\newtheorem{theorem}{\sc Theorem}[section]
\newtheorem{thm}[theorem]{\sc Theorem}
\newtheorem*{thmE}{\textbf{Theorem E}}
 \DeclareMathOperator{\GL}{GL}
 \DeclareMathOperator{\Aut}{Aut} 
 \DeclareMathOperator{\Tor}{Tor}  
\DeclareMathOperator{\diag}{diag}
 \theoremstyle{definition}\newtheorem{definition}{Definition}
\newtheorem{example}{Example}
\newtheorem{proposition}{Proposition}
\theoremstyle{remark}
\title[Automorphism Orbits]{Automorphism Orbits of the Group of Unitriangular Matrices}
\address{Departamento de Matem\'atica, Campus Universit\'{a}rio Darcy Ribeiro,  Universidade de Bras\'ilia,
Brasilia-DF, 70910-900 Brazil}
\author[de Melo]{Emerson de Melo}
\author[Kato]{Júlia Kato}
\email{(de Melo) emerson@mat.unb.br; (Kato) juliakato@mat.unb.br}
\subjclass[2010]{20E22; 20E36.}
\keywords{Extensions; Automorphisms; Soluble groups}
\begin{document}

\begin{abstract}
Let $G$ be a group. The orbits of the natural action of $\Aut(G)$ on $G$ 
are called the automorphism orbits of $G$, and their number is 
denoted by $\omega(G)$. Let $\mathbb{F}$ be an infinite field, 
and let $UT_n(\mathbb{F})$ denote the group of unitriangular matrices over 
$\mathbb{F}$. We show that $\omega(UT_n(\mathbb{F}))$ is finite for $n \leq 5$ 
and infinite for $n \geq 6$.
\end{abstract}

%%% ----------------------------------------------------------------------
\maketitle
\section{Introduction}

Let $G$ be a group, and let $\Aut(G)$ denote its automorphism group.   The group $G$ is partitioned into orbits under the natural action of $\Aut(G)$, where two elements $g, h \in G$ are in the same orbit if there exists an automorphism $\alpha \in \text{Aut}(G)$ such that $g^\alpha = h$. These orbits are referred to as \textit{automorphism orbits} and the number of automorphism orbits is denoted by $\omega(G)$. A group is called a \textit{k-orbit group} if it has exactly $k$ automorphism orbits.

The identity group is the only example of an 1-orbit group. Furthermore, it is straightforward to verify that the finite 2-orbit groups are exactly the elementary abelian groups of prime-power order. Higman, Neumann, and Neumann  constructed a non-abelian torsion-free simple group in which all nontrivial elements are conjugate (\cite{Rob} (6.4.6)) showing the difference of infinite groups in this context. 

The investigation into the relationships between groups and their automorphism orbits began with  Higman \cite{H63} in 1963. Higman initially explored finite 2-groups in which the involutions constitute a single automorphism orbit. 
A related problem had emerged earlier: the classification of groups with a prescribed number of conjugacy classes, which are the orbits under inner automorphisms. This problem was proposed by W. Burnside \cite{B55} in 1955 .
Since then, the study of automorphism orbits has become a prominent area of interest among group theorists. Observe that automorphism orbits are unions of conjugacy classes and hence they give an example of \textit{fusion} in the holomorph group $G \rtimes \Aut(G)$, a well-known concept established in the literature (see for instance \cite[Chapter 7]{gorenstein}).

Laffey and MacHale \cite{LM} in  1986 classified all finite soluble non-$p$-groups with $\omega(G)=3$. Recently, finite 2-groups that are 3-orbit groups were classified by Bors and Glasby \cite{BG20}, while 3-orbit groups that are $p$-groups with odd prime $p$ were classified by Li and Zhu \cite{LZ24}, completing the classification of finite $3$-orbit groups.

Motivated by the work of T. J. Laffey and D. MacHale \cite{LM}, subsequent research has extended this line of investigation to mixed-order groups, that is, groups containing both elements of finite and infinite order. Recall that a group is said to be radicable if for every $g \in G$ and every $n \in \mathbb{N}$, there exists $h \in G$ such that $h^n = g$. Several structural results were obtained  in this direction. For soluble groups of finite rank, Bastos, Dantas, and de Melo \cite{BDdM} showed that if $\omega(G)<\infty$, then $G=K \rtimes H$, where $K$ is a torsion-free radicable nilpotent characteristic subgroup and $H$ is finite. In the virtually nilpotent case, it was shown in \cite{emerson2021} that if $\omega(G)<\infty$, then $G=K \rtimes H$, where $K$ is a torsion-free radicable nilpotent characteristic subgroup and $H$ a torsion subgroup, and moreover $G' = D \times \Tor(G')$ with $D$ torsion-free, nilpotent, radicable, and characteristic.

These results suggest that the torsion-free radicable nilpotent groups groups deserves special attention. The family of unitriangular groups $UT_n(\mathbb{F})$ over a field $\mathbb{F}$, provides a concrete class of nilpotent groups which, for certain fields, are torsion-free and radicable, making it a natural starting point for such an exploration. Furthermore, this perspective motivates the broader problem of classifying linear groups that possess only finitely many automorphism orbits. 

The group $UT_n(\mathbb{F})$ can be viewed as a subgroup of $GL_n(\mathbb{F})$. Its normalizer is known to be the group of upper triangular matrices $T_n(\mathbb{F})$ \cite{AB95}, which decomposes as the semidirect product
$$T_n(\mathbb{F})=UT_n(\mathbb{F})\rtimes D_n(\mathbb{F}),$$
where $D_n(\mathbb{F})$ denotes the diagonal subgroup. Consequently, $T_n(\mathbb{F})$ embeds naturally into $\Aut(UT_n(\mathbb{F}))$.

Platonov conjectured that in $T_n(\mathbb{F})$ the number of the conjugacy classes of $UT_n(\mathbb{F})$ in general is infinite.  Zalesskii \cite{Z}, showed that the number in question is infinite in the triangular group $T_n(\mathbb{F})$ for $n\geq 6$.

The automorphism group of the group of unitriangular matrices over a field was studied by many authors \cite{M93,L83,M10,W55}. Pavlov studied the automorphism group of unitriangular matrices over a finite field of odd prime order. Weir \cite{W55} described the automorphism group of the group of unitriangular matrices over a finite field of odd characteristic. Maginnis \cite{M93} described it for the field of order two. 

Finally, Levchuk \cite{L83} proved that the automorphism group of $UT_n(\mathbb{K})$, where $\mathbb{K}$ is a ring, is generated by certain families of automorphisms. Later, Mahalanobis \cite{M10} provided an alternative proof in 2013 for the case where $\mathbb{K}$ is a field, showing that $\Aut(UT_n(\mathbb{K}))$ is generated by extremal, diagonal, inner, central, and field automorphisms.

Our main results are as follows: 
\begin{thm}
Let $\mathbb{F}$ be an infinite field. The unitriangular group $UT_n(\mathbb{F})$ has finitely many automorphism orbits for $n \leq 5$. In particular, $\omega(UT_3(\mathbb{F}))=3$. Furthermore, when $n=4$, the number of orbits of $UT_4(\mathbb{F})$ under the action of $T_4(\mathbb{F})$ is $16$, and for $n=5$ this number is $61$.
\end{thm}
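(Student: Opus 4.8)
The plan is to split the statement into two tasks: finiteness of $\omega(UT_n(\mathbb{F}))$ for $n\le 5$, and the exact orbit counts. The crucial reduction is that the conjugation action of $T_n(\mathbb{F})=UT_n(\mathbb{F})\rtimes D_n(\mathbb{F})$ inside $GL_n(\mathbb{F})$ identifies $T_n(\mathbb{F})$ (modulo scalars) with a subgroup of $\Aut(UT_n(\mathbb{F}))$, so every automorphism orbit of $UT_n(\mathbb{F})$ is a union of $T_n(\mathbb{F})$-conjugacy classes; in particular
\[
\omega(UT_n(\mathbb{F}))\ \le\ \#\bigl\{\,T_n(\mathbb{F})\text{-conjugacy classes in }UT_n(\mathbb{F})\,\bigr\}.
\]
Hence it suffices to prove the right-hand side is finite for $n\le 5$, which is precisely the range complementary to Zalesskii's theorem that it is infinite for $n\ge 6$; I would establish finiteness and extract the counts $16$ and $61$ simultaneously, by an explicit enumeration.

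For the enumeration I would normalize an element $g=I+A\in UT_n(\mathbb{F})$ band by band. Write $A=\sum_{k=1}^{n-1}A^{(k)}$, where the $k$-th superdiagonal band $A^{(k)}$ records the entries $A_{i,i+k}$; the subgroups $\{I+A:\ A^{(1)}=\cdots=A^{(k-1)}=0\}$ are the terms of the lower central series of $UT_n(\mathbb{F})$. Conjugation by $\diag(t_1,\dots,t_n)$ rescales $A_{i,i+k}$ by $t_i/t_{i+k}$, while conjugation by a unitriangular element $I+N$ with $N$ supported on bands of index $\ge k-1$ fixes $A^{(1)},\dots,A^{(k-1)}$ and changes $A^{(k)}$ only by commutators built from $N$ and the already-fixed lower bands, i.e.\ by an affine-linear ``unipotent shear''. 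One therefore processes $A^{(1)}$ first, then $A^{(2)}$, and so on: at each stage the residual stabilizer in $T_n(\mathbb{F})$ (which depends only on the zero-pattern fixed so far) acts on the affine space of admissible values of the current band through a mixture of torus characters and these shears. Over an infinite field the only facts used are that $\mathbb{F}^\times$ acts transitively on $\mathbb{F}^\times$ and that $(\mathbb{F},+)$ is a single orbit under translations, so each stage contributes only finitely many normal forms and finitely many residual stabilizers. For $n\le 5$ the process terminates after at most four bands, and bookkeeping the cases yields $16$ classes when $n=4$ and $61$ when $n=5$; for $n=6$ one reaches a configuration of bands that cannot be reduced below a one-parameter (cross-ratio type) family, which is the source of Zalesskii's infinitude.

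It remains to pin down $\omega(UT_3(\mathbb{F}))=3$ exactly. Here $UT_3(\mathbb{F})$ is the Heisenberg group over $\mathbb{F}$; the orbits $\{I\}$ and $Z(UT_3(\mathbb{F}))\setminus\{I\}$ are visibly distinct, and $Z(UT_3(\mathbb{F}))\setminus\{I\}$ is a single orbit already under the torus (the $(1,3)$-entry is rescaled by $t_1/t_3$, which exhausts $\mathbb{F}^\times$). For the remaining, non-central elements, write such an element as $I+aE_{12}+bE_{23}+cE_{13}$; its image in the abelianization $UT_3(\mathbb{F})/UT_3(\mathbb{F})'\cong\mathbb{F}^2$ is the nonzero vector $(a,b)$. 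Using that $\Aut(UT_3(\mathbb{F}))$ surjects onto $GL_2(\mathbb{F})=\GL(UT_3(\mathbb{F})/UT_3(\mathbb{F})')$ — realized by the diagonal automorphisms, the graph automorphism $g\mapsto J(g^{-1})^{\top}J$ ($J$ the antidiagonal permutation matrix), and the automorphisms lifting the transvections of $GL_2(\mathbb{F})$ — one moves $(a,b)$ onto $(1,0)$, and then an inner automorphism (conjugation by a suitable unitriangular element) kills the leftover $(1,3)$-entry. Thus all non-central elements form one orbit, and $\omega(UT_3(\mathbb{F}))=3$. Note there are already $5$ classes under $T_3(\mathbb{F})$ alone, so the full automorphism group is genuinely needed here, to merge the three non-central $T_3(\mathbb{F})$-classes.

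The step I expect to be the real obstacle is the case analysis for $n=5$: making it provably exhaustive and free of double counting. Concretely one must carry, for every admissible zero-pattern propagated through the four bands, the exact residual stabilizer and the orbits it induces on the next band; the value $61$ is a sum over a moderately large tree of such cases, and the delicate strata are those in which several superdiagonal entries vanish at once, since there the residual torus acquires extra kernel and the unipotent shears begin to interact. (For the finiteness statement itself one needs nothing about $\Aut(UT_n(\mathbb{F}))$ beyond the embedding of $T_n(\mathbb{F})$; the Levchuk--Mahalanobis generation theorem enters only in the exact evaluation $\omega(UT_3(\mathbb{F}))=3$.)
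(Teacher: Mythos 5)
Your proposal is correct and follows essentially the same route as the paper: reduce to counting $T_n(\mathbb{F})$-conjugacy classes via the embedding $T_n(\mathbb{F})\hookrightarrow\Aut(UT_n(\mathbb{F}))$, stratify by the zero-pattern of the superdiagonal entries and normalize band by band with explicit diagonal and unitriangular conjugations, and invoke the $GL_2(\mathbb{F})$ quotient of $\Aut(UT_3(\mathbb{F}))$ only to fuse the non-central $T_3$-classes (your count of $5$ such classes matches the paper's). The only gap is the one you flag yourself: the exhaustive case analysis that actually produces $16$ and $61$ is described but not carried out, which is precisely the content the paper supplies by hand for $n=4$ and via SageMath for $n=5$.
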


The case $n=5$ was established using the computer algebra system SageMath, since its verification required a case-by-case analysis involving numerous systems of equations. This computational approach allowed us to systematically handle these cases and rigorously determine the number of automorphism orbits. Further details regarding the case $n=5$ are provided in the appendix below.

As mentioned above, in \cite{Z} it was proved that $UT_n(\mathbb{F})$ has infinitely many automorphism orbits under the action of $T_n(\mathbb{F})$ for $n \geq 6$. We follow the same strategy to obtain the following result.

\begin{thm}
Let $\mathbb{F}$ be an infinite field. Then the unitriangular group $UT_n(\mathbb{F})$ has infinitely many automorphism orbits for all $n \geq 6$.
\end{thm}
\section{Preliminaries}

This section is based on the papers \cite{L83} and \cite{M10}, where the authors determine a generating set for the automorphism group of the unitriangular group over a field.

The upper unitriangular matrix group of dimension $n\times n$ over the field $\mathbb{F}$, denoted $UT_n(\mathbb{F})$, is the group, under multiplication, with $1$'s on the diagonal, $0$'s below the diagonal, and arbitrary entries above the diagonal. 
Explicitly,
$$UT_n(\mathbb{F})=\left\{ \begin{pmatrix}
1 & * & * & \dots & * \\
0 & 1 & * & \dots & * \\
0 & 0 & 1 & \dots & * \\
\dots & \dots & \dots & \dots & * \\
0 & 0 & 0 & \dots & 1 
\end{pmatrix}_{n\times n}\colon \text{all star-marked entries vary arbitrarily over }\mathbb{F}\right\}.  $$

We define the elementary matrix $xe_{i,j}$ to be the $n\times n$ matrix with $x$ in the $(i,j)$ position, 1's in the diagonal and  0 elsewhere, where $x\in \mathbb{F}$.

\begin{definition}
Define 
\begin{equation*}
    \gamma_k=\{M=(m_{i,j})\in UT_n(\mathbb{F})\colon m_{i,j}=0,\text{ }i<j,\text{ } j-i< k\}.
\end{equation*}
\end{definition}

In other words, the $\gamma_1=UT_n(\mathbb{F})$. The subgroup $\gamma_2$ is the commutator of $UT_n(\mathbb{F})$.  It consists of all upper unitriangular matrices with the first superdiagonal entries zero. The first superdiagonal can be specified by all entries $(i,j)$ with
 $j-i =1$. Similarly $\gamma_3$ consists of all matrices with the first two superdiagonals
 zero and so on. It follows that $\gamma_n =1$. We denote the identity matrix by $\mathbbm{1}$.

 \begin{proposition}[\cite{M10}, Proposition 1.1]
     In $UT_n(\mathbb{F})$, the lower central series and the upper central series are identical and it is of the form 
$$UT_n(\mathbb{F})=\gamma_1>\gamma_2>\ldots>\gamma_{n-1}>\gamma_n=\mathbbm{1}.$$
 \end{proposition}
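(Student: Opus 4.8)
The plan is to reduce both assertions to the commutator calculus of the elementary matrices $x e_{i,j}$, using the fact that $\gamma_k$ is exactly the subgroup of $UT_n(\mathbb{F})$ generated by the $e_{i,j}$ with $j-i\ge k$. This last fact is the standard ``sweeping'' argument: given $M\in\gamma_k$, clear the entries on its shortest nonzero superdiagonal by multiplying by suitable $(-x)e_{i,j}$, then the next shortest, and so on; since multiplication by $xe_{i,j}$ only adds a multiple of row $j$ to row $i$ it never disturbs shorter superdiagonals, so finitely many steps reduce $M$ to $\mathbbm{1}$.

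First I would settle the lower central series. A direct computation shows that $[xe_{i,j},ye_{k,l}]$ is trivial unless $j=k$ or $l=i$, and otherwise equals the single elementary matrix $xy\,e_{i,l}$, respectively $-xy\,e_{k,j}$, which sits on superdiagonal $(j-i)+(l-k)$. Combined with the standard identities $[ab,c]=[a,c]^{b}[b,c]$, $[a,bc]=[a,c][a,b]^{c}$ and the (elementary) normality of each $\gamma_t$ in $UT_n(\mathbb{F})$, this yields $[\gamma_p,\gamma_q]\subseteq\gamma_{p+q}$, so in particular $[\gamma_1,\gamma_k]\subseteq\gamma_{k+1}$. Conversely, every generator $e_{i,l}$ of $\gamma_{k+1}$ (so $l-i\ge k+1$) satisfies $e_{i,l}=[e_{i,i+1},e_{i+1,l}]$ with $e_{i,i+1}\in\gamma_1$ and $e_{i+1,l}\in\gamma_k$, whence $[\gamma_1,\gamma_k]=\gamma_{k+1}$. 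An immediate induction then identifies $(\gamma_k)_k$ with the lower central series of $UT_n(\mathbb{F})$, and since $e_{1,n}\in\gamma_{n-1}\setminus\gamma_n$ the nilpotency class is exactly $n-1$.

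The core of the proof is the claim that for every $m$ with $2\le m\le n$ one has $Z\bigl(UT_n(\mathbb{F})/\gamma_m\bigr)=\gamma_{m-1}/\gamma_m$. The inclusion ``$\supseteq$'' is exactly $[\gamma_{m-1},\gamma_1]\subseteq\gamma_m$ from the previous step. For ``$\subseteq$'', suppose $g=(g_{i,j})\notin\gamma_{m-1}$ and let $d\le m-2$ be minimal such that $g_{i,i+d}\ne 0$ for some $i$; writing $g=\mathbbm{1}+N$ with $N$ supported on superdiagonals $\ge d$, a short computation with matrix units shows that, modulo $\gamma_{d+2}$,
$$[\,g,\ \mathbbm{1}+E_{p,p+1}\,]\ \equiv\ \mathbbm{1}+g_{p-d,\,p}\,E_{p-d,\,p+1}-g_{p+1,\,p+1+d}\,E_{p,\,p+1+d},$$
whose non-trivial part lies on superdiagonal $d+1$. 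Choosing $p=i+d$ when $i+d\le n-1$, or $p=i-1$ when $i+d=n$ (in which case $i\ge 2$ since $d\le n-2$), the right-hand side acquires the entry $\pm g_{i,i+d}\ne 0$ on superdiagonal $d+1\le m-1$, so it does not belong to $\gamma_m$; hence $g\gamma_m$ is not central. Taking $m=n$ gives in particular $Z\bigl(UT_n(\mathbb{F})\bigr)=\gamma_{n-1}$.

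Finally I would run an induction on $k$ to show that the $k$-th centre $Z_k$ of $UT_n(\mathbb{F})$ equals $\gamma_{n-k}$: indeed $Z_0=\mathbbm{1}=\gamma_n$, and if $Z_{k-1}=\gamma_{n-k+1}$ then $Z_k/Z_{k-1}=Z\bigl(UT_n(\mathbb{F})/\gamma_{n-k+1}\bigr)=\gamma_{n-k}/\gamma_{n-k+1}$ by the claim, so $Z_k=\gamma_{n-k}$. Thus the upper central series is again $\mathbbm{1}=\gamma_n<\gamma_{n-1}<\cdots<\gamma_1=UT_n(\mathbb{F})$, coinciding with the lower central series, and the proposition follows. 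The only genuinely delicate point is the leading-term computation in the third step: one must track precisely which superdiagonal the relevant commutator entry lands on, and handle the boundary indices $i=1$ and $j=n$ so that an admissible elementary matrix $E_{p,p+1}$ is always available to witness the offending entry; everything else is routine matrix bookkeeping.
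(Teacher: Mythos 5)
Your argument is correct and complete; note that the paper itself offers no proof of this proposition, citing it as Proposition 1.1 of \cite{M10}, so yours is a self-contained substitute for that reference, following the standard route (elementary-matrix commutator calculus for the lower central series, a leading-term computation in the associated graded object for the upper one). The two genuinely delicate points --- that the two candidate entries of $[\,g,\mathbbm{1}+E_{p,p+1}]$ on superdiagonal $d+1$ sit at distinct positions $(p-d,p+1)$ and $(p,p+1+d)$ and hence cannot cancel, and that an admissible index $p$ with $1\le p\le n-1$ exists even when the offending entry $g_{i,i+d}$ lies in the last column --- are exactly the ones you flag and handle, so I see no gap.
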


We now describe some examples of automorphisms of $UT_n(\mathbb{F})$. 
They are given in terms of their action on the generators $x e_{i,i+1}$, for $i=1,2,\dots,n-1$. 
The automorphisms are as follows:

\begin{enumerate}
    \item \textbf{Inner automorphisms:} We will denote by  $\mathcal{J}$  the subgroup of inner automorphisms. For an invertible matrix $A=(a_{i,j})\in UT_n(\mathbb{F})$, the inner automorphism induced by $A$ is defined by
    $$X\mapsto A^{-1} X A,$$
    where $X\in UT_n(\mathbb{F})$;

    \item \textbf{Diagonal automorphisms:} We will denote by $D$ the subgroup of diagonal automorphisms. For a diagonal matrix $\diag{[d_1,\dots, d_n]}$, where each $d_i$ is invertible, $i=1,\dots,n$, the diagonal automorphism induced by $\diag{[d_1,\dots, d_n]}$ is given by
    $$e_{i,j}\mapsto d_i^{-1}e_{i,j}d_j;$$

    \item \textbf{Central automorphisms:} The subgroup of central automorphisms will be denoted by $\mathcal{Z}$, it is generated by the automorphisms 
$$xe_{i,i+1}\mapsto xe_{i,i+1}+x^{\lambda}e_{1,n},$$
where $\lambda$ is a linear map of $\mathbb{F}^+$ to itself;

    \item \textbf{Extremal Automorphisms:} We will denote by $\mathcal{U}$  the subgroup of extremal automorphisms, these automorphisms are given by the rule,
\begin{gather*}
	xe_{1,2}\to x(e_{1,2}+\lambda e_{2,n})+\frac{\lambda x^2}{2}e_{1,n},\\
	x e_{n-1,n}\to x(e_{n-1,n}+\mu e_{1,n-1})+\frac{\mu x^2}{2}e_{1,n}, \quad x\in \mathbb{F},
\end{gather*}
for $\lambda,\mu$ running through $\mathbb{F}$. All other generators remain fixed;

    \item \textbf{Field Automorphisms:} We will denote by $\overline{\Aut}(\mathbb{F})$ the subgroup formed by induced field automorphisms. It is generated by the automorphisms 
$$x e_{i,i+1}\mapsto x^{\mu} e_{i,i+1},$$
where $\mu$ is a field automorphism and $i=1,\dots,n-1$;

    \item \textbf{Flip Automorphisms:}  Let $W$ denote the subgroup generated by the flip automorphism which is given by 
$$x e_{i,j}\mapsto(-1)^{i-j-1}x e_{n-j+1,n-i+1}.$$ 
This automorphism is given by flipping the
 matrix by the anti-diagonal and changing the sign of some entries.  This is an automorphism of order 2.
    
\end{enumerate}

Now we state the theorem of characterization of the automorphism group of $UT_n(\mathbb{F})$.

\begin{theorem}[\cite{L83}, Corollary 5]
Let $\mathbb{F}$ be an infinite field. The group of automorphism of
$\Aut((UT_n(\mathbb{F}))$  coincides with the product 
$$(\mathcal{Z}\rtimes \GL_2(\mathbb{F}))\rtimes \overline{\Aut}(\mathbb{F}),$$
for $n=3$, and for $n>3$, coincides with the product 
$$((((\mathcal{ZJ})\rtimes\mathcal{U})\rtimes W)\rtimes D)\rtimes \overline{\Aut}(\mathbb{F}).$$
\end{theorem}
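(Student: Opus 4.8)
The plan is to prove the two inclusions separately. For the inclusion $\supseteq$, I would first check that each of the six families listed above genuinely defines an automorphism of $UT_n(\mathbb{F})$ by verifying that the prescribed action on the generators $xe_{i,i+1}$ respects the defining relations (in particular the commutator relations and field-additivity), and then confirm the asserted internal semidirect-product structure. The latter amounts to two routine but essential verifications: that $\mathcal{Z}\mathcal{J}$ is normal in the whole product, and that each successive factor $\mathcal{U}$, $W$, $D$, $\overline{\Aut}(\mathbb{F})$ normalizes the subgroup generated by the previous ones, together with triviality of the relevant intersections (e.g.\ $\mathcal{U}\cap(\mathcal{Z}\mathcal{J})=1$, $D\cap W=1$, and so on). These follow by inspecting the explicit action on generators; for instance, conjugating a central or inner automorphism by a diagonal one again produces a central or inner automorphism, which is exactly what the required normality says.

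The substance of the theorem is the reverse inclusion: every $\phi\in\Aut(UT_n(\mathbb{F}))$ lies in the stated product. The starting point is the Proposition above: the terms $\gamma_k$ form the lower central series and are therefore characteristic, so $\phi$ preserves the whole filtration and induces a linear automorphism $\bar\phi$ of the abelianization $V:=\gamma_1/\gamma_2\cong\mathbb{F}^{n-1}$, whose standard basis consists of the images of $e_{1,2},\dots,e_{n-1,n}$. The key rigidity comes from the commutator pairing $V\times V\to\gamma_2/\gamma_3$, which $\phi$ must preserve: since $[e_{i,i+1},e_{j,j+1}]$ is nontrivial modulo $\gamma_3$ exactly when $|i-j|=1$, the adjacency pattern of the path $A_{n-1}$ is an invariant of $\bar\phi$. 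For $n>3$ this forces $\bar\phi$ to permute the one-dimensional coordinate subspaces either in the standard order or in the reversed order; composing $\phi$ with the flip $W$ if necessary, I may assume the order is preserved, so that $\bar\phi$ is diagonal up to a field twist on the simple directions.

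Next I would strip off the remaining diagonal data. A field automorphism in $\overline{\Aut}(\mathbb{F})$ removes the Frobenius-type twist in the scalars acting on each $e_{i,i+1}$, and a diagonal automorphism in $D$ then rescales so that, after these adjustments, the modified $\phi$ fixes every generator $e_{i,i+1}$ modulo $\gamma_2$. The concluding and hardest step is to show that an automorphism $\psi$ fixing all simple generators modulo $\gamma_2$ already lies in $\mathcal{Z}\mathcal{J}\mathcal{U}$. I would argue by induction up the filtration: at each level one measures the discrepancy $\psi(e_{i,i+1})\,e_{i,i+1}^{-1}\in\gamma_2$ and its higher analogues, and shows that this discrepancy can be cancelled by an inner automorphism, except for corrections supported in the corner positions adjacent to $e_{1,2}$ and $e_{n-1,n}$, which require exactly the extremal automorphisms $\mathcal{U}$, and corrections supported in the top-right entry $e_{1,n}$ spanning the center $\gamma_{n-1}$, which require exactly the central automorphisms $\mathcal{Z}$. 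Verifying that these three families suffice to absorb every possible discrepancy — and that nothing outside the corners escapes the reach of inner automorphisms — is the main obstacle, since it is here that the special role of the extremal and central generators, and the precise quadratic correction terms $\tfrac{\lambda x^2}{2}e_{1,n}$ in the definition of $\mathcal{U}$, must be accounted for.

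Finally I would treat $n=3$ separately, where the path $A_2$ has only two nodes and the adjacency obstruction disappears: the commutator pairing $V\times V\to\gamma_2=Z(UT_3(\mathbb{F}))$ is a nondegenerate alternating form on the two-dimensional space $V$, so any $\bar\phi\in\GL_2(\mathbb{F})$ is admissible, with the induced scaling on the one-dimensional center forced to equal $\det\bar\phi$. This accounts for the factor $\GL_2(\mathbb{F})$ (which already absorbs the flip as the coordinate swap), while $\mathcal{Z}$ captures the automorphisms fixing $V$ but shifting the generators into the center, and $\overline{\Aut}(\mathbb{F})$ captures the field twists, yielding $(\mathcal{Z}\rtimes\GL_2(\mathbb{F}))\rtimes\overline{\Aut}(\mathbb{F})$ and completing the proof.
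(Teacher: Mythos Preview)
The paper does not prove this theorem at all: it is quoted in the Preliminaries section as Corollary~5 of Levchuk~\cite{L83} (with an alternative reference to Mahalanobis~\cite{M10}) and is used as a black box in the later sections. There is therefore no ``paper's own proof'' to compare your proposal against.

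That said, your outline follows the standard strategy used in the literature (and in \cite{L83,M10}): exploit that the $\gamma_k$ are characteristic to get an induced map on $V=\gamma_1/\gamma_2$, use the commutator pairing into $\gamma_2/\gamma_3$ to rigidify the action on the $A_{n-1}$ path, peel off $W$, $D$, $\overline{\Aut}(\mathbb{F})$ in that order, and then absorb the remainder into $\mathcal{Z}\mathcal{J}\mathcal{U}$. Two places in your sketch are genuinely nontrivial and would need more than a sentence if you were to write this out. First, the step ``a field automorphism removes the Frobenius-type twist'' hides the fact that you must show the induced map on each one-dimensional coordinate line of $V$ is additive and comes from a \emph{single} $\varphi\in\Aut(\mathbb{F})$ common to all coordinates; this is extracted from compatibility with the commutator pairing into successive quotients $\gamma_k/\gamma_{k+1}$, and is where infiniteness of $\mathbb{F}$ (or at least $|\mathbb{F}|>2$) is used. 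Second, in the final absorption step your claim that ``nothing outside the corners escapes the reach of inner automorphisms'' is exactly the content of Levchuk's analysis of the maximal abelian normal subgroups $N_{i,j}$ and their images under automorphisms; proving it from scratch requires tracking the discrepancies level by level and is the bulk of the work in \cite{L83}. Your description of the $n=3$ case is correct.
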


\begin{definition}
For $j>i$ let us define $N_{i,j}$ to be the subset of $UT_n(\mathbb{F})$ all of whose matrices have all rows greater than the $i^{\text{th}}$ row zero and all columns less than the $j^{\text{th}}$ column zero, except from the diagonal entries.  
\end{definition}

It is straightforward to see that $N_{i,j}$ is an abelian normal subgroup of $UT_n(\mathbb{F})$. We now present some examples.

\begin{example}
    Let $G=UT_4(\mathbb{F})$. The subgroup $N_{1,2}$ and $N_{2,3}$ has elements of the form
     \begin{align*}
         &N_{1,2}=\left\{\begin{pmatrix}
		1 & a_{1,2} & a_{1,3} & a_{1,4}  \\
		0 & 1 & 0 & 0  \\
		0 & 0 & 1 & 0  \\
		0 & 0 & 0 & 1 
	\end{pmatrix}\colon a_{i,j}\in\mathbb{F}\right\},\\
    &N_{2,3}=\left\{\begin{pmatrix}
		1 & 0 & a_{1,3} & a_{1,4}  \\
		0 & 1 & a_{2,3} & a_{2,4}  \\
		0 & 0 & 1 & 0  \\
		0 & 0 & 0 & 1 
	\end{pmatrix}\colon a_{i,j}\in\mathbb{F}\right\}.
     \end{align*}

Note that $N_{2,3}=C_{UT_4(\mathbb{F})}(\gamma_2)$.     
\end{example}

\begin{proposition}[\cite{L83}, Lemma 5.]
The centralizer of $N_{i,j}$ is
    $$C_{UT_n(\mathbb{F})}(N_{i,j})=N_{j-1, i+1}.$$
\end{proposition}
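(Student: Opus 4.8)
The plan is to reduce the centraliser condition to commuting with finitely many matrix units, and then to read off the answer by a direct, entrywise computation.

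First I would observe that $N_{i,j}=\mathbbm{1}+\mathcal{M}$, where $\mathcal{M}$ is the $\mathbb{F}$-linear span of the matrix units $E_{a,b}$ with $1\le a\le i$ and $j\le b\le n$; here $E_{a,b}$ is the matrix with a single $1$ in position $(a,b)$ and zeros elsewhere, so that the elementary matrices $\mathbbm{1}+xE_{a,b}$ are the $xe_{a,b}$ used above. Since $j>i$, all these units lie strictly above the diagonal, in accordance with $N_{i,j}\le UT_n(\mathbb{F})$. For $X\in UT_n(\mathbb{F})$ and $M\in\mathcal{M}$ one has $X(\mathbbm{1}+M)=(\mathbbm{1}+M)X$ if and only if $XM=MX$, and since $\mathcal{M}$ is spanned by the $E_{a,b}$ this means that $X$ centralises $N_{i,j}$ if and only if $XE_{a,b}=E_{a,b}X$ for all $a\le i$ and $b\ge j$. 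This linearisation is the only conceptual step; the rest is computation.

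Next I would evaluate those commutators. The matrix $XE_{a,b}$ has its $b$-th column equal to the $a$-th column of $X$ and all other columns zero, while $E_{a,b}X$ has its $a$-th row equal to the $b$-th row of $X$ and all other rows zero. Comparing the two products entry by entry: outside row $a$ and outside column $b$ both sides vanish; equality in column $b$ (away from row $a$) forces $X_{p,a}=0$ for all $p\ne a$, i.e.\ (since $X$ is unitriangular) the $a$-th column of $X$ is the standard basis vector $e_a$; equality in row $a$ (away from column $b$) forces $X_{b,q}=0$ for all $q\ne b$, i.e.\ the $b$-th row of $X$ is $e_b^{\top}$; and the $(a,b)$-entries match automatically. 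Letting $a$ run over $1,\dots,i$ and $b$ over $j,\dots,n$, I conclude that $X\in C_{UT_n(\mathbb{F})}(N_{i,j})$ exactly when columns $1,\dots,i$ and rows $j,\dots,n$ of $X$ coincide with those of $\mathbbm{1}$.

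Finally I would match this description with $N_{j-1,i+1}$: requiring columns $\le i$ and rows $\ge j$ of $X$ to be trivial is precisely requiring every nonzero off-diagonal entry of $X$ to lie in some row $\le j-1$ and some column $\ge i+1$, which is the defining condition for membership in $N_{j-1,i+1}$; conversely every element of $N_{j-1,i+1}$ visibly has trivial columns $1,\dots,i$ and trivial rows $j,\dots,n$. This yields $C_{UT_n(\mathbb{F})}(N_{i,j})=N_{j-1,i+1}$. I do not anticipate a genuine obstacle, since the argument amounts to bookkeeping with matrix units; the one point that deserves a word of care is that $N_{j-1,i+1}$ must be interpreted inside $UT_n(\mathbb{F})$ as the set of unitriangular matrices supported in rows $\le j-1$ and columns $\ge i+1$ also when $i+1\le j-1$ (a range not covered by the standing assumption $j>i$ in the definition of $N$), in which case strict upper-triangularity is automatic and $N_{j-1,i+1}$ is still a (generally non-abelian) normal subgroup.
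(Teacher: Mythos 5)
Your argument is correct and complete. Note that the paper does not actually prove this proposition; it simply cites it as Lemma~5 of Levchuk's paper, so you have supplied a self-contained elementary verification of a result the authors take from the literature. The linearisation step ($X$ centralises $\mathbbm{1}+\mathcal{M}$ iff $X$ commutes with each spanning matrix unit $E_{a,b}$, $a\le i$, $b\ge j$) is the right reduction, the entrywise comparison of $XE_{a,b}$ and $E_{a,b}X$ is carried out correctly (column $a$ of $X$ must be $e_a$, row $b$ must be $e_b^{\top}$, and the $(a,b)$-entries agree automatically since $X_{a,a}=X_{b,b}=1$), and the translation back into the defining condition for $N_{j-1,i+1}$ is accurate. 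Your closing caveat is also well taken and worth keeping: for $j\ge i+2$ the pair $(j-1,i+1)$ violates the standing convention ``second index greater than first'' under which $N$ was defined, so the statement only makes sense once one extends the definition verbatim to arbitrary index pairs, in which case $N_{j-1,i+1}$ is still a normal subgroup of $UT_n(\mathbb{F})$ but need no longer be abelian (e.g.\ $C_{UT_5(\mathbb{F})}(N_{1,3})=N_{2,2}$ contains $e_{1,2}$ and $e_{2,3}$, which do not commute). This is a genuine imprecision in the paper's statement that your proof makes explicit rather than glosses over.
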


Levchuk also proved that the subgroups $N_{i,i+1}$ are maximal abelian normal subgroups of $UT_n(\mathbb{F})$, for $i=1,2,\ldots n-1$.  This property is particularly useful, because the image of maximal abelian normal subgroups under automorphism are maximal abelian normal subgroups.

We conclude this section with the following useful proposition. 
\begin{proposition}\label{remark:ação}
Let $X\in UT_n(\mathbb{F})$. If an entry on the first superdiagonal of $X$ is nonzero, it cannot be transformed to zero via conjugation by a diagonal or unitriangular matrix.
\end{proposition}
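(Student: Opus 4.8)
The plan is to dispatch the two cases separately — conjugation by a diagonal matrix, and conjugation by a unitriangular matrix — by a short entrywise computation in each case, exploiting the rigidity of the first superdiagonal.

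First I would treat the diagonal case. Writing $X=(x_{i,j})$ with $x_{i,i}=1$ and $d=\diag[d_1,\dots,d_n]$ with every $d_i\in\mathbb{F}$ invertible, the $(i,i+1)$ entry of $d^{-1}Xd$ is $d_i^{-1}x_{i,i+1}d_{i+1}$. Since $\mathbb{F}$ is a field and the $d_i$ are units, this vanishes only when $x_{i,i+1}=0$; in fact conjugation by $d$ merely rescales the first superdiagonal coordinatewise and never annihilates a nonzero entry.

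For the unitriangular case the cleanest argument is structural. By the proposition identifying the lower and upper central series, $\gamma_2=[UT_n(\mathbb{F}),UT_n(\mathbb{F})]$, and by the definition of $\gamma_k$ the subgroup $\gamma_2$ consists precisely of the unitriangular matrices whose first superdiagonal is zero. Hence for any $A\in UT_n(\mathbb{F})$ we have $A^{-1}XA=X\cdot[X,A]$ with $[X,A]\in\gamma_2$, so $A^{-1}XA$ and $X$ agree on the entire first superdiagonal; in particular a nonzero entry there survives. Equivalently, one checks directly that $(A^{-1}XA)_{i,i+1}$ receives contributions only from the index pairs $(i,i),(i,i+1),(i+1,i+1)$, which sum to $a_{i,i+1}+x_{i,i+1}+(A^{-1})_{i,i+1}=x_{i,i+1}$, once one notes that the first superdiagonal of $A^{-1}$ is the negative of that of $A$ — immediate from $A^{-1}=\sum_{k\ge 0}(\mathbbm{1}-A)^k$, since $(\mathbbm{1}-A)^k$ is supported on the $k$-th and higher superdiagonals.

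There is essentially no obstacle here: the proposition packages two elementary observations, and the only point deserving a word of justification — that $A$ and $A^{-1}$ have opposite first superdiagonals — is handled by the Neumann series remark above. Finally, since conjugation by an arbitrary product of a diagonal and a unitriangular matrix factors as one conjugation of each of the two types just analyzed, and neither step can turn a nonzero first-superdiagonal entry into zero, the statement follows.
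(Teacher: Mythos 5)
Your proposal is correct and follows essentially the same route as the paper: the unitriangular case is handled by observing that the first superdiagonal records the image in $UT_n(\mathbb{F})/\gamma_2$, which inner automorphisms fix, and the diagonal case by noting that conjugation merely rescales each entry $x_{i,i+1}$ by the unit $d_i^{-1}d_{i+1}$. Your explicit entrywise verification and the Neumann-series remark about $A^{-1}$ just make precise what the paper states more briefly.
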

\begin{proof}
The group of upper triangular matrices $T_n(\mathbb{F})$ of dimension $n\times n$ over $\mathbb{F}$ is decomposed as the semidirect product
$$T_n(\mathbb{F})=UT_n(\mathbb{F})\rtimes D_n(\mathbb{F}),$$
where $D_n(\mathbb{F})$ is the subgroup of $n\times n$ diagonal matrices over $\mathbb{F}$. Let $T = UD \in T_n(\mathbb{F})$ with $U \in UT_n(\mathbb{F})$ and $D \in D_n(\mathbb{F})$. 
The entries on the first superdiagonal correspond to the quotient 
$UT_n(\mathbb{F}) / \gamma_2(UT_n(\mathbb{F}))$ and therefore are unaffected by the 
unitriangular factor $U$; they depend only on the diagonal part $D$, whose entries are nonzero. 
Consequently, if an entry of a matrix $X$ on the first superdiagonal is nonzero, it cannot be 
transformed into zero by conjugation with either a diagonal or a unitriangular matrix.
\end{proof}

If $A \leq \Aut(G)$ acts on $G$, then the action partitions $G$ into orbits. 
A set $T \subseteq G$ is called a set of orbit representatives for the action of $A$ on $G$ if
\begin{equation*}
G = \bigcup_{t \in T} Orb_A(t),
\end{equation*}
where $Orb_A(t) = \{ t^a : a \in A \}$ denotes the orbit of $t$ under $A$.

\section{The unitriangular group $UT_3(\mathbb{F})$}

In this section, we prove the following.

\begin{theorem}
    The unitriangular group $UT_3(\mathbb{F})$ has 3 automorphism orbits.
\end{theorem}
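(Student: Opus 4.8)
The plan is to exhibit the three orbits explicitly and verify nothing finer can occur. Write a general element of $UT_3(\mathbb{F})$ as
\[
X(a,b,c)=\begin{pmatrix}1 & a & c\\ 0 & 1 & b\\ 0 & 0 & 1\end{pmatrix},\qquad a,b,c\in\mathbb{F},
\]
and set $Z=Z(UT_3(\mathbb{F}))=\gamma_2=\{X(0,0,c):c\in\mathbb{F}\}$. I claim the automorphism orbits are exactly $\{\mathbbm{1}\}$, $Z\setminus\{\mathbbm{1}\}$ and $UT_3(\mathbb{F})\setminus Z$. Since $\mathbbm{1}$ is fixed by every automorphism and the center is a characteristic subgroup, these three sets are nonempty, pairwise disjoint and $\Aut(UT_3(\mathbb{F}))$-invariant, so $\omega(UT_3(\mathbb{F}))\geq 3$; it then remains only to prove that $Z\setminus\{\mathbbm{1}\}$ and $UT_3(\mathbb{F})\setminus Z$ are each a single orbit.

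For the center I would fix $X(0,0,1)=e_{1,3}$ as representative. Given $c\neq 0$, the diagonal automorphism induced by $\diag[1,1,c]$ sends $e_{1,3}$ to $X(0,0,c)$: indeed $e_{1,3}=[e_{1,2},e_{2,3}]$, and this automorphism multiplies the $(1,2)$- and $(2,3)$-entries by $1$ and $c$ respectively, hence the $(1,3)$-entry by $c$. So $Z\setminus\{\mathbbm{1}\}$ is the orbit of $e_{1,3}$.

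For the non-central elements I would fix $e_{1,2}=X(1,0,0)$ as representative and take $X=X(a,b,c)$ with $(a,b)\neq(0,0)$. By Levchuk's structure theorem recalled in Section~2, $\Aut(UT_3(\mathbb{F}))$ contains a copy of $\GL_2(\mathbb{F})$ whose action on the abelianization $UT_3(\mathbb{F})/\gamma_2\cong\mathbb{F}^2$ — coordinatized by the $(1,2)$- and $(2,3)$-entries — is the standard two-dimensional representation. Since $\GL_2(\mathbb{F})$ is transitive on the nonzero vectors of $\mathbb{F}^2$, some automorphism carries $X$ to an element $X(1,0,c')$, and a short matrix computation shows that conjugation by the unitriangular matrix $X(0,-c',0)$ sends $X(1,0,c')$ to $X(1,0,0)=e_{1,2}$. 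Hence $X$ lies in the orbit of $e_{1,2}$, so $UT_3(\mathbb{F})\setminus Z$ is one orbit, and altogether $\omega(UT_3(\mathbb{F}))=3$.

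The only point that is not purely mechanical is the identification of the $\GL_2(\mathbb{F})$-action on $UT_3(\mathbb{F})/\gamma_2$ with the standard representation; this has to be read off from the explicit generators of $\Aut(UT_3(\mathbb{F}))$ listed in Section~2, noting that the diagonal automorphisms realize the diagonal torus, the extremal automorphisms the two unipotent subgroups, and the flip automorphism the nontrivial Weyl-group element on the top superdiagonal. One can sidestep this black box by arguing directly: rescale with a diagonal automorphism and, if $a=0$, apply the flip automorphism to interchange the two superdiagonal slots so that the $(1,2)$-entry becomes nonzero, then use an extremal automorphism to clear the remaining superdiagonal entry, again reaching $X(1,0,c')$ — at the cost of a handful of extra cases. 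In either approach the closing step is the one-line conjugation identity noted above, and it is this structural input about $\Aut(UT_3(\mathbb{F}))$, rather than any hard computation, that is the real content.
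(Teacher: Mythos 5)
Your proposal is correct, and the orbit decomposition is identical to the paper's: $\{\mathbbm{1}\}$, $\gamma_2\setminus\{\mathbbm{1}\}$ and $UT_3(\mathbb{F})\setminus\gamma_2$, with the central orbit handled by diagonal automorphisms in both treatments. The difference lies in how the complement of the center is shown to be a single orbit. The paper splits it into three cases according to the vanishing of the $(1,2)$- and $(2,3)$-entries, exhibits an explicit conjugating matrix in $T_3(\mathbb{F})$ reducing each case to one of three representatives $B_1,B_2,B_3$, and then asserts that a map permuting these representatives extends to an automorphism; that last assertion is exactly the $\GL_2(\mathbb{F})$-transitivity you invoke, so your version makes explicit the structural input the paper leaves implicit, at the cost of having to justify that the $\GL_2(\mathbb{F})$ factor in Levchuk's theorem acts on $UT_3(\mathbb{F})/\gamma_2\cong\mathbb{F}^2$ as the standard representation (which you correctly flag as the one non-mechanical step, and which is the standard description of $\Aut$ of the Heisenberg group). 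Your closing conjugation by $X(0,-c',0)$ checks out, and the lower bound $\omega\geq 3$ via characteristicity of the center is the same in both arguments. The paper's route buys complete explicitness over $T_3(\mathbb{F})$ (useful for its later counts of $T_n$-orbits in higher rank); yours buys a shorter, case-free argument with the transitivity localized in one cited fact. One caution on your suggested elementary detour: the extremal automorphisms as defined in Section~2 involve a coefficient $\tfrac{\lambda x^2}{2}$, so that variant implicitly assumes $\operatorname{char}\mathbb{F}\neq 2$; the $\GL_2$ route you take as primary avoids this.
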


\begin{proof}
Let $G:=UT_3(\mathbb{F})$. We show that elements in $G'\setminus\{\mathbbm{1}\}$ and $G\setminus G'$ coincide with the two non trivial automorphism orbits. If $x,y\in G'\setminus\{\mathbbm{1}\}$, for 
\begin{equation*}
     x=\left(\begin{matrix}
	1 & 0 & x_{1,3}  \\
	0 & 1 & 0  \\
	0 & 0 & 1 
\end{matrix}\right),      y=\left(\begin{matrix}
	1 & 0 & y_{1,3}  \\
	0 & 1 & 0  \\
	0 & 0 & 1 
\end{matrix}\right),
\end{equation*}
it is clear that there is a diagonal matrix $A$ given by
\begin{equation*}
    A=\left(\begin{matrix}
	x_{1,3} & 0 & 0  \\
	0 & 1 & 0  \\
	0 & 0 & y_{1,3} 
\end{matrix}\right),
\end{equation*}
such that $x^{A}=y$. Since $G'$ is a characteristic subgroup, its non trivial elements form an automorphism orbit. Now we analyze elements in $G\setminus G'$. Set 
\begin{equation*}
     x=\left(\begin{matrix}
	1 & x_{1,2} & x_{1,3}  \\
	0 & 1 & x_{2,3}  \\
	0 & 0 & 1 
\end{matrix}\right)\in G\setminus G'.
\end{equation*}
We now proceed by considering three distinct cases. First assume $x_{1,2}\neq0$ and $x_{2,3}=0$. Conjugation by matrix $A$ yields
\begin{equation*}
     A=\left(\begin{matrix}
	1 & 0 & 0  \\
	0 & x_{1,2}^{-1} & -x_{1,3}x_{1,2}^{-1}  \\
	0 & 0 & 1 
\end{matrix}\right),\quad x^A=\left(\begin{matrix}
	1 & 1 & 0  \\
	0 & 1 & 0  \\
	0 & 0 & 1 
\end{matrix}\right)=B_1.
\end{equation*}
Similarly assume $x_{1,2}=0$ and $x_{2,3}\neq 0$.
Conjugation by matrix $A$ yields
\begin{equation*}
     A=\left(\begin{matrix}
	1 & x_{1,3}x_{2,3}^{-1} & 0  \\
	0 & 1 & 0  \\
	0 & 0 & x_{2,3}^{-1} 
\end{matrix}\right),\quad x^A=\left(\begin{matrix}
	1 & 0 & 0  \\
	0 & 1 & 1  \\
	0 & 0 & 1 
\end{matrix}\right)=B_2.
\end{equation*}
Now assume $x_{1,2}\neq0$ and $x_{2,3}\neq 0$. Conjugation by matrix $A$ yields
\begin{equation*}
     A=\left(\begin{matrix}
	1 & 0 & 0  \\
	0 & x_{1,2}^{-1} & {x_{1,3}}(x_{1,2}^2x_{2,3})^{-1}  \\
	0 & 0 & (x_{2,3}x_{1,2})^{-1} 
\end{matrix}\right),\quad x^A=\left(\begin{matrix}
	1 & 1 & 0  \\
	0 & 1 & 1  \\
	0 & 0 & 1 
\end{matrix}\right)=B_3.
\end{equation*}
It suffices to prove that $\omega(UT_3(\mathbb{F})$ is finite. However, one can verify that the map sending $B_1$ to $B_2$ and $B_2$ to $B_3$ extends to an automorphism of the group. This proves that $G\setminus G'$ forms an automorphism orbit. Thus $\omega(G)=3$.
\end{proof}

\section{The unitriangular group $UT_4(\mathbb{F})$}

In this section, we prove the following. 

\begin{theorem}
    The unitriangular group $UT_4(\mathbb{F})$ has 16 orbits under the action of $T_4(\mathbb{F})$. 
Moreover, the numbers of orbits corresponding to characteristic subgroups are as follows: 
two for $\gamma_3$, five for $\gamma_2$, and seven for $N_{2,3}$.
    
\end{theorem}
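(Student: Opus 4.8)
The plan is to work with the filtration of $G:=UT_4(\mathbb{F})$ by the characteristic subgroups $\mathbbm{1}<\gamma_3<\gamma_2<N_{2,3}<G$, and to count $T_4(\mathbb{F})$-orbits layer by layer, since $T_4(\mathbb{F})=UT_4(\mathbb{F})\rtimes D_4(\mathbb{F})$ acts on $G$ by conjugation (inner together with diagonal automorphisms). First I would note that $\gamma_3=\{\,\mathbbm{1}+x e_{1,4}:x\in\mathbb{F}\,\}$ is central, and that conjugation by $\diag[d_1,d_2,d_3,d_4]$ scales the $(1,4)$ entry by $d_1^{-1}d_4$, so $\gamma_3\setminus\{\mathbbm{1}\}$ is a single orbit and $\gamma_3$ contributes $2$ orbits ($\mathbbm{1}$ and the nontrivial one), matching the statement.

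Next I would treat $\gamma_2\setminus\gamma_3$, i.e.\ matrices $X=\mathbbm{1}+a e_{1,3}+b e_{2,4}+c e_{1,4}$ with $(a,b)\neq(0,0)$. Conjugation by a unitriangular matrix can kill the $(1,4)$ entry $c$ whenever at least one of $a,b$ is nonzero (computing the relevant commutator, $[{\mathbbm 1}+t e_{3,4},{\mathbbm 1}+a e_{1,3}]$ affects the $(1,4)$ slot through $a$, and similarly $e_{1,2}$ through $b$), so the $(1,4)$ coordinate can be normalized to $0$; then a diagonal matrix scales $a$ by $d_1^{-1}d_3$ and $b$ by $d_2^{-1}d_4$ independently, so each of $a$ and $b$ can be normalized to $1$ when nonzero. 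This gives the three representatives $e_{1,3}+e_{2,4}$, $e_{1,3}$, $e_{2,4}$ for $\gamma_2\setminus\gamma_3$, hence $3$ orbits there and $2+3=5$ orbits inside $\gamma_2$, again matching. The one point to check carefully is that when exactly one of $a,b$ vanishes the leftover $(1,4)$ entry is still removable — but the commutator computation above shows the nonzero one of $a,b$ always supplies the needed freedom.

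For the next layer $N_{2,3}\setminus\gamma_2$ — matrices $X=\mathbbm{1}+p e_{1,2}^{\,*}$, more precisely $X$ with possibly nonzero entries in positions $(1,2),(1,3),(2,3),(1,4),(2,4)$ and zero $(3,4)$ entry, with $(p_{1,2},p_{2,3})\neq(0,0)$ — I would split on which of the first-superdiagonal entries $p_{1,2},p_{2,3}$ are nonzero (by Proposition~\ref{remark:ação} this is a genuine invariant, unchanged by any conjugation in $T_4$). Within each of the three cases one conjugates by a suitable unitriangular matrix to clear the superdiagonal entries in positions $(1,3),(2,4)$ and then the $(1,4)$ entry, reducing to a handful of normal forms, and finally uses the diagonal torus to normalize the surviving nonzero superdiagonal entries to $1$; bookkeeping should yield $7$ orbits for $N_{2,3}$ in total ($5$ from $\gamma_2$ plus $2$ new), as claimed. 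The main obstacle — and the only genuinely delicate part — is the top layer $G\setminus N_{2,3}$, where the $(3,4)$ entry is nonzero: here one must carefully enumerate the cases according to which of the three first-superdiagonal entries $p_{1,2},p_{2,3},p_{3,4}$ vanish, solve the resulting systems of linear (and at worst quadratic, after completing squares in the conjugation parameters) equations to bring each matrix to a canonical form, and verify that distinct forms are genuinely inequivalent under $T_4(\mathbb{F})$. This last verification is what forces the case analysis to be exhaustive; I would organize it by first using $UT_4$-conjugation to reduce the non-superdiagonal entries and only then deploy $D_4$, and then count: adding the $16-7=9$ orbits outside $N_{2,3}$ to the $7$ already found gives the asserted total of $16$.
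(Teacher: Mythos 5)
Your overall strategy --- normalizing each element by explicit $T_4(\mathbb{F})$-conjugation according to which first-superdiagonal entries vanish, and organizing the count along the characteristic series $\mathbbm{1}<\gamma_3<\gamma_2<N_{2,3}<G$ --- is essentially the argument given in the paper (which partitions $G$ into eight pieces by the vanishing pattern of $x_{1,2},x_{2,3},x_{3,4}$ and treats $\gamma_2$ last), and your treatment of $\gamma_3$ and of $\gamma_2\setminus\gamma_3$ is correct and complete. Two things go wrong afterwards. First, you misdescribe $N_{2,3}$: by definition (and by the paper's own example) its matrices have \emph{zero} $(1,2)$ entry, the possibly nonzero positions being $(1,3),(1,4),(2,3),(2,4)$. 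Hence $N_{2,3}\setminus\gamma_2$ is exactly the set with $x_{2,3}\neq0$ and $x_{1,2}=x_{3,4}=0$, and your three-way split on $(p_{1,2},p_{2,3})$ is based on a wrong description of the subgroup --- indeed it is inconsistent with your own claim of only ``2 new'' orbits, since three nonempty cases would contribute at least three.

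Second, and more seriously, the counts $7$ and $16$ are asserted rather than derived, and your default recipe (``clear the entries off the first superdiagonal, then scale the surviving superdiagonal entries to $1$'') undercounts if carried out naively: within a fixed vanishing pattern there can be more than one orbit, detected by polynomial conditions that your sketch never identifies. Concretely, on the layer $x_{2,3}\neq0$, $x_{1,2}=x_{3,4}=0$ the vanishing of $x_{1,4}x_{2,3}-x_{1,3}x_{2,4}$ separates two orbits (one reduces to the matrix with only $x_{2,3}=1$, the other to the one with $x_{1,3}=x_{2,3}=x_{2,4}=1$); this is where the ``2 new'' orbits inside $N_{2,3}$ actually come from. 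Similarly, on the layer $x_{1,2},x_{3,4}\neq0$, $x_{2,3}=0$ the vanishing of $x_{1,2}x_{2,4}+x_{1,3}x_{3,4}$ separates two orbits. Exhibiting these invariants, carrying out the reduction in each of the remaining cases, and verifying pairwise inequivalence of the $16$ representatives is the actual content of the theorem; as it stands the proposal proves the counts $2$ and $5$ but only conjectures $7$ and $16$.
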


First, we outline the strategy used. We establish a partition for $UT_4(\mathbb{F})$ and for each subset of the partition, we will present a finite number of orbit representatives. The approach involves taking an arbitrary element from the subset and constructing an automorphism that maps this element to the representative. 

Consider the following partition of $UT_4(\mathbb{F})$
\[
UT_4(\mathbb{F})= Y_1\cup Y_2\cup Y_3\cup Y_4\cup Y_5\cup Y_6\cup Y_7\cup Y_8,
\]
where

$Y_1=\left\{ \left(% [inline block 0: 11 envs, 2833 chars -> data_tex | \begin{matrix} 	1 & 0 & x_{1,3} & x_{1,4} \\...]
\right).
\end{equation*}
We computed $x^A=(b_{ij})\in UT_4(\mathbb{F})$ and obtained the following entries
{
\begin{align*}
	&b_{1,1} = 1, b_{2,2} = 1, b_{3,3} = 1, b_{4,4} = 1, \\
	&b_{1,2} =\frac{d_2x_{1,2}}{d_1} , b_{1,3} = \frac{a_{2,3}d_2x_{1,2}+d_2d_3x_{1,3}-a_{1,2}d_3x_{2,3}}{d_1d_2}, \\
    &b_{1,4} = \frac{\splitfrac{a_{2,4}d_2d_3x_{1,2}+a_{3,4}d_2d_3x_{1,3}+d_2d_3d_4x_{1,4}-a_{1,2}a_{3,4}d_3x_{2,3}-}{-a_{1,2}d_3d_4x_{2,4}+a_{1,2}a_{2,3}d_4x_{3,4}-a_{1,3}d_2d_4x_{3,4}}}{d_1d_2d_3}, \\
    &b_{2,3} = \frac{d_3x_{2,3}}{d_2}, b_{2,4} = \frac{a_{3,4}d_3x_{2,3}+d_3d_4x_{2,4}-a_{2,3}d_4x_{3,4}}{d_2d_3},  b_{3,4} = \frac{d_4x_{3,4}}{d_3}.
\end{align*}}%
To find the matrix $A$, we equate the corresponding entries $b_{ij}$ with those of the candidate representative and set up a system of equations. Solving this system of equations we found that matrix $A$ can defined by
\begin{align*}
	&d_{1} = 1, d_{2} = \frac{1}{x_{1,2}}, d_{3} = \frac{1}{x_{1,2}x_{2,3}}, d_{4} = \frac{1}{x_{1,2}x_{2,3}x_{3,4}}, \\
	&a_{1,2} =0 , a_{1,3} = 0, a_{1,4} = 0, a_{2,3} = \frac{ - x_{1,3}}{x_{1,2}^{2} x_{2,3}}, \\
    &a_{2,4} = -\frac{x_{1,2} x_{1,4} x_{2,3} - x_{1,2} x_{1,3} x_{2,4} - { x_{1,3}^{2}} x_{3,4}}{x_{1,2}^{3} x_{2,3}^{2} x_{3,4}},  a_{3,4} = -\frac{ x_{1,2} x_{2,4} +   x_{1,3} x_{3,4}}{x_{1,2}^{2} x_{2,3}^{2} x_{3,4}}.
\end{align*}
This means that all elements of $Y_2$ can be mapped to the matrix above via conjugation by an appropriate matrix. Most of the subsets $Y_i$ can be treated in the same manner, following the general procedure outlined above. 
\subsection*{Subset $Y_3$:}\begin{equation*}
	x=\left(\begin{matrix}
		1 &  x_{1,2} & x_{1,3} & x_{1,4}  \\
		0 & 1 & x_{2,3} & x_{2,4}  \\
		0 & 0 & 1 & 0  \\
		0 & 0 & 0 & 1 
	\end{matrix}\right), x^{A}=\left(\begin{matrix}
		1 & 1 & 0 & 0  \\
		0 & 1 & 1 & 0  \\
		0 & 0 & 1 & 0  \\
		0 & 0 & 0 & 1
	\end{matrix}\right)
\end{equation*}
Where matrix $A$ is defined by
\begin{align*}
		&d_{1} = 1, d_{2} = \frac{1}{x_{1,2}}, d_{3} = \frac{1}{x_{1,2}x_{2,3}}, d_{4} = 1, \\
	&a_{1,2} =0 , a_{1,3} = 0, a_{1,4} = 0,  a_{2,3} = \frac{-x_{1,3}}{x_{1,2}^2x_{2,3}}, a_{2,4} = \frac{x_{2,4}x_{1,3}-x_{1,4}x_{2,3}}{x_{1,2}x_{2,3}}, a_{3,4} = \frac{-x_{2,4}}{x_{2,3}}.
\end{align*}

\subsection*{Subset $Y_4$:}A few specific cases require a more detailed analysis. The initial idea for $Y_4$ was to proceed as in the previous case, however we were unable to identify a single representative to which all elements could be conjugated. As a result, we divided the analysis into a few separate cases to account for the distinct behaviors observed. First assume $x_{1,2} x_{2,4} + x_{1,3} x_{3,4}\neq 0$ and $x_{1,3}\neq 0$.\begin{equation*}
	x=\left(% [inline block 1: 47 envs, 12436 chars -> data_tex | \begin{matrix} 		1 &  x_{1,2}  & x_{1,3} & x_{1,4}  \\...]
\right)\Biggl\}\cup\{\mathbbm{1}\}.
\end{aligned}
\end{equation*}
\\

\textbullet \textbf{Claim.} There are 16 orbits under the action of $T_4(\mathbb{F})$.\\

It has already been mentioned that the orbits under the action of $T_4(\mathbb{F})$ remain within each $Y_i$. Therefore, to verify that the elements in the set $\mathcal{S}$ form a set of orbit representatives, it suffices to check that the conjugacy class under the action of $T_4(\mathbb{F})$ of a specific element does not contain any other element of the set $\mathcal{S}$. We will present a specific case, as the others follow in an analogous manner.

The conjugacy class of the candidate for orbit representative in $Y_5$
$$\left(\begin{matrix}
		1 & 1 & 0 & 0  \\
		0 & 1 & 0 & 1  \\
		0 & 0 & 1 & 0  \\
		0 & 0 & 0 & 1  
	\end{matrix}\right),$$
has the form
$$\left(\begin{matrix}
1 & \frac{d_2}{d_1} & \frac{a_{2,3}}{d_1} & \frac{a_{2,4}d_2-a_{1,2}d_4}{d_1d_2} \\
0 & 1 & 0 & \frac{d_4}{d_2} \\
0 & 0 & 1 & 0 \\
0 & 0 & 0 & 1
\end{matrix}\right),$$
for $d_1,d_2,d_4\in\mathbb{F}\setminus\{0\}$ and $a_{1,2}, a_{2,3}, a_{2,4}\in \mathbb{F}$. Notice that this conjugacy class does not contain
$$\left(\begin{matrix}
		1 & 1 & 0 & 0  \\
		0 & 1 & 0 & 0  \\
		0 & 0 & 1 & 0  \\
		0 & 0 & 0 & 1  
	\end{matrix}\right),$$
once $d_4$ is nonzero. So both elements belong to different orbits under the action of $T_4(\mathbb{F})$.

After performing all computations, we conclude that the set $\mathcal{S}$ constitutes a complete set of orbit representatives for the action of $T_4(\mathbb{F})$. In particular, this establishes that the number of automorphism orbits for $n=4$ is finite. 

\section{The unitriangular group $UT_5(\mathbb{F})$}
 In this section we prove,
\begin{thmE}
    The unitriangular group $UT_5(\mathbb{F})$ has finitely many automorphism orbits. Moreover, the number of orbits under the action of $T_5(\mathbb{F})$ is 61.
\end{thmE}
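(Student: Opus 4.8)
The plan is to extend the argument used above for $UT_4(\mathbb{F})$ by one dimension, carrying out the bulk of the case analysis with the computer algebra system SageMath. First I would fix the normalizer $T_5(\mathbb{F}) = UT_5(\mathbb{F}) \rtimes D_5(\mathbb{F})$ and write a generic conjugating matrix $A = (a_{i,j}) \in T_5(\mathbb{F})$ with diagonal part $\diag[d_1,\dots,d_5]$ and arbitrary entries $a_{i,j}$, $i<j$. I would then partition $UT_5(\mathbb{F})$ according to the vanishing or non-vanishing pattern of the four entries on the first superdiagonal, so that each block of the partition is a union of cosets of $\gamma_2$; by Proposition~\ref{remark:ação} the first-superdiagonal pattern is invariant under conjugation by elements of $T_5(\mathbb{F})$, hence the $T_5(\mathbb{F})$-orbits respect this partition. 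There are $2^4 = 16$ such blocks a priori, but the flip automorphism in $W$ and the field automorphisms identify several blocks and several of their orbits with one another, which is what ultimately collapses the count.

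For each block I would write down a generic element $x = (x_{i,j})$, compute $x^A = (b_{i,j})$ symbolically, propose a candidate representative whose superdiagonal entries lie in $\{0,1\}$, equate $b_{i,j}$ with the entries of the candidate, and solve the resulting polynomial system for the $d_i$ and the $a_{i,j}$ as rational expressions in the $x_{i,j}$ — branching into subcases precisely when a single representative is not reachable, exactly as already happened for $Y_4$, $Y_6$, $Y_7$, $Y_8$ in the $n=4$ analysis, and which occurs far more frequently here. This is the step that is infeasible by hand and is delegated to SageMath: for each block one asks the solver whether the candidate is attainable, and whenever it is not one reads off the obstruction (a polynomial in the $x_{i,j}$ that must vanish, or not) and splits accordingly. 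The derived subgroup $\gamma_2$ needs separate treatment, since it contains the further characteristic subgroups $\gamma_3$, $\gamma_4$ and the $N_{i,j}$; on this part $T_5(\mathbb{F})$-conjugation alone is too weak, so one must also invoke the central automorphisms $\mathcal{Z}$ and the extremal automorphisms $\mathcal{U}$, and in this regime the orbits are no longer confined to a single coset of $\gamma_3$.

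Once a finite set $\mathcal{S}$ of candidate representatives has been produced, it remains to show that no two of them lie in the same $T_5(\mathbb{F})$-orbit. As in the $n=4$ case, for each candidate I would compute its full conjugacy class under $T_5(\mathbb{F})$ — a family parametrised by the $d_i$ and $a_{i,j}$ — and check that it omits every other element of $\mathcal{S}$; the separating invariants are the first-superdiagonal pattern (which already distinguishes different blocks), together with ranks and vanishing conditions for the induced bilinear and linear maps on the successive quotients $\gamma_k/\gamma_{k+1}$, i.e. conditions of the type ``$d_4 \neq 0$ forces the relevant entry to remain nonzero'' seen in the $Y_5$ example. Collecting everything yields $\omega(UT_5(\mathbb{F})) < \infty$, and a careful bookkeeping of the surviving representatives gives the exact count $61$.

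The main obstacle is the sheer size of the case tree: the number of blocks, the number of subcases forced inside a block when no uniform representative exists, and the number of pairwise non-conjugacy checks all grow sharply from $n=4$ to $n=5$, and each leaf of the tree conceals a polynomial system whose solvability must be decided correctly and whose branching conditions must be tracked without omission or double counting. This is precisely why the verification is organised as a SageMath computation rather than carried out by hand, with the full details deferred to the appendix.
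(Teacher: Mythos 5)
Your overall strategy coincides with the paper's: partition $UT_5(\mathbb{F})$ into the $2^4=16$ blocks determined by the vanishing pattern of the first superdiagonal (these are exactly the paper's sets $Y_1,\dots,Y_{16}$, equivalently the products $H_{i_1}\cdots H_{i_k}Z_3$ of the maximal abelian normal subgroups with $\gamma_2$, minus the smaller products), conjugate a generic element by a generic $A\in T_5(\mathbb{F})$, equate with a candidate representative, let SageMath solve the resulting polynomial systems with case splits governed by the obstructing polynomials, treat $\gamma_2$ separately via the characteristic chain $\gamma_4<\gamma_3<N_{2,4}<\gamma_2$, and finish with pairwise non-conjugacy checks as in the $n=4$ case. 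That is precisely how the paper proceeds.

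Two of your side remarks, however, conflict with what the theorem actually asserts. The count of $61$ is the number of orbits under $T_5(\mathbb{F})$ alone, i.e.\ under diagonal and inner automorphisms only; the flip, field, central and extremal automorphisms do not lie in $T_5(\mathbb{F})$ and play no role in this count --- they can only merge $T_5(\mathbb{F})$-orbits further, which is exactly why $\omega(UT_5(\mathbb{F}))\le 61$ rather than $=61$. So (i) your claim that $W$ and $\overline{\Aut}(\mathbb{F})$ ``collapse the count'' to $61$ is backwards: blocks related by the flip still give distinct $T_5(\mathbb{F})$-orbits and are counted separately in the table of $61$; and (ii) your claim that $T_5(\mathbb{F})$-conjugation is ``too weak'' on $\gamma_2$, so that $\mathcal{Z}$ and $\mathcal{U}$ must be invoked there, is false --- the paper finds exactly $15$ orbit representatives inside $\gamma_2$ using $T_5(\mathbb{F})$ alone. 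Had you adjoined $\mathcal{Z}$ and $\mathcal{U}$ on that stratum you would be computing orbits of a strictly larger group, and your bookkeeping would not return $61$. Neither slip endangers the qualitative conclusion $\omega(UT_5(\mathbb{F}))<\infty$, but both would derail the exact count claimed in the statement.
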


\begin{proof}
We proceeded in an analogous manner of $UT_4(\mathbb{F})$: for each subset of the partition, we identified a finite set of orbit representatives under the action of $T_5(\mathbb{F})$. 
However, determining these representatives required a case-by-case analysis. We had to consider the cases where each entry is either zero or nonzero in order to solve the system of equations, this leads to a large number of cases. Moreover, there are more equations than in the case for dimension $4$. Therefore, we used software SageMath \cite{Sage} to solve these systems of equations. Our work can be reproduced, we provide the code with all systems of equations on the GitHub repository\footnote{\url{https://github.com/juhmit/unitriangular_n5.git}}.

Based on the computations presented in the Appendix, we obtained a finite list of 61 representatives in $UT_5(\mathbb{F})$ for the action of $T_5(\mathbb{F})$. 
We list them explicitly below.
Consider the following partition of $UT_5(\mathbb{F})$

\begin{align*}
	UT_5(\mathbb{F})=& Y_1\cup Y_2\cup Y_3\cup Y_4\cup Y_5\cup Y_6\cup Y_7\cup Y_8\cup Y_9\cup Y_{10}\cup Y_{11}\cup\\
	&\cup Y_{12}\cup Y_{13}\cup Y_{14}\cup Y_{15}\cup Y_{16},   
\end{align*}
where	

$Y_1=\left\{ \left(% [inline block 2: 31 envs, 7405 chars -> data_tex | \begin{matrix} 		1 & 0 & x_{1,3} & x_{1,4} & x_{1,5} \\...]
\right).
\end{align*}}%

The final case $G \setminus \gamma_2$ is more intricate, as it involves all elements from $Y_i$, for $i=2,\dots,16$. 
To handle this situation, it is necessary to organize the group in a suitable manner. 
In particular, we focus on certain subgroups, mainly the maximal abelian normal subgroups $N_{1,2}, N_{2,3}, N_{3,4}, N_{4,5}$, the characteristic subgroup $\gamma_2$, and various products among them. Denote
\begin{gather*}
    N_{1,2}:=H_1,\\
    N_{2,3}:=H_2,\\
    N_{3,4}:=H_3,\\
    N_{4,5}:=H_4,\\
    \gamma_2:=Z_3.
\end{gather*}

The orbits representatives in $H_1Z_3\setminus Z_3$ are

{\footnotesize
\begin{gather*}
    \left(% [inline block 3: 47 envs, 10300 chars -> data_tex | \begin{matrix} 		1 & 1 & 0 & 0 & 0 \\...]

\caption{Number of orbits representatives under action of $T_5(\mathbb{F})$.}
\label{tab:my-table2}
\end{table}
}%%
\end{proof}

\section{The unitriangular group $UT_n(\mathbb{F})$, for $n\geq6$}

In \cite{KP05} it was shown that if $K$ is a field such that the number of orbits of $\Aut(K)$ is finite, then $K$ must be finite itself. We shall make use of this result in order to establish the next theorem.

\begin{thm}
Let $\mathbb{F}$ be an infinite field. Then the unitriangular group $UT_n(\mathbb{F})$ has infinitely many automorphism orbits for all $n \geq 6$.
\end{thm}

\begin{proof}
    First assume $n=6$. For any $x\in\mathbb{F}$  we  denote  by $\boldsymbol{x}$ the following matrix
    $$\boldsymbol{x}={\footnotesize\left(\begin{matrix}
        1 & x & 1 & 0 & 0 & 0 \\
        0 & 1 & 0 & 0 & 1 & 0 \\
        0 & 0 & 1 & 1 & 0 & 0 \\
        0 & 0 & 0 & 1 & 0 & 1 \\
        0 & 0 & 0 & 0 & 1 & 1 \\
        0 & 0 & 0 & 0 & 0 & 1
        \end{matrix}\right)_{6\times6},} $$
    The matrix $\boldsymbol{x}$ is defined as follows: it has $1$’s on the diagonal and in the positions $(5,6)$, $(4,6)$, $(3,4)$, $(2,5)$, and $(1,3)$, the element $x$ in position $(1,2)$, and $0$ in all other positions. Also note that we can write $\boldsymbol{x}$ as product of generators in the following way
    $$\boldsymbol{x}=e_{5,6}[e_{4,5},e_{5,6}]e_{3,4}[[e_{2,3},e_{3,4}],e_{4,5}][e_{1,2},e_{2,3}]xe_{1,2}.$$ 

We consider the action of $\Aut(UT_6(\mathbb{F}))$ on the quotient $UT_6(\mathbb{F})/\gamma_4$. 
By Corollary 4 and 5 of \cite{L83}, this action is given by the group 
\[
((\mathcal{J} \rtimes W)\rtimes D)\rtimes \overline{\Aut}(\mathbb{F}),
\]
since in this case it is not necessary to take central and extremal automorphisms into account.

Let $\Phi$ be an automorphism of $UT_6(\mathbb{F})$. We will prove that if $\boldsymbol{x}^{\Phi}=\boldsymbol{y}$, then $x^{\varphi}=y$ for some $\varphi\in Aut(\mathbb{F})$. Modulo $\gamma_4$, we may write $\Phi=kwd\overline{\varphi}$ where $\overline{\varphi}$ denotes the automorphism of $UT_6(\mathbb{F})$ induced by a field automorphism $\varphi\in \Aut(\mathbb{F})$, $k\in \mathcal{J}$, $w\in W$, $d\in D$. Let us compute $x^{\Phi}=\boldsymbol{x}^{\overline{\varphi} dwuk}$ step by step, applying each automorphism successively from left to right.  By the decomposition of $\boldsymbol{x}$ as product of generators we have that 
$${\footnotesize(\boldsymbol{x})^{\overline{\varphi}}=\left(\begin{matrix}
        1 & x^{\varphi} & 1 & 0 & 0 & 0 \\
        0 & 1 & 0 & 0 & 1 & 0 \\
        0 & 0 & 1 & 1 & 0 & 0 \\
        0 & 0 & 0 & 1 & 0 & 1 \\
        0 & 0 & 0 & 0 & 1 & 1 \\
        0 & 0 & 0 & 0 & 0 & 1
        \end{matrix}\right)}$$% 

Let $d$ be the diagonal matrix $\diag[d_1,d_2,d_3,d_4,d_5,d_6]$.  Then

$${\footnotesize \boldsymbol{x}^{\overline{\varphi} d}=\left(\begin{matrix}
        1 & \frac{d_2}{d_1}x^{\varphi} & \frac{d_3}{d_1} & 0 & 0 & 0 \\
        0 & 1 & 0 & 0 & \frac{d_5}{d_2} & 0 \\
        0 & 0 & 1 & \frac{d_4}{d_3}& 0 & 0 \\
        0 & 0 & 0 & 1 & 0 & \frac{d_6}{d_4} \\
        0 & 0 & 0 & 0 & 1 & \frac{d_6}{d_5} \\
        0 & 0 & 0 & 0 & 0 & 1
        \end{matrix}\right), }$$%
             
Now, we assume that the flip automorphism $w$ is not trivial.

$${\footnotesize
\boldsymbol{x}^{\overline{\varphi}dw}=\left(\begin{matrix}
1 & \frac{d_6}{d_5} & \frac{-d_6}{d_4} & \frac{-d_6}{d_3} & \frac{d_6}{d_2} & \frac{d_6+d_6x^{\varphi}}{d_1} \\
0 & 1 & 0 & 0 & \frac{d_5}{d_2} & \frac{d_5x^{\varphi}}{d_1} \\
0 & 0 & 1 & \frac{d_4}{d_3} & 0 & \frac{-d_4}{d_1} \\
0 & 0 & 0 & 1 & 0 & \frac{-d_3}{d_1} \\
0 & 0 & 0 & 0 & 1 & \frac{d_2x^{\varphi}}{d_1} \\
0 & 0 & 0 & 0 & 0 & 1
\end{matrix}\right).}$$%
        
Since we are considering the quotient by $\gamma_4$ we have 

$${\footnotesize\boldsymbol{x}^{\overline{\varphi}dw}==\left(\begin{matrix}
1 & \frac{d_6}{d_5} & \frac{-d_6}{d_4} & \frac{-d_6}{d_3} & * & * \\
0 & 1 & 0 & 0 & \frac{d_5}{d_2} & * \\
0 & 0 & 1 & \frac{d_4}{d_3} & 0 & \frac{-d_4}{d_1} \\
0 & 0 & 0 & 1 & 0 & \frac{-d_3}{d_1} \\
0 & 0 & 0 & 0 & 1 & \frac{d_2x^{\varphi}}{d_1} \\
0 & 0 & 0 & 0 & 0 & 1
\end{matrix}\right),}$$%
where the star-marked entries depend on $w$.
        
Assume that $k\in\mathcal{J}$ is the conjugation by unitriangular matrix $A=(a_{i,j})$, then 
    $$\boldsymbol{x}^{\overline{\varphi}dwk}=\boldsymbol{y}\implies A\boldsymbol{x}^{\overline{\varphi}dw}A^{-1}=\boldsymbol{y}\implies A\boldsymbol{x}^{\overline{\varphi}dw}=\boldsymbol{y}A.$$
    Denote $(x'_{ij})=\boldsymbol{x}^{\overline{\varphi}dw}$. We compare the elements of matrices $A(x'_{ij})$ and $\boldsymbol{y}A$ at positions:
    
    \begin{align*}
    &(1,2): x'_{1,2}=y, \\
    &(1,3):  x'_{1,3} = ya_{2,3} +1 , \\
    &(2,4): a_{2,3}x'_{3,4} = 0 , \\
    &(2,5):  x'_{2,5} = 1, \\
    &(3,4): x'_{3,4} = 1, \\
    &(3,5): 0 = a_{4,5},\\
    &(4,6): x'_{4,6} +a_{4,5}x'_{5,6} = 1,\\
    &(5,6):  x'_{5,6} = 1.
    \end{align*}
    We can conclude that 
    \begin{align*}
    & x'_{1,2}=y, \quad  x'_{1,3} = 1 , \quad a_{2,3} = 0 , \quad  x'_{2,5} = 1, \\
    & x'_{3,4} =1,\quad a_{4,5}=0,\quad x'_{4,6}  = 1,\quad  x'_{5,6} = 1.
    \end{align*}
Therefore
\begin{align*}
    & \frac{d_6}{d_5}=y, \quad  -\frac{d_6}{d_4} = 1, \quad  \frac{d_5}{d_2} = 1, \\
    & \frac{d_4}{d_3} = 1,\quad -\frac{d_3}{d_1}  = 1,\quad  \frac{d_2}{d_1}x^{\varphi} = 1.
    \end{align*}
    
From the relations above we obtain that
$$\frac{d_6}{d_5}\frac{d_2}{d_1}x^{\varphi}=y, \quad \frac{d_3}{d_1}\frac{d_6}{d_4}=1, \quad {d_5}={d_2}, \quad {d_4}={d_3}.$$
So $x^{\varphi}=y$.

Now assume $w\in \mathcal{W}$ is trivial. Simirlarly, we denote $(x'_{ij})=\boldsymbol{x}^{\overline{\varphi}d}$ and again we compare the elements of matrices $A(x'_{ij})$ and $\boldsymbol{y}A$ in entries
    \begin{align*}
    &(1,2): \frac{d_2}{d_1}x^{\varphi}=y, \\
    &(1,3):  \frac{d_3}{d_1} = 1 , \\
    &(2,5):  \frac{d_5}{d_2} = 1, \\
    &(3,4): \frac{d_4}{d_3} = 1, \\
    &(4,6): \frac{d_6}{d_4}  = 1,\\
    &(5,6):  \frac{d_6}{d_5} = 1.
    \end{align*}
From the relations above we obtain that ${d_1}={d_3}={d_4}={d_6}={d_5}={d_2}$ so the relations also implies that $x^{\varphi}=y$.

This implies that if two elements 
    $${\footnotesize (\boldsymbol{x^{\varphi}})=\left(\begin{matrix}
        1 & x^{\varphi} & 1 & 0 & 0 & 0 \\
        0 & 1 & 0 & 0 & 1 & 0 \\
        0 & 0 & 1 & 1 & 0 & 0 \\
        0 & 0 & 0 & 1 & 0 & 1 \\
        0 & 0 & 0 & 0 & 1 & 1 \\
        0 & 0 & 0 & 0 & 0 & 1
        \end{matrix}\right)\quad \textrm{and} \quad \boldsymbol{y}=\left(\begin{matrix}
        1 & y & 1 & 0 & 0 & 0 \\
        0 & 1 & 0 & 0 & 1 & 0 \\
        0 & 0 & 1 & 1 & 0 & 0 \\
        0 & 0 & 0 & 1 & 0 & 1 \\
        0 & 0 & 0 & 0 & 1 & 1 \\
        0 & 0 & 0 & 0 & 0 & 1
        \end{matrix}\right) } $$%
lie in the same automorphism orbit, then $x^{\varphi} = y$ for some field automorphism $\varphi$. Consequently, $UT_6(\mathbb{F})$ admits infinitely many orbit representatives whenever $\mathbb{F}$ is infinite. This proves the Theorem for $n=6$.

Now we extend to $n\geq 6$. One can view $UT_n(\mathbb{F})$ as $\left(\begin{matrix}
	UT_6(\mathbb{F}) & \star  \\
	0 & UT_{n-6}(\mathbb{F}) 
\end{matrix}\right)$ so that  $\left(\begin{matrix}
X & \star  \\
0 & I 
\end{matrix}\right)$ and $\left(\begin{matrix}
Y & \star  \\
0 & I 
\end{matrix}\right)$ are in the same automorphism orbit in $UT_n(\mathbb{F})$ if and only if $X$ and $Y$ are in the same automorphism orbit, for $X,Y\in UT_6(\mathbb{F})$. So we have even fewer automorphisms to consider, as we can disregard flip and extremal automorphisms. Nevertheless, the same argument applies to prove that the number of automorphism orbits is infinite, as desired.
\end{proof}

\noindent {\bf Acknowledgments.} The authors would like to express their sincere gratitude to Professor Alexandre Zalesski for proposing the research problem and for his valuable insights and stimulating discussions throughout the development of this work.

\end{document}

% --- supplement: appendixO.tex ---

\maketitle

	%%% ----------------------------------------------------------------------
	\maketitle
	%%% ----------------------------------------------------------------------
	\tableofcontents

Appendix A presents a set $\mathcal{S}$ of elements from $UT_5(\mathbb{F})$. Every element in $UT_5(\mathbb{F})$ can be mapped via conjugation by an element of $T_5(\mathbb{F})$ to an element in $\mathcal{S}$.

Appendix B through Q contain detailed computations referenced throughout the paper. These include matrix conjugations, solutions to systems of equations, and verifications of orbit representatives under group actions. Due to the large number of cases and the need for accuracy, most computations were performed using the computer algebra system SageMath \cite{sage}.
	
	For each subset of the partition, we identified a finite set of orbit representatives under the action of $T_5(\mathbb{F})$. This was done by selecting an element $x\in Y_i$, $1\leq i\leq16$, and finding a matrix $A\in T_n(\mathbb{F})$ that conjugates it to a chosen representative.
	
	$$A=\left(% [inline block 4: 188 envs, 55660 chars -> data_tex | \begin{matrix} 		d_1 & a_{1,2} & a_{1,3} & a_{1,4} & a_{1,5} \\...]
\right)
\end{equation*} \newline
Where matrix $A$ has entries
\begin{align*}
	&d_{1} = 1, d_{2} = 1, d_{3} = \frac{1}{x_{13}}, d_{4} = \frac{1}{x_{24}}, d_{5} = \frac{1}{x_{13} x_{35}}, \\ 
	&a_{12} = 1, a_{13} = 1, a_{14} = 1, a_{15} = 1, \\ 
	&a_{23} = 1, a_{24} = 1, a_{25} = 1, \\ 
	&a_{34} = -\frac{x_{14} - x_{24}}{x_{13} x_{24}}, a_{35} = -\frac{x_{15} x_{24} - x_{14} x_{25} + {\left(x_{13} x_{14} - x_{13} x_{24}\right)} x_{35}}{x_{13}^{2} x_{24} x_{35}}, \\ 
	&a_{45} = \frac{x_{13} x_{35} - x_{25}}{x_{13} x_{24} x_{35}}
\end{align*}

		\section{Subcases of $Y_2$}

%000001
\begin{equation*}x=\left(% [inline block 5: 92 envs, 34330 chars -> data_tex | \begin{matrix} 		1 & x_{12} & 0 & 0 & x_{15} \\...]
\right)
\end{equation*}
Where matrix $A$ has entries
\begin{align*}
	&d_{1} = 1, d_{2} = \frac{1}{x_{12}}, d_{3} = \frac{x_{35}}{x_{12} x_{25} + x_{13} x_{35}}, d_{4} = 1, d_{5} = \frac{1}{x_{12} x_{25} + x_{13} x_{35}}, \\ 
	&a_{12} = 1, a_{13} = 1, a_{14} = 1, a_{15} = 1, \\ 
	&a_{23} = \frac{x_{25}}{x_{12} x_{25} + x_{13} x_{35}}, a_{24} = 1, a_{25} = 1, \\ 
	&a_{34} = -\frac{x_{12}}{x_{13}}, a_{35} = -\frac{{\left(x_{12} - 1\right)} x_{13} x_{35} + x_{15} + {\left(x_{12}^{2} - x_{12}\right)} x_{25}}{x_{12} x_{13} x_{25} + x_{13}^{2} x_{35}}, a_{45} = 1
\end{align*}

Now assume $x13 = \frac{-x_{12}x_{25}}{x_{35}}$
\begin{equation*}x=\left(% [inline block 6: 8 envs, 2920 chars -> data_tex | \begin{matrix} 		1 & x_{12} & x_{13} & 0 & x_{15} \\...]
\right)
\end{equation*}
Where matrix $A$ has entries
\begin{align*}
	&d_{1} = 1, d_{2} = \frac{1}{x_{12}}, d_{3} = \frac{ x_{35}}{x_{12} x_{25} + x_{13} x_{35}}, d_{4} = 1, d_{5} = \frac{1}{x_{12} x_{25} + x_{13} x_{35}}, \\ 
	&a_{12} = 1, a_{13} = 1, a_{14} = 1, a_{15} = 1, \\ 
	&a_{23} = \frac{ x_{25}}{x_{12} x_{25} + x_{13} x_{35}}, a_{24} = 1, a_{25} = 1, \\ 
	&a_{34} = -\frac{ x_{12} +  x_{14}}{x_{13}}, a_{35} = 1, \\ 
	&a_{45} = -\frac{ x_{15} + {\left( x_{12}^{2} +  x_{12} x_{13} -  x_{12}\right)} x_{25} + {\left( x_{13}^{2} + {\left( x_{12} - 1\right)} x_{13}\right)} x_{35}}{x_{12} x_{14} x_{25} + x_{13} x_{14} x_{35}}
\end{align*}
Now assume $x13 = \frac{-x_{12}x_{25}}{x_{35}}$
\begin{equation*}x=\left(% [inline block 7: 18 envs, 6702 chars -> data_tex | \begin{matrix} 		1 & x_{12} & x_{13} & x_{14} & x_{15} \\...]
\right)
\end{equation*}
Where matrix $A$ has entries
\begin{align*}
	&d_{1} = 1, d_{2} = \frac{1}{x_{12}}, d_{3} = 1, d_{4} = \frac{1}{x_{12} x_{24}}, d_{5} = \frac{1}{x_{35}}, \\ 
	&a_{12} = 1, a_{13} = 1, a_{14} = 1, a_{15} = 1, \\ 
	&a_{23} = -\frac{ x_{13}}{x_{12}}, a_{24} = 1, a_{25} = 1, \\ 
	&a_{34} = -\frac{ x_{14} + {\left( x_{12}^{2} -  x_{12}\right)} x_{24}}{x_{12} x_{13} x_{24}}, a_{35} = -\frac{ x_{12} x_{15} x_{24} - {\left( x_{13} x_{14} - {\left( x_{12}^{2} -  x_{12}\right)} x_{24}\right)} x_{35}}{x_{12} x_{13} x_{24} x_{35}}, \\ 
	&a_{45} = -\frac{ x_{13}}{x_{12} x_{24}}
\end{align*}

%111000
\begin{equation*}x=\left(% [inline block 8: 14 envs, 5258 chars -> data_tex | \begin{matrix} 		1 & x_{12} & 0 & 0 & 0 \\...]
\right)
\end{equation*} \newline
Where matrix $A$ has entries
\begin{align*}
	&d_{1} = 1, d_{2} = \frac{1}{x_{12}}, d_{3} = 1, d_{4} = \frac{1}{x_{12} x_{24}}, d_{5} = \frac{1}{x_{35}}, \\ 
	&a_{12} = 1, a_{13} = 1, a_{14} = 1, a_{15} = 1, \\ 
	&a_{23} = -\frac{ x_{13}}{x_{12}}, a_{24} = 1, a_{25} = 1, \\ 
	&a_{34} = -\frac{ x_{14} + {\left( x_{12}^{2} -  x_{12}\right)} x_{24}}{x_{12} x_{13} x_{24}}, a_{35} = \frac{ x_{12} x_{14} x_{25} + {\left( x_{13} x_{14} - {\left( x_{12}^{2} -  x_{12}\right)} x_{24}\right)} x_{35}}{x_{12} x_{13} x_{24} x_{35}}, \\ 
	&a_{45} = -\frac{ x_{12} x_{25} +  x_{13} x_{35}}{x_{12} x_{24} x_{35}}
\end{align*}

%111111
\begin{equation*}x=\left(\begin{matrix}
		1 & x_{12} & x_{13} & x_{14} & x_{15} \\
		0 & 1 & 0 & x_{24} & x_{25} \\
		0 & 0 & 1 & 0 & x_{35}  \\
		0 & 0 & 0 & 1 & 0  \\
		0 & 0 & 0 & 0 & 1 \\
	\end{matrix}\right),x^A=\left(\begin{matrix}
		1 & 1 & 0 & 0 & 0 \\
		0 & 1 & 0 & 1 & 0 \\
		0 & 0 & 1 & 0 & 1 \\
		0 & 0 & 0 & 1 & 0 \\
		0 & 0 & 0 & 0 & 1
	\end{matrix}\right)
\end{equation*} \newline
Where matrix $A$ has entries
\begin{align*}
	&d_{1} = 1, d_{2} = \frac{1}{x_{12}}, d_{3} = 1, d_{4} = \frac{1}{x_{12} x_{24}}, d_{5} = \frac{1}{x_{35}}, \\ 
	&a_{12} = 1, a_{13} = 1, a_{14} = 1, a_{15} = 1, \\ 
	&a_{23} = -\frac{x_{13}}{x_{12}}, a_{24} = 1, a_{25} = 1, \\ 
	&a_{34} = -\frac{x_{14} + {\left(x_{12}^{2} - x_{12}\right)} x_{24}}{x_{12} x_{13} x_{24}}, a_{35} = -\frac{x_{12} x_{15} x_{24} - x_{12} x_{14} x_{25} - {\left(x_{13} x_{14} - {\left(x_{12}^{2} - x_{12}\right)} x_{24}\right)} x_{35}}{x_{12} x_{13} x_{24} x_{35}}, \\
	&a_{45} = -\frac{x_{12} x_{25} + x_{13} x_{35}}{x_{12} x_{24} x_{35}}
\end{align*}

		\section{Subcases of $Y_3$}

%000001
\begin{equation*}x=\left(% [inline block 9: 70 envs, 25831 chars -> data_tex | \begin{matrix} 		1 & 0 & 0 & 0 & x_{15}  \\...]
\right)
\end{equation*} \newline
Where matrix $A$ has entries
\begin{align*}
	&d_{1} = 1, d_{2} = 1, d_{3} = \frac{1}{x_{23}}, d_{4} = \frac{x_{23}}{x_{14} x_{23} - x_{13} x_{24}}, d_{5} = 1, \\ 
	&a_{12} = \frac{x_{13}}{x_{23}}, a_{13} = 1, a_{14} = 1, a_{15} = 1, \\ 
	&a_{23} = 1, a_{24} = 1, a_{25} = 1, \\ 
	&a_{34} = -\frac{x_{24}}{x_{14} x_{23} - x_{13} x_{24}}, a_{35} = \frac{x_{15} x_{24} - x_{14} x_{25}}{x_{14} x_{23} - x_{13} x_{24}}, \\ 
	&a_{45} = -\frac{x_{15} x_{23} - x_{13} x_{25}}{x_{14} x_{23} - x_{13} x_{24}}
\end{align*}

Now assume $x_{14} = \frac{x_{13}x_{24}}{x_{23}}$ and $x_{15} \neq \frac{x_{13}x_{25}}{x_{23}}$
\begin{equation*}x=\left(\begin{matrix}
		1 & 0 & x_{13} & x_{14} & x_{15}  \\
		0 & 1 & x_{23} & x_{24} & x_{25}  \\
		0 & 0 & 1 & 0 & 0  \\
		0 & 0 & 0 & 1 & 0  \\
		0 & 0 & 0 & 0 & 1  \\
	\end{matrix}\right),x^A=\left(\begin{matrix}
		1 & 0 & 0 & 0 & 1 \\
		0 & 1 & 1 & 0 & 0 \\
		0 & 0 & 1 & 0 & 0 \\
		0 & 0 & 0 & 1 & 0 \\
		0 & 0 & 0 & 0 & 1
	\end{matrix}\right)
\end{equation*} \newline
Where matrix $A$ has entries
\begin{align*}
	&d_{1} = 1, d_{2} = 1, d_{3} = \frac{1}{x_{23}}, d_{4} = 1, d_{5} = \frac{x_{23}}{x_{15} x_{23} - x_{13} x_{25}}, \\ 
	&a_{12} = \frac{x_{13}}{x_{23}}, a_{13} = 1, a_{14} = 1, a_{15} = 1, \\ 
	&a_{23} = 1, a_{24} = 1, a_{25} = 1, \\ 
	&a_{34} = -\frac{x_{24}}{x_{23}}, a_{35} = 1, \\ 
	&a_{45} = -\frac{x_{15} x_{23}^{2} - {\left(x_{13} - 1\right)} x_{23} x_{25}}{x_{15} x_{23} x_{24} - x_{13} x_{24} x_{25}}
\end{align*}

Now assume $x_{14} = \frac{x_{13}x_{24}}{x_{23}}$ and $x_{15} = \frac{x_{13}x_{25}}{x_{23}}$
\begin{equation*}x=\left(% [inline block 10: 46 envs, 17511 chars -> data_tex | \begin{matrix} 		1 & 0 & x_{13} & x_{14} & x_{15}  \\...]
\right)
\end{equation*} \newline
Where matrix $A$ has entries
\begin{align*}
	&d_{1} = 1, d_{2} = 1, d_{3} = \frac{1}{x_{23}}, d_{4} = -\frac{x_{23}}{x_{13} x_{24}}, d_{5} = \frac{1}{x_{23} x_{35}}, \\ 
	&a_{12} = \frac{x_{13}}{x_{23}}, a_{13} = 1, a_{14} = 1, a_{15} = 1, \\ 
	&a_{23} = 1, a_{24} = 1, a_{25} = 1, \\ 
	&a_{34} = \frac{1}{x_{13}}, a_{35} = \frac{x_{23} x_{35} - x_{15}}{x_{13} x_{23} x_{35}}, \\ 
	&a_{45} = \frac{x_{15} + {\left(x_{13} - x_{23}\right)} x_{35}}{x_{13} x_{24} x_{35}}
\end{align*}

%110110
First assume $x_{14} \neq \frac{x_{13}x_{24}}{x_{23}}$.
\begin{equation*}x=\left(\begin{matrix}
		1 & 0 & x_{13} & x_{14} & 0  \\
		0 & 1 & x_{23} & x_{24} & 0  \\
		0 & 0 & 1 & 0 & x_{35}  \\
		0 & 0 & 0 & 1 & 0  \\
		0 & 0 & 0 & 0 & 1  \\
	\end{matrix}\right),x^A=\left(\begin{matrix}
		1 & 0 & 0 & 1 & 0 \\
		0 & 1 & 1 & 0 & 0 \\
		0 & 0 & 1 & 0 & 1 \\
		0 & 0 & 0 & 1 & 0 \\
		0 & 0 & 0 & 0 & 1
	\end{matrix}\right)
\end{equation*} \newline
Where matrix $A$ has entries
\begin{align*}
	&d_{1} = 1, d_{2} = 1, d_{3} = \frac{1}{x_{23}}, d_{4} = \frac{x_{23}}{x_{14} x_{23} - x_{13} x_{24}}, d_{5} = \frac{1}{x_{23} x_{35}}, \\ 
	&a_{12} = \frac{x_{13}}{x_{23}}, a_{13} = 1, a_{14} = 1, a_{15} = 1, \\ 
	&a_{23} = 1, a_{24} = 1, a_{25} = 1, \\ 
	&a_{34} = -\frac{x_{24}}{x_{14} x_{23} - x_{13} x_{24}}, a_{35} = \frac{x_{14} - x_{24}}{x_{14} x_{23} - x_{13} x_{24}}, \\ 
	&a_{45} = -\frac{x_{13} - x_{23}}{x_{14} x_{23} - x_{13} x_{24}}
\end{align*}

Now assume $x_{14} = \frac{x_{13}x_{24}}{x_{23}}$.
\begin{equation*}x=\left(\begin{matrix}
		1 & 0 & x_{13} & x_{14} & 0  \\
		0 & 1 & x_{23} & x_{24} & 0  \\
		0 & 0 & 1 & 0 & x_{35}  \\
		0 & 0 & 0 & 1 & 0  \\
		0 & 0 & 0 & 0 & 1  \\
	\end{matrix}\right),x^A=\left(\begin{matrix}
		1 & 0 & 0 & 0 & 0 \\
		0 & 1 & 1 & 0 & 0 \\
		0 & 0 & 1 & 0 & 1 \\
		0 & 0 & 0 & 1 & 0 \\
		0 & 0 & 0 & 0 & 1
	\end{matrix}\right)
\end{equation*} \newline
Where matrix $A$ has entries
\begin{align*}
	&d_{1} = 1, d_{2} = 1, d_{3} = \frac{1}{x_{23}}, d_{4} = 1, d_{5} = \frac{1}{x_{23} x_{35}}, \\ 
	&a_{12} = \frac{x_{13}}{x_{23}}, a_{13} = 1, a_{14} = 1, a_{15} = 1, \\ 
	&a_{23} = \frac{x_{23}}{x_{13}}, a_{24} = 1, a_{25} = 1, \\ 
	&a_{34} = -\frac{x_{24}}{x_{23}}, a_{35} = 1, \\ 
	&a_{45} = -\frac{{\left(x_{13} - 1\right)} x_{23}}{x_{13} x_{24}}
\end{align*}
%110111
First assume $x_{14} \neq \frac{x_{13}x_{24}}{x_{23}}$.
\begin{equation*}x=\left(\begin{matrix}
		1 & 0 & x_{13} & x_{14} & x_{15}  \\
		0 & 1 & x_{23} & x_{24} & 0  \\
		0 & 0 & 1 & 0 & x_{35}  \\
		0 & 0 & 0 & 1 & 0  \\
		0 & 0 & 0 & 0 & 1  \\
	\end{matrix}\right),x^A=\left(\begin{matrix}
		1 & 0 & 0 & 1 & 0 \\
		0 & 1 & 1 & 0 & 0 \\
		0 & 0 & 1 & 0 & 1 \\
		0 & 0 & 0 & 1 & 0 \\
		0 & 0 & 0 & 0 & 1
	\end{matrix}\right)
\end{equation*} \newline
Where matrix $A$ has entries
\begin{align*}
	&d_{1} = 1, d_{2} = 1, d_{3} = \frac{1}{x_{23}}, d_{4} = \frac{x_{23}}{x_{14} x_{23} - x_{13} x_{24}}, d_{5} = \frac{1}{x_{23} x_{35}}, \\ 
	&a_{12} = \frac{x_{13}}{x_{23}}, a_{13} = 1, a_{14} = 1, a_{15} = 1, \\ 
	&a_{23} = 1, a_{24} = 1, a_{25} = 1, \\ 
	&a_{34} = -\frac{x_{24}}{x_{14} x_{23} - x_{13} x_{24}}, a_{35} = \frac{x_{15} x_{24} + {\left(x_{14} x_{23} - x_{23} x_{24}\right)} x_{35}}{{\left(x_{14} x_{23}^{2} - x_{13} x_{23} x_{24}\right)} x_{35}}, \\ 
	&a_{45} = -\frac{x_{15} + {\left(x_{13} - x_{23}\right)} x_{35}}{{\left(x_{14} x_{23} - x_{13} x_{24}\right)} x_{35}}
\end{align*}

Now assume $x_{14} = \frac{x_{13}x_{24}}{x_{23}}$.
\begin{equation*}x=\left(% [inline block 11: 14 envs, 5425 chars -> data_tex | \begin{matrix} 		1 & 0 & x_{13} & x_{14} & x_{15}  \\...]
\right)
\end{equation*} \newline
Where matrix $A$ has entries
\begin{align*}
	&d_{1} = 1, d_{2} = 1, d_{3} = \frac{1}{x_{23}}, d_{4} = -\frac{x_{23}}{x_{13} x_{24}}, d_{5} = \frac{1}{x_{23} x_{35}}, \\ 
	&a_{12} = \frac{x_{13}}{x_{23}}, a_{13} = 1, a_{14} = 1, a_{15} = 1, \\ 
	&a_{23} = 1, a_{24} = 1, a_{25} = 1, \\ 
	&a_{34} = \frac{1}{x_{13}}, a_{35} = \frac{x_{23} x_{35} - x_{15}}{x_{13} x_{23} x_{35}}, \\ 
	&a_{45} = \frac{x_{15} x_{23} - x_{13} x_{25} + {\left(x_{13} x_{23} - x_{23}^{2}\right)} x_{35}}{x_{13} x_{23} x_{24} x_{35}}
\end{align*}

%111110
First assume $x_{14} \neq \frac{x_{13}x_{24}}{x_{23}}$.
\begin{equation*}x=\left(\begin{matrix}
		1 & 0 & x_{13} & x_{14} & 0  \\
		0 & 1 & x_{23} & x_{24} & x_{25}  \\
		0 & 0 & 1 & 0 & x_{35}  \\
		0 & 0 & 0 & 1 & 0  \\
		0 & 0 & 0 & 0 & 1  \\
	\end{matrix}\right),x^A=\left(\begin{matrix}
		1 & 0 & 0 & 1 & 0 \\
		0 & 1 & 1 & 0 & 0 \\
		0 & 0 & 1 & 0 & 1 \\
		0 & 0 & 0 & 1 & 0 \\
		0 & 0 & 0 & 0 & 1
	\end{matrix}\right)
\end{equation*} \newline
Where matrix $A$ has entries
\begin{align*}
	&d_{1} = 1, d_{2} = 1, d_{3} = \frac{1}{x_{23}}, d_{4} = \frac{x_{23}}{x_{14} x_{23} - x_{13} x_{24}}, d_{5} = \frac{1}{x_{23} x_{35}}, \\ 
	&a_{12} = \frac{x_{13}}{x_{23}}, a_{13} = 1, a_{14} = 1, a_{15} = 1, \\ 
	&a_{23} = 1, a_{24} = 1, a_{25} = 1, \\ 
	&a_{34} = -\frac{x_{24}}{x_{14} x_{23} - x_{13} x_{24}}, a_{35} = -\frac{x_{14} x_{25} - {\left(x_{14} x_{23} - x_{23} x_{24}\right)} x_{35}}{{\left(x_{14} x_{23}^{2} - x_{13} x_{23} x_{24}\right)} x_{35}}, \\ 
	&a_{45} = \frac{x_{13} x_{25} - {\left(x_{13} x_{23} - x_{23}^{2}\right)} x_{35}}{{\left(x_{14} x_{23}^{2} - x_{13} x_{23} x_{24}\right)} x_{35}}
\end{align*}

Now assume $x_{14} = \frac{x_{13}x_{24}}{x_{23}}$.
\begin{equation*}x=\left(\begin{matrix}
		1 & 0 & x_{13} & x_{14} & 0  \\
		0 & 1 & x_{23} & x_{24} & x_{25}  \\
		0 & 0 & 1 & 0 & x_{35}  \\
		0 & 0 & 0 & 1 & 0  \\
		0 & 0 & 0 & 0 & 1  \\
	\end{matrix}\right),x^A=\left(\begin{matrix}
		1 & 0 & 0 & 0 & 0 \\
		0 & 1 & 1 & 0 & 0 \\
		0 & 0 & 1 & 0 & 1 \\
		0 & 0 & 0 & 1 & 0 \\
		0 & 0 & 0 & 0 & 1
	\end{matrix}\right)
\end{equation*} \newline
Where matrix $A$ has entries
\begin{align*}
	&d_{1} = 1, d_{2} = 1, d_{3} = \frac{1}{x_{23}}, d_{4} = 1, d_{5} = \frac{1}{x_{23} x_{35}}, \\ 
	&a_{12} = \frac{x_{13}}{x_{23}}, a_{13} = 1, a_{14} = 1, a_{15} = 1, \\ 
	&a_{23} = \frac{x_{23}^{2} x_{35} + x_{13} x_{25}}{x_{13} x_{23} x_{35}}, a_{24} = 1, a_{25} = 1, \\ 
	&a_{34} = -\frac{x_{24}}{x_{23}}, a_{35} = 1, \\ 
	&a_{45} = -\frac{{\left(x_{13} - 1\right)} x_{23}}{x_{13} x_{24}}
\end{align*}

%111111
First assume $x_{14} \neq \frac{x_{13}x_{24}}{x_{23}}$.
\begin{equation*}x=\left(\begin{matrix}
		1 & 0 & x_{13} & x_{14} & x_{15}  \\
		0 & 1 & x_{23} & x_{24} & x_{25}  \\
		0 & 0 & 1 & 0 & x_{35}  \\
		0 & 0 & 0 & 1 & 0  \\
		0 & 0 & 0 & 0 & 1  \\
	\end{matrix}\right),x^A=\left(\begin{matrix}
		1 & 0 & 0 & 1 & 0 \\
		0 & 1 & 1 & 0 & 0 \\
		0 & 0 & 1 & 0 & 1 \\
		0 & 0 & 0 & 1 & 0 \\
		0 & 0 & 0 & 0 & 1
	\end{matrix}\right)
\end{equation*} \newline
Where matrix $A$ has entries
\begin{align*}
	&d_{1} = 1, d_{2} = 1, d_{3} = \frac{1}{x_{23}}, d_{4} = \frac{x_{23}}{x_{14} x_{23} - x_{13} x_{24}}, d_{5} = \frac{1}{x_{23} x_{35}}, \\ 
	&a_{12} = \frac{x_{13}}{x_{23}}, a_{13} = 1, a_{14} = 1, a_{15} = 1, \\ 
	&a_{23} = 1, a_{24} = 1, a_{25} = 1, \\ 
	&a_{34} = -\frac{x_{24}}{x_{14} x_{23} - x_{13} x_{24}}, a_{35} = \frac{x_{15} x_{24} - x_{14} x_{25} + {\left(x_{14} x_{23} - x_{23} x_{24}\right)} x_{35}}{{\left(x_{14} x_{23}^{2} - x_{13} x_{23} x_{24}\right)} x_{35}}, \\ 
	&a_{45} = -\frac{x_{15} x_{23} - x_{13} x_{25} + {\left(x_{13} x_{23} - x_{23}^{2}\right)} x_{35}}{{\left(x_{14} x_{23}^{2} - x_{13} x_{23} x_{24}\right)} x_{35}}
\end{align*}

Now assume $x_{14} = \frac{x_{13}x_{24}}{x_{23}}$.
\begin{equation*}x=\left(% [inline block 12: 44 envs, 16737 chars -> data_tex | \begin{matrix} 		1 & 0 & x_{13} & x_{14} & x_{15}  \\...]
\right)
\end{equation*} \newline
Where matrix $A$ has entries
\begin{align*}
	&d_{1} = 1, d_{2} = \frac{1}{x_{12}}, d_{3} = \frac{1}{x_{12} x_{23}}, d_{4} = 1, d_{5} = 1, \\ 
	&a_{12} = 1, a_{13} = 1, a_{14} = 1, a_{15} = 1, \\ 
	&a_{23} = \frac{x_{12} x_{23} - x_{13}}{x_{12}^{2} x_{23}}, a_{24} = \frac{x_{13} x_{24}}{x_{12} x_{23}}, a_{25} = 1, \\ 
	&a_{34} = -\frac{x_{24}}{x_{23}}, a_{35} = -\frac{x_{12} + x_{15}}{x_{13}}, \\ 
	&a_{45} = \frac{{\left(x_{12} + x_{15}\right)} x_{23}}{x_{13} x_{24}}
\end{align*}

%010110

First assume $x_{14} \neq \frac{x_{13}x_{24}}{x_{23}}$
\begin{equation*}x=\left(\begin{matrix}
		1 & x_{12} & x_{13} & x_{14} & 0 \\
		0 & 1 & x_{23} & x_{24} & 0 \\
		0 & 0 & 1 & 0 & 0  \\
		0 & 0 & 0 & 1 & 0  \\
		0 & 0 & 0 & 0 & 1 \\
	\end{matrix}\right),x^A=\left(\begin{matrix}
		1 & 1 & 0 & 0 & 0 \\
		0 & 1 & 1 & 0 & 0 \\
		0 & 0 & 1 & 0 & 0 \\
		0 & 0 & 0 & 1 & 0 \\
		0 & 0 & 0 & 0 & 1
	\end{matrix}\right)
\end{equation*} \newline
Where matrix $A$ has entries
\begin{align*}
	&d_{1} = 1, d_{2} = \frac{1}{x_{12}}, d_{3} = \frac{1}{x_{12} x_{23}}, d_{4} = 1, d_{5} = 1, \\ 
	&a_{12} = 1, a_{13} = 1, a_{14} = 1, a_{15} = 1, \\ 
	&a_{23} = \frac{x_{12} x_{23} - x_{13}}{x_{12}^{2} x_{23}}, a_{24} = -\frac{x_{14} x_{23} - x_{13} x_{24}}{x_{12} x_{23}}, a_{25} = 1, \\ 
	&a_{34} = -\frac{x_{24}}{x_{23}}, a_{35} = \frac{x_{12} x_{24}}{x_{14} x_{23} - x_{13} x_{24}}, \\ 
	&a_{45} = -\frac{x_{12} x_{23}}{x_{14} x_{23} - x_{13} x_{24}}
\end{align*}

Now assume $x_{14} = \frac{x_{13}x_{24}}{x_{23}}$
\begin{equation*}x=\left(\begin{matrix}
		1 & x_{12} & x_{13} & x_{14} & 0 \\
		0 & 1 & x_{23} & x_{24} & 0 \\
		0 & 0 & 1 & 0 & 0  \\
		0 & 0 & 0 & 1 & 0  \\
		0 & 0 & 0 & 0 & 1 \\
	\end{matrix}\right),x^A=\left(\begin{matrix}
		1 & 1 & 0 & 0 & 0 \\
		0 & 1 & 1 & 0 & 0 \\
		0 & 0 & 1 & 0 & 0 \\
		0 & 0 & 0 & 1 & 0 \\
		0 & 0 & 0 & 0 & 1
	\end{matrix}\right)
\end{equation*} \newline
Where matrix $A$ has entries
\begin{align*}
	&d_{1} = 1, d_{2} = \frac{1}{x_{12}}, d_{3} = \frac{1}{x_{12} x_{23}}, d_{4} = 1, d_{5} = 1, \\ 
	&a_{12} = 1, a_{13} = 1, a_{14} = 1, a_{15} = 1, \\ 
	&a_{23} = \frac{x_{12} x_{23} - x_{13}}{x_{12}^{2} x_{23}}, a_{24} = 0, a_{25} = 0, \\ 
	&a_{34} = -\frac{x_{24}}{x_{23}}, a_{35} = 1, \\ 
	&a_{45} = -\frac{x_{23}}{x_{24}}
\end{align*}

%010111
First assume $x_{14} \neq \frac{x_{13}x_{24}}{x_{23}}$
\begin{equation*}x=\left(\begin{matrix}
		1 & x_{12} & x_{13} & x_{14} & x_{15} \\
		0 & 1 & x_{23} & x_{24} & 0 \\
		0 & 0 & 1 & 0 & 0  \\
		0 & 0 & 0 & 1 & 0  \\
		0 & 0 & 0 & 0 & 1 \\
	\end{matrix}\right),x^A=\left(\begin{matrix}
		1 & 1 & 0 & 0 & 0 \\
		0 & 1 & 1 & 0 & 0 \\
		0 & 0 & 1 & 0 & 0 \\
		0 & 0 & 0 & 1 & 0 \\
		0 & 0 & 0 & 0 & 1
	\end{matrix}\right)
\end{equation*} \newline
Where matrix $A$ has entries
\begin{align*}
	&d_{1} = 1, d_{2} = \frac{1}{x_{12}}, d_{3} = \frac{1}{x_{12} x_{23}}, d_{4} = 1, d_{5} = 1, \\ 
	&a_{12} = 1, a_{13} = 1, a_{14} = 1, a_{15} = 1, \\ 
	&a_{23} = \frac{x_{12} x_{23} - x_{13}}{x_{12}^{2} x_{23}}, a_{24} = -\frac{x_{14} x_{23} - x_{13} x_{24}}{x_{12} x_{23}}, a_{25} = 1, \\ 
	&a_{34} = -\frac{x_{24}}{x_{23}}, a_{35} = \frac{{\left(x_{12} + x_{15}\right)} x_{24}}{x_{14} x_{23} - x_{13} x_{24}}, \\ 
	&a_{45} = -\frac{{\left(x_{12} + x_{15}\right)} x_{23}}{x_{14} x_{23} - x_{13} x_{24}}
\end{align*}

Now assume $x_{14} = \frac{x_{13}x_{24}}{x_{23}}$
\begin{equation*}x=\left(% [inline block 13: 14 envs, 5197 chars -> data_tex | \begin{matrix} 		1 & x_{12} & x_{13} & x_{14} & x_{15} \\...]
\right)
\end{equation*} \newline
Where matrix $A$ has entries
\begin{align*}
	&d_{1} = 1, d_{2} = \frac{1}{x_{12}}, d_{3} = \frac{1}{x_{12} x_{23}}, d_{4} = 1, d_{5} = 1, \\ 
	&a_{12} = 1, a_{13} = 1, a_{14} = 1, a_{15} = 1, \\ 
	&a_{23} = \frac{x_{12} x_{23} - x_{13}}{x_{12}^{2} x_{23}}, a_{24} = \frac{x_{13} x_{24}}{x_{12} x_{23}}, a_{25} = 1, \\ 
	&a_{34} = -\frac{x_{24}}{x_{23}}, a_{35} = -\frac{x_{12} + x_{15}}{x_{13}}, \\ 
	&a_{45} = -\frac{x_{13} x_{25} - {\left(x_{12} + x_{15}\right)} x_{23}}{x_{13} x_{24}}
\end{align*}

%011110
First assume $x_{14}\neq \frac{x_{13}x_{24}}{x_{23}}$.
\begin{equation*}x=\left(\begin{matrix}
		1 & x_{12} & x_{13} & x_{14} & 0 \\
		0 & 1 & x_{23} & x_{24} & x_{25} \\
		0 & 0 & 1 & 0 & 0  \\
		0 & 0 & 0 & 1 & 0  \\
		0 & 0 & 0 & 0 & 1 \\
	\end{matrix}\right),x^A=\left(\begin{matrix}
		1 & 1 & 0 & 0 & 0 \\
		0 & 1 & 1 & 0 & 0 \\
		0 & 0 & 1 & 0 & 0 \\
		0 & 0 & 0 & 1 & 0 \\
		0 & 0 & 0 & 0 & 1
	\end{matrix}\right)
\end{equation*} \newline
Where matrix $A$ has entries
\begin{align*}
	&d_{1} = 1, d_{2} = \frac{1}{x_{12}}, d_{3} = \frac{1}{x_{12} x_{23}}, d_{4} = 1, d_{5} = 1, \\ 
	&a_{12} = 1, a_{13} = 1, a_{14} = 1, a_{15} = 1, \\ 
	&a_{23} = \frac{x_{12} x_{23} - x_{13}}{x_{12}^{2} x_{23}}, a_{24} = -\frac{x_{14} x_{23} - x_{13} x_{24}}{x_{12} x_{23}}, a_{25} = 1, \\ 
	&a_{34} = -\frac{x_{24}}{x_{23}}, a_{35} = \frac{x_{12} x_{24} - x_{14} x_{25}}{x_{14} x_{23} - x_{13} x_{24}}, \\ 
	&a_{45} = -\frac{x_{12} x_{23} - x_{13} x_{25}}{x_{14} x_{23} - x_{13} x_{24}}
\end{align*}
Now assume $x_{14}= \frac{x_{13}x_{24}}{x_{23}}$.
\begin{equation*}x=\left(\begin{matrix}
		1 & x_{12} & x_{13} & x_{14} & 0 \\
		0 & 1 & x_{23} & x_{24} & x_{25} \\
		0 & 0 & 1 & 0 & 0  \\
		0 & 0 & 0 & 1 & 0  \\
		0 & 0 & 0 & 0 & 1 \\
	\end{matrix}\right),x^A=\left(\begin{matrix}
		1 & 1 & 0 & 0 & 0 \\
		0 & 1 & 1 & 0 & 0 \\
		0 & 0 & 1 & 0 & 0 \\
		0 & 0 & 0 & 1 & 0 \\
		0 & 0 & 0 & 0 & 1
	\end{matrix}\right)
\end{equation*} \newline
Where matrix $A$ has entries
\begin{align*}
	&d_{1} = 1, d_{2} = \frac{1}{x_{12}}, d_{3} = \frac{1}{x_{12} x_{23}}, d_{4} = 1, d_{5} = 1, \\ 
	&a_{12} = 1, a_{13} = 1, a_{14} = 1, a_{15} = 1, \\ 
	&a_{23} = \frac{x_{12} x_{23} - x_{13}}{x_{12}^{2} x_{23}}, a_{24} = 0, a_{25} = \frac{x_{13} x_{25}}{x_{12} x_{23}}, \\ 
	&a_{34} = -\frac{x_{24}}{x_{23}}, a_{35} = 1, \\ 
	&a_{45} = -\frac{x_{23} + x_{25}}{x_{24}}
\end{align*}
%011111
First assume $x_{14}\neq \frac{x_{13}x_{24}}{x_{23}}$.
\begin{equation*}x=\left(\begin{matrix}
		1 & x_{12} & x_{13} & x_{14} & x_{15} \\
		0 & 1 & x_{23} & x_{24} & x_{25} \\
		0 & 0 & 1 & 0 & 0  \\
		0 & 0 & 0 & 1 & 0  \\
		0 & 0 & 0 & 0 & 1 \\
	\end{matrix}\right),x^A=\left(\begin{matrix}
		1 & 1 & 0 & 0 & 0 \\
		0 & 1 & 1 & 0 & 0 \\
		0 & 0 & 1 & 0 & 0 \\
		0 & 0 & 0 & 1 & 0 \\
		0 & 0 & 0 & 0 & 1
	\end{matrix}\right)
\end{equation*} \newline
Where matrix $A$ has entries
\begin{align*}
	&d_{1} = 1, d_{2} = \frac{1}{x_{12}}, d_{3} = \frac{1}{x_{12} x_{23}}, d_{4} = 1, d_{5} = 1, \\ 
	&a_{12} = 1, a_{13} = 1, a_{14} = 1, a_{15} = 1, \\ 
	&a_{23} = \frac{x_{12} x_{23} - x_{13}}{x_{12}^{2} x_{23}}, a_{24} = -\frac{x_{14} x_{23} - x_{13} x_{24}}{x_{12} x_{23}}, a_{25} = 1, \\ 
	&a_{34} = -\frac{x_{24}}{x_{23}}, a_{35} = -\frac{x_{14} x_{25} - {\left(x_{12} + x_{15}\right)} x_{24}}{x_{14} x_{23} - x_{13} x_{24}}, \\ 
	&a_{45} = \frac{x_{13} x_{25} - {\left(x_{12} + x_{15}\right)} x_{23}}{x_{14} x_{23} - x_{13} x_{24}}
\end{align*}

Now assume $x_{14}= \frac{x_{13}x_{24}}{x_{23}}$.
\begin{equation*}x=\left(% [inline block 14: 14 envs, 5309 chars -> data_tex | \begin{matrix} 		1 & x_{12} & x_{13} & x_{14} & x_{15} \\...]
\right)
\end{equation*} \newline
Where matrix $A$ has entries
\begin{align*}
	&d_{1} = 1, d_{2} = \frac{1}{x_{12}}, d_{3} = \frac{1}{x_{12} x_{23}}, d_{4} = 1, d_{5} = \frac{1}{x_{12} x_{23} x_{35}}, \\ 
	&a_{12} = 1, a_{13} = 1, a_{14} = 1, a_{15} = 1, \\ 
	&a_{23} = \frac{x_{12} x_{23} - x_{13}}{x_{12}^{2} x_{23}}, a_{24} = 0, a_{25} = -\frac{x_{12} x_{15} x_{23} - {\left(x_{12}^{2} x_{23}^{2} - x_{12} x_{13} x_{23} + x_{13}^{2}\right)} x_{35}}{x_{12}^{3} x_{23}^{2} x_{35}}, \\ 
	&a_{34} = 0, a_{35} = \frac{x_{12} x_{23} - x_{13}}{x_{12}^{2} x_{23}^{2}}, \\ 
	&a_{45} = 1
\end{align*}

%100110
\begin{equation*}x=\left(\begin{matrix}
		1 & x_{12} & x_{13} & x_{14} & 0 \\
		0 & 1 & x_{23} & 0 & 0 \\
		0 & 0 & 1 & 0 & x_{35}  \\
		0 & 0 & 0 & 1 & 0  \\
		0 & 0 & 0 & 0 & 1 \\
	\end{matrix}\right),x^A=\left(\begin{matrix}
		1 & 1 & 0 & 0 & 0 \\
		0 & 1 & 1 & 0 & 0 \\
		0 & 0 & 1 & 0 & 1 \\
		0 & 0 & 0 & 1 & 0 \\
		0 & 0 & 0 & 0 & 1
	\end{matrix}\right)
\end{equation*} \newline
Where matrix $A$ has entries
\begin{align*}
	&d_{1} = 1, d_{2} = \frac{1}{x_{12}}, d_{3} = \frac{1}{x_{12} x_{23}}, d_{4} = 1, d_{5} = \frac{1}{x_{12} x_{23} x_{35}}, \\ 
	&a_{12} = 1, a_{13} = 1, a_{14} = 1, a_{15} = 1, \\ 
	&a_{23} = \frac{x_{12} x_{23} - x_{13}}{x_{12}^{2} x_{23}}, a_{24} = -\frac{x_{14}}{x_{12}}, a_{25} = 1, \\ 
	&a_{34} = 0, a_{35} = \frac{x_{12} x_{23} - x_{13}}{x_{12}^{2} x_{23}^{2}}, \\ 
	&a_{45} = -\frac{x_{12} x_{13} x_{23} - x_{13}^{2} + {\left(x_{12}^{3} - x_{12}^{2}\right)} x_{23}^{2}}{x_{12}^{2} x_{14} x_{23}^{2}}
\end{align*}

%100111
\begin{equation*}x=\left(\begin{matrix}
		1 & x_{12} & x_{13} & x_{14} & x_{15} \\
		0 & 1 & x_{23} & 0 & 0 \\
		0 & 0 & 1 & 0 & x_{35}  \\
		0 & 0 & 0 & 1 & 0  \\
		0 & 0 & 0 & 0 & 1 \\
	\end{matrix}\right),x^A=\left(\begin{matrix}
		1 & 1 & 0 & 0 & 0 \\
		0 & 1 & 1 & 0 & 0 \\
		0 & 0 & 1 & 0 & 1 \\
		0 & 0 & 0 & 1 & 0 \\
		0 & 0 & 0 & 0 & 1
	\end{matrix}\right)
\end{equation*} \newline
Where matrix $A$ has entries
\begin{align*}
	&d_{1} = 1, d_{2} = \frac{1}{x_{12}}, d_{3} = \frac{1}{x_{12} x_{23}}, d_{4} = 1, d_{5} = \frac{1}{x_{12} x_{23} x_{35}}, \\ 
	&a_{12} = 1, a_{13} = 1, a_{14} = 1, a_{15} = 1, \\ 
	&a_{23} = \frac{x_{12} x_{23} - x_{13}}{x_{12}^{2} x_{23}}, a_{24} = -\frac{x_{14}}{x_{12}}, a_{25} = 1, \\ 
	&a_{34} = 0, a_{35} = \frac{x_{12} x_{23} - x_{13}}{x_{12}^{2} x_{23}^{2}}, \\ 
	&a_{45} = -\frac{x_{12} x_{15} x_{23} + {\left(x_{12} x_{13} x_{23} - x_{13}^{2} + {\left(x_{12}^{3} - x_{12}^{2}\right)} x_{23}^{2}\right)} x_{35}}{x_{12}^{2} x_{14} x_{23}^{2} x_{35}}
\end{align*}

%101000
\begin{equation*}x=\left(% [inline block 15: 10 envs, 3712 chars -> data_tex | \begin{matrix} 		1 & x_{12} & 0 & 0 & 0 \\...]
\right)
\end{equation*} \newline
Where matrix $A$ has entries
\begin{align*}
	&d_{1} = 1, d_{2} = \frac{1}{x_{12}}, d_{3} = \frac{1}{x_{12} x_{23}}, d_{4} = 1, d_{5} = \frac{1}{x_{12} x_{23} x_{35}}, \\ 
	&a_{12} = 1, a_{13} = 1, a_{14} = 1, a_{15} = 1, \\ 
	&a_{23} = \frac{x_{12} x_{23} - x_{13}}{x_{12}^{2} x_{23}}, a_{24} = 0, a_{25} = \frac{x_{12} x_{13} x_{25} + {\left(x_{12}^{2} x_{23}^{2} - x_{12} x_{13} x_{23} + x_{13}^{2}\right)} x_{35}}{x_{12}^{3} x_{23}^{2} x_{35}}, \\ 
	&a_{34} = 0, a_{35} = -\frac{x_{12} x_{25} - {\left(x_{12} x_{23} - x_{13}\right)} x_{35}}{x_{12}^{2} x_{23}^{2} x_{35}}, \\ 
	&a_{45} = 1
\end{align*}

%101101
\begin{equation*}x=\left(\begin{matrix}
		1 & x_{12} & x_{13} & 0 & x_{15} \\
		0 & 1 & x_{23} & 0 & x_{25} \\
		0 & 0 & 1 & 0 & x_{35}  \\
		0 & 0 & 0 & 1 & 0  \\
		0 & 0 & 0 & 0 & 1 \\
	\end{matrix}\right),x^A=\left(\begin{matrix}
		1 & 1 & 0 & 0 & 0 \\
		0 & 1 & 1 & 0 & 0 \\
		0 & 0 & 1 & 0 & 1 \\
		0 & 0 & 0 & 1 & 0 \\
		0 & 0 & 0 & 0 & 1
	\end{matrix}\right)
\end{equation*} \newline
Where matrix $A$ has entries
\begin{align*}
	&d_{1} = 1, d_{2} = \frac{1}{x_{12}}, d_{3} = \frac{1}{x_{12} x_{23}}, d_{4} = 1, d_{5} = \frac{1}{x_{12} x_{23} x_{35}}, \\ 
	&a_{12} = 1, a_{13} = 1, a_{14} = 1, a_{15} = 1, \\ 
	&a_{23} = \frac{x_{12} x_{23} - x_{13}}{x_{12}^{2} x_{23}}, a_{24} = 0, a_{25} = -\frac{x_{12} x_{15} x_{23} - x_{12} x_{13} x_{25} - {\left(x_{12}^{2} x_{23}^{2} - x_{12} x_{13} x_{23} + x_{13}^{2}\right)} x_{35}}{x_{12}^{3} x_{23}^{2} x_{35}}, \\ 
	&a_{34} = 0, a_{35} = -\frac{x_{12} x_{25} - {\left(x_{12} x_{23} - x_{13}\right)} x_{35}}{x_{12}^{2} x_{23}^{2} x_{35}}, \\ 
	&a_{45} = 1
\end{align*}

%101110
\begin{equation*}x=\left(\begin{matrix}
		1 & x_{12} & x_{13} & x_{14} & 0 \\
		0 & 1 & x_{23} & 0 & x_{25} \\
		0 & 0 & 1 & 0 & x_{35}  \\
		0 & 0 & 0 & 1 & 0  \\
		0 & 0 & 0 & 0 & 1 \\
	\end{matrix}\right),x^A=\left(\begin{matrix}
		1 & 1 & 0 & 0 & 0 \\
		0 & 1 & 1 & 0 & 0 \\
		0 & 0 & 1 & 0 & 1 \\
		0 & 0 & 0 & 1 & 0 \\
		0 & 0 & 0 & 0 & 1
	\end{matrix}\right)
\end{equation*} \newline
Where matrix $A$ has entries
\begin{align*}
	&d_{1} = 1, d_{2} = \frac{1}{x_{12}}, d_{3} = \frac{1}{x_{12} x_{23}}, d_{4} = 1, d_{5} = \frac{1}{x_{12} x_{23} x_{35}}, \\ 
	&a_{12} = 1, a_{13} = 1, a_{14} = 1, a_{15} = 1, \\ 
	&a_{23} = \frac{x_{12} x_{23} - x_{13}}{x_{12}^{2} x_{23}}, a_{24} = -\frac{x_{14}}{x_{12}}, a_{25} = 1, \\ 
	&a_{34} = 0, a_{35} = -\frac{x_{12} x_{25} - {\left(x_{12} x_{23} - x_{13}\right)} x_{35}}{x_{12}^{2} x_{23}^{2} x_{35}}, \\ 
	&a_{45} = \frac{x_{12} x_{13} x_{25} - {\left(x_{12} x_{13} x_{23} - x_{13}^{2} + {\left(x_{12}^{3} - x_{12}^{2}\right)} x_{23}^{2}\right)} x_{35}}{x_{12}^{2} x_{14} x_{23}^{2} x_{35}}
\end{align*}

%101111
\begin{equation*}x=\left(\begin{matrix}
		1 & x_{12} & x_{13} & x_{14} & x_{15} \\
		0 & 1 & x_{23} & 0 & x_{25} \\
		0 & 0 & 1 & 0 & x_{35}  \\
		0 & 0 & 0 & 1 & 0  \\
		0 & 0 & 0 & 0 & 1 \\
	\end{matrix}\right),x^A=\left(\begin{matrix}
		1 & 1 & 0 & 0 & 0 \\
		0 & 1 & 1 & 0 & 0 \\
		0 & 0 & 1 & 0 & 1 \\
		0 & 0 & 0 & 1 & 0 \\
		0 & 0 & 0 & 0 & 1
	\end{matrix}\right)
\end{equation*} \newline
Where matrix $A$ has entries
\begin{align*}
	&d_{1} = 1, d_{2} = \frac{1}{x_{12}}, d_{3} = \frac{1}{x_{12} x_{23}}, d_{4} = 1, d_{5} = \frac{1}{x_{12} x_{23} x_{35}}, \\ 
	&a_{12} = 1, a_{13} = 1, a_{14} = 1, a_{15} = 1, \\ 
	&a_{23} = \frac{x_{12} x_{23} - x_{13}}{x_{12}^{2} x_{23}}, a_{24} = -\frac{x_{14}}{x_{12}}, a_{25} = 1, \\ 
	&a_{34} = 0, a_{35} = -\frac{x_{12} x_{25} - {\left(x_{12} x_{23} - x_{13}\right)} x_{35}}{x_{12}^{2} x_{23}^{2} x_{35}}, \\ 
	&a_{45} = -\frac{x_{12} x_{15} x_{23} - x_{12} x_{13} x_{25} + {\left(x_{12} x_{13} x_{23} - x_{13}^{2} + {\left(x_{12}^{3} - x_{12}^{2}\right)} x_{23}^{2}\right)} x_{35}}{x_{12}^{2} x_{14} x_{23}^{2} x_{35}}
\end{align*}

%110000
\begin{equation*}x=\left(% [inline block 16: 8 envs, 2868 chars -> data_tex | \begin{matrix} 		1 & x_{12} & 0 & 0 & 0 \\...]
\right)
\end{equation*} \newline
Where matrix $A$ has entries
\begin{align*}
	&d_{1} = 1, d_{2} = \frac{1}{x_{12}}, d_{3} = \frac{1}{x_{12} x_{23}}, d_{4} = 1, d_{5} = \frac{1}{x_{12} x_{23} x_{35}}, \\ 
	&a_{12} = 1, a_{13} = 1, a_{14} = 1, a_{15} = 1, \\ 
	&a_{23} = \frac{1}{x_{12}}, a_{24} = -\frac{x_{14}}{x_{12}}, a_{25} = 1, \\ 
	&a_{34} = -\frac{x_{24}}{x_{23}}, a_{35} = \frac{x_{15} x_{24} + {\left(x_{14} x_{23} + {\left(x_{12}^{2} - x_{12}\right)} x_{23} x_{24}\right)} x_{35}}{x_{12} x_{14} x_{23}^{2} x_{35}}, \\ 
	&a_{45} = -\frac{{\left(x_{12}^{2} - x_{12}\right)} x_{23} x_{35} + x_{15}}{x_{12} x_{14} x_{23} x_{35}}
\end{align*}

%110100
\begin{equation*}x=\left(\begin{matrix}
		1 & x_{12} & x_{13} & 0 & 0 \\
		0 & 1 & x_{23} & x_{24} & 0 \\
		0 & 0 & 1 & 0 & x_{35}  \\
		0 & 0 & 0 & 1 & 0  \\
		0 & 0 & 0 & 0 & 1 \\
	\end{matrix}\right),x^A=\left(\begin{matrix}
		1 & 1 & 0 & 0 & 0 \\
		0 & 1 & 1 & 0 & 0 \\
		0 & 0 & 1 & 0 & 0 \\
		0 & 0 & 0 & 1 & 0 \\
		0 & 0 & 0 & 0 & 1
	\end{matrix}\right)
\end{equation*} \newline
Where matrix $A$ has entries
\begin{align*}
	&d_{1} = 1, d_{2} = \frac{1}{x_{12}}, d_{3} = \frac{1}{x_{12} x_{23}}, d_{4} = 1, d_{5} = \frac{1}{x_{12} x_{23} x_{35}}, \\ 
	&a_{12} = 1, a_{13} = 1, a_{14} = 1, a_{15} = 1, \\ 
	&a_{23} = \frac{x_{12} x_{23} - x_{13}}{x_{12}^{2} x_{23}}, a_{24} = \frac{x_{13} x_{24}}{x_{12} x_{23}}, a_{25} = 1, \\ 
	&a_{34} = -\frac{x_{24}}{x_{23}}, a_{35} = -\frac{x_{12} - 1}{x_{13}}, \\ 
	&a_{45} = \frac{x_{12} x_{13} x_{23} - x_{13}^{2} + {\left(x_{12}^{3} - x_{12}^{2}\right)} x_{23}^{2}}{x_{12}^{2} x_{13} x_{23} x_{24}}
\end{align*}

%110101
\begin{equation*}x=\left(\begin{matrix}
		1 & x_{12} & x_{13} & 0 & x_{15} \\
		0 & 1 & x_{23} & x_{24} & 0 \\
		0 & 0 & 1 & 0 & x_{35}  \\
		0 & 0 & 0 & 1 & 0  \\
		0 & 0 & 0 & 0 & 1 \\
	\end{matrix}\right),x^A=\left(\begin{matrix}
		1 & 1 & 0 & 0 & 0 \\
		0 & 1 & 1 & 0 & 0 \\
		0 & 0 & 1 & 0 & 1 \\
		0 & 0 & 0 & 1 & 0 \\
		0 & 0 & 0 & 0 & 1
	\end{matrix}\right)
\end{equation*} \newline
Where matrix $A$ has entries
\begin{align*}
	&d_{1} = 1, d_{2} = \frac{1}{x_{12}}, d_{3} = \frac{1}{x_{12} x_{23}}, d_{4} = 1, d_{5} = \frac{1}{x_{12} x_{23} x_{35}}, \\ 
	&a_{12} = 1, a_{13} = 1, a_{14} = 1, a_{15} = 1, \\ 
	&a_{23} = \frac{x_{12} x_{23} - x_{13}}{x_{12}^{2} x_{23}}, a_{24} = \frac{x_{13} x_{24}}{x_{12} x_{23}}, a_{25} = 1, \\ 
	&a_{34} = -\frac{x_{24}}{x_{23}}, a_{35} = -\frac{{\left(x_{12}^{2} - x_{12}\right)} x_{23} x_{35} + x_{15}}{x_{12} x_{13} x_{23} x_{35}}, \\ 
	&a_{45} = \frac{x_{12} x_{15} x_{23} + {\left(x_{12} x_{13} x_{23} - x_{13}^{2} + {\left(x_{12}^{3} - x_{12}^{2}\right)} x_{23}^{2}\right)} x_{35}}{x_{12}^{2} x_{13} x_{23} x_{24} x_{35}}
\end{align*}

%110110
First assume $x_{14}\neq \frac{x_{13}x_{24}}{x_{23}}$
\begin{equation*}x=\left(\begin{matrix}
		1 & x_{12} & x_{13} & x_{14} & 0 \\
		0 & 1 & x_{23} & x_{24} & 0 \\
		0 & 0 & 1 & 0 & x_{35}  \\
		0 & 0 & 0 & 1 & 0  \\
		0 & 0 & 0 & 0 & 1 \\
	\end{matrix}\right),x^A=\left(\begin{matrix}
		1 & 1 & 0 & 0 & 0 \\
		0 & 1 & 1 & 0 & 0 \\
		0 & 0 & 1 & 0 & 1 \\
		0 & 0 & 0 & 1 & 0 \\
		0 & 0 & 0 & 0 & 1
	\end{matrix}\right)
\end{equation*} \newline
Where matrix $A$ has entries
\begin{align*}
	&d_{1} = 1, d_{2} = \frac{1}{x_{12}}, d_{3} = \frac{1}{x_{12} x_{23}}, d_{4} = 1, d_{5} = \frac{1}{x_{12} x_{23} x_{35}}, \\ 
	&a_{12} = 1, a_{13} = 1, a_{14} = 1, a_{15} = 1, \\ 
	&a_{23} = \frac{x_{12} x_{23} - x_{13}}{x_{12}^{2} x_{23}}, a_{24} = -\frac{x_{14} x_{23} - x_{13} x_{24}}{x_{12} x_{23}}, a_{25} = 1, \\ 
	&a_{34} = -\frac{x_{24}}{x_{23}}, a_{35} = \frac{x_{12} x_{14} x_{23} - x_{13} x_{14} + {\left(x_{12}^{3} - x_{12}^{2}\right)} x_{23} x_{24}}{x_{12}^{2} x_{14} x_{23}^{2} - x_{12}^{2} x_{13} x_{23} x_{24}}, \\ 
	&a_{45} = -\frac{x_{12} x_{13} x_{23} - x_{13}^{2} + {\left(x_{12}^{3} - x_{12}^{2}\right)} x_{23}^{2}}{x_{12}^{2} x_{14} x_{23}^{2} - x_{12}^{2} x_{13} x_{23} x_{24}}
\end{align*}

Now assume $x_{14}= \frac{x_{13}x_{24}}{x_{23}}$
\begin{equation*}x=\left(\begin{matrix}
		1 & x_{12} & x_{13} & x_{14} & 0 \\
		0 & 1 & x_{23} & x_{24} & 0 \\
		0 & 0 & 1 & 0 & x_{35}  \\
		0 & 0 & 0 & 1 & 0  \\
		0 & 0 & 0 & 0 & 1 \\
	\end{matrix}\right),x^A=\left(\begin{matrix}
		1 & 1 & 0 & 0 & 0 \\
		0 & 1 & 1 & 0 & 0 \\
		0 & 0 & 1 & 0 & 1 \\
		0 & 0 & 0 & 1 & 0 \\
		0 & 0 & 0 & 0 & 1
	\end{matrix}\right)
\end{equation*} \newline
Where matrix $A$ has entries
\begin{align*}
	&d_{1} = 1, d_{2} = \frac{1}{x_{12}}, d_{3} = \frac{1}{x_{12} x_{23}}, d_{4} = 1, d_{5} = \frac{1}{x_{12} x_{23} x_{35}}, \\ 
	&a_{12} = 1, a_{13} = 1, a_{14} = 1, a_{15} = 1, \\ 
	&a_{23} = \frac{x_{12} x_{23} - x_{13}}{x_{12}^{2} x_{23}}, a_{24} = 0, a_{25} = \frac{x_{12}^{2} x_{23}^{2} - x_{12} x_{13} x_{23} + x_{13}^{2}}{x_{12}^{3} x_{23}^{2}}, \\ 
	&a_{34} = -\frac{x_{24}}{x_{23}}, a_{35} = 1, \\ 
	&a_{45} = -\frac{x_{12}^{2} x_{23}^{2} - x_{12} x_{23} + x_{13}}{x_{12}^{2} x_{23} x_{24}}
\end{align*}

%110111

First assume $x_{14}\neq \frac{x_{13}x_{24}}{x_{23}}$
\begin{equation*}x=\left(\begin{matrix}
		1 & x_{12} & x_{13} & x_{14} & x_{15} \\
		0 & 1 & x_{23} & x_{24} & 0 \\
		0 & 0 & 1 & 0 & x_{35}  \\
		0 & 0 & 0 & 1 & 0  \\
		0 & 0 & 0 & 0 & 1 \\
	\end{matrix}\right),x^A=\left(\begin{matrix}
		1 & 1 & 0 & 0 & 0 \\
		0 & 1 & 1 & 0 & 0 \\
		0 & 0 & 1 & 0 & 1 \\
		0 & 0 & 0 & 1 & 0 \\
		0 & 0 & 0 & 0 & 1
	\end{matrix}\right)
\end{equation*} \newline
Where matrix $A$ has entries
\begin{align*}
	&d_{1} = 1, d_{2} = \frac{1}{x_{12}}, d_{3} = \frac{1}{x_{12} x_{23}}, d_{4} = 1, d_{5} = \frac{1}{x_{12} x_{23} x_{35}}, \\ 
	&a_{12} = 1, a_{13} = 1, a_{14} = 1, a_{15} = 1, \\ 
	&a_{23} = \frac{x_{12} x_{23} - x_{13}}{x_{12}^{2} x_{23}}, a_{24} = -\frac{x_{14} x_{23} - x_{13} x_{24}}{x_{12} x_{23}}, a_{25} = 1, \\ 
	&a_{34} = -\frac{x_{24}}{x_{23}}, a_{35} = \frac{x_{12} x_{15} x_{24} + {\left(x_{12} x_{14} x_{23} - x_{13} x_{14} + {\left(x_{12}^{3} - x_{12}^{2}\right)} x_{23} x_{24}\right)} x_{35}}{{\left(x_{12}^{2} x_{14} x_{23}^{2} - x_{12}^{2} x_{13} x_{23} x_{24}\right)} x_{35}}, \\ 
	&a_{45} = -\frac{x_{12} x_{15} x_{23} + {\left(x_{12} x_{13} x_{23} - x_{13}^{2} + {\left(x_{12}^{3} - x_{12}^{2}\right)} x_{23}^{2}\right)} x_{35}}{{\left(x_{12}^{2} x_{14} x_{23}^{2} - x_{12}^{2} x_{13} x_{23} x_{24}\right)} x_{35}}
\end{align*}

Now assume $x_{14}= \frac{x_{13}x_{24}}{x_{23}}$
\begin{equation*}x=\left(\begin{matrix}
		1 & x_{12} & x_{13} & x_{14} & x_{15} \\
		0 & 1 & x_{23} & x_{24} & 0 \\
		0 & 0 & 1 & 0 & x_{35}  \\
		0 & 0 & 0 & 1 & 0  \\
		0 & 0 & 0 & 0 & 1 \\
	\end{matrix}\right),x^A=\left(\begin{matrix}
		1 & 1 & 0 & 0 & 0 \\
		0 & 1 & 1 & 0 & 0 \\
		0 & 0 & 1 & 0 & 1 \\
		0 & 0 & 0 & 1 & 0 \\
		0 & 0 & 0 & 0 & 1
	\end{matrix}\right)
\end{equation*} \newline
Where matrix $A$ has entries
\begin{align*}
	&d_{1} = 1, d_{2} = \frac{1}{x_{12}}, d_{3} = \frac{1}{x_{12} x_{23}}, d_{4} = 1, d_{5} = \frac{1}{x_{12} x_{23} x_{35}}, \\ 
	&a_{12} = 1, a_{13} = 1, a_{14} = 1, a_{15} = 1, \\ 
	&a_{23} = \frac{x_{12} x_{23} - x_{13}}{x_{12}^{2} x_{23}}, a_{24} = 0, a_{25} = -\frac{x_{12} x_{15} x_{23} - {\left(x_{12}^{2} x_{23}^{2} - x_{12} x_{13} x_{23} + x_{13}^{2}\right)} x_{35}}{x_{12}^{3} x_{23}^{2} x_{35}}, \\ 
	&a_{34} = -\frac{x_{24}}{x_{23}}, a_{35} = 1, \\ 
	&a_{45} = -\frac{x_{12}^{2} x_{23}^{2} - x_{12} x_{23} + x_{13}}{x_{12}^{2} x_{23} x_{24}}
\end{align*}

%111000
\begin{equation*}x=\left(% [inline block 17: 6 envs, 2108 chars -> data_tex | \begin{matrix} 		1 & x_{12} & 0 & 0 & 0 \\...]
\right)
\end{equation*} \newline
Where matrix $A$ has entries
\begin{align*}
	&d_{1} = 1, d_{2} = \frac{1}{x_{12}}, d_{3} = \frac{1}{x_{12} x_{23}}, d_{4} = 1, d_{5} = \frac{1}{x_{12} x_{23} x_{35}}, \\ 
	&a_{12} = 1, a_{13} = 1, a_{14} = 1, a_{15} = 1, \\ 
	&a_{23} = \frac{1}{x_{12}}, a_{24} = -\frac{x_{14}}{x_{12}}, a_{25} = 1, \\ 
	&a_{34} = -\frac{x_{24}}{x_{23}}, a_{35} = -\frac{x_{14} x_{25} - {\left(x_{14} x_{23} + {\left(x_{12}^{2} - x_{12}\right)} x_{23} x_{24}\right)} x_{35}}{x_{12} x_{14} x_{23}^{2} x_{35}}, \\ 
	&a_{45} = -\frac{x_{12} - 1}{x_{14}}
\end{align*}

%111011
\begin{equation*}x=\left(\begin{matrix}
		1 & x_{12} & 0 & x_{14} & x_{15} \\
		0 & 1 & x_{23} & x_{24} & x_{25} \\
		0 & 0 & 1 & 0 & x_{35}  \\
		0 & 0 & 0 & 1 & 0  \\
		0 & 0 & 0 & 0 & 1 \\
	\end{matrix}\right),x^A=\left(\begin{matrix}
		1 & 1 & 0 & 0 & 0 \\
		0 & 1 & 1 & 0 & 0 \\
		0 & 0 & 1 & 0 & 1 \\
		0 & 0 & 0 & 1 & 0 \\
		0 & 0 & 0 & 0 & 1
	\end{matrix}\right)
\end{equation*} \newline
Where matrix $A$ has entries
\begin{align*}
	&d_{1} = 1, d_{2} = \frac{1}{x_{12}}, d_{3} = \frac{1}{x_{12} x_{23}}, d_{4} = 1, d_{5} = \frac{1}{x_{12} x_{23} x_{35}}, \\ 
	&a_{12} = 1, a_{13} = 1, a_{14} = 1, a_{15} = 1, \\ 
	&a_{23} = \frac{1}{x_{12}}, a_{24} = -\frac{x_{14}}{x_{12}}, a_{25} = 1, \\ 
	&a_{34} = -\frac{x_{24}}{x_{23}}, a_{35} = \frac{x_{15} x_{24} - x_{14} x_{25} + {\left(x_{14} x_{23} + {\left(x_{12}^{2} - x_{12}\right)} x_{23} x_{24}\right)} x_{35}}{x_{12} x_{14} x_{23}^{2} x_{35}}, \\ 
	&a_{45} = -\frac{{\left(x_{12}^{2} - x_{12}\right)} x_{23} x_{35} + x_{15}}{x_{12} x_{14} x_{23} x_{35}}
\end{align*}

%111100
\begin{equation*}x=\left(\begin{matrix}
		1 & x_{12} & x_{13} & 0 & 0 \\
		0 & 1 & x_{23} & x_{24} & x_{25} \\
		0 & 0 & 1 & 0 & x_{35}  \\
		0 & 0 & 0 & 1 & 0  \\
		0 & 0 & 0 & 0 & 1 \\
	\end{matrix}\right),x^A=\left(\begin{matrix}
		1 & 1 & 0 & 0 & 0 \\
		0 & 1 & 1 & 0 & 0 \\
		0 & 0 & 1 & 0 & 1 \\
		0 & 0 & 0 & 1 & 0 \\
		0 & 0 & 0 & 0 & 1
	\end{matrix}\right)
\end{equation*} \newline
Where matrix $A$ has entries
\begin{align*}
	&d_{1} = 1, d_{2} = \frac{1}{x_{12}}, d_{3} = \frac{1}{x_{12} x_{23}}, d_{4} = 1, d_{5} = \frac{1}{x_{12} x_{23} x_{35}}, \\ 
	&a_{12} = 1, a_{13} = 1, a_{14} = 1, a_{15} = 1, \\ 
	&a_{23} = \frac{x_{12} x_{23} - x_{13}}{x_{12}^{2} x_{23}}, a_{24} = \frac{x_{13} x_{24}}{x_{12} x_{23}}, a_{25} = 1, \\ 
	&a_{34} = -\frac{x_{24}}{x_{23}}, a_{35} = -\frac{x_{12} - 1}{x_{13}}, \\ 
	&a_{45} = -\frac{x_{12} x_{13} x_{25} - {\left(x_{12} x_{13} x_{23} - x_{13}^{2} + {\left(x_{12}^{3} - x_{12}^{2}\right)} x_{23}^{2}\right)} x_{35}}{x_{12}^{2} x_{13} x_{23} x_{24} x_{35}}
\end{align*}

%111101
\begin{equation*}x=\left(\begin{matrix}
		1 & x_{12} & x_{13} & 0 & x_{15} \\
		0 & 1 & x_{23} & x_{24} & x_{25} \\
		0 & 0 & 1 & 0 & x_{35}  \\
		0 & 0 & 0 & 1 & 0  \\
		0 & 0 & 0 & 0 & 1 \\
	\end{matrix}\right),x^A=\left(\begin{matrix}
		1 & 1 & 0 & 0 & 0 \\
		0 & 1 & 1 & 0 & 0 \\
		0 & 0 & 1 & 0 & 1 \\
		0 & 0 & 0 & 1 & 0 \\
		0 & 0 & 0 & 0 & 1
	\end{matrix}\right)
\end{equation*} \newline
Where matrix $A$ has entries
\begin{align*}
	&d_{1} = 1, d_{2} = \frac{1}{x_{12}}, d_{3} = \frac{1}{x_{12} x_{23}}, d_{4} = 1, d_{5} = \frac{1}{x_{12} x_{23} x_{35}}, \\ 
	&a_{12} = 1, a_{13} = 1, a_{14} = 1, a_{15} = 1, \\ 
	&a_{23} = \frac{x_{12} x_{23} - x_{13}}{x_{12}^{2} x_{23}}, a_{24} = \frac{x_{13} x_{24}}{x_{12} x_{23}}, a_{25} = 1, \\ 
	&a_{34} = -\frac{x_{24}}{x_{23}}, a_{35} = -\frac{{\left(x_{12}^{2} - x_{12}\right)} x_{23} x_{35} + x_{15}}{x_{12} x_{13} x_{23} x_{35}}, \\ 
	&a_{45} = \frac{x_{12} x_{15} x_{23} - x_{12} x_{13} x_{25} + {\left(x_{12} x_{13} x_{23} - x_{13}^{2} + {\left(x_{12}^{3} - x_{12}^{2}\right)} x_{23}^{2}\right)} x_{35}}{x_{12}^{2} x_{13} x_{23} x_{24} x_{35}}
\end{align*}

%111110

First assume $x_{14}\neq \frac{x_{13}x_{24}}{x_{23}}$
\begin{equation*}x=\left(\begin{matrix}
		1 & x_{12} & x_{13} & x_{14} & 0 \\
		0 & 1 & x_{23} & x_{24} & x_{25} \\
		0 & 0 & 1 & 0 & x_{35}  \\
		0 & 0 & 0 & 1 & 0  \\
		0 & 0 & 0 & 0 & 1 \\
	\end{matrix}\right),x^A=\left(\begin{matrix}
		1 & 1 & 0 & 0 & 0 \\
		0 & 1 & 1 & 0 & 0 \\
		0 & 0 & 1 & 0 & 1 \\
		0 & 0 & 0 & 1 & 0 \\
		0 & 0 & 0 & 0 & 1
	\end{matrix}\right)
\end{equation*} \newline
Where matrix $A$ has entries
\begin{align*}
	&d_{1} = 1, d_{2} = \frac{1}{x_{12}}, d_{3} = \frac{1}{x_{12} x_{23}}, d_{4} = 1, d_{5} = \frac{1}{x_{12} x_{23} x_{35}}, \\ 
	&a_{12} = 1, a_{13} = 1, a_{14} = 1, a_{15} = 1, \\ 
	&a_{23} = \frac{x_{12} x_{23} - x_{13}}{x_{12}^{2} x_{23}}, a_{24} = -\frac{x_{14} x_{23} - x_{13} x_{24}}{x_{12} x_{23}}, a_{25} = 1, \\ 
	&a_{34} = -\frac{x_{24}}{x_{23}}, a_{35} = -\frac{x_{12} x_{14} x_{25} - {\left(x_{12} x_{14} x_{23} - x_{13} x_{14} + {\left(x_{12}^{3} - x_{12}^{2}\right)} x_{23} x_{24}\right)} x_{35}}{{\left(x_{12}^{2} x_{14} x_{23}^{2} - x_{12}^{2} x_{13} x_{23} x_{24}\right)} x_{35}}, \\ 
	&a_{45} = \frac{x_{12} x_{13} x_{25} - {\left(x_{12} x_{13} x_{23} - x_{13}^{2} + {\left(x_{12}^{3} - x_{12}^{2}\right)} x_{23}^{2}\right)} x_{35}}{{\left(x_{12}^{2} x_{14} x_{23}^{2} - x_{12}^{2} x_{13} x_{23} x_{24}\right)} x_{35}}
\end{align*}

Now assume $x_{14}= \frac{x_{13}x_{24}}{x_{23}}$
\begin{equation*}x=\left(\begin{matrix}
		1 & x_{12} & x_{13} & x_{14} & 0 \\
		0 & 1 & x_{23} & x_{24} & x_{25} \\
		0 & 0 & 1 & 0 & x_{35}  \\
		0 & 0 & 0 & 1 & 0  \\
		0 & 0 & 0 & 0 & 1 \\
	\end{matrix}\right),x^A=\left(\begin{matrix}
		1 & 1 & 0 & 0 & 0 \\
		0 & 1 & 1 & 0 & 0 \\
		0 & 0 & 1 & 0 & 1 \\
		0 & 0 & 0 & 1 & 0 \\
		0 & 0 & 0 & 0 & 1
	\end{matrix}\right)
\end{equation*} \newline
Where matrix $A$ has entries
\begin{align*}
	&d_{1} = 1, d_{2} = \frac{1}{x_{12}}, d_{3} = \frac{1}{x_{12} x_{23}}, d_{4} = 1, d_{5} = \frac{1}{x_{12} x_{23} x_{35}}, \\ 
	&a_{12} = 1, a_{13} = 1, a_{14} = 1, a_{15} = 1, \\ 
	&a_{23} = \frac{x_{12} x_{23} - x_{13}}{x_{12}^{2} x_{23}}, a_{24} = 0, a_{25} = \frac{x_{12} x_{13} x_{25} + {\left(x_{12}^{2} x_{23}^{2} - x_{12} x_{13} x_{23} + x_{13}^{2}\right)} x_{35}}{x_{12}^{3} x_{23}^{2} x_{35}}, \\ 
	&a_{34} = -\frac{x_{24}}{x_{23}}, a_{35} = 1, \\ 
	&a_{45} = -\frac{x_{12} x_{25} + {\left(x_{12}^{2} x_{23}^{2} - x_{12} x_{23} + x_{13}\right)} x_{35}}{x_{12}^{2} x_{23} x_{24} x_{35}}
\end{align*}

%111111

First assume $x_{14}\neq \frac{x_{13}x_{24}}{x_{23}}$
\begin{equation*}x=\left(\begin{matrix}
		1 & x_{12} & x_{13} & x_{14} & x_{15} \\
		0 & 1 & x_{23} & x_{24} & x_{25} \\
		0 & 0 & 1 & 0 & x_{35}  \\
		0 & 0 & 0 & 1 & 0  \\
		0 & 0 & 0 & 0 & 1 \\
	\end{matrix}\right),x^A=\left(\begin{matrix}
		1 & 1 & 0 & 0 & 0 \\
		0 & 1 & 1 & 0 & 0 \\
		0 & 0 & 1 & 0 & 1 \\
		0 & 0 & 0 & 1 & 0 \\
		0 & 0 & 0 & 0 & 1
	\end{matrix}\right)
\end{equation*} \newline
Where matrix $A$ has entries
\begin{align*}
	&d_{1} = 1, d_{2} = \frac{1}{x_{12}}, d_{3} = \frac{1}{x_{12} x_{23}}, d_{4} = 1, d_{5} = \frac{1}{x_{12} x_{23} x_{35}}, \\ 
	&a_{12} = 1, a_{13} = 1, a_{14} = 1, a_{15} = 1, \\ 
	&a_{23} = \frac{x_{12} x_{23} - x_{13}}{x_{12}^{2} x_{23}}, a_{24} = -\frac{x_{14} x_{23} - x_{13} x_{24}}{x_{12} x_{23}}, a_{25} = 1, \\ 
	&a_{34} = -\frac{x_{24}}{x_{23}}, a_{35} = \frac{x_{12} x_{15} x_{24} - x_{12} x_{14} x_{25} + {\left(x_{12} x_{14} x_{23} - x_{13} x_{14} + {\left(x_{12}^{3} - x_{12}^{2}\right)} x_{23} x_{24}\right)} x_{35}}{{\left(x_{12}^{2} x_{14} x_{23}^{2} - x_{12}^{2} x_{13} x_{23} x_{24}\right)} x_{35}}, \\ 
	&a_{45} = -\frac{x_{12} x_{15} x_{23} - x_{12} x_{13} x_{25} + {\left(x_{12} x_{13} x_{23} - x_{13}^{2} + {\left(x_{12}^{3} - x_{12}^{2}\right)} x_{23}^{2}\right)} x_{35}}{{\left(x_{12}^{2} x_{14} x_{23}^{2} - x_{12}^{2} x_{13} x_{23} x_{24}\right)} x_{35}}
\end{align*}

Now assume $x_{14}= \frac{x_{13}x_{24}}{x_{23}}$
\begin{equation*}x=\left(\begin{matrix}
		1 & x_{12} & x_{13} & x_{14} & x_{15} \\
		0 & 1 & x_{23} & x_{24} & x_{25} \\
		0 & 0 & 1 & 0 & x_{35}  \\
		0 & 0 & 0 & 1 & 0  \\
		0 & 0 & 0 & 0 & 1 \\
	\end{matrix}\right),x^A=\left(\begin{matrix}
		1 & 1 & 0 & 0 & 0 \\
		0 & 1 & 1 & 0 & 0 \\
		0 & 0 & 1 & 0 & 1 \\
		0 & 0 & 0 & 1 & 0 \\
		0 & 0 & 0 & 0 & 1
	\end{matrix}\right)
\end{equation*} \newline
Where matrix $A$ has entries
\begin{align*}
	&d_{1} = 1, d_{2} = \frac{1}{x_{12}}, d_{3} = \frac{1}{x_{12} x_{23}}, d_{4} = 1, d_{5} = \frac{1}{x_{12} x_{23} x_{35}}, \\ 
	&a_{12} = 1, a_{13} = 1, a_{14} = 1, a_{15} = 1, \\ 
	&a_{23} = \frac{x_{12} x_{23} - x_{13}}{x_{12}^{2} x_{23}}, a_{24} = 0, a_{25} = -\frac{x_{12} x_{15} x_{23} - x_{12} x_{13} x_{25} - {\left(x_{12}^{2} x_{23}^{2} - x_{12} x_{13} x_{23} + x_{13}^{2}\right)} x_{35}}{x_{12}^{3} x_{23}^{2} x_{35}}, \\ 
	&a_{34} = -\frac{x_{24}}{x_{23}}, a_{35} = 1, \\ 
	&a_{45} = -\frac{x_{12} x_{25} + {\left(x_{12}^{2} x_{23}^{2} - x_{12} x_{23} + x_{13}\right)} x_{35}}{x_{12}^{2} x_{23} x_{24} x_{35}}
\end{align*}

		\section{Subcases of  $Y_5$}

%000001
\begin{equation*}x=\left(% [inline block 18: 38 envs, 14340 chars -> data_tex | \begin{matrix} 		1 & 0 & 0 & 0 & x_{15} \\...]
\right)
\end{equation*} \newline
Where matrix $A$ has entries
\begin{align*}
	&d_{1} = 1, d_{2} = \frac{x_{24}}{x_{14}}, d_{3} = \frac{x_{24}}{x_{14} x_{23}}, d_{4} = \frac{1}{x_{14}}, d_{5} = \frac{1}{x_{14} x_{45}}, \\ 
	&a_{12} = 0, a_{13} = 1, a_{14} = 1, a_{15} = 1, \\ 
	&a_{23} = 1, a_{24} = 1, a_{25} = 1, \\ 
	&a_{34} = -\frac{x_{24}}{x_{14} x_{23}}, a_{35} = \frac{x_{15} x_{24} + {\left({2} \, x_{14}^{2} - {2} \, x_{14} x_{24}\right)} x_{45}}{x_{14}^{2} x_{23} x_{45}}, \\ 
	&a_{45} = \frac{{2} \, x_{14} x_{45} - x_{15}}{x_{14}^{2} x_{45}}
\end{align*}

%010100
\begin{equation*}x=\left(\begin{matrix}
		1 & 0 & x_{13} & 0 & 0 \\
		0 & 1 & x_{23} & x_{24} & 0 \\
		0 & 0 & 1 & 0 & 0  \\
		0 & 0 & 0 & 1 & x_{45}  \\
		0 & 0 & 0 & 0 & 1 \\
	\end{matrix}\right),x^A=\left(\begin{matrix}
		1 & 0 & 0 & 1 & 0 \\
		0 & 1 & 1 & 0 & 0 \\
		0 & 0 & 1 & 0 & 1 \\
		0 & 0 & 0 & 1 & 1 \\
		0 & 0 & 0 & 0 & 1
	\end{matrix}\right)
\end{equation*} \newline
Where matrix $A$ has entries
\begin{align*}
	&d_{1} = 1, d_{2} = -\frac{x_{23}}{x_{13}}, d_{3} = -\frac{1}{x_{13}}, d_{4} = -\frac{x_{23}}{x_{13} x_{24}}, d_{5} = -\frac{x_{23}}{x_{13} x_{24} x_{45}}, \\ 
	&a_{12} = -1, a_{13} = 1, a_{14} = 1, a_{15} = 1, \\ 
	&a_{23} = 1, a_{24} = 1, a_{25} = 1, \\ 
	&a_{34} = \frac{1}{x_{13}}, a_{35} = \frac{2}{x_{13}}, \\ 
	&a_{45} = \frac{{2} \, x_{13} - {2} \, x_{23}}{x_{13} x_{24}}
\end{align*}

%010101
\begin{equation*}x=\left(\begin{matrix}
		1 & 0 & x_{13} & 0 & x_{15} \\
		0 & 1 & x_{23} & x_{24} & 0 \\
		0 & 0 & 1 & 0 & 0  \\
		0 & 0 & 0 & 1 & x_{45}  \\
		0 & 0 & 0 & 0 & 1 \\
	\end{matrix}\right),x^A=\left(\begin{matrix}
		1 & 0 & 0 & 1 & 0 \\
		0 & 1 & 1 & 0 & 0 \\
		0 & 0 & 1 & 0 & 1 \\
		0 & 0 & 0 & 1 & 1 \\
		0 & 0 & 0 & 0 & 1
	\end{matrix}\right)
\end{equation*} \newline
Where matrix $A$ has entries
\begin{align*}
	&d_{1} = 1, d_{2} = -\frac{x_{23}}{x_{13}}, d_{3} = -\frac{1}{x_{13}}, d_{4} = -\frac{x_{23}}{x_{13} x_{24}}, d_{5} = -\frac{x_{23}}{x_{13} x_{24} x_{45}}, \\ 
	&a_{12} = -1, a_{13} = 1, a_{14} = 1, a_{15} = 1, \\ 
	&a_{23} = 1, a_{24} = 1, a_{25} = 1, \\ 
	&a_{34} = \frac{1}{x_{13}}, a_{35} = \frac{{2} \, x_{13} x_{24} x_{45} + x_{15} x_{23}}{x_{13}^{2} x_{24} x_{45}}, \\ 
	&a_{45} = -\frac{x_{15} x_{23}^{2} - {\left({2} \, x_{13}^{2} - {2} \, x_{13} x_{23}\right)} x_{24} x_{45}}{x_{13}^{2} x_{24}^{2} x_{45}}
\end{align*}

%010110
First assume $x_{14}\neq \frac{x_{13}x_{24}}{x_{23}}$
\begin{equation*}x=\left(\begin{matrix}
		1 & 0 & x_{13} & x_{14} & 0 \\
		0 & 1 & x_{23} & x_{24} & 0 \\
		0 & 0 & 1 & 0 & 0  \\
		0 & 0 & 0 & 1 & x_{45}  \\
		0 & 0 & 0 & 0 & 1 \\
	\end{matrix}\right),x^A=\left(\begin{matrix}
		1 & 0 & 0 & 1 & 0 \\
		0 & 1 & 1 & 0 & 0 \\
		0 & 0 & 1 & 0 & 1 \\
		0 & 0 & 0 & 1 & 1 \\
		0 & 0 & 0 & 0 & 1
	\end{matrix}\right)
\end{equation*} \newline
Where matrix $A$ has entries
\begin{align*}
	&d_{1} = 1, d_{2} = \frac{x_{23} x_{24}}{x_{14} x_{23} - x_{13} x_{24}}, d_{3} = \frac{x_{24}}{x_{14} x_{23} - x_{13} x_{24}}, d_{4} = \frac{x_{23}}{x_{14} x_{23} - x_{13} x_{24}}, d_{5} = \frac{x_{23}}{{\left(x_{14} x_{23} - x_{13} x_{24}\right)} x_{45}}, \\ 
	&a_{12} = \frac{x_{13} x_{24}}{x_{14} x_{23} - x_{13} x_{24}}, a_{13} = 1, a_{14} = 1, a_{15} = 1, \\ 
	&a_{23} = 1, a_{24} = 1, a_{25} = 1, \\ 
	&a_{34} = -\frac{x_{24}}{x_{14} x_{23} - x_{13} x_{24}}, a_{35} = \frac{{2} \, x_{14} - {2} \, x_{24}}{x_{14} x_{23} - x_{13} x_{24}}, \\ 
	&a_{45} = -\frac{{2} \, x_{13} - {2} \, x_{23}}{x_{14} x_{23} - x_{13} x_{24}}
\end{align*}

Now assume $x_{14}= \frac{x_{13}x_{24}}{x_{23}}$
\begin{equation*}x=\left(\begin{matrix}
		1 & 0 & x_{13} & x_{14} & 0 \\
		0 & 1 & x_{23} & x_{24} & 0 \\
		0 & 0 & 1 & 0 & 0  \\
		0 & 0 & 0 & 1 & x_{45}  \\
		0 & 0 & 0 & 0 & 1 \\
	\end{matrix}\right),x^A=\left(\begin{matrix}
		1 & 0 & 0 & 0 & 0 \\
		0 & 1 & 1 & 0 & 0 \\
		0 & 0 & 1 & 0 & 1 \\
		0 & 0 & 0 & 1 & 1 \\
		0 & 0 & 0 & 0 & 1
	\end{matrix}\right)
\end{equation*} \newline
Where matrix $A$ has entries
\begin{align*}
	&d_{1} = 1, d_{2} = 1, d_{3} = \frac{1}{x_{23}}, d_{4} = \frac{1}{x_{24}}, d_{5} = \frac{1}{x_{24} x_{45}}, \\ 
	&a_{12} = \frac{x_{13}}{x_{23}}, a_{13} = 1, a_{14} = 1, a_{15} = 1, \\ 
	&a_{23} = 1, a_{24} = -\frac{x_{13} - {2} \, x_{23}}{x_{13}}, a_{25} = 1, \\ 
	&a_{34} = -\frac{1}{x_{23}}, a_{35} = 1, \\ 
	&a_{45} = -\frac{{\left(x_{13} - {2} \,\right)} x_{23}}{x_{13} x_{24}}
\end{align*}

%010111

First assume $x_{14}\neq \frac{x_{13}x_{24}}{x_{23}}$
\begin{equation*}x=\left(\begin{matrix}
		1 & 0 & x_{13} & x_{14} & x_{15} \\
		0 & 1 & x_{23} & x_{24} & 0 \\
		0 & 0 & 1 & 0 & 0  \\
		0 & 0 & 0 & 1 & x_{45}  \\
		0 & 0 & 0 & 0 & 1 \\
	\end{matrix}\right),x^A=\left(\begin{matrix}
		1 & 0 & 0 & 1 & 0 \\
		0 & 1 & 1 & 0 & 0 \\
		0 & 0 & 1 & 0 & 1 \\
		0 & 0 & 0 & 1 & 1 \\
		0 & 0 & 0 & 0 & 1
	\end{matrix}\right)
\end{equation*} \newline
Where matrix $A$ has entries
\begin{align*}
	&d_{1} = 1, d_{2} = \frac{x_{23} x_{24}}{x_{14} x_{23} - x_{13} x_{24}}, d_{3} = \frac{x_{24}}{x_{14} x_{23} - x_{13} x_{24}}, d_{4} = \frac{x_{23}}{x_{14} x_{23} - x_{13} x_{24}}, d_{5} = \frac{x_{23}}{{\left(x_{14} x_{23} - x_{13} x_{24}\right)} x_{45}}, \\ 
	&a_{12} = \frac{x_{13} x_{24}}{x_{14} x_{23} - x_{13} x_{24}}, a_{13} = 1, a_{14} = 1, a_{15} = 1, \\ 
	&a_{23} = 1, a_{24} = 1, a_{25} = 1, \\ 
	&a_{34} = -\frac{x_{24}}{x_{14} x_{23} - x_{13} x_{24}}, a_{35} = \frac{x_{15} x_{23} x_{24} + {\left({2} \, x_{14}^{2} x_{23} + {2} \, x_{13} x_{24}^{2} - {\left({2} \, x_{13} x_{14} + {2} \, x_{14} x_{23}\right)} x_{24}\right)} x_{45}}{{\left(x_{14}^{2} x_{23}^{2} - 2 \, x_{13} x_{14} x_{23} x_{24} + x_{13}^{2} x_{24}^{2}\right)} x_{45}}, \\ 
	&a_{45} = -\frac{x_{15} x_{23}^{2} + {\left({2} \, x_{13} x_{14} x_{23} - {2} \, x_{14} x_{23}^{2} - {\left({2} \, x_{13}^{2} - {2} \, x_{13} x_{23}\right)} x_{24}\right)} x_{45}}{{\left(x_{14}^{2} x_{23}^{2} - 2 \, x_{13} x_{14} x_{23} x_{24} + x_{13}^{2} x_{24}^{2}\right)} x_{45}}
\end{align*}

Now assume $x_{14}= \frac{x_{13}x_{24}}{x_{23}}$
\begin{equation*}x=\left(\begin{matrix}
		1 & 0 & x_{13} & x_{14} & x_{15} \\
		0 & 1 & x_{23} & x_{24} & 0 \\
		0 & 0 & 1 & 0 & 0  \\
		0 & 0 & 0 & 1 & x_{45}  \\
		0 & 0 & 0 & 0 & 1 \\
	\end{matrix}\right),x^A=\left(\begin{matrix}
		1 & 0 & 0 & 0 & 0 \\
		0 & 1 & 1 & 0 & 0 \\
		0 & 0 & 1 & 0 & 1 \\
		0 & 0 & 0 & 1 & 1 \\
		0 & 0 & 0 & 0 & 1
	\end{matrix}\right)
\end{equation*} \newline
Where matrix $A$ has entries
\begin{align*}
	&d_{1} = 1, d_{2} = 1, d_{3} = \frac{1}{x_{23}}, d_{4} = \frac{1}{x_{24}}, d_{5} = \frac{1}{x_{24} x_{45}}, \\ 
	&a_{12} = \frac{x_{13}}{x_{23}}, a_{13} = 1, a_{14} = 1, a_{15} = 1, \\ 
	&a_{23} = 1, a_{24} = -\frac{x_{15} x_{23} + {\left(x_{13} - {2} \, x_{23}\right)} x_{24} x_{45}}{x_{13} x_{24} x_{45}}, a_{25} = 1, \\ 
	&a_{34} = -\frac{1}{x_{23}}, a_{35} = 1, \\ 
	&a_{45} = -\frac{{\left(x_{13} - {2} \,\right)} x_{23} x_{24} x_{45} + x_{15} x_{23}}{x_{13} x_{24}^{2} x_{45}}
\end{align*}

%011000
\begin{equation*}x=\left(% [inline block 19: 8 envs, 2839 chars -> data_tex | \begin{matrix} 		1 & 0 & 0 & 0 & 0 \\...]
\right)
\end{equation*} \newline
Where matrix $A$ has entries
\begin{align*}
	&d_{1} = 1, d_{2} = \frac{x_{24}}{x_{14}}, d_{3} = \frac{x_{24}}{x_{14} x_{23}}, d_{4} = \frac{1}{x_{14}}, d_{5} = \frac{1}{x_{14} x_{45}}, \\ 
	&a_{12} = 0, a_{13} = 1, a_{14} = 1, a_{15} = 1, \\ 
	&a_{23} = 1, a_{24} = 1, a_{25} = 1, \\ 
	&a_{34} = -\frac{x_{24}}{x_{14} x_{23}}, a_{35} = \frac{x_{15} x_{24} - x_{14} x_{25} + {\left({2} \, x_{14}^{2} - {2} \, x_{14} x_{24}\right)} x_{45}}{x_{14}^{2} x_{23} x_{45}}, \\ 
	&a_{45} = \frac{{2} \, x_{14} x_{45} - x_{15}}{x_{14}^{2} x_{45}}
\end{align*}

%011100
\begin{equation*}x=\left(\begin{matrix}
		1 & 0 & x_{13} & 0 & 0 \\
		0 & 1 & x_{23} & x_{24} & x_{25} \\
		0 & 0 & 1 & 0 & 0  \\
		0 & 0 & 0 & 1 & x_{45}  \\
		0 & 0 & 0 & 0 & 1 \\
	\end{matrix}\right),x^A=\left(\begin{matrix}
		1 & 0 & 0 & 1 & 0 \\
		0 & 1 & 1 & 0 & 0 \\
		0 & 0 & 1 & 0 & 1 \\
		0 & 0 & 0 & 1 & 1 \\
		0 & 0 & 0 & 0 & 1
	\end{matrix}\right)
\end{equation*} \newline
Where matrix $A$ has entries
\begin{align*}
	&d_{1} = 1, d_{2} = -\frac{x_{23}}{x_{13}}, d_{3} = -\frac{1}{x_{13}}, d_{4} = -\frac{x_{23}}{x_{13} x_{24}}, d_{5} = -\frac{x_{23}}{x_{13} x_{24} x_{45}}, \\ 
	&a_{12} = -1, a_{13} = 1, a_{14} = 1, a_{15} = 1, \\ 
	&a_{23} = 1, a_{24} = 1, a_{25} = 1, \\ 
	&a_{34} = \frac{1}{x_{13}}, a_{35} = \frac{2}{x_{13}}, \\ 
	&a_{45} = \frac{x_{23} x_{25} + {\left({2} \, x_{13} - {2} \, x_{23}\right)} x_{24} x_{45}}{x_{13} x_{24}^{2} x_{45}}
\end{align*}

%011101
\begin{equation*}x=\left(\begin{matrix}
		1 & 0 & x_{13} & 0 & x_{15} \\
		0 & 1 & x_{23} & x_{24} & x_{25} \\
		0 & 0 & 1 & 0 & 0  \\
		0 & 0 & 0 & 1 & x_{45}  \\
		0 & 0 & 0 & 0 & 1 \\
	\end{matrix}\right),x^A=\left(\begin{matrix}
		1 & 0 & 0 & 1 & 0 \\
		0 & 1 & 1 & 0 & 0 \\
		0 & 0 & 1 & 0 & 1 \\
		0 & 0 & 0 & 1 & 1 \\
		0 & 0 & 0 & 0 & 1
	\end{matrix}\right)
\end{equation*} \newline
Where matrix $A$ has entries
\begin{align*}
	&d_{1} = 1, d_{2} = -\frac{x_{23}}{x_{13}}, d_{3} = -\frac{1}{x_{13}}, d_{4} = -\frac{x_{23}}{x_{13} x_{24}}, d_{5} = -\frac{x_{23}}{x_{13} x_{24} x_{45}}, \\ 
	&a_{12} = -1, a_{13} = 1, a_{14} = 1, a_{15} = 1, \\ 
	&a_{23} = 1, a_{24} = 1, a_{25} = 1, \\ 
	&a_{34} = \frac{1}{x_{13}}, a_{35} = \frac{{2} \, x_{13} x_{24} x_{45} + x_{15} x_{23}}{x_{13}^{2} x_{24} x_{45}}, \\ 
	&a_{45} = -\frac{x_{15} x_{23}^{2} - x_{13} x_{23} x_{25} - {\left({2} \, x_{13}^{2} - {2} \, x_{13} x_{23}\right)} x_{24} x_{45}}{x_{13}^{2} x_{24}^{2} x_{45}}
\end{align*}

%011110
First assume $x_{14}\neq \frac{x_{13}x_{24}}{x_{23}}$
\begin{equation*}x=\left(\begin{matrix}
		1 & 0 & x_{13} & x_{14} & 0 \\
		0 & 1 & x_{23} & x_{24} & x_{25} \\
		0 & 0 & 1 & 0 & 0  \\
		0 & 0 & 0 & 1 & x_{45}  \\
		0 & 0 & 0 & 0 & 1 \\
	\end{matrix}\right),x^A=\left(\begin{matrix}
		1 & 0 & 0 & 1 & 0 \\
		0 & 1 & 1 & 0 & 0 \\
		0 & 0 & 1 & 0 & 1 \\
		0 & 0 & 0 & 1 & 1 \\
		0 & 0 & 0 & 0 & 1
	\end{matrix}\right)
\end{equation*} \newline
Where matrix $A$ has entries
\begin{align*}
	&d_{1} = 1, d_{2} = \frac{x_{23} x_{24}}{x_{14} x_{23} - x_{13} x_{24}}, d_{3} = \frac{x_{24}}{x_{14} x_{23} - x_{13} x_{24}}, d_{4} = \frac{x_{23}}{x_{14} x_{23} - x_{13} x_{24}}, d_{5} = \frac{x_{23}}{{\left(x_{14} x_{23} - x_{13} x_{24}\right)} x_{45}}, \\ 
	&a_{12} = \frac{x_{13} x_{24}}{x_{14} x_{23} - x_{13} x_{24}}, a_{13} = 1, a_{14} = 1, a_{15} = 1, \\ 
	&a_{23} = 1, a_{24} = 1, a_{25} = 1, \\ 
	&a_{34} = -\frac{x_{24}}{x_{14} x_{23} - x_{13} x_{24}}, a_{35} = -\frac{x_{14} x_{23} x_{25} - {\left({2} \, x_{14}^{2} x_{23} + {2} \, x_{13} x_{24}^{2} - {\left({2} \, x_{13} x_{14} + {2} \, x_{14} x_{23}\right)} x_{24}\right)} x_{45}}{{\left(x_{14}^{2} x_{23}^{2} - 2 \, x_{13} x_{14} x_{23} x_{24} + x_{13}^{2} x_{24}^{2}\right)} x_{45}}, \\ 
	&a_{45} = \frac{x_{13} x_{23} x_{25} - {\left({2} \, x_{13} x_{14} x_{23} - {2} \, x_{14} x_{23}^{2} - {\left({2} \, x_{13}^{2} - {2} \, x_{13} x_{23}\right)} x_{24}\right)} x_{45}}{{\left(x_{14}^{2} x_{23}^{2} - 2 \, x_{13} x_{14} x_{23} x_{24} + x_{13}^{2} x_{24}^{2}\right)} x_{45}}
\end{align*}
Now assume $x_{14}= \frac{x_{13}x_{24}}{x_{23}}$
\begin{equation*}x=\left(\begin{matrix}
		1 & 0 & x_{13} & x_{14} & 0 \\
		0 & 1 & x_{23} & x_{24} & x_{25} \\
		0 & 0 & 1 & 0 & 0  \\
		0 & 0 & 0 & 1 & x_{45}  \\
		0 & 0 & 0 & 0 & 1 \\
	\end{matrix}\right),x^A=\left(\begin{matrix}
		1 & 0 & 0 & 0 & 0 \\
		0 & 1 & 1 & 0 & 0 \\
		0 & 0 & 1 & 0 & 1 \\
		0 & 0 & 0 & 1 & 1 \\
		0 & 0 & 0 & 0 & 1
	\end{matrix}\right)
\end{equation*} \newline
Where matrix $A$ has entries
\begin{align*}
	&d_{1} = 1, d_{2} = 1, d_{3} = \frac{1}{x_{23}}, d_{4} = \frac{1}{x_{24}}, d_{5} = \frac{1}{x_{24} x_{45}}, \\ 
	&a_{12} = \frac{x_{13}}{x_{23}}, a_{13} = 1, a_{14} = 1, a_{15} = 1, \\ 
	&a_{23} = 1, a_{24} = \frac{x_{13} x_{25} - {\left(x_{13} - {2} \, x_{23}\right)} x_{24} x_{45}}{x_{13} x_{24} x_{45}}, a_{25} = 1, \\ 
	&a_{34} = -\frac{1}{x_{23}}, a_{35} = 1, \\ 
	&a_{45} = -\frac{{\left(x_{13} - {2} \,\right)} x_{23}}{x_{13} x_{24}}
\end{align*}

%011111
First assume $x_{14}\neq \frac{x_{13}x_{24}}{x_{23}}$
\begin{equation*}x=\left(\begin{matrix}
		1 & 0 & x_{13} & x_{14} & x_{15} \\
		0 & 1 & x_{23} & x_{24} & x_{25} \\
		0 & 0 & 1 & 0 & 0  \\
		0 & 0 & 0 & 1 & x_{45}  \\
		0 & 0 & 0 & 0 & 1 \\
	\end{matrix}\right),x^A=\left(\begin{matrix}
		1 & 0 & 0 & 1 & 0 \\
		0 & 1 & 1 & 0 & 0 \\
		0 & 0 & 1 & 0 & 1 \\
		0 & 0 & 0 & 1 & 1 \\
		0 & 0 & 0 & 0 & 1 
	\end{matrix}\right)
\end{equation*} \newline
Where matrix $A$ has entries
\begin{align*}
	&d_{1} = 1, d_{2} = \frac{x_{23} x_{24}}{x_{14} x_{23} - x_{13} x_{24}}, d_{3} = \frac{x_{24}}{x_{14} x_{23} - x_{13} x_{24}}, d_{4} = \frac{x_{23}}{x_{14} x_{23} - x_{13} x_{24}}, d_{5} = \frac{x_{23}}{{\left(x_{14} x_{23} - x_{13} x_{24}\right)} x_{45}}, \\ 
	&a_{12} = \frac{x_{13} x_{24}}{x_{14} x_{23} - x_{13} x_{24}}, a_{13} = 1, a_{14} = 1, a_{15} = 1, \\ 
	&a_{23} = 1, a_{24} = 1, a_{25} = 1, \\ 
	&a_{34} = -\frac{x_{24}}{x_{14} x_{23} - x_{13} x_{24}}, a_{35} = \frac{x_{15} x_{23} x_{24} - x_{14} x_{23} x_{25} + {\left({2} \, x_{14}^{2} x_{23} + {2} \, x_{13} x_{24}^{2} - {\left({2} \, x_{13} x_{14} + {2} \, x_{14} x_{23}\right)} x_{24}\right)} x_{45}}{{\left(x_{14}^{2} x_{23}^{2} - 2 \, x_{13} x_{14} x_{23} x_{24} + x_{13}^{2} x_{24}^{2}\right)} x_{45}}, \\ 
	&a_{45} = -\frac{x_{15} x_{23}^{2} - x_{13} x_{23} x_{25} + {\left({2} \, x_{13} x_{14} x_{23} - {2} \, x_{14} x_{23}^{2} - {\left({2} \, x_{13}^{2} - {2} \, x_{13} x_{23}\right)} x_{24}\right)} x_{45}}{{\left(x_{14}^{2} x_{23}^{2} - 2 \, x_{13} x_{14} x_{23} x_{24} + x_{13}^{2} x_{24}^{2}\right)} x_{45}}
\end{align*}

Now assume $x_{14}= \frac{x_{13}x_{24}}{x_{23}}$
\begin{equation*}x=\left(\begin{matrix}
		1 & 0 & x_{13} & x_{14} & x_{15} \\
		0 & 1 & x_{23} & x_{24} & x_{25} \\
		0 & 0 & 1 & 0 & 0  \\
		0 & 0 & 0 & 1 & x_{45}  \\
		0 & 0 & 0 & 0 & 1 \\
	\end{matrix}\right),x^A=\left(\begin{matrix}
		1 & 0 & 0 & 0 & 0 \\
		0 & 1 & 1 & 0 & 0 \\
		0 & 0 & 1 & 0 & 1 \\
		0 & 0 & 0 & 1 & 1 \\
		0 & 0 & 0 & 0 & 1
	\end{matrix}\right)
\end{equation*} \newline
Where matrix $A$ has entries
\begin{align*}
	&d_{1} = 1, d_{2} = 1, d_{3} = \frac{1}{x_{23}}, d_{4} = \frac{1}{x_{24}}, d_{5} = \frac{1}{x_{24} x_{45}}, \\ 
	&a_{12} = \frac{x_{13}}{x_{23}}, a_{13} = 1, a_{14} = 1, a_{15} = 1, \\ 
	&a_{23} = 1, a_{24} = -\frac{x_{15} x_{23} - x_{13} x_{25} + {\left(x_{13} - {2} \, x_{23}\right)} x_{24} x_{45}}{x_{13} x_{24} x_{45}}, a_{25} = 1, \\ 
	&a_{34} = -\frac{1}{x_{23}}, a_{35} = 1, \\ 
	&a_{45} = -\frac{{\left(x_{13} - {2} \,\right)} x_{23} x_{24} x_{45} + x_{15} x_{23}}{x_{13} x_{24}^{2} x_{45}}
\end{align*}

%100000
\begin{equation*}x=\left(% [inline block 20: 30 envs, 11870 chars -> data_tex | \begin{matrix} 		1 & 0 & 0 & 0 & 0 \\...]
\right)
\end{equation*} \newline
Where matrix $A$ has entries
\begin{align*}
	&d_{1} = 1, d_{2} = \frac{x_{23} x_{35}}{x_{14} x_{45}}, d_{3} = \frac{x_{35}}{x_{14} x_{45}}, d_{4} = \frac{1}{x_{14}}, d_{5} = \frac{1}{x_{14} x_{45}}, \\ 
	&a_{12} = \frac{x_{13} x_{35}}{x_{14} x_{45}}, a_{13} = 1, a_{14} = 1, a_{15} = 1, \\ 
	&a_{23} = 1, a_{24} = 1, a_{25} = 1, \\ 
	&a_{34} = 0, a_{35} = \frac{{2} \, x_{14} x_{45} - x_{25}}{x_{14} x_{23} x_{45}}, \\ 
	&a_{45} = \frac{x_{13} x_{25} - {\left({2} \, x_{13} x_{14} - {2} \, x_{14} x_{23}\right)} x_{45}}{x_{14}^{2} x_{23} x_{45}}
\end{align*}

%101111
\begin{equation*}x=\left(\begin{matrix}
		1 & 0 & x_{13} & x_{14} & x_{15} \\
		0 & 1 & x_{23} & 0 & x_{25} \\
		0 & 0 & 1 & 0 & x_{35}  \\
		0 & 0 & 0 & 1 & x_{45}  \\
		0 & 0 & 0 & 0 & 1 \\
	\end{matrix}\right),x^A=\left(\begin{matrix}
		1 & 0 & 0 & 1 & 0 \\
		0 & 1 & 1 & 0 & 0 \\
		0 & 0 & 1 & 0 & 1 \\
		0 & 0 & 0 & 1 & 1 \\
		0 & 0 & 0 & 0 & 1
	\end{matrix}\right)
\end{equation*} \newline
Where matrix $A$ has entries
\begin{align*}
	&d_{1} = 1, d_{2} = \frac{x_{23} x_{35}}{x_{14} x_{45}}, d_{3} = \frac{x_{35}}{x_{14} x_{45}}, d_{4} = \frac{1}{x_{14}}, d_{5} = \frac{1}{x_{14} x_{45}}, \\ 
	&a_{12} = \frac{x_{13} x_{35}}{x_{14} x_{45}}, a_{13} = 1, a_{14} = 1, a_{15} = 1, \\ 
	&a_{23} = 1, a_{24} = 1, a_{25} = 1, \\ 
	&a_{34} = 0, a_{35} = \frac{{2} \, x_{14} x_{45} - x_{25}}{x_{14} x_{23} x_{45}}, \\ 
	&a_{45} = -\frac{x_{15} x_{23} - x_{13} x_{25} + {\left({2} \, x_{13} x_{14} - {2} \, x_{14} x_{23}\right)} x_{45}}{x_{14}^{2} x_{23} x_{45}}
\end{align*}

%110000

First assume $x_{24}\neq \frac{-x_{35}x_{23}}{x_{45}}$
\begin{equation*}x=\left(% [inline block 21: 6 envs, 2014 chars -> data_tex | \begin{matrix} 		1 & 0 & 0 & 0 & 0 \\...]
\right)
\end{equation*} \newline
Where matrix $A$ has entries
\begin{align*}
	&d_{1} = 1, d_{2} = 1, d_{3} = \frac{1}{x_{23}}, d_{4} = \frac{x_{45}}{x_{23} x_{35} + x_{24} x_{45}}, d_{5} = \frac{1}{x_{23} x_{35} + x_{24} x_{45}}, \\ 
	&a_{12} = 0, a_{13} = 1, a_{14} = -\frac{x_{23} x_{35} + x_{24} x_{45} - x_{15}}{x_{23} x_{35} + x_{24} x_{45}}, a_{15} = 1, \\ 
	&a_{23} = 1, a_{24} = 1, a_{25} = 1, \\ 
	&a_{34} = -\frac{x_{24} x_{45}}{x_{23}^{2} x_{35} + x_{23} x_{24} x_{45}}, a_{35} = 1, \\ 
	&a_{45} = -\frac{x_{23} - {2} \,}{x_{24}}
\end{align*}

Now assume $x_{24}= \frac{-x_{35}x_{23}}{x_{45}}$
\begin{equation*}x=\left(\begin{matrix}
		1 & 0 & 0 & 0 & x_{15} \\
		0 & 1 & x_{23} & x_{24} & 0 \\
		0 & 0 & 1 & 0 & x_{35}  \\
		0 & 0 & 0 & 1 & x_{45}  \\
		0 & 0 & 0 & 0 & 1 \\
	\end{matrix}\right),x^A=\left(\begin{matrix}
		1 & 0 & 0 & 0 & 0 \\
		0 & 1 & 1 & 0 & 0 \\
		0 & 0 & 1 & 0 & 0 \\
		0 & 0 & 0 & 1 & 1 \\
		0 & 0 & 0 & 0 & 1
	\end{matrix}\right)
\end{equation*} \newline
Where matrix $A$ has entries
\begin{align*}
	&d_{1} = 1, d_{2} = 1, d_{3} = \frac{1}{x_{23}}, d_{4} = 1, d_{5} = \frac{1}{x_{45}}, \\ 
	&a_{12} = 0, a_{13} = 1, a_{14} = \frac{x_{15}}{x_{45}}, a_{15} = 1, \\ 
	&a_{23} = 1, a_{24} = 1, a_{25} = 1, \\ 
	&a_{34} = \frac{x_{35}}{x_{45}}, a_{35} = 1, \\ 
	&a_{45} = \frac{{\left(x_{23} - 1\right)} x_{45}}{x_{23} x_{35}}
\end{align*}

%110010

First assume $x_{35}\neq \frac{-x_{45}x_{24}}{x_{23}}$
\begin{equation*}x=\left(\begin{matrix}
		1 & 0 & 0 & x_{14} & 0 \\
		0 & 1 & x_{23} & x_{24} & 0 \\
		0 & 0 & 1 & 0 & x_{35}  \\
		0 & 0 & 0 & 1 & x_{45}  \\
		0 & 0 & 0 & 0 & 1 \\
	\end{matrix}\right),x^A=\left(\begin{matrix}
		1 & 0 & 0 & 1 & 0 \\
		0 & 1 & 1 & 0 & 0 \\
		0 & 0 & 1 & 0 & 1 \\
		0 & 0 & 0 & 1 & 1 \\
		0 & 0 & 0 & 0 & 1
	\end{matrix}\right)
\end{equation*} \newline
Where matrix $A$ has entries
\begin{align*}
	&d_{1} = 1, d_{2} = \frac{x_{23} x_{35} + x_{24} x_{45}}{x_{14} x_{45}}, d_{3} = \frac{x_{23} x_{35} + x_{24} x_{45}}{x_{14} x_{23} x_{45}}, d_{4} = \frac{1}{x_{14}}, d_{5} = \frac{1}{x_{14} x_{45}}, \\ 
	&a_{12} = 0, a_{13} = 1, a_{14} = 1, a_{15} = 1, \\ 
	&a_{23} = 1, a_{24} = 1, a_{25} = 1, \\ 
	&a_{34} = -\frac{x_{24}}{x_{14} x_{23}}, a_{35} = \frac{{2} \, x_{14} - {2} \, x_{24}}{x_{14} x_{23}}, \\ 
	&a_{45} = \frac{2}{x_{14}}
\end{align*}

Now assume $x_{35}= \frac{-x_{45}x_{24}}{x_{23}}$
\begin{equation*}x=\left(\begin{matrix}
		1 & 0 & 0 & x_{14} & 0 \\
		0 & 1 & x_{23} & x_{24} & 0 \\
		0 & 0 & 1 & 0 & x_{35}  \\
		0 & 0 & 0 & 1 & x_{45}  \\
		0 & 0 & 0 & 0 & 1 \\
	\end{matrix}\right),x^A=\left(\begin{matrix}
		1 & 0 & 0 & 1 & 0 \\
		0 & 1 & 1 & 0 & 0 \\
		0 & 0 & 1 & 0 & 0 \\
		0 & 0 & 0 & 1 & 1 \\
		0 & 0 & 0 & 0 & 1
	\end{matrix}\right)
\end{equation*} \newline
Where matrix $A$ has entries
\begin{align*}
	&d_{1} = 1, d_{2} = 1, d_{3} = \frac{1}{x_{23}}, d_{4} = \frac{1}{x_{14}}, d_{5} = \frac{1}{x_{14} x_{45}}, \\ 
	&a_{12} = 0, a_{13} = 1, a_{14} = 1, a_{15} = 1, \\ 
	&a_{23} = 1, a_{24} = 1, a_{25} = 1, \\ 
	&a_{34} = -\frac{x_{24}}{x_{14} x_{23}}, a_{35} = \frac{x_{14} - x_{24}}{x_{14} x_{23}}, \\ 
	&a_{45} = \frac{1}{x_{14}}
\end{align*}

%110011

First assume $x_{35}\neq \frac{-x_{45}x_{24}}{x_{23}}$
\begin{equation*}x=\left(\begin{matrix}
		1 & 0 & 0 & x_{14} & x_{15} \\
		0 & 1 & x_{23} & x_{24} & 0 \\
		0 & 0 & 1 & 0 & x_{35}  \\
		0 & 0 & 0 & 1 & x_{45}  \\
		0 & 0 & 0 & 0 & 1 \\
	\end{matrix}\right),x^A=\left(\begin{matrix}
		1 & 0 & 0 & 1 & 0 \\
		0 & 1 & 1 & 0 & 0 \\
		0 & 0 & 1 & 0 & 1 \\
		0 & 0 & 0 & 1 & 1 \\
		0 & 0 & 0 & 0 & 1
	\end{matrix}\right)
\end{equation*} \newline
Where matrix $A$ has entries
\begin{align*}
	&d_{1} = 1, d_{2} = \frac{x_{23} x_{35} + x_{24} x_{45}}{x_{14} x_{45}}, d_{3} = \frac{x_{23} x_{35} + x_{24} x_{45}}{x_{14} x_{23} x_{45}}, d_{4} = \frac{1}{x_{14}}, d_{5} = \frac{1}{x_{14} x_{45}}, \\ 
	&a_{12} = 0, a_{13} = 1, a_{14} = 1, a_{15} = 1, \\ 
	&a_{23} = 1, a_{24} = 1, a_{25} = 1, \\ 
	&a_{34} = -\frac{x_{24}}{x_{14} x_{23}}, a_{35} = \frac{x_{15} x_{24} + {\left({2} \, x_{14}^{2} - {2} \, x_{14} x_{24}\right)} x_{45}}{x_{14}^{2} x_{23} x_{45}}, \\ 
	&a_{45} = \frac{{2} \, x_{14} x_{45} - x_{15}}{x_{14}^{2} x_{45}}
\end{align*}

Now assume $x_{35}= \frac{-x_{45}x_{24}}{x_{23}}$
\begin{equation*}x=\left(\begin{matrix}
		1 & 0 & 0 & x_{14} & x_{15} \\
		0 & 1 & x_{23} & x_{24} & 0 \\
		0 & 0 & 1 & 0 & x_{35}  \\
		0 & 0 & 0 & 1 & x_{45}  \\
		0 & 0 & 0 & 0 & 1 \\
	\end{matrix}\right),x^A=\left(\begin{matrix}
		1 & 0 & 0 & 1 & 0 \\
		0 & 1 & 1 & 0 & 0 \\
		0 & 0 & 1 & 0 & 0 \\
		0 & 0 & 0 & 1 & 1 \\
		0 & 0 & 0 & 0 & 1
	\end{matrix}\right)
\end{equation*} \newline
Where matrix $A$ has entries
\begin{align*}
	&d_{1} = 1, d_{2} = 1, d_{3} = \frac{1}{x_{23}}, d_{4} = \frac{1}{x_{14}}, d_{5} = \frac{1}{x_{14} x_{45}}, \\ 
	&a_{12} = 0, a_{13} = 1, a_{14} = 1, a_{15} = 1, \\ 
	&a_{23} = 1, a_{24} = 1, a_{25} = 1, \\ 
	&a_{34} = -\frac{x_{24}}{x_{14} x_{23}}, a_{35} = \frac{x_{15} x_{24} + {\left(x_{14}^{2} - x_{14} x_{24}\right)} x_{45}}{x_{14}^{2} x_{23} x_{45}}, \\ 
	&a_{45} = \frac{x_{14} x_{45} - x_{15}}{x_{14}^{2} x_{45}}
\end{align*}

%110100

First assume $x_{35}\neq \frac{-x_{45}x_{24}}{x_{23}}$
\begin{equation*}x=\left(\begin{matrix}
		1 & 0 & x_{13} & 0 & 0 \\
		0 & 1 & x_{23} & x_{24} & 0 \\
		0 & 0 & 1 & 0 & x_{35}  \\
		0 & 0 & 0 & 1 & x_{45}  \\
		0 & 0 & 0 & 0 & 1 \\
	\end{matrix}\right),x^A=\left(\begin{matrix}
		1 & 0 & 0 & 1 & 0 \\
		0 & 1 & 1 & 0 & 0 \\
		0 & 0 & 1 & 0 & 1 \\
		0 & 0 & 0 & 1 & 1 \\
		0 & 0 & 0 & 0 & 1
	\end{matrix}\right)
\end{equation*} \newline
Where matrix $A$ has entries
\begin{align*}
	&d_{1} = 1, d_{2} = -\frac{x_{23}^{2} x_{35} + x_{23} x_{24} x_{45}}{x_{13} x_{24} x_{45}}, d_{3} = -\frac{x_{23} x_{35} + x_{24} x_{45}}{x_{13} x_{24} x_{45}}, d_{4} = -\frac{x_{23}}{x_{13} x_{24}}, d_{5} = -\frac{x_{23}}{x_{13} x_{24} x_{45}}, \\ 
	&a_{12} = -\frac{x_{23} x_{35} + x_{24} x_{45}}{x_{24} x_{45}}, a_{13} = 1, a_{14} = 1, a_{15} = 1, \\ 
	&a_{23} = 1, a_{24} = 1, a_{25} = 1, \\ 
	&a_{34} = \frac{1}{x_{13}}, a_{35} = \frac{2}{x_{13}}, \\ 
	&a_{45} = \frac{{2} \, x_{13} - {2} \, x_{23}}{x_{13} x_{24}}
\end{align*}

Now assume $x_{35}= \frac{-x_{45}x_{24}}{x_{23}}$
\begin{equation*}x=\left(\begin{matrix}
		1 & 0 & x_{13} & 0 & 0 \\
		0 & 1 & x_{23} & x_{24} & 0 \\
		0 & 0 & 1 & 0 & x_{35}  \\
		0 & 0 & 0 & 1 & x_{45}  \\
		0 & 0 & 0 & 0 & 1 \\
	\end{matrix}\right),x^A=\left(\begin{matrix}
		1 & 0 & 0 & 1 & 0 \\
		0 & 1 & 1 & 0 & 0 \\
		0 & 0 & 1 & 0 & 0 \\
		0 & 0 & 0 & 1 & 1 \\
		0 & 0 & 0 & 0 & 1
	\end{matrix}\right)
\end{equation*} \newline
Where matrix $A$ has entries
\begin{align*}
	&d_{1} = 1, d_{2} = 1, d_{3} = \frac{1}{x_{23}}, d_{4} = -\frac{x_{23}}{x_{13} x_{24}}, d_{5} = -\frac{x_{23}}{x_{13} x_{24} x_{45}}, \\ 
	&a_{12} = \frac{x_{13}}{x_{23}}, a_{13} = 1, a_{14} = 1, a_{15} = 1, \\ 
	&a_{23} = 1, a_{24} = 1, a_{25} = 1, \\ 
	&a_{34} = \frac{1}{x_{13}}, a_{35} = \frac{1}{x_{13}}, \\ 
	&a_{45} = \frac{x_{13} - x_{23}}{x_{13} x_{24}}
\end{align*}
%110101

First assume $x_{35}\neq \frac{-x_{45}x_{24}}{x_{23}}$
\begin{equation*}x=\left(\begin{matrix}
		1 & 0 & x_{13} & 0 & x_{15} \\
		0 & 1 & x_{23} & x_{24} & 0 \\
		0 & 0 & 1 & 0 & x_{35}  \\
		0 & 0 & 0 & 1 & x_{45}  \\
		0 & 0 & 0 & 0 & 1 \\
	\end{matrix}\right),x^A=\left(\begin{matrix}
		1 & 0 & 0 & 1 & 0 \\
		0 & 1 & 1 & 0 & 0 \\
		0 & 0 & 1 & 0 & 1 \\
		0 & 0 & 0 & 1 & 1 \\
		0 & 0 & 0 & 0 & 1
	\end{matrix}\right)
\end{equation*} \newline
Where matrix $A$ has entries
\begin{align*}
	&d_{1} = 1, d_{2} = -\frac{x_{23}^{2} x_{35} + x_{23} x_{24} x_{45}}{x_{13} x_{24} x_{45}}, d_{3} = -\frac{x_{23} x_{35} + x_{24} x_{45}}{x_{13} x_{24} x_{45}}, d_{4} = -\frac{x_{23}}{x_{13} x_{24}}, d_{5} = -\frac{x_{23}}{x_{13} x_{24} x_{45}}, \\ 
	&a_{12} = -\frac{x_{23} x_{35} + x_{24} x_{45}}{x_{24} x_{45}}, a_{13} = 1, a_{14} = 1, a_{15} = 1, \\ 
	&a_{23} = 1, a_{24} = 1, a_{25} = 1, \\ 
	&a_{34} = \frac{1}{x_{13}}, a_{35} = \frac{{2} \, x_{13} x_{24} x_{45} + x_{15} x_{23}}{x_{13}^{2} x_{24} x_{45}}, \\ 
	&a_{45} = -\frac{x_{15} x_{23}^{2} - {\left({2} \, x_{13}^{2} - {2} \, x_{13} x_{23}\right)} x_{24} x_{45}}{x_{13}^{2} x_{24}^{2} x_{45}}
\end{align*}

Now assume $x_{35}= \frac{-x_{45}x_{24}}{x_{23}}$
\begin{equation*}x=\left(\begin{matrix}
		1 & 0 & x_{13} & 0 & x_{15} \\
		0 & 1 & x_{23} & x_{24} & 0 \\
		0 & 0 & 1 & 0 & x_{35}  \\
		0 & 0 & 0 & 1 & x_{45}  \\
		0 & 0 & 0 & 0 & 1 \\
	\end{matrix}\right),x^A=\left(\begin{matrix}
		1 & 0 & 0 & 1 & 0 \\
		0 & 1 & 1 & 0 & 0 \\
		0 & 0 & 1 & 0 & 0 \\
		0 & 0 & 0 & 1 & 1 \\
		0 & 0 & 0 & 0 & 1
	\end{matrix}\right)
\end{equation*} \newline
Where matrix $A$ has entries
\begin{align*}
	&d_{1} = 1, d_{2} = 1, d_{3} = \frac{1}{x_{23}}, d_{4} = -\frac{x_{23}}{x_{13} x_{24}}, d_{5} = -\frac{x_{23}}{x_{13} x_{24} x_{45}}, \\ 
	&a_{12} = \frac{x_{13}}{x_{23}}, a_{13} = 1, a_{14} = 1, a_{15} = 1, \\ 
	&a_{23} = 1, a_{24} = 1, a_{25} = 1, \\ 
	&a_{34} = \frac{1}{x_{13}}, a_{35} = \frac{x_{13} x_{24} x_{45} + x_{15} x_{23}}{x_{13}^{2} x_{24} x_{45}}, \\ 
	&a_{45} = -\frac{x_{15} x_{23}^{2} - {\left(x_{13}^{2} - x_{13} x_{23}\right)} x_{24} x_{45}}{x_{13}^{2} x_{24}^{2} x_{45}}
\end{align*}

%110110

First assume $x_{14} \neq \frac{x_{13}x_{24}}{x_{23}}$ and $x_{35}\neq \frac{-x_{45}x_{24}}{x_{23}}$
\begin{equation*}x=\left(\begin{matrix}
		1 & 0 & x_{13} & x_{14} & 0 \\
		0 & 1 & x_{23} & x_{24} & 0 \\
		0 & 0 & 1 & 0 & x_{35}  \\
		0 & 0 & 0 & 1 & x_{45}  \\
		0 & 0 & 0 & 0 & 1 \\
	\end{matrix}\right),x^A=\left(\begin{matrix}
		1 & 0 & 0 & 1 & 0 \\
		0 & 1 & 1 & 0 & 0 \\
		0 & 0 & 1 & 0 & 1 \\
		0 & 0 & 0 & 1 & 1 \\
		0 & 0 & 0 & 0 & 1
	\end{matrix}\right)
\end{equation*} \newline
Where matrix $A$ has entries
\begin{align*}
	&d_{1} = 1, d_{2} = \frac{x_{23}^{2} x_{35} + x_{23} x_{24} x_{45}}{{\left(x_{14} x_{23} - x_{13} x_{24}\right)} x_{45}}, d_{3} = \frac{x_{23} x_{35} + x_{24} x_{45}}{{\left(x_{14} x_{23} - x_{13} x_{24}\right)} x_{45}}, d_{4} = \frac{x_{23}}{x_{14} x_{23} - x_{13} x_{24}}, d_{5} = \frac{x_{23}}{{\left(x_{14} x_{23} - x_{13} x_{24}\right)} x_{45}}, \\ 
	&a_{12} = \frac{x_{13} x_{23} x_{35} + x_{13} x_{24} x_{45}}{{\left(x_{14} x_{23} - x_{13} x_{24}\right)} x_{45}}, a_{13} = 1, a_{14} = 1, a_{15} = 1, \\ 
	&a_{23} = 1, a_{24} = 1, a_{25} = 1, \\ 
	&a_{34} = -\frac{x_{24}}{x_{14} x_{23} - x_{13} x_{24}}, a_{35} = \frac{{2} \, x_{14} - {2} \, x_{24}}{x_{14} x_{23} - x_{13} x_{24}}, \\ 
	&a_{45} = -\frac{{2} \, x_{13} - {2} \, x_{23}}{x_{14} x_{23} - x_{13} x_{24}}
\end{align*}

Now assume $x_{14} = \frac{x_{13}x_{24}}{x_{23}}$ and $x_{24} \neq \frac{-x_{23}x_{35}}{x_{45}}$
\begin{equation*}x=\left(\begin{matrix}
		1 & 0 & x_{13} & x_{14} & 0 \\
		0 & 1 & x_{23} & x_{24} & 0 \\
		0 & 0 & 1 & 0 & x_{35}  \\
		0 & 0 & 0 & 1 & x_{45}  \\
		0 & 0 & 0 & 0 & 1 \\
	\end{matrix}\right),x^A=\left(\begin{matrix}
		1 & 0 & 0 & 0 & 0 \\
		0 & 1 & 1 & 0 & 0 \\
		0 & 0 & 1 & 0 & 1 \\
		0 & 0 & 0 & 1 & 1 \\
		0 & 0 & 0 & 0 & 1
	\end{matrix}\right)
\end{equation*} \newline
Where matrix $A$ has entries
\begin{align*}
	&d_{1} = 1, d_{2} = 1, d_{3} = \frac{1}{x_{23}}, d_{4} = \frac{x_{45}}{x_{23} x_{35} + x_{24} x_{45}}, d_{5} = \frac{1}{x_{23} x_{35} + x_{24} x_{45}}, \\ 
	&a_{12} = \frac{x_{13}}{x_{23}}, a_{13} = 1, a_{14} = 1, a_{15} = 1, \\ 
	&a_{23} = 1, a_{24} = -\frac{x_{13} - {2} \, x_{23}}{x_{13}}, a_{25} = 1, \\ 
	&a_{34} = -\frac{x_{24} x_{45}}{x_{23}^{2} x_{35} + x_{23} x_{24} x_{45}}, a_{35} = 1, \\ 
	&a_{45} = -\frac{{\left(x_{13} - {2} \,\right)} x_{23}}{x_{13} x_{24}}
\end{align*}

Now assume $x_{14} = \frac{x_{13}x_{24}}{x_{23}}$ and $x_{24} = \frac{-x_{23}x_{35}}{x_{45}}$
\begin{equation*}x=\left(\begin{matrix}
		1 & 0 & x_{13} & x_{14} & 0 \\
		0 & 1 & x_{23} & x_{24} & 0 \\
		0 & 0 & 1 & 0 & x_{35}  \\
		0 & 0 & 0 & 1 & x_{45}  \\
		0 & 0 & 0 & 0 & 1 \\
	\end{matrix}\right),x^A=\left(\begin{matrix}
		1 & 0 & 0 & 0 & 0 \\
		0 & 1 & 1 & 0 & 0 \\
		0 & 0 & 1 & 0 & 0 \\
		0 & 0 & 0 & 1 & 1 \\
		0 & 0 & 0 & 0 & 1
	\end{matrix}\right)
\end{equation*} \newline
Where matrix $A$ has entries
\begin{align*}
	&d_{1} = 1, d_{2} = 1, d_{3} = \frac{1}{x_{23}}, d_{4} = 1, d_{5} = \frac{1}{x_{45}}, \\ 
	&a_{12} = \frac{x_{13}}{x_{23}}, a_{13} = 1, a_{14} = 1, a_{15} = 1, \\ 
	&a_{23} = 1, a_{24} = \frac{x_{23}}{x_{13}}, a_{25} = 1, \\ 
	&a_{34} = \frac{x_{35}}{x_{45}}, a_{35} = 1, \\ 
	&a_{45} = \frac{{\left(x_{13} - 1\right)} x_{45}}{x_{13} x_{35}}
\end{align*}

Now assume $x_{14} \neq \frac{x_{13}x_{24}}{x_{23}}$, $x_{24}= \frac{-x_{23}x_{35}}{x_{45}}$ and $x_{13}\neq \frac{-x_{14}x_{45}}{x_{35}}$ 
\begin{equation*}x=\left(\begin{matrix}
		1 & 0 & x_{13} & x_{14} & 0 \\
		0 & 1 & x_{23} & x_{24} & 0 \\
		0 & 0 & 1 & 0 & x_{35}  \\
		0 & 0 & 0 & 1 & x_{45}  \\
		0 & 0 & 0 & 0 & 1 \\
	\end{matrix}\right),x^A=\left(\begin{matrix}
		1 & 0 & 0 & 1 & 0 \\
		0 & 1 & 1 & 0 & 0 \\
		0 & 0 & 1 & 0 & 0 \\
		0 & 0 & 0 & 1 & 1 \\
		0 & 0 & 0 & 0 & 1
	\end{matrix}\right)
\end{equation*} \newline
Where matrix $A$ has entries
\begin{align*}
	&d_{1} = 1, d_{2} = 1, d_{3} = \frac{1}{x_{23}}, d_{4} = \frac{x_{45}}{x_{13} x_{35} + x_{14} x_{45}}, d_{5} = \frac{1}{x_{13} x_{35} + x_{14} x_{45}}, \\ 
	&a_{12} = \frac{x_{13}}{x_{23}}, a_{13} = 1, a_{14} = 1, a_{15} = 1, \\ 
	&a_{23} = 1, a_{24} = 1, a_{25} = 1, \\ 
	&a_{34} = \frac{x_{35}}{x_{13} x_{35} + x_{14} x_{45}}, a_{35} = \frac{x_{23} x_{35} + x_{14} x_{45}}{x_{13} x_{23} x_{35} + x_{14} x_{23} x_{45}}, \\ 
	&a_{45} = -\frac{{\left(x_{13} - x_{23}\right)} x_{45}}{x_{13} x_{23} x_{35} + x_{14} x_{23} x_{45}}
\end{align*}

Now assume $x_{14} \neq \frac{x_{13}x_{24}}{x_{23}}$, $x_{24}= \frac{-x_{23}x_{35}}{x_{45}}$ and $x_{13}= \frac{-x_{14}x_{45}}{x_{35}}$ 
\begin{equation*}x=\left(\begin{matrix}
		1 & 0 & x_{13} & x_{14} & 0 \\
		0 & 1 & x_{23} & x_{24} & 0 \\
		0 & 0 & 1 & 0 & x_{35}  \\
		0 & 0 & 0 & 1 & x_{45}  \\
		0 & 0 & 0 & 0 & 1 \\
	\end{matrix}\right),x^A=\left(\begin{matrix}
		1 & 0 & 0 & 0 & 0 \\
		0 & 1 & 1 & 0 & 0 \\
		0 & 0 & 1 & 0 & 0 \\
		0 & 0 & 0 & 1 & 1 \\
		0 & 0 & 0 & 0 & 1
	\end{matrix}\right)
\end{equation*} \newline
Where matrix $A$ has entries
\begin{align*}
	&d_{1} = 1, d_{2} = 1, d_{3} = \frac{1}{x_{23}}, d_{4} = 1, d_{5} = \frac{1}{x_{45}}, \\ 
	&a_{12} = -\frac{x_{14} x_{45}}{x_{23} x_{35}}, a_{13} = 1, a_{14} = 1, a_{15} = 1, \\ 
	&a_{23} = 1, a_{24} = -\frac{x_{23} x_{35}}{x_{14} x_{45}}, a_{25} = 1, \\ 
	&a_{34} = \frac{x_{35}}{x_{45}}, a_{35} = 1, \\ 
	&a_{45} = \frac{x_{14} x_{45} + x_{35}}{x_{14} x_{35}}
\end{align*}

%110111
First assume $x_{14} \neq \frac{x_{13}x_{24}}{x_{23}}$ and $x_{35}\neq \frac{-x_{45}x_{24}}{x_{23}}$
\begin{equation*}x=\left(\begin{matrix}
		1 & 0 & x_{13} & x_{14} & x_{15} \\
		0 & 1 & x_{23} & x_{24} & 0 \\
		0 & 0 & 1 & 0 & x_{35}  \\
		0 & 0 & 0 & 1 & x_{45}  \\
		0 & 0 & 0 & 0 & 1 \\
	\end{matrix}\right),x^A=\left(\begin{matrix}
		1 & 0 & 0 & 1 & 0 \\
		0 & 1 & 1 & 0 & 0 \\
		0 & 0 & 1 & 0 & 1 \\
		0 & 0 & 0 & 1 & 1 \\
		0 & 0 & 0 & 0 & 1
	\end{matrix}\right)
\end{equation*} \newline
Where matrix $A$ has entries
\begin{align*}
	&d_{1} = 1, d_{2} = \frac{x_{23}^{2} x_{35} + x_{23} x_{24} x_{45}}{{\left(x_{14} x_{23} - x_{13} x_{24}\right)} x_{45}}, d_{3} = \frac{x_{23} x_{35} + x_{24} x_{45}}{{\left(x_{14} x_{23} - x_{13} x_{24}\right)} x_{45}}, d_{4} = \frac{x_{23}}{x_{14} x_{23} - x_{13} x_{24}}, d_{5} = \frac{x_{23}}{{\left(x_{14} x_{23} - x_{13} x_{24}\right)} x_{45}}, \\ 
	&a_{12} = \frac{x_{13} x_{23} x_{35} + x_{13} x_{24} x_{45}}{{\left(x_{14} x_{23} - x_{13} x_{24}\right)} x_{45}}, a_{13} = 1, a_{14} = 1, a_{15} = 1, \\ 
	&a_{23} = 1, a_{24} = 1, a_{25} = 1, \\ 
	&a_{34} = -\frac{x_{24}}{x_{14} x_{23} - x_{13} x_{24}}, a_{35} = \frac{x_{15} x_{23} x_{24} + {\left({2} \, x_{14}^{2} x_{23} + {2} \, x_{13} x_{24}^{2} - {\left({2} \, x_{13} x_{14} + {2} \, x_{14} x_{23}\right)} x_{24}\right)} x_{45}}{{\left(x_{14}^{2} x_{23}^{2} - 2 \, x_{13} x_{14} x_{23} x_{24} + x_{13}^{2} x_{24}^{2}\right)} x_{45}}, \\ 
	&a_{45} = -\frac{x_{15} x_{23}^{2} + {\left({2} \, x_{13} x_{14} x_{23} - {2} \, x_{14} x_{23}^{2} - {\left({2} \, x_{13}^{2} - {2} \, x_{13} x_{23}\right)} x_{24}\right)} x_{45}}{{\left(x_{14}^{2} x_{23}^{2} - 2 \, x_{13} x_{14} x_{23} x_{24} + x_{13}^{2} x_{24}^{2}\right)} x_{45}}
\end{align*}

Now assume $x_{14} = \frac{x_{13}x_{24}}{x_{23}}$ and $x_{35} \neq \frac{-x_{45}x_{24}}{x_{23}}$
\begin{equation*}x=\left(\begin{matrix}
		1 & 0 & x_{13} & x_{14} & x_{15} \\
		0 & 1 & x_{23} & x_{24} & 0 \\
		0 & 0 & 1 & 0 & x_{35}  \\
		0 & 0 & 0 & 1 & x_{45}  \\
		0 & 0 & 0 & 0 & 1 \\
	\end{matrix}\right),x^A=\left(\begin{matrix}
		1 & 0 & 0 & 0 & 0 \\
		0 & 1 & 1 & 0 & 0 \\
		0 & 0 & 1 & 0 & 1 \\
		0 & 0 & 0 & 1 & 1 \\
		0 & 0 & 0 & 0 & 1
	\end{matrix}\right)
\end{equation*} \newline
Where matrix $A$ has entries
\begin{align*}
	&d_{1} = 1, d_{2} = 1, d_{3} = \frac{1}{x_{23}}, d_{4} = \frac{x_{45}}{x_{23} x_{35} + x_{24} x_{45}}, d_{5} = \frac{1}{x_{23} x_{35} + x_{24} x_{45}}, \\ 
	&a_{12} = \frac{x_{13}}{x_{23}}, a_{13} = 1, a_{14} = 1, a_{15} = 1, \\ 
	&a_{23} = 1, a_{24} = -\frac{x_{15} x_{23} + {\left(x_{13} - {2} \, x_{23}\right)} x_{24} x_{45} + {\left(x_{13} x_{23} - {2} \, x_{23}^{2}\right)} x_{35}}{x_{13} x_{23} x_{35} + x_{13} x_{24} x_{45}}, a_{25} = 1, \\ 
	&a_{34} = -\frac{x_{24} x_{45}}{x_{23}^{2} x_{35} + x_{23} x_{24} x_{45}}, a_{35} = 1, \\ 
	&a_{45} = -\frac{{\left(x_{13} - {2} \,\right)} x_{23}^{2} x_{35} + {\left(x_{13} - {2} \,\right)} x_{23} x_{24} x_{45} + x_{15} x_{23}}{x_{13} x_{23} x_{24} x_{35} + x_{13} x_{24}^{2} x_{45}}
\end{align*}

Now assume $x_{14} \neq \frac{x_{13}x_{24}}{x_{23}}$ and $x_{35} = \frac{-x_{45}x_{24}}{x_{23}}$
\begin{equation*}x=\left(\begin{matrix}
		1 & 0 & x_{13} & x_{14} & x_{15} \\
		0 & 1 & x_{23} & x_{24} & 0 \\
		0 & 0 & 1 & 0 & x_{35}  \\
		0 & 0 & 0 & 1 & x_{45}  \\
		0 & 0 & 0 & 0 & 1 \\
	\end{matrix}\right),x^A=\left(\begin{matrix}
		1 & 0 & 0 & 1 & 0 \\
		0 & 1 & 1 & 0 & 0 \\
		0 & 0 & 1 & 0 & 0 \\
		0 & 0 & 0 & 1 & 1 \\
		0 & 0 & 0 & 0 & 1
	\end{matrix}\right)
\end{equation*} \newline
Where matrix $A$ has entries
\begin{align*}
	&d_{1} = 1, d_{2} = 1, d_{3} = \frac{1}{x_{23}}, d_{4} = \frac{x_{23}}{x_{14} x_{23} - x_{13} x_{24}}, d_{5} = \frac{x_{23}}{{\left(x_{14} x_{23} - x_{13} x_{24}\right)} x_{45}}, \\ 
	&a_{12} = \frac{x_{13}}{x_{23}}, a_{13} = 1, a_{14} = 1, a_{15} = 1, \\ 
	&a_{23} = 1, a_{24} = 1, a_{25} = 1, \\ 
	&a_{34} = -\frac{x_{24}}{x_{14} x_{23} - x_{13} x_{24}}, a_{35} = \frac{x_{15} x_{23} x_{24} + {\left(x_{14}^{2} x_{23} + x_{13} x_{24}^{2} - {\left(x_{13} x_{14} + x_{14} x_{23}\right)} x_{24}\right)} x_{45}}{{\left(x_{14}^{2} x_{23}^{2} - 2 \, x_{13} x_{14} x_{23} x_{24} + x_{13}^{2} x_{24}^{2}\right)} x_{45}}, \\ 
	&a_{45} = -\frac{x_{15} x_{23}^{2} + {\left(x_{13} x_{14} x_{23} - x_{14} x_{23}^{2} - {\left(x_{13}^{2} - x_{13} x_{23}\right)} x_{24}\right)} x_{45}}{{\left(x_{14}^{2} x_{23}^{2} - 2 \, x_{13} x_{14} x_{23} x_{24} + x_{13}^{2} x_{24}^{2}\right)} x_{45}}
\end{align*}

Now assume $x_{14} = \frac{x_{13}x_{24}}{x_{23}}$ and $x_{24}\neq \frac{-x_{23}x_{35}}{x_{45}}$
\begin{equation*}x=\left(\begin{matrix}
		1 & 0 & x_{13} & x_{14} & x_{15} \\
		0 & 1 & x_{23} & x_{24} & 0 \\
		0 & 0 & 1 & 0 & x_{35}  \\
		0 & 0 & 0 & 1 & x_{45}  \\
		0 & 0 & 0 & 0 & 1 \\
	\end{matrix}\right),x^A=\left(\begin{matrix}
		1 & 0 & 0 & 0 & 0 \\
		0 & 1 & 1 & 0 & 0 \\
		0 & 0 & 1 & 0 & 1 \\
		0 & 0 & 0 & 1 & 1 \\
		0 & 0 & 0 & 0 & 1
	\end{matrix}\right)
\end{equation*} \newline
Where matrix $A$ has entries
\begin{align*}
	&d_{1} = 1, d_{2} = 1, d_{3} = \frac{1}{x_{23}}, d_{4} = \frac{x_{45}}{x_{23} x_{35} + x_{24} x_{45}}, d_{5} = \frac{1}{x_{23} x_{35} + x_{24} x_{45}}, \\ 
	&a_{12} = \frac{x_{13}}{x_{23}}, a_{13} = 1, a_{14} = 1, a_{15} = 1, \\ 
	&a_{23} = 1, a_{24} = -\frac{x_{15} x_{23} + {\left(x_{13} - {2} \, x_{23}\right)} x_{24} x_{45} + {\left(x_{13} x_{23} - {2} \, x_{23}^{2}\right)} x_{35}}{x_{13} x_{23} x_{35} + x_{13} x_{24} x_{45}}, a_{25} = 1, \\ 
	&a_{34} = -\frac{x_{24} x_{45}}{x_{23}^{2} x_{35} + x_{23} x_{24} x_{45}}, a_{35} = 1, \\ 
	&a_{45} = -\frac{{\left(x_{13} - {2} \,\right)} x_{23}^{2} x_{35} + {\left(x_{13} - {2} \,\right)} x_{23} x_{24} x_{45} + x_{15} x_{23}}{x_{13} x_{23} x_{24} x_{35} + x_{13} x_{24}^{2} x_{45}}
\end{align*}

Now assume $x_{14} = \frac{x_{13}x_{24}}{x_{23}}$ and $x_{24}= \frac{-x_{23}x_{35}}{x_{45}}$
\begin{equation*}x=\left(\begin{matrix}
		1 & 0 & x_{13} & x_{14} & x_{15} \\
		0 & 1 & x_{23} & x_{24} & 0 \\
		0 & 0 & 1 & 0 & x_{35}  \\
		0 & 0 & 0 & 1 & x_{45}  \\
		0 & 0 & 0 & 0 & 1 \\
	\end{matrix}\right),x^A=\left(\begin{matrix}
		1 & 0 & 0 & 0 & 0 \\
		0 & 1 & 1 & 0 & 0 \\
		0 & 0 & 1 & 0 & 0 \\
		0 & 0 & 0 & 1 & 1 \\
		0 & 0 & 0 & 0 & 1
	\end{matrix}\right)
\end{equation*} \newline
Where matrix $A$ has entries
\begin{align*}
	&d_{1} = 1, d_{2} = 1, d_{3} = \frac{1}{x_{23}}, d_{4} = 1, d_{5} = \frac{1}{x_{45}}, \\ 
	&a_{12} = \frac{x_{13}}{x_{23}}, a_{13} = 1, a_{14} = 1, a_{15} = 1, \\ 
	&a_{23} = 1, a_{24} = -\frac{x_{15} x_{23} - x_{23} x_{45}}{x_{13} x_{45}}, a_{25} = 1, \\ 
	&a_{34} = \frac{x_{35}}{x_{45}}, a_{35} = 1, \\ 
	&a_{45} = \frac{x_{15} + {\left(x_{13} - 1\right)} x_{45}}{x_{13} x_{35}}
\end{align*}
%111000

First assume $x_{24} \neq \frac{-x_{23}x_{35}}{x_{45}}$
\begin{equation*}x=\left(\begin{matrix}
		1 & 0 & 0 & 0 & 0 \\
		0 & 1 & x_{23} & x_{24} & x_{25} \\
		0 & 0 & 1 & 0 & x_{35}  \\
		0 & 0 & 0 & 1 & x_{45}  \\
		0 & 0 & 0 & 0 & 1 \\
	\end{matrix}\right),x^A=\left(\begin{matrix}
		1 & 0 & 0 & 0 & 0 \\
		0 & 1 & 1 & 0 & 0 \\
		0 & 0 & 1 & 0 & 1 \\
		0 & 0 & 0 & 1 & 1 \\
		0 & 0 & 0 & 0 & 1
	\end{matrix}\right)
\end{equation*} \newline
Where matrix $A$ has entries
\begin{align*}
	&d_{1} = 1, d_{2} = 1, d_{3} = \frac{1}{x_{23}}, d_{4} = \frac{x_{45}}{x_{23} x_{35} + x_{24} x_{45}}, d_{5} = \frac{1}{x_{23} x_{35} + x_{24} x_{45}}, \\ 
	&a_{12} = 0, a_{13} = 1, a_{14} = -1, a_{15} = 1, \\ 
	&a_{23} = 1, a_{24} = 1, a_{25} = 1, \\ 
	&a_{34} = -\frac{x_{24} x_{45}}{x_{23}^{2} x_{35} + x_{23} x_{24} x_{45}}, a_{35} = 1, \\ 
	&a_{45} = -\frac{{\left(x_{23} - {2} \,\right)} x_{24} x_{45} + x_{25} + {\left(x_{23}^{2} - {2} \, x_{23}\right)} x_{35}}{x_{23} x_{24} x_{35} + x_{24}^{2} x_{45}}
\end{align*}

Now assume $x_{24} = \frac{-x_{23}x_{35}}{x_{45}}$
\begin{equation*}x=\left(\begin{matrix}
		1 & 0 & 0 & 0 & 0 \\
		0 & 1 & x_{23} & x_{24} & x_{25} \\
		0 & 0 & 1 & 0 & x_{35}  \\
		0 & 0 & 0 & 1 & x_{45}  \\
		0 & 0 & 0 & 0 & 1 \\
	\end{matrix}\right),x^A=\left(\begin{matrix}
		1 & 0 & 0 & 0 & 0 \\
		0 & 1 & 1 & 0 & 0 \\
		0 & 0 & 1 & 0 & 0 \\
		0 & 0 & 0 & 1 & 1 \\
		0 & 0 & 0 & 0 & 1
	\end{matrix}\right)
\end{equation*} \newline
Where matrix $A$ has entries
\begin{align*}
	&d_{1} = 1, d_{2} = 1, d_{3} = \frac{1}{x_{23}}, d_{4} = 1, d_{5} = \frac{1}{x_{45}}, \\ 
	&a_{12} = 0, a_{13} = 1, a_{14} = 0, a_{15} = 1, \\ 
	&a_{23} = 1, a_{24} = 1, a_{25} = 1, \\ 
	&a_{34} = \frac{x_{35}}{x_{45}}, a_{35} = 1, \\ 
	&a_{45} = \frac{x_{25} + {\left(x_{23} - 1\right)} x_{45}}{x_{23} x_{35}}
\end{align*}

%111001

First assume $x_{24} \neq \frac{-x_{23}x_{35}}{x_{45}}$
\begin{equation*}x=\left(\begin{matrix}
		1 & 0 & 0 & 0 & x_{15} \\
		0 & 1 & x_{23} & x_{24} & x_{25} \\
		0 & 0 & 1 & 0 & x_{35}  \\
		0 & 0 & 0 & 1 & x_{45}  \\
		0 & 0 & 0 & 0 & 1 \\
	\end{matrix}\right),x^A=\left(\begin{matrix}
		1 & 0 & 0 & 0 & 0 \\
		0 & 1 & 1 & 0 & 0 \\
		0 & 0 & 1 & 0 & 1 \\
		0 & 0 & 0 & 1 & 1 \\
		0 & 0 & 0 & 0 & 1
	\end{matrix}\right)
\end{equation*} \newline
Where matrix $A$ has entries
\begin{align*}
	&d_{1} = 1, d_{2} = 1, d_{3} = \frac{1}{x_{23}}, d_{4} = \frac{x_{45}}{x_{23} x_{35} + x_{24} x_{45}}, d_{5} = \frac{1}{x_{23} x_{35} + x_{24} x_{45}}, \\ 
	&a_{12} = 0, a_{13} = 1, a_{14} = -\frac{x_{23} x_{35} + x_{24} x_{45} - x_{15}}{x_{23} x_{35} + x_{24} x_{45}}, a_{15} = 1, \\ 
	&a_{23} = 1, a_{24} = 1, a_{25} = 1, \\ 
	&a_{34} = -\frac{x_{24} x_{45}}{x_{23}^{2} x_{35} + x_{23} x_{24} x_{45}}, a_{35} = 1, \\ 
	&a_{45} = -\frac{{\left(x_{23} - {2} \,\right)} x_{24} x_{45} + x_{25} + {\left(x_{23}^{2} - {2} \, x_{23}\right)} x_{35}}{x_{23} x_{24} x_{35} + x_{24}^{2} x_{45}}
\end{align*}

Now assume $x_{24} = \frac{-x_{23}x_{35}}{x_{45}}$
\begin{equation*}x=\left(\begin{matrix}
		1 & 0 & 0 & 0 & x_{15} \\
		0 & 1 & x_{23} & x_{24} & x_{25} \\
		0 & 0 & 1 & 0 & x_{35}  \\
		0 & 0 & 0 & 1 & x_{45}  \\
		0 & 0 & 0 & 0 & 1 \\
	\end{matrix}\right),x^A=\left(\begin{matrix}
		1 & 0 & 0 & 0 & 0 \\
		0 & 1 & 1 & 0 & 0 \\
		0 & 0 & 1 & 0 & 0 \\
		0 & 0 & 0 & 1 & 1 \\
		0 & 0 & 0 & 0 & 1
	\end{matrix}\right)
\end{equation*} \newline
Where matrix $A$ has entries
\begin{align*}
	&d_{1} = 1, d_{2} = 1, d_{3} = \frac{1}{x_{23}}, d_{4} = 1, d_{5} = \frac{1}{x_{45}}, \\ 
	&a_{12} = 0, a_{13} = 1, a_{14} = \frac{x_{15}}{x_{45}}, a_{15} = 1, \\ 
	&a_{23} = 1, a_{24} = 1, a_{25} = 1, \\ 
	&a_{34} = \frac{x_{35}}{x_{45}}, a_{35} = 1, \\ 
	&a_{45} = \frac{x_{25} + {\left(x_{23} - 1\right)} x_{45}}{x_{23} x_{35}}
\end{align*}

%111010
First assume $x_{45} \neq \frac{-x_{23}x_{35}}{x_{24}}$
\begin{equation*}x=\left(\begin{matrix}
		1 & 0 & 0 & x_{14} & 0 \\
		0 & 1 & x_{23} & x_{24} & x_{25} \\
		0 & 0 & 1 & 0 & x_{35}  \\
		0 & 0 & 0 & 1 & x_{45}  \\
		0 & 0 & 0 & 0 & 1 \\
	\end{matrix}\right),x^A=\left(\begin{matrix}
		1 & 0 & 0 & 1 & 0 \\
		0 & 1 & 1 & 0 & 0 \\
		0 & 0 & 1 & 0 & 1 \\
		0 & 0 & 0 & 1 & 1 \\
		0 & 0 & 0 & 0 & 1
	\end{matrix}\right)
\end{equation*} \newline
Where matrix $A$ has entries
\begin{align*}
	&d_{1} = 1, d_{2} = \frac{x_{23} x_{35} + x_{24} x_{45}}{x_{14} x_{45}}, d_{3} = \frac{x_{23} x_{35} + x_{24} x_{45}}{x_{14} x_{23} x_{45}}, d_{4} = \frac{1}{x_{14}}, d_{5} = \frac{1}{x_{14} x_{45}}, \\ 
	&a_{12} = 0, a_{13} = 1, a_{14} = 1, a_{15} = 1, \\ 
	&a_{23} = 1, a_{24} = 1, a_{25} = 1, \\ 
	&a_{34} = -\frac{x_{24}}{x_{14} x_{23}}, a_{35} = -\frac{x_{25} - {\left({2} \, x_{14} - {2} \, x_{24}\right)} x_{45}}{x_{14} x_{23} x_{45}}, \\ 
	&a_{45} = \frac{2}{x_{14}}
\end{align*}

Now assume $x_{45} = \frac{-x_{23}x_{35}}{x_{24}}$
\begin{equation*}x=\left(\begin{matrix}
		1 & 0 & 0 & x_{14} & 0 \\
		0 & 1 & x_{23} & x_{24} & x_{25} \\
		0 & 0 & 1 & 0 & x_{35}  \\
		0 & 0 & 0 & 1 & x_{45}  \\
		0 & 0 & 0 & 0 & 1 \\
	\end{matrix}\right),x^A=\left(\begin{matrix}
		1 & 0 & 0 & 1 & 0 \\
		0 & 1 & 1 & 0 & 0 \\
		0 & 0 & 1 & 0 & 1 \\
		0 & 0 & 0 & 1 & 1 \\
		0 & 0 & 0 & 0 & 1
	\end{matrix}\right)
\end{equation*} \newline
Where matrix $A$ has entries
\begin{align*}
	&d_{1} = 1, d_{2} = 1, d_{3} = \frac{1}{x_{23}}, d_{4} = \frac{1}{x_{14}}, d_{5} = -\frac{x_{24}}{x_{14} x_{23} x_{35}}, \\ 
	&a_{12} = 0, a_{13} = 1, a_{14} = 1, a_{15} = 1, \\ 
	&a_{23} = 1, a_{24} = 1, a_{25} = 1, \\ 
	&a_{34} = -\frac{x_{24}}{x_{14} x_{23}}, a_{35} = \frac{x_{24} x_{25} + {\left(x_{14} x_{23} - x_{23} x_{24}\right)} x_{35}}{x_{14} x_{23}^{2} x_{35}}, \\ 
	&a_{45} = \frac{1}{x_{14}}
\end{align*}

%111011

First assume $x_{45} \neq \frac{-x_{23}x_{35}}{x_{24}}$
\begin{equation*}x=\left(\begin{matrix}
		1 & 0 & 0 & x_{14} & x_{15} \\
		0 & 1 & x_{23} & x_{24} & x_{25} \\
		0 & 0 & 1 & 0 & x_{35}  \\
		0 & 0 & 0 & 1 & x_{45}  \\
		0 & 0 & 0 & 0 & 1 \\
	\end{matrix}\right),x^A=\left(\begin{matrix}
		1 & 0 & 0 & 1 & 0 \\
		0 & 1 & 1 & 0 & 0 \\
		0 & 0 & 1 & 0 & 1 \\
		0 & 0 & 0 & 1 & 1 \\
		0 & 0 & 0 & 0 & 1
	\end{matrix}\right)
\end{equation*} \newline
Where matrix $A$ has entries
\begin{align*}
	&d_{1} = 1, d_{2} = \frac{x_{23} x_{35} + x_{24} x_{45}}{x_{14} x_{45}}, d_{3} = \frac{x_{23} x_{35} + x_{24} x_{45}}{x_{14} x_{23} x_{45}}, d_{4} = \frac{1}{x_{14}}, d_{5} = \frac{1}{x_{14} x_{45}}, \\ 
	&a_{12} = 0, a_{13} = 1, a_{14} = 1, a_{15} = 1, \\ 
	&a_{23} = 1, a_{24} = 1, a_{25} = 1, \\ 
	&a_{34} = -\frac{x_{24}}{x_{14} x_{23}}, a_{35} = \frac{x_{15} x_{24} - x_{14} x_{25} + {\left({2} \, x_{14}^{2} - {2} \, x_{14} x_{24}\right)} x_{45}}{x_{14}^{2} x_{23} x_{45}}, \\ 
	&a_{45} = \frac{{2} \, x_{14} x_{45} - x_{15}}{x_{14}^{2} x_{45}}
\end{align*}

Now assume $x_{45} = \frac{-x_{23}x_{35}}{x_{24}}$
\begin{equation*}x=\left(\begin{matrix}
		1 & 0 & 0 & x_{14} & x_{15} \\
		0 & 1 & x_{23} & x_{24} & x_{25} \\
		0 & 0 & 1 & 0 & x_{35}  \\
		0 & 0 & 0 & 1 & x_{45}  \\
		0 & 0 & 0 & 0 & 1 \\
	\end{matrix}\right),x^A=\left(\begin{matrix}
		1 & 0 & 0 & 1 & 0 \\
		0 & 1 & 1 & 0 & 0 \\
		0 & 0 & 1 & 0 & 1 \\
		0 & 0 & 0 & 1 & 1 \\
		0 & 0 & 0 & 0 & 1
	\end{matrix}\right)
\end{equation*} \newline
Where matrix $A$ has entries
\begin{align*}
	&d_{1} = 1, d_{2} = 1, d_{3} = \frac{1}{x_{23}}, d_{4} = \frac{1}{x_{14}}, d_{5} = -\frac{x_{24}}{x_{14} x_{23} x_{35}}, \\ 
	&a_{12} = 0, a_{13} = 1, a_{14} = 1, a_{15} = 1, \\ 
	&a_{23} = 1, a_{24} = 1, a_{25} = 1, \\ 
	&a_{34} = -\frac{x_{24}}{x_{14} x_{23}}, a_{35} = -\frac{x_{15} x_{24}^{2} - x_{14} x_{24} x_{25} - {\left(x_{14}^{2} x_{23} - x_{14} x_{23} x_{24}\right)} x_{35}}{x_{14}^{2} x_{23}^{2} x_{35}}, \\ 
	&a_{45} = \frac{x_{14} x_{23} x_{35} + x_{15} x_{24}}{x_{14}^{2} x_{23} x_{35}}
\end{align*}
%111100

First assume $x_{45} \neq \frac{-x_{23}x_{35}}{x_{24}}$
\begin{equation*}x=\left(\begin{matrix}
		1 & 0 & x_{13} & 0 & 0 \\
		0 & 1 & x_{23} & x_{24} & x_{25} \\
		0 & 0 & 1 & 0 & x_{35}  \\
		0 & 0 & 0 & 1 & x_{45}  \\
		0 & 0 & 0 & 0 & 1 \\
	\end{matrix}\right),x^A=\left(\begin{matrix}
		1 & 0 & 0 & 1 & 0 \\
		0 & 1 & 1 & 0 & 0 \\
		0 & 0 & 1 & 0 & 1 \\
		0 & 0 & 0 & 1 & 1 \\
		0 & 0 & 0 & 0 & 1
	\end{matrix}\right)
\end{equation*} \newline
Where matrix $A$ has entries
\begin{align*}
	&d_{1} = 1, d_{2} = -\frac{x_{23}^{2} x_{35} + x_{23} x_{24} x_{45}}{x_{13} x_{24} x_{45}}, d_{3} = -\frac{x_{23} x_{35} + x_{24} x_{45}}{x_{13} x_{24} x_{45}}, d_{4} = -\frac{x_{23}}{x_{13} x_{24}}, d_{5} = -\frac{x_{23}}{x_{13} x_{24} x_{45}}, \\ 
	&a_{12} = -\frac{x_{23} x_{35} + x_{24} x_{45}}{x_{24} x_{45}}, a_{13} = 1, a_{14} = 1, a_{15} = 1, \\ 
	&a_{23} = 1, a_{24} = 1, a_{25} = 1, \\ 
	&a_{34} = \frac{1}{x_{13}}, a_{35} = \frac{2}{x_{13}}, \\ 
	&a_{45} = \frac{x_{23} x_{25} + {\left({2} \, x_{13} - {2} \, x_{23}\right)} x_{24} x_{45}}{x_{13} x_{24}^{2} x_{45}}
\end{align*}

Now assume $x_{45} = \frac{-x_{23}x_{35}}{x_{24}}$
\begin{equation*}x=\left(\begin{matrix}
		1 & 0 & x_{13} & 0 & 0 \\
		0 & 1 & x_{23} & x_{24} & x_{25} \\
		0 & 0 & 1 & 0 & x_{35}  \\
		0 & 0 & 0 & 1 & x_{45}  \\
		0 & 0 & 0 & 0 & 1 \\
	\end{matrix}\right),x^A=\left(\begin{matrix}
		1 & 0 & 0 & 1 & 0 \\
		0 & 1 & 1 & 0 & 0 \\
		0 & 0 & 1 & 0 & 0 \\
		0 & 0 & 0 & 1 & 1 \\
		0 & 0 & 0 & 0 & 1
	\end{matrix}\right)
\end{equation*} \newline
Where matrix $A$ has entries
\begin{align*}
	&d_{1} = 1, d_{2} = 1, d_{3} = \frac{1}{x_{23}}, d_{4} = -\frac{x_{23}}{x_{13} x_{24}}, d_{5} = \frac{1}{x_{13} x_{35}}, \\ 
	&a_{12} = \frac{x_{13}}{x_{23}}, a_{13} = 1, a_{14} = 1, a_{15} = 1, \\ 
	&a_{23} = 1, a_{24} = 1, a_{25} = 1, \\ 
	&a_{34} = \frac{1}{x_{13}}, a_{35} = \frac{1}{x_{13}}, \\ 
	&a_{45} = -\frac{x_{25} - {\left(x_{13} - x_{23}\right)} x_{35}}{x_{13} x_{24} x_{35}}
\end{align*}

%111101

First assume $x_{45} \neq \frac{-x_{23}x_{35}}{x_{24}}$
\begin{equation*}x=\left(\begin{matrix}
		1 & 0 & x_{13} & 0 & x_{15} \\
		0 & 1 & x_{23} & x_{24} & x_{25} \\
		0 & 0 & 1 & 0 & x_{35}  \\
		0 & 0 & 0 & 1 & x_{45}  \\
		0 & 0 & 0 & 0 & 1 \\
	\end{matrix}\right),x^A=\left(\begin{matrix}
		1 & 0 & 0 & 1 & 0 \\
		0 & 1 & 1 & 0 & 0 \\
		0 & 0 & 1 & 0 & 1 \\
		0 & 0 & 0 & 1 & 1 \\
		0 & 0 & 0 & 0 & 1
	\end{matrix}\right)
\end{equation*} \newline
Where matrix $A$ has entries
\begin{align*}
	&d_{1} = 1, d_{2} = -\frac{x_{23}^{2} x_{35} + x_{23} x_{24} x_{45}}{x_{13} x_{24} x_{45}}, d_{3} = -\frac{x_{23} x_{35} + x_{24} x_{45}}{x_{13} x_{24} x_{45}}, d_{4} = -\frac{x_{23}}{x_{13} x_{24}}, d_{5} = -\frac{x_{23}}{x_{13} x_{24} x_{45}}, \\ 
	&a_{12} = -\frac{x_{23} x_{35} + x_{24} x_{45}}{x_{24} x_{45}}, a_{13} = 1, a_{14} = 1, a_{15} = 1, \\ 
	&a_{23} = 1, a_{24} = 1, a_{25} = 1, \\ 
	&a_{34} = \frac{1}{x_{13}}, a_{35} = \frac{{2} \, x_{13} x_{24} x_{45} + x_{15} x_{23}}{x_{13}^{2} x_{24} x_{45}}, \\ 
	&a_{45} = -\frac{x_{15} x_{23}^{2} - x_{13} x_{23} x_{25} - {\left({2} \, x_{13}^{2} - {2} \, x_{13} x_{23}\right)} x_{24} x_{45}}{x_{13}^{2} x_{24}^{2} x_{45}}
\end{align*}

Now assume $x_{45} = \frac{-x_{23}x_{35}}{x_{24}}$
\begin{equation*}x=\left(\begin{matrix}
		1 & 0 & x_{13} & 0 & x_{15} \\
		0 & 1 & x_{23} & x_{24} & x_{25} \\
		0 & 0 & 1 & 0 & x_{35}  \\
		0 & 0 & 0 & 1 & x_{45}  \\
		0 & 0 & 0 & 0 & 1 \\
	\end{matrix}\right),x^A=\left(\begin{matrix}
		1 & 0 & 0 & 1 & 0 \\
		0 & 1 & 1 & 0 & 0 \\
		0 & 0 & 1 & 0 & 0 \\
		0 & 0 & 0 & 1 & 1 \\
		0 & 0 & 0 & 0 & 1
	\end{matrix}\right)
\end{equation*} \newline
Where matrix $A$ has entries
\begin{align*}
	&d_{1} = 1, d_{2} = 1, d_{3} = \frac{1}{x_{23}}, d_{4} = -\frac{x_{23}}{x_{13} x_{24}}, d_{5} = \frac{1}{x_{13} x_{35}}, \\ 
	&a_{12} = \frac{x_{13}}{x_{23}}, a_{13} = 1, a_{14} = 1, a_{15} = 1, \\ 
	&a_{23} = 1, a_{24} = 1, a_{25} = 1, \\ 
	&a_{34} = \frac{1}{x_{13}}, a_{35} = \frac{x_{13} x_{35} - x_{15}}{x_{13}^{2} x_{35}}, \\ 
	&a_{45} = \frac{x_{15} x_{23} - x_{13} x_{25} + {\left(x_{13}^{2} - x_{13} x_{23}\right)} x_{35}}{x_{13}^{2} x_{24} x_{35}}
\end{align*}

%111110
First assume $x_{14}\neq \frac{x_{13}x_{24}}{x_{23}}$ and $x_{24}\neq \frac{-x_{23}x_{35}}{x_{45}}$
\begin{equation*}x=\left(\begin{matrix}
		1 & 0 & x_{13} & x_{14} & 0 \\
		0 & 1 & x_{23} & x_{24} & x_{25} \\
		0 & 0 & 1 & 0 & x_{35}  \\
		0 & 0 & 0 & 1 & x_{45}  \\
		0 & 0 & 0 & 0 & 1 \\
	\end{matrix}\right),x^A=\left(\begin{matrix}
		1 & 0 & 0 & 1 & 0 \\
		0 & 1 & 1 & 0 & 0 \\
		0 & 0 & 1 & 0 & 1 \\
		0 & 0 & 0 & 1 & 1 \\
		0 & 0 & 0 & 0 & 1
	\end{matrix}\right)
\end{equation*} \newline
Where matrix $A$ has entries
\begin{align*}
	&d_{1} = 1, d_{2} = \frac{x_{23}^{2} x_{35} + x_{23} x_{24} x_{45}}{{\left(x_{14} x_{23} - x_{13} x_{24}\right)} x_{45}}, d_{3} = \frac{x_{23} x_{35} + x_{24} x_{45}}{{\left(x_{14} x_{23} - x_{13} x_{24}\right)} x_{45}}, d_{4} = \frac{x_{23}}{x_{14} x_{23} - x_{13} x_{24}}, d_{5} = \frac{x_{23}}{{\left(x_{14} x_{23} - x_{13} x_{24}\right)} x_{45}}, \\ 
	&a_{12} = \frac{x_{13} x_{23} x_{35} + x_{13} x_{24} x_{45}}{{\left(x_{14} x_{23} - x_{13} x_{24}\right)} x_{45}}, a_{13} = 1, a_{14} = 1, a_{15} = 1, \\ 
	&a_{23} = 1, a_{24} = 1, a_{25} = 1, \\ 
	&a_{34} = -\frac{x_{24}}{x_{14} x_{23} - x_{13} x_{24}}, a_{35} = -\frac{x_{14} x_{23} x_{25} - {\left({2} \, x_{14}^{2} x_{23} + {2} \, x_{13} x_{24}^{2} - {\left({2} \, x_{13} x_{14} + {2} \, x_{14} x_{23}\right)} x_{24}\right)} x_{45}}{{\left(x_{14}^{2} x_{23}^{2} - 2 \, x_{13} x_{14} x_{23} x_{24} + x_{13}^{2} x_{24}^{2}\right)} x_{45}}, \\ 
	&a_{45} = \frac{x_{13} x_{23} x_{25} - {\left({2} \, x_{13} x_{14} x_{23} - {2} \, x_{14} x_{23}^{2} - {\left({2} \, x_{13}^{2} - {2} \, x_{13} x_{23}\right)} x_{24}\right)} x_{45}}{{\left(x_{14}^{2} x_{23}^{2} - 2 \, x_{13} x_{14} x_{23} x_{24} + x_{13}^{2} x_{24}^{2}\right)} x_{45}}
\end{align*}

Now assume $x_{14}= \frac{x_{13}x_{24}}{x_{23}}$ and $x_{24}\neq\frac{-x_{23}x_{35}}{x_{45}}$
\begin{equation*}x=\left(\begin{matrix}
		1 & 0 & x_{13} & x_{14} & 0 \\
		0 & 1 & x_{23} & x_{24} & x_{25} \\
		0 & 0 & 1 & 0 & x_{35}  \\
		0 & 0 & 0 & 1 & x_{45}  \\
		0 & 0 & 0 & 0 & 1 \\
	\end{matrix}\right),x^A=\left(\begin{matrix}
		1 & 0 & 0 & 0 & 0 \\
		0 & 1 & 1 & 0 & 0 \\
		0 & 0 & 1 & 0 & 1 \\
		0 & 0 & 0 & 1 & 1 \\
		0 & 0 & 0 & 0 & 1
	\end{matrix}\right)
\end{equation*} \newline
Where matrix $A$ has entries
\begin{align*}
	&d_{1} = 1, d_{2} = 1, d_{3} = \frac{1}{x_{23}}, d_{4} = \frac{x_{45}}{x_{23} x_{35} + x_{24} x_{45}}, d_{5} = \frac{1}{x_{23} x_{35} + x_{24} x_{45}}, \\ 
	&a_{12} = \frac{x_{13}}{x_{23}}, a_{13} = 1, a_{14} = 1, a_{15} = 1, \\ 
	&a_{23} = 1, a_{24} = \frac{x_{13} x_{25} - {\left(x_{13} - {2} \, x_{23}\right)} x_{24} x_{45} - {\left(x_{13} x_{23} - {2} \, x_{23}^{2}\right)} x_{35}}{x_{13} x_{23} x_{35} + x_{13} x_{24} x_{45}}, a_{25} = 1, \\ 
	&a_{34} = -\frac{x_{24} x_{45}}{x_{23}^{2} x_{35} + x_{23} x_{24} x_{45}}, a_{35} = 1, \\ 
	&a_{45} = -\frac{{\left(x_{13} - {2} \,\right)} x_{23}}{x_{13} x_{24}}
\end{align*}

Now assume $x_{14}= \frac{x_{13}x_{24}}{x_{23}}$ and $x_{24}=\frac{-x_{23}x_{35}}{x_{45}}$
\begin{equation*}x=\left(\begin{matrix}
		1 & 0 & x_{13} & x_{14} & 0 \\
		0 & 1 & x_{23} & x_{24} & x_{25} \\
		0 & 0 & 1 & 0 & x_{35}  \\
		0 & 0 & 0 & 1 & x_{45}  \\
		0 & 0 & 0 & 0 & 1 \\
	\end{matrix}\right),x^A=\left(\begin{matrix}
		1 & 0 & 0 & 0 & 0 \\
		0 & 1 & 1 & 0 & 0 \\
		0 & 0 & 1 & 0 & 0 \\
		0 & 0 & 0 & 1 & 1 \\
		0 & 0 & 0 & 0 & 1
	\end{matrix}\right)
\end{equation*} \newline
Where matrix $A$ has entries
\begin{align*}
	&d_{1} = 1, d_{2} = 1, d_{3} = \frac{1}{x_{23}}, d_{4} = 1, d_{5} = \frac{1}{x_{45}}, \\ 
	&a_{12} = \frac{x_{13}}{x_{23}}, a_{13} = 1, a_{14} = 1, a_{15} = 1, \\ 
	&a_{23} = 1, a_{24} = \frac{x_{13} x_{25} + x_{23} x_{45}}{x_{13} x_{45}}, a_{25} = 1, \\ 
	&a_{34} = \frac{x_{35}}{x_{45}}, a_{35} = 1, \\ 
	&a_{45} = \frac{{\left(x_{13} - 1\right)} x_{45}}{x_{13} x_{35}}
\end{align*}

Now assume $x_{14}\neq \frac{x_{13}x_{24}}{x_{23}}$, $x_{24}=\frac{-x_{23}x_{35}}{x_{45}}$ and $x_{14}\neq \frac{x_{13}x_{35}}{x_{45}}$
\begin{equation*}x=\left(\begin{matrix}
		1 & 0 & x_{13} & x_{14} & 0 \\
		0 & 1 & x_{23} & x_{24} & x_{25} \\
		0 & 0 & 1 & 0 & x_{35}  \\
		0 & 0 & 0 & 1 & x_{45}  \\
		0 & 0 & 0 & 0 & 1 \\
	\end{matrix}\right),x^A=\left(\begin{matrix}
		1 & 0 & 0 & 1 & 0 \\
		0 & 1 & 1 & 0 & 0 \\
		0 & 0 & 1 & 0 & 0 \\
		0 & 0 & 0 & 1 & 1 \\
		0 & 0 & 0 & 0 & 1
	\end{matrix}\right)
\end{equation*} \newline
Where matrix $A$ has entries
\begin{align*}
	&d_{1} = 1, d_{2} = 1, d_{3} = \frac{1}{x_{23}}, d_{4} = \frac{x_{45}}{x_{13} x_{35} + x_{14} x_{45}}, d_{5} = \frac{1}{x_{13} x_{35} + x_{14} x_{45}}, \\ 
	&a_{12} = \frac{x_{13}}{x_{23}}, a_{13} = 1, a_{14} = 1, a_{15} = 1, \\ 
	&a_{23} = 1, a_{24} = 1, a_{25} = 1, \\ 
	&a_{34} = \frac{x_{35}}{x_{13} x_{35} + x_{14} x_{45}}, a_{35} = \frac{x_{13} x_{23} x_{35}^{2} + x_{14}^{2} x_{45}^{2} - {\left(x_{14} x_{25} - {\left(x_{13} x_{14} + x_{14} x_{23}\right)} x_{35}\right)} x_{45}}{x_{13}^{2} x_{23} x_{35}^{2} + 2 \, x_{13} x_{14} x_{23} x_{35} x_{45} + x_{14}^{2} x_{23} x_{45}^{2}}, \\ 
	&a_{45} = -\frac{{\left(x_{13} x_{14} - x_{14} x_{23}\right)} x_{45}^{2} - {\left(x_{13} x_{25} - {\left(x_{13}^{2} - x_{13} x_{23}\right)} x_{35}\right)} x_{45}}{x_{13}^{2} x_{23} x_{35}^{2} + 2 \, x_{13} x_{14} x_{23} x_{35} x_{45} + x_{14}^{2} x_{23} x_{45}^{2}}
\end{align*}

Now assume $x_{14}\neq \frac{x_{13}x_{24}}{x_{23}}$, $x_{24}=\frac{-x_{23}x_{35}}{x_{45}}$ and $x_{14}= \frac{x_{13}x_{35}}{x_{45}}$
\begin{equation*}x=\left(\begin{matrix}
		1 & 0 & x_{13} & x_{14} & 0 \\
		0 & 1 & x_{23} & x_{24} & x_{25} \\
		0 & 0 & 1 & 0 & x_{35}  \\
		0 & 0 & 0 & 1 & x_{45}  \\
		0 & 0 & 0 & 0 & 1 \\
	\end{matrix}\right),x^A=\left(\begin{matrix}
		1 & 0 & 0 & 1 & 0 \\
		0 & 1 & 1 & 0 & 0 \\
		0 & 0 & 1 & 0 & 0 \\
		0 & 0 & 0 & 1 & 1 \\
		0 & 0 & 0 & 0 & 1
	\end{matrix}\right)
\end{equation*} \newline
Where matrix $A$ has entries
\begin{align*}
	&d_{1} = 1, d_{2} = 1, d_{3} = \frac{1}{x_{23}}, d_{4} = \frac{x_{45}}{2 \, x_{13} x_{35}}, d_{5} = \frac{1}{2 \, x_{13} x_{35}}, \\ 
	&a_{12} = \frac{x_{13}}{x_{23}}, a_{13} = 1, a_{14} = 1, a_{15} = 1, \\ 
	&a_{23} = 1, a_{24} = 1, a_{25} = 1, \\ 
	&a_{34} = \frac{1}{2 \, x_{13}}, a_{35} = -\frac{x_{25} - 2 \, {\left(x_{13} + x_{23}\right)} x_{35}}{4 \, x_{13} x_{23} x_{35}}, \\ 
	&a_{45} = \frac{{\left(x_{25} - 2 \, {\left(x_{13} - x_{23}\right)} x_{35}\right)} x_{45}}{4 \, x_{13} x_{23} x_{35}^{2}}
\end{align*}

%111111

First assume $x_{14}\neq \frac{x_{13}x_{24}}{x_{23}}$ and $x_{24}\neq \frac{-x_{23}x_{35}}{x_{45}}$
\begin{equation*}x=\left(\begin{matrix}
		1 & 0 & x_{13} & x_{14} & x_{15} \\
		0 & 1 & x_{23} & x_{24} & x_{25} \\
		0 & 0 & 1 & 0 & x_{35}  \\
		0 & 0 & 0 & 1 & x_{45}  \\
		0 & 0 & 0 & 0 & 1 \\
	\end{matrix}\right),x^A=\left(\begin{matrix}
		1 & 0 & 0 & 1 & 0 \\
		0 & 1 & 1 & 0 & 0 \\
		0 & 0 & 1 & 0 & 1 \\
		0 & 0 & 0 & 1 & 1 \\
		0 & 0 & 0 & 0 & 1
	\end{matrix}\right)
\end{equation*} \newline
Where matrix $A$ has entries
\begin{align*}
	&d_{1} = 1, d_{2} = \frac{x_{23}^{2} x_{35} + x_{23} x_{24} x_{45}}{{\left(x_{14} x_{23} - x_{13} x_{24}\right)} x_{45}}, d_{3} = \frac{x_{23} x_{35} + x_{24} x_{45}}{{\left(x_{14} x_{23} - x_{13} x_{24}\right)} x_{45}}, d_{4} = \frac{x_{23}}{x_{14} x_{23} - x_{13} x_{24}}, d_{5} = \frac{x_{23}}{{\left(x_{14} x_{23} - x_{13} x_{24}\right)} x_{45}}, \\ 
	&a_{12} = \frac{x_{13} x_{23} x_{35} + x_{13} x_{24} x_{45}}{{\left(x_{14} x_{23} - x_{13} x_{24}\right)} x_{45}}, a_{13} = 1, a_{14} = 1, a_{15} = 1, \\ 
	&a_{23} = 1, a_{24} = 1, a_{25} = 1, \\ 
	&a_{34} = -\frac{x_{24}}{x_{14} x_{23} - x_{13} x_{24}}, a_{35} = \frac{x_{15} x_{23} x_{24} - x_{14} x_{23} x_{25} + {\left({2} \, x_{14}^{2} x_{23} + {2} \, x_{13} x_{24}^{2} - {\left({2} \, x_{13} x_{14} + {2} \, x_{14} x_{23}\right)} x_{24}\right)} x_{45}}{{\left(x_{14}^{2} x_{23}^{2} - 2 \, x_{13} x_{14} x_{23} x_{24} + x_{13}^{2} x_{24}^{2}\right)} x_{45}}, \\ 
	&a_{45} = -\frac{x_{15} x_{23}^{2} - x_{13} x_{23} x_{25} + {\left({2} \, x_{13} x_{14} x_{23} - {2} \, x_{14} x_{23}^{2} - {\left({2} \, x_{13}^{2} - {2} \, x_{13} x_{23}\right)} x_{24}\right)} x_{45}}{{\left(x_{14}^{2} x_{23}^{2} - 2 \, x_{13} x_{14} x_{23} x_{24} + x_{13}^{2} x_{24}^{2}\right)} x_{45}}
\end{align*}

Now assume $x_{14}= \frac{x_{13}x_{24}}{x_{23}}$ and $x_{24}\neq\frac{-x_{23}x_{35}}{x_{45}}$
\begin{equation*}x=\left(\begin{matrix}
		1 & 0 & x_{13} & x_{14} & x_{15} \\
		0 & 1 & x_{23} & x_{24} & x_{25} \\
		0 & 0 & 1 & 0 & x_{35}  \\
		0 & 0 & 0 & 1 & x_{45}  \\
		0 & 0 & 0 & 0 & 1 \\
	\end{matrix}\right),x^A=\left(\begin{matrix}
		1 & 0 & 0 & 0 & 0 \\
		0 & 1 & 1 & 0 & 0 \\
		0 & 0 & 1 & 0 & 1 \\
		0 & 0 & 0 & 1 & 1 \\
		0 & 0 & 0 & 0 & 1
	\end{matrix}\right)
\end{equation*} \newline
Where matrix $A$ has entries
\begin{align*}
	&d_{1} = 1, d_{2} = 1, d_{3} = \frac{1}{x_{23}}, d_{4} = \frac{x_{45}}{x_{23} x_{35} + x_{24} x_{45}}, d_{5} = \frac{1}{x_{23} x_{35} + x_{24} x_{45}}, \\ 
	&a_{12} = \frac{x_{13}}{x_{23}}, a_{13} = 1, a_{14} = 1, a_{15} = 1, \\ 
	&a_{23} = 1, a_{24} = -\frac{x_{15} x_{23} - x_{13} x_{25} + {\left(x_{13} - {2} \, x_{23}\right)} x_{24} x_{45} + {\left(x_{13} x_{23} - {2} \, x_{23}^{2}\right)} x_{35}}{x_{13} x_{23} x_{35} + x_{13} x_{24} x_{45}}, a_{25} = 1, \\ 
	&a_{34} = -\frac{x_{24} x_{45}}{x_{23}^{2} x_{35} + x_{23} x_{24} x_{45}}, a_{35} = 1, \\ 
	&a_{45} = -\frac{{\left(x_{13} - {2} \,\right)} x_{23}^{2} x_{35} + {\left(x_{13} - {2} \,\right)} x_{23} x_{24} x_{45} + x_{15} x_{23}}{x_{13} x_{23} x_{24} x_{35} + x_{13} x_{24}^{2} x_{45}}
\end{align*}

Now assume $x_{14}= \frac{x_{13}x_{24}}{x_{23}}$ and $x_{24}=\frac{-x_{23}x_{35}}{x_{45}}$
\begin{equation*}x=\left(\begin{matrix}
		1 & 0 & x_{13} & x_{14} & x_{15} \\
		0 & 1 & x_{23} & x_{24} & x_{25} \\
		0 & 0 & 1 & 0 & x_{35}  \\
		0 & 0 & 0 & 1 & x_{45}  \\
		0 & 0 & 0 & 0 & 1 \\
	\end{matrix}\right),x^A=\left(\begin{matrix}
		1 & 0 & 0 & 0 & 0 \\
		0 & 1 & 1 & 0 & 0 \\
		0 & 0 & 1 & 0 & 0 \\
		0 & 0 & 0 & 1 & 1 \\
		0 & 0 & 0 & 0 & 1
	\end{matrix}\right)
\end{equation*} \newline
Where matrix $A$ has entries
\begin{align*}
	&d_{1} = 1, d_{2} = 1, d_{3} = \frac{1}{x_{23}}, d_{4} = 1, d_{5} = \frac{1}{x_{45}}, \\ 
	&a_{12} = \frac{x_{13}}{x_{23}}, a_{13} = 1, a_{14} = 1, a_{15} = 1, \\ 
	&a_{23} = 1, a_{24} = -\frac{x_{15} x_{23} - x_{13} x_{25} - x_{23} x_{45}}{x_{13} x_{45}}, a_{25} = 1, \\ 
	&a_{34} = \frac{x_{35}}{x_{45}}, a_{35} = 1, \\ 
	&a_{45} = \frac{x_{15} + {\left(x_{13} - 1\right)} x_{45}}{x_{13} x_{35}}
\end{align*}

Now assume $x_{14}\neq \frac{x_{13}x_{24}}{x_{23}}$, $x_{24}=\frac{-x_{23}x_{35}}{x_{45}}$ and $x_{14}\neq \frac{x_{13}x_{35}}{x_{45}}$
\begin{equation*}x=\left(\begin{matrix}
		1 & 0 & x_{13} & x_{14} & x_{15} \\
		0 & 1 & x_{23} & x_{24} & x_{25} \\
		0 & 0 & 1 & 0 & x_{35}  \\
		0 & 0 & 0 & 1 & x_{45}  \\
		0 & 0 & 0 & 0 & 1 \\
	\end{matrix}\right),x^A=\left(\begin{matrix}
		1 & 0 & 0 & 1 & 0 \\
		0 & 1 & 1 & 0 & 0 \\
		0 & 0 & 1 & 0 & 0 \\
		0 & 0 & 0 & 1 & 1 \\
		0 & 0 & 0 & 0 & 1
	\end{matrix}\right)
\end{equation*} \newline
Where matrix $A$ has entries
\begin{align*}
	&d_{1} = 1, d_{2} = 1, d_{3} = \frac{1}{x_{23}}, d_{4} = \frac{x_{45}}{x_{13} x_{35} + x_{14} x_{45}}, d_{5} = \frac{1}{x_{13} x_{35} + x_{14} x_{45}}, \\ 
	&a_{12} = \frac{x_{13}}{x_{23}}, a_{13} = 1, a_{14} = 1, a_{15} = 1, \\ 
	&a_{23} = 1, a_{24} = 1, a_{25} = 1, \\ 
	&a_{34} = \frac{x_{35}}{x_{13} x_{35} + x_{14} x_{45}}, a_{35} = \frac{x_{13} x_{23} x_{35}^{2} + x_{14}^{2} x_{45}^{2} - x_{15} x_{23} x_{35} - {\left(x_{14} x_{25} - {\left(x_{13} x_{14} + x_{14} x_{23}\right)} x_{35}\right)} x_{45}}{x_{13}^{2} x_{23} x_{35}^{2} + 2 \, x_{13} x_{14} x_{23} x_{35} x_{45} + x_{14}^{2} x_{23} x_{45}^{2}}, \\ 
	&a_{45} = -\frac{{\left(x_{13} x_{14} - x_{14} x_{23}\right)} x_{45}^{2} + {\left(x_{15} x_{23} - x_{13} x_{25} + {\left(x_{13}^{2} - x_{13} x_{23}\right)} x_{35}\right)} x_{45}}{x_{13}^{2} x_{23} x_{35}^{2} + 2 \, x_{13} x_{14} x_{23} x_{35} x_{45} + x_{14}^{2} x_{23} x_{45}^{2}}
\end{align*}

Now assume $x_{14}\neq \frac{x_{13}x_{24}}{x_{23}}$, $x_{24}=\frac{-x_{23}x_{35}}{x_{45}}$ and $x_{14}= \frac{x_{13}x_{35}}{x_{45}}$
\begin{equation*}x=\left(\begin{matrix}
		1 & 0 & x_{13} & x_{14} & x_{15} \\
		0 & 1 & x_{23} & x_{24} & x_{25} \\
		0 & 0 & 1 & 0 & x_{35}  \\
		0 & 0 & 0 & 1 & x_{45}  \\
		0 & 0 & 0 & 0 & 1 \\
	\end{matrix}\right),x^A=\left(\begin{matrix}
		1 & 0 & 0 & 0 & 0 \\
		0 & 1 & 1 & 0 & 0 \\
		0 & 0 & 1 & 0 & 0 \\
		0 & 0 & 0 & 1 & 1 \\
		0 & 0 & 0 & 0 & 1
	\end{matrix}\right)
\end{equation*} \newline
Where matrix $A$ has entries
\begin{align*}
	&d_{1} = 1, d_{2} = 1, d_{3} = \frac{1}{x_{23}}, d_{4} = 1, d_{5} = \frac{1}{x_{45}}, \\ 
	&a_{12} = -\frac{x_{14} x_{45}}{x_{23} x_{35}}, a_{13} = 1, a_{14} = 1, a_{15} = 1, \\ 
	&a_{23} = 1, a_{24} = \frac{x_{15} x_{23} x_{35} + {\left(x_{14} x_{25} - x_{23} x_{35}\right)} x_{45}}{x_{14} x_{45}^{2}}, a_{25} = 1, \\ 
	&a_{34} = \frac{x_{35}}{x_{45}}, a_{35} = 1, \\ 
	&a_{45} = \frac{x_{14} x_{45}^{2} - x_{15} x_{35} + x_{35} x_{45}}{x_{14} x_{35} x_{45}}
\end{align*}

		\section{Subcases of $Y_6$}

%000001
\begin{equation*}x=\left(% [inline block 22: 32 envs, 11901 chars -> data_tex | \begin{matrix} 		1 & 0 & 0 & 0 & x_{15} \\...]
\right)
\end{equation*} \newline
Where matrix $A$ has entries
\begin{align*}
	&d_{1} = 1, d_{2} = 1, d_{3} = \frac{1}{x_{23}}, d_{4} = \frac{1}{x_{23} x_{34}}, d_{5} = \frac{x_{23}}{x_{15} x_{23} - x_{13} x_{25}},\\ 
	&a_{12} = \frac{x_{13}}{x_{23}}, a_{13} = 1, a_{14} = 1, a_{15} = 1,\\ 
	&a_{23} = \frac{x_{23} x_{34} - x_{14}}{x_{13} x_{34}}, a_{24} = 1, a_{25} = 1,\\ 
	&a_{34} = \frac{x_{23} x_{34} - x_{14}}{x_{13} x_{23} x_{34}}, a_{35} = -\frac{x_{25}}{x_{15} x_{23} - x_{13} x_{25}},\\ 
	&a_{45} = 0
\end{align*}

Now assume $x_{15}=\frac{x_{13}x_{25}}{x_{23}}$
\begin{equation*}x=\left(% [inline block 23: 34 envs, 13115 chars -> data_tex | \begin{matrix} 		1 & 0 & x_{13} & x_{14} & x_{15} \\...]
\right)
\end{equation*} \newline
Where matrix $A$ has entries
\begin{align*}
	&d_{1} = 1, d_{2} = 1, d_{3} = \frac{1}{x_{23}}, d_{4} = \frac{1}{x_{23} x_{34}}, d_{5} = -\frac{x_{23}}{x_{13} x_{25}},\\ 
	&a_{12} = \frac{x_{13}}{x_{23}}, a_{13} = 1, a_{14} = 1, a_{15} = 1,\\ 
	&a_{23} = \frac{x_{23}^{2} x_{34} - x_{14} x_{23} + x_{13} x_{24}}{x_{13} x_{23} x_{34}}, a_{24} = 1, a_{25} = 1,\\ 
	&a_{34} = \frac{x_{23} x_{34} - x_{14}}{x_{13} x_{23} x_{34}}, a_{35} = \frac{1}{x_{13}},\\ 
	&a_{45} = 0
\end{align*}

%011111

First assume $x_{15}\neq \frac{x_{13}x_{25}}{x_{23}}$
\begin{equation*}x=\left(\begin{matrix}
		1 & 0 & x_{13} & x_{14} & x_{15} \\
		0 & 1 & x_{23} & x_{24} & x_{25} \\
		0 & 0 & 1 & x_{34} & 0  \\
		0 & 0 & 0 & 1 & 0  \\
		0 & 0 & 0 & 0 & 1 \\
	\end{matrix}\right),x^A=\left(\begin{matrix}
		1 & 0 & 0 & 0 & 1 \\
		0 & 1 & 1 & 0 & 0 \\
		0 & 0 & 1 & 1 & 0 \\
		0 & 0 & 0 & 1 & 0 \\
		0 & 0 & 0 & 0 & 1
	\end{matrix}\right)
\end{equation*} \newline
Where matrix $A$ has entries
\begin{align*}
	&d_{1} = 1, d_{2} = 1, d_{3} = \frac{1}{x_{23}}, d_{4} = \frac{1}{x_{23} x_{34}}, d_{5} = \frac{x_{23}}{x_{15} x_{23} - x_{13} x_{25}},\\ 
	&a_{12} = \frac{x_{13}}{x_{23}}, a_{13} = 1, a_{14} = 1, a_{15} = 1,\\ 
	&a_{23} = \frac{x_{23}^{2} x_{34} - x_{14} x_{23} + x_{13} x_{24}}{x_{13} x_{23} x_{34}}, a_{24} = 1, a_{25} = 1,\\ 
	&a_{34} = \frac{x_{23} x_{34} - x_{14}}{x_{13} x_{23} x_{34}}, a_{35} = -\frac{x_{25}}{x_{15} x_{23} - x_{13} x_{25}},\\ 
	&a_{45} = 0
\end{align*}

First assume $x_{15}= \frac{x_{13}x_{25}}{x_{23}}$
\begin{equation*}x=\left(% [inline block 24: 20 envs, 7503 chars -> data_tex | \begin{matrix} 		1 & 0 & x_{13} & x_{14} & x_{15} \\...]
\right)
\end{equation*} \newline
Where matrix $A$ has entries
\begin{align*}
	&d_{1} = 1, d_{2} = 1, d_{3} = \frac{1}{x_{23}}, d_{4} = \frac{1}{x_{23} x_{34}}, d_{5} = \frac{x_{34}}{x_{15} x_{34} - x_{14} x_{35}},\\ 
	&a_{12} = \frac{x_{13}}{x_{23}}, a_{13} = 1, a_{14} = 1, a_{15} = 1,\\ 
	&a_{23} = \frac{x_{23} x_{34} - x_{14}}{x_{13} x_{34}}, a_{24} = 1, a_{25} = 1,\\ 
	&a_{34} = \frac{x_{23} x_{34} - x_{14}}{x_{13} x_{23} x_{34}}, a_{35} = 0,\\ 
	&a_{45} = -\frac{x_{35}}{x_{15} x_{34} - x_{14} x_{35}}
\end{align*}

Now assume $x_{15}= \frac{x_{14}x_{35}}{x_{34}}$
\begin{equation*}x=\left(% [inline block 25: 10 envs, 3531 chars -> data_tex | \begin{matrix} 		1 & 0 & x_{13} & x_{14} & x_{15} \\...]
\right)
\end{equation*} \newline
Where matrix $A$ has entries
\begin{align*}
	&d_{1} = 1, d_{2} = 1, d_{3} = \frac{1}{x_{23}}, d_{4} = \frac{1}{x_{23} x_{34}}, d_{5} = \frac{x_{34}}{x_{15} x_{34} - x_{14} x_{35}},\\ 
	&a_{12} = 0, a_{13} = \frac{x_{14}}{x_{23} x_{34}}, a_{14} = 1, a_{15} = 1,\\ 
	&a_{23} = 1, a_{24} = 1, a_{25} = 1,\\ 
	&a_{34} = \frac{1}{x_{23}}, a_{35} = -\frac{x_{25} x_{34}}{x_{15} x_{23} x_{34} - x_{14} x_{23} x_{35}},\\ 
	&a_{45} = -\frac{x_{35}}{x_{15} x_{34} - x_{14} x_{35}}
\end{align*}

Now assume $x_{15}= \frac{x_{14}x_{35}}{x_{34}}$
\begin{equation*}x=\left(\begin{matrix}
		1 & 0 & 0 & x_{14} & x_{15} \\
		0 & 1 & x_{23} & 0 & x_{25} \\
		0 & 0 & 1 & x_{34} & x_{35}  \\
		0 & 0 & 0 & 1 & 0  \\
		0 & 0 & 0 & 0 & 1 \\
	\end{matrix}\right),x^A=\left(\begin{matrix}
		1 & 0 & 0 & 0 & 0 \\
		0 & 1 & 1 & 0 & 0 \\
		0 & 0 & 1 & 1 & 0 \\
		0 & 0 & 0 & 1 & 0 \\
		0 & 0 & 0 & 0 & 1
	\end{matrix}\right)
\end{equation*} \newline
Where matrix $A$ has entries
\begin{align*}
	&d_{1} = 1, d_{2} = 1, d_{3} = \frac{1}{x_{23}}, d_{4} = \frac{1}{x_{23} x_{34}}, d_{5} = 1,\\ 
	&a_{12} = 0, a_{13} = \frac{x_{14}}{x_{23} x_{34}}, a_{14} = 1, a_{15} = 1,\\ 
	&a_{23} = 1, a_{24} = 1, a_{25} = 1,\\ 
	&a_{34} = \frac{1}{x_{23}}, a_{35} = -\frac{x_{25}}{x_{23}},\\ 
	&a_{45} = -\frac{x_{35}}{x_{34}}
\end{align*}

%101100
\begin{equation*}x=\left(\begin{matrix}
		1 & 0 & x_{13} & 0 & 0 \\
		0 & 1 & x_{23} & 0 & x_{25} \\
		0 & 0 & 1 & x_{34} & x_{35}  \\
		0 & 0 & 0 & 1 & 0  \\
		0 & 0 & 0 & 0 & 1 \\
	\end{matrix}\right),x^A=\left(\begin{matrix}
		1 & 0 & 0 & 0 & 1 \\
		0 & 1 & 1 & 0 & 0 \\
		0 & 0 & 1 & 1 & 0 \\
		0 & 0 & 0 & 1 & 0 \\
		0 & 0 & 0 & 0 & 1
	\end{matrix}\right)
\end{equation*} \newline
Where matrix $A$ has entries
\begin{align*}
	&d_{1} = 1, d_{2} = 1, d_{3} = \frac{1}{x_{23}}, d_{4} = \frac{1}{x_{23} x_{34}}, d_{5} = -\frac{x_{23}}{x_{13} x_{25}},\\ 
	&a_{12} = \frac{x_{13}}{x_{23}}, a_{13} = 1, a_{14} = 1, a_{15} = 1,\\ 
	&a_{23} = \frac{x_{23}}{x_{13}}, a_{24} = 1, a_{25} = 1,\\ 
	&a_{34} = \frac{1}{x_{13}}, a_{35} = \frac{1}{x_{13}},\\ 
	&a_{45} = \frac{x_{23} x_{35}}{x_{13} x_{25} x_{34}}
\end{align*}

%101101
First assume $x_{15}\neq \frac{x_{13}x_{25}}{x_{23}}$
\begin{equation*}x=\left(\begin{matrix}
		1 & 0 & x_{13} & 0 & x_{15} \\
		0 & 1 & x_{23} & 0 & x_{25} \\
		0 & 0 & 1 & x_{34} & x_{35}  \\
		0 & 0 & 0 & 1 & 0  \\
		0 & 0 & 0 & 0 & 1 \\
	\end{matrix}\right),x^A=\left(\begin{matrix}
		1 & 0 & 0 & 0 & 1 \\
		0 & 1 & 1 & 0 & 0 \\
		0 & 0 & 1 & 1 & 0 \\
		0 & 0 & 0 & 1 & 0 \\
		0 & 0 & 0 & 0 & 1
	\end{matrix}\right)
\end{equation*} \newline
Where matrix $A$ has entries
\begin{align*}
	&d_{1} = 1, d_{2} = 1, d_{3} = \frac{1}{x_{23}}, d_{4} = \frac{1}{x_{23} x_{34}}, d_{5} = \frac{x_{23}}{x_{15} x_{23} - x_{13} x_{25}},\\ 
	&a_{12} = \frac{x_{13}}{x_{23}}, a_{13} = 1, a_{14} = 1, a_{15} = 1,\\ 
	&a_{23} = \frac{x_{23}}{x_{13}}, a_{24} = 1, a_{25} = 1,\\ 
	&a_{34} = \frac{1}{x_{13}}, a_{35} = -\frac{x_{25}}{x_{15} x_{23} - x_{13} x_{25}},\\ 
	&a_{45} = -\frac{x_{23} x_{35}}{{\left(x_{15} x_{23} - x_{13} x_{25}\right)} x_{34}}
\end{align*}

Now assume $x_{15}= \frac{x_{13}x_{25}}{x_{23}}$
\begin{equation*}x=\left(\begin{matrix}
		1 & 0 & x_{13} & 0 & x_{15} \\
		0 & 1 & x_{23} & 0 & x_{25} \\
		0 & 0 & 1 & x_{34} & x_{35}  \\
		0 & 0 & 0 & 1 & 0  \\
		0 & 0 & 0 & 0 & 1 \\
	\end{matrix}\right),x^A=\left(\begin{matrix}
		1 & 0 & 0 & 0 & 0 \\
		0 & 1 & 1 & 0 & 0 \\
		0 & 0 & 1 & 1 & 0 \\
		0 & 0 & 0 & 1 & 0 \\
		0 & 0 & 0 & 0 & 1
	\end{matrix}\right)
\end{equation*} \newline
Where matrix $A$ has entries
\begin{align*}
	&d_{1} = 1, d_{2} = 1, d_{3} = \frac{1}{x_{23}}, d_{4} = \frac{1}{x_{23} x_{34}}, d_{5} = 1,\\ 
	&a_{12} = \frac{x_{13}}{x_{23}}, a_{13} = 1, a_{14} = 1, a_{15} = 1,\\ 
	&a_{23} = \frac{x_{23}}{x_{13}}, a_{24} = 1, a_{25} = 1,\\ 
	&a_{34} = \frac{1}{x_{13}}, a_{35} = -\frac{x_{25}}{x_{23}},\\ 
	&a_{45} = -\frac{x_{35}}{x_{34}}
\end{align*}
%101110

First assume $x_{35} \neq \frac{-x_{13}x_{25}x_{34}}{x_{14}x_{23}}$
\begin{equation*}x=\left(\begin{matrix}
		1 & 0 & x_{13} & x_{14} & 0 \\
		0 & 1 & x_{23} & 0 & x_{25} \\
		0 & 0 & 1 & x_{34} & x_{35}  \\
		0 & 0 & 0 & 1 & 0  \\
		0 & 0 & 0 & 0 & 1 \\
	\end{matrix}\right),x^A=\left(\begin{matrix}
		1 & 0 & 0 & 0 & 1 \\
		0 & 1 & 1 & 0 & 0 \\
		0 & 0 & 1 & 1 & 0 \\
		0 & 0 & 0 & 1 & 0 \\
		0 & 0 & 0 & 0 & 1
	\end{matrix}\right)
\end{equation*} \newline
Where matrix $A$ has entries
\begin{align*}
	&d_{1} = 1, d_{2} = 1, d_{3} = \frac{1}{x_{23}}, d_{4} = \frac{1}{x_{23} x_{34}}, d_{5} = -\frac{x_{23} x_{34}}{x_{13} x_{25} x_{34} + x_{14} x_{23} x_{35}},\\ 
	&a_{12} = \frac{x_{13}}{x_{23}}, a_{13} = 1, a_{14} = 1, a_{15} = 1,\\ 
	&a_{23} = \frac{x_{23} x_{34} - x_{14}}{x_{13} x_{34}}, a_{24} = 1, a_{25} = 1,\\ 
	&a_{34} = \frac{x_{23} x_{34} - x_{14}}{x_{13} x_{23} x_{34}}, a_{35} = \frac{x_{25} x_{34}}{x_{13} x_{25} x_{34} + x_{14} x_{23} x_{35}},\\ 
	&a_{45} = \frac{x_{23} x_{35}}{x_{13} x_{25} x_{34} + x_{14} x_{23} x_{35}}
\end{align*}

First assume $x_{35} = \frac{-x_{13}x_{25}x_{34}}{x_{14}x_{23}}$
\begin{equation*}x=\left(\begin{matrix}
		1 & 0 & x_{13} & x_{14} & 0 \\
		0 & 1 & x_{23} & 0 & x_{25} \\
		0 & 0 & 1 & x_{34} & x_{35}  \\
		0 & 0 & 0 & 1 & 0  \\
		0 & 0 & 0 & 0 & 1 \\
	\end{matrix}\right),x^A=\left(\begin{matrix}
		1 & 0 & 0 & 0 & 0 \\
		0 & 1 & 1 & 0 & 0 \\
		0 & 0 & 1 & 1 & 0 \\
		0 & 0 & 0 & 1 & 0 \\
		0 & 0 & 0 & 0 & 1
	\end{matrix}\right)
\end{equation*} \newline
Where matrix $A$ has entries
\begin{align*}
	&d_{1} = 1, d_{2} = 1, d_{3} = \frac{1}{x_{23}}, d_{4} = \frac{1}{x_{23} x_{34}}, d_{5} = 1,\\ 
	&a_{12} = \frac{x_{13}}{x_{23}}, a_{13} = 1, a_{14} = 1, a_{15} = 1,\\ 
	&a_{23} = \frac{x_{23} x_{34} - x_{14}}{x_{13} x_{34}}, a_{24} = 1, a_{25} = 1,\\ 
	&a_{34} = \frac{x_{23} x_{34} - x_{14}}{x_{13} x_{23} x_{34}}, a_{35} = -\frac{x_{25}}{x_{23}},\\ 
	&a_{45} = \frac{x_{13} x_{25}}{x_{14} x_{23}}
\end{align*}

%101111

First assume $x_{35} \neq \frac{(x_{15}x_{23}-x_{13}x_{25})x_{34}}{x_{14}x_{23}}$

\begin{equation*}x=\left(\begin{matrix}
		1 & 0 & x_{13} & x_{14} & x_{15} \\
		0 & 1 & x_{23} & 0 & x_{25} \\
		0 & 0 & 1 & x_{34} & x_{35}  \\
		0 & 0 & 0 & 1 & 0  \\
		0 & 0 & 0 & 0 & 1 \\
	\end{matrix}\right),x^A=\left(\begin{matrix}
		1 & 0 & 0 & 0 & 1 \\
		0 & 1 & 1 & 0 & 0 \\
		0 & 0 & 1 & 1 & 0 \\
		0 & 0 & 0 & 1 & 0 \\
		0 & 0 & 0 & 0 & 1
	\end{matrix}\right)
\end{equation*} \newline
Where matrix $A$ has entries
\begin{align*}
	&d_{1} = 1, d_{2} = 1, d_{3} = \frac{1}{x_{23}}, d_{4} = \frac{1}{x_{23} x_{34}}, d_{5} = -\frac{x_{23} x_{34}}{x_{14} x_{23} x_{35} - {\left(x_{15} x_{23} - x_{13} x_{25}\right)} x_{34}},\\ 
	&a_{12} = \frac{x_{13}}{x_{23}}, a_{13} = 1, a_{14} = 1, a_{15} = 1,\\ 
	&a_{23} = \frac{x_{23} x_{34} - x_{14}}{x_{13} x_{34}}, a_{24} = 1, a_{25} = 1,\\ 
	&a_{34} = \frac{x_{23} x_{34} - x_{14}}{x_{13} x_{23} x_{34}}, a_{35} = \frac{x_{25} x_{34}}{x_{14} x_{23} x_{35} - {\left(x_{15} x_{23} - x_{13} x_{25}\right)} x_{34}},\\ 
	&a_{45} = \frac{x_{23} x_{35}}{x_{14} x_{23} x_{35} - {\left(x_{15} x_{23} - x_{13} x_{25}\right)} x_{34}}
\end{align*}

Now assume $x_{35} = \frac{(x_{15}x_{23}-x_{13}x_{25})x_{34}}{x_{14}x_{23}}$

\begin{equation*}x=\left(% [inline block 26: 10 envs, 3696 chars -> data_tex | \begin{matrix} 		1 & 0 & x_{13} & x_{14} & x_{15} \\...]
\right)
\end{equation*} \newline
Where matrix $A$ has entries
\begin{align*}
	&d_{1} = 1, d_{2} = 1, d_{3} = \frac{1}{x_{23}}, d_{4} = \frac{1}{x_{23} x_{34}}, d_{5} = \frac{x_{34}}{x_{15} x_{34} - x_{14} x_{35}},\\ 
	&a_{12} = 0, a_{13} = \frac{x_{14}}{x_{23} x_{34}}, a_{14} = 1, a_{15} = 1,\\ 
	&a_{23} = 1, a_{24} = 1, a_{25} = 1,\\ 
	&a_{34} = \frac{x_{23} x_{34} - x_{24}}{x_{23}^{2} x_{34}}, a_{35} = \frac{x_{24} x_{35}}{x_{15} x_{23} x_{34} - x_{14} x_{23} x_{35}},\\ 
	&a_{45} = -\frac{x_{35}}{x_{15} x_{34} - x_{14} x_{35}}
\end{align*}

Now assume $x_{15}= \frac{x_{14}x_{35}}{x_{34}}$
\begin{equation*}x=\left(\begin{matrix}
		1 & 0 & 0 & x_{14} & x_{15} \\
		0 & 1 & x_{23} & x_{24} & 0 \\
		0 & 0 & 1 & x_{34} & x_{35}  \\
		0 & 0 & 0 & 1 & 0  \\
		0 & 0 & 0 & 0 & 1 \\
	\end{matrix}\right),x^A=\left(\begin{matrix}
		1 & 0 & 0 & 0 & 0 \\
		0 & 1 & 1 & 0 & 0 \\
		0 & 0 & 1 & 1 & 0 \\
		0 & 0 & 0 & 1 & 0 \\
		0 & 0 & 0 & 0 & 1
	\end{matrix}\right)
\end{equation*} \newline
Where matrix $A$ has entries
\begin{align*}
	&d_{1} = 1, d_{2} = 1, d_{3} = \frac{1}{x_{23}}, d_{4} = \frac{1}{x_{23} x_{34}}, d_{5} = 1,\\ 
	&a_{12} = 0, a_{13} = \frac{x_{14}}{x_{23} x_{34}}, a_{14} = 1, a_{15} = 1,\\ 
	&a_{23} = 1, a_{24} = 1, a_{25} = 1,\\ 
	&a_{34} = \frac{x_{23} x_{34} - x_{24}}{x_{23}^{2} x_{34}}, a_{35} = \frac{x_{24} x_{35}}{x_{23} x_{34}},\\ 
	&a_{45} = -\frac{x_{35}}{x_{34}}
\end{align*}

%110100
\begin{equation*}x=\left(\begin{matrix}
		1 & 0 & x_{13} & 0 & 0 \\
		0 & 1 & x_{23} & x_{24} & 0 \\
		0 & 0 & 1 & x_{34} & x_{35}  \\
		0 & 0 & 0 & 1 & 0  \\
		0 & 0 & 0 & 0 & 1 \\
	\end{matrix}\right),x^A=\left(\begin{matrix}
		1 & 0 & 0 & 0 & 1 \\
		0 & 1 & 1 & 0 & 0 \\
		0 & 0 & 1 & 1 & 0 \\
		0 & 0 & 0 & 1 & 0 \\
		0 & 0 & 0 & 0 & 1
	\end{matrix}\right)
\end{equation*} \newline
Where matrix $A$ has entries
\begin{align*}
	&d_{1} = 1, d_{2} = 1, d_{3} = \frac{1}{x_{23}}, d_{4} = \frac{1}{x_{23} x_{34}}, d_{5} = \frac{x_{23} x_{34}}{x_{13} x_{24} x_{35}},\\ 
	&a_{12} = \frac{x_{13}}{x_{23}}, a_{13} = 1, a_{14} = 1, a_{15} = 1,\\ 
	&a_{23} = \frac{x_{23}^{2} x_{34} + x_{13} x_{24}}{x_{13} x_{23} x_{34}}, a_{24} = 1, a_{25} = 1,\\ 
	&a_{34} = \frac{1}{x_{13}}, a_{35} = \frac{1}{x_{13}},\\ 
	&a_{45} = -\frac{x_{23}}{x_{13} x_{24}}
\end{align*}

%110101

First assume $x_{35} \neq \frac{-x_{15}x_{23}x_{34}}{x_{13}x_{23}}$
\begin{equation*}x=\left(\begin{matrix}
		1 & 0 & x_{13} & 0 & x_{15} \\
		0 & 1 & x_{23} & x_{24} & 0 \\
		0 & 0 & 1 & x_{34} & x_{35}  \\
		0 & 0 & 0 & 1 & 0  \\
		0 & 0 & 0 & 0 & 1 \\
	\end{matrix}\right),x^A=\left(\begin{matrix}
		1 & 0 & 0 & 0 & 1 \\
		0 & 1 & 1 & 0 & 0 \\
		0 & 0 & 1 & 1 & 0 \\
		0 & 0 & 0 & 1 & 0 \\
		0 & 0 & 0 & 0 & 1
	\end{matrix}\right)
\end{equation*} \newline
Where matrix $A$ has entries
\begin{align*}
	&d_{1} = 1, d_{2} = 1, d_{3} = \frac{1}{x_{23}}, d_{4} = \frac{1}{x_{23} x_{34}}, d_{5} = \frac{x_{23} x_{34}}{x_{15} x_{23} x_{34} + x_{13} x_{24} x_{35}},\\ 
	&a_{12} = \frac{x_{13}}{x_{23}}, a_{13} = 1, a_{14} = 1, a_{15} = 1,\\ 
	&a_{23} = \frac{x_{23}^{2} x_{34} + x_{13} x_{24}}{x_{13} x_{23} x_{34}}, a_{24} = 1, a_{25} = 1,\\ 
	&a_{34} = \frac{1}{x_{13}}, a_{35} = \frac{x_{24} x_{35}}{x_{15} x_{23} x_{34} + x_{13} x_{24} x_{35}},\\ 
	&a_{45} = -\frac{x_{23} x_{35}}{x_{15} x_{23} x_{34} + x_{13} x_{24} x_{35}}
\end{align*}

Now assume $x_{35} = \frac{-x_{15}x_{23}x_{34}}{x_{13}x_{23}}$
\begin{equation*}x=\left(\begin{matrix}
		1 & 0 & x_{13} & 0 & x_{15} \\
		0 & 1 & x_{23} & x_{24} & 0 \\
		0 & 0 & 1 & x_{34} & x_{35}  \\
		0 & 0 & 0 & 1 & 0  \\
		0 & 0 & 0 & 0 & 1 \\
	\end{matrix}\right),x^A=\left(\begin{matrix}
		1 & 0 & 0 & 0 & 0 \\
		0 & 1 & 1 & 0 & 0 \\
		0 & 0 & 1 & 1 & 0 \\
		0 & 0 & 0 & 1 & 0 \\
		0 & 0 & 0 & 0 & 1
	\end{matrix}\right)
\end{equation*} \newline
Where matrix $A$ has entries
\begin{align*}
	&d_{1} = 1, d_{2} = 1, d_{3} = \frac{1}{x_{23}}, d_{4} = \frac{1}{x_{23} x_{34}}, d_{5} = 1,\\ 
	&a_{12} = \frac{x_{13}}{x_{23}}, a_{13} = 1, a_{14} = 1, a_{15} = 1,\\ 
	&a_{23} = \frac{x_{23}^{2} x_{34} + x_{13} x_{24}}{x_{13} x_{23} x_{34}}, a_{24} = 1, a_{25} = 1,\\ 
	&a_{34} = \frac{1}{x_{13}}, a_{35} = -\frac{x_{15}}{x_{13}},\\ 
	&a_{45} = \frac{x_{15} x_{23}}{x_{13} x_{24}}
\end{align*}
%110110

First assume $x_{14}\neq \frac{x_{13}x_{24}}{x_{23}}$
\begin{equation*}x=\left(\begin{matrix}
		1 & 0 & x_{13} & x_{14} & 0 \\
		0 & 1 & x_{23} & x_{24} & 0 \\
		0 & 0 & 1 & x_{34} & x_{35}  \\
		0 & 0 & 0 & 1 & 0  \\
		0 & 0 & 0 & 0 & 1 \\
	\end{matrix}\right),x^A=\left(\begin{matrix}
		1 & 0 & 0 & 0 & 1 \\
		0 & 1 & 1 & 0 & 0 \\
		0 & 0 & 1 & 1 & 0 \\
		0 & 0 & 0 & 1 & 0 \\
		0 & 0 & 0 & 0 & 1
	\end{matrix}\right)
\end{equation*} \newline
Where matrix $A$ has entries
\begin{align*}
	&d_{1} = 1, d_{2} = 1, d_{3} = \frac{1}{x_{23}}, d_{4} = \frac{1}{x_{23} x_{34}}, d_{5} = -\frac{x_{23} x_{34}}{{\left(x_{14} x_{23} - x_{13} x_{24}\right)} x_{35}},\\ 
	&a_{12} = \frac{x_{13}}{x_{23}}, a_{13} = 1, a_{14} = 1, a_{15} = 1,\\ 
	&a_{23} = \frac{x_{23}^{2} x_{34} - x_{14} x_{23} + x_{13} x_{24}}{x_{13} x_{23} x_{34}}, a_{24} = 1, a_{25} = 1,\\ 
	&a_{34} = \frac{x_{23} x_{34} - x_{14}}{x_{13} x_{23} x_{34}}, a_{35} = -\frac{x_{24}}{x_{14} x_{23} - x_{13} x_{24}},\\ 
	&a_{45} = \frac{x_{23}}{x_{14} x_{23} - x_{13} x_{24}}
\end{align*}

First assume $x_{14}= \frac{x_{13}x_{24}}{x_{23}}$
\begin{equation*}x=\left(\begin{matrix}
		1 & 0 & x_{13} & x_{14} & 0 \\
		0 & 1 & x_{23} & x_{24} & 0 \\
		0 & 0 & 1 & x_{34} & x_{35}  \\
		0 & 0 & 0 & 1 & 0  \\
		0 & 0 & 0 & 0 & 1 \\
	\end{matrix}\right),x^A=\left(\begin{matrix}
		1 & 0 & 0 & 0 & 0 \\
		0 & 1 & 1 & 0 & 0 \\
		0 & 0 & 1 & 1 & 0 \\
		0 & 0 & 0 & 1 & 0 \\
		0 & 0 & 0 & 0 & 1
	\end{matrix}\right)
\end{equation*} \newline
Where matrix $A$ has entries
\begin{align*}
	&d_{1} = 1, d_{2} = 1, d_{3} = \frac{1}{x_{23}}, d_{4} = \frac{1}{x_{23} x_{34}}, d_{5} = 1,\\ 
	&a_{12} = \frac{x_{13}}{x_{23}}, a_{13} = 1, a_{14} = 1, a_{15} = 1,\\ 
	&a_{23} = \frac{x_{23}}{x_{13}}, a_{24} = 1, a_{25} = 1,\\ 
	&a_{34} = \frac{x_{23}^{2} x_{34} - x_{13} x_{24}}{x_{13} x_{23}^{2} x_{34}}, a_{35} = \frac{x_{24} x_{35}}{x_{23} x_{34}},\\ 
	&a_{45} = -\frac{x_{35}}{x_{34}}
\end{align*}

%110111

First assume $x_{15} \neq \frac{(x_{14}x_{23}-x_{13}x_{24})x_{35}}{x_{34}x_{23}}$

\begin{equation*}x=\left(\begin{matrix}
		1 & 0 & x_{13} & x_{14} & x_{15} \\
		0 & 1 & x_{23} & x_{24} & 0 \\
		0 & 0 & 1 & x_{34} & x_{35}  \\
		0 & 0 & 0 & 1 & 0  \\
		0 & 0 & 0 & 0 & 1 \\
	\end{matrix}\right),x^A=\left(\begin{matrix}
		1 & 0 & 0 & 0 & 1 \\
		0 & 1 & 1 & 0 & 0 \\
		0 & 0 & 1 & 1 & 0 \\
		0 & 0 & 0 & 1 & 0 \\
		0 & 0 & 0 & 0 & 1
	\end{matrix}\right)
\end{equation*} \newline
Where matrix $A$ has entries
\begin{align*}
	&d_{1} = 1, d_{2} = 1, d_{3} = \frac{1}{x_{23}}, d_{4} = \frac{1}{x_{23} x_{34}}, d_{5} = \frac{x_{23} x_{34}}{x_{15} x_{23} x_{34} - {\left(x_{14} x_{23} - x_{13} x_{24}\right)} x_{35}},\\ 
	&a_{12} = \frac{x_{13}}{x_{23}}, a_{13} = 1, a_{14} = 1, a_{15} = 1,\\ 
	&a_{23} = \frac{x_{23}^{2} x_{34} - x_{14} x_{23} + x_{13} x_{24}}{x_{13} x_{23} x_{34}}, a_{24} = 1, a_{25} = 1,\\ 
	&a_{34} = \frac{x_{23} x_{34} - x_{14}}{x_{13} x_{23} x_{34}}, a_{35} = \frac{x_{24} x_{35}}{x_{15} x_{23} x_{34} - {\left(x_{14} x_{23} - x_{13} x_{24}\right)} x_{35}},\\ 
	&a_{45} = -\frac{x_{23} x_{35}}{x_{15} x_{23} x_{34} - {\left(x_{14} x_{23} - x_{13} x_{24}\right)} x_{35}}
\end{align*}

First assume $x_{15} = \frac{(x_{14}x_{23}-x_{13}x_{24})x_{35}}{x_{34}x_{23}}$

\begin{equation*}x=\left(% [inline block 27: 8 envs, 2872 chars -> data_tex | \begin{matrix} 		1 & 0 & x_{13} & x_{14} & x_{15} \\...]
\right)
\end{equation*} \newline
Where matrix $A$ has entries
\begin{align*}
	&d_{1} = 1, d_{2} = 1, d_{3} = \frac{1}{x_{23}}, d_{4} = \frac{1}{x_{23} x_{34}}, d_{5} = -\frac{x_{34}}{x_{14} x_{35}},\\ 
	&a_{12} = 0, a_{13} = \frac{x_{14}}{x_{23} x_{34}}, a_{14} = 1, a_{15} = 1,\\ 
	&a_{23} = 1, a_{24} = 1, a_{25} = 1,\\ 
	&a_{34} = \frac{x_{23} x_{34} - x_{24}}{x_{23}^{2} x_{34}}, a_{35} = \frac{x_{25} x_{34} - x_{24} x_{35}}{x_{14} x_{23} x_{35}},\\ 
	&a_{45} = \frac{1}{x_{14}}
\end{align*}

%111011

First assume $x_{15}\neq \frac{x_{14}x_{35}}{x_{34}}$
\begin{equation*}x=\left(\begin{matrix}
		1 & 0 & 0 & x_{14} & x_{15} \\
		0 & 1 & x_{23} & x_{24} & x_{25} \\
		0 & 0 & 1 & x_{34} & x_{35}  \\
		0 & 0 & 0 & 1 & 0  \\
		0 & 0 & 0 & 0 & 1 \\
	\end{matrix}\right),x^A=\left(\begin{matrix}
		1 & 0 & 0 & 0 & 1 \\
		0 & 1 & 1 & 0 & 0 \\
		0 & 0 & 1 & 1 & 0 \\
		0 & 0 & 0 & 1 & 0 \\
		0 & 0 & 0 & 0 & 1
	\end{matrix}\right)
\end{equation*} \newline
Where matrix $A$ has entries
\begin{align*}
	&d_{1} = 1, d_{2} = 1, d_{3} = \frac{1}{x_{23}}, d_{4} = \frac{1}{x_{23} x_{34}}, d_{5} = \frac{x_{34}}{x_{15} x_{34} - x_{14} x_{35}},\\ 
	&a_{12} = 0, a_{13} = \frac{x_{14}}{x_{23} x_{34}}, a_{14} = 1, a_{15} = 1,\\ 
	&a_{23} = 1, a_{24} = 1, a_{25} = 1,\\ 
	&a_{34} = \frac{x_{23} x_{34} - x_{24}}{x_{23}^{2} x_{34}}, a_{35} = -\frac{x_{25} x_{34} - x_{24} x_{35}}{x_{15} x_{23} x_{34} - x_{14} x_{23} x_{35}},\\ 
	&a_{45} = -\frac{x_{35}}{x_{15} x_{34} - x_{14} x_{35}}
\end{align*}

Now assume $x_{15}= \frac{x_{14}x_{35}}{x_{34}}$
\begin{equation*}x=\left(\begin{matrix}
		1 & 0 & 0 & x_{14} & x_{15} \\
		0 & 1 & x_{23} & x_{24} & x_{25} \\
		0 & 0 & 1 & x_{34} & x_{35}  \\
		0 & 0 & 0 & 1 & 0  \\
		0 & 0 & 0 & 0 & 1 \\
	\end{matrix}\right),x^A=\left(\begin{matrix}
		1 & 0 & 0 & 0 & 0 \\
		0 & 1 & 1 & 0 & 0 \\
		0 & 0 & 1 & 1 & 0 \\
		0 & 0 & 0 & 1 & 0 \\
		0 & 0 & 0 & 0 & 1
	\end{matrix}\right)
\end{equation*} \newline
Where matrix $A$ has entries
\begin{align*}
	&d_{1} = 1, d_{2} = 1, d_{3} = \frac{1}{x_{23}}, d_{4} = \frac{1}{x_{23} x_{34}}, d_{5} = 1,\\ 
	&a_{12} = 0, a_{13} = \frac{x_{14}}{x_{23} x_{34}}, a_{14} = 1, a_{15} = 1,\\ 
	&a_{23} = 1, a_{24} = 1, a_{25} = 1,\\ 
	&a_{34} = \frac{x_{23} x_{34} - x_{24}}{x_{23}^{2} x_{34}}, a_{35} = -\frac{x_{25} x_{34} - x_{24} x_{35}}{x_{23} x_{34}},\\ 
	&a_{45} = -\frac{x_{35}}{x_{34}}
\end{align*}

%111100

First assume $x_{25}\neq \frac{x_{24}x_{35}}{x_{34}}$
\begin{equation*}x=\left(\begin{matrix}
		1 & 0 & x_{13} & 0 & 0 \\
		0 & 1 & x_{23} & x_{24} & x_{25} \\
		0 & 0 & 1 & x_{34} & x_{35}  \\
		0 & 0 & 0 & 1 & 0  \\
		0 & 0 & 0 & 0 & 1 \\
	\end{matrix}\right),x^A=\left(\begin{matrix}
		1 & 0 & 0 & 0 & 1 \\
		0 & 1 & 1 & 0 & 0 \\
		0 & 0 & 1 & 1 & 0 \\
		0 & 0 & 0 & 1 & 0 \\
		0 & 0 & 0 & 0 & 1
	\end{matrix}\right)
\end{equation*} \newline
Where matrix $A$ has entries
\begin{align*}
	&d_{1} = 1, d_{2} = 1, d_{3} = \frac{1}{x_{23}}, d_{4} = \frac{1}{x_{23} x_{34}}, d_{5} = -\frac{x_{23} x_{34}}{x_{13} x_{25} x_{34} - x_{13} x_{24} x_{35}},\\ 
	&a_{12} = \frac{x_{13}}{x_{23}}, a_{13} = 1, a_{14} = 1, a_{15} = 1,\\ 
	&a_{23} = \frac{x_{23}^{2} x_{34} + x_{13} x_{24}}{x_{13} x_{23} x_{34}}, a_{24} = 1, a_{25} = 1,\\ 
	&a_{34} = \frac{1}{x_{13}}, a_{35} = \frac{1}{x_{13}},\\ 
	&a_{45} = \frac{x_{23} x_{35}}{x_{13} x_{25} x_{34} - x_{13} x_{24} x_{35}}
\end{align*}

Now assume $x_{25}= \frac{x_{24}x_{35}}{x_{34}}$
\begin{equation*}x=\left(\begin{matrix}
		1 & 0 & x_{13} & 0 & 0 \\
		0 & 1 & x_{23} & x_{24} & x_{25} \\
		0 & 0 & 1 & x_{34} & x_{35}  \\
		0 & 0 & 0 & 1 & 0  \\
		0 & 0 & 0 & 0 & 1 \\
	\end{matrix}\right),x^A=\left(\begin{matrix}
		1 & 0 & 0 & 0 & 0 \\
		0 & 1 & 1 & 0 & 0 \\
		0 & 0 & 1 & 1 & 0 \\
		0 & 0 & 0 & 1 & 0 \\
		0 & 0 & 0 & 0 & 1
	\end{matrix}\right)
\end{equation*} \newline
Where matrix $A$ has entries
\begin{align*}
	&d_{1} = 1, d_{2} = 1, d_{3} = \frac{1}{x_{23}}, d_{4} = \frac{1}{x_{23} x_{34}}, d_{5} = 1,\\ 
	&a_{12} = \frac{x_{13}}{x_{23}}, a_{13} = 1, a_{14} = 1, a_{15} = 1,\\ 
	&a_{23} = \frac{x_{23}^{2} x_{34} + x_{13} x_{24}}{x_{13} x_{23} x_{34}}, a_{24} = 1, a_{25} = 1,\\ 
	&a_{34} = \frac{1}{x_{13}}, a_{35} = 0,\\ 
	&a_{45} = -\frac{x_{35}}{x_{34}}
\end{align*}

%111101

First assume $x_{15} \neq \frac{(x_{34}x_{25}-x_{24}x_{35})x_{13}}{x_{34}x_{23}}$
\begin{equation*}x=\left(\begin{matrix}
		1 & 0 & x_{13} & 0 & x_{15} \\
		0 & 1 & x_{23} & x_{24} & x_{25} \\
		0 & 0 & 1 & x_{34} & x_{35}  \\
		0 & 0 & 0 & 1 & 0  \\
		0 & 0 & 0 & 0 & 1 \\
	\end{matrix}\right),x^A=\left(\begin{matrix}
		1 & 0 & 0 & 0 & 1 \\
		0 & 1 & 1 & 0 & 0 \\
		0 & 0 & 1 & 1 & 0 \\
		0 & 0 & 0 & 1 & 0 \\
		0 & 0 & 0 & 0 & 1
	\end{matrix}\right)
\end{equation*} \newline
Where matrix $A$ has entries
\begin{align*}
	&d_{1} = 1, d_{2} = 1, d_{3} = \frac{1}{x_{23}}, d_{4} = \frac{1}{x_{23} x_{34}}, d_{5} = \frac{x_{23} x_{34}}{x_{13} x_{24} x_{35} + {\left(x_{15} x_{23} - x_{13} x_{25}\right)} x_{34}},\\ 
	&a_{12} = \frac{x_{13}}{x_{23}}, a_{13} = 1, a_{14} = 1, a_{15} = 1,\\ 
	&a_{23} = \frac{x_{23}^{2} x_{34} + x_{13} x_{24}}{x_{13} x_{23} x_{34}}, a_{24} = 1, a_{25} = 1,\\ 
	&a_{34} = \frac{1}{x_{13}}, a_{35} = -\frac{x_{25} x_{34} - x_{24} x_{35}}{x_{13} x_{24} x_{35} + {\left(x_{15} x_{23} - x_{13} x_{25}\right)} x_{34}},\\ 
	&a_{45} = -\frac{x_{23} x_{35}}{x_{13} x_{24} x_{35} + {\left(x_{15} x_{23} - x_{13} x_{25}\right)} x_{34}}
\end{align*}

Now assume $x_{15} = \frac{(x_{34}x_{25}-x_{24}x_{35})x_{13}}{x_{34}x_{23}}$
\begin{equation*}x=\left(\begin{matrix}
		1 & 0 & x_{13} & 0 & x_{15} \\
		0 & 1 & x_{23} & x_{24} & x_{25} \\
		0 & 0 & 1 & x_{34} & x_{35}  \\
		0 & 0 & 0 & 1 & 0  \\
		0 & 0 & 0 & 0 & 1 \\
	\end{matrix}\right),x^A=\left(\begin{matrix}
		1 & 0 & 0 & 0 & 0 \\
		0 & 1 & 1 & 0 & 0 \\
		0 & 0 & 1 & 1 & 0 \\
		0 & 0 & 0 & 1 & 0 \\
		0 & 0 & 0 & 0 & 1
	\end{matrix}\right)
\end{equation*} \newline
Where matrix $A$ has entries
\begin{align*}
	&d_{1} = 1, d_{2} = 1, d_{3} = \frac{1}{x_{23}}, d_{4} = \frac{1}{x_{23} x_{34}}, d_{5} = 1,\\ 
	&a_{12} = \frac{x_{13}}{x_{23}}, a_{13} = 1, a_{14} = 1, a_{15} = 1,\\ 
	&a_{23} = \frac{x_{23}^{2} x_{34} + x_{13} x_{24}}{x_{13} x_{23} x_{34}}, a_{24} = 1, a_{25} = 1,\\ 
	&a_{34} = \frac{1}{x_{13}}, a_{35} = -\frac{x_{25} x_{34} - x_{24} x_{35}}{x_{23} x_{34}},\\ 
	&a_{45} = -\frac{x_{35}}{x_{34}}
\end{align*}

%111110

First assume $x_{14} \neq \frac{(-x_{34}x_{25}+x_{24}x_{35})x_{13}}{x_{35}x_{23}}$
\begin{equation*}x=\left(\begin{matrix}
		1 & 0 & x_{13} & x_{14} & 0 \\
		0 & 1 & x_{23} & x_{24} & x_{25} \\
		0 & 0 & 1 & x_{34} & x_{35}  \\
		0 & 0 & 0 & 1 & 0  \\
		0 & 0 & 0 & 0 & 1 \\
	\end{matrix}\right),x^A=\left(\begin{matrix}
		1 & 0 & 0 & 0 & 1 \\
		0 & 1 & 1 & 0 & 0 \\
		0 & 0 & 1 & 1 & 0 \\
		0 & 0 & 0 & 1 & 0 \\
		0 & 0 & 0 & 0 & 1
	\end{matrix}\right)
\end{equation*} \newline
Where matrix $A$ has entries
\begin{align*}
	&d_{1} = 1, d_{2} = 1, d_{3} = \frac{1}{x_{23}}, d_{4} = \frac{1}{x_{23} x_{34}}, d_{5} = -\frac{x_{23} x_{34}}{x_{13} x_{25} x_{34} + {\left(x_{14} x_{23} - x_{13} x_{24}\right)} x_{35}},\\ 
	&a_{12} = \frac{x_{13}}{x_{23}}, a_{13} = 1, a_{14} = 1, a_{15} = 1,\\ 
	&a_{23} = \frac{x_{23}^{2} x_{34} - x_{14} x_{23} + x_{13} x_{24}}{x_{13} x_{23} x_{34}}, a_{24} = 1, a_{25} = 1,\\ 
	&a_{34} = \frac{x_{23} x_{34} - x_{14}}{x_{13} x_{23} x_{34}}, a_{35} = \frac{x_{25} x_{34} - x_{24} x_{35}}{x_{13} x_{25} x_{34} + {\left(x_{14} x_{23} - x_{13} x_{24}\right)} x_{35}},\\ 
	&a_{45} = \frac{x_{23} x_{35}}{x_{13} x_{25} x_{34} + {\left(x_{14} x_{23} - x_{13} x_{24}\right)} x_{35}}
\end{align*}

Now assume $x_{14} = \frac{(-x_{34}x_{25}+x_{24}x_{35})x_{13}}{x_{35}x_{23}}$
\begin{equation*}x=\left(\begin{matrix}
		1 & 0 & x_{13} & x_{14} & 0 \\
		0 & 1 & x_{23} & x_{24} & x_{25} \\
		0 & 0 & 1 & x_{34} & x_{35}  \\
		0 & 0 & 0 & 1 & 0  \\
		0 & 0 & 0 & 0 & 1 \\
	\end{matrix}\right),x^A=\left(\begin{matrix}
		1 & 0 & 0 & 0 & 0 \\
		0 & 1 & 1 & 0 & 0 \\
		0 & 0 & 1 & 1 & 0 \\
		0 & 0 & 0 & 1 & 0 \\
		0 & 0 & 0 & 0 & 1
	\end{matrix}\right)
\end{equation*} \newline
Where matrix $A$ has entries
\begin{align*}
	&d_{1} = 1, d_{2} = 1, d_{3} = \frac{1}{x_{23}}, d_{4} = \frac{1}{x_{23} x_{34}}, d_{5} = 1,\\ 
	&a_{12} = \frac{x_{13}}{x_{23}}, a_{13} = 1, a_{14} = 1, a_{15} = 1,\\ 
	&a_{23} = \frac{x_{23}^{2} x_{35} + x_{13} x_{25}}{x_{13} x_{23} x_{35}}, a_{24} = 1, a_{25} = 1,\\ 
	&a_{34} = \frac{x_{13} x_{25} x_{34} + {\left(x_{23}^{2} x_{34} - x_{13} x_{24}\right)} x_{35}}{x_{13} x_{23}^{2} x_{34} x_{35}}, a_{35} = -\frac{x_{25} x_{34} - x_{24} x_{35}}{x_{23} x_{34}},\\ 
	&a_{45} = -\frac{x_{35}}{x_{34}}
\end{align*}

%111111

First assume $x_{15}\neq \frac{x_{13}x_{25}}{x_{23}}$ and $x_{34} \neq \frac{(x_{14}x_{23}-x_{13}x_{24})x_{35}}{x_{15}x_{23}-x_{13}x_{25}}$ 
\begin{equation*}x=\left(\begin{matrix}
		1 & 0 & x_{13} & x_{14} & x_{15} \\
		0 & 1 & x_{23} & x_{24} & x_{25} \\
		0 & 0 & 1 & x_{34} & x_{35}  \\
		0 & 0 & 0 & 1 & 0  \\
		0 & 0 & 0 & 0 & 1 \\
	\end{matrix}\right),x^A=\left(\begin{matrix}
		1 & 0 & 0 & 0 & 1 \\
		0 & 1 & 1 & 0 & 0 \\
		0 & 0 & 1 & 1 & 0 \\
		0 & 0 & 0 & 1 & 0 \\
		0 & 0 & 0 & 0 & 1
	\end{matrix}\right)
\end{equation*} \newline
Where matrix $A$ has entries
\begin{align*}
	&d_{1} = 1, d_{2} = 1, d_{3} = \frac{1}{x_{23}}, d_{4} = \frac{1}{x_{23} x_{34}}, d_{5} = \frac{x_{23} x_{34}}{{\left(x_{15} x_{23} - x_{13} x_{25}\right)} x_{34} - {\left(x_{14} x_{23} - x_{13} x_{24}\right)} x_{35}},\\ 
	&a_{12} = \frac{x_{13}}{x_{23}}, a_{13} = 1, a_{14} = 1, a_{15} = 1,\\ 
	&a_{23} = \frac{x_{23}^{2} x_{34} - x_{14} x_{23} + x_{13} x_{24}}{x_{13} x_{23} x_{34}}, a_{24} = 1, a_{25} = 1,\\ 
	&a_{34} = \frac{x_{23} x_{34} - x_{14}}{x_{13} x_{23} x_{34}}, a_{35} = -\frac{x_{25} x_{34} - x_{24} x_{35}}{{\left(x_{15} x_{23} - x_{13} x_{25}\right)} x_{34} - {\left(x_{14} x_{23} - x_{13} x_{24}\right)} x_{35}},\\ 
	&a_{45} = -\frac{x_{23} x_{35}}{{\left(x_{15} x_{23} - x_{13} x_{25}\right)} x_{34} - {\left(x_{14} x_{23} - x_{13} x_{24}\right)} x_{35}}
\end{align*}

Now assume $x_{15}= \frac{x_{13}x_{25}}{x_{23}}$ and $x_{14}\neq \frac{x_{13}x_{24}}{x_{23}}$
\begin{equation*}x=\left(\begin{matrix}
		1 & 0 & x_{13} & x_{14} & x_{15} \\
		0 & 1 & x_{23} & x_{24} & x_{25} \\
		0 & 0 & 1 & x_{34} & x_{35}  \\
		0 & 0 & 0 & 1 & 0  \\
		0 & 0 & 0 & 0 & 1 \\
	\end{matrix}\right),x^A=\left(\begin{matrix}
		1 & 0 & 0 & 0 & 1 \\
		0 & 1 & 1 & 0 & 0 \\
		0 & 0 & 1 & 1 & 0 \\
		0 & 0 & 0 & 1 & 0 \\
		0 & 0 & 0 & 0 & 1
	\end{matrix}\right)
\end{equation*} \newline
Where matrix $A$ has entries
\begin{align*}
	&d_{1} = 1, d_{2} = 1, d_{3} = \frac{1}{x_{23}}, d_{4} = \frac{1}{x_{23} x_{34}}, d_{5} = -\frac{x_{23} x_{34}}{{\left(x_{14} x_{23} - x_{13} x_{24}\right)} x_{35}},\\ 
	&a_{12} = \frac{x_{13}}{x_{23}}, a_{13} = 1, a_{14} = 1, a_{15} = 1,\\ 
	&a_{23} = \frac{x_{23}^{2} x_{34} - x_{14} x_{23} + x_{13} x_{24}}{x_{13} x_{23} x_{34}}, a_{24} = 1, a_{25} = 1,\\ 
	&a_{34} = \frac{x_{23} x_{34} - x_{14}}{x_{13} x_{23} x_{34}}, a_{35} = \frac{x_{25} x_{34} - x_{24} x_{35}}{{\left(x_{14} x_{23} - x_{13} x_{24}\right)} x_{35}},\\ 
	&a_{45} = \frac{x_{23}}{x_{14} x_{23} - x_{13} x_{24}}
\end{align*}

Now assume $x_{15}=\frac{x_{13}x_{25}}{x_{23}}$ and $x_{14}= \frac{x_{13}x_{24}}{x_{23}}$
\begin{equation*}x=\left(\begin{matrix}
		1 & 0 & x_{13} & x_{14} & x_{15} \\
		0 & 1 & x_{23} & x_{24} & x_{25} \\
		0 & 0 & 1 & x_{34} & x_{35}  \\
		0 & 0 & 0 & 1 & 0  \\
		0 & 0 & 0 & 0 & 1 \\
	\end{matrix}\right),x^A=\left(\begin{matrix}
		1 & 0 & 0 & 0 & 0 \\
		0 & 1 & 1 & 0 & 0 \\
		0 & 0 & 1 & 1 & 0 \\
		0 & 0 & 0 & 1 & 0 \\
		0 & 0 & 0 & 0 & 1
	\end{matrix}\right)
\end{equation*} \newline
Where matrix $A$ has entries
\begin{align*}
	&d_{1} = 1, d_{2} = 1, d_{3} = \frac{1}{x_{23}}, d_{4} = \frac{1}{x_{23} x_{34}}, d_{5} = 1,\\ 
	&a_{12} = \frac{x_{13}}{x_{23}}, a_{13} = 1, a_{14} = 1, a_{15} = 1,\\ 
	&a_{23} = \frac{x_{23}}{x_{13}}, a_{24} = 1, a_{25} = 1,\\ 
	&a_{34} = \frac{x_{23}^{2} x_{34} - x_{13} x_{24}}{x_{13} x_{23}^{2} x_{34}}, a_{35} = -\frac{x_{25} x_{34} - x_{24} x_{35}}{x_{23} x_{34}},\\ 
	&a_{45} = -\frac{x_{35}}{x_{34}}
\end{align*}

Now assume $x_{15}\neq \frac{x_{13}x_{25}}{x_{23}}$ and $x_{34} = \frac{(x_{14}x_{23}-x_{13}x_{24})x_{35}}{x_{15}x_{23}-x_{13}x_{25}}$  and $x_{14}\neq \frac{x_{13}x_{24}}{x_{23}}$
\begin{equation*}x=\left(\begin{matrix}
		1 & 0 & x_{13} & x_{14} & x_{15} \\
		0 & 1 & x_{23} & x_{24} & x_{25} \\
		0 & 0 & 1 & x_{34} & x_{35}  \\
		0 & 0 & 0 & 1 & 0  \\
		0 & 0 & 0 & 0 & 1 \\
	\end{matrix}\right),x^A=\left(\begin{matrix}
		1 & 0 & 0 & 0 & 0 \\
		0 & 1 & 1 & 0 & 0 \\
		0 & 0 & 1 & 1 & 0 \\
		0 & 0 & 0 & 1 & 0 \\
		0 & 0 & 0 & 0 & 1
	\end{matrix}\right)
\end{equation*} \newline
Where matrix $A$ has entries
\begin{align*}
	&d_{1} = 1, d_{2} = 1, d_{3} = \frac{1}{x_{23}}, d_{4} = \frac{x_{15} x_{23} - x_{13} x_{25}}{{\left(x_{14} x_{23}^{2} - x_{13} x_{23} x_{24}\right)} x_{35}}, d_{5} = 1,\\ 
	&a_{12} = \frac{x_{13}}{x_{23}}, a_{13} = 1, a_{14} = 1, a_{15} = 1,\\ 
	&a_{23} = \frac{x_{23}^{2} x_{35} - x_{15} x_{23} + x_{13} x_{25}}{x_{13} x_{23} x_{35}}, a_{24} = 1, a_{25} = 1,\\ 
	&a_{34} = -\frac{x_{14} x_{15} x_{23} - x_{13} x_{14} x_{25} - {\left(x_{14} x_{23}^{2} - x_{13} x_{23} x_{24}\right)} x_{35}}{{\left(x_{13} x_{14} x_{23}^{2} - x_{13}^{2} x_{23} x_{24}\right)} x_{35}}, a_{35} = \frac{x_{15} x_{24} - x_{14} x_{25}}{x_{14} x_{23} - x_{13} x_{24}},\\ 
	&a_{45} = -\frac{x_{15} x_{23} - x_{13} x_{25}}{x_{14} x_{23} - x_{13} x_{24}}
\end{align*}

These are the only possible cases, given that $x_{ij}\neq 0$.
		\section{Subcases of $Y_7$}

%000001
\begin{equation*}x=\left(% [inline block 28: 10 envs, 3622 chars -> data_tex | \begin{matrix} 		1 & x_{12} & 0 & 0 & x_{15} \\...]
\right)
\end{equation*} \newline
Where matrix $A$ has entries
\begin{align*}
	&d_{1} = 1, d_{2} = \frac{1}{x_{12}}, d_{3} = \frac{1}{x_{12} x_{23}}, d_{4} = \frac{1}{x_{12} x_{23} x_{34}}, d_{5} = 1,\\ 
	&a_{12} = 1, a_{13} = 1, a_{14} = 1, a_{15} = 1,\\ 
	&a_{23} = \frac{x_{12} x_{23} - x_{13}}{x_{12}^{2} x_{23}}, a_{24} = \frac{x_{12}^{2} x_{23}^{2} - x_{12} x_{13} x_{23} + x_{13}^{2}}{x_{12}^{3} x_{23}^{2}}, a_{25} = -\frac{x_{15}}{x_{12}},\\ 
	&a_{34} = \frac{x_{12} x_{23} - x_{13}}{x_{12}^{2} x_{23}^{2}}, a_{35} = 0,\\ 
	&a_{45} = 0
\end{align*}

%000110
\begin{equation*}x=\left(\begin{matrix}
		1 & x_{12} & x_{13} & x_{14} & 0 \\
		0 & 1 & x_{23} & 0 & 0 \\
		0 & 0 & 1 & x_{34} & 0  \\
		0 & 0 & 0 & 1 & 0  \\
		0 & 0 & 0 & 0 & 1 \\
	\end{matrix}\right),x^A=\left(\begin{matrix}
		1 & 1 & 0 & 0 & 0 \\
		0 & 1 & 1 & 0 & 0 \\
		0 & 0 & 1 & 1 & 0 \\
		0 & 0 & 0 & 1 & 0 \\
		0 & 0 & 0 & 0 & 1
	\end{matrix}\right)
\end{equation*} \newline
Where matrix $A$ has entries
\begin{align*}
	&d_{1} = 1, d_{2} = \frac{1}{x_{12}}, d_{3} = \frac{1}{x_{12} x_{23}}, d_{4} = \frac{1}{x_{12} x_{23} x_{34}}, d_{5} = 1,\\ 
	&a_{12} = 1, a_{13} = 1, a_{14} = 1, a_{15} = 1,\\ 
	&a_{23} = \frac{x_{12} x_{23} - x_{13}}{x_{12}^{2} x_{23}}, a_{24} = -\frac{x_{12} x_{14} x_{23} - {\left(x_{12}^{2} x_{23}^{2} - x_{12} x_{13} x_{23} + x_{13}^{2}\right)} x_{34}}{x_{12}^{3} x_{23}^{2} x_{34}}, a_{25} = 0,\\ 
	&a_{34} = \frac{x_{12} x_{23} - x_{13}}{x_{12}^{2} x_{23}^{2}}, a_{35} = 0,\\ 
	&a_{45} = 0
\end{align*}

%000111
\begin{equation*}x=\left(\begin{matrix}
		1 & x_{12} & x_{13} & x_{14} & x_{15} \\
		0 & 1 & x_{23} & 0 & 0 \\
		0 & 0 & 1 & x_{34} & 0  \\
		0 & 0 & 0 & 1 & 0  \\
		0 & 0 & 0 & 0 & 1 \\
	\end{matrix}\right),x^A=\left(\begin{matrix}
		1 & 1 & 0 & 0 & 0 \\
		0 & 1 & 1 & 0 & 0 \\
		0 & 0 & 1 & 1 & 0 \\
		0 & 0 & 0 & 1 & 0 \\
		0 & 0 & 0 & 0 & 1
	\end{matrix}\right)
\end{equation*} \newline
Where matrix $A$ has entries
\begin{align*}
	&d_{1} = 1, d_{2} = \frac{1}{x_{12}}, d_{3} = \frac{1}{x_{12} x_{23}}, d_{4} = \frac{1}{x_{12} x_{23} x_{34}}, d_{5} = 1,\\ 
	&a_{12} = 1, a_{13} = 1, a_{14} = 1, a_{15} = 1,\\ 
	&a_{23} = \frac{x_{12} x_{23} - x_{13}}{x_{12}^{2} x_{23}}, a_{24} = -\frac{x_{12} x_{14} x_{23} - {\left(x_{12}^{2} x_{23}^{2} - x_{12} x_{13} x_{23} + x_{13}^{2}\right)} x_{34}}{x_{12}^{3} x_{23}^{2} x_{34}}, a_{25} = -\frac{x_{15}}{x_{12}},\\ 
	&a_{34} = \frac{x_{12} x_{23} - x_{13}}{x_{12}^{2} x_{23}^{2}}, a_{35} = 0,\\ 
	&a_{45} = 0
\end{align*}

%001000
\begin{equation*}x=\left(% [inline block 29: 10 envs, 3610 chars -> data_tex | \begin{matrix} 		1 & x_{12} & 0 & 0 & 0 \\...]
\right)
\end{equation*} \newline
Where matrix $A$ has entries
\begin{align*}
	&d_{1} = 1, d_{2} = \frac{1}{x_{12}}, d_{3} = \frac{1}{x_{12} x_{23}}, d_{4} = \frac{1}{x_{12} x_{23} x_{34}}, d_{5} = 1,\\ 
	&a_{12} = 1, a_{13} = 1, a_{14} = 1, a_{15} = 1,\\ 
	&a_{23} = \frac{x_{12} x_{23} - x_{13}}{x_{12}^{2} x_{23}}, a_{24} = \frac{x_{12}^{2} x_{23}^{2} - x_{12} x_{13} x_{23} + x_{13}^{2}}{x_{12}^{3} x_{23}^{2}}, a_{25} = \frac{x_{13} x_{25}}{x_{12} x_{23}},\\ 
	&a_{34} = \frac{x_{12} x_{23} - x_{13}}{x_{12}^{2} x_{23}^{2}}, a_{35} = -\frac{x_{25}}{x_{23}},\\ 
	&a_{45} = 0
\end{align*}

%001101
\begin{equation*}x=\left(\begin{matrix}
		1 & x_{12} & x_{13} & 0 & x_{15} \\
		0 & 1 & x_{23} & 0 & x_{25} \\
		0 & 0 & 1 & x_{34} & 0  \\
		0 & 0 & 0 & 1 & 0  \\
		0 & 0 & 0 & 0 & 1 \\
	\end{matrix}\right),x^A=\left(\begin{matrix}
		1 & 1 & 0 & 0 & 0 \\
		0 & 1 & 1 & 0 & 0 \\
		0 & 0 & 1 & 1 & 0 \\
		0 & 0 & 0 & 1 & 0 \\
		0 & 0 & 0 & 0 & 1
	\end{matrix}\right)
\end{equation*} \newline
Where matrix $A$ has entries
\begin{align*}
	&d_{1} = 1, d_{2} = \frac{1}{x_{12}}, d_{3} = \frac{1}{x_{12} x_{23}}, d_{4} = \frac{1}{x_{12} x_{23} x_{34}}, d_{5} = 1,\\ 
	&a_{12} = 1, a_{13} = 1, a_{14} = 1, a_{15} = 1,\\ 
	&a_{23} = \frac{x_{12} x_{23} - x_{13}}{x_{12}^{2} x_{23}}, a_{24} = \frac{x_{12}^{2} x_{23}^{2} - x_{12} x_{13} x_{23} + x_{13}^{2}}{x_{12}^{3} x_{23}^{2}}, a_{25} = -\frac{x_{15} x_{23} - x_{13} x_{25}}{x_{12} x_{23}},\\ 
	&a_{34} = \frac{x_{12} x_{23} - x_{13}}{x_{12}^{2} x_{23}^{2}}, a_{35} = -\frac{x_{25}}{x_{23}},\\ 
	&a_{45} = 0
\end{align*}

%001110
\begin{equation*}x=\left(\begin{matrix}
		1 & x_{12} & x_{13} & x_{14} & 0 \\
		0 & 1 & x_{23} & 0 & x_{25} \\
		0 & 0 & 1 & x_{34} & 0  \\
		0 & 0 & 0 & 1 & 0  \\
		0 & 0 & 0 & 0 & 1 \\
	\end{matrix}\right),x^A=\left(\begin{matrix}
		1 & 1 & 0 & 0 & 0 \\
		0 & 1 & 1 & 0 & 0 \\
		0 & 0 & 1 & 1 & 0 \\
		0 & 0 & 0 & 1 & 0 \\
		0 & 0 & 0 & 0 & 1 
	\end{matrix}\right)
\end{equation*} \newline
Where matrix $A$ has entries
\begin{align*}
	&d_{1} = 1, d_{2} = \frac{1}{x_{12}}, d_{3} = \frac{1}{x_{12} x_{23}}, d_{4} = \frac{1}{x_{12} x_{23} x_{34}}, d_{5} = 1,\\ 
	&a_{12} = 1, a_{13} = 1, a_{14} = 1, a_{15} = 1,\\ 
	&a_{23} = \frac{x_{12} x_{23} - x_{13}}{x_{12}^{2} x_{23}}, a_{24} = -\frac{x_{12} x_{14} x_{23} - {\left(x_{12}^{2} x_{23}^{2} - x_{12} x_{13} x_{23} + x_{13}^{2}\right)} x_{34}}{x_{12}^{3} x_{23}^{2} x_{34}}, a_{25} = \frac{x_{13} x_{25}}{x_{12} x_{23}},\\ 
	&a_{34} = \frac{x_{12} x_{23} - x_{13}}{x_{12}^{2} x_{23}^{2}}, a_{35} = -\frac{x_{25}}{x_{23}},\\ 
	&a_{45} = 0
\end{align*}

%001111
\begin{equation*}x=\left(\begin{matrix}
		1 & x_{12} & x_{13} & x_{14} & x_{15} \\
		0 & 1 & x_{23} & 0 & x_{25} \\
		0 & 0 & 1 & x_{34} & 0  \\
		0 & 0 & 0 & 1 & 0  \\
		0 & 0 & 0 & 0 & 1 \\
	\end{matrix}\right),x^A=\left(\begin{matrix}
		1 & 1 & 0 & 0 & 0 \\
		0 & 1 & 1 & 0 & 0 \\
		0 & 0 & 1 & 1 & 0 \\
		0 & 0 & 0 & 1 & 0 \\
		0 & 0 & 0 & 0 & 1
	\end{matrix}\right)
\end{equation*} \newline
Where matrix $A$ has entries
\begin{align*}
	&d_{1} = 1, d_{2} = \frac{1}{x_{12}}, d_{3} = \frac{1}{x_{12} x_{23}}, d_{4} = \frac{1}{x_{12} x_{23} x_{34}}, d_{5} = 1,\\ 
	&a_{12} = 1, a_{13} = 1, a_{14} = 1, a_{15} = 1,\\ 
	&a_{23} = \frac{x_{12} x_{23} - x_{13}}{x_{12}^{2} x_{23}}, a_{24} = -\frac{x_{12} x_{14} x_{23} - {\left(x_{12}^{2} x_{23}^{2} - x_{12} x_{13} x_{23} + x_{13}^{2}\right)} x_{34}}{x_{12}^{3} x_{23}^{2} x_{34}}, a_{25} = -\frac{x_{15} x_{23} - x_{13} x_{25}}{x_{12} x_{23}},\\ 
	&a_{34} = \frac{x_{12} x_{23} - x_{13}}{x_{12}^{2} x_{23}^{2}}, a_{35} = -\frac{x_{25}}{x_{23}},\\ 
	&a_{45} = 0
\end{align*}

%010000
\begin{equation*}x=\left(% [inline block 30: 10 envs, 3654 chars -> data_tex | \begin{matrix} 		1 & x_{12} & 0 & 0 & 0 \\...]
\right)
\end{equation*} \newline
Where matrix $A$ has entries
\begin{align*}
	&d_{1} = 1, d_{2} = \frac{1}{x_{12}}, d_{3} = \frac{1}{x_{12} x_{23}}, d_{4} = \frac{1}{x_{12} x_{23} x_{34}}, d_{5} = 1,\\ 
	&a_{12} = 1, a_{13} = 1, a_{14} = 1, a_{15} = 1,\\ 
	&a_{23} = \frac{x_{12} x_{23} - x_{13}}{x_{12}^{2} x_{23}}, a_{24} = \frac{x_{12} x_{13} x_{24} + {\left(x_{12}^{2} x_{23}^{2} - x_{12} x_{13} x_{23} + x_{13}^{2}\right)} x_{34}}{x_{12}^{3} x_{23}^{2} x_{34}}, a_{25} = 0,\\ 
	&a_{34} = -\frac{x_{12} x_{24} - {\left(x_{12} x_{23} - x_{13}\right)} x_{34}}{x_{12}^{2} x_{23}^{2} x_{34}}, a_{35} = 0,\\ 
	&a_{45} = 0
\end{align*}

%010101
\begin{equation*}x=\left(\begin{matrix}
		1 & x_{12} & x_{13} & 0 & x_{15} \\
		0 & 1 & x_{23} & x_{24} & 0 \\
		0 & 0 & 1 & x_{34} & 0  \\
		0 & 0 & 0 & 1 & 0  \\
		0 & 0 & 0 & 0 & 1 \\
	\end{matrix}\right),x^A=\left(\begin{matrix}
		1 & 1 & 0 & 0 & 0 \\
		0 & 1 & 1 & 0 & 0 \\
		0 & 0 & 1 & 1 & 0 \\
		0 & 0 & 0 & 1 & 0 \\
		0 & 0 & 0 & 0 & 1
	\end{matrix}\right)
\end{equation*} \newline
Where matrix $A$ has entries
\begin{align*}
	&d_{1} = 1, d_{2} = \frac{1}{x_{12}}, d_{3} = \frac{1}{x_{12} x_{23}}, d_{4} = \frac{1}{x_{12} x_{23} x_{34}}, d_{5} = 1,\\ 
	&a_{12} = 1, a_{13} = 1, a_{14} = 1, a_{15} = 1,\\ 
	&a_{23} = \frac{x_{12} x_{23} - x_{13}}{x_{12}^{2} x_{23}}, a_{24} = \frac{x_{12} x_{13} x_{24} + {\left(x_{12}^{2} x_{23}^{2} - x_{12} x_{13} x_{23} + x_{13}^{2}\right)} x_{34}}{x_{12}^{3} x_{23}^{2} x_{34}}, a_{25} = -\frac{x_{15}}{x_{12}},\\ 
	&a_{34} = -\frac{x_{12} x_{24} - {\left(x_{12} x_{23} - x_{13}\right)} x_{34}}{x_{12}^{2} x_{23}^{2} x_{34}}, a_{35} = 0,\\ 
	&a_{45} = 0
\end{align*}

%010110
\begin{equation*}x=\left(\begin{matrix}
		1 & x_{12} & x_{13} & x_{14} & 0 \\
		0 & 1 & x_{23} & x_{24} & 0 \\
		0 & 0 & 1 & x_{34} & 0  \\
		0 & 0 & 0 & 1 & 0  \\
		0 & 0 & 0 & 0 & 1 \\
	\end{matrix}\right),x^A=\left(\begin{matrix}
		1 & 1 & 0 & 0 & 0 \\
		0 & 1 & 1 & 0 & 0 \\
		0 & 0 & 1 & 1 & 0 \\
		0 & 0 & 0 & 1 & 0 \\
		0 & 0 & 0 & 0 & 1
	\end{matrix}\right)
\end{equation*} \newline
Where matrix $A$ has entries
\begin{align*}
	&d_{1} = 1, d_{2} = \frac{1}{x_{12}}, d_{3} = \frac{1}{x_{12} x_{23}}, d_{4} = \frac{1}{x_{12} x_{23} x_{34}}, d_{5} = 1,\\ 
	&a_{12} = 1, a_{13} = 1, a_{14} = 1, a_{15} = 1,\\ 
	&a_{23} = \frac{x_{12} x_{23} - x_{13}}{x_{12}^{2} x_{23}}, a_{24} = -\frac{x_{12} x_{14} x_{23} - x_{12} x_{13} x_{24} - {\left(x_{12}^{2} x_{23}^{2} - x_{12} x_{13} x_{23} + x_{13}^{2}\right)} x_{34}}{x_{12}^{3} x_{23}^{2} x_{34}}, a_{25} = 0,\\ 
	&a_{34} = -\frac{x_{12} x_{24} - {\left(x_{12} x_{23} - x_{13}\right)} x_{34}}{x_{12}^{2} x_{23}^{2} x_{34}}, a_{35} = 0,\\ 
	&a_{45} = 0
\end{align*}

%010111
\begin{equation*}x=\left(\begin{matrix}
		1 & x_{12} & x_{13} & x_{14} & x_{15} \\
		0 & 1 & x_{23} & x_{24} & 0 \\
		0 & 0 & 1 & x_{34} & 0  \\
		0 & 0 & 0 & 1 & 0  \\
		0 & 0 & 0 & 0 & 1 \\
	\end{matrix}\right),x^A=\left(\begin{matrix}
		1 & 1 & 0 & 0 & 0 \\
		0 & 1 & 1 & 0 & 0 \\
		0 & 0 & 1 & 1 & 0 \\
		0 & 0 & 0 & 1 & 0 \\
		0 & 0 & 0 & 0 & 1
	\end{matrix}\right)
\end{equation*} \newline
Where matrix $A$ has entries
\begin{align*}
	&d_{1} = 1, d_{2} = \frac{1}{x_{12}}, d_{3} = \frac{1}{x_{12} x_{23}}, d_{4} = \frac{1}{x_{12} x_{23} x_{34}}, d_{5} = 1,\\ 
	&a_{12} = 1, a_{13} = 1, a_{14} = 1, a_{15} = 1,\\ 
	&a_{23} = \frac{x_{12} x_{23} - x_{13}}{x_{12}^{2} x_{23}}, a_{24} = -\frac{x_{12} x_{14} x_{23} - x_{12} x_{13} x_{24} - {\left(x_{12}^{2} x_{23}^{2} - x_{12} x_{13} x_{23} + x_{13}^{2}\right)} x_{34}}{x_{12}^{3} x_{23}^{2} x_{34}}, a_{25} = -\frac{x_{15}}{x_{12}},\\ 
	&a_{34} = -\frac{x_{12} x_{24} - {\left(x_{12} x_{23} - x_{13}\right)} x_{34}}{x_{12}^{2} x_{23}^{2} x_{34}}, a_{35} = 0,\\ 
	&a_{45} = 0
\end{align*}

%011000
\begin{equation*}x=\left(% [inline block 31: 10 envs, 3763 chars -> data_tex | \begin{matrix} 		1 & x_{12} & 0 & 0 & 0 \\...]
\right)
\end{equation*} \newline
Where matrix $A$ has entries
\begin{align*}
	&d_{1} = 1, d_{2} = \frac{1}{x_{12}}, d_{3} = \frac{1}{x_{12} x_{23}}, d_{4} = \frac{1}{x_{12} x_{23} x_{34}}, d_{5} = 1,\\ 
	&a_{12} = 1, a_{13} = 1, a_{14} = 1, a_{15} = 1,\\ 
	&a_{23} = \frac{x_{12} x_{23} - x_{13}}{x_{12}^{2} x_{23}}, a_{24} = \frac{x_{12} x_{13} x_{24} + {\left(x_{12}^{2} x_{23}^{2} - x_{12} x_{13} x_{23} + x_{13}^{2}\right)} x_{34}}{x_{12}^{3} x_{23}^{2} x_{34}}, a_{25} = \frac{x_{13} x_{25}}{x_{12} x_{23}},\\ 
	&a_{34} = -\frac{x_{12} x_{24} - {\left(x_{12} x_{23} - x_{13}\right)} x_{34}}{x_{12}^{2} x_{23}^{2} x_{34}}, a_{35} = -\frac{x_{25}}{x_{23}},\\ 
	&a_{45} = 0
\end{align*}

%011101
\begin{equation*}x=\left(\begin{matrix}
		1 & x_{12} & x_{13} & 0 & x_{15} \\
		0 & 1 & x_{23} & x_{24} & x_{25} \\
		0 & 0 & 1 & x_{34} & 0  \\
		0 & 0 & 0 & 1 & 0  \\
		0 & 0 & 0 & 0 & 1 \\
	\end{matrix}\right),x^A=\left(\begin{matrix}
		1 & 1 & 0 & 0 & 0 \\
		0 & 1 & 1 & 0 & 0 \\
		0 & 0 & 1 & 1 & 0 \\
		0 & 0 & 0 & 1 & 0 \\
		0 & 0 & 0 & 0 & 1
	\end{matrix}\right)
\end{equation*} \newline
Where matrix $A$ has entries
\begin{align*}
	&d_{1} = 1, d_{2} = \frac{1}{x_{12}}, d_{3} = \frac{1}{x_{12} x_{23}}, d_{4} = \frac{1}{x_{12} x_{23} x_{34}}, d_{5} = 1,\\ 
	&a_{12} = 1, a_{13} = 1, a_{14} = 1, a_{15} = 1,\\ 
	&a_{23} = \frac{x_{12} x_{23} - x_{13}}{x_{12}^{2} x_{23}}, a_{24} = \frac{x_{12} x_{13} x_{24} + {\left(x_{12}^{2} x_{23}^{2} - x_{12} x_{13} x_{23} + x_{13}^{2}\right)} x_{34}}{x_{12}^{3} x_{23}^{2} x_{34}}, a_{25} = -\frac{x_{15} x_{23} - x_{13} x_{25}}{x_{12} x_{23}},\\ 
	&a_{34} = -\frac{x_{12} x_{24} - {\left(x_{12} x_{23} - x_{13}\right)} x_{34}}{x_{12}^{2} x_{23}^{2} x_{34}}, a_{35} = -\frac{x_{25}}{x_{23}},\\ 
	&a_{45} = 0
\end{align*}

%011110
\begin{equation*}x=\left(\begin{matrix}
		1 & x_{12} & x_{13} & x_{14} & 0 \\
		0 & 1 & x_{23} & x_{24} & x_{25} \\
		0 & 0 & 1 & x_{34} & 0  \\
		0 & 0 & 0 & 1 & 0  \\
		0 & 0 & 0 & 0 & 1 \\
	\end{matrix}\right),x^A=\left(\begin{matrix}
		1 & 1 & 0 & 0 & 0 \\
		0 & 1 & 1 & 0 & 0 \\
		0 & 0 & 1 & 1 & 0 \\
		0 & 0 & 0 & 1 & 0 \\
		0 & 0 & 0 & 0 & 1
	\end{matrix}\right)
\end{equation*} \newline
Where matrix $A$ has entries
\begin{align*}
	&d_{1} = 1, d_{2} = \frac{1}{x_{12}}, d_{3} = \frac{1}{x_{12} x_{23}}, d_{4} = \frac{1}{x_{12} x_{23} x_{34}}, d_{5} = 1,\\ 
	&a_{12} = 1, a_{13} = 1, a_{14} = 1, a_{15} = 1,\\ 
	&a_{23} = \frac{x_{12} x_{23} - x_{13}}{x_{12}^{2} x_{23}}, a_{24} = -\frac{x_{12} x_{14} x_{23} - x_{12} x_{13} x_{24} - {\left(x_{12}^{2} x_{23}^{2} - x_{12} x_{13} x_{23} + x_{13}^{2}\right)} x_{34}}{x_{12}^{3} x_{23}^{2} x_{34}}, a_{25} = \frac{x_{13} x_{25}}{x_{12} x_{23}},\\ 
	&a_{34} = -\frac{x_{12} x_{24} - {\left(x_{12} x_{23} - x_{13}\right)} x_{34}}{x_{12}^{2} x_{23}^{2} x_{34}}, a_{35} = -\frac{x_{25}}{x_{23}},\\ 
	&a_{45} = 0
\end{align*}

%011111
\begin{equation*}x=\left(\begin{matrix}
		1 & x_{12} & x_{13} & x_{14} & x_{15} \\
		0 & 1 & x_{23} & x_{24} & x_{25} \\
		0 & 0 & 1 & x_{34} & 0  \\
		0 & 0 & 0 & 1 & 0  \\
		0 & 0 & 0 & 0 & 1 \\
	\end{matrix}\right),x^A=\left(\begin{matrix}
		1 & 1 & 0 & 0 & 0 \\
		0 & 1 & 1 & 0 & 0 \\
		0 & 0 & 1 & 1 & 0 \\
		0 & 0 & 0 & 1 & 0 \\
		0 & 0 & 0 & 0 & 1
	\end{matrix}\right)
\end{equation*} \newline
Where matrix $A$ has entries
\begin{align*}
	&d_{1} = 1, d_{2} = \frac{1}{x_{12}}, d_{3} = \frac{1}{x_{12} x_{23}}, d_{4} = \frac{1}{x_{12} x_{23} x_{34}}, d_{5} = 1,\\ 
	&a_{12} = 1, a_{13} = 1, a_{14} = 1, a_{15} = 1,\\ 
	&a_{23} = \frac{x_{12} x_{23} - x_{13}}{x_{12}^{2} x_{23}}, a_{24} = -\frac{x_{12} x_{14} x_{23} - x_{12} x_{13} x_{24} - {\left(x_{12}^{2} x_{23}^{2} - x_{12} x_{13} x_{23} + x_{13}^{2}\right)} x_{34}}{x_{12}^{3} x_{23}^{2} x_{34}}, a_{25} = -\frac{x_{15} x_{23} - x_{13} x_{25}}{x_{12} x_{23}},\\ 
	&a_{34} = -\frac{x_{12} x_{24} - {\left(x_{12} x_{23} - x_{13}\right)} x_{34}}{x_{12}^{2} x_{23}^{2} x_{34}}, a_{35} = -\frac{x_{25}}{x_{23}},\\ 
	&a_{45} = 0
\end{align*}

%100000
\begin{equation*}x=\left(% [inline block 32: 10 envs, 3674 chars -> data_tex | \begin{matrix} 		1 & x_{12} & 0 & 0 & 0 \\...]
\right)
\end{equation*} \newline
Where matrix $A$ has entries
\begin{align*}
	&d_{1} = 1, d_{2} = \frac{1}{x_{12}}, d_{3} = \frac{1}{x_{12} x_{23}}, d_{4} = \frac{1}{x_{12} x_{23} x_{34}}, d_{5} = 1,\\ 
	&a_{12} = 1, a_{13} = 1, a_{14} = 1, a_{15} = 1,\\ 
	&a_{23} = \frac{x_{12} x_{23} - x_{13}}{x_{12}^{2} x_{23}}, a_{24} = \frac{x_{12}^{2} x_{23}^{2} - x_{12} x_{13} x_{23} + x_{13}^{2}}{x_{12}^{3} x_{23}^{2}}, a_{25} = 0,\\ 
	&a_{34} = \frac{x_{12} x_{23} - x_{13}}{x_{12}^{2} x_{23}^{2}}, a_{35} = 0,\\ 
	&a_{45} = -\frac{x_{35}}{x_{34}}
\end{align*}

%100101
\begin{equation*}x=\left(\begin{matrix}
		1 & x_{12} & x_{13} & 0 & x_{15} \\
		0 & 1 & x_{23} & 0 & 0 \\
		0 & 0 & 1 & x_{34} & x_{35}  \\
		0 & 0 & 0 & 1 & 0  \\
		0 & 0 & 0 & 0 & 1 \\
	\end{matrix}\right),x^A=\left(\begin{matrix}
		1 & 1 & 0 & 0 & 0 \\
		0 & 1 & 1 & 0 & 0 \\
		0 & 0 & 1 & 1 & 0 \\
		0 & 0 & 0 & 1 & 0 \\
		0 & 0 & 0 & 0 & 1
	\end{matrix}\right)
\end{equation*} \newline
Where matrix $A$ has entries
\begin{align*}
	&d_{1} = 1, d_{2} = \frac{1}{x_{12}}, d_{3} = \frac{1}{x_{12} x_{23}}, d_{4} = \frac{1}{x_{12} x_{23} x_{34}}, d_{5} = 1,\\ 
	&a_{12} = 1, a_{13} = 1, a_{14} = 1, a_{15} = 1,\\ 
	&a_{23} = \frac{x_{12} x_{23} - x_{13}}{x_{12}^{2} x_{23}}, a_{24} = \frac{x_{12}^{2} x_{23}^{2} - x_{12} x_{13} x_{23} + x_{13}^{2}}{x_{12}^{3} x_{23}^{2}}, a_{25} = -\frac{x_{15}}{x_{12}},\\ 
	&a_{34} = \frac{x_{12} x_{23} - x_{13}}{x_{12}^{2} x_{23}^{2}}, a_{35} = 0,\\ 
	&a_{45} = -\frac{x_{35}}{x_{34}}
\end{align*}

%100110
\begin{equation*}x=\left(\begin{matrix}
		1 & x_{12} & x_{13} & x_{14} & 0 \\
		0 & 1 & x_{23} & 0 & 0 \\
		0 & 0 & 1 & x_{34} & x_{35}  \\
		0 & 0 & 0 & 1 & 0  \\
		0 & 0 & 0 & 0 & 1 \\
	\end{matrix}\right),x^A=\left(\begin{matrix}
		1 & 1 & 0 & 0 & 0 \\
		0 & 1 & 1 & 0 & 0 \\
		0 & 0 & 1 & 1 & 0 \\
		0 & 0 & 0 & 1 & 0 \\
		0 & 0 & 0 & 0 & 1
	\end{matrix}\right)
\end{equation*} \newline
Where matrix $A$ has entries
\begin{align*}
	&d_{1} = 1, d_{2} = \frac{1}{x_{12}}, d_{3} = \frac{1}{x_{12} x_{23}}, d_{4} = \frac{1}{x_{12} x_{23} x_{34}}, d_{5} = 1,\\ 
	&a_{12} = 1, a_{13} = 1, a_{14} = 1, a_{15} = 1,\\ 
	&a_{23} = \frac{x_{12} x_{23} - x_{13}}{x_{12}^{2} x_{23}}, a_{24} = -\frac{x_{12} x_{14} x_{23} - {\left(x_{12}^{2} x_{23}^{2} - x_{12} x_{13} x_{23} + x_{13}^{2}\right)} x_{34}}{x_{12}^{3} x_{23}^{2} x_{34}}, a_{25} = \frac{x_{14} x_{35}}{x_{12} x_{34}},\\ 
	&a_{34} = \frac{x_{12} x_{23} - x_{13}}{x_{12}^{2} x_{23}^{2}}, a_{35} = 0,\\ 
	&a_{45} = -\frac{x_{35}}{x_{34}}
\end{align*}

%100111
\begin{equation*}x=\left(\begin{matrix}
		1 & x_{12} & x_{13} & x_{14} & x_{15} \\
		0 & 1 & x_{23} & 0 & 0 \\
		0 & 0 & 1 & x_{34} & x_{35}  \\
		0 & 0 & 0 & 1 & 0  \\
		0 & 0 & 0 & 0 & 1 \\
	\end{matrix}\right),x^A=\left(\begin{matrix}
		1 & 1 & 0 & 0 & 0 \\
		0 & 1 & 1 & 0 & 0 \\
		0 & 0 & 1 & 1 & 0 \\
		0 & 0 & 0 & 1 & 0 \\
		0 & 0 & 0 & 0 & 1
	\end{matrix}\right)
\end{equation*} \newline
Where matrix $A$ has entries
\begin{align*}
	&d_{1} = 1, d_{2} = \frac{1}{x_{12}}, d_{3} = \frac{1}{x_{12} x_{23}}, d_{4} = \frac{1}{x_{12} x_{23} x_{34}}, d_{5} = 1,\\ 
	&a_{12} = 1, a_{13} = 1, a_{14} = 1, a_{15} = 1,\\ 
	&a_{23} = \frac{x_{12} x_{23} - x_{13}}{x_{12}^{2} x_{23}}, a_{24} = -\frac{x_{12} x_{14} x_{23} - {\left(x_{12}^{2} x_{23}^{2} - x_{12} x_{13} x_{23} + x_{13}^{2}\right)} x_{34}}{x_{12}^{3} x_{23}^{2} x_{34}}, a_{25} = -\frac{x_{15} x_{34} - x_{14} x_{35}}{x_{12} x_{34}},\\ 
	&a_{34} = \frac{x_{12} x_{23} - x_{13}}{x_{12}^{2} x_{23}^{2}}, a_{35} = 0,\\ 
	&a_{45} = -\frac{x_{35}}{x_{34}}
\end{align*}

%101000
\begin{equation*}x=\left(% [inline block 33: 10 envs, 3783 chars -> data_tex | \begin{matrix} 		1 & x_{12} & 0 & 0 & 0 \\...]
\right)
\end{equation*} \newline
Where matrix $A$ has entries
\begin{align*}
	&d_{1} = 1, d_{2} = \frac{1}{x_{12}}, d_{3} = \frac{1}{x_{12} x_{23}}, d_{4} = \frac{1}{x_{12} x_{23} x_{34}}, d_{5} = 1,\\ 
	&a_{12} = 1, a_{13} = 1, a_{14} = 1, a_{15} = 1,\\ 
	&a_{23} = \frac{x_{12} x_{23} - x_{13}}{x_{12}^{2} x_{23}}, a_{24} = \frac{x_{12}^{2} x_{23}^{2} - x_{12} x_{13} x_{23} + x_{13}^{2}}{x_{12}^{3} x_{23}^{2}}, a_{25} = \frac{x_{13} x_{25}}{x_{12} x_{23}},\\ 
	&a_{34} = \frac{x_{12} x_{23} - x_{13}}{x_{12}^{2} x_{23}^{2}}, a_{35} = -\frac{x_{25}}{x_{23}},\\ 
	&a_{45} = -\frac{x_{35}}{x_{34}}
\end{align*}

%101101
\begin{equation*}x=\left(\begin{matrix}
		1 & x_{12} & x_{13} & 0 & x_{15} \\
		0 & 1 & x_{23} & 0 & x_{25} \\
		0 & 0 & 1 & x_{34} & x_{35}  \\
		0 & 0 & 0 & 1 & 0  \\
		0 & 0 & 0 & 0 & 1 \\
	\end{matrix}\right),x^A=\left(\begin{matrix}
		1 & 1 & 0 & 0 & 0 \\
		0 & 1 & 1 & 0 & 0 \\
		0 & 0 & 1 & 1 & 0 \\
		0 & 0 & 0 & 1 & 0 \\
		0 & 0 & 0 & 0 & 1
	\end{matrix}\right)
\end{equation*} \newline
Where matrix $A$ has entries
\begin{align*}
	&d_{1} = 1, d_{2} = \frac{1}{x_{12}}, d_{3} = \frac{1}{x_{12} x_{23}}, d_{4} = \frac{1}{x_{12} x_{23} x_{34}}, d_{5} = 1,\\ 
	&a_{12} = 1, a_{13} = 1, a_{14} = 1, a_{15} = 1,\\ 
	&a_{23} = \frac{x_{12} x_{23} - x_{13}}{x_{12}^{2} x_{23}}, a_{24} = \frac{x_{12}^{2} x_{23}^{2} - x_{12} x_{13} x_{23} + x_{13}^{2}}{x_{12}^{3} x_{23}^{2}}, a_{25} = -\frac{x_{15} x_{23} - x_{13} x_{25}}{x_{12} x_{23}},\\ 
	&a_{34} = \frac{x_{12} x_{23} - x_{13}}{x_{12}^{2} x_{23}^{2}}, a_{35} = -\frac{x_{25}}{x_{23}},\\ 
	&a_{45} = -\frac{x_{35}}{x_{34}}
\end{align*}

%101110
\begin{equation*}x=\left(\begin{matrix}
		1 & x_{12} & x_{13} & x_{14} & 0 \\
		0 & 1 & x_{23} & 0 & x_{25} \\
		0 & 0 & 1 & x_{34} & x_{35}  \\
		0 & 0 & 0 & 1 & 0  \\
		0 & 0 & 0 & 0 & 1 \\
	\end{matrix}\right),x^A=\left(\begin{matrix}
		1 & 1 & 0 & 0 & 0 \\
		0 & 1 & 1 & 0 & 0 \\
		0 & 0 & 1 & 1 & 0 \\
		0 & 0 & 0 & 1 & 0 \\
		0 & 0 & 0 & 0 & 1
	\end{matrix}\right)
\end{equation*} \newline
Where matrix $A$ has entries
\begin{align*}
	&d_{1} = 1, d_{2} = \frac{1}{x_{12}}, d_{3} = \frac{1}{x_{12} x_{23}}, d_{4} = \frac{1}{x_{12} x_{23} x_{34}}, d_{5} = 1,\\ 
	&a_{12} = 1, a_{13} = 1, a_{14} = 1, a_{15} = 1,\\ 
	&a_{23} = \frac{x_{12} x_{23} - x_{13}}{x_{12}^{2} x_{23}}, a_{24} = -\frac{x_{12} x_{14} x_{23} - {\left(x_{12}^{2} x_{23}^{2} - x_{12} x_{13} x_{23} + x_{13}^{2}\right)} x_{34}}{x_{12}^{3} x_{23}^{2} x_{34}}, a_{25} = \frac{x_{13} x_{25} x_{34} + x_{14} x_{23} x_{35}}{x_{12} x_{23} x_{34}},\\ 
	&a_{34} = \frac{x_{12} x_{23} - x_{13}}{x_{12}^{2} x_{23}^{2}}, a_{35} = -\frac{x_{25}}{x_{23}},\\ 
	&a_{45} = -\frac{x_{35}}{x_{34}}
\end{align*}

%101111
\begin{equation*}x=\left(\begin{matrix}
		1 & x_{12} & x_{13} & x_{14} & x_{15} \\
		0 & 1 & x_{23} & 0 & x_{25} \\
		0 & 0 & 1 & x_{34} & x_{35}  \\
		0 & 0 & 0 & 1 & 0  \\
		0 & 0 & 0 & 0 & 1 \\
	\end{matrix}\right),x^A=\left(\begin{matrix}
		1 & 1 & 0 & 0 & 0 \\
		0 & 1 & 1 & 0 & 0 \\
		0 & 0 & 1 & 1 & 0 \\
		0 & 0 & 0 & 1 & 0 \\
		0 & 0 & 0 & 0 & 1
	\end{matrix}\right)
\end{equation*} \newline
Where matrix $A$ has entries
\begin{align*}
	&d_{1} = 1, d_{2} = \frac{1}{x_{12}}, d_{3} = \frac{1}{x_{12} x_{23}}, d_{4} = \frac{1}{x_{12} x_{23} x_{34}}, d_{5} = 1,\\ 
	&a_{12} = 1, a_{13} = 1, a_{14} = 1, a_{15} = 1,\\ 
	&a_{23} = \frac{x_{12} x_{23} - x_{13}}{x_{12}^{2} x_{23}}, a_{24} = -\frac{x_{12} x_{14} x_{23} - {\left(x_{12}^{2} x_{23}^{2} - x_{12} x_{13} x_{23} + x_{13}^{2}\right)} x_{34}}{x_{12}^{3} x_{23}^{2} x_{34}}, a_{25} = \frac{x_{14} x_{23} x_{35} - {\left(x_{15} x_{23} - x_{13} x_{25}\right)} x_{34}}{x_{12} x_{23} x_{34}},\\ 
	&a_{34} = \frac{x_{12} x_{23} - x_{13}}{x_{12}^{2} x_{23}^{2}}, a_{35} = -\frac{x_{25}}{x_{23}},\\ 
	&a_{45} = -\frac{x_{35}}{x_{34}}
\end{align*}

%110000
\begin{equation*}x=\left(% [inline block 34: 6 envs, 2065 chars -> data_tex | \begin{matrix} 		1 & x_{12} & 0 & 0 & 0 \\...]
\right)
\end{equation*} \newline
Where matrix $A$ has entries
\begin{align*}
	&d_{1} = 1, d_{2} = \frac{1}{x_{12}}, d_{3} = \frac{1}{x_{12} x_{23}}, d_{4} = \frac{1}{x_{12} x_{23} x_{34}}, d_{5} = 1,\\ 
	&a_{12} = 1, a_{13} = 1, a_{14} = 1, a_{15} = 1,\\ 
	&a_{23} = \frac{1}{x_{12}}, a_{24} = \frac{x_{12} x_{23} x_{34} - x_{14}}{x_{12}^{2} x_{23} x_{34}}, a_{25} = \frac{x_{14} x_{35}}{x_{12} x_{34}},\\ 
	&a_{34} = \frac{x_{23} x_{34} - x_{24}}{x_{12} x_{23}^{2} x_{34}}, a_{35} = \frac{x_{24} x_{35}}{x_{23} x_{34}},\\ 
	&a_{45} = -\frac{x_{35}}{x_{34}}
\end{align*}

%110011
\begin{equation*}x=\left(\begin{matrix}
		1 & x_{12} & 0 & x_{14} & x_{15} \\
		0 & 1 & x_{23} & x_{24} & 0 \\
		0 & 0 & 1 & x_{34} & x_{35}  \\
		0 & 0 & 0 & 1 & 0  \\
		0 & 0 & 0 & 0 & 1 \\
	\end{matrix}\right),x^A=\left(\begin{matrix}
		1 & 1 & 0 & 0 & 0 \\
		0 & 1 & 1 & 0 & 0 \\
		0 & 0 & 1 & 1 & 0 \\
		0 & 0 & 0 & 1 & 0 \\
		0 & 0 & 0 & 0 & 1
	\end{matrix}\right)
\end{equation*} \newline
Where matrix $A$ has entries
\begin{align*}
	&d_{1} = 1, d_{2} = \frac{1}{x_{12}}, d_{3} = \frac{1}{x_{12} x_{23}}, d_{4} = \frac{1}{x_{12} x_{23} x_{34}}, d_{5} = 1,\\ 
	&a_{12} = 1, a_{13} = 1, a_{14} = 1, a_{15} = 1,\\ 
	&a_{23} = \frac{1}{x_{12}}, a_{24} = \frac{x_{12} x_{23} x_{34} - x_{14}}{x_{12}^{2} x_{23} x_{34}}, a_{25} = -\frac{x_{15} x_{34} - x_{14} x_{35}}{x_{12} x_{34}},\\ 
	&a_{34} = \frac{x_{23} x_{34} - x_{24}}{x_{12} x_{23}^{2} x_{34}}, a_{35} = \frac{x_{24} x_{35}}{x_{23} x_{34}},\\ 
	&a_{45} = -\frac{x_{35}}{x_{34}}
\end{align*}

%110100
\begin{equation*}x=\left(\begin{matrix}
		1 & x_{12} & x_{13} & 0 & 0 \\
		0 & 1 & x_{23} & x_{24} & 0 \\
		0 & 0 & 1 & x_{34} & x_{35}  \\
		0 & 0 & 0 & 1 & 0  \\
		0 & 0 & 0 & 0 & 1 \\
	\end{matrix}\right),x^A=\left(\begin{matrix}
		1 & 1 & 0 & 0 & 0 \\
		0 & 1 & 1 & 0 & 0 \\
		0 & 0 & 1 & 1 & 0 \\
		0 & 0 & 0 & 1 & 0 \\
		0 & 0 & 0 & 0 & 1
	\end{matrix}\right)
\end{equation*} \newline
Where matrix $A$ has entries
\begin{align*}
	&d_{1} = 1, d_{2} = \frac{1}{x_{12}}, d_{3} = \frac{1}{x_{12} x_{23}}, d_{4} = \frac{1}{x_{12} x_{23} x_{34}}, d_{5} = 1,\\ 
	&a_{12} = 1, a_{13} = 1, a_{14} = 1, a_{15} = 1,\\ 
	&a_{23} = \frac{x_{12} x_{23} - x_{13}}{x_{12}^{2} x_{23}}, a_{24} = \frac{x_{12} x_{13} x_{24} + {\left(x_{12}^{2} x_{23}^{2} - x_{12} x_{13} x_{23} + x_{13}^{2}\right)} x_{34}}{x_{12}^{3} x_{23}^{2} x_{34}}, a_{25} = -\frac{x_{13} x_{24} x_{35}}{x_{12} x_{23} x_{34}},\\ 
	&a_{34} = -\frac{x_{12} x_{24} - {\left(x_{12} x_{23} - x_{13}\right)} x_{34}}{x_{12}^{2} x_{23}^{2} x_{34}}, a_{35} = \frac{x_{24} x_{35}}{x_{23} x_{34}},\\ 
	&a_{45} = -\frac{x_{35}}{x_{34}}
\end{align*}

%110101
\begin{equation*}x=\left(\begin{matrix}
		1 & x_{12} & x_{13} & 0 & x_{15} \\
		0 & 1 & x_{23} & x_{24} & 0 \\
		0 & 0 & 1 & x_{34} & x_{35}  \\
		0 & 0 & 0 & 1 & 0  \\
		0 & 0 & 0 & 0 & 1 \\
	\end{matrix}\right),x^A=\left(\begin{matrix}
		1 & 1 & 0 & 0 & 0 \\
		0 & 1 & 1 & 0 & 0 \\
		0 & 0 & 1 & 1 & 0 \\
		0 & 0 & 0 & 1 & 0 \\
		0 & 0 & 0 & 0 & 1
	\end{matrix}\right)
\end{equation*} \newline
Where matrix $A$ has entries
\begin{align*}
	&d_{1} = 1, d_{2} = \frac{1}{x_{12}}, d_{3} = \frac{1}{x_{12} x_{23}}, d_{4} = \frac{1}{x_{12} x_{23} x_{34}}, d_{5} = 1,\\ 
	&a_{12} = 1, a_{13} = 1, a_{14} = 1, a_{15} = 1,\\ 
	&a_{23} = \frac{x_{12} x_{23} - x_{13}}{x_{12}^{2} x_{23}}, a_{24} = \frac{x_{12} x_{13} x_{24} + {\left(x_{12}^{2} x_{23}^{2} - x_{12} x_{13} x_{23} + x_{13}^{2}\right)} x_{34}}{x_{12}^{3} x_{23}^{2} x_{34}}, a_{25} = -\frac{x_{15} x_{23} x_{34} + x_{13} x_{24} x_{35}}{x_{12} x_{23} x_{34}},\\ 
	&a_{34} = -\frac{x_{12} x_{24} - {\left(x_{12} x_{23} - x_{13}\right)} x_{34}}{x_{12}^{2} x_{23}^{2} x_{34}}, a_{35} = \frac{x_{24} x_{35}}{x_{23} x_{34}},\\ 
	&a_{45} = -\frac{x_{35}}{x_{34}}
\end{align*}

%110110
\begin{equation*}x=\left(\begin{matrix}
		1 & x_{12} & x_{13} & x_{14} & 0 \\
		0 & 1 & x_{23} & x_{24} & 0 \\
		0 & 0 & 1 & x_{34} & x_{35}  \\
		0 & 0 & 0 & 1 & 0  \\
		0 & 0 & 0 & 0 & 1 \\
	\end{matrix}\right),x^A=\left(\begin{matrix}
		1 & 1 & 0 & 0 & 0 \\
		0 & 1 & 1 & 0 & 0 \\
		0 & 0 & 1 & 1 & 0 \\
		0 & 0 & 0 & 1 & 0 \\
		0 & 0 & 0 & 0 & 1
	\end{matrix}\right)
\end{equation*} \newline
Where matrix $A$ has entries
\begin{align*}
	&d_{1} = 1, d_{2} = \frac{1}{x_{12}}, d_{3} = \frac{1}{x_{12} x_{23}}, d_{4} = \frac{1}{x_{12} x_{23} x_{34}}, d_{5} = 1,\\ 
	&a_{12} = 1, a_{13} = 1, a_{14} = 1, a_{15} = 1,\\ 
	&a_{23} = \frac{x_{12} x_{23} - x_{13}}{x_{12}^{2} x_{23}}, a_{24} = -\frac{x_{12} x_{14} x_{23} - x_{12} x_{13} x_{24} - {\left(x_{12}^{2} x_{23}^{2} - x_{12} x_{13} x_{23} + x_{13}^{2}\right)} x_{34}}{x_{12}^{3} x_{23}^{2} x_{34}}, a_{25} = \frac{{\left(x_{14} x_{23} - x_{13} x_{24}\right)} x_{35}}{x_{12} x_{23} x_{34}},\\ 
	&a_{34} = -\frac{x_{12} x_{24} - {\left(x_{12} x_{23} - x_{13}\right)} x_{34}}{x_{12}^{2} x_{23}^{2} x_{34}}, a_{35} = \frac{x_{24} x_{35}}{x_{23} x_{34}},\\ 
	&a_{45} = -\frac{x_{35}}{x_{34}}
\end{align*}

%110111
\begin{equation*}x=\left(\begin{matrix}
		1 & x_{12} & x_{13} & x_{14} & x_{15} \\
		0 & 1 & x_{23} & x_{24} & 0 \\
		0 & 0 & 1 & x_{34} & x_{35}  \\
		0 & 0 & 0 & 1 & 0  \\
		0 & 0 & 0 & 0 & 1 \\
	\end{matrix}\right),x^A=\left(\begin{matrix}
		1 & 1 & 0 & 0 & 0 \\
		0 & 1 & 1 & 0 & 0 \\
		0 & 0 & 1 & 1 & 0 \\
		0 & 0 & 0 & 1 & 0 \\
		0 & 0 & 0 & 0 & 1
	\end{matrix}\right)
\end{equation*} \newline
Where matrix $A$ has entries
\begin{align*}
	&d_{1} = 1, d_{2} = \frac{1}{x_{12}}, d_{3} = \frac{1}{x_{12} x_{23}}, d_{4} = \frac{1}{x_{12} x_{23} x_{34}}, d_{5} = 1,\\ 
	&a_{12} = 1, a_{13} = 1, a_{14} = 1, a_{15} = 1,\\ 
	&a_{23} = \frac{x_{12} x_{23} - x_{13}}{x_{12}^{2} x_{23}}, a_{24} = -\frac{x_{12} x_{14} x_{23} - x_{12} x_{13} x_{24} - {\left(x_{12}^{2} x_{23}^{2} - x_{12} x_{13} x_{23} + x_{13}^{2}\right)} x_{34}}{x_{12}^{3} x_{23}^{2} x_{34}}, a_{25} = -\frac{x_{15} x_{23} x_{34} - {\left(x_{14} x_{23} - x_{13} x_{24}\right)} x_{35}}{x_{12} x_{23} x_{34}},\\ 
	&a_{34} = -\frac{x_{12} x_{24} - {\left(x_{12} x_{23} - x_{13}\right)} x_{34}}{x_{12}^{2} x_{23}^{2} x_{34}}, a_{35} = \frac{x_{24} x_{35}}{x_{23} x_{34}},\\ 
	&a_{45} = -\frac{x_{35}}{x_{34}}
\end{align*}

%111000
\begin{equation*}x=\left(% [inline block 35: 6 envs, 2114 chars -> data_tex | \begin{matrix} 		1 & x_{12} & 0 & 0 & 0 \\...]
\right)
\end{equation*} \newline
Where matrix $A$ has entries
\begin{align*}
	&d_{1} = 1, d_{2} = \frac{1}{x_{12}}, d_{3} = \frac{1}{x_{12} x_{23}}, d_{4} = \frac{1}{x_{12} x_{23} x_{34}}, d_{5} = 1,\\ 
	&a_{12} = 1, a_{13} = 1, a_{14} = 1, a_{15} = 1,\\ 
	&a_{23} = \frac{1}{x_{12}}, a_{24} = \frac{x_{12} x_{23} x_{34} - x_{14}}{x_{12}^{2} x_{23} x_{34}}, a_{25} = \frac{x_{14} x_{35}}{x_{12} x_{34}},\\ 
	&a_{34} = \frac{x_{23} x_{34} - x_{24}}{x_{12} x_{23}^{2} x_{34}}, a_{35} = -\frac{x_{25} x_{34} - x_{24} x_{35}}{x_{23} x_{34}},\\ 
	&a_{45} = -\frac{x_{35}}{x_{34}}
\end{align*}

%111011
\begin{equation*}x=\left(\begin{matrix}
		1 & x_{12} & 0 & x_{14} & x_{15} \\
		0 & 1 & x_{23} & x_{24} & x_{25} \\
		0 & 0 & 1 & x_{34} & x_{35}  \\
		0 & 0 & 0 & 1 & 0  \\
		0 & 0 & 0 & 0 & 1 \\
	\end{matrix}\right),x^A=\left(\begin{matrix}
		1 & 1 & 0 & 0 & 0 \\
		0 & 1 & 1 & 0 & 0 \\
		0 & 0 & 1 & 1 & 0 \\
		0 & 0 & 0 & 1 & 0 \\
		0 & 0 & 0 & 0 & 1
	\end{matrix}\right)
\end{equation*} \newline
Where matrix $A$ has entries
\begin{align*}
	&d_{1} = 1, d_{2} = \frac{1}{x_{12}}, d_{3} = \frac{1}{x_{12} x_{23}}, d_{4} = \frac{1}{x_{12} x_{23} x_{34}}, d_{5} = 1,\\ 
	&a_{12} = 1, a_{13} = 1, a_{14} = 1, a_{15} = 1,\\ 
	&a_{23} = \frac{1}{x_{12}}, a_{24} = \frac{x_{12} x_{23} x_{34} - x_{14}}{x_{12}^{2} x_{23} x_{34}}, a_{25} = -\frac{x_{15} x_{34} - x_{14} x_{35}}{x_{12} x_{34}},\\ 
	&a_{34} = \frac{x_{23} x_{34} - x_{24}}{x_{12} x_{23}^{2} x_{34}}, a_{35} = -\frac{x_{25} x_{34} - x_{24} x_{35}}{x_{23} x_{34}},\\ 
	&a_{45} = -\frac{x_{35}}{x_{34}}
\end{align*}

%111100
\begin{equation*}x=\left(\begin{matrix}
		1 & x_{12} & x_{13} & 0 & 0 \\
		0 & 1 & x_{23} & x_{24} & x_{25} \\
		0 & 0 & 1 & x_{34} & x_{35}  \\
		0 & 0 & 0 & 1 & 0  \\
		0 & 0 & 0 & 0 & 1 \\
	\end{matrix}\right),x^A=\left(\begin{matrix}
		1 & 1 & 0 & 0 & 0 \\
		0 & 1 & 1 & 0 & 0 \\
		0 & 0 & 1 & 1 & 0 \\
		0 & 0 & 0 & 1 & 0 \\
		0 & 0 & 0 & 0 & 1
	\end{matrix}\right)
\end{equation*} \newline
Where matrix $A$ has entries
\begin{align*}
	&d_{1} = 1, d_{2} = \frac{1}{x_{12}}, d_{3} = \frac{1}{x_{12} x_{23}}, d_{4} = \frac{1}{x_{12} x_{23} x_{34}}, d_{5} = 1,\\ 
	&a_{12} = 1, a_{13} = 1, a_{14} = 1, a_{15} = 1,\\ 
	&a_{23} = \frac{x_{12} x_{23} - x_{13}}{x_{12}^{2} x_{23}}, a_{24} = \frac{x_{12} x_{13} x_{24} + {\left(x_{12}^{2} x_{23}^{2} - x_{12} x_{13} x_{23} + x_{13}^{2}\right)} x_{34}}{x_{12}^{3} x_{23}^{2} x_{34}}, a_{25} = \frac{x_{13} x_{25} x_{34} - x_{13} x_{24} x_{35}}{x_{12} x_{23} x_{34}},\\ 
	&a_{34} = -\frac{x_{12} x_{24} - {\left(x_{12} x_{23} - x_{13}\right)} x_{34}}{x_{12}^{2} x_{23}^{2} x_{34}}, a_{35} = -\frac{x_{25} x_{34} - x_{24} x_{35}}{x_{23} x_{34}},\\ 
	&a_{45} = -\frac{x_{35}}{x_{34}}
\end{align*}

%111101
\begin{equation*}x=\left(\begin{matrix}
		1 & x_{12} & x_{13} & 0 & x_{15} \\
		0 & 1 & x_{23} & x_{24} & x_{25} \\
		0 & 0 & 1 & x_{34} & x_{35}  \\
		0 & 0 & 0 & 1 & 0  \\
		0 & 0 & 0 & 0 & 1 \\
	\end{matrix}\right),x^A=\left(\begin{matrix}
		1 & 1 & 0 & 0 & 0 \\
		0 & 1 & 1 & 0 & 0 \\
		0 & 0 & 1 & 1 & 0 \\
		0 & 0 & 0 & 1 & 0 \\
		0 & 0 & 0 & 0 & 1
	\end{matrix}\right)
\end{equation*} \newline
Where matrix $A$ has entries
\begin{align*}
	&d_{1} : 1, d_{2} : \frac{1}{x_{12}}, d_{3} : \frac{1}{x_{12} x_{23}}, d_{4} : \frac{1}{x_{12} x_{23} x_{34}}, d_{5} : 1, \\
	&a_{12} : 1, a_{13} : 1, a_{14} : 1, a_{15} : 1, \\
	&a_{23} : \frac{x_{12} x_{23} - x_{13}}{x_{12}^{2} x_{23}}, a_{24} : \frac{x_{12} x_{13} x_{24} + {\left(x_{12}^{2} x_{23}^{2} - x_{12} x_{13} x_{23} + x_{13}^{2}\right)} x_{34}}{x_{12}^{3} x_{23}^{2} x_{34}}, a_{25} : -\frac{x_{13} x_{24} x_{35} + {\left(x_{15} x_{23} - x_{13} x_{25}\right)} x_{34}}{x_{12} x_{23} x_{34}},\\
	&a_{34} : -\frac{x_{12} x_{24} - {\left(x_{12} x_{23} - x_{13}\right)} x_{34}}{x_{12}^{2} x_{23}^{2} x_{34}}, a_{35} : -\frac{x_{25} x_{34} - x_{24} x_{35}}{x_{23} x_{34}}, \\
	&a_{45} : -\frac{x_{35}}{x_{34}}
\end{align*}

%111110
\begin{equation*}x=\left(\begin{matrix}
		1 & x_{12} & x_{13} & x_{14} & 0 \\
		0 & 1 & x_{23} & x_{24} & x_{25} \\
		0 & 0 & 1 & x_{34} & x_{35}  \\
		0 & 0 & 0 & 1 & 0  \\
		0 & 0 & 0 & 0 & 1 \\
	\end{matrix}\right),x^A=\left(\begin{matrix}
		1 & 1 & 0 & 0 & 0 \\
		0 & 1 & 1 & 0 & 0 \\
		0 & 0 & 1 & 1 & 0 \\
		0 & 0 & 0 & 1 & 0 \\
		0 & 0 & 0 & 0 & 1
	\end{matrix}\right)
\end{equation*} \newline
Where matrix $A$ has entries
\begin{align*}
	&d_{1} = 1, d_{2} = \frac{1}{x_{12}}, d_{3} = \frac{1}{x_{12} x_{23}}, d_{4} = \frac{1}{x_{12} x_{23} x_{34}}, d_{5} = 1,\\ 
	&a_{12} = 1, a_{13} = 1, a_{14} = 1, a_{15} = 1,\\ 
	&a_{23} = \frac{x_{12} x_{23} - x_{13}}{x_{12}^{2} x_{23}}, a_{24} = -\frac{x_{12} x_{14} x_{23} - x_{12} x_{13} x_{24} - {\left(x_{12}^{2} x_{23}^{2} - x_{12} x_{13} x_{23} + x_{13}^{2}\right)} x_{34}}{x_{12}^{3} x_{23}^{2} x_{34}}, a_{25} = \frac{x_{13} x_{25} x_{34} + {\left(x_{14} x_{23} - x_{13} x_{24}\right)} x_{35}}{x_{12} x_{23} x_{34}},\\ 
	&a_{34} = -\frac{x_{12} x_{24} - {\left(x_{12} x_{23} - x_{13}\right)} x_{34}}{x_{12}^{2} x_{23}^{2} x_{34}}, a_{35} = -\frac{x_{25} x_{34} - x_{24} x_{35}}{x_{23} x_{34}},\\ 
	&a_{45} = -\frac{x_{35}}{x_{34}}
\end{align*}

%111111
\begin{equation*}x=\left(\begin{matrix}
		1 & x_{12} & x_{13} & x_{14} & x_{15} \\
		0 & 1 & x_{23} & x_{24} & x_{25} \\
		0 & 0 & 1 & x_{34} & x_{35}  \\
		0 & 0 & 0 & 1 & 0  \\
		0 & 0 & 0 & 0 & 1 \\
	\end{matrix}\right),x^A=\left(\begin{matrix}
		1 & 1 & 0 & 0 & 0 \\
		0 & 1 & 1 & 0 & 0 \\
		0 & 0 & 1 & 1 & 0 \\
		0 & 0 & 0 & 1 & 0 \\
		0 & 0 & 0 & 0 & 1
	\end{matrix}\right)
\end{equation*} \newline
Where matrix $A$ has entries
\begin{align*}
	&d_{1} = 1, d_{2} = \frac{1}{x_{12}}, d_{3} = \frac{1}{x_{12} x_{23}}, d_{4} = \frac{1}{x_{12} x_{23} x_{34}}, d_{5} = 1,\\ 
	&a_{12} = 1, a_{13} = 1, a_{14} = 1, a_{15} = 1,\\ 
	&a_{23} = \frac{x_{12} x_{23} - x_{13}}{x_{12}^{2} x_{23}}, a_{24} = -\frac{x_{12} x_{14} x_{23} - x_{12} x_{13} x_{24} - {\left(x_{12}^{2} x_{23}^{2} - x_{12} x_{13} x_{23} + x_{13}^{2}\right)} x_{34}}{x_{12}^{3} x_{23}^{2} x_{34}},\\,
	&a_{25} = -\frac{{\left(x_{15} x_{23} - x_{13} x_{25}\right)} x_{34} - {\left(x_{14} x_{23} - x_{13} x_{24}\right)} x_{35}}{x_{12} x_{23} x_{34}},\\ 
	&a_{34} = -\frac{x_{12} x_{24} - {\left(x_{12} x_{23} - x_{13}\right)} x_{34}}{x_{12}^{2} x_{23}^{2} x_{34}}, a_{35} = -\frac{x_{25} x_{34} - x_{24} x_{35}}{x_{23} x_{34}},\\ 
	&a_{45} = -\frac{x_{35}}{x_{34}}
\end{align*}

		\section{Subcases of $Y_8$}

%000001
\begin{equation*}x=\left(% [inline block 36: 24 envs, 8750 chars -> data_tex | \begin{matrix} 		1 & x_{12} & 0 & 0 & x_{15} \\...]
\right)
\end{equation*} \newline
Where matrix $A$ has entries
\begin{align*}
	&d_{1} = 1, d_{2} = \frac{1}{x_{12}}, d_{3} = 1, d_{4} = \frac{x_{45}}{x_{12} x_{25} + x_{14} x_{45}}, d_{5} = \frac{1}{x_{12} x_{25} + x_{14} x_{45}},\\ 
	&a_{12} = 1, a_{13} = 1, a_{14} = 1, a_{15} = 1,\\ 
	&a_{23} = 0, a_{24} = \frac{x_{25}}{x_{12} x_{25} + x_{14} x_{45}}, a_{25} = 1,\\ 
	&a_{34} = 0, a_{35} = 1,\\ 
	&a_{45} = -\frac{{\left(x_{12} - 1\right)} x_{14} x_{45} + x_{15} + {\left(x_{12}^{2} - x_{12}\right)} x_{25}}{x_{12} x_{14} x_{25} + x_{14}^{2} x_{45}}
\end{align*}

Now assume $x_{14}= \frac{-x_{12}x_{25}}{x_{45}}$
\begin{equation*}x=\left(% [inline block 37: 12 envs, 4465 chars -> data_tex | \begin{matrix} 		1 & x_{12} & 0 & x_{14} & x_{15} \\...]
\right)
\end{equation*} \newline
Where matrix $A$ has entries
\begin{align*}
	&d_{1} = 1, d_{2} = \frac{1}{x_{12}}, d_{3} = 1, d_{4} = \frac{x_{45}}{x_{12} x_{25} + x_{14} x_{45}}, d_{5} = \frac{1}{x_{12} x_{25} + x_{14} x_{45}},\\ 
	&a_{12} = 1, a_{13} = 1, a_{14} = 1, a_{15} = 1,\\ 
	&a_{23} = -\frac{x_{13}}{x_{12}}, a_{24} = \frac{x_{25}}{x_{12} x_{25} + x_{14} x_{45}}, a_{25} = 1,\\ 
	&a_{34} = 0, a_{35} = 1,\\ 
	&a_{45} = -\frac{{\left(x_{12} + x_{13} - 1\right)} x_{14} x_{45} + x_{15} + {\left(x_{12}^{2} + x_{12} x_{13} - x_{12}\right)} x_{25}}{x_{12} x_{14} x_{25} + x_{14}^{2} x_{45}}
\end{align*}

Now assume $x_{14}= \frac{-x_{12}x_{25}}{x_{45}}$
\begin{equation*}x=\left(% [inline block 38: 10 envs, 3579 chars -> data_tex | \begin{matrix} 		1 & x_{12} & x_{13} & x_{14} & x_{15} \\...]
\right)
\end{equation*} \newline
Where matrix $A$ has entries
\begin{align*}
	&d_{1} = 1, d_{2} = \frac{1}{x_{12}}, d_{3} = 1, d_{4} = \frac{1}{x_{12} x_{24}}, d_{5} = \frac{1}{x_{12} x_{24} x_{45}},\\ 
	&a_{12} = 1, a_{13} = 1, a_{14} = 1, a_{15} = 1,\\ 
	&a_{23} = 0, a_{24} = \frac{x_{12} x_{24} - x_{14}}{x_{12}^{2} x_{24}}, a_{25} = -\frac{x_{12} x_{15} x_{24} - {\left(x_{12}^{2} x_{24}^{2} - x_{12} x_{14} x_{24} + x_{14}^{2}\right)} x_{45}}{x_{12}^{3} x_{24}^{2} x_{45}},\\ 
	&a_{34} = 0, a_{35} = 1,\\ 
	&a_{45} = \frac{x_{12} x_{24} - x_{14}}{x_{12}^{2} x_{24}^{2}}
\end{align*}

%010100
\begin{equation*}x=\left(\begin{matrix}
		1 & x_{12} & x_{13} & 0 & 0 \\
		0 & 1 & 0 & x_{24} & 0 \\
		0 & 0 & 1 & 0 & 0  \\
		0 & 0 & 0 & 1 & x_{45}  \\
		0 & 0 & 0 & 0 & 1 \\
	\end{matrix}\right),x^A=\left(\begin{matrix}
		1 & 1 & 0 & 0 & 0 \\
		0 & 1 & 0 & 1 & 0 \\
		0 & 0 & 1 & 0 & 0 \\
		0 & 0 & 0 & 1 & 1 \\
		0 & 0 & 0 & 0 & 1
	\end{matrix}\right)
\end{equation*} \newline
Where matrix $A$ has entries
\begin{align*}
	&d_{1} = 1, d_{2} = \frac{1}{x_{12}}, d_{3} = 1, d_{4} = \frac{1}{x_{12} x_{24}}, d_{5} = \frac{1}{x_{12} x_{24} x_{45}},\\ 
	&a_{12} = 1, a_{13} = 1, a_{14} = 1, a_{15} = 1,\\ 
	&a_{23} = -\frac{x_{13}}{x_{12}}, a_{24} = \frac{1}{x_{12}}, a_{25} = 1,\\ 
	&a_{34} = 0, a_{35} = -\frac{x_{12} - 1}{x_{13}},\\ 
	&a_{45} = \frac{1}{x_{12} x_{24}}
\end{align*}

%010101
\begin{equation*}x=\left(\begin{matrix}
		1 & x_{12} & x_{13} & 0 & x_{15} \\
		0 & 1 & 0 & x_{24} & 0 \\
		0 & 0 & 1 & 0 & 0  \\
		0 & 0 & 0 & 1 & x_{45}  \\
		0 & 0 & 0 & 0 & 1 \\
	\end{matrix}\right),x^A=\left(\begin{matrix}
		1 & 1 & 0 & 0 & 0 \\
		0 & 1 & 0 & 1 & 0 \\
		0 & 0 & 1 & 0 & 0 \\
		0 & 0 & 0 & 1 & 1 \\
		0 & 0 & 0 & 0 & 1
	\end{matrix}\right)
\end{equation*} \newline
Where matrix $A$ has entries
\begin{align*}
	&d_{1} = 1, d_{2} = \frac{1}{x_{12}}, d_{3} = 1, d_{4} = \frac{1}{x_{12} x_{24}}, d_{5} = \frac{1}{x_{12} x_{24} x_{45}},\\ 
	&a_{12} = 1, a_{13} = 1, a_{14} = 1, a_{15} = 1,\\ 
	&a_{23} = -\frac{x_{13}}{x_{12}}, a_{24} = \frac{1}{x_{12}}, a_{25} = 1,\\ 
	&a_{34} = 0, a_{35} = -\frac{{\left(x_{12}^{2} - x_{12}\right)} x_{24} x_{45} + x_{15}}{x_{12} x_{13} x_{24} x_{45}},\\ 
	&a_{45} = \frac{1}{x_{12} x_{24}}
\end{align*}

%010110
\begin{equation*}x=\left(\begin{matrix}
		1 & x_{12} & x_{13} & x_{14} & 0 \\
		0 & 1 & 0 & x_{24} & 0 \\
		0 & 0 & 1 & 0 & 0  \\
		0 & 0 & 0 & 1 & x_{45}  \\
		0 & 0 & 0 & 0 & 1 \\
	\end{matrix}\right),x^A=\left(\begin{matrix}
		1 & 1 & 0 & 0 & 0 \\
		0 & 1 & 0 & 1 & 0 \\
		0 & 0 & 1 & 0 & 0 \\
		0 & 0 & 0 & 1 & 1 \\
		0 & 0 & 0 & 0 & 1
	\end{matrix}\right)
\end{equation*} \newline
Where matrix $A$ has entries
\begin{align*}
	&d_{1} = 1, d_{2} = \frac{1}{x_{12}}, d_{3} = 1, d_{4} = \frac{1}{x_{12} x_{24}}, d_{5} = \frac{1}{x_{12} x_{24} x_{45}},\\ 
	&a_{12} = 1, a_{13} = 1, a_{14} = 1, a_{15} = 1,\\ 
	&a_{23} = -\frac{x_{13}}{x_{12}}, a_{24} = \frac{x_{12} x_{24} - x_{14}}{x_{12}^{2} x_{24}}, a_{25} = 1,\\ 
	&a_{34} = 0, a_{35} = -\frac{x_{12} x_{14} x_{24} - x_{14}^{2} + {\left(x_{12}^{3} - x_{12}^{2}\right)} x_{24}^{2}}{x_{12}^{2} x_{13} x_{24}^{2}},\\ 
	&a_{45} = \frac{x_{12} x_{24} - x_{14}}{x_{12}^{2} x_{24}^{2}}
\end{align*}

%010111
\begin{equation*}x=\left(\begin{matrix}
		1 & x_{12} & x_{13} & x_{14} & x_{15} \\
		0 & 1 & 0 & x_{24} & 0 \\
		0 & 0 & 1 & 0 & 0  \\
		0 & 0 & 0 & 1 & x_{45}  \\
		0 & 0 & 0 & 0 & 1 \\
	\end{matrix}\right),x^A=\left(\begin{matrix}
		1 & 1 & 0 & 0 & 0 \\
		0 & 1 & 0 & 1 & 0 \\
		0 & 0 & 1 & 0 & 0 \\
		0 & 0 & 0 & 1 & 1 \\
		0 & 0 & 0 & 0 & 1
	\end{matrix}\right)
\end{equation*} \newline
Where matrix $A$ has entries
\begin{align*}
	&d_{1} = 1, d_{2} = \frac{1}{x_{12}}, d_{3} = 1, d_{4} = \frac{1}{x_{12} x_{24}}, d_{5} = \frac{1}{x_{12} x_{24} x_{45}},\\ 
	&a_{12} = 1, a_{13} = 1, a_{14} = 1, a_{15} = 1,\\ 
	&a_{23} = -\frac{x_{13}}{x_{12}}, a_{24} = \frac{x_{12} x_{24} - x_{14}}{x_{12}^{2} x_{24}}, a_{25} = 1,\\ 
	&a_{34} = 0, a_{35} = -\frac{x_{12} x_{15} x_{24} + {\left(x_{12} x_{14} x_{24} - x_{14}^{2} + {\left(x_{12}^{3} - x_{12}^{2}\right)} x_{24}^{2}\right)} x_{45}}{x_{12}^{2} x_{13} x_{24}^{2} x_{45}},\\ 
	&a_{45} = \frac{x_{12} x_{24} - x_{14}}{x_{12}^{2} x_{24}^{2}}
\end{align*}

%011000
\begin{equation*}x=\left(\begin{matrix}
		1 & x_{12} & 0 & 0 & 0 \\
		0 & 1 & 0 & x_{24} & x_{25} \\
		0 & 0 & 1 & 0 & 0  \\
		0 & 0 & 0 & 1 & x_{45}  \\
		0 & 0 & 0 & 0 & 1 \\
	\end{matrix}\right),x^A=\left(\begin{matrix}
		1 & 1 & 0 & 0 & 0 \\
		0 & 1 & 0 & 1 & 0 \\
		0 & 0 & 1 & 0 & 0 \\
		0 & 0 & 0 & 1 & 1 \\
		0 & 0 & 0 & 0 & 1
	\end{matrix}\right)
\end{equation*} \newline
Where matrix $A$ has entries
\begin{align*}
	&d_{1} = 1, d_{2} = \frac{1}{x_{12}}, d_{3} = 1, d_{4} = \frac{1}{x_{12} x_{24}}, d_{5} = \frac{1}{x_{12} x_{24} x_{45}},\\ 
	&a_{12} = 1, a_{13} = 1, a_{14} = 1, a_{15} = 1,\\ 
	&a_{23} = 0, a_{24} = \frac{1}{x_{12}}, a_{25} = \frac{1}{x_{12}},\\ 
	&a_{34} = 0, a_{35} = 1,\\ 
	&a_{45} = \frac{x_{24} x_{45} - x_{25}}{x_{12} x_{24}^{2} x_{45}}
\end{align*}

%011001
\begin{equation*}x=\left(\begin{matrix}
		1 & x_{12} & 0 & 0 & x_{15} \\
		0 & 1 & 0 & x_{24} & x_{25} \\
		0 & 0 & 1 & 0 & 0  \\
		0 & 0 & 0 & 1 & x_{45}  \\
		0 & 0 & 0 & 0 & 1 \\
	\end{matrix}\right),x^A=\left(\begin{matrix}
		1 & 1 & 0 & 0 & 0 \\
		0 & 1 & 0 & 1 & 0 \\
		0 & 0 & 1 & 0 & 0 \\
		0 & 0 & 0 & 1 & 1 \\
		0 & 0 & 0 & 0 & 1
	\end{matrix}\right)
\end{equation*} \newline
Where matrix $A$ has entries
\begin{align*}
	&d_{1} = 1, d_{2} = \frac{1}{x_{12}}, d_{3} = 1, d_{4} = \frac{1}{x_{12} x_{24}}, d_{5} = \frac{1}{x_{12} x_{24} x_{45}},\\ 
	&a_{12} = 1, a_{13} = 1, a_{14} = 1, a_{15} = 1,\\ 
	&a_{23} = 0, a_{24} = \frac{1}{x_{12}}, a_{25} = \frac{x_{12} x_{24} x_{45} - x_{15}}{x_{12}^{2} x_{24} x_{45}},\\ 
	&a_{34} = 0, a_{35} = 1,\\ 
	&a_{45} = \frac{x_{24} x_{45} - x_{25}}{x_{12} x_{24}^{2} x_{45}}
\end{align*}

%011010
\begin{equation*}x=\left(\begin{matrix}
		1 & x_{12} & 0 & x_{14} & 0 \\
		0 & 1 & 0 & x_{24} & x_{25} \\
		0 & 0 & 1 & 0 & 0  \\
		0 & 0 & 0 & 1 & x_{45}  \\
		0 & 0 & 0 & 0 & 1 \\
	\end{matrix}\right),x^A=\left(\begin{matrix}
		1 & 1 & 0 & 0 & 0 \\
		0 & 1 & 0 & 1 & 0 \\
		0 & 0 & 1 & 0 & 0 \\
		0 & 0 & 0 & 1 & 1 \\
		0 & 0 & 0 & 0 & 1
	\end{matrix}\right)
\end{equation*} \newline
Where matrix $A$ has entries
\begin{align*}
	&d_{1} = 1, d_{2} = \frac{1}{x_{12}}, d_{3} = 1, d_{4} = \frac{1}{x_{12} x_{24}}, d_{5} = \frac{1}{x_{12} x_{24} x_{45}},\\ 
	&a_{12} = 1, a_{13} = 1, a_{14} = 1, a_{15} = 1,\\ 
	&a_{23} = 0, a_{24} = \frac{x_{12} x_{24} - x_{14}}{x_{12}^{2} x_{24}}, a_{25} = \frac{x_{12} x_{14} x_{25} + {\left(x_{12}^{2} x_{24}^{2} - x_{12} x_{14} x_{24} + x_{14}^{2}\right)} x_{45}}{x_{12}^{3} x_{24}^{2} x_{45}},\\ 
	&a_{34} = 0, a_{35} = 1,\\ 
	&a_{45} = -\frac{x_{12} x_{25} - {\left(x_{12} x_{24} - x_{14}\right)} x_{45}}{x_{12}^{2} x_{24}^{2} x_{45}}
\end{align*}

%011011
\begin{equation*}x=\left(\begin{matrix}
		1 & x_{12} & 0 & x_{14} & x_{15} \\
		0 & 1 & 0 & x_{24} & x_{25} \\
		0 & 0 & 1 & 0 & 0  \\
		0 & 0 & 0 & 1 & x_{45}  \\
		0 & 0 & 0 & 0 & 1 \\
	\end{matrix}\right),x^A=\left(\begin{matrix}
		1 & 1 & 0 & 0 & 0 \\
		0 & 1 & 0 & 1 & 0 \\
		0 & 0 & 1 & 0 & 0 \\
		0 & 0 & 0 & 1 & 1 \\
		0 & 0 & 0 & 0 & 1
	\end{matrix}\right)
\end{equation*} \newline
Where matrix $A$ has entries
\begin{align*}
	&d_{1} = 1, d_{2} = \frac{1}{x_{12}}, d_{3} = 1, d_{4} = \frac{1}{x_{12} x_{24}}, d_{5} = \frac{1}{x_{12} x_{24} x_{45}},\\ 
	&a_{12} = 1, a_{13} = 1, a_{14} = 1, a_{15} = 1,\\ 
	&a_{23} = 0, a_{24} = \frac{x_{12} x_{24} - x_{14}}{x_{12}^{2} x_{24}}, a_{25} = -\frac{x_{12} x_{15} x_{24} - x_{12} x_{14} x_{25} - {\left(x_{12}^{2} x_{24}^{2} - x_{12} x_{14} x_{24} + x_{14}^{2}\right)} x_{45}}{x_{12}^{3} x_{24}^{2} x_{45}},\\ 
	&a_{34} = 0, a_{35} = 1,\\ 
	&a_{45} = -\frac{x_{12} x_{25} - {\left(x_{12} x_{24} - x_{14}\right)} x_{45}}{x_{12}^{2} x_{24}^{2} x_{45}}
\end{align*}

%011100
\begin{equation*}x=\left(% [inline block 39: 6 envs, 2064 chars -> data_tex | \begin{matrix} 		1 & x_{12} & x_{13} & 0 & 0 \\...]
\right)
\end{equation*} \newline
Where matrix $A$ has entries
\begin{align*}
	&d_{1} = 1, d_{2} = \frac{1}{x_{12}}, d_{3} = 1, d_{4} = \frac{1}{x_{12} x_{24}}, d_{5} = \frac{1}{x_{12} x_{24} x_{45}},\\ 
	&a_{12} = 1, a_{13} = 1, a_{14} = 1, a_{15} = 1,\\ 
	&a_{23} = -\frac{x_{13}}{x_{12}}, a_{24} = \frac{x_{12} x_{24} - x_{14}}{x_{12}^{2} x_{24}}, a_{25} = 1,\\ 
	&a_{34} = 0, a_{35} = \frac{x_{12} x_{14} x_{25} - {\left(x_{12} x_{14} x_{24} - x_{14}^{2} + {\left(x_{12}^{3} - x_{12}^{2}\right)} x_{24}^{2}\right)} x_{45}}{x_{12}^{2} x_{13} x_{24}^{2} x_{45}},\\ 
	&a_{45} = -\frac{x_{12} x_{25} - {\left(x_{12} x_{24} - x_{14}\right)} x_{45}}{x_{12}^{2} x_{24}^{2} x_{45}}
\end{align*}

%011111
\begin{equation*}x=\left(\begin{matrix}
		1 & x_{12} & x_{13} & x_{14} & x_{15} \\
		0 & 1 & 0 & x_{24} & x_{25} \\
		0 & 0 & 1 & 0 & 0  \\
		0 & 0 & 0 & 1 & x_{45}  \\
		0 & 0 & 0 & 0 & 1 \\
	\end{matrix}\right),x^A=\left(\begin{matrix}
		1 & 1 & 0 & 0 & 0 \\
		0 & 1 & 0 & 1 & 0 \\
		0 & 0 & 1 & 0 & 0 \\
		0 & 0 & 0 & 1 & 1 \\
		0 & 0 & 0 & 0 & 1
	\end{matrix}\right)
\end{equation*} \newline
Where matrix $A$ has entries
\begin{align*}
	&d_{1} = 1, d_{2} = \frac{1}{x_{12}}, d_{3} = 1, d_{4} = \frac{1}{x_{12} x_{24}}, d_{5} = \frac{1}{x_{12} x_{24} x_{45}},\\ 
	&a_{12} = 1, a_{13} = 1, a_{14} = 1, a_{15} = 1,\\ 
	&a_{23} = -\frac{x_{13}}{x_{12}}, a_{24} = \frac{x_{12} x_{24} - x_{14}}{x_{12}^{2} x_{24}}, a_{25} = 1,\\ 
	&a_{34} = 0, a_{35} = -\frac{x_{12} x_{15} x_{24} - x_{12} x_{14} x_{25} + {\left(x_{12} x_{14} x_{24} - x_{14}^{2} + {\left(x_{12}^{3} - x_{12}^{2}\right)} x_{24}^{2}\right)} x_{45}}{x_{12}^{2} x_{13} x_{24}^{2} x_{45}},\\ 
	&a_{45} = -\frac{x_{12} x_{25} - {\left(x_{12} x_{24} - x_{14}\right)} x_{45}}{x_{12}^{2} x_{24}^{2} x_{45}}
\end{align*}

%100000
\begin{equation*}x=\left(% [inline block 40: 18 envs, 6681 chars -> data_tex | \begin{matrix} 		1 & x_{12} & 0 & 0 & 0 \\...]
\right)
\end{equation*} \newline
Where matrix $A$ has entries
\begin{align*}
	&d_{1} = 1, d_{2} = \frac{1}{x_{12}}, d_{3} = 1, d_{4} = \frac{x_{45}}{x_{13} x_{35} + x_{14} x_{45}}, d_{5} = \frac{1}{x_{13} x_{35} + x_{14} x_{45}},\\ 
	&a_{12} = 1, a_{13} = 1, a_{14} = 1, a_{15} = 1,\\ 
	&a_{23} = -\frac{x_{13}}{x_{12}}, a_{24} = 0, a_{25} = 1,\\ 
	&a_{34} = \frac{x_{35}}{x_{13} x_{35} + x_{14} x_{45}}, a_{35} = 1,\\ 
	&a_{45} = -\frac{{\left(x_{12} + x_{13} - 1\right)} x_{14} x_{45} + x_{15} + {\left(x_{13}^{2} + {\left(x_{12} - 1\right)} x_{13}\right)} x_{35}}{x_{13} x_{14} x_{35} + x_{14}^{2} x_{45}}
\end{align*}

Now assume $x_{14}= \frac{-x_{13}x_{35}}{x_{45}}$
\begin{equation*}x=\left(% [inline block 41: 12 envs, 4473 chars -> data_tex | \begin{matrix} 		1 & x_{12} & x_{13} & x_{14} & x_{15} \\...]
\right)
\end{equation*} \newline
Where matrix $A$ has entries
\begin{align*}
	&d_{1} = 1, d_{2} = \frac{1}{x_{12}}, d_{3} = 1, d_{4} = \frac{x_{45}}{x_{12} x_{25} + x_{14} x_{45}}, d_{5} = \frac{1}{x_{12} x_{25} + x_{14} x_{45}},\\ 
	&a_{12} = 1, a_{13} = 1, a_{14} = 1, a_{15} = 1,\\ 
	&a_{23} = 0, a_{24} = \frac{x_{25}}{x_{12} x_{25} + x_{14} x_{45}}, a_{25} = 1,\\ 
	&a_{34} = \frac{x_{35}}{x_{12} x_{25} + x_{14} x_{45}}, a_{35} = 1,\\ 
	&a_{45} = -\frac{{\left(x_{12} - 1\right)} x_{14} x_{45} + x_{15} + {\left(x_{12}^{2} - x_{12}\right)} x_{25}}{x_{12} x_{14} x_{25} + x_{14}^{2} x_{45}}
\end{align*}

Now assume $x_{14}= \frac{-x_{12}x_{25}}{x_{45}}$
\begin{equation*}x=\left(\begin{matrix}
		1 & x_{12} & 0 & x_{14} & x_{15} \\
		0 & 1 & 0 & 0 & x_{25} \\
		0 & 0 & 1 & 0 & x_{35}  \\
		0 & 0 & 0 & 1 & x_{45}  \\
		0 & 0 & 0 & 0 & 1 \\
	\end{matrix}\right),x^A=\left(\begin{matrix}
		1 & 1 & 0 & 0 & 0 \\
		0 & 1 & 0 & 0 & 0 \\
		0 & 0 & 1 & 0 & 0 \\
		0 & 0 & 0 & 1 & 1 \\
		0 & 0 & 0 & 0 & 1
	\end{matrix}\right)
\end{equation*} \newline
Where matrix $A$ has entries
\begin{align*}
	&d_{1} = 1, d_{2} = \frac{1}{x_{12}}, d_{3} = 1, d_{4} = 1, d_{5} = \frac{1}{x_{45}},\\ 
	&a_{12} = 1, a_{13} = 1, a_{14} = 1, a_{15} = 1,\\ 
	&a_{23} = 0, a_{24} = \frac{x_{25}}{x_{45}}, a_{25} = 1,\\ 
	&a_{34} = \frac{x_{35}}{x_{45}}, a_{35} = 1,\\ 
	&a_{45} = \frac{x_{15} + {\left(x_{12} - 1\right)} x_{45}}{x_{12} x_{25}}
\end{align*}

%101100

First assume $x_{25}\neq \frac{-x_{13}x_{35}}{x_{12}}$
\begin{equation*}x=\left(\begin{matrix}
		1 & x_{12} & x_{13} & 0 & 0 \\
		0 & 1 & 0 & 0 & x_{25} \\
		0 & 0 & 1 & 0 & x_{35}  \\
		0 & 0 & 0 & 1 & x_{45}  \\
		0 & 0 & 0 & 0 & 1 \\
	\end{matrix}\right),x^A=\left(\begin{matrix}
		1 & 1 & 0 & 1 & 0 \\
		0 & 1 & 0 & 0 & 0 \\
		0 & 0 & 1 & 0 & 0 \\
		0 & 0 & 0 & 1 & 1 \\
		0 & 0 & 0 & 0 & 1
	\end{matrix}\right)
\end{equation*} \newline
Where matrix $A$ has entries
\begin{align*}
	&d_{1} = 1, d_{2} = \frac{1}{x_{12}}, d_{3} = 1, d_{4} = \frac{x_{45}}{x_{12} x_{25} + x_{13} x_{35}}, d_{5} = \frac{1}{x_{12} x_{25} + x_{13} x_{35}},\\ 
	&a_{12} = 1, a_{13} = 1, a_{14} = 1, a_{15} = 1,\\ 
	&a_{23} = -\frac{x_{13}}{x_{12}}, a_{24} = \frac{x_{25}}{x_{12} x_{25} + x_{13} x_{35}}, a_{25} = 1,\\ 
	&a_{34} = \frac{x_{35}}{x_{12} x_{25} + x_{13} x_{35}}, a_{35} = -\frac{x_{12} - 1}{x_{13}},\\ 
	&a_{45} = 1
\end{align*}

Now assume $x_{25}= \frac{-x_{13}x_{35}}{x_{12}}$
\begin{equation*}x=\left(\begin{matrix}
		1 & x_{12} & x_{13} & 0 & 0 \\
		0 & 1 & 0 & 0 & x_{25} \\
		0 & 0 & 1 & 0 & x_{35}  \\
		0 & 0 & 0 & 1 & x_{45}  \\
		0 & 0 & 0 & 0 & 1 \\
	\end{matrix}\right),x^A=\left(\begin{matrix}
		1 & 1 & 0 & 0 & 0 \\
		0 & 1 & 0 & 0 & 0 \\
		0 & 0 & 1 & 0 & 0 \\
		0 & 0 & 0 & 1 & 1 \\
		0 & 0 & 0 & 0 & 1
	\end{matrix}\right)
\end{equation*} \newline
Where matrix $A$ has entries
\begin{align*}
	&d_{1} = 1, d_{2} = \frac{1}{x_{12}}, d_{3} = 1, d_{4} = 1, d_{5} = \frac{1}{x_{45}},\\ 
	&a_{12} = 1, a_{13} = 1, a_{14} = 1, a_{15} = 1,\\ 
	&a_{23} = -\frac{x_{13}}{x_{12}}, a_{24} = -\frac{x_{13} x_{35}}{x_{12} x_{45}}, a_{25} = 1,\\ 
	&a_{34} = \frac{x_{35}}{x_{45}}, a_{35} = -\frac{x_{12} - 1}{x_{13}},\\ 
	&a_{45} = 1
\end{align*}

%101101
First assume $x_{25}\neq \frac{-x_{13}x_{35}}{x_{12}}$
\begin{equation*}x=\left(\begin{matrix}
		1 & x_{12} & x_{13} & 0 & x_{15} \\
		0 & 1 & 0 & 0 & x_{25} \\
		0 & 0 & 1 & 0 & x_{35}  \\
		0 & 0 & 0 & 1 & x_{45}  \\
		0 & 0 & 0 & 0 & 1 \\
	\end{matrix}\right),x^A=\left(\begin{matrix}
		1 & 1 & 0 & 1 & 0 \\
		0 & 1 & 0 & 0 & 0 \\
		0 & 0 & 1 & 0 & 0 \\
		0 & 0 & 0 & 1 & 1 \\
		0 & 0 & 0 & 0 & 1
	\end{matrix}\right)
\end{equation*} \newline
Where matrix $A$ has entries
\begin{align*}
	&d_{1} = 1, d_{2} = \frac{1}{x_{12}}, d_{3} = 1, d_{4} = \frac{x_{45}}{x_{12} x_{25} + x_{13} x_{35}}, d_{5} = \frac{1}{x_{12} x_{25} + x_{13} x_{35}},\\ 
	&a_{12} = 1, a_{13} = 1, a_{14} = 1, a_{15} = 1,\\ 
	&a_{23} = -\frac{x_{13}}{x_{12}}, a_{24} = \frac{x_{25}}{x_{12} x_{25} + x_{13} x_{35}}, a_{25} = 1,\\ 
	&a_{34} = \frac{x_{35}}{x_{12} x_{25} + x_{13} x_{35}}, a_{35} = -\frac{{\left(x_{12} - 1\right)} x_{13} x_{35} + x_{15} + {\left(x_{12}^{2} - x_{12}\right)} x_{25}}{x_{12} x_{13} x_{25} + x_{13}^{2} x_{35}},\\ 
	&a_{45} = 1
\end{align*}

Now assume $x_{25}= \frac{-x_{13}x_{35}}{x_{12}}$
\begin{equation*}x=\left(\begin{matrix}
		1 & x_{12} & x_{13} & 0 & x_{15} \\
		0 & 1 & 0 & 0 & x_{25} \\
		0 & 0 & 1 & 0 & x_{35}  \\
		0 & 0 & 0 & 1 & x_{45}  \\
		0 & 0 & 0 & 0 & 1 \\
	\end{matrix}\right),x^A=\left(\begin{matrix}
		1 & 1 & 0 & 0 & 0 \\
		0 & 1 & 0 & 0 & 0 \\
		0 & 0 & 1 & 0 & 0 \\
		0 & 0 & 0 & 1 & 1 \\
		0 & 0 & 0 & 0 & 1
	\end{matrix}\right)
\end{equation*} \newline
Where matrix $A$ has entries
\begin{align*}
	&d_{1} = 1, d_{2} = \frac{1}{x_{12}}, d_{3} = 1, d_{4} = 1, d_{5} = \frac{1}{x_{45}},\\ 
	&a_{12} = 1, a_{13} = 1, a_{14} = 1, a_{15} = 1,\\ 
	&a_{23} = -\frac{x_{13}}{x_{12}}, a_{24} = -\frac{x_{13} x_{35}}{x_{12} x_{45}}, a_{25} = 1,\\ 
	&a_{34} = \frac{x_{35}}{x_{45}}, a_{35} = -\frac{x_{15} + {\left(x_{12} - 1\right)} x_{45}}{x_{13} x_{45}},\\ 
	&a_{45} = 1
\end{align*}

%101110
First assume $x_{14} \neq \frac{-x_{12}x_{25}-x_{13}x_{35}}{x_{45}}$
\begin{equation*}x=\left(\begin{matrix}
		1 & x_{12} & x_{13} & x_{14} & 0 \\
		0 & 1 & 0 & 0 & x_{25} \\
		0 & 0 & 1 & 0 & x_{35}  \\
		0 & 0 & 0 & 1 & x_{45}  \\
		0 & 0 & 0 & 0 & 1 \\
	\end{matrix}\right),x^A=\left(\begin{matrix}
		1 & 1 & 0 & 0 & 0 \\
		0 & 1 & 0 & 1 & 0 \\
		0 & 0 & 1 & 0 & 0 \\
		0 & 0 & 0 & 1 & 1 \\
		0 & 0 & 0 & 0 & 1
	\end{matrix}\right)
\end{equation*} \newline
Where matrix $A$ has entries
\begin{align*}
	&d_{1} = 1, d_{2} = \frac{1}{x_{12}}, d_{3} = 1, d_{4} = \frac{x_{45}}{x_{12} x_{25} + x_{13} x_{35} + x_{14} x_{45}}, d_{5} = \frac{1}{x_{12} x_{25} + x_{13} x_{35} + x_{14} x_{45}},\\ 
	&a_{12} = 1, a_{13} = 1, a_{14} = 1, a_{15} = 1,\\ 
	&a_{23} = -\frac{x_{13}}{x_{12}}, a_{24} = \frac{x_{25}}{x_{12} x_{25} + x_{13} x_{35} + x_{14} x_{45}}, a_{25} = 1,\\ 
	&a_{34} = \frac{x_{35}}{x_{12} x_{25} + x_{13} x_{35} + x_{14} x_{45}}, a_{35} = 1,\\ 
	&a_{45} = -\frac{x_{12} + x_{13} - 1}{x_{14}}
\end{align*}

Now assume $x_{14} = \frac{-x_{12}x_{25}-x_{13}x_{35}}{x_{45}}$ and $x_{25} \neq \frac{-x_{13}x_{35}}{x_{12}}$
\begin{equation*}x=\left(\begin{matrix}
		1 & x_{12} & x_{13} & x_{14} & 0 \\
		0 & 1 & 0 & 0 & x_{25} \\
		0 & 0 & 1 & 0 & x_{35}  \\
		0 & 0 & 0 & 1 & x_{45}  \\
		0 & 0 & 0 & 0 & 1 \\
	\end{matrix}\right),x^A=\left(\begin{matrix}
		1 & 1 & 0 & 0 & 0 \\
		0 & 1 & 0 & 0 & 0 \\
		0 & 0 & 1 & 0 & 0 \\
		0 & 0 & 0 & 1 & 1 \\
		0 & 0 & 0 & 0 & 1
	\end{matrix}\right)
\end{equation*} \newline
Where matrix $A$ has entries
\begin{align*}
	&d_{1} = 1, d_{2} = \frac{1}{x_{12}}, d_{3} = 1, d_{4} = 1, d_{5} = \frac{1}{x_{45}},\\ 
	&a_{12} = 1, a_{13} = 1, a_{14} = 1, a_{15} = 1,\\ 
	&a_{23} = -\frac{x_{13}}{x_{12}}, a_{24} = \frac{x_{25}}{x_{45}}, a_{25} = 1,\\ 
	&a_{34} = \frac{x_{35}}{x_{45}}, a_{35} = 1,\\ 
	&a_{45} = \frac{{\left(x_{12} + x_{13} - 1\right)} x_{45}}{x_{12} x_{25} + x_{13} x_{35}}
\end{align*}

Now assume $x_{14} = \frac{-x_{12}x_{25}-x_{13}x_{35}}{x_{45}}$ and $x_{25} = \frac{-x_{13}x_{35}}{x_{12}}$
\begin{equation*}x=\left(\begin{matrix}
		1 & x_{12} & x_{13} & x_{14} & 0 \\
		0 & 1 & 0 & 0 & x_{25} \\
		0 & 0 & 1 & 0 & x_{35}  \\
		0 & 0 & 0 & 1 & x_{45}  \\
		0 & 0 & 0 & 0 & 1 \\
	\end{matrix}\right),x^A=\left(\begin{matrix}
		1 & 1 & 0 & 0 & 0 \\
		0 & 1 & 0 & 0 & 0 \\
		0 & 0 & 1 & 0 & 0 \\
		0 & 0 & 0 & 1 & 1 \\
		0 & 0 & 0 & 0 & 1
	\end{matrix}\right)
\end{equation*} \newline
Where matrix $A$ has entries
\begin{align*}
	&d_{1} = 1, d_{2} = \frac{1}{x_{12}}, d_{3} = 1, d_{4} = 1, d_{5} = \frac{1}{x_{45}},\\ 
	&a_{12} = 1, a_{13} = 1, a_{14} = 1, a_{15} = 1,\\ 
	&a_{23} = -\frac{x_{13}}{x_{12}}, a_{24} = -\frac{x_{13} x_{35}}{x_{12} x_{45}}, a_{25} = 1,\\ 
	&a_{34} = \frac{x_{35}}{x_{45}}, a_{35} = -\frac{x_{12} - 1}{x_{13}},\\ 
	&a_{45} = 1
\end{align*}

%101111
First assume $x_{14} \neq \frac{-x_{12}*x_{25}-x_{13}x_{35}}{x_{45}}$
\begin{equation*}x=\left(\begin{matrix}
		1 & x_{12} & x_{13} & x_{14} & x_{15} \\
		0 & 1 & 0 & 0 & x_{25} \\
		0 & 0 & 1 & 0 & x_{35}  \\
		0 & 0 & 0 & 1 & x_{45}  \\
		0 & 0 & 0 & 0 & 1 \\
	\end{matrix}\right),x^A=\left(\begin{matrix}
		1 & 1 & 0 & 1 & 0 \\
		0 & 1 & 0 & 0 & 0 \\
		0 & 0 & 1 & 0 & 0 \\
		0 & 0 & 0 & 1 & 1 \\
		0 & 0 & 0 & 0 & 1
	\end{matrix}\right)
\end{equation*} \newline
Where matrix $A$ has entries
\begin{align*}
	&d_{1} = 1, d_{2} = \frac{1}{x_{12}}, d_{3} = 1, d_{4} = \frac{x_{45}}{x_{12} x_{25} + x_{13} x_{35} + x_{14} x_{45}}, d_{5} = \frac{1}{x_{12} x_{25} + x_{13} x_{35} + x_{14} x_{45}},\\ 
	&a_{12} = 1, a_{13} = 1, a_{14} = 1, a_{15} = 1,\\ 
	&a_{23} = -\frac{x_{13}}{x_{12}}, a_{24} = \frac{x_{25}}{x_{12} x_{25} + x_{13} x_{35} + x_{14} x_{45}}, a_{25} = 1,\\ 
	&a_{34} = \frac{x_{35}}{x_{12} x_{25} + x_{13} x_{35} + x_{14} x_{45}}, a_{35} = 1,\\ 
	&a_{45} = -\frac{{\left(x_{12} + x_{13} - 1\right)} x_{14} x_{45} + x_{15} + {\left(x_{12}^{2} + x_{12} x_{13} - x_{12}\right)} x_{25} + {\left(x_{13}^{2} + {\left(x_{12} - 1\right)} x_{13}\right)} x_{35}}{x_{12} x_{14} x_{25} + x_{13} x_{14} x_{35} + x_{14}^{2} x_{45}}
\end{align*}

Now assume $x_{14} = \frac{-x_{12}*x_{25}-x_{13}x_{35}}{x_{45}}$ and $x_{25} \neq \frac{-x_{13}x_{35}}{x_{12}}$
\begin{equation*}x=\left(\begin{matrix}
		1 & x_{12} & x_{13} & x_{14} & x_{15} \\
		0 & 1 & 0 & 0 & x_{25} \\
		0 & 0 & 1 & 0 & x_{35}  \\
		0 & 0 & 0 & 1 & x_{45}  \\
		0 & 0 & 0 & 0 & 1 \\
	\end{matrix}\right),x^A=\left(\begin{matrix}
		1 & 1 & 0 & 0 & 0 \\
		0 & 1 & 0 & 0 & 0 \\
		0 & 0 & 1 & 0 & 0 \\
		0 & 0 & 0 & 1 & 1 \\
		0 & 0 & 0 & 0 & 1
	\end{matrix}\right)
\end{equation*} \newline
Where matrix $A$ has entries
\begin{align*}
	&d_{1} = 1, d_{2} = \frac{1}{x_{12}}, d_{3} = 1, d_{4} = 1, d_{5} = \frac{1}{x_{45}},\\ 
	&a_{12} = 1, a_{13} = 1, a_{14} = 1, a_{15} = 1,\\ 
	&a_{23} = -\frac{x_{13}}{x_{12}}, a_{24} = \frac{x_{25}}{x_{45}}, a_{25} = 1,\\ 
	&a_{34} = \frac{x_{35}}{x_{45}}, a_{35} = 1,\\ 
	&a_{45} = \frac{x_{15} + {\left(x_{12} + x_{13} - 1\right)} x_{45}}{x_{12} x_{25} + x_{13} x_{35}}
\end{align*}

Now assume $x_{14} = \frac{-x_{12}x_{25}-x_{13}x_{35}}{x_{45}}$ and $x_{25} = \frac{-x_{13}x_{35}}{x_{12}}$
\begin{equation*}x=\left(% [inline block 42: 8 envs, 2796 chars -> data_tex | \begin{matrix} 		1 & x_{12} & x_{13} & x_{14} & x_{15} \\...]
\right)
\end{equation*} \newline
Where matrix $A$ has entries
\begin{align*}
	&d_{1} = 1, d_{2} = \frac{1}{x_{12}}, d_{3} = 1, d_{4} = \frac{1}{x_{12} x_{24}}, d_{5} = \frac{1}{x_{12} x_{24} x_{45}},\\ 
	&a_{12} = 1, a_{13} = 1, a_{14} = 1, a_{15} = 1,\\ 
	&a_{23} = 0, a_{24} = \frac{x_{12} x_{24} - x_{14}}{x_{12}^{2} x_{24}}, a_{25} = \frac{x_{12}^{2} x_{24}^{2} - x_{12} x_{14} x_{24} + x_{14}^{2}}{x_{12}^{3} x_{24}^{2}},\\ 
	&a_{34} = \frac{x_{35}}{x_{12} x_{24} x_{45}}, a_{35} = 1,\\ 
	&a_{45} = \frac{x_{12} x_{24} - x_{14}}{x_{12}^{2} x_{24}^{2}}
\end{align*}

%110011
\begin{equation*}x=\left(\begin{matrix}
		1 & x_{12} & 0 & x_{14} & x_{15} \\
		0 & 1 & 0 & x_{24} & 0 \\
		0 & 0 & 1 & 0 & x_{35}  \\
		0 & 0 & 0 & 1 & x_{45}  \\
		0 & 0 & 0 & 0 & 1 \\
	\end{matrix}\right),x^A=\left(\begin{matrix}
		1 & 1 & 0 & 0 & 0 \\
		0 & 1 & 0 & 1 & 0 \\
		0 & 0 & 1 & 0 & 0 \\
		0 & 0 & 0 & 1 & 1 \\
		0 & 0 & 0 & 0 & 1
	\end{matrix}\right)
\end{equation*} \newline
Where matrix $A$ has entries
\begin{align*}
	&d_{1} = 1, d_{2} = \frac{1}{x_{12}}, d_{3} = 1, d_{4} = \frac{1}{x_{12} x_{24}}, d_{5} = \frac{1}{x_{12} x_{24} x_{45}},\\ 
	&a_{12} = 1, a_{13} = 1, a_{14} = 1, a_{15} = 1,\\ 
	&a_{23} = 0, a_{24} = \frac{x_{12} x_{24} - x_{14}}{x_{12}^{2} x_{24}}, a_{25} = -\frac{x_{12} x_{15} x_{24} - {\left(x_{12}^{2} x_{24}^{2} - x_{12} x_{14} x_{24} + x_{14}^{2}\right)} x_{45}}{x_{12}^{3} x_{24}^{2} x_{45}},\\ 
	&a_{34} = \frac{x_{35}}{x_{12} x_{24} x_{45}}, a_{35} = 1,\\ 
	&a_{45} = \frac{x_{12} x_{24} - x_{14}}{x_{12}^{2} x_{24}^{2}}
\end{align*}

%110100
\begin{equation*}x=\left(\begin{matrix}
		1 & x_{12} & x_{13} & 0 & 0 \\
		0 & 1 & 0 & x_{24} & 0 \\
		0 & 0 & 1 & 0 & x_{35}  \\
		0 & 0 & 0 & 1 & x_{45}  \\
		0 & 0 & 0 & 0 & 1 \\
	\end{matrix}\right),x^A=\left(\begin{matrix}
		1 & 1 & 0 & 0 & 0 \\
		0 & 1 & 0 & 1 & 0 \\
		0 & 0 & 1 & 0 & 0 \\
		0 & 0 & 0 & 1 & 1 \\
		0 & 0 & 0 & 0 & 1
	\end{matrix}\right)
\end{equation*} \newline
Where matrix $A$ has entries
\begin{align*}
	&d_{1} = 1, d_{2} = \frac{1}{x_{12}}, d_{3} = 1, d_{4} = \frac{1}{x_{12} x_{24}}, d_{5} = \frac{1}{x_{12} x_{24} x_{45}},\\ 
	&a_{12} = 1, a_{13} = 1, a_{14} = 1, a_{15} = 1,\\ 
	&a_{23} = -\frac{x_{13}}{x_{12}}, a_{24} = \frac{x_{12} x_{24} x_{45} - x_{13} x_{35}}{x_{12}^{2} x_{24} x_{45}}, a_{25} = 1,\\ 
	&a_{34} = \frac{x_{35}}{x_{12} x_{24} x_{45}}, a_{35} = -\frac{x_{12} - 1}{x_{13}},\\ 
	&a_{45} = \frac{x_{12} x_{24} x_{45} - x_{13} x_{35}}{x_{12}^{2} x_{24}^{2} x_{45}}
\end{align*}

%110101
\begin{equation*}x=\left(\begin{matrix}
		1 & x_{12} & x_{13} & 0 & x_{15} \\
		0 & 1 & 0 & x_{24} & 0 \\
		0 & 0 & 1 & 0 & x_{35}  \\
		0 & 0 & 0 & 1 & x_{45}  \\
		0 & 0 & 0 & 0 & 1 \\
	\end{matrix}\right),x^A=\left(\begin{matrix}
		1 & 1 & 0 & 0 & 0 \\
		0 & 1 & 0 & 1 & 0 \\
		0 & 0 & 1 & 0 & 0 \\
		0 & 0 & 0 & 1 & 1 \\
		0 & 0 & 0 & 0 & 1
	\end{matrix}\right)
\end{equation*} \newline
Where matrix $A$ has entries
\begin{align*}
	&d_{1} = 1, d_{2} = \frac{1}{x_{12}}, d_{3} = 1, d_{4} = \frac{1}{x_{12} x_{24}}, d_{5} = \frac{1}{x_{12} x_{24} x_{45}},\\ 
	&a_{12} = 1, a_{13} = 1, a_{14} = 1, a_{15} = 1,\\ 
	&a_{23} = -\frac{x_{13}}{x_{12}}, a_{24} = \frac{x_{12} x_{24} x_{45} - x_{13} x_{35}}{x_{12}^{2} x_{24} x_{45}}, a_{25} = 1,\\ 
	&a_{34} = \frac{x_{35}}{x_{12} x_{24} x_{45}}, a_{35} = -\frac{{\left(x_{12}^{2} - x_{12}\right)} x_{24} x_{45} + x_{15}}{x_{12} x_{13} x_{24} x_{45}},\\ 
	&a_{45} = \frac{x_{12} x_{24} x_{45} - x_{13} x_{35}}{x_{12}^{2} x_{24}^{2} x_{45}}
\end{align*}

%110110
\begin{equation*}x=\left(\begin{matrix}
		1 & x_{12} & x_{13} & x_{14} & 0 \\
		0 & 1 & 0 & x_{24} & 0 \\
		0 & 0 & 1 & 0 & x_{35}  \\
		0 & 0 & 0 & 1 & x_{45}  \\
		0 & 0 & 0 & 0 & 1 \\
	\end{matrix}\right),x^A=\left(\begin{matrix}
		1 & 1 & 0 & 0 & 0 \\
		0 & 1 & 0 & 1 & 0 \\
		0 & 0 & 1 & 0 & 0 \\
		0 & 0 & 0 & 1 & 1 \\
		0 & 0 & 0 & 0 & 1
	\end{matrix}\right)
\end{equation*} \newline
Where matrix $A$ has entries
\begin{align*}
	&d_{1} = 1, d_{2} = \frac{1}{x_{12}}, d_{3} = 1, d_{4} = \frac{1}{x_{12} x_{24}}, d_{5} = \frac{1}{x_{12} x_{24} x_{45}},\\ 
	&a_{12} = 1, a_{13} = 1, a_{14} = 1, a_{15} = 1,\\ 
	&a_{23} = -\frac{x_{13}}{x_{12}}, a_{24} = -\frac{x_{13} x_{35} - {\left(x_{12} x_{24} - x_{14}\right)} x_{45}}{x_{12}^{2} x_{24} x_{45}}, a_{25} = 1,\\ 
	&a_{34} = \frac{x_{35}}{x_{12} x_{24} x_{45}}, a_{35} = \frac{x_{13} x_{14} x_{35} - {\left(x_{12} x_{14} x_{24} - x_{14}^{2} + {\left(x_{12}^{3} - x_{12}^{2}\right)} x_{24}^{2}\right)} x_{45}}{x_{12}^{2} x_{13} x_{24}^{2} x_{45}},\\ 
	&a_{45} = -\frac{x_{13} x_{35} - {\left(x_{12} x_{24} - x_{14}\right)} x_{45}}{x_{12}^{2} x_{24}^{2} x_{45}}
\end{align*}

%110111
\begin{equation*}x=\left(\begin{matrix}
		1 & x_{12} & x_{13} & x_{14} & x_{15} \\
		0 & 1 & 0 & x_{24} & 0 \\
		0 & 0 & 1 & 0 & x_{35}  \\
		0 & 0 & 0 & 1 & x_{45}  \\
		0 & 0 & 0 & 0 & 1 \\
	\end{matrix}\right),x^A=\left(\begin{matrix}
		1 & 1 & 0 & 0 & 0 \\
		0 & 1 & 0 & 1 & 0 \\
		0 & 0 & 1 & 0 & 0 \\
		0 & 0 & 0 & 1 & 1 \\
		0 & 0 & 0 & 0 & 1
	\end{matrix}\right)
\end{equation*} \newline
Where matrix $A$ has entries
\begin{align*}
	&d_{1} = 1, d_{2} = \frac{1}{x_{12}}, d_{3} = 1, d_{4} = \frac{1}{x_{12} x_{24}}, d_{5} = \frac{1}{x_{12} x_{24} x_{45}},\\ 
	&a_{12} = 1, a_{13} = 1, a_{14} = 1, a_{15} = 1,\\ 
	&a_{23} = -\frac{x_{13}}{x_{12}}, a_{24} = -\frac{x_{13} x_{35} - {\left(x_{12} x_{24} - x_{14}\right)} x_{45}}{x_{12}^{2} x_{24} x_{45}}, a_{25} = 1,\\ 
	&a_{34} = \frac{x_{35}}{x_{12} x_{24} x_{45}}, a_{35} = -\frac{x_{12} x_{15} x_{24} - x_{13} x_{14} x_{35} + {\left(x_{12} x_{14} x_{24} - x_{14}^{2} + {\left(x_{12}^{3} - x_{12}^{2}\right)} x_{24}^{2}\right)} x_{45}}{x_{12}^{2} x_{13} x_{24}^{2} x_{45}},\\ 
	&a_{45} = -\frac{x_{13} x_{35} - {\left(x_{12} x_{24} - x_{14}\right)} x_{45}}{x_{12}^{2} x_{24}^{2} x_{45}}
\end{align*}

%111000
\begin{equation*}x=\left(% [inline block 43: 6 envs, 2048 chars -> data_tex | \begin{matrix} 		1 & x_{12} & 0 & 0 & 0 \\...]
\right)
\end{equation*} \newline
Where matrix $A$ has entries
\begin{align*}
	&d_{1} = 1, d_{2} = \frac{1}{x_{12}}, d_{3} = 1, d_{4} = \frac{1}{x_{12} x_{24}}, d_{5} = \frac{1}{x_{12} x_{24} x_{45}},\\ 
	&a_{12} = 1, a_{13} = 1, a_{14} = 1, a_{15} = 1,\\ 
	&a_{23} = 0, a_{24} = \frac{x_{12} x_{24} - x_{14}}{x_{12}^{2} x_{24}}, a_{25} = \frac{x_{12} x_{14} x_{25} + {\left(x_{12}^{2} x_{24}^{2} - x_{12} x_{14} x_{24} + x_{14}^{2}\right)} x_{45}}{x_{12}^{3} x_{24}^{2} x_{45}},\\ 
	&a_{34} = \frac{x_{35}}{x_{12} x_{24} x_{45}}, a_{35} = 1,\\ 
	&a_{45} = -\frac{x_{12} x_{25} - {\left(x_{12} x_{24} - x_{14}\right)} x_{45}}{x_{12}^{2} x_{24}^{2} x_{45}}
\end{align*}

%111011
\begin{equation*}x=\left(\begin{matrix}
		1 & x_{12} & 0 & x_{14} & x_{15} \\
		0 & 1 & 0 & x_{24} & x_{25} \\
		0 & 0 & 1 & 0 & x_{35}  \\
		0 & 0 & 0 & 1 & x_{45}  \\
		0 & 0 & 0 & 0 & 1 \\
	\end{matrix}\right),x^A=\left(\begin{matrix}
		1 & 1 & 0 & 0 & 0 \\
		0 & 1 & 0 & 1 & 0 \\
		0 & 0 & 1 & 0 & 0 \\
		0 & 0 & 0 & 1 & 1 \\
		0 & 0 & 0 & 0 & 1
	\end{matrix}\right)
\end{equation*} \newline
Where matrix $A$ has entries
\begin{align*}
	&d_{1} = 1, d_{2} = \frac{1}{x_{12}}, d_{3} = 1, d_{4} = \frac{1}{x_{12} x_{24}}, d_{5} = \frac{1}{x_{12} x_{24} x_{45}},\\ 
	&a_{12} = 1, a_{13} = 1, a_{14} = 1, a_{15} = 1,\\ 
	&a_{23} = 0, a_{24} = \frac{x_{12} x_{24} - x_{14}}{x_{12}^{2} x_{24}}, a_{25} = -\frac{x_{12} x_{15} x_{24} - x_{12} x_{14} x_{25} - {\left(x_{12}^{2} x_{24}^{2} - x_{12} x_{14} x_{24} + x_{14}^{2}\right)} x_{45}}{x_{12}^{3} x_{24}^{2} x_{45}},\\ 
	&a_{34} = \frac{x_{35}}{x_{12} x_{24} x_{45}}, a_{35} = 1,\\ 
	&a_{45} = -\frac{x_{12} x_{25} - {\left(x_{12} x_{24} - x_{14}\right)} x_{45}}{x_{12}^{2} x_{24}^{2} x_{45}}
\end{align*}

%111100
\begin{equation*}x=\left(\begin{matrix}
		1 & x_{12} & x_{13} & 0 & 0 \\
		0 & 1 & 0 & x_{24} & x_{25} \\
		0 & 0 & 1 & 0 & x_{35}  \\
		0 & 0 & 0 & 1 & x_{45}  \\
		0 & 0 & 0 & 0 & 1 \\
	\end{matrix}\right),x^A=\left(\begin{matrix}
		1 & 1 & 0 & 0 & 0 \\
		0 & 1 & 0 & 1 & 0 \\
		0 & 0 & 1 & 0 & 0 \\
		0 & 0 & 0 & 1 & 1 \\
		0 & 0 & 0 & 0 & 1
	\end{matrix}\right)
\end{equation*} \newline
Where matrix $A$ has entries
\begin{align*}
	&d_{1} = 1, d_{2} = \frac{1}{x_{12}}, d_{3} = 1, d_{4} = \frac{1}{x_{12} x_{24}}, d_{5} = \frac{1}{x_{12} x_{24} x_{45}},\\ 
	&a_{12} = 1, a_{13} = 1, a_{14} = 1, a_{15} = 1,\\ 
	&a_{23} = -\frac{x_{13}}{x_{12}}, a_{24} = \frac{x_{12} x_{24} x_{45} - x_{13} x_{35}}{x_{12}^{2} x_{24} x_{45}}, a_{25} = 1,\\ 
	&a_{34} = \frac{x_{35}}{x_{12} x_{24} x_{45}}, a_{35} = -\frac{x_{12} - 1}{x_{13}},\\ 
	&a_{45} = \frac{x_{12} x_{24} x_{45} - x_{12} x_{25} - x_{13} x_{35}}{x_{12}^{2} x_{24}^{2} x_{45}}
\end{align*}

%111101
\begin{equation*}x=\left(\begin{matrix}
		1 & x_{12} & x_{13} & 0 & x_{15} \\
		0 & 1 & 0 & x_{24} & x_{25} \\
		0 & 0 & 1 & 0 & x_{35}  \\
		0 & 0 & 0 & 1 & x_{45}  \\
		0 & 0 & 0 & 0 & 1 \\
	\end{matrix}\right),x^A=\left(\begin{matrix}
		1 & 1 & 0 & 0 & 0 \\
		0 & 1 & 0 & 1 & 0 \\
		0 & 0 & 1 & 0 & 0 \\
		0 & 0 & 0 & 1 & 1 \\
		0 & 0 & 0 & 0 & 1
	\end{matrix}\right)
\end{equation*} \newline
Where matrix $A$ has entries
\begin{align*}
	&d_{1} = 1, d_{2} = \frac{1}{x_{12}}, d_{3} = 1, d_{4} = \frac{1}{x_{12} x_{24}}, d_{5} = \frac{1}{x_{12} x_{24} x_{45}},\\ 
	&a_{12} = 1, a_{13} = 1, a_{14} = 1, a_{15} = 1,\\ 
	&a_{23} = -\frac{x_{13}}{x_{12}}, a_{24} = \frac{x_{12} x_{24} x_{45} - x_{13} x_{35}}{x_{12}^{2} x_{24} x_{45}}, a_{25} = 1,\\ 
	&a_{34} = \frac{x_{35}}{x_{12} x_{24} x_{45}}, a_{35} = -\frac{{\left(x_{12}^{2} - x_{12}\right)} x_{24} x_{45} + x_{15}}{x_{12} x_{13} x_{24} x_{45}},\\ 
	&a_{45} = \frac{x_{12} x_{24} x_{45} - x_{12} x_{25} - x_{13} x_{35}}{x_{12}^{2} x_{24}^{2} x_{45}}
\end{align*}

%111110
\begin{equation*}x=\left(\begin{matrix}
		1 & x_{12} & x_{13} & x_{14} & 0 \\
		0 & 1 & 0 & x_{24} & x_{25} \\
		0 & 0 & 1 & 0 & x_{35}  \\
		0 & 0 & 0 & 1 & x_{45}  \\
		0 & 0 & 0 & 0 & 1 \\
	\end{matrix}\right),x^A=\left(\begin{matrix}
		1 & 1 & 0 & 0 & 0 \\
		0 & 1 & 0 & 1 & 0 \\
		0 & 0 & 1 & 0 & 0 \\
		0 & 0 & 0 & 1 & 1 \\
		0 & 0 & 0 & 0 & 1
	\end{matrix}\right)
\end{equation*} \newline
Where matrix $A$ has entries
\begin{align*}
	&d_{1} = 1, d_{2} = \frac{1}{x_{12}}, d_{3} = 1, d_{4} = \frac{1}{x_{12} x_{24}}, d_{5} = \frac{1}{x_{12} x_{24} x_{45}},\\ 
	&a_{12} = 1, a_{13} = 1, a_{14} = 1, a_{15} = 1,\\ 
	&a_{23} = -\frac{x_{13}}{x_{12}}, a_{24} = -\frac{x_{13} x_{35} - {\left(x_{12} x_{24} - x_{14}\right)} x_{45}}{x_{12}^{2} x_{24} x_{45}}, a_{25} = 1,\\ 
	&a_{34} = \frac{x_{35}}{x_{12} x_{24} x_{45}}, a_{35} = \frac{x_{12} x_{14} x_{25} + x_{13} x_{14} x_{35} - {\left(x_{12} x_{14} x_{24} - x_{14}^{2} + {\left(x_{12}^{3} - x_{12}^{2}\right)} x_{24}^{2}\right)} x_{45}}{x_{12}^{2} x_{13} x_{24}^{2} x_{45}},\\ 
	&a_{45} = -\frac{x_{12} x_{25} + x_{13} x_{35} - {\left(x_{12} x_{24} - x_{14}\right)} x_{45}}{x_{12}^{2} x_{24}^{2} x_{45}}
\end{align*}

%111111
\begin{equation*}x=\left(\begin{matrix}
		1 & x_{12} & x_{13} & x_{14} & x_{15} \\
		0 & 1 & 0 & x_{24} & x_{25} \\
		0 & 0 & 1 & 0 & x_{35}  \\
		0 & 0 & 0 & 1 & x_{45}  \\
		0 & 0 & 0 & 0 & 1 \\
	\end{matrix}\right),x^A=\left(\begin{matrix}
		1 & 1 & 0 & 0 & 0 \\
		0 & 1 & 0 & 1 & 0 \\
		0 & 0 & 1 & 0 & 0 \\
		0 & 0 & 0 & 1 & 1 \\
		0 & 0 & 0 & 0 & 1
	\end{matrix}\right)
\end{equation*} \newline
Where matrix $A$ has entries
\begin{align*}
	&d_{1} = 1, d_{2} = \frac{1}{x_{12}}, d_{3} = 1, d_{4} = \frac{1}{x_{12} x_{24}}, d_{5} = \frac{1}{x_{12} x_{24} x_{45}},\\ 
	&a_{12} = 1, a_{13} = 1, a_{14} = 1, a_{15} = 1,\\ 
	&a_{23} = -\frac{x_{13}}{x_{12}}, a_{24} = -\frac{x_{13} x_{35} - {\left(x_{12} x_{24} - x_{14}\right)} x_{45}}{x_{12}^{2} x_{24} x_{45}}, a_{25} = 1,\\ 
	&a_{34} = \frac{x_{35}}{x_{12} x_{24} x_{45}}, a_{35} = -\frac{x_{12} x_{15} x_{24} - x_{12} x_{14} x_{25} - x_{13} x_{14} x_{35} + {\left(x_{12} x_{14} x_{24} - x_{14}^{2} + {\left(x_{12}^{3} - x_{12}^{2}\right)} x_{24}^{2}\right)} x_{45}}{x_{12}^{2} x_{13} x_{24}^{2} x_{45}},\\ 
	&a_{45} = -\frac{x_{12} x_{25} + x_{13} x_{35} - {\left(x_{12} x_{24} - x_{14}\right)} x_{45}}{x_{12}^{2} x_{24}^{2} x_{45}}
\end{align*}

		\section{Subcases of $Y_9$}

%000001
\begin{equation*}x=\left(% [inline block 44: 14 envs, 5113 chars -> data_tex | \begin{matrix} 		1 & x_{12} & 0 & 0 & x_{15} \\...]
\right)
\end{equation*} \newline
Where matrix $A$ has entries
\begin{align*}
	&d_{1} = 1, d_{2} = \frac{1}{x_{12}}, d_{3} = \frac{1}{x_{12} x_{23}}, d_{4} = 1, d_{5} = \frac{1}{x_{45}},\\ 
	&a_{12} = 1, a_{13} = 1, a_{14} = 1, a_{15} = 1,\\ 
	&a_{23} = \frac{x_{12} x_{23} - x_{13}}{x_{12}^{2} x_{23}}, a_{24} = -\frac{x_{14}}{x_{12}}, a_{25} = 1,\\ 
	&a_{34} = 0, a_{35} = -\frac{x_{14}}{x_{12} x_{23}},\\ 
	&a_{45} = -\frac{x_{12} x_{15} x_{23} - {\left(x_{13} x_{14} - {\left(x_{12}^{2} - x_{12}\right)} x_{23}\right)} x_{45}}{x_{12} x_{14} x_{23} x_{45}}
\end{align*}

%001000
\begin{equation*}x=\left(% [inline block 45: 14 envs, 5233 chars -> data_tex | \begin{matrix} 		1 & x_{12} & 0 & 0 & 0 \\...]
\right)
\end{equation*} \newline
Where matrix $A$ has entries
\begin{align*}
	&d_{1} = 1, d_{2} = \frac{1}{x_{12}}, d_{3} = \frac{1}{x_{12} x_{23}}, d_{4} = 1, d_{5} = \frac{1}{x_{45}},\\ 
	&a_{12} = 1, a_{13} = 1, a_{14} = 1, a_{15} = 1,\\ 
	&a_{23} = \frac{x_{12} x_{23} - x_{13}}{x_{12}^{2} x_{23}}, a_{24} = -\frac{x_{14}}{x_{12}}, a_{25} = 1,\\ 
	&a_{34} = 0, a_{35} = -\frac{x_{12} x_{25} + x_{14} x_{45}}{x_{12} x_{23} x_{45}},\\ 
	&a_{45} = \frac{x_{12} x_{13} x_{25} + {\left(x_{13} x_{14} - {\left(x_{12}^{2} - x_{12}\right)} x_{23}\right)} x_{45}}{x_{12} x_{14} x_{23} x_{45}}
\end{align*}

%001111
\begin{equation*}x=\left(\begin{matrix}
		1 & x_{12} & x_{13} & x_{14} & x_{15} \\
		0 & 1 & x_{23} & 0 & x_{25} \\
		0 & 0 & 1 & 0 & 0  \\
		0 & 0 & 0 & 1 & x_{45}  \\
		0 & 0 & 0 & 0 & 1 \\
	\end{matrix}\right),x^A=\left(\begin{matrix}
		1 & 1 & 0 & 0 & 0 \\
		0 & 1 & 1 & 0 & 0 \\
		0 & 0 & 1 & 0 & 0 \\
		0 & 0 & 0 & 1 & 1 \\
		0 & 0 & 0 & 0 & 1
	\end{matrix}\right)
\end{equation*} \newline
Where matrix $A$ has entries
\begin{align*}
	&d_{1} = 1, d_{2} = \frac{1}{x_{12}}, d_{3} = \frac{1}{x_{12} x_{23}}, d_{4} = 1, d_{5} = \frac{1}{x_{45}},\\ 
	&a_{12} = 1, a_{13} = 1, a_{14} = 1, a_{15} = 1,\\ 
	&a_{23} = \frac{x_{12} x_{23} - x_{13}}{x_{12}^{2} x_{23}}, a_{24} = -\frac{x_{14}}{x_{12}}, a_{25} = 1,\\ 
	&a_{34} = 0, a_{35} = -\frac{x_{12} x_{25} + x_{14} x_{45}}{x_{12} x_{23} x_{45}},\\ 
	&a_{45} = -\frac{x_{12} x_{15} x_{23} - x_{12} x_{13} x_{25} - {\left(x_{13} x_{14} - {\left(x_{12}^{2} - x_{12}\right)} x_{23}\right)} x_{45}}{x_{12} x_{14} x_{23} x_{45}}
\end{align*}

%010000
\begin{equation*}x=\left(% [inline block 46: 6 envs, 2032 chars -> data_tex | \begin{matrix} 		1 & x_{12} & 0 & 0 & 0 \\...]
\right)
\end{equation*} \newline
Where matrix $A$ has entries
\begin{align*}
	&d_{1} = 1, d_{2} = \frac{1}{x_{12}}, d_{3} = \frac{1}{x_{12} x_{23}}, d_{4} = \frac{1}{x_{12} x_{24}}, d_{5} = \frac{1}{x_{12} x_{24} x_{45}},\\ 
	&a_{12} = 1, a_{13} = 1, a_{14} = 1, a_{15} = 1,\\ 
	&a_{23} = \frac{1}{x_{12}}, a_{24} = -\frac{x_{14}}{x_{12}^{2} x_{24}}, a_{25} = 1,\\ 
	&a_{34} = -\frac{1}{x_{12} x_{23}}, a_{35} = \frac{x_{12} x_{14} x_{24} - x_{14}^{2} + {\left(x_{12}^{3} - {2} \, x_{12}^{2}\right)} x_{24}^{2}}{x_{12}^{2} x_{14} x_{23} x_{24}},\\ 
	&a_{45} = -\frac{x_{12} - 2}{x_{14}}
\end{align*}

%010011
\begin{equation*}x=\left(\begin{matrix}
		1 & x_{12} & 0 & x_{14} & x_{15} \\
		0 & 1 & x_{23} & x_{24} & 0 \\
		0 & 0 & 1 & 0 & 0  \\
		0 & 0 & 0 & 1 & x_{45}  \\
		0 & 0 & 0 & 0 & 1 \\
	\end{matrix}\right),x^A=\left(\begin{matrix}
		1 & 1 & 0 & 0 & 0 \\
		0 & 1 & 1 & 0 & 0 \\
		0 & 0 & 1 & 0 & 1 \\
		0 & 0 & 0 & 1 & 1 \\
		0 & 0 & 0 & 0 & 1
	\end{matrix}\right)
\end{equation*} \newline
Where matrix $A$ has entries
\begin{align*}
	&d_{1} = 1, d_{2} = \frac{1}{x_{12}}, d_{3} = \frac{1}{x_{12} x_{23}}, d_{4} = \frac{1}{x_{12} x_{24}}, d_{5} = \frac{1}{x_{12} x_{24} x_{45}},\\ 
	&a_{12} = 1, a_{13} = 1, a_{14} = 1, a_{15} = 1,\\ 
	&a_{23} = \frac{1}{x_{12}}, a_{24} = -\frac{x_{14}}{x_{12}^{2} x_{24}}, a_{25} = 1,\\ 
	&a_{34} = -\frac{1}{x_{12} x_{23}}, a_{35} = \frac{x_{12} x_{15} x_{24} + {\left(x_{12} x_{14} x_{24} - x_{14}^{2} + {\left(x_{12}^{3} - {2} \, x_{12}^{2}\right)} x_{24}^{2}\right)} x_{45}}{x_{12}^{2} x_{14} x_{23} x_{24} x_{45}},\\ 
	&a_{45} = -\frac{{\left(x_{12}^{2} - {2} \, x_{12}\right)} x_{24} x_{45} + x_{15}}{x_{12} x_{14} x_{24} x_{45}}
\end{align*}

%010100
\begin{equation*}x=\left(\begin{matrix}
		1 & x_{12} & x_{13} & 0 & 0 \\
		0 & 1 & x_{23} & x_{24} & 0 \\
		0 & 0 & 1 & 0 & 0  \\
		0 & 0 & 0 & 1 & x_{45}  \\
		0 & 0 & 0 & 0 & 1 \\
	\end{matrix}\right),x^A=\left(\begin{matrix}
		1 & 1 & 0 & 0 & 0 \\
		0 & 1 & 1 & 0 & 0 \\
		0 & 0 & 1 & 0 & 1 \\
		0 & 0 & 0 & 1 & 1 \\
		0 & 0 & 0 & 0 & 1
	\end{matrix}\right)
\end{equation*} \newline
Where matrix $A$ has entries
\begin{align*}
	&d_{1} = 1, d_{2} = \frac{1}{x_{12}}, d_{3} = \frac{1}{x_{12} x_{23}}, d_{4} = \frac{1}{x_{12} x_{24}}, d_{5} = \frac{1}{x_{12} x_{24} x_{45}},\\ 
	&a_{12} = 1, a_{13} = 1, a_{14} = 1, a_{15} = 1,\\ 
	&a_{23} = \frac{x_{12} x_{23} - x_{13}}{x_{12}^{2} x_{23}}, a_{24} = \frac{x_{13}}{x_{12}^{2} x_{23}}, a_{25} = 1,\\ 
	&a_{34} = -\frac{1}{x_{12} x_{23}}, a_{35} = -\frac{x_{12} - 2}{x_{13}},\\ 
	&a_{45} = \frac{x_{13} + {\left(x_{12}^{2} - {2} \, x_{12}\right)} x_{23}}{x_{12} x_{13} x_{24}}
\end{align*}

%010101
\begin{equation*}x=\left(\begin{matrix}
		1 & x_{12} & x_{13} & 0 & x_{15} \\
		0 & 1 & x_{23} & x_{24} & 0 \\
		0 & 0 & 1 & 0 & 0  \\
		0 & 0 & 0 & 1 & x_{45}  \\
		0 & 0 & 0 & 0 & 1 \\
	\end{matrix}\right),x^A=\left(\begin{matrix}
		1 & 1 & 0 & 0 & 0 \\
		0 & 1 & 1 & 0 & 0 \\
		0 & 0 & 1 & 0 & 1 \\
		0 & 0 & 0 & 1 & 1 \\
		0 & 0 & 0 & 0 & 1
	\end{matrix}\right)
\end{equation*} \newline
Where matrix $A$ has entries
\begin{align*}
	&d_{1} = 1, d_{2} = \frac{1}{x_{12}}, d_{3} = \frac{1}{x_{12} x_{23}}, d_{4} = \frac{1}{x_{12} x_{24}}, d_{5} = \frac{1}{x_{12} x_{24} x_{45}},\\ 
	&a_{12} = 1, a_{13} = 1, a_{14} = 1, a_{15} = 1,\\ 
	&a_{23} = \frac{x_{12} x_{23} - x_{13}}{x_{12}^{2} x_{23}}, a_{24} = \frac{x_{13}}{x_{12}^{2} x_{23}}, a_{25} = 1,\\ 
	&a_{34} = -\frac{1}{x_{12} x_{23}}, a_{35} = -\frac{{\left(x_{12}^{2} - {2} \, x_{12}\right)} x_{24} x_{45} + x_{15}}{x_{12} x_{13} x_{24} x_{45}},\\ 
	&a_{45} = \frac{x_{15} x_{23} + {\left(x_{13} + {\left(x_{12}^{2} - {2} \, x_{12}\right)} x_{23}\right)} x_{24} x_{45}}{x_{12} x_{13} x_{24}^{2} x_{45}}
\end{align*}

%010110

First assume $x_{14}\neq \frac{x_{13}x_{24}}{x_{23}}$
\begin{equation*}x=\left(\begin{matrix}
		1 & x_{12} & x_{13} & x_{14} & 0 \\
		0 & 1 & x_{23} & x_{24} & 0 \\
		0 & 0 & 1 & 0 & 0  \\
		0 & 0 & 0 & 1 & x_{45}  \\
		0 & 0 & 0 & 0 & 1 \\
	\end{matrix}\right),x^A=\left(\begin{matrix}
		1 & 1 & 0 & 0 & 0 \\
		0 & 1 & 1 & 0 & 0 \\
		0 & 0 & 1 & 0 & 1 \\
		0 & 0 & 0 & 1 & 1 \\
		0 & 0 & 0 & 0 & 1
	\end{matrix}\right)
\end{equation*} \newline
Where matrix $A$ has entries
\begin{align*}
	&d_{1} = 1, d_{2} = \frac{1}{x_{12}}, d_{3} = \frac{1}{x_{12} x_{23}}, d_{4} = \frac{1}{x_{12} x_{24}}, d_{5} = \frac{1}{x_{12} x_{24} x_{45}},\\ 
	&a_{12} = 1, a_{13} = 1, a_{14} = 1, a_{15} = 1,\\ 
	&a_{23} = \frac{x_{12} x_{23} - x_{13}}{x_{12}^{2} x_{23}}, a_{24} = -\frac{x_{14} x_{23} - x_{13} x_{24}}{x_{12}^{2} x_{23} x_{24}}, a_{25} = 1,\\ 
	&a_{34} = -\frac{1}{x_{12} x_{23}}, a_{35} = \frac{x_{12} x_{14} x_{24} - x_{14}^{2} + {\left(x_{12}^{3} - {2} \, x_{12}^{2}\right)} x_{24}^{2}}{x_{12}^{2} x_{14} x_{23} x_{24} - x_{12}^{2} x_{13} x_{24}^{2}},\\ 
	&a_{45} = \frac{x_{13} x_{14} - {\left(x_{12} x_{13} + {\left(x_{12}^{3} - {2} \, x_{12}^{2}\right)} x_{23}\right)} x_{24}}{x_{12}^{2} x_{14} x_{23} x_{24} - x_{12}^{2} x_{13} x_{24}^{2}}
\end{align*}

Now assume $x_{14}= \frac{x_{13}x_{24}}{x_{23}}$ 

\begin{equation*}x=\left(\begin{matrix}
		1 & x_{12} & x_{13} & x_{14} & 0 \\
		0 & 1 & x_{23} & x_{24} & 0 \\
		0 & 0 & 1 & 0 & 0  \\
		0 & 0 & 0 & 1 & x_{45}  \\
		0 & 0 & 0 & 0 & 1 \\
	\end{matrix}\right),x^A=\left(\begin{matrix}
		1 & 1 & 0 & 0 & 0 \\
		0 & 1 & 1 & 0 & 0 \\
		0 & 0 & 1 & 0 & 1 \\
		0 & 0 & 0 & 1 & 1 \\
		0 & 0 & 0 & 0 & 1
	\end{matrix}\right)
\end{equation*} \newline
Where matrix $A$ has entries
\begin{align*}
	&d_{1} = 1, d_{2} = \frac{1}{x_{12}}, d_{3} = \frac{1}{x_{12} x_{23}}, d_{4} = \frac{1}{x_{12} x_{24}}, d_{5} = \frac{1}{x_{12} x_{24} x_{45}},\\ 
	&a_{12} = 1, a_{13} = 1, a_{14} = 1, a_{15} = 1,\\ 
	&a_{23} = \frac{x_{12} x_{23} - x_{13}}{x_{12}^{2} x_{23}}, a_{24} = 0, a_{25} = \frac{{2} \, x_{12}^{2} x_{23}^{2} - x_{12} x_{13} x_{23} + x_{13}^{2}}{x_{12}^{3} x_{23}^{2}},\\ 
	&a_{34} = -\frac{1}{x_{12} x_{23}}, a_{35} = 1,\\ 
	&a_{45} = -\frac{x_{12}^{2} x_{23}^{2} - x_{12} x_{23} + x_{13}}{x_{12}^{2} x_{23} x_{24}}
\end{align*}

%010111

First assume $x_{14}\neq \frac{x_{13}x_{24}}{x_{23}}$
\begin{equation*}x=\left(\begin{matrix}
		1 & x_{12} & x_{13} & x_{14} & x_{15} \\
		0 & 1 & x_{23} & x_{24} & 0 \\
		0 & 0 & 1 & 0 & 0  \\
		0 & 0 & 0 & 1 & x_{45}  \\
		0 & 0 & 0 & 0 & 1 \\
	\end{matrix}\right),x^A=\left(\begin{matrix}
		1 & 1 & 0 & 0 & 0 \\
		0 & 1 & 1 & 0 & 0 \\
		0 & 0 & 1 & 0 & 1 \\
		0 & 0 & 0 & 1 & 1 \\
		0 & 0 & 0 & 0 & 1
	\end{matrix}\right)
\end{equation*} \newline
Where matrix $A$ has entries
\begin{align*}
	&d_{1} = 1, d_{2} = \frac{1}{x_{12}}, d_{3} = \frac{1}{x_{12} x_{23}}, d_{4} = \frac{1}{x_{12} x_{24}}, d_{5} = \frac{1}{x_{12} x_{24} x_{45}},\\ 
	&a_{12} = 1, a_{13} = 1, a_{14} = 1, a_{15} = 1,\\ 
	&a_{23} = \frac{x_{12} x_{23} - x_{13}}{x_{12}^{2} x_{23}}, a_{24} = -\frac{x_{14} x_{23} - x_{13} x_{24}}{x_{12}^{2} x_{23} x_{24}}, a_{25} = 1,\\ 
	&a_{34} = -\frac{1}{x_{12} x_{23}}, a_{35} = \frac{x_{12} x_{15} x_{24} + {\left(x_{12} x_{14} x_{24} - x_{14}^{2} + {\left(x_{12}^{3} - {2} \, x_{12}^{2}\right)} x_{24}^{2}\right)} x_{45}}{{\left(x_{12}^{2} x_{14} x_{23} x_{24} - x_{12}^{2} x_{13} x_{24}^{2}\right)} x_{45}},\\ 
	&a_{45} = -\frac{x_{12} x_{15} x_{23} - {\left(x_{13} x_{14} - {\left(x_{12} x_{13} + {\left(x_{12}^{3} - {2} \, x_{12}^{2}\right)} x_{23}\right)} x_{24}\right)} x_{45}}{{\left(x_{12}^{2} x_{14} x_{23} x_{24} - x_{12}^{2} x_{13} x_{24}^{2}\right)} x_{45}}
\end{align*}

Now assume $x_{14}= \frac{x_{13}x_{24}}{x_{23}}$
\begin{equation*}x=\left(\begin{matrix}
		1 & x_{12} & x_{13} & x_{14} & x_{15} \\
		0 & 1 & x_{23} & x_{24} & 0 \\
		0 & 0 & 1 & 0 & 0  \\
		0 & 0 & 0 & 1 & x_{45}  \\
		0 & 0 & 0 & 0 & 1 \\
	\end{matrix}\right),x^A=\left(\begin{matrix}
		1 & 1 & 0 & 0 & 0 \\
		0 & 1 & 1 & 0 & 0 \\
		0 & 0 & 1 & 0 & 1 \\
		0 & 0 & 0 & 1 & 1 \\
		0 & 0 & 0 & 0 & 1
	\end{matrix}\right)
\end{equation*} \newline
Where matrix $A$ has entries
\begin{align*}
	&d_{1} = 1, d_{2} = \frac{1}{x_{12}}, d_{3} = \frac{1}{x_{12} x_{23}}, d_{4} = \frac{1}{x_{12} x_{24}}, d_{5} = \frac{1}{x_{12} x_{24} x_{45}},\\ 
	&a_{12} = 1, a_{13} = 1, a_{14} = 1, a_{15} = 1,\\ 
	&a_{23} = \frac{x_{12} x_{23} - x_{13}}{x_{12}^{2} x_{23}}, a_{24} = 0, a_{25} = -\frac{x_{12} x_{15} x_{23}^{2} - {\left({2} \, x_{12}^{2} x_{23}^{2} - x_{12} x_{13} x_{23} + x_{13}^{2}\right)} x_{24} x_{45}}{x_{12}^{3} x_{23}^{2} x_{24} x_{45}},\\ 
	&a_{34} = -\frac{1}{x_{12} x_{23}}, a_{35} = 1,\\ 
	&a_{45} = -\frac{x_{12}^{2} x_{23}^{2} - x_{12} x_{23} + x_{13}}{x_{12}^{2} x_{23} x_{24}}
\end{align*}
%011000
\begin{equation*}x=\left(\begin{matrix}
		1 & x_{12} & 0 & 0 & 0 \\
		0 & 1 & x_{23} & x_{24} & x_{25} \\
		0 & 0 & 1 & 0 & 0  \\
		0 & 0 & 0 & 1 & x_{45}  \\
		0 & 0 & 0 & 0 & 1 \\
	\end{matrix}\right),x^A=\left(\begin{matrix}
		1 & 1 & 0 & 0 & 0 \\
		0 & 1 & 1 & 0 & 0 \\
		0 & 0 & 1 & 0 & 1 \\
		0 & 0 & 0 & 1 & 1 \\
		0 & 0 & 0 & 0 & 1
	\end{matrix}\right)
\end{equation*} \newline
Where matrix $A$ has entries
\begin{align*}
	&d_{1} = 1, d_{2} = \frac{1}{x_{12}}, d_{3} = \frac{1}{x_{12} x_{23}}, d_{4} = \frac{1}{x_{12} x_{24}}, d_{5} = \frac{1}{x_{12} x_{24} x_{45}},\\ 
	&a_{12} = 1, a_{13} = 1, a_{14} = 1, a_{15} = 1,\\ 
	&a_{23} = \frac{1}{x_{12}}, a_{24} = 0, a_{25} = \frac{2}{x_{12}},\\ 
	&a_{34} = -\frac{1}{x_{12} x_{23}}, a_{35} = 1,\\ 
	&a_{45} = -\frac{{\left(x_{12} x_{23} - 1\right)} x_{24} x_{45} + x_{25}}{x_{12} x_{24}^{2} x_{45}}
\end{align*}

%011001
\begin{equation*}x=\left(\begin{matrix}
		1 & x_{12} & 0 & 0 & x_{15} \\
		0 & 1 & x_{23} & x_{24} & x_{25} \\
		0 & 0 & 1 & 0 & 0  \\
		0 & 0 & 0 & 1 & x_{45}  \\
		0 & 0 & 0 & 0 & 1 \\
	\end{matrix}\right),x^A=\left(\begin{matrix}
		1 & 1 & 0 & 0 & 0 \\
		0 & 1 & 1 & 0 & 0 \\
		0 & 0 & 1 & 0 & 1 \\
		0 & 0 & 0 & 1 & 1 \\
		0 & 0 & 0 & 0 & 1
	\end{matrix}\right)
\end{equation*} \newline
Where matrix $A$ has entries
\begin{align*}
	&d_{1} = 1, d_{2} = \frac{1}{x_{12}}, d_{3} = \frac{1}{x_{12} x_{23}}, d_{4} = \frac{1}{x_{12} x_{24}}, d_{5} = \frac{1}{x_{12} x_{24} x_{45}},\\ 
	&a_{12} = 1, a_{13} = 1, a_{14} = 1, a_{15} = 1,\\ 
	&a_{23} = \frac{1}{x_{12}}, a_{24} = 0, a_{25} = \frac{{2} \, x_{12} x_{24} x_{45} - x_{15}}{x_{12}^{2} x_{24} x_{45}},\\ 
	&a_{34} = -\frac{1}{x_{12} x_{23}}, a_{35} = 1,\\ 
	&a_{45} = -\frac{{\left(x_{12} x_{23} - 1\right)} x_{24} x_{45} + x_{25}}{x_{12} x_{24}^{2} x_{45}}
\end{align*}

%011010
\begin{equation*}x=\left(\begin{matrix}
		1 & x_{12} & 0 & x_{14} & 0 \\
		0 & 1 & x_{23} & x_{24} & x_{25} \\
		0 & 0 & 1 & 0 & 0  \\
		0 & 0 & 0 & 1 & x_{45}  \\
		0 & 0 & 0 & 0 & 1 \\
	\end{matrix}\right),x^A=\left(\begin{matrix}
		1 & 1 & 0 & 0 & 0 \\
		0 & 1 & 1 & 0 & 0 \\
		0 & 0 & 1 & 0 & 1 \\
		0 & 0 & 0 & 1 & 1 \\
		0 & 0 & 0 & 0 & 1
	\end{matrix}\right)
\end{equation*} \newline
Where matrix $A$ has entries
\begin{align*}
	&d_{1} = 1, d_{2} = \frac{1}{x_{12}}, d_{3} = \frac{1}{x_{12} x_{23}}, d_{4} = \frac{1}{x_{12} x_{24}}, d_{5} = \frac{1}{x_{12} x_{24} x_{45}},\\ 
	&a_{12} = 1, a_{13} = 1, a_{14} = 1, a_{15} = 1,\\ 
	&a_{23} = \frac{1}{x_{12}}, a_{24} = -\frac{x_{14}}{x_{12}^{2} x_{24}}, a_{25} = 1,\\ 
	&a_{34} = -\frac{1}{x_{12} x_{23}}, a_{35} = -\frac{x_{12} x_{14} x_{25} - {\left(x_{12} x_{14} x_{24} - x_{14}^{2} + {\left(x_{12}^{3} - {2} \, x_{12}^{2}\right)} x_{24}^{2}\right)} x_{45}}{x_{12}^{2} x_{14} x_{23} x_{24} x_{45}},\\ 
	&a_{45} = -\frac{x_{12} - 2}{x_{14}}
\end{align*}

%011011
\begin{equation*}x=\left(\begin{matrix}
		1 & x_{12} & 0 & x_{14} & x_{15} \\
		0 & 1 & x_{23} & x_{24} & x_{25} \\
		0 & 0 & 1 & 0 & 0  \\
		0 & 0 & 0 & 1 & x_{45}  \\
		0 & 0 & 0 & 0 & 1 \\
	\end{matrix}\right),x^A=\left(\begin{matrix}
		1 & 1 & 0 & 0 & 0 \\
		0 & 1 & 1 & 0 & 0 \\
		0 & 0 & 1 & 0 & 1 \\
		0 & 0 & 0 & 1 & 1 \\
		0 & 0 & 0 & 0 & 1
	\end{matrix}\right)
\end{equation*} \newline
Where matrix $A$ has entries
\begin{align*}
	&d_{1} = 1, d_{2} = \frac{1}{x_{12}}, d_{3} = \frac{1}{x_{12} x_{23}}, d_{4} = \frac{1}{x_{12} x_{24}}, d_{5} = \frac{1}{x_{12} x_{24} x_{45}},\\ 
	&a_{12} = 1, a_{13} = 1, a_{14} = 1, a_{15} = 1,\\ 
	&a_{23} = \frac{1}{x_{12}}, a_{24} = -\frac{x_{14}}{x_{12}^{2} x_{24}}, a_{25} = 1,\\ 
	&a_{34} = -\frac{1}{x_{12} x_{23}}, a_{35} = \frac{x_{12} x_{15} x_{24} - x_{12} x_{14} x_{25} + {\left(x_{12} x_{14} x_{24} - x_{14}^{2} + {\left(x_{12}^{3} - {2} \, x_{12}^{2}\right)} x_{24}^{2}\right)} x_{45}}{x_{12}^{2} x_{14} x_{23} x_{24} x_{45}},\\ 
	&a_{45} = -\frac{{\left(x_{12}^{2} - {2} \, x_{12}\right)} x_{24} x_{45} + x_{15}}{x_{12} x_{14} x_{24} x_{45}}
\end{align*}

%011100
\begin{equation*}x=\left(\begin{matrix}
		1 & x_{12} & x_{13} & 0 & 0 \\
		0 & 1 & x_{23} & x_{24} & x_{25} \\
		0 & 0 & 1 & 0 & 0  \\
		0 & 0 & 0 & 1 & x_{45}  \\
		0 & 0 & 0 & 0 & 1 \\
	\end{matrix}\right),x^A=\left(\begin{matrix}
		1 & 1 & 0 & 0 & 0 \\
		0 & 1 & 1 & 0 & 0 \\
		0 & 0 & 1 & 0 & 1 \\
		0 & 0 & 0 & 1 & 1 \\
		0 & 0 & 0 & 0 & 1
	\end{matrix}\right)
\end{equation*} \newline
Where matrix $A$ has entries
\begin{align*}
	&d_{1} = 1, d_{2} = \frac{1}{x_{12}}, d_{3} = \frac{1}{x_{12} x_{23}}, d_{4} = \frac{1}{x_{12} x_{24}}, d_{5} = \frac{1}{x_{12} x_{24} x_{45}},\\ 
	&a_{12} = 1, a_{13} = 1, a_{14} = 1, a_{15} = 1,\\ 
	&a_{23} = \frac{x_{12} x_{23} - x_{13}}{x_{12}^{2} x_{23}}, a_{24} = \frac{x_{13}}{x_{12}^{2} x_{23}}, a_{25} = 1,\\ 
	&a_{34} = -\frac{1}{x_{12} x_{23}}, a_{35} = -\frac{x_{12} - 2}{x_{13}},\\ 
	&a_{45} = -\frac{x_{13} x_{25} - {\left(x_{13} + {\left(x_{12}^{2} - {2} \, x_{12}\right)} x_{23}\right)} x_{24} x_{45}}{x_{12} x_{13} x_{24}^{2} x_{45}}
\end{align*}

%011101
\begin{equation*}x=\left(\begin{matrix}
		1 & x_{12} & x_{13} & 0 & x_{15} \\
		0 & 1 & x_{23} & x_{24} & x_{25} \\
		0 & 0 & 1 & 0 & 0  \\
		0 & 0 & 0 & 1 & x_{45}  \\
		0 & 0 & 0 & 0 & 1 \\
	\end{matrix}\right),x^A=\left(\begin{matrix}
		1 & 1 & 0 & 0 & 0 \\
		0 & 1 & 1 & 0 & 0 \\
		0 & 0 & 1 & 0 & 1 \\
		0 & 0 & 0 & 1 & 1 \\
		0 & 0 & 0 & 0 & 1
	\end{matrix}\right)
\end{equation*} \newline
Where matrix $A$ has entries
\begin{align*}
	&d_{1} = 1, d_{2} = \frac{1}{x_{12}}, d_{3} = \frac{1}{x_{12} x_{23}}, d_{4} = \frac{1}{x_{12} x_{24}}, d_{5} = \frac{1}{x_{12} x_{24} x_{45}},\\ 
	&a_{12} = 1, a_{13} = 1, a_{14} = 1, a_{15} = 1,\\ 
	&a_{23} = \frac{x_{12} x_{23} - x_{13}}{x_{12}^{2} x_{23}}, a_{24} = \frac{x_{13}}{x_{12}^{2} x_{23}}, a_{25} = 1,\\ 
	&a_{34} = -\frac{1}{x_{12} x_{23}}, a_{35} = -\frac{{\left(x_{12}^{2} - {2} \, x_{12}\right)} x_{24} x_{45} + x_{15}}{x_{12} x_{13} x_{24} x_{45}},\\ 
	&a_{45} = \frac{x_{15} x_{23} - x_{13} x_{25} + {\left(x_{13} + {\left(x_{12}^{2} - {2} \, x_{12}\right)} x_{23}\right)} x_{24} x_{45}}{x_{12} x_{13} x_{24}^{2} x_{45}}
\end{align*}

%011110

First assume $x_{14}\neq \frac{x_{13}x_{24}}{x_{23}}$
\begin{equation*}x=\left(\begin{matrix}
		1 & x_{12} & x_{13} & x_{14} & 0 \\
		0 & 1 & x_{23} & x_{24} & x_{25} \\
		0 & 0 & 1 & 0 & 0  \\
		0 & 0 & 0 & 1 & x_{45}  \\
		0 & 0 & 0 & 0 & 1 \\
	\end{matrix}\right),x^A=\left(\begin{matrix}
		1 & 1 & 0 & 0 & 0 \\
		0 & 1 & 1 & 0 & 0 \\
		0 & 0 & 1 & 0 & 1 \\
		0 & 0 & 0 & 1 & 1 \\
		0 & 0 & 0 & 0 & 1
	\end{matrix}\right)
\end{equation*} \newline
Where matrix $A$ has entries
\begin{align*}
	&d_{1} = 1, d_{2} = \frac{1}{x_{12}}, d_{3} = \frac{1}{x_{12} x_{23}}, d_{4} = \frac{1}{x_{12} x_{24}}, d_{5} = \frac{1}{x_{12} x_{24} x_{45}},\\ 
	&a_{12} = 1, a_{13} = 1, a_{14} = 1, a_{15} = 1,\\ 
	&a_{23} = \frac{x_{12} x_{23} - x_{13}}{x_{12}^{2} x_{23}}, a_{24} = -\frac{x_{14} x_{23} - x_{13} x_{24}}{x_{12}^{2} x_{23} x_{24}}, a_{25} = 1,\\ 
	&a_{34} = -\frac{1}{x_{12} x_{23}}, a_{35} = -\frac{x_{12} x_{14} x_{25} - {\left(x_{12} x_{14} x_{24} - x_{14}^{2} + {\left(x_{12}^{3} - {2} \, x_{12}^{2}\right)} x_{24}^{2}\right)} x_{45}}{{\left(x_{12}^{2} x_{14} x_{23} x_{24} - x_{12}^{2} x_{13} x_{24}^{2}\right)} x_{45}},\\ 
	&a_{45} = \frac{x_{12} x_{13} x_{25} + {\left(x_{13} x_{14} - {\left(x_{12} x_{13} + {\left(x_{12}^{3} - {2} \, x_{12}^{2}\right)} x_{23}\right)} x_{24}\right)} x_{45}}{{\left(x_{12}^{2} x_{14} x_{23} x_{24} - x_{12}^{2} x_{13} x_{24}^{2}\right)} x_{45}}
\end{align*}

Now assume $x_{14}= \frac{x_{13}x_{24}}{x_{23}}$
\begin{equation*}x=\left(\begin{matrix}
		1 & x_{12} & x_{13} & x_{14} & 0 \\
		0 & 1 & x_{23} & x_{24} & x_{25} \\
		0 & 0 & 1 & 0 & 0  \\
		0 & 0 & 0 & 1 & x_{45}  \\
		0 & 0 & 0 & 0 & 1 \\
	\end{matrix}\right),x^A=\left(\begin{matrix}
		1 & 1 & 0 & 0 & 0 \\
		0 & 1 & 1 & 0 & 0 \\
		0 & 0 & 1 & 0 & 1 \\
		0 & 0 & 0 & 1 & 1 \\
		0 & 0 & 0 & 0 & 1
	\end{matrix}\right)
\end{equation*} \newline
Where matrix $A$ has entries
\begin{align*}
	&d_{1} = 1, d_{2} = \frac{1}{x_{12}}, d_{3} = \frac{1}{x_{12} x_{23}}, d_{4} = \frac{1}{x_{12} x_{24}}, d_{5} = \frac{1}{x_{12} x_{24} x_{45}},\\ 
	&a_{12} = 1, a_{13} = 1, a_{14} = 1, a_{15} = 1,\\ 
	&a_{23} = \frac{x_{12} x_{23} - x_{13}}{x_{12}^{2} x_{23}}, a_{24} = 0, a_{25} = \frac{x_{12} x_{13} x_{23} x_{25} + {\left({2} \, x_{12}^{2} x_{23}^{2} - x_{12} x_{13} x_{23} + x_{13}^{2}\right)} x_{24} x_{45}}{x_{12}^{3} x_{23}^{2} x_{24} x_{45}},\\ 
	&a_{34} = -\frac{1}{x_{12} x_{23}}, a_{35} = 1,\\ 
	&a_{45} = -\frac{x_{12} x_{23} x_{25} + {\left(x_{12}^{2} x_{23}^{2} - x_{12} x_{23} + x_{13}\right)} x_{24} x_{45}}{x_{12}^{2} x_{23} x_{24}^{2} x_{45}}
\end{align*}

%011111

First assume $x_{14}\neq \frac{x_{13}x_{24}}{x_{23}}$
\begin{equation*}x=\left(\begin{matrix}
		1 & x_{12} & x_{13} & x_{14} & x_{15} \\
		0 & 1 & x_{23} & x_{24} & x_{25} \\
		0 & 0 & 1 & 0 & 0  \\
		0 & 0 & 0 & 1 & x_{45}  \\
		0 & 0 & 0 & 0 & 1 \\
	\end{matrix}\right),x^A=\left(\begin{matrix}
		1 & 1 & 0 & 0 & 0 \\
		0 & 1 & 1 & 0 & 0 \\
		0 & 0 & 1 & 0 & 1 \\
		0 & 0 & 0 & 1 & 1 \\
		0 & 0 & 0 & 0 & 1
	\end{matrix}\right)
\end{equation*} \newline
Where matrix $A$ has entries
\begin{align*}
	&d_{1} = 1, d_{2} = \frac{1}{x_{12}}, d_{3} = \frac{1}{x_{12} x_{23}}, d_{4} = \frac{1}{x_{12} x_{24}}, d_{5} = \frac{1}{x_{12} x_{24} x_{45}},\\ 
	&a_{12} = 1, a_{13} = 1, a_{14} = 1, a_{15} = 1,\\ 
	&a_{23} = \frac{x_{12} x_{23} - x_{13}}{x_{12}^{2} x_{23}}, a_{24} = -\frac{x_{14} x_{23} - x_{13} x_{24}}{x_{12}^{2} x_{23} x_{24}}, a_{25} = 1,\\ 
	&a_{34} = -\frac{1}{x_{12} x_{23}}, a_{35} = \frac{x_{12} x_{15} x_{24} - x_{12} x_{14} x_{25} + {\left(x_{12} x_{14} x_{24} - x_{14}^{2} + {\left(x_{12}^{3} - {2} \, x_{12}^{2}\right)} x_{24}^{2}\right)} x_{45}}{{\left(x_{12}^{2} x_{14} x_{23} x_{24} - x_{12}^{2} x_{13} x_{24}^{2}\right)} x_{45}},\\ 
	&a_{45} = -\frac{x_{12} x_{15} x_{23} - x_{12} x_{13} x_{25} - {\left(x_{13} x_{14} - {\left(x_{12} x_{13} + {\left(x_{12}^{3} - {2} \, x_{12}^{2}\right)} x_{23}\right)} x_{24}\right)} x_{45}}{{\left(x_{12}^{2} x_{14} x_{23} x_{24} - x_{12}^{2} x_{13} x_{24}^{2}\right)} x_{45}}
\end{align*}
Now assume $x_{14}= \frac{x_{13}x_{24}}{x_{23}}$
\begin{equation*}x=\left(\begin{matrix}
		1 & x_{12} & x_{13} & x_{14} & x_{15} \\
		0 & 1 & x_{23} & x_{24} & x_{25} \\
		0 & 0 & 1 & 0 & 0  \\
		0 & 0 & 0 & 1 & x_{45}  \\
		0 & 0 & 0 & 0 & 1 \\
	\end{matrix}\right),x^A=\left(\begin{matrix}
		1 & 1 & 0 & 0 & 0 \\
		0 & 1 & 1 & 0 & 0 \\
		0 & 0 & 1 & 0 & 1 \\
		0 & 0 & 0 & 1 & 1 \\
		0 & 0 & 0 & 0 & 1
	\end{matrix}\right)
\end{equation*} \newline
Where matrix $A$ has entries
\begin{align*}
	&d_{1} = 1, d_{2} = \frac{1}{x_{12}}, d_{3} = \frac{1}{x_{12} x_{23}}, d_{4} = \frac{1}{x_{12} x_{24}}, d_{5} = \frac{1}{x_{12} x_{24} x_{45}},\\ 
	&a_{12} = 1, a_{13} = 1, a_{14} = 1, a_{15} = 1,\\ 
	&a_{23} = \frac{x_{12} x_{23} - x_{13}}{x_{12}^{2} x_{23}}, a_{24} = 0, a_{25} = -\frac{x_{12} x_{15} x_{23}^{2} - x_{12} x_{13} x_{23} x_{25} - {\left({2} \, x_{12}^{2} x_{23}^{2} - x_{12} x_{13} x_{23} + x_{13}^{2}\right)} x_{24} x_{45}}{x_{12}^{3} x_{23}^{2} x_{24} x_{45}},\\ 
	&a_{34} = -\frac{1}{x_{12} x_{23}}, a_{35} = 1,\\ 
	&a_{45} = -\frac{x_{12} x_{23} x_{25} + {\left(x_{12}^{2} x_{23}^{2} - x_{12} x_{23} + x_{13}\right)} x_{24} x_{45}}{x_{12}^{2} x_{23} x_{24}^{2} x_{45}}
\end{align*}

%100000
\begin{equation*}x=\left(% [inline block 47: 8 envs, 2887 chars -> data_tex | \begin{matrix} 		1 & x_{12} & 0 & 0 & 0 \\...]
\right)
\end{equation*} \newline
Where matrix $A$ has entries
\begin{align*}
	&d_{1} = 1, d_{2} = \frac{1}{x_{12}}, d_{3} = \frac{1}{x_{12} x_{23}}, d_{4} = \frac{x_{45}}{x_{12} x_{23} x_{35}}, d_{5} = \frac{1}{x_{12} x_{23} x_{35}},\\ 
	&a_{12} = 1, a_{13} = 1, a_{14} = 1, a_{15} = 1,\\ 
	&a_{23} = \frac{1}{x_{12}}, a_{24} = -\frac{x_{14} x_{45}}{x_{12}^{2} x_{23} x_{35}}, a_{25} = 1,\\ 
	&a_{34} = 0, a_{35} = \frac{x_{12} x_{23} x_{35} - x_{14} x_{45}}{x_{12}^{2} x_{23}^{2} x_{35}},\\ 
	&a_{45} = -\frac{{\left(x_{12}^{2} - {2} \, x_{12}\right)} x_{23} x_{35} + x_{15}}{x_{12} x_{14} x_{23} x_{35}}
\end{align*}

%100100
\begin{equation*}x=\left(\begin{matrix}
		1 & x_{12} & x_{13} & 0 & 0 \\
		0 & 1 & x_{23} & 0 & 0 \\
		0 & 0 & 1 & 0 & x_{35}  \\
		0 & 0 & 0 & 1 & x_{45}  \\
		0 & 0 & 0 & 0 & 1 \\
	\end{matrix}\right),x^A=\left(\begin{matrix}
		1 & 1 & 0 & 0 & 0 \\
		0 & 1 & 1 & 0 & 0 \\
		0 & 0 & 1 & 0 & 1 \\
		0 & 0 & 0 & 1 & 1 \\
		0 & 0 & 0 & 0 & 1
	\end{matrix}\right)
\end{equation*} \newline
Where matrix $A$ has entries
\begin{align*}
	&d_{1} = 1, d_{2} = \frac{1}{x_{12}}, d_{3} = \frac{1}{x_{12} x_{23}}, d_{4} = \frac{x_{45}}{x_{12} x_{23} x_{35}}, d_{5} = \frac{1}{x_{12} x_{23} x_{35}},\\ 
	&a_{12} = 1, a_{13} = 1, a_{14} = 1, a_{15} = 1,\\ 
	&a_{23} = \frac{x_{12} x_{23} - x_{13}}{x_{12}^{2} x_{23}}, a_{24} = 0, a_{25} = \frac{{2} \, x_{12}^{2} x_{23}^{2} - x_{12} x_{13} x_{23} + x_{13}^{2}}{x_{12}^{3} x_{23}^{2}},\\ 
	&a_{34} = 0, a_{35} = \frac{x_{12} x_{23} - x_{13}}{x_{12}^{2} x_{23}^{2}},\\ 
	&a_{45} = 1
\end{align*}

%100101
\begin{equation*}x=\left(\begin{matrix}
		1 & x_{12} & x_{13} & 0 & x_{15} \\
		0 & 1 & x_{23} & 0 & 0 \\
		0 & 0 & 1 & 0 & x_{35}  \\
		0 & 0 & 0 & 1 & x_{45}  \\
		0 & 0 & 0 & 0 & 1 \\
	\end{matrix}\right),x^A=\left(\begin{matrix}
		1 & 1 & 0 & 0 & 0 \\
		0 & 1 & 1 & 0 & 0 \\
		0 & 0 & 1 & 0 & 1 \\
		0 & 0 & 0 & 1 & 1 \\
		0 & 0 & 0 & 0 & 1
	\end{matrix}\right)
\end{equation*} \newline
Where matrix $A$ has entries
\begin{align*}
	&d_{1} = 1, d_{2} = \frac{1}{x_{12}}, d_{3} = \frac{1}{x_{12} x_{23}}, d_{4} = \frac{x_{45}}{x_{12} x_{23} x_{35}}, d_{5} = \frac{1}{x_{12} x_{23} x_{35}},\\ 
	&a_{12} = 1, a_{13} = 1, a_{14} = 1, a_{15} = 1,\\ 
	&a_{23} = \frac{x_{12} x_{23} - x_{13}}{x_{12}^{2} x_{23}}, a_{24} = 0, a_{25} = -\frac{x_{12} x_{15} x_{23} - {\left({2} \, x_{12}^{2} x_{23}^{2} - x_{12} x_{13} x_{23} + x_{13}^{2}\right)} x_{35}}{x_{12}^{3} x_{23}^{2} x_{35}},\\ 
	&a_{34} = 0, a_{35} = \frac{x_{12} x_{23} - x_{13}}{x_{12}^{2} x_{23}^{2}},\\ 
	&a_{45} = 1
\end{align*}

%100110
\begin{equation*}x=\left(\begin{matrix}
		1 & x_{12} & x_{13} & x_{14} & 0 \\
		0 & 1 & x_{23} & 0 & 0 \\
		0 & 0 & 1 & 0 & x_{35}  \\
		0 & 0 & 0 & 1 & x_{45}  \\
		0 & 0 & 0 & 0 & 1 \\
	\end{matrix}\right),x^A=\left(\begin{matrix}
		1 & 1 & 0 & 0 & 0 \\
		0 & 1 & 1 & 0 & 0 \\
		0 & 0 & 1 & 0 & 1 \\
		0 & 0 & 0 & 1 & 1 \\
		0 & 0 & 0 & 0 & 1
	\end{matrix}\right)
\end{equation*} \newline
Where matrix $A$ has entries
\begin{align*}
	&d_{1} = 1, d_{2} = \frac{1}{x_{12}}, d_{3} = \frac{1}{x_{12} x_{23}}, d_{4} = \frac{x_{45}}{x_{12} x_{23} x_{35}}, d_{5} = \frac{1}{x_{12} x_{23} x_{35}},\\ 
	&a_{12} = 1, a_{13} = 1, a_{14} = 1, a_{15} = 1,\\ 
	&a_{23} = \frac{x_{12} x_{23} - x_{13}}{x_{12}^{2} x_{23}}, a_{24} = -\frac{x_{14} x_{45}}{x_{12}^{2} x_{23} x_{35}}, a_{25} = 1,\\ 
	&a_{34} = 0, a_{35} = -\frac{x_{14} x_{45} - {\left(x_{12} x_{23} - x_{13}\right)} x_{35}}{x_{12}^{2} x_{23}^{2} x_{35}},\\ 
	&a_{45} = \frac{x_{13} x_{14} x_{45} - {\left(x_{12} x_{13} x_{23} - x_{13}^{2} + {\left(x_{12}^{3} - {2} \, x_{12}^{2}\right)} x_{23}^{2}\right)} x_{35}}{x_{12}^{2} x_{14} x_{23}^{2} x_{35}}
\end{align*}

%100111
\begin{equation*}x=\left(\begin{matrix}
		1 & x_{12} & x_{13} & x_{14} & x_{15} \\
		0 & 1 & x_{23} & 0 & 0 \\
		0 & 0 & 1 & 0 & x_{35}  \\
		0 & 0 & 0 & 1 & x_{45}  \\
		0 & 0 & 0 & 0 & 1 \\
	\end{matrix}\right),x^A=\left(\begin{matrix}
		1 & 1 & 0 & 0 & 0 \\
		0 & 1 & 1 & 0 & 0 \\
		0 & 0 & 1 & 0 & 1 \\
		0 & 0 & 0 & 1 & 1 \\
		0 & 0 & 0 & 0 & 1
	\end{matrix}\right)
\end{equation*} \newline
Where matrix $A$ has entries
\begin{align*}
	&d_{1} = 1, d_{2} = \frac{1}{x_{12}}, d_{3} = \frac{1}{x_{12} x_{23}}, d_{4} = \frac{x_{45}}{x_{12} x_{23} x_{35}}, d_{5} = \frac{1}{x_{12} x_{23} x_{35}},\\ 
	&a_{12} = 1, a_{13} = 1, a_{14} = 1, a_{15} = 1,\\ 
	&a_{23} = \frac{x_{12} x_{23} - x_{13}}{x_{12}^{2} x_{23}}, a_{24} = -\frac{x_{14} x_{45}}{x_{12}^{2} x_{23} x_{35}}, a_{25} = 1,\\ 
	&a_{34} = 0, a_{35} = -\frac{x_{14} x_{45} - {\left(x_{12} x_{23} - x_{13}\right)} x_{35}}{x_{12}^{2} x_{23}^{2} x_{35}},\\ 
	&a_{45} = -\frac{x_{12} x_{15} x_{23} - x_{13} x_{14} x_{45} + {\left(x_{12} x_{13} x_{23} - x_{13}^{2} + {\left(x_{12}^{3} - {2} \, x_{12}^{2}\right)} x_{23}^{2}\right)} x_{35}}{x_{12}^{2} x_{14} x_{23}^{2} x_{35}}
\end{align*}

%101000
\begin{equation*}x=\left(% [inline block 48: 6 envs, 2055 chars -> data_tex | \begin{matrix} 		1 & x_{12} & 0 & 0 & 0 \\...]
\right)
\end{equation*} \newline
Where matrix $A$ has entries
\begin{align*}
	&d_{1} = 1, d_{2} = \frac{1}{x_{12}}, d_{3} = \frac{1}{x_{12} x_{23}}, d_{4} = \frac{x_{45}}{x_{12} x_{23} x_{35}}, d_{5} = \frac{1}{x_{12} x_{23} x_{35}},\\ 
	&a_{12} = 1, a_{13} = 1, a_{14} = 1, a_{15} = 1,\\ 
	&a_{23} = \frac{1}{x_{12}}, a_{24} = -\frac{x_{14} x_{45}}{x_{12}^{2} x_{23} x_{35}}, a_{25} = 1,\\ 
	&a_{34} = 0, a_{35} = \frac{x_{12} x_{23} x_{35} - x_{12} x_{25} - x_{14} x_{45}}{x_{12}^{2} x_{23}^{2} x_{35}},\\ 
	&a_{45} = -\frac{x_{12} - 2}{x_{14}}
\end{align*}

%101011
\begin{equation*}x=\left(\begin{matrix}
		1 & x_{12} & 0 & x_{14} & x_{15} \\
		0 & 1 & x_{23} & 0 & x_{25} \\
		0 & 0 & 1 & 0 & x_{35}  \\
		0 & 0 & 0 & 1 & x_{45}  \\
		0 & 0 & 0 & 0 & 1 \\
	\end{matrix}\right),x^A=\left(\begin{matrix}
		1 & 1 & 0 & 0 & 0 \\
		0 & 1 & 1 & 0 & 0 \\
		0 & 0 & 1 & 0 & 1 \\
		0 & 0 & 0 & 1 & 1 \\
		0 & 0 & 0 & 0 & 1
	\end{matrix}\right)
\end{equation*} \newline
Where matrix $A$ has entries
\begin{align*}
	&d_{1} = 1, d_{2} = \frac{1}{x_{12}}, d_{3} = \frac{1}{x_{12} x_{23}}, d_{4} = \frac{x_{45}}{x_{12} x_{23} x_{35}}, d_{5} = \frac{1}{x_{12} x_{23} x_{35}},\\ 
	&a_{12} = 1, a_{13} = 1, a_{14} = 1, a_{15} = 1,\\ 
	&a_{23} = \frac{1}{x_{12}}, a_{24} = -\frac{x_{14} x_{45}}{x_{12}^{2} x_{23} x_{35}}, a_{25} = 1,\\ 
	&a_{34} = 0, a_{35} = \frac{x_{12} x_{23} x_{35} - x_{12} x_{25} - x_{14} x_{45}}{x_{12}^{2} x_{23}^{2} x_{35}},\\ 
	&a_{45} = -\frac{{\left(x_{12}^{2} - {2} \, x_{12}\right)} x_{23} x_{35} + x_{15}}{x_{12} x_{14} x_{23} x_{35}}
\end{align*}

%101100
\begin{equation*}x=\left(\begin{matrix}
		1 & x_{12} & x_{13} & 0 & 0 \\
		0 & 1 & x_{23} & 0 & x_{25} \\
		0 & 0 & 1 & 0 & x_{35}  \\
		0 & 0 & 0 & 1 & x_{45}  \\
		0 & 0 & 0 & 0 & 1 \\
	\end{matrix}\right),x^A=\left(\begin{matrix}
		1 & 1 & 0 & 0 & 0 \\
		0 & 1 & 1 & 0 & 0 \\
		0 & 0 & 1 & 0 & 1 \\
		0 & 0 & 0 & 1 & 1 \\
		0 & 0 & 0 & 0 & 1
	\end{matrix}\right)
\end{equation*} \newline
Where matrix $A$ has entries
\begin{align*}
	&d_{1} = 1, d_{2} = \frac{1}{x_{12}}, d_{3} = \frac{1}{x_{12} x_{23}}, d_{4} = \frac{x_{45}}{x_{12} x_{23} x_{35}}, d_{5} = \frac{1}{x_{12} x_{23} x_{35}},\\ 
	&a_{12} = 1, a_{13} = 1, a_{14} = 1, a_{15} = 1,\\ 
	&a_{23} = \frac{x_{12} x_{23} - x_{13}}{x_{12}^{2} x_{23}}, a_{24} = 0, a_{25} = \frac{x_{12} x_{13} x_{25} + {\left({2} \, x_{12}^{2} x_{23}^{2} - x_{12} x_{13} x_{23} + x_{13}^{2}\right)} x_{35}}{x_{12}^{3} x_{23}^{2} x_{35}},\\ 
	&a_{34} = 0, a_{35} = -\frac{x_{12} x_{25} - {\left(x_{12} x_{23} - x_{13}\right)} x_{35}}{x_{12}^{2} x_{23}^{2} x_{35}},\\ 
	&a_{45} = 1
\end{align*}

%101101
\begin{equation*}x=\left(\begin{matrix}
		1 & x_{12} & x_{13} & 0 & x_{15} \\
		0 & 1 & x_{23} & 0 & x_{25} \\
		0 & 0 & 1 & 0 & x_{35}  \\
		0 & 0 & 0 & 1 & x_{45}  \\
		0 & 0 & 0 & 0 & 1 \\
	\end{matrix}\right),x^A=\left(\begin{matrix}
		1 & 1 & 0 & 0 & 0 \\
		0 & 1 & 1 & 0 & 0 \\
		0 & 0 & 1 & 0 & 1 \\
		0 & 0 & 0 & 1 & 1 \\
		0 & 0 & 0 & 0 & 1
	\end{matrix}\right)
\end{equation*} \newline
Where matrix $A$ has entries
\begin{align*}
	&d_{1} = 1, d_{2} = \frac{1}{x_{12}}, d_{3} = \frac{1}{x_{12} x_{23}}, d_{4} = \frac{x_{45}}{x_{12} x_{23} x_{35}}, d_{5} = \frac{1}{x_{12} x_{23} x_{35}},\\ 
	&a_{12} = 1, a_{13} = 1, a_{14} = 1, a_{15} = 1,\\ 
	&a_{23} = \frac{x_{12} x_{23} - x_{13}}{x_{12}^{2} x_{23}}, a_{24} = 0, a_{25} = -\frac{x_{12} x_{15} x_{23} - x_{12} x_{13} x_{25} - {\left({2} \, x_{12}^{2} x_{23}^{2} - x_{12} x_{13} x_{23} + x_{13}^{2}\right)} x_{35}}{x_{12}^{3} x_{23}^{2} x_{35}},\\ 
	&a_{34} = 0, a_{35} = -\frac{x_{12} x_{25} - {\left(x_{12} x_{23} - x_{13}\right)} x_{35}}{x_{12}^{2} x_{23}^{2} x_{35}},\\ 
	&a_{45} = 1
\end{align*}

%101110
\begin{equation*}x=\left(\begin{matrix}
		1 & x_{12} & x_{13} & x_{14} & 0 \\
		0 & 1 & x_{23} & 0 & x_{25} \\
		0 & 0 & 1 & 0 & x_{35}  \\
		0 & 0 & 0 & 1 & x_{45}  \\
		0 & 0 & 0 & 0 & 1 \\
	\end{matrix}\right),x^A=\left(\begin{matrix}
		1 & 1 & 0 & 0 & 0 \\
		0 & 1 & 1 & 0 & 0 \\
		0 & 0 & 1 & 0 & 1 \\
		0 & 0 & 0 & 1 & 1 \\
		0 & 0 & 0 & 0 & 1
	\end{matrix}\right)
\end{equation*} \newline
Where matrix $A$ has entries
\begin{align*}
	&d_{1} = 1, d_{2} = \frac{1}{x_{12}}, d_{3} = \frac{1}{x_{12} x_{23}}, d_{4} = \frac{x_{45}}{x_{12} x_{23} x_{35}}, d_{5} = \frac{1}{x_{12} x_{23} x_{35}},\\ 
	&a_{12} = 1, a_{13} = 1, a_{14} = 1, a_{15} = 1,\\ 
	&a_{23} = \frac{x_{12} x_{23} - x_{13}}{x_{12}^{2} x_{23}}, a_{24} = -\frac{x_{14} x_{45}}{x_{12}^{2} x_{23} x_{35}}, a_{25} = 1,\\ 
	&a_{34} = 0, a_{35} = -\frac{x_{12} x_{25} + x_{14} x_{45} - {\left(x_{12} x_{23} - x_{13}\right)} x_{35}}{x_{12}^{2} x_{23}^{2} x_{35}},\\ 
	&a_{45} = \frac{x_{12} x_{13} x_{25} + x_{13} x_{14} x_{45} - {\left(x_{12} x_{13} x_{23} - x_{13}^{2} + {\left(x_{12}^{3} - {2} \, x_{12}^{2}\right)} x_{23}^{2}\right)} x_{35}}{x_{12}^{2} x_{14} x_{23}^{2} x_{35}}
\end{align*}

%101111
\begin{equation*}x=\left(\begin{matrix}
		1 & x_{12} & x_{13} & x_{14} & x_{15} \\
		0 & 1 & x_{23} & 0 & x_{25} \\
		0 & 0 & 1 & 0 & x_{35}  \\
		0 & 0 & 0 & 1 & x_{45}  \\
		0 & 0 & 0 & 0 & 1 \\
	\end{matrix}\right),x^A=\left(\begin{matrix}
		1 & 1 & 0 & 0 & 0 \\
		0 & 1 & 1 & 0 & 0 \\
		0 & 0 & 1 & 0 & 1 \\
		0 & 0 & 0 & 1 & 1 \\
		0 & 0 & 0 & 0 & 1
	\end{matrix}\right)
\end{equation*} \newline
Where matrix $A$ has entries
\begin{align*}
	&d_{1} = 1, d_{2} = \frac{1}{x_{12}}, d_{3} = \frac{1}{x_{12} x_{23}}, d_{4} = \frac{x_{45}}{x_{12} x_{23} x_{35}}, d_{5} = \frac{1}{x_{12} x_{23} x_{35}},\\ 
	&a_{12} = 1, a_{13} = 1, a_{14} = 1, a_{15} = 1,\\ 
	&a_{23} = \frac{x_{12} x_{23} - x_{13}}{x_{12}^{2} x_{23}}, a_{24} = -\frac{x_{14} x_{45}}{x_{12}^{2} x_{23} x_{35}}, a_{25} = 1,\\ 
	&a_{34} = 0, a_{35} = -\frac{x_{12} x_{25} + x_{14} x_{45} - {\left(x_{12} x_{23} - x_{13}\right)} x_{35}}{x_{12}^{2} x_{23}^{2} x_{35}},\\ 
	&a_{45} = -\frac{x_{12} x_{15} x_{23} - x_{12} x_{13} x_{25} - x_{13} x_{14} x_{45} + {\left(x_{12} x_{13} x_{23} - x_{13}^{2} + {\left(x_{12}^{3} - {2} \, x_{12}^{2}\right)} x_{23}^{2}\right)} x_{35}}{x_{12}^{2} x_{14} x_{23}^{2} x_{35}}
\end{align*}

%110000

First assume $x_{35}\neq \frac{-x_{24}x_{45}}{x_{23}}$
\begin{equation*}x=\left(\begin{matrix}
		1 & x_{12} & 0 & 0 & 0 \\
		0 & 1 & x_{23} & x_{24} & 0 \\
		0 & 0 & 1 & 0 & x_{35}  \\
		0 & 0 & 0 & 1 & x_{45}  \\
		0 & 0 & 0 & 0 & 1 \\
	\end{matrix}\right),x^A=\left(\begin{matrix}
		1 & 1 & 0 & 0 & 0 \\
		0 & 1 & 1 & 0 & 0 \\
		0 & 0 & 1 & 0 & 1 \\
		0 & 0 & 0 & 1 & 1 \\
		0 & 0 & 0 & 0 & 1
	\end{matrix}\right)
\end{equation*} \newline
Where matrix $A$ has entries
\begin{align*}
	&d_{1} = 1, d_{2} = \frac{1}{x_{12}}, d_{3} = \frac{1}{x_{12} x_{23}}, d_{4} = \frac{x_{45}}{x_{12} x_{23} x_{35} + x_{12} x_{24} x_{45}}, d_{5} = \frac{1}{x_{12} x_{23} x_{35} + x_{12} x_{24} x_{45}},\\ 
	&a_{12} = 1, a_{13} = 1, a_{14} = 1, a_{15} = 1,\\ 
	&a_{23} = \frac{1}{x_{12}}, a_{24} = 0, a_{25} = \frac{2}{x_{12}},\\ 
	&a_{34} = -\frac{x_{24} x_{45}}{x_{12} x_{23}^{2} x_{35} + x_{12} x_{23} x_{24} x_{45}}, a_{35} = 1,\\ 
	&a_{45} = -\frac{x_{12} x_{23} - 1}{x_{12} x_{24}}
\end{align*}

Now assume $x_{35}= \frac{-x_{24}x_{45}}{x_{23}}$
\begin{equation*}x=\left(\begin{matrix}
		1 & x_{12} & 0 & 0 & 0 \\
		0 & 1 & x_{23} & x_{24} & 0 \\
		0 & 0 & 1 & 0 & x_{35}  \\
		0 & 0 & 0 & 1 & x_{45}  \\
		0 & 0 & 0 & 0 & 1 \\
	\end{matrix}\right),x^A=\left(\begin{matrix}
		1 & 1 & 0 & 0 & 0 \\
		0 & 1 & 1 & 0 & 0 \\
		0 & 0 & 1 & 0 & 0 \\
		0 & 0 & 0 & 1 & 1 \\
		0 & 0 & 0 & 0 & 1
	\end{matrix}\right)
\end{equation*} \newline
Where matrix $A$ has entries
\begin{align*}
	&d_{1} = 1, d_{2} = \frac{1}{x_{12}}, d_{3} = \frac{1}{x_{12} x_{23}}, d_{4} = 1, d_{5} = \frac{1}{x_{45}},\\ 
	&a_{12} = 1, a_{13} = 1, a_{14} = 1, a_{15} = 1,\\ 
	&a_{23} = \frac{1}{x_{12}}, a_{24} = 0, a_{25} = \frac{1}{x_{12}},\\ 
	&a_{34} = -\frac{x_{24}}{x_{23}}, a_{35} = 1,\\ 
	&a_{45} = -\frac{x_{23}}{x_{24}}
\end{align*}

%110001

First assume $x_{35}\neq \frac{-x_{24}x_{45}}{x_{23}}$
\begin{equation*}x=\left(\begin{matrix}
		1 & x_{12} & 0 & 0 & x_{15} \\
		0 & 1 & x_{23} & x_{24} & 0 \\
		0 & 0 & 1 & 0 & x_{35}  \\
		0 & 0 & 0 & 1 & x_{45}  \\
		0 & 0 & 0 & 0 & 1 \\
	\end{matrix}\right),x^A=\left(\begin{matrix}
		1 & 1 & 0 & 0 & 0 \\
		0 & 1 & 1 & 0 & 0 \\
		0 & 0 & 1 & 0 & 1 \\
		0 & 0 & 0 & 1 & 1 \\
		0 & 0 & 0 & 0 & 1
	\end{matrix}\right)
\end{equation*} \newline
Where matrix $A$ has entries
\begin{align*}
	&d_{1} = 1, d_{2} = \frac{1}{x_{12}}, d_{3} = \frac{1}{x_{12} x_{23}}, d_{4} = \frac{x_{45}}{x_{12} x_{23} x_{35} + x_{12} x_{24} x_{45}}, d_{5} = \frac{1}{x_{12} x_{23} x_{35} + x_{12} x_{24} x_{45}},\\ 
	&a_{12} = 1, a_{13} = 1, a_{14} = 1, a_{15} = 1,\\ 
	&a_{23} = \frac{1}{x_{12}}, a_{24} = 0, a_{25} = \frac{{2} \, x_{12} x_{23} x_{35} + {2} \, x_{12} x_{24} x_{45} - x_{15}}{x_{12}^{2} x_{23} x_{35} + x_{12}^{2} x_{24} x_{45}},\\ 
	&a_{34} = -\frac{x_{24} x_{45}}{x_{12} x_{23}^{2} x_{35} + x_{12} x_{23} x_{24} x_{45}}, a_{35} = 1,\\ 
	&a_{45} = -\frac{x_{12} x_{23} - 1}{x_{12} x_{24}}
\end{align*}

Now assume $x_{35}= \frac{-x_{24}x_{45}}{x_{23}}$
\begin{equation*}x=\left(\begin{matrix}
		1 & x_{12} & 0 & 0 & x_{15} \\
		0 & 1 & x_{23} & x_{24} & 0 \\
		0 & 0 & 1 & 0 & x_{35}  \\
		0 & 0 & 0 & 1 & x_{45}  \\
		0 & 0 & 0 & 0 & 1 \\
	\end{matrix}\right),x^A=\left(\begin{matrix}
		1 & 1 & 0 & 0 & 0 \\
		0 & 1 & 1 & 0 & 0 \\
		0 & 0 & 1 & 0 & 0 \\
		0 & 0 & 0 & 1 & 1 \\
		0 & 0 & 0 & 0 & 1
	\end{matrix}\right)
\end{equation*} \newline
Where matrix $A$ has entries
\begin{align*}
	&d_{1} = 1, d_{2} = \frac{1}{x_{12}}, d_{3} = \frac{1}{x_{12} x_{23}}, d_{4} = 1, d_{5} = \frac{1}{x_{45}},\\ 
	&a_{12} = 1, a_{13} = 1, a_{14} = 1, a_{15} = 1,\\ 
	&a_{23} = \frac{1}{x_{12}}, a_{24} = 0, a_{25} = -\frac{x_{15} - x_{45}}{x_{12} x_{45}},\\ 
	&a_{34} = -\frac{x_{24}}{x_{23}}, a_{35} = 1,\\ 
	&a_{45} = -\frac{x_{23}}{x_{24}}
\end{align*}

%110010

First assume $x_{35}\neq \frac{-x_{24}x_{45}}{x_{23}}$
\begin{equation*}x=\left(\begin{matrix}
		1 & x_{12} & 0 & x_{14} & 0 \\
		0 & 1 & x_{23} & x_{24} & 0 \\
		0 & 0 & 1 & 0 & x_{35}  \\
		0 & 0 & 0 & 1 & x_{45}  \\
		0 & 0 & 0 & 0 & 1 \\
	\end{matrix}\right),x^A=\left(\begin{matrix}
		1 & 1 & 0 & 0 & 0 \\
		0 & 1 & 1 & 0 & 0 \\
		0 & 0 & 1 & 0 & 1 \\
		0 & 0 & 0 & 1 & 1 \\
		0 & 0 & 0 & 0 & 1
	\end{matrix}\right)
\end{equation*} \newline
Where matrix $A$ has entries
\begin{align*}
	&d_{1} = 1, d_{2} = \frac{1}{x_{12}}, d_{3} = \frac{1}{x_{12} x_{23}}, d_{4} = \frac{x_{45}}{x_{12} x_{23} x_{35} + x_{12} x_{24} x_{45}}, d_{5} = \frac{1}{x_{12} x_{23} x_{35} + x_{12} x_{24} x_{45}},\\ 
	&a_{12} = 1, a_{13} = 1, a_{14} = 1, a_{15} = 1,\\ 
	&a_{23} = \frac{1}{x_{12}}, a_{24} = -\frac{x_{14} x_{45}}{x_{12}^{2} x_{23} x_{35} + x_{12}^{2} x_{24} x_{45}}, a_{25} = 1,\\ 
	&a_{34} = -\frac{x_{24} x_{45}}{x_{12} x_{23}^{2} x_{35} + x_{12} x_{23} x_{24} x_{45}}, a_{35} = \frac{{\left(x_{12} x_{14} x_{23} + {\left(x_{12}^{3} - {2} \, x_{12}^{2}\right)} x_{23} x_{24}\right)} x_{35} + {\left(x_{12} x_{14} x_{24} - x_{14}^{2} + {\left(x_{12}^{3} - {2} \, x_{12}^{2}\right)} x_{24}^{2}\right)} x_{45}}{x_{12}^{2} x_{14} x_{23}^{2} x_{35} + x_{12}^{2} x_{14} x_{23} x_{24} x_{45}},\\ 
	&a_{45} = -\frac{x_{12} - 2}{x_{14}}
\end{align*}

Now assume $x_{35}= \frac{-x_{24}x_{45}}{x_{23}}$
\begin{equation*}x=\left(\begin{matrix}
		1 & x_{12} & 0 & x_{14} & 0 \\
		0 & 1 & x_{23} & x_{24} & 0 \\
		0 & 0 & 1 & 0 & x_{35}  \\
		0 & 0 & 0 & 1 & x_{45}  \\
		0 & 0 & 0 & 0 & 1 \\
	\end{matrix}\right),x^A=\left(\begin{matrix}
		1 & 1 & 0 & 0 & 0 \\
		0 & 1 & 1 & 0 & 0 \\
		0 & 0 & 1 & 0 & 0 \\
		0 & 0 & 0 & 1 & 1 \\
		0 & 0 & 0 & 0 & 1
	\end{matrix}\right)
\end{equation*} \newline
Where matrix $A$ has entries
\begin{align*}
	&d_{1} = 1, d_{2} = \frac{1}{x_{12}}, d_{3} = \frac{1}{x_{12} x_{23}}, d_{4} = 1, d_{5} = \frac{1}{x_{45}},\\ 
	&a_{12} = 1, a_{13} = 1, a_{14} = 1, a_{15} = 1,\\ 
	&a_{23} = \frac{1}{x_{12}}, a_{24} = -\frac{x_{14}}{x_{12}}, a_{25} = 1,\\ 
	&a_{34} = -\frac{x_{24}}{x_{23}}, a_{35} = -\frac{x_{14}^{2} - {\left(x_{12}^{2} - x_{12}\right)} x_{24}}{x_{12} x_{14} x_{23}},\\ 
	&a_{45} = -\frac{x_{12} - 1}{x_{14}}
\end{align*}

%110011

First assume $x_{35}\neq \frac{-x_{24}x_{45}}{x_{23}}$
\begin{equation*}x=\left(\begin{matrix}
		1 & x_{12} & 0 & x_{14} & x_{15} \\
		0 & 1 & x_{23} & x_{24} & 0 \\
		0 & 0 & 1 & 0 & x_{35}  \\
		0 & 0 & 0 & 1 & x_{45}  \\
		0 & 0 & 0 & 0 & 1 \\
	\end{matrix}\right),x^A=\left(\begin{matrix}
		1 & 1 & 0 & 0 & 0 \\
		0 & 1 & 1 & 0 & 0 \\
		0 & 0 & 1 & 0 & 1 \\
		0 & 0 & 0 & 1 & 1 \\
		0 & 0 & 0 & 0 & 1
	\end{matrix}\right)
\end{equation*} \newline
Where matrix $A$ has entries
\begin{align*}
	&d_{1} = 1, d_{2} = \frac{1}{x_{12}}, d_{3} = \frac{1}{x_{12} x_{23}}, d_{4} = \frac{x_{45}}{x_{12} x_{23} x_{35} + x_{12} x_{24} x_{45}}, d_{5} = \frac{1}{x_{12} x_{23} x_{35} + x_{12} x_{24} x_{45}},\\ 
	&a_{12} = 1, a_{13} = 1, a_{14} = 1, a_{15} = 1,\\ 
	&a_{23} = \frac{1}{x_{12}}, a_{24} = -\frac{x_{14} x_{45}}{x_{12}^{2} x_{23} x_{35} + x_{12}^{2} x_{24} x_{45}}, a_{25} = 1,\\ 
	&a_{34} = -\frac{x_{24} x_{45}}{x_{12} x_{23}^{2} x_{35} + x_{12} x_{23} x_{24} x_{45}}, \\
	&a_{35} = \frac{x_{12} x_{15} x_{24} + {\left(x_{12} x_{14} x_{23} + {\left(x_{12}^{3} - {2} \, x_{12}^{2}\right)} x_{23} x_{24}\right)} x_{35} + {\left(x_{12} x_{14} x_{24} - x_{14}^{2} + {\left(x_{12}^{3} - {2} \, x_{12}^{2}\right)} x_{24}^{2}\right)} x_{45}}{x_{12}^{2} x_{14} x_{23}^{2} x_{35} + x_{12}^{2} x_{14} x_{23} x_{24} x_{45}},\\ 
	&a_{45} = -\frac{{\left(x_{12}^{2} - {2} \, x_{12}\right)} x_{23} x_{35} + {\left(x_{12}^{2} - {2} \, x_{12}\right)} x_{24} x_{45} + x_{15}}{x_{12} x_{14} x_{23} x_{35} + x_{12} x_{14} x_{24} x_{45}}
\end{align*}

Now assume $x_{35}= \frac{-x_{24}x_{45}}{x_{23}}$
\begin{equation*}x=\left(\begin{matrix}
		1 & x_{12} & 0 & x_{14} & x_{15} \\
		0 & 1 & x_{23} & x_{24} & 0 \\
		0 & 0 & 1 & 0 & x_{35}  \\
		0 & 0 & 0 & 1 & x_{45}  \\
		0 & 0 & 0 & 0 & 1 \\
	\end{matrix}\right),x^A=\left(\begin{matrix}
		1 & 1 & 0 & 0 & 0 \\
		0 & 1 & 1 & 0 & 0 \\
		0 & 0 & 1 & 0 & 0 \\
		0 & 0 & 0 & 1 & 1 \\
		0 & 0 & 0 & 0 & 1
	\end{matrix}\right)
\end{equation*} \newline
Where matrix $A$ has entries
\begin{align*}
	&d_{1} = 1, d_{2} = \frac{1}{x_{12}}, d_{3} = \frac{1}{x_{12} x_{23}}, d_{4} = 1, d_{5} = \frac{1}{x_{45}},\\ 
	&a_{12} = 1, a_{13} = 1, a_{14} = 1, a_{15} = 1,\\ 
	&a_{23} = \frac{1}{x_{12}}, a_{24} = -\frac{x_{14}}{x_{12}}, a_{25} = 1,\\ 
	&a_{34} = -\frac{x_{24}}{x_{23}}, a_{35} = \frac{x_{12} x_{15} x_{24} - {\left(x_{14}^{2} - {\left(x_{12}^{2} - x_{12}\right)} x_{24}\right)} x_{45}}{x_{12} x_{14} x_{23} x_{45}},\\ 
	&a_{45} = -\frac{x_{15} + {\left(x_{12} - 1\right)} x_{45}}{x_{14} x_{45}}
\end{align*}

%110100

First assume $x_{35}\neq \frac{-x_{24}x_{45}}{x_{23}}$
\begin{equation*}x=\left(\begin{matrix}
		1 & x_{12} & x_{13} & 0 & 0 \\
		0 & 1 & x_{23} & x_{24} & 0 \\
		0 & 0 & 1 & 0 & x_{35}  \\
		0 & 0 & 0 & 1 & x_{45}  \\
		0 & 0 & 0 & 0 & 1 \\
	\end{matrix}\right),x^A=\left(\begin{matrix}
		1 & 1 & 0 & 0 & 0 \\
		0 & 1 & 1 & 0 & 0 \\
		0 & 0 & 1 & 0 & 1 \\
		0 & 0 & 0 & 1 & 1 \\
		0 & 0 & 0 & 0 & 1
	\end{matrix}\right)
\end{equation*} \newline
Where matrix $A$ has entries
\begin{align*}
	&d_{1} = 1, d_{2} = \frac{1}{x_{12}}, d_{3} = \frac{1}{x_{12} x_{23}}, d_{4} = \frac{x_{45}}{x_{12} x_{23} x_{35} + x_{12} x_{24} x_{45}}, d_{5} = \frac{1}{x_{12} x_{23} x_{35} + x_{12} x_{24} x_{45}},\\ 
	&a_{12} = 1, a_{13} = 1, a_{14} = 1, a_{15} = 1,\\ 
	&a_{23} = \frac{x_{12} x_{23} - x_{13}}{x_{12}^{2} x_{23}}, a_{24} = \frac{x_{13} x_{24} x_{45}}{x_{12}^{2} x_{23}^{2} x_{35} + x_{12}^{2} x_{23} x_{24} x_{45}}, a_{25} = 1,\\ 
	&a_{34} = -\frac{x_{24} x_{45}}{x_{12} x_{23}^{2} x_{35} + x_{12} x_{23} x_{24} x_{45}}, a_{35} = -\frac{x_{12} - 2}{x_{13}},\\ 
	&a_{45} = \frac{{\left(x_{12} x_{13} + {\left(x_{12}^{3} - {2} \, x_{12}^{2}\right)} x_{23}\right)} x_{24} x_{45} + {\left(x_{12} x_{13} x_{23} - x_{13}^{2} + {\left(x_{12}^{3} - {2} \, x_{12}^{2}\right)} x_{23}^{2}\right)} x_{35}}{x_{12}^{2} x_{13} x_{23} x_{24} x_{35} + x_{12}^{2} x_{13} x_{24}^{2} x_{45}}
\end{align*}

Now assume $x_{35}= \frac{-x_{24}x_{45}}{x_{23}}$
\begin{equation*}x=\left(\begin{matrix}
		1 & x_{12} & x_{13} & 0 & 0 \\
		0 & 1 & x_{23} & x_{24} & 0 \\
		0 & 0 & 1 & 0 & x_{35}  \\
		0 & 0 & 0 & 1 & x_{45}  \\
		0 & 0 & 0 & 0 & 1 \\
	\end{matrix}\right),x^A=\left(\begin{matrix}
		1 & 1 & 0 & 0 & 0 \\
		0 & 1 & 1 & 0 & 0 \\
		0 & 0 & 1 & 0 & 0 \\
		0 & 0 & 0 & 1 & 1 \\
		0 & 0 & 0 & 0 & 1
	\end{matrix}\right)
\end{equation*} \newline
Where matrix $A$ has entries
\begin{align*}
	&d_{1} = 1, d_{2} = \frac{1}{x_{12}}, d_{3} = \frac{1}{x_{12} x_{23}}, d_{4} = 1, d_{5} = \frac{1}{x_{45}},\\ 
	&a_{12} = 1, a_{13} = 1, a_{14} = 1, a_{15} = 1,\\ 
	&a_{23} = \frac{x_{12} x_{23} - x_{13}}{x_{12}^{2} x_{23}}, a_{24} = \frac{x_{13} x_{24}}{x_{12} x_{23}}, a_{25} = 1,\\ 
	&a_{34} = -\frac{x_{24}}{x_{23}}, a_{35} = -\frac{x_{12} - 1}{x_{13}},\\ 
	&a_{45} = \frac{x_{13}^{2} x_{24} + {\left(x_{12}^{2} - x_{12}\right)} x_{23}^{2}}{x_{12} x_{13} x_{23} x_{24}}
\end{align*}

%110101

First assume $x_{35}\neq \frac{-x_{24}x_{45}}{x_{23}}$
\begin{equation*}x=\left(\begin{matrix}
		1 & x_{12} & x_{13} & 0 & x_{15} \\
		0 & 1 & x_{23} & x_{24} & 0 \\
		0 & 0 & 1 & 0 & x_{35}  \\
		0 & 0 & 0 & 1 & x_{45}  \\
		0 & 0 & 0 & 0 & 1 \\
	\end{matrix}\right),x^A=\left(\begin{matrix}
		1 & 1 & 0 & 0 & 0 \\
		0 & 1 & 1 & 0 & 0 \\
		0 & 0 & 1 & 0 & 1 \\
		0 & 0 & 0 & 1 & 1 \\
		0 & 0 & 0 & 0 & 1
	\end{matrix}\right)
\end{equation*} \newline
Where matrix $A$ has entries
\begin{align*}
	&d_{1} = 1, d_{2} = \frac{1}{x_{12}}, d_{3} = \frac{1}{x_{12} x_{23}}, d_{4} = \frac{x_{45}}{x_{12} x_{23} x_{35} + x_{12} x_{24} x_{45}}, d_{5} = \frac{1}{x_{12} x_{23} x_{35} + x_{12} x_{24} x_{45}},\\ 
	&a_{12} = 1, a_{13} = 1, a_{14} = 1, a_{15} = 1,\\ 
	&a_{23} = \frac{x_{12} x_{23} - x_{13}}{x_{12}^{2} x_{23}}, a_{24} = \frac{x_{13} x_{24} x_{45}}{x_{12}^{2} x_{23}^{2} x_{35} + x_{12}^{2} x_{23} x_{24} x_{45}}, a_{25} = 1,\\ 
	&a_{34} = -\frac{x_{24} x_{45}}{x_{12} x_{23}^{2} x_{35} + x_{12} x_{23} x_{24} x_{45}}, a_{35} = -\frac{{\left(x_{12}^{2} - {2} \, x_{12}\right)} x_{23} x_{35} + {\left(x_{12}^{2} - {2} \, x_{12}\right)} x_{24} x_{45} + x_{15}}{x_{12} x_{13} x_{23} x_{35} + x_{12} x_{13} x_{24} x_{45}},\\ 
	&a_{45} = \frac{x_{12} x_{15} x_{23} + {\left(x_{12} x_{13} + {\left(x_{12}^{3} - {2} \, x_{12}^{2}\right)} x_{23}\right)} x_{24} x_{45} + {\left(x_{12} x_{13} x_{23} - x_{13}^{2} + {\left(x_{12}^{3} - {2} \, x_{12}^{2}\right)} x_{23}^{2}\right)} x_{35}}{x_{12}^{2} x_{13} x_{23} x_{24} x_{35} + x_{12}^{2} x_{13} x_{24}^{2} x_{45}}
\end{align*}

Now assume $x_{35}= \frac{-x_{24}x_{45}}{x_{23}}$
\begin{equation*}x=\left(\begin{matrix}
		1 & x_{12} & x_{13} & 0 & x_{15} \\
		0 & 1 & x_{23} & x_{24} & 0 \\
		0 & 0 & 1 & 0 & x_{35}  \\
		0 & 0 & 0 & 1 & x_{45}  \\
		0 & 0 & 0 & 0 & 1 \\
	\end{matrix}\right),x^A=\left(\begin{matrix}
		1 & 1 & 0 & 0 & 0 \\
		0 & 1 & 1 & 0 & 0 \\
		0 & 0 & 1 & 0 & 0 \\
		0 & 0 & 0 & 1 & 1 \\
		0 & 0 & 0 & 0 & 1
	\end{matrix}\right)
\end{equation*} \newline
Where matrix $A$ has entries
\begin{align*}
	&d_{1} = 1, d_{2} = \frac{1}{x_{12}}, d_{3} = \frac{1}{x_{12} x_{23}}, d_{4} = 1, d_{5} = \frac{1}{x_{45}},\\ 
	&a_{12} = 1, a_{13} = 1, a_{14} = 1, a_{15} = 1,\\ 
	&a_{23} = \frac{x_{12} x_{23} - x_{13}}{x_{12}^{2} x_{23}}, a_{24} = \frac{x_{13} x_{24}}{x_{12} x_{23}}, a_{25} = 1,\\ 
	&a_{34} = -\frac{x_{24}}{x_{23}}, a_{35} = -\frac{x_{15} + {\left(x_{12} - 1\right)} x_{45}}{x_{13} x_{45}},\\ 
	&a_{45} = \frac{x_{12} x_{15} x_{23}^{2} + {\left(x_{13}^{2} x_{24} + {\left(x_{12}^{2} - x_{12}\right)} x_{23}^{2}\right)} x_{45}}{x_{12} x_{13} x_{23} x_{24} x_{45}}
\end{align*}

%110110

First assume $x_{23}\neq \frac{-x_{24}x_{45}}{x_{35}}$ and $x_{23}\neq \frac{x_{13}x_{24}}{x_{14}}$
\begin{equation*}x=\left(\begin{matrix}
		1 & x_{12} & x_{13} & x_{14} & 0 \\
		0 & 1 & x_{23} & x_{24} & 0 \\
		0 & 0 & 1 & 0 & x_{35}  \\
		0 & 0 & 0 & 1 & x_{45}  \\
		0 & 0 & 0 & 0 & 1 \\
	\end{matrix}\right),x^A=\left(\begin{matrix}
		1 & 1 & 0 & 0 & 0 \\
		0 & 1 & 1 & 0 & 0 \\
		0 & 0 & 1 & 0 & 1 \\
		0 & 0 & 0 & 1 & 1 \\
		0 & 0 & 0 & 0 & 1
	\end{matrix}\right)
\end{equation*} \newline
Where matrix $A$ has entries
\begin{align*}
	&d_{1} = 1, d_{2} = \frac{1}{x_{12}}, d_{3} = \frac{1}{x_{12} x_{23}}, d_{4} = \frac{x_{45}}{x_{12} x_{23} x_{35} + x_{12} x_{24} x_{45}}, d_{5} = \frac{1}{x_{12} x_{23} x_{35} + x_{12} x_{24} x_{45}},\\ 
	&a_{12} = 1, a_{13} = 1, a_{14} = 1, a_{15} = 1,\\ 
	&a_{23} = \frac{x_{12} x_{23} - x_{13}}{x_{12}^{2} x_{23}}, a_{24} = -\frac{{\left(x_{14} x_{23} - x_{13} x_{24}\right)} x_{45}}{x_{12}^{2} x_{23}^{2} x_{35} + x_{12}^{2} x_{23} x_{24} x_{45}}, a_{25} = 1,\\ 
	&a_{34} = -\frac{x_{24} x_{45}}{x_{12} x_{23}^{2} x_{35} + x_{12} x_{23} x_{24} x_{45}},\\
	&a_{35} = \frac{{\left(x_{12} x_{14} x_{23} - x_{13} x_{14} + {\left(x_{12}^{3} - {2} \, x_{12}^{2}\right)} x_{23} x_{24}\right)} x_{35} + {\left(x_{12} x_{14} x_{24} - x_{14}^{2} + {\left(x_{12}^{3} - {2} \, x_{12}^{2}\right)} x_{24}^{2}\right)} x_{45}}{{\left(x_{12}^{2} x_{14} x_{23}^{2} - x_{12}^{2} x_{13} x_{23} x_{24}\right)} x_{35} + {\left(x_{12}^{2} x_{14} x_{23} x_{24} - x_{12}^{2} x_{13} x_{24}^{2}\right)} x_{45}},\\ 
	&a_{45} = -\frac{{\left(x_{12} x_{13} x_{23} - x_{13}^{2} + {\left(x_{12}^{3} - {2} \, x_{12}^{2}\right)} x_{23}^{2}\right)} x_{35} - {\left(x_{13} x_{14} - {\left(x_{12} x_{13} + {\left(x_{12}^{3} - {2} \, x_{12}^{2}\right)} x_{23}\right)} x_{24}\right)} x_{45}}{{\left(x_{12}^{2} x_{14} x_{23}^{2} - x_{12}^{2} x_{13} x_{23} x_{24}\right)} x_{35} + {\left(x_{12}^{2} x_{14} x_{23} x_{24} - x_{12}^{2} x_{13} x_{24}^{2}\right)} x_{45}}
\end{align*}

Now assume $x_{23}\neq \frac{-x_{24}x_{45}}{x_{35}}$ and $x_{23}= \frac{x_{13}x_{24}}{x_{14}}$ and $x_{45}\neq \frac{-x_{13}x_{35}}{x_{14}}$
\begin{equation*}x=\left(\begin{matrix} 
		1 & x_{12} & x_{13} & x_{14} & 0 \\
		0 & 1 & x_{23} & x_{24} & 0 \\
		0 & 0 & 1 & 0 & x_{35}  \\
		0 & 0 & 0 & 1 & x_{45}  \\
		0 & 0 & 0 & 0 & 1 \\
	\end{matrix}\right),x^A=\left(\begin{matrix}
		1 & 1 & 0 & 0 & 0 \\
		0 & 1 & 1 & 0 & 0 \\
		0 & 0 & 1 & 0 & 1 \\
		0 & 0 & 0 & 1 & 1 \\
		0 & 0 & 0 & 0 & 1
	\end{matrix}\right)
\end{equation*} \newline
Where matrix $A$ has entries
\begin{align*}
	&d_{1} = 1, d_{2} = \frac{1}{x_{12}}, d_{3} = \frac{x_{14}}{x_{12} x_{13} x_{24}}, d_{4} = \frac{x_{14} x_{45}}{x_{12} x_{13} x_{24} x_{35} + x_{12} x_{14} x_{24} x_{45}}, d_{5} = \frac{x_{14}}{x_{12} x_{13} x_{24} x_{35} + x_{12} x_{14} x_{24} x_{45}},\\ 
	&a_{12} = 1, a_{13} = 1, a_{14} = 1, a_{15} = 1,\\ 
	&a_{23} = \frac{x_{12} x_{24} - x_{14}}{x_{12}^{2} x_{24}}, a_{24} = 0, a_{25} = \frac{{2} \, x_{12}^{2} x_{24}^{2} - x_{12} x_{14} x_{24} + x_{14}^{2}}{x_{12}^{3} x_{24}^{2}},\\ 
	&a_{34} = -\frac{x_{14}^{2} x_{45}}{x_{12} x_{13}^{2} x_{24} x_{35} + x_{12} x_{13} x_{14} x_{24} x_{45}}, a_{35} = 1,\\ 
	&a_{45} = -\frac{x_{12}^{2} x_{13} x_{24}^{2} - x_{12} x_{14} x_{24} + x_{14}^{2}}{x_{12}^{2} x_{14} x_{24}^{2}}
\end{align*}

Now assume $x_{23}\neq \frac{-x_{24}x_{45}}{x_{35}}$ and $x_{23}= \frac{x_{13}x_{24}}{x_{14}}$
and $x_{45}= \frac{-x_{13}x_{35}}{x_{14}}$
\begin{equation*}x=\left(\begin{matrix}
		1 & x_{12} & x_{13} & x_{14} & 0 \\
		0 & 1 & x_{23} & x_{24} & 0 \\
		0 & 0 & 1 & 0 & x_{35}  \\
		0 & 0 & 0 & 1 & x_{45}  \\
		0 & 0 & 0 & 0 & 1 \\
	\end{matrix}\right),x^A=\left(\begin{matrix}
		1 & 1 & 0 & 0 & 0 \\
		0 & 1 & 1 & 0 & 0 \\
		0 & 0 & 1 & 0 & 0 \\
		0 & 0 & 0 & 1 & 1 \\
		0 & 0 & 0 & 0 & 1
	\end{matrix}\right)
\end{equation*} \newline
Where matrix $A$ has entries
\begin{align*}
	&d_{1} = 1, d_{2} = \frac{1}{x_{12}}, d_{3} = \frac{x_{14}}{x_{12} x_{13} x_{24}}, d_{4} = 1, d_{5} = -\frac{x_{14}}{x_{13} x_{35}},\\ 
	&a_{12} = 1, a_{13} = 1, a_{14} = 1, a_{15} = 1,\\ 
	&a_{23} = \frac{x_{12} x_{24} - x_{14}}{x_{12}^{2} x_{24}}, a_{24} = 0, a_{25} = \frac{1}{x_{12}},\\ 
	&a_{34} = -\frac{x_{14}}{x_{13}}, a_{35} = 1,\\ 
	&a_{45} = -\frac{x_{13}}{x_{14}}
\end{align*}

Now assume $x_{23}= \frac{-x_{24}x_{45}}{x_{35}}$ and $x_{45}\neq \frac{-x_{13}x_{35}}{x_{14}}$
\begin{equation*}x=\left(\begin{matrix}
		1 & x_{12} & x_{13} & x_{14} & 0 \\
		0 & 1 & x_{23} & x_{24} & 0 \\
		0 & 0 & 1 & 0 & x_{35}  \\
		0 & 0 & 0 & 1 & x_{45}  \\
		0 & 0 & 0 & 0 & 1 \\
	\end{matrix}\right),x^A=\left(\begin{matrix}
		1 & 1 & 0 & 0 & 0 \\
		0 & 1 & 1 & 0 & 0 \\
		0 & 0 & 1 & 0 & 0 \\
		0 & 0 & 0 & 1 & 1 \\
		0 & 0 & 0 & 0 & 1
	\end{matrix}\right)
\end{equation*} \newline
Where matrix $A$ has entries
\begin{align*}
	&d_{1} = 1, d_{2} = \frac{1}{x_{12}}, d_{3} = -\frac{x_{35}}{x_{12} x_{24} x_{45}}, d_{4} = 1, d_{5} = \frac{1}{x_{45}},\\ 
	&a_{12} = 1, a_{13} = 1, a_{14} = 1, a_{15} = 1,\\ 
	&a_{23} = \frac{x_{12} x_{24} x_{45} + x_{13} x_{35}}{x_{12}^{2} x_{24} x_{45}}, a_{24} = -\frac{x_{13} x_{35} + x_{14} x_{45}}{x_{12} x_{45}}, a_{25} = 1,\\ 
	&a_{34} = \frac{x_{35}}{x_{45}}, a_{35} = \frac{x_{13} x_{14} x_{35}^{2} + {\left(x_{14}^{2} - {\left(x_{12}^{2} - x_{12}\right)} x_{24}\right)} x_{35} x_{45}}{x_{12} x_{13} x_{24} x_{35} x_{45} + x_{12} x_{14} x_{24} x_{45}^{2}},\\ 
	&a_{45} = -\frac{x_{13}^{2} x_{35}^{2} + x_{13} x_{14} x_{35} x_{45} + {\left(x_{12}^{2} - x_{12}\right)} x_{24} x_{45}^{2}}{x_{12} x_{13} x_{24} x_{35} x_{45} + x_{12} x_{14} x_{24} x_{45}^{2}}
\end{align*}

Now assume $x_{23}= \frac{-x_{24}x_{45}}{x_{35}}$ and $x_{45}= \frac{-x_{13}x_{35}}{x_{14}}$
\begin{equation*}x=\left(\begin{matrix}
		1 & x_{12} & x_{13} & x_{14} & 0 \\
		0 & 1 & x_{23} & x_{24} & 0 \\
		0 & 0 & 1 & 0 & x_{35}  \\
		0 & 0 & 0 & 1 & x_{45}  \\
		0 & 0 & 0 & 0 & 1 \\
	\end{matrix}\right),x^A=\left(\begin{matrix}
		1 & 1 & 0 & 0 & 0 \\
		0 & 1 & 1 & 0 & 0 \\
		0 & 0 & 1 & 0 & 0 \\
		0 & 0 & 0 & 1 & 1 \\
		0 & 0 & 0 & 0 & 1
	\end{matrix}\right)
\end{equation*} \newline
Where matrix $A$ has entries
\begin{align*}
	&d_{1} = 1, d_{2} = \frac{1}{x_{12}}, d_{3} = \frac{x_{14}}{x_{12} x_{13} x_{24}}, d_{4} = 1, d_{5} = -\frac{x_{14}}{x_{13} x_{35}},\\ 
	&a_{12} = 1, a_{13} = 1, a_{14} = 1, a_{15} = 1,\\ 
	&a_{23} = \frac{x_{12} x_{24} - x_{14}}{x_{12}^{2} x_{24}}, a_{24} = 0, a_{25} = \frac{1}{x_{12}},\\ 
	&a_{34} = -\frac{x_{14}}{x_{13}}, a_{35} = 1,\\ 
	&a_{45} = -\frac{x_{13}}{x_{14}}
\end{align*}

%110111

First assume $x_{23}\neq \frac{-x_{24}x_{45}}{x_{35}}$ and $x_{23}\neq \frac{x_{13}x_{24}}{x_{14}}$
\begin{equation*}x=\left(\begin{matrix}
		1 & x_{12} & x_{13} & x_{14} & x_{15} \\
		0 & 1 & x_{23} & x_{24} & 0 \\
		0 & 0 & 1 & 0 & x_{35}  \\
		0 & 0 & 0 & 1 & x_{45}  \\
		0 & 0 & 0 & 0 & 1 \\
	\end{matrix}\right),x^A=\left(\begin{matrix}
		1 & 1 & 0 & 0 & 0 \\
		0 & 1 & 1 & 0 & 0 \\
		0 & 0 & 1 & 0 & 1 \\
		0 & 0 & 0 & 1 & 1 \\
		0 & 0 & 0 & 0 & 1
	\end{matrix}\right)
\end{equation*} \newline
Where matrix $A$ has entries
\begin{align*}
	&d_{1} = 1, d_{2} = \frac{1}{x_{12}}, d_{3} = \frac{1}{x_{12} x_{23}}, d_{4} = \frac{x_{45}}{x_{12} x_{23} x_{35} + x_{12} x_{24} x_{45}}, d_{5} = \frac{1}{x_{12} x_{23} x_{35} + x_{12} x_{24} x_{45}},\\ 
	&a_{12} = 1, a_{13} = 1, a_{14} = 1, a_{15} = 1,\\ 
	&a_{23} = \frac{x_{12} x_{23} - x_{13}}{x_{12}^{2} x_{23}}, a_{24} = -\frac{{\left(x_{14} x_{23} - x_{13} x_{24}\right)} x_{45}}{x_{12}^{2} x_{23}^{2} x_{35} + x_{12}^{2} x_{23} x_{24} x_{45}}, a_{25} = 1,\\ 
	&a_{34} = -\frac{x_{24} x_{45}}{x_{12} x_{23}^{2} x_{35} + x_{12} x_{23} x_{24} x_{45}}, \\
	&a_{35} = \frac{x_{12} x_{15} x_{24} + {\left(x_{12} x_{14} x_{23} - x_{13} x_{14} + {\left(x_{12}^{3} - {2} \, x_{12}^{2}\right)} x_{23} x_{24}\right)} x_{35} + {\left(x_{12} x_{14} x_{24} - x_{14}^{2} + {\left(x_{12}^{3} - {2} \, x_{12}^{2}\right)} x_{24}^{2}\right)} x_{45}}{{\left(x_{12}^{2} x_{14} x_{23}^{2} - x_{12}^{2} x_{13} x_{23} x_{24}\right)} x_{35} + {\left(x_{12}^{2} x_{14} x_{23} x_{24} - x_{12}^{2} x_{13} x_{24}^{2}\right)} x_{45}},\\ 
	&a_{45} = -\frac{x_{12} x_{15} x_{23} + {\left(x_{12} x_{13} x_{23} - x_{13}^{2} + {\left(x_{12}^{3} - {2} \, x_{12}^{2}\right)} x_{23}^{2}\right)} x_{35} - {\left(x_{13} x_{14} - {\left(x_{12} x_{13} + {\left(x_{12}^{3} - {2} \, x_{12}^{2}\right)} x_{23}\right)} x_{24}\right)} x_{45}}{{\left(x_{12}^{2} x_{14} x_{23}^{2} - x_{12}^{2} x_{13} x_{23} x_{24}\right)} x_{35} + {\left(x_{12}^{2} x_{14} x_{23} x_{24} - x_{12}^{2} x_{13} x_{24}^{2}\right)} x_{45}}
\end{align*}

Now assume $x_{23}\neq \frac{-x_{24}x_{45}}{x_{35}}$ and $x_{23}= \frac{x_{13}x_{24}}{x_{14}}$ and $x_{45}\neq \frac{-x_{13}x_{35}}{x_{14}}$

\begin{equation*}x=\left(\begin{matrix}
		1 & x_{12} & x_{13} & x_{14} & x_{15} \\
		0 & 1 & x_{23} & x_{24} & 0 \\
		0 & 0 & 1 & 0 & x_{35}  \\
		0 & 0 & 0 & 1 & x_{45}  \\
		0 & 0 & 0 & 0 & 1 \\
	\end{matrix}\right),x^A=\left(\begin{matrix}
		1 & 1 & 0 & 0 & 0 \\
		0 & 1 & 1 & 0 & 0 \\
		0 & 0 & 1 & 0 & 1 \\
		0 & 0 & 0 & 1 & 1 \\
		0 & 0 & 0 & 0 & 1
	\end{matrix}\right)
\end{equation*} \newline
Where matrix $A$ has entries
\begin{align*}
	&d_{1} = 1, d_{2} = \frac{1}{x_{12}}, d_{3} = \frac{x_{14}}{x_{12} x_{13} x_{24}}, d_{4} = \frac{x_{14} x_{45}}{x_{12} x_{13} x_{24} x_{35} + x_{12} x_{14} x_{24} x_{45}}, d_{5} = \frac{x_{14}}{x_{12} x_{13} x_{24} x_{35} + x_{12} x_{14} x_{24} x_{45}},\\ 
	&a_{12} = 1, a_{13} = 1, a_{14} = 1, a_{15} = 1,\\ 
	&a_{23} = \frac{x_{12} x_{24} - x_{14}}{x_{12}^{2} x_{24}}, a_{24} = 0, \\
	&a_{25} = -\frac{x_{12} x_{14} x_{15} x_{24} - {\left({2} \, x_{12}^{2} x_{13} x_{24}^{2} - x_{12} x_{13} x_{14} x_{24} + x_{13} x_{14}^{2}\right)} x_{35} - {\left({2} \, x_{12}^{2} x_{14} x_{24}^{2} - x_{12} x_{14}^{2} x_{24} + x_{14}^{3}\right)} x_{45}}{x_{12}^{3} x_{13} x_{24}^{2} x_{35} + x_{12}^{3} x_{14} x_{24}^{2} x_{45}},\\ 
	&a_{34} = -\frac{x_{14}^{2} x_{45}}{x_{12} x_{13}^{2} x_{24} x_{35} + x_{12} x_{13} x_{14} x_{24} x_{45}}, a_{35} = 1,\\ 
	&a_{45} = -\frac{x_{12}^{2} x_{13} x_{24}^{2} - x_{12} x_{14} x_{24} + x_{14}^{2}}{x_{12}^{2} x_{14} x_{24}^{2}}
\end{align*}

Now assume $x_{23}\neq \frac{-x_{24}x_{45}}{x_{35}}$ and $x_{23}= \frac{x_{13}x_{24}}{x_{14}}$
and $x_{45}= \frac{-x_{13}x_{35}}{x_{14}}$
\begin{equation*}x=\left(\begin{matrix}
		1 & x_{12} & x_{13} & x_{14} & x_{15} \\
		0 & 1 & x_{23} & x_{24} & 0 \\
		0 & 0 & 1 & 0 & x_{35}  \\
		0 & 0 & 0 & 1 & x_{45}  \\
		0 & 0 & 0 & 0 & 1 \\
	\end{matrix}\right),x^A=\left(\begin{matrix}
		1 & 1 & 0 & 0 & 0 \\
		0 & 1 & 1 & 0 & 0 \\
		0 & 0 & 1 & 0 & 0 \\
		0 & 0 & 0 & 1 & 1 \\
		0 & 0 & 0 & 0 & 1 
	\end{matrix}\right)
\end{equation*} \newline
Where matrix $A$ has entries
\begin{align*}
	&d_{1} = 1, d_{2} = \frac{1}{x_{12}}, d_{3} = \frac{x_{14}}{x_{12} x_{13} x_{24}}, d_{4} = 1, d_{5} = -\frac{x_{14}}{x_{13} x_{35}},\\ 
	&a_{12} = 1, a_{13} = 1, a_{14} = 1, a_{15} = 1,\\ 
	&a_{23} = \frac{x_{12} x_{24} - x_{14}}{x_{12}^{2} x_{24}}, a_{24} = 0, a_{25} = \frac{x_{14} x_{15} + x_{13} x_{35}}{x_{12} x_{13} x_{35}},\\ 
	&a_{34} = -\frac{x_{14}}{x_{13}}, a_{35} = 1,\\ 
	&a_{45} = -\frac{x_{13}}{x_{14}}
\end{align*}

Now assume $x_{23}= \frac{-x_{24}x_{45}}{x_{35}}$ and $x_{45}\neq \frac{-x_{13}x_{35}}{x_{14}}$

\begin{equation*}x=\left(\begin{matrix}
		1 & x_{12} & x_{13} & x_{14} & x_{15} \\
		0 & 1 & x_{23} & x_{24} & 0 \\
		0 & 0 & 1 & 0 & x_{35}  \\
		0 & 0 & 0 & 1 & x_{45}  \\
		0 & 0 & 0 & 0 & 1 \\
	\end{matrix}\right),x^A=\left(\begin{matrix}
		1 & 1 & 0 & 0 & 0 \\
		0 & 1 & 1 & 0 & 0 \\
		0 & 0 & 1 & 0 & 0 \\
		0 & 0 & 0 & 1 & 1 \\
		0 & 0 & 0 & 0 & 1
	\end{matrix}\right)
\end{equation*} \newline
Where matrix $A$ has entries
\begin{align*}
	&d_{1} = 1, d_{2} = \frac{1}{x_{12}}, d_{3} = -\frac{x_{35}}{x_{12} x_{24} x_{45}}, d_{4} = 1, d_{5} = \frac{1}{x_{45}},\\ 
	&a_{12} = 1, a_{13} = 1, a_{14} = 1, a_{15} = 1,\\ 
	&a_{23} = \frac{x_{12} x_{24} x_{45} + x_{13} x_{35}}{x_{12}^{2} x_{24} x_{45}}, a_{24} = -\frac{x_{13} x_{35} + x_{14} x_{45}}{x_{12} x_{45}}, a_{25} = 1,\\ 
	&a_{34} = \frac{x_{35}}{x_{45}}, a_{35} = -\frac{x_{12} x_{15} x_{24} x_{35} - x_{13} x_{14} x_{35}^{2} - {\left(x_{14}^{2} - {\left(x_{12}^{2} - x_{12}\right)} x_{24}\right)} x_{35} x_{45}}{x_{12} x_{13} x_{24} x_{35} x_{45} + x_{12} x_{14} x_{24} x_{45}^{2}},\\ 
	&a_{45} = -\frac{x_{13}^{2} x_{35}^{2} + {\left(x_{12}^{2} - x_{12}\right)} x_{24} x_{45}^{2} + {\left(x_{12} x_{15} x_{24} + x_{13} x_{14} x_{35}\right)} x_{45}}{x_{12} x_{13} x_{24} x_{35} x_{45} + x_{12} x_{14} x_{24} x_{45}^{2}}
\end{align*}

Now assume $x_{23}= \frac{-x_{24}x_{45}}{x_{35}}$ and $x_{45}= \frac{-x_{13}x_{35}}{x_{14}}$

\begin{equation*}x=\left(\begin{matrix}
		1 & x_{12} & x_{13} & x_{14} & x_{15} \\
		0 & 1 & x_{23} & x_{24} & 0 \\
		0 & 0 & 1 & 0 & x_{35}  \\
		0 & 0 & 0 & 1 & x_{45}  \\
		0 & 0 & 0 & 0 & 1 \\
	\end{matrix}\right),x^A=\left(\begin{matrix}
		1 & 1 & 0 & 0 & 0 \\
		0 & 1 & 1 & 0 & 0 \\
		0 & 0 & 1 & 0 & 0 \\
		0 & 0 & 0 & 1 & 1 \\
		0 & 0 & 0 & 0 & 1
	\end{matrix}\right)
\end{equation*} \newline
Where matrix $A$ has entries
\begin{align*}
	&d_{1} = 1, d_{2} = \frac{1}{x_{12}}, d_{3} = \frac{x_{14}}{x_{12} x_{13} x_{24}}, d_{4} = 1, d_{5} = -\frac{x_{14}}{x_{13} x_{35}},\\ 
	&a_{12} = 1, a_{13} = 1, a_{14} = 1, a_{15} = 1,\\ 
	&a_{23} = \frac{x_{12} x_{24} - x_{14}}{x_{12}^{2} x_{24}}, a_{24} = 0, a_{25} = \frac{x_{14} x_{15} + x_{13} x_{35}}{x_{12} x_{13} x_{35}},\\ 
	&a_{34} = -\frac{x_{14}}{x_{13}}, a_{35} = 1,\\ 
	&a_{45} = -\frac{x_{13}}{x_{14}}
\end{align*}

%111000

First assume $x_{23}\neq \frac{-x_{24}x_{45}}{x_{35}}$
\begin{equation*}x=\left(\begin{matrix}
		1 & x_{12} & 0 & 0 & 0 \\
		0 & 1 & x_{23} & x_{24} & x_{25} \\
		0 & 0 & 1 & 0 & x_{35}  \\
		0 & 0 & 0 & 1 & x_{45}  \\
		0 & 0 & 0 & 0 & 1 \\
	\end{matrix}\right),x^A=\left(\begin{matrix}
		1 & 1 & 0 & 0 & 0 \\
		0 & 1 & 1 & 0 & 0 \\
		0 & 0 & 1 & 0 & 1 \\
		0 & 0 & 0 & 1 & 1 \\
		0 & 0 & 0 & 0 & 1
	\end{matrix}\right)
\end{equation*} \newline
Where matrix $A$ has entries
\begin{align*}
	&d_{1} = 1, d_{2} = \frac{1}{x_{12}}, d_{3} = \frac{1}{x_{12} x_{23}}, d_{4} = \frac{x_{45}}{x_{12} x_{23} x_{35} + x_{12} x_{24} x_{45}}, d_{5} = \frac{1}{x_{12} x_{23} x_{35} + x_{12} x_{24} x_{45}},\\ 
	&a_{12} = 1, a_{13} = 1, a_{14} = 1, a_{15} = 1,\\ 
	&a_{23} = \frac{1}{x_{12}}, a_{24} = 0, a_{25} = \frac{2}{x_{12}},\\ 
	&a_{34} = -\frac{x_{24} x_{45}}{x_{12} x_{23}^{2} x_{35} + x_{12} x_{23} x_{24} x_{45}}, a_{35} = 1,\\ 
	&a_{45} = -\frac{{\left(x_{12} x_{23} - 1\right)} x_{24} x_{45} + x_{25} + {\left(x_{12} x_{23}^{2} - x_{23}\right)} x_{35}}{x_{12} x_{23} x_{24} x_{35} + x_{12} x_{24}^{2} x_{45}}
\end{align*}

Now assume $x_{23}= \frac{-x_{24}x_{45}}{x_{35}}$
\begin{equation*}x=\left(\begin{matrix}
		1 & x_{12} & 0 & 0 & 0 \\
		0 & 1 & x_{23} & x_{24} & x_{25} \\
		0 & 0 & 1 & 0 & x_{35}  \\
		0 & 0 & 0 & 1 & x_{45}  \\
		0 & 0 & 0 & 0 & 1 \\
	\end{matrix}\right),x^A=\left(\begin{matrix}
		1 & 1 & 0 & 0 & 0 \\
		0 & 1 & 1 & 0 & 0 \\
		0 & 0 & 1 & 0 & 0 \\
		0 & 0 & 0 & 1 & 1 \\
		0 & 0 & 0 & 0 & 1
	\end{matrix}\right)
\end{equation*} \newline
Where matrix $A$ has entries
\begin{align*}
	&d_{1} = 1, d_{2} = \frac{1}{x_{12}}, d_{3} = -\frac{x_{35}}{x_{12} x_{24} x_{45}}, d_{4} = 1, d_{5} = \frac{1}{x_{45}},\\ 
	&a_{12} = 1, a_{13} = 1, a_{14} = 1, a_{15} = 1,\\ 
	&a_{23} = \frac{1}{x_{12}}, a_{24} = 0, a_{25} = \frac{1}{x_{12}},\\ 
	&a_{34} = \frac{x_{35}}{x_{45}}, a_{35} = 1,\\ 
	&a_{45} = \frac{x_{24} x_{45}^{2} - x_{25} x_{35}}{x_{24} x_{35} x_{45}}
\end{align*}

%111001

First assume $x_{23}\neq \frac{-x_{24}x_{45}}{x_{35}}$
\begin{equation*}x=\left(\begin{matrix}
		1 & x_{12} & 0 & 0 & x_{15} \\
		0 & 1 & x_{23} & x_{24} & x_{25} \\
		0 & 0 & 1 & 0 & x_{35}  \\
		0 & 0 & 0 & 1 & x_{45}  \\
		0 & 0 & 0 & 0 & 1 \\
	\end{matrix}\right),x^A=\left(\begin{matrix}
		1 & 1 & 0 & 0 & 0 \\
		0 & 1 & 1 & 0 & 0 \\
		0 & 0 & 1 & 0 & 1 \\
		0 & 0 & 0 & 1 & 1 \\
		0 & 0 & 0 & 0 & 1
	\end{matrix}\right)
\end{equation*} \newline
Where matrix $A$ has entries
\begin{align*}
	&d_{1} = 1, d_{2} = \frac{1}{x_{12}}, d_{3} = \frac{1}{x_{12} x_{23}}, d_{4} = \frac{x_{45}}{x_{12} x_{23} x_{35} + x_{12} x_{24} x_{45}}, d_{5} = \frac{1}{x_{12} x_{23} x_{35} + x_{12} x_{24} x_{45}},\\ 
	&a_{12} = 1, a_{13} = 1, a_{14} = 1, a_{15} = 1,\\ 
	&a_{23} = \frac{1}{x_{12}}, a_{24} = 0, a_{25} = \frac{{2} \, x_{12} x_{23} x_{35} + {2} \, x_{12} x_{24} x_{45} - x_{15}}{x_{12}^{2} x_{23} x_{35} + x_{12}^{2} x_{24} x_{45}},\\ 
	&a_{34} = -\frac{x_{24} x_{45}}{x_{12} x_{23}^{2} x_{35} + x_{12} x_{23} x_{24} x_{45}}, a_{35} = 1,\\ 
	&a_{45} = -\frac{{\left(x_{12} x_{23} - 1\right)} x_{24} x_{45} + x_{25} + {\left(x_{12} x_{23}^{2} - x_{23}\right)} x_{35}}{x_{12} x_{23} x_{24} x_{35} + x_{12} x_{24}^{2} x_{45}}
\end{align*}

Now assume $x_{23}= \frac{-x_{24}x_{45}}{x_{35}}$
\begin{equation*}x=\left(\begin{matrix}
		1 & x_{12} & 0 & 0 & x_{15} \\
		0 & 1 & x_{23} & x_{24} & x_{25} \\
		0 & 0 & 1 & 0 & x_{35}  \\
		0 & 0 & 0 & 1 & x_{45}  \\
		0 & 0 & 0 & 0 & 1 \\
	\end{matrix}\right),x^A=\left(\begin{matrix}
		1 & 1 & 0 & 0 & 0 \\
		0 & 1 & 1 & 0 & 0 \\
		0 & 0 & 1 & 0 & 0 \\
		0 & 0 & 0 & 1 & 1 \\
		0 & 0 & 0 & 0 & 1
	\end{matrix}\right)
\end{equation*} \newline
Where matrix $A$ has entries
\begin{align*}
	&d_{1} = 1, d_{2} = \frac{1}{x_{12}}, d_{3} = -\frac{x_{35}}{x_{12} x_{24} x_{45}}, d_{4} = 1, d_{5} = \frac{1}{x_{45}},\\ 
	&a_{12} = 1, a_{13} = 1, a_{14} = 1, a_{15} = 1,\\ 
	&a_{23} = \frac{1}{x_{12}}, a_{24} = 0, a_{25} = -\frac{x_{15} - x_{45}}{x_{12} x_{45}},\\ 
	&a_{34} = \frac{x_{35}}{x_{45}}, a_{35} = 1,\\ 
	&a_{45} = \frac{x_{24} x_{45}^{2} - x_{25} x_{35}}{x_{24} x_{35} x_{45}}
\end{align*}

%111010

First assume $x_{23}\neq \frac{-x_{24}x_{45}}{x_{35}}$
\begin{equation*}x=\left(\begin{matrix}
		1 & x_{12} & 0 & x_{14} & 0 \\
		0 & 1 & x_{23} & x_{24} & x_{25} \\
		0 & 0 & 1 & 0 & x_{35}  \\
		0 & 0 & 0 & 1 & x_{45}  \\
		0 & 0 & 0 & 0 & 1 \\
	\end{matrix}\right),x^A=\left(\begin{matrix}
		1 & 1 & 0 & 0 & 0 \\
		0 & 1 & 1 & 0 & 0 \\
		0 & 0 & 1 & 0 & 1 \\
		0 & 0 & 0 & 1 & 1 \\
		0 & 0 & 0 & 0 & 1
	\end{matrix}\right)
\end{equation*} \newline
Where matrix $A$ has entries
\begin{align*}
	&d_{1} = 1, d_{2} = \frac{1}{x_{12}}, d_{3} = \frac{1}{x_{12} x_{23}}, d_{4} = \frac{x_{45}}{x_{12} x_{23} x_{35} + x_{12} x_{24} x_{45}}, d_{5} = \frac{1}{x_{12} x_{23} x_{35} + x_{12} x_{24} x_{45}},\\ 
	&a_{12} = 1, a_{13} = 1, a_{14} = 1, a_{15} = 1,\\ 
	&a_{23} = \frac{1}{x_{12}}, a_{24} = -\frac{x_{14} x_{45}}{x_{12}^{2} x_{23} x_{35} + x_{12}^{2} x_{24} x_{45}}, a_{25} = 1,\\ 
	&a_{34} = -\frac{x_{24} x_{45}}{x_{12} x_{23}^{2} x_{35} + x_{12} x_{23} x_{24} x_{45}}, \\
	&a_{35} = -\frac{x_{12} x_{14} x_{25} - {\left(x_{12} x_{14} x_{23} + {\left(x_{12}^{3} - {2} \, x_{12}^{2}\right)} x_{23} x_{24}\right)} x_{35} - {\left(x_{12} x_{14} x_{24} - x_{14}^{2} + {\left(x_{12}^{3} - {2} \, x_{12}^{2}\right)} x_{24}^{2}\right)} x_{45}}{x_{12}^{2} x_{14} x_{23}^{2} x_{35} + x_{12}^{2} x_{14} x_{23} x_{24} x_{45}},\\ 
	&a_{45} = -\frac{x_{12} - 2}{x_{14}}
\end{align*}

Now assume $x_{23}= \frac{-x_{24}x_{45}}{x_{35}}$
\begin{equation*}x=\left(\begin{matrix}
		1 & x_{12} & 0 & x_{14} & 0 \\
		0 & 1 & x_{23} & x_{24} & x_{25} \\
		0 & 0 & 1 & 0 & x_{35}  \\
		0 & 0 & 0 & 1 & x_{45}  \\
		0 & 0 & 0 & 0 & 1 \\
	\end{matrix}\right),x^A=\left(\begin{matrix}
		1 & 1 & 0 & 0 & 0 \\
		0 & 1 & 1 & 0 & 0 \\
		0 & 0 & 1 & 0 & 0 \\
		0 & 0 & 0 & 1 & 1 \\
		0 & 0 & 0 & 0 & 1
	\end{matrix}\right)
\end{equation*} \newline
Where matrix $A$ has entries
\begin{align*}
	&d_{1} = 1, d_{2} = \frac{1}{x_{12}}, d_{3} = -\frac{x_{35}}{x_{12} x_{24} x_{45}}, d_{4} = 1, d_{5} = \frac{1}{x_{45}},\\ 
	&a_{12} = 1, a_{13} = 1, a_{14} = 1, a_{15} = 1,\\ 
	&a_{23} = \frac{1}{x_{12}}, a_{24} = -\frac{x_{14}}{x_{12}}, a_{25} = 1,\\ 
	&a_{34} = \frac{x_{35}}{x_{45}}, a_{35} = \frac{x_{12} x_{14} x_{25} x_{35} + {\left(x_{14}^{2} - {\left(x_{12}^{2} - x_{12}\right)} x_{24}\right)} x_{35} x_{45}}{x_{12} x_{14} x_{24} x_{45}^{2}},\\ 
	&a_{45} = -\frac{x_{12} - 1}{x_{14}}
\end{align*}

%111011

First assume $x_{23}\neq \frac{-x_{24}x_{45}}{x_{35}}$
\begin{equation*}x=\left(\begin{matrix}
		1 & x_{12} & 0 & x_{14} & x_{15} \\
		0 & 1 & x_{23} & x_{24} & x_{25} \\
		0 & 0 & 1 & 0 & x_{35}  \\
		0 & 0 & 0 & 1 & x_{45}  \\
		0 & 0 & 0 & 0 & 1 \\
	\end{matrix}\right),x^A=\left(\begin{matrix}
		1 & 1 & 0 & 0 & 0 \\
		0 & 1 & 1 & 0 & 0 \\
		0 & 0 & 1 & 0 & 1 \\
		0 & 0 & 0 & 1 & 1 \\
		0 & 0 & 0 & 0 & 1
	\end{matrix}\right)
\end{equation*} \newline
Where matrix $A$ has entries
\begin{align*}
	&d_{1} = 1, d_{2} = \frac{1}{x_{12}}, d_{3} = \frac{1}{x_{12} x_{23}}, d_{4} = \frac{x_{45}}{x_{12} x_{23} x_{35} + x_{12} x_{24} x_{45}}, d_{5} = \frac{1}{x_{12} x_{23} x_{35} + x_{12} x_{24} x_{45}},\\ 
	&a_{12} = 1, a_{13} = 1, a_{14} = 1, a_{15} = 1,\\ 
	&a_{23} = \frac{1}{x_{12}}, a_{24} = -\frac{x_{14} x_{45}}{x_{12}^{2} x_{23} x_{35} + x_{12}^{2} x_{24} x_{45}}, a_{25} = 1,\\ 
	&a_{34} = -\frac{x_{24} x_{45}}{x_{12} x_{23}^{2} x_{35} + x_{12} x_{23} x_{24} x_{45}}, \\
	&a_{35} = \frac{x_{12} x_{15} x_{24} - x_{12} x_{14} x_{25} + {\left(x_{12} x_{14} x_{23} + {\left(x_{12}^{3} - {2} \, x_{12}^{2}\right)} x_{23} x_{24}\right)} x_{35} + {\left(x_{12} x_{14} x_{24} - x_{14}^{2} + {\left(x_{12}^{3} - {2} \, x_{12}^{2}\right)} x_{24}^{2}\right)} x_{45}}{x_{12}^{2} x_{14} x_{23}^{2} x_{35} + x_{12}^{2} x_{14} x_{23} x_{24} x_{45}},\\ 
	&a_{45} = -\frac{{\left(x_{12}^{2} - {2} \, x_{12}\right)} x_{23} x_{35} + {\left(x_{12}^{2} - {2} \, x_{12}\right)} x_{24} x_{45} + x_{15}}{x_{12} x_{14} x_{23} x_{35} + x_{12} x_{14} x_{24} x_{45}}
\end{align*}
Now assume $x_{23}=\frac{-x_{24}x_{45}}{x_{35}}$
\begin{equation*}x=\left(\begin{matrix}
		1 & x_{12} & 0 & x_{14} & x_{15} \\
		0 & 1 & x_{23} & x_{24} & x_{25} \\
		0 & 0 & 1 & 0 & x_{35}  \\
		0 & 0 & 0 & 1 & x_{45}  \\
		0 & 0 & 0 & 0 & 1 \\
	\end{matrix}\right),x^A=\left(\begin{matrix}
		1 & 1 & 0 & 0 & 0 \\
		0 & 1 & 1 & 0 & 0 \\
		0 & 0 & 1 & 0 & 0 \\
		0 & 0 & 0 & 1 & 1 \\
		0 & 0 & 0 & 0 & 1
	\end{matrix}\right)
\end{equation*} \newline
Where matrix $A$ has entries
\begin{align*}
	&d_{1} = 1, d_{2} = \frac{1}{x_{12}}, d_{3} = -\frac{x_{35}}{x_{12} x_{24} x_{45}}, d_{4} = 1, d_{5} = \frac{1}{x_{45}},\\ 
	&a_{12} = 1, a_{13} = 1, a_{14} = 1, a_{15} = 1,\\ 
	&a_{23} = \frac{1}{x_{12}}, a_{24} = -\frac{x_{14}}{x_{12}}, a_{25} = 1,\\ 
	&a_{34} = \frac{x_{35}}{x_{45}}, a_{35} = \frac{{\left(x_{14}^{2} - {\left(x_{12}^{2} - x_{12}\right)} x_{24}\right)} x_{35} x_{45} - {\left(x_{12} x_{15} x_{24} - x_{12} x_{14} x_{25}\right)} x_{35}}{x_{12} x_{14} x_{24} x_{45}^{2}},\\ 
	&a_{45} = -\frac{x_{15} + {\left(x_{12} - 1\right)} x_{45}}{x_{14} x_{45}}
\end{align*}

%111100
First assume $x_{23}\neq \frac{-x_{24}x_{45}}{x_{35}}$
\begin{equation*}x=\left(\begin{matrix}
		1 & x_{12} & x_{13} & 0 & 0 \\
		0 & 1 & x_{23} & x_{24} & x_{25} \\
		0 & 0 & 1 & 0 & x_{35}  \\
		0 & 0 & 0 & 1 & x_{45}  \\
		0 & 0 & 0 & 0 & 1 \\
	\end{matrix}\right),x^A=\left(\begin{matrix}
		1 & 1 & 0 & 0 & 0 \\
		0 & 1 & 1 & 0 & 0 \\
		0 & 0 & 1 & 0 & 1 \\
		0 & 0 & 0 & 1 & 1 \\
		0 & 0 & 0 & 0 & 1
	\end{matrix}\right)
\end{equation*} \newline
Where matrix $A$ has entries
\begin{align*}
	&d_{1} = 1, d_{2} = \frac{1}{x_{12}}, d_{3} = \frac{1}{x_{12} x_{23}}, d_{4} = \frac{x_{45}}{x_{12} x_{23} x_{35} + x_{12} x_{24} x_{45}}, d_{5} = \frac{1}{x_{12} x_{23} x_{35} + x_{12} x_{24} x_{45}},\\ 
	&a_{12} = 1, a_{13} = 1, a_{14} = 1, a_{15} = 1,\\ 
	&a_{23} = \frac{x_{12} x_{23} - x_{13}}{x_{12}^{2} x_{23}}, a_{24} = \frac{x_{13} x_{24} x_{45}}{x_{12}^{2} x_{23}^{2} x_{35} + x_{12}^{2} x_{23} x_{24} x_{45}}, a_{25} = 1,\\ 
	&a_{34} = -\frac{x_{24} x_{45}}{x_{12} x_{23}^{2} x_{35} + x_{12} x_{23} x_{24} x_{45}}, a_{35} = -\frac{x_{12} - 2}{x_{13}},\\ 
	&a_{45} = -\frac{x_{12} x_{13} x_{25} - {\left(x_{12} x_{13} + {\left(x_{12}^{3} - {2} \, x_{12}^{2}\right)} x_{23}\right)} x_{24} x_{45} - {\left(x_{12} x_{13} x_{23} - x_{13}^{2} + {\left(x_{12}^{3} - {2} \, x_{12}^{2}\right)} x_{23}^{2}\right)} x_{35}}{x_{12}^{2} x_{13} x_{23} x_{24} x_{35} + x_{12}^{2} x_{13} x_{24}^{2} x_{45}}
\end{align*}

Now assume $x_{23}= \frac{-x_{24}x_{45}}{x_{35}}$
\begin{equation*}x=\left(\begin{matrix}
		1 & x_{12} & x_{13} & 0 & 0 \\
		0 & 1 & x_{23} & x_{24} & x_{25} \\
		0 & 0 & 1 & 0 & x_{35}  \\
		0 & 0 & 0 & 1 & x_{45}  \\
		0 & 0 & 0 & 0 & 1 \\
	\end{matrix}\right),x^A=\left(\begin{matrix}
		1 & 1 & 0 & 0 & 0 \\
		0 & 1 & 1 & 0 & 0 \\
		0 & 0 & 1 & 0 & 0 \\
		0 & 0 & 0 & 1 & 1 \\
		0 & 0 & 0 & 0 & 1
	\end{matrix}\right)
\end{equation*} \newline
Where matrix $A$ has entries
\begin{align*}
	&d_{1} = 1, d_{2} = \frac{1}{x_{12}}, d_{3} = -\frac{x_{35}}{x_{12} x_{24} x_{45}}, d_{4} = 1, d_{5} = \frac{1}{x_{45}},\\ 
	&a_{12} = 1, a_{13} = 1, a_{14} = 1, a_{15} = 1,\\ 
	&a_{23} = \frac{x_{12} x_{24} x_{45} + x_{13} x_{35}}{x_{12}^{2} x_{24} x_{45}}, a_{24} = -\frac{x_{13} x_{35}}{x_{12} x_{45}}, a_{25} = 1,\\ 
	&a_{34} = \frac{x_{35}}{x_{45}}, a_{35} = -\frac{x_{12} - 1}{x_{13}},\\ 
	&a_{45} = -\frac{x_{12} x_{13} x_{25} x_{35} + x_{13}^{2} x_{35}^{2} + {\left(x_{12}^{2} - x_{12}\right)} x_{24} x_{45}^{2}}{x_{12} x_{13} x_{24} x_{35} x_{45}}
\end{align*}

%111101
First assume $x_{23}\neq \frac{-x_{24}x_{45}}{x_{35}}$
\begin{equation*}x=\left(\begin{matrix}
		1 & x_{12} & x_{13} & 0 & x_{15} \\
		0 & 1 & x_{23} & x_{24} & x_{25} \\
		0 & 0 & 1 & 0 & x_{35}  \\
		0 & 0 & 0 & 1 & x_{45}  \\
		0 & 0 & 0 & 0 & 1 \\
	\end{matrix}\right),x^A=\left(\begin{matrix}
		1 & 1 & 0 & 0 & 0 \\
		0 & 1 & 1 & 0 & 0 \\
		0 & 0 & 1 & 0 & 1 \\
		0 & 0 & 0 & 1 & 1 \\
		0 & 0 & 0 & 0 & 1
	\end{matrix}\right)
\end{equation*} \newline
Where matrix $A$ has entries
\begin{align*}
	&d_{1} = 1, d_{2} = \frac{1}{x_{12}}, d_{3} = \frac{1}{x_{12} x_{23}}, d_{4} = \frac{x_{45}}{x_{12} x_{23} x_{35} + x_{12} x_{24} x_{45}}, d_{5} = \frac{1}{x_{12} x_{23} x_{35} + x_{12} x_{24} x_{45}},\\ 
	&a_{12} = 1, a_{13} = 1, a_{14} = 1, a_{15} = 1,\\ 
	&a_{23} = \frac{x_{12} x_{23} - x_{13}}{x_{12}^{2} x_{23}}, a_{24} = \frac{x_{13} x_{24} x_{45}}{x_{12}^{2} x_{23}^{2} x_{35} + x_{12}^{2} x_{23} x_{24} x_{45}}, a_{25} = 1,\\ 
	&a_{34} = -\frac{x_{24} x_{45}}{x_{12} x_{23}^{2} x_{35} + x_{12} x_{23} x_{24} x_{45}}, a_{35} = -\frac{{\left(x_{12}^{2} - {2} \, x_{12}\right)} x_{23} x_{35} + {\left(x_{12}^{2} - {2} \, x_{12}\right)} x_{24} x_{45} + x_{15}}{x_{12} x_{13} x_{23} x_{35} + x_{12} x_{13} x_{24} x_{45}},\\ 
	&a_{45} = \frac{x_{12} x_{15} x_{23} - x_{12} x_{13} x_{25} + {\left(x_{12} x_{13} + {\left(x_{12}^{3} - {2} \, x_{12}^{2}\right)} x_{23}\right)} x_{24} x_{45} + {\left(x_{12} x_{13} x_{23} - x_{13}^{2} + {\left(x_{12}^{3} - {2} \, x_{12}^{2}\right)} x_{23}^{2}\right)} x_{35}}{x_{12}^{2} x_{13} x_{23} x_{24} x_{35} + x_{12}^{2} x_{13} x_{24}^{2} x_{45}}
\end{align*}

Now assume $x_{23}= \frac{-x_{24}x_{45}}{x_{35}}$
\begin{equation*}x=\left(\begin{matrix}
		1 & x_{12} & x_{13} & 0 & x_{15} \\
		0 & 1 & x_{23} & x_{24} & x_{25} \\
		0 & 0 & 1 & 0 & x_{35}  \\
		0 & 0 & 0 & 1 & x_{45}  \\
		0 & 0 & 0 & 0 & 1 \\
	\end{matrix}\right),x^A=\left(\begin{matrix}
		1 & 1 & 0 & 0 & 0 \\
		0 & 1 & 1 & 0 & 0 \\
		0 & 0 & 1 & 0 & 0 \\
		0 & 0 & 0 & 1 & 1 \\
		0 & 0 & 0 & 0 & 1
	\end{matrix}\right)
\end{equation*} \newline
Where matrix $A$ has entries
\begin{align*}
	&d_{1} = 1, d_{2} = \frac{1}{x_{12}}, d_{3} = -\frac{x_{35}}{x_{12} x_{24} x_{45}}, d_{4} = 1, d_{5} = \frac{1}{x_{45}},\\ 
	&a_{12} = 1, a_{13} = 1, a_{14} = 1, a_{15} = 1,\\ 
	&a_{23} = \frac{x_{12} x_{24} x_{45} + x_{13} x_{35}}{x_{12}^{2} x_{24} x_{45}}, a_{24} = -\frac{x_{13} x_{35}}{x_{12} x_{45}}, a_{25} = 1,\\ 
	&a_{34} = \frac{x_{35}}{x_{45}}, a_{35} = -\frac{x_{15} + {\left(x_{12} - 1\right)} x_{45}}{x_{13} x_{45}},\\ 
	&a_{45} = -\frac{x_{12} x_{13} x_{25} x_{35} + x_{13}^{2} x_{35}^{2} + x_{12} x_{15} x_{24} x_{45} + {\left(x_{12}^{2} - x_{12}\right)} x_{24} x_{45}^{2}}{x_{12} x_{13} x_{24} x_{35} x_{45}}
\end{align*}

%111110

First assume $x_{23}\neq \frac{-x_{24}x_{45}}{x_{35}}$ and $x_{23}\neq \frac{x_{13}x_{24}}{x_{14}}$
\begin{equation*}x=\left(\begin{matrix}
		1 & x_{12} & x_{13} & x_{14} & 0 \\
		0 & 1 & x_{23} & x_{24} & x_{25} \\
		0 & 0 & 1 & 0 & x_{35}  \\
		0 & 0 & 0 & 1 & x_{45}  \\
		0 & 0 & 0 & 0 & 1 \\
	\end{matrix}\right),x^A=\left(\begin{matrix}
		1 & 1 & 0 & 0 & 0 \\
		0 & 1 & 1 & 0 & 0 \\
		0 & 0 & 1 & 0 & 1 \\
		0 & 0 & 0 & 1 & 1 \\
		0 & 0 & 0 & 0 & 1
	\end{matrix}\right)
\end{equation*} \newline
Where matrix $A$ has entries
\begin{align*}
	&d_{1} = 1, d_{2} = \frac{1}{x_{12}}, d_{3} = \frac{1}{x_{12} x_{23}}, d_{4} = \frac{x_{45}}{x_{12} x_{23} x_{35} + x_{12} x_{24} x_{45}}, d_{5} = \frac{1}{x_{12} x_{23} x_{35} + x_{12} x_{24} x_{45}}, \\ 
	&a_{12} = 1, a_{13} = 1, a_{14} = 1, a_{15} = 1, \\ 
	&a_{23} = \frac{x_{12} x_{23} - x_{13}}{x_{12}^{2} x_{23}}, a_{24} = -\frac{{\left(x_{14} x_{23} - x_{13} x_{24}\right)} x_{45}}{x_{12}^{2} x_{23}^{2} x_{35} + x_{12}^{2} x_{23} x_{24} x_{45}}, a_{25} = 1, \\ 
	&a_{34} = -\frac{x_{24} x_{45}}{x_{12} x_{23}^{2} x_{35} + x_{12} x_{23} x_{24} x_{45}}, \\
	&a_{35} = -\frac{x_{12} x_{14} x_{25} - {\left(x_{12} x_{14} x_{23} - x_{13} x_{14} + {\left(x_{12}^{3} - 2 x_{12}^{2}\right)} x_{23} x_{24}\right)} x_{35} - {\left(x_{12} x_{14} x_{24} - x_{14}^{2} + {\left(x_{12}^{3} - 2 x_{12}^{2}\right)} x_{24}^{2}\right)} x_{45}}{{\left(x_{12}^{2} x_{14} x_{23}^{2} - x_{12}^{2} x_{13} x_{23} x_{24}\right)} x_{35} + {\left(x_{12}^{2} x_{14} x_{23} x_{24} - x_{12}^{2} x_{13} x_{24}^{2}\right)} x_{45}}, \\ 
	&a_{45} = \frac{x_{12} x_{13} x_{25} - {\left(x_{12} x_{13} x_{23} - x_{13}^{2} + {\left(x_{12}^{3} - 2 x_{12}^{2}\right)} x_{23}^{2}\right)} x_{35} + {\left(x_{13} x_{14} - {\left(x_{12} x_{13} + {\left(x_{12}^{3} - 2 x_{12}^{2}\right)} x_{23}\right)} x_{24}\right)} x_{45}}{{\left(x_{12}^{2} x_{14} x_{23}^{2} - x_{12}^{2} x_{13} x_{23} x_{24}\right)} x_{35} + {\left(x_{12}^{2} x_{14} x_{23} x_{24} - x_{12}^{2} x_{13} x_{24}^{2}\right)} x_{45}}
\end{align*}

Now assume $x_{23}\neq \frac{-x_{24}x_{45}}{x_{35}}$ and $x_{23}= \frac{x_{13}x_{24}}{x_{14}}$ and $x_{35}\neq \frac{-x_{14}x_{45}}{x_{13}}$ 
\begin{equation*}x=\left(\begin{matrix}
		1 & x_{12} & x_{13} & x_{14} & 0 \\
		0 & 1 & x_{23} & x_{24} & x_{25} \\
		0 & 0 & 1 & 0 & x_{35}  \\
		0 & 0 & 0 & 1 & x_{45}  \\
		0 & 0 & 0 & 0 & 1 \\
	\end{matrix}\right),x^A=\left(\begin{matrix}
		1 & 1 & 0 & 0 & 0 \\
		0 & 1 & 1 & 0 & 0 \\
		0 & 0 & 1 & 0 & 1 \\
		0 & 0 & 0 & 1 & 1 \\
		0 & 0 & 0 & 0 & 1
	\end{matrix}\right)
\end{equation*} \newline
Where matrix $A$ has entries
\begin{align*}
	&d_{1} = 1, d_{2} = \frac{1}{x_{12}}, d_{3} = \frac{x_{14}}{x_{12} x_{13} x_{24}}, d_{4} = \frac{x_{14} x_{45}}{x_{12} x_{13} x_{24} x_{35} + x_{12} x_{14} x_{24} x_{45}}, d_{5} = \frac{x_{14}}{x_{12} x_{13} x_{24} x_{35} + x_{12} x_{14} x_{24} x_{45}}, \\ 
	&a_{12} = 1, a_{13} = 1, a_{14} = 1, a_{15} = 1, \\ 
	&a_{23} = \frac{x_{12} x_{24} - x_{14}}{x_{12}^{2} x_{24}}, a_{24} = 0, \\
	&a_{25} = \frac{x_{12} x_{14}^{2} x_{25} + {\left(2 x_{12}^{2} x_{13} x_{24}^{2} - x_{12} x_{13} x_{14} x_{24} + x_{13} x_{14}^{2}\right)} x_{35} + {\left(2 x_{12}^{2} x_{14} x_{24}^{2} - x_{12} x_{14}^{2} x_{24} + x_{14}^{3}\right)} x_{45}}{x_{12}^{3} x_{13} x_{24}^{2} x_{35} + x_{12}^{3} x_{14} x_{24}^{2} x_{45}}, \\ 
	&a_{34} = -\frac{x_{14}^{2} x_{45}}{x_{12} x_{13}^{2} x_{24} x_{35} + x_{12} x_{13} x_{14} x_{24} x_{45}}, a_{35} = 1, \\ 
	&a_{45} = -\frac{x_{12} x_{14}^{2} x_{25} + {\left(x_{12}^{2} x_{13}^{2} x_{24}^{2} - x_{12} x_{13} x_{14} x_{24} + x_{13} x_{14}^{2}\right)} x_{35} + {\left(x_{12}^{2} x_{13} x_{14} x_{24}^{2} - x_{12} x_{14}^{2} x_{24} + x_{14}^{3}\right)} x_{45}}{x_{12}^{2} x_{13} x_{14} x_{24}^{2} x_{35} + x_{12}^{2} x_{14}^{2} x_{24}^{2} x_{45}}
\end{align*}

Now assume $x_{23}\neq \frac{-x_{24}x_{45}}{x_{35}}$ and $x_{23}= \frac{x_{13}x_{24}}{x_{14}}$ and $x_{35}= \frac{-x_{14}x_{45}}{x_{13}}$ 
\begin{equation*}x=\left(\begin{matrix}
		1 & x_{12} & x_{13} & x_{14} & 0 \\
		0 & 1 & x_{23} & x_{24} & x_{25} \\
		0 & 0 & 1 & 0 & x_{35}  \\
		0 & 0 & 0 & 1 & x_{45}  \\
		0 & 0 & 0 & 0 & 1 \\
	\end{matrix}\right),x^A=\left(\begin{matrix}
		1 & 1 & 0 & 0 & 0 \\
		0 & 1 & 1 & 0 & 0 \\
		0 & 0 & 1 & 0 & 0 \\
		0 & 0 & 0 & 1 & 1 \\
		0 & 0 & 0 & 0 & 1
	\end{matrix}\right)
\end{equation*} \newline
Where matrix $A$ has entries
\begin{align*}
	&d_{1} = 1, d_{2} = \frac{1}{x_{12}}, d_{3} = \frac{x_{14}}{x_{12} x_{13} x_{24}}, d_{4} = 1, d_{5} = \frac{1}{x_{45}}, \\ 
	&a_{12} = 1, a_{13} = 1, a_{14} = 1, a_{15} = 1, \\ 
	&a_{23} = \frac{x_{12} x_{24} - x_{14}}{x_{12}^{2} x_{24}}, a_{24} = 0, a_{25} = \frac{x_{14} x_{25} + x_{24} x_{45}}{x_{12} x_{24} x_{45}}, \\ 
	&a_{34} = -\frac{x_{14}}{x_{13}}, a_{35} = 1, \\ 
	&a_{45} = -\frac{x_{13} x_{24} x_{45} + x_{14} x_{25}}{x_{14} x_{24} x_{45}}
\end{align*}

Now assume $x_{23}= \frac{-x_{24}x_{45}}{x_{35}}$ and $x_{45}\neq \frac{-x_{13}x_{35}}{x_{14}}$
\begin{equation*}x=\left(\begin{matrix}
		1 & x_{12} & x_{13} & x_{14} & 0 \\
		0 & 1 & x_{23} & x_{24} & x_{25} \\
		0 & 0 & 1 & 0 & x_{35}  \\
		0 & 0 & 0 & 1 & x_{45}  \\
		0 & 0 & 0 & 0 & 1 \\
	\end{matrix}\right),x^A=\left(\begin{matrix}
		1 & 1 & 0 & 0 & 0 \\
		0 & 1 & 1 & 0 & 0 \\
		0 & 0 & 1 & 0 & 0 \\
		0 & 0 & 0 & 1 & 1 \\
		0 & 0 & 0 & 0 & 1
	\end{matrix}\right)
\end{equation*} \newline
Where matrix $A$ has entries
\begin{align*}
	&d_{1} = 1, d_{2} = \frac{1}{x_{12}}, d_{3} = -\frac{x_{35}}{x_{12} x_{24} x_{45}}, d_{4} = 1, d_{5} = \frac{1}{x_{45}}, \\ 
	&a_{12} = 1, a_{13} = 1, a_{14} = 1, a_{15} = 1, \\ 
	&a_{23} = \frac{x_{12} x_{24} x_{45} + x_{13} x_{35}}{x_{12}^{2} x_{24} x_{45}}, a_{24} = -\frac{x_{13} x_{35} + x_{14} x_{45}}{x_{12} x_{45}}, a_{25} = 1, \\ 
	&a_{34} = \frac{x_{35}}{x_{45}}, a_{35} = \frac{x_{12} x_{14} x_{25} x_{35} + x_{13} x_{14} x_{35}^{2} + {\left(x_{14}^{2} - {\left(x_{12}^{2} - x_{12}\right)} x_{24}\right)} x_{35} x_{45}}{x_{12} x_{13} x_{24} x_{35} x_{45} + x_{12} x_{14} x_{24} x_{45}^{2}}, \\ 
	&a_{45} = -\frac{x_{12} x_{13} x_{25} x_{35} + x_{13}^{2} x_{35}^{2} + x_{13} x_{14} x_{35} x_{45} + {\left(x_{12}^{2} - x_{12}\right)} x_{24} x_{45}^{2}}{x_{12} x_{13} x_{24} x_{35} x_{45} + x_{12} x_{14} x_{24} x_{45}^{2}}
\end{align*}

Now assume $x_{23}= \frac{-x_{24}x_{45}}{x_{35}}$ and $x_{45}= \frac{-x_{13}x_{35}}{x_{14}}$
\begin{equation*}x=\left(\begin{matrix}
		1 & x_{12} & x_{13} & x_{14} & 0 \\
		0 & 1 & x_{23} & x_{24} & x_{25} \\
		0 & 0 & 1 & 0 & x_{35}  \\
		0 & 0 & 0 & 1 & x_{45}  \\
		0 & 0 & 0 & 0 & 1 \\
	\end{matrix}\right),x^A=\left(\begin{matrix}
		1 & 1 & 0 & 0 & 0 \\
		0 & 1 & 1 & 0 & 0 \\
		0 & 0 & 1 & 0 & 0 \\
		0 & 0 & 0 & 1 & 1 \\
		0 & 0 & 0 & 0 & 1
	\end{matrix}\right)
\end{equation*} \newline
Where matrix $A$ has entries
\begin{align*}
	&d_{1} = 1, d_{2} = \frac{1}{x_{12}}, d_{3} = \frac{x_{14}}{x_{12} x_{13} x_{24}}, d_{4} = 1, d_{5} = -\frac{x_{14}}{x_{13} x_{35}}, \\ 
	&a_{12} = 1, a_{13} = 1, a_{14} = 1, a_{15} = 1, \\ 
	&a_{23} = \frac{x_{12} x_{24} - x_{14}}{x_{12}^{2} x_{24}}, a_{24} = 0, a_{25} = -\frac{x_{14}^{2} x_{25} - x_{13} x_{24} x_{35}}{x_{12} x_{13} x_{24} x_{35}}, \\ 
	&a_{34} = -\frac{x_{14}}{x_{13}}, a_{35} = 1, \\ 
	&a_{45} = -\frac{x_{13}^{2} x_{24} x_{35} - x_{14}^{2} x_{25}}{x_{13} x_{14} x_{24} x_{35}}
\end{align*}

%111111

First assume $x_{23}\neq \frac{-x_{24}x_{45}}{x_{35}}$ and $x_{23}\neq \frac{x_{13}x_{24}}{x_{14}}$

\begin{equation*}x=\left(\begin{matrix}
		1 & x_{12} & x_{13} & x_{14} & x_{15} \\
		0 & 1 & x_{23} & x_{24} & x_{25} \\
		0 & 0 & 1 & 0 & x_{35}  \\
		0 & 0 & 0 & 1 & x_{45}  \\
		0 & 0 & 0 & 0 & 1 \\
	\end{matrix}\right),x^A=\left(\begin{matrix}
		1 & 1 & 0 & 0 & 0 \\
		0 & 1 & 1 & 0 & 0 \\
		0 & 0 & 1 & 0 & 1 \\
		0 & 0 & 0 & 1 & 1 \\
		0 & 0 & 0 & 0 & 1
	\end{matrix}\right)
\end{equation*} \newline
Where matrix $A$ has entries
\begin{align*}
	&d_{1} = 1, d_{2} = \frac{1}{x_{12}}, d_{3} = \frac{1}{x_{12} x_{23}}, d_{4} = \frac{x_{45}}{x_{12} x_{23} x_{35} + x_{12} x_{24} x_{45}}, d_{5} = \frac{1}{x_{12} x_{23} x_{35} + x_{12} x_{24} x_{45}}, \\ 
	&a_{12} = 1, a_{13} = 1, a_{14} = 1, a_{15} = 1, \\ 
	&a_{23} = \frac{x_{12} x_{23} - x_{13}}{x_{12}^{2} x_{23}}, a_{24} = -\frac{{\left(x_{14} x_{23} - x_{13} x_{24}\right)} x_{45}}{x_{12}^{2} x_{23}^{2} x_{35} + x_{12}^{2} x_{23} x_{24} x_{45}}, a_{25} = 1, \\ 
	&a_{34} = -\frac{x_{24} x_{45}}{x_{12} x_{23}^{2} x_{35} + x_{12} x_{23} x_{24} x_{45}}, \\
	&a_{35} = \frac{x_{12} x_{15} x_{24} - x_{12} x_{14} x_{25} + {\left(x_{12} x_{14} x_{23} - x_{13} x_{14} + {\left(x_{12}^{3} - 2 x_{12}^{2}\right)} x_{23} x_{24}\right)} x_{35} + {\left(x_{12} x_{14} x_{24} - x_{14}^{2} + {\left(x_{12}^{3} - 2 x_{12}^{2}\right)} x_{24}^{2}\right)} x_{45}}{{\left(x_{12}^{2} x_{14} x_{23}^{2} - x_{12}^{2} x_{13} x_{23} x_{24}\right)} x_{35} + {\left(x_{12}^{2} x_{14} x_{23} x_{24} - x_{12}^{2} x_{13} x_{24}^{2}\right)} x_{45}}, \\ 
	&a_{45} = -\frac{x_{12} x_{15} x_{23} - x_{12} x_{13} x_{25} + {\left(x_{12} x_{13} x_{23} - x_{13}^{2} + {\left(x_{12}^{3} - 2 x_{12}^{2}\right)} x_{23}^{2}\right)} x_{35} - {\left(x_{13} x_{14} - {\left(x_{12} x_{13} + {\left(x_{12}^{3} - 2 x_{12}^{2}\right)} x_{23}\right)} x_{24}\right)} x_{45}}{{\left(x_{12}^{2} x_{14} x_{23}^{2} - x_{12}^{2} x_{13} x_{23} x_{24}\right)} x_{35} + {\left(x_{12}^{2} x_{14} x_{23} x_{24} - x_{12}^{2} x_{13} x_{24}^{2}\right)} x_{45}}
\end{align*}

Now assume $x_{23}\neq \frac{-x_{24}x_{45}}{x_{35}}$ and $x_{23}= \frac{x_{13}x_{24}}{x_{14}}$ and $x_{35}\neq \frac{-x_{14}x_{45}}{x_{13}}$ 
\begin{equation*}x=\left(\begin{matrix}
		1 & x_{12} & x_{13} & x_{14} & x_{15} \\
		0 & 1 & x_{23} & x_{24} & x_{25} \\
		0 & 0 & 1 & 0 & x_{35}  \\
		0 & 0 & 0 & 1 & x_{45}  \\
		0 & 0 & 0 & 0 & 1 \\
	\end{matrix}\right),x^A=\left(\begin{matrix}
		1 & 1 & 0 & 0 & 0 \\
		0 & 1 & 1 & 0 & 0 \\
		0 & 0 & 1 & 0 & 0 \\
		0 & 0 & 0 & 1 & 1 \\
		0 & 0 & 0 & 0 & 1
	\end{matrix}\right)
\end{equation*} \newline
Where matrix $A$ has entries
\begin{align*}
	&d_{1} = 1, d_{2} = \frac{1}{x_{12}}, d_{3} = \frac{x_{14}}{x_{12} x_{13} x_{24}}, d_{4} = 1, d_{5} = \frac{1}{x_{45}}, \\ 
	&a_{12} = 1, a_{13} = 1, a_{14} = 1, a_{15} = 1, \\ 
	&a_{23} = \frac{x_{12} x_{24} - x_{14}}{x_{12}^{2} x_{24}}, a_{24} = 0, a_{25} = -\frac{x_{15} x_{24} - x_{14} x_{25} - x_{24} x_{45}}{x_{12} x_{24} x_{45}}, \\ 
	&a_{34} = -\frac{x_{14}}{x_{13}}, a_{35} = 1, \\ 
	&a_{45} = -\frac{x_{13} x_{24} x_{45} + x_{14} x_{25}}{x_{14} x_{24} x_{45}}
\end{align*}

Now assume $x_{23}\neq \frac{-x_{24}x_{45}}{x_{35}}$ and $x_{23}= \frac{x_{13}x_{24}}{x_{14}}$ and $x_{35}= \frac{-x_{14}x_{45}}{x_{13}}$ 

\begin{equation*}x=\left(\begin{matrix}
		1 & x_{12} & x_{13} & x_{14} & x_{15} \\
		0 & 1 & x_{23} & x_{24} & x_{25} \\
		0 & 0 & 1 & 0 & x_{35}  \\
		0 & 0 & 0 & 1 & x_{45}  \\
		0 & 0 & 0 & 0 & 1 \\
	\end{matrix}\right),x^A=\left(\begin{matrix}
		1 & 1 & 0 & 0 & 0 \\
		0 & 1 & 1 & 0 & 0 \\
		0 & 0 & 1 & 0 & 0 \\
		0 & 0 & 0 & 1 & 1 \\
		0 & 0 & 0 & 0 & 1
	\end{matrix}\right)
\end{equation*} \newline
Where matrix $A$ has entries

\begin{align*}
	&d_{1} = 1, d_{2} = \frac{1}{x_{12}}, d_{3} = \frac{x_{14}}{x_{12} x_{13} x_{24}}, d_{4} = 1, d_{5} = \frac{1}{x_{45}}, \\ 
	&a_{12} = 1, a_{13} = 1, a_{14} = 1, a_{15} = 1, \\ 
	&a_{23} = \frac{x_{12} x_{24} - x_{14}}{x_{12}^{2} x_{24}}, a_{24} = 0, a_{25} = -\frac{x_{15} x_{24} - x_{14} x_{25} - x_{24} x_{45}}{x_{12} x_{24} x_{45}}, \\ 
	&a_{34} = -\frac{x_{14}}{x_{13}}, a_{35} = 1, \\ 
	&a_{45} = -\frac{x_{13} x_{24} x_{45} + x_{14} x_{25}}{x_{14} x_{24} x_{45}}
\end{align*}

Now assume $x_{23}= \frac{-x_{24}x_{45}}{x_{35}}$ and $x_{45}\neq \frac{-x_{13}x_{35}}{x_{14}}$
\begin{equation*}x=\left(\begin{matrix}
		1 & x_{12} & x_{13} & x_{14} & x_{15} \\
		0 & 1 & x_{23} & x_{24} & x_{25} \\
		0 & 0 & 1 & 0 & x_{35}  \\
		0 & 0 & 0 & 1 & x_{45}  \\
		0 & 0 & 0 & 0 & 1 \\
	\end{matrix}\right),x^A=\left(\begin{matrix}
		1 & 1 & 0 & 0 & 0 \\
		0 & 1 & 1 & 0 & 0 \\
		0 & 0 & 1 & 0 & 0 \\
		0 & 0 & 0 & 1 & 1 \\
		0 & 0 & 0 & 0 & 1
	\end{matrix}\right)
\end{equation*} \newline
Where matrix $A$ has entries
\begin{align*}
	&d_{1} = 1, d_{2} = \frac{1}{x_{12}}, d_{3} = -\frac{x_{35}}{x_{12} x_{24} x_{45}}, d_{4} = 1, d_{5} = \frac{1}{x_{45}}, \\ 
	&a_{12} = 1, a_{13} = 1, a_{14} = 1, a_{15} = 1, \\ 
	&a_{23} = \frac{x_{12} x_{24} x_{45} + x_{13} x_{35}}{x_{12}^{2} x_{24} x_{45}}, a_{24} = -\frac{x_{13} x_{35} + x_{14} x_{45}}{x_{12} x_{45}}, a_{25} = 1, \\ 
	&a_{34} = \frac{x_{35}}{x_{45}}, a_{35} = \frac{x_{13} x_{14} x_{35}^{2} + {\left(x_{14}^{2} - {\left(x_{12}^{2} - x_{12}\right)} x_{24}\right)} x_{35} x_{45} - {\left(x_{12} x_{15} x_{24} - x_{12} x_{14} x_{25}\right)} x_{35}}{x_{12} x_{13} x_{24} x_{35} x_{45} + x_{12} x_{14} x_{24} x_{45}^{2}}, \\ 
	&a_{45} = -\frac{x_{12} x_{13} x_{25} x_{35} + x_{13}^{2} x_{35}^{2} + {\left(x_{12}^{2} - x_{12}\right)} x_{24} x_{45}^{2} + {\left(x_{12} x_{15} x_{24} + x_{13} x_{14} x_{35}\right)} x_{45}}{x_{12} x_{13} x_{24} x_{35} x_{45} + x_{12} x_{14} x_{24} x_{45}^{2}}
\end{align*}

Now assume $x_{23}= \frac{-x_{24}x_{45}}{x_{35}}$ and $x_{45}= \frac{-x_{13}x_{35}}{x_{14}}$
\begin{equation*}x=\left(\begin{matrix}
		1 & x_{12} & x_{13} & x_{14} & x_{15} \\
		0 & 1 & x_{23} & x_{24} & x_{25} \\
		0 & 0 & 1 & 0 & x_{35}  \\
		0 & 0 & 0 & 1 & x_{45}  \\
		0 & 0 & 0 & 0 & 1 \\
	\end{matrix}\right),x^A=\left(\begin{matrix}
		1 & 1 & 0 & 0 & 0 \\
		0 & 1 & 1 & 0 & 0 \\
		0 & 0 & 1 & 0 & 0 \\
		0 & 0 & 0 & 1 & 1 \\
		0 & 0 & 0 & 0 & 1
	\end{matrix}\right)
\end{equation*} \newline
Where matrix $A$ has entries
\begin{align*}
	&d_{1} = 1, d_{2} = \frac{1}{x_{12}}, d_{3} = \frac{x_{14}}{x_{12} x_{13} x_{24}}, d_{4} = 1, d_{5} = -\frac{x_{14}}{x_{13} x_{35}}, \\ 
	&a_{12} = 1, a_{13} = 1, a_{14} = 1, a_{15} = 1, \\ 
	&a_{23} = \frac{x_{12} x_{24} - x_{14}}{x_{12}^{2} x_{24}}, a_{24} = 0, a_{25} = \frac{x_{14} x_{15} x_{24} - x_{14}^{2} x_{25} + x_{13} x_{24} x_{35}}{x_{12} x_{13} x_{24} x_{35}}, \\ 
	&a_{34} = -\frac{x_{14}}{x_{13}}, a_{35} = 1, \\ 
	&a_{45} = -\frac{x_{13}^{2} x_{24} x_{35} - x_{14}^{2} x_{25}}{x_{13} x_{14} x_{24} x_{35}}
\end{align*}
		
		\section{Subcases of $Y_{10}$}

%000001
\begin{equation*}x=\left(% [inline block 49: 6 envs, 2086 chars -> data_tex | \begin{matrix} 		1 & x_{12} & 0 & 0 & x_{15} \\...]
\right)
\end{equation*} \newline
Where matrix $A$ has entries
\begin{align*}
	&d_{1} = 1, d_{2} = \frac{1}{x_{12}}, d_{3} = \frac{1}{x_{12} x_{23}}, d_{4} = \frac{1}{x_{12} x_{23} x_{34}}, d_{5} = \frac{1}{x_{12} x_{23} x_{34} x_{45}},\\ 
	&a_{12} = 0, a_{13} = 1, a_{14} = 1, a_{15} = 1,\\ 
	&a_{23} = 0, a_{24} = \frac{x_{12} x_{23} x_{34} - x_{14}}{x_{12}^{2} x_{23} x_{34}}, a_{25} = \frac{x_{12} x_{23} x_{34} x_{45} - x_{15}}{x_{12}^{2} x_{23} x_{34} x_{45}},\\ 
	&a_{34} = 0, a_{35} = \frac{x_{12} x_{23} x_{34} - x_{14}}{x_{12}^{2} x_{23}^{2} x_{34}},\\ 
	&a_{45} = 0
\end{align*}

%000100
\begin{equation*}x=\left(\begin{matrix}
		1 & x_{12} & x_{13} & 0 & 0 \\
		0 & 1 & x_{23} & 0 & 0 \\
		0 & 0 & 1 & x_{34} & 0  \\
		0 & 0 & 0 & 1 & x_{45}  \\
		0 & 0 & 0 & 1 & 1 \\
	\end{matrix}\right),x^A=\left(\begin{matrix}
		1 & 1 & 0 & 0 & 0 \\
		0 & 1 & 1 & 0 & 0 \\
		0 & 0 & 1 & 1 & 0 \\
		0 & 0 & 0 & 1 & 1 \\
		0 & 0 & 0 & 0 & 1
	\end{matrix}\right)
\end{equation*} \newline
Where matrix $A$ has entries
\begin{align*}
	&d_{1} = 1, d_{2} = \frac{1}{x_{12}}, d_{3} = \frac{1}{x_{12} x_{23}}, d_{4} = \frac{1}{x_{12} x_{23} x_{34}}, d_{5} = \frac{1}{x_{12} x_{23} x_{34} x_{45}},\\ 
	&a_{12} = \frac{x_{13}}{x_{12} x_{23}}, a_{13} = 1, a_{14} = 1, a_{15} = 1,\\ 
	&a_{23} = 0, a_{24} = \frac{1}{x_{12}}, a_{25} = \frac{x_{12}^{3} x_{23}^{3} - x_{12}^{2} x_{13} x_{23}^{2}}{x_{12}^{4} x_{23}^{3}},\\ 
	&a_{34} = 0, a_{35} = \frac{1}{x_{12} x_{23}},\\ 
	&a_{45} = 0
\end{align*}

%000101
\begin{equation*}x=\left(\begin{matrix}
		1 & x_{12} & x_{13} & 0 & x_{15} \\
		0 & 1 & x_{23} & 0 & 0 \\
		0 & 0 & 1 & x_{34} & 0  \\
		0 & 0 & 0 & 1 & x_{45}  \\
		0 & 0 & 0 & 1 & 1 \\
	\end{matrix}\right),x^A=\left(\begin{matrix}
		1 & 1 & 0 & 0 & 0 \\
		0 & 1 & 1 & 0 & 0 \\
		0 & 0 & 1 & 1 & 0 \\
		0 & 0 & 0 & 1 & 1 \\
		0 & 0 & 0 & 0 & 1
	\end{matrix}\right)
\end{equation*} \newline
Where matrix $A$ has entries
\begin{align*}
	&d_{1} = 1, d_{2} = \frac{1}{x_{12}}, d_{3} = \frac{1}{x_{12} x_{23}}, d_{4} = \frac{1}{x_{12} x_{23} x_{34}}, d_{5} = \frac{1}{x_{12} x_{23} x_{34} x_{45}},\\ 
	&a_{12} = \frac{x_{13}}{x_{12} x_{23}}, a_{13} = 1, a_{14} = 1, a_{15} = 1,\\ 
	&a_{23} = 0, a_{24} = \frac{1}{x_{12}}, a_{25} = -\frac{x_{12}^{2} x_{15} x_{23}^{2} - {\left(x_{12}^{3} x_{23}^{3} - x_{12}^{2} x_{13} x_{23}^{2}\right)} x_{34} x_{45}}{x_{12}^{4} x_{23}^{3} x_{34} x_{45}},\\ 
	&a_{34} = 0, a_{35} = \frac{1}{x_{12} x_{23}},\\ 
	&a_{45} = 0
\end{align*}

%000110
\begin{equation*}x=\left(\begin{matrix}
		1 & x_{12} & x_{13} & x_{14} & 0 \\
		0 & 1 & x_{23} & 0 & 0 \\
		0 & 0 & 1 & x_{34} & 0  \\
		0 & 0 & 0 & 1 & x_{45}  \\
		0 & 0 & 0 & 1 & 1 \\
	\end{matrix}\right),x^A=\left(\begin{matrix}
		1 & 1 & 0 & 0 & 0 \\
		0 & 1 & 1 & 0 & 0 \\
		0 & 0 & 1 & 1 & 0 \\
		0 & 0 & 0 & 1 & 1 \\
		0 & 0 & 0 & 0 & 1
	\end{matrix}\right)
\end{equation*} \newline
Where matrix $A$ has entries
\begin{align*}
	&d_{1} = 1, d_{2} = \frac{1}{x_{12}}, d_{3} = \frac{1}{x_{12} x_{23}}, d_{4} = \frac{1}{x_{12} x_{23} x_{34}}, d_{5} = \frac{1}{x_{12} x_{23} x_{34} x_{45}},\\ 
	&a_{12} = \frac{x_{13}}{x_{12} x_{23}}, a_{13} = 1, a_{14} = 1, a_{15} = 1,\\ 
	&a_{23} = 0, a_{24} = \frac{x_{12}^{2} x_{23}^{2} x_{34} - x_{12} x_{14} x_{23}}{x_{12}^{3} x_{23}^{2} x_{34}}, a_{25} = \frac{x_{12} x_{13} x_{14} x_{23} + {\left(x_{12}^{3} x_{23}^{3} - x_{12}^{2} x_{13} x_{23}^{2}\right)} x_{34}}{x_{12}^{4} x_{23}^{3} x_{34}},\\ 
	&a_{34} = 0, a_{35} = \frac{x_{12}^{2} x_{23}^{2} x_{34} - x_{12} x_{14} x_{23}}{x_{12}^{3} x_{23}^{3} x_{34}},\\ 
	&a_{45} = 0
\end{align*}

%000111
\begin{equation*}x=\left(\begin{matrix}
		1 & x_{12} & x_{13} & x_{14} & x_{15} \\
		0 & 1 & x_{23} & 0 & 0 \\
		0 & 0 & 1 & x_{34} & 0  \\
		0 & 0 & 0 & 1 & x_{45}  \\
		0 & 0 & 0 & 1 & 1 \\
	\end{matrix}\right),x^A=\left(\begin{matrix}
		1 & 1 & 0 & 0 & 0 \\
		0 & 1 & 1 & 0 & 0 \\
		0 & 0 & 1 & 1 & 0 \\
		0 & 0 & 0 & 1 & 1 \\
		0 & 0 & 0 & 0 & 1
	\end{matrix}\right)
\end{equation*} \newline
Where matrix $A$ has entries
\begin{align*}
	&d_{1} = 1, d_{2} = \frac{1}{x_{12}}, d_{3} = \frac{1}{x_{12} x_{23}}, d_{4} = \frac{1}{x_{12} x_{23} x_{34}}, d_{5} = \frac{1}{x_{12} x_{23} x_{34} x_{45}},\\ 
	&a_{12} = \frac{x_{13}}{x_{12} x_{23}}, a_{13} = 1, a_{14} = 1, a_{15} = 1,\\ 
	&a_{23} = 0, a_{24} = \frac{x_{12}^{2} x_{23}^{2} x_{34} - x_{12} x_{14} x_{23}}{x_{12}^{3} x_{23}^{2} x_{34}}, a_{25} = -\frac{x_{12}^{2} x_{15} x_{23}^{2} - {\left(x_{12} x_{13} x_{14} x_{23} + {\left(x_{12}^{3} x_{23}^{3} - x_{12}^{2} x_{13} x_{23}^{2}\right)} x_{34}\right)} x_{45}}{x_{12}^{4} x_{23}^{3} x_{34} x_{45}},\\ 
	&a_{34} = 0, a_{35} = \frac{x_{12}^{2} x_{23}^{2} x_{34} - x_{12} x_{14} x_{23}}{x_{12}^{3} x_{23}^{3} x_{34}},\\ 
	&a_{45} = 0
\end{align*}

%001000
\begin{equation*}x=\left(% [inline block 50: 6 envs, 2094 chars -> data_tex | \begin{matrix} 		1 & x_{12} & 0 & 0 & 0 \\...]
\right)
\end{equation*} \newline
Where matrix $A$ has entries
\begin{align*}
	&d_{1} = 1, d_{2} = \frac{1}{x_{12}}, d_{3} = \frac{1}{x_{12} x_{23}}, d_{4} = \frac{1}{x_{12} x_{23} x_{34}}, d_{5} = \frac{1}{x_{12} x_{23} x_{34} x_{45}},\\ 
	&a_{12} = 0, a_{13} = 1, a_{14} = 1, a_{15} = 1,\\ 
	&a_{23} = 0, a_{24} = \frac{x_{12} x_{23} x_{34} - x_{14}}{x_{12}^{2} x_{23} x_{34}}, a_{25} = \frac{1}{x_{12}},\\ 
	&a_{34} = 0, a_{35} = -\frac{x_{12} x_{25} - {\left(x_{12} x_{23} x_{34} - x_{14}\right)} x_{45}}{x_{12}^{2} x_{23}^{2} x_{34} x_{45}},\\ 
	&a_{45} = 0
\end{align*}

%001011
\begin{equation*}x=\left(\begin{matrix}
		1 & x_{12} & 0 & x_{14} & x_{15} \\
		0 & 1 & x_{23} & 0 & x_{25} \\
		0 & 0 & 1 & x_{34} & 0  \\
		0 & 0 & 0 & 1 & x_{45}  \\
		0 & 0 & 0 & 1 & 1 \\
	\end{matrix}\right),x^A=\left(\begin{matrix}
		1 & 1 & 0 & 0 & 0 \\
		0 & 1 & 1 & 0 & 0 \\
		0 & 0 & 1 & 1 & 0 \\
		0 & 0 & 0 & 1 & 1 \\
		0 & 0 & 0 & 0 & 1
	\end{matrix}\right)
\end{equation*} \newline
Where matrix $A$ has entries
\begin{align*}
	&d_{1} = 1, d_{2} = \frac{1}{x_{12}}, d_{3} = \frac{1}{x_{12} x_{23}}, d_{4} = \frac{1}{x_{12} x_{23} x_{34}}, d_{5} = \frac{1}{x_{12} x_{23} x_{34} x_{45}},\\ 
	&a_{12} = 0, a_{13} = 1, a_{14} = 1, a_{15} = 1,\\ 
	&a_{23} = 0, a_{24} = \frac{x_{12} x_{23} x_{34} - x_{14}}{x_{12}^{2} x_{23} x_{34}}, a_{25} = \frac{x_{12} x_{23} x_{34} x_{45} - x_{15}}{x_{12}^{2} x_{23} x_{34} x_{45}},\\ 
	&a_{34} = 0, a_{35} = -\frac{x_{12} x_{25} - {\left(x_{12} x_{23} x_{34} - x_{14}\right)} x_{45}}{x_{12}^{2} x_{23}^{2} x_{34} x_{45}},\\ 
	&a_{45} = 0
\end{align*}

%001100
\begin{equation*}x=\left(\begin{matrix}
		1 & x_{12} & x_{13} & 0 & 0 \\
		0 & 1 & x_{23} & 0 & x_{25} \\
		0 & 0 & 1 & x_{34} & 0  \\
		0 & 0 & 0 & 1 & x_{45}  \\
		0 & 0 & 0 & 1 & 1 \\
	\end{matrix}\right),x^A=\left(\begin{matrix}
		1 & 1 & 0 & 0 & 0 \\
		0 & 1 & 1 & 0 & 0 \\
		0 & 0 & 1 & 1 & 0 \\
		0 & 0 & 0 & 1 & 1 \\
		0 & 0 & 0 & 0 & 1
	\end{matrix}\right)
\end{equation*} \newline
Where matrix $A$ has entries
\begin{align*}
	&d_{1} = 1, d_{2} = \frac{1}{x_{12}}, d_{3} = \frac{1}{x_{12} x_{23}}, d_{4} = \frac{1}{x_{12} x_{23} x_{34}}, d_{5} = \frac{1}{x_{12} x_{23} x_{34} x_{45}},\\ 
	&a_{12} = \frac{x_{13}}{x_{12} x_{23}}, a_{13} = 1, a_{14} = 1, a_{15} = 1,\\ 
	&a_{23} = 0, a_{24} = \frac{1}{x_{12}}, a_{25} = \frac{x_{12}^{2} x_{13} x_{23} x_{25} + {\left(x_{12}^{3} x_{23}^{3} - x_{12}^{2} x_{13} x_{23}^{2}\right)} x_{34} x_{45}}{x_{12}^{4} x_{23}^{3} x_{34} x_{45}},\\ 
	&a_{34} = 0, a_{35} = \frac{x_{12}^{2} x_{23}^{2} x_{34} x_{45} - x_{12}^{2} x_{23} x_{25}}{x_{12}^{3} x_{23}^{3} x_{34} x_{45}},\\ 
	&a_{45} = 0
\end{align*}

%001101
\begin{equation*}x=\left(\begin{matrix}
		1 & x_{12} & x_{13} & 0 & x_{15} \\
		0 & 1 & x_{23} & 0 & x_{25} \\
		0 & 0 & 1 & x_{34} & 0  \\
		0 & 0 & 0 & 1 & x_{45}  \\
		0 & 0 & 0 & 1 & 1 \\
	\end{matrix}\right),x^A=\left(\begin{matrix}
		1 & 1 & 0 & 0 & 0 \\
		0 & 1 & 1 & 0 & 0 \\
		0 & 0 & 1 & 1 & 0 \\
		0 & 0 & 0 & 1 & 1 \\
		0 & 0 & 0 & 0 & 1
	\end{matrix}\right)
\end{equation*} \newline
Where matrix $A$ has entries
\begin{align*}
	&d_{1} = 1, d_{2} = \frac{1}{x_{12}}, d_{3} = \frac{1}{x_{12} x_{23}}, d_{4} = \frac{1}{x_{12} x_{23} x_{34}}, d_{5} = \frac{1}{x_{12} x_{23} x_{34} x_{45}},\\ 
	&a_{12} = \frac{x_{13}}{x_{12} x_{23}}, a_{13} = 1, a_{14} = 1, a_{15} = 1,\\ 
	&a_{23} = 0, a_{24} = \frac{1}{x_{12}}, a_{25} = -\frac{x_{12}^{2} x_{15} x_{23}^{2} - x_{12}^{2} x_{13} x_{23} x_{25} - {\left(x_{12}^{3} x_{23}^{3} - x_{12}^{2} x_{13} x_{23}^{2}\right)} x_{34} x_{45}}{x_{12}^{4} x_{23}^{3} x_{34} x_{45}},\\ 
	&a_{34} = 0, a_{35} = \frac{x_{12}^{2} x_{23}^{2} x_{34} x_{45} - x_{12}^{2} x_{23} x_{25}}{x_{12}^{3} x_{23}^{3} x_{34} x_{45}},\\ 
	&a_{45} = 0
\end{align*}

%001110
\begin{equation*}x=\left(\begin{matrix}
		1 & x_{12} & x_{13} & x_{14} & 0 \\
		0 & 1 & x_{23} & 0 & x_{25} \\
		0 & 0 & 1 & x_{34} & 0  \\
		0 & 0 & 0 & 1 & x_{45}  \\
		0 & 0 & 0 & 1 & 1 \\
	\end{matrix}\right),x^A=\left(\begin{matrix}
		1 & 1 & 0 & 0 & 0 \\
		0 & 1 & 1 & 0 & 0 \\
		0 & 0 & 1 & 1 & 0 \\
		0 & 0 & 0 & 1 & 1 \\
		0 & 0 & 0 & 0 & 1
	\end{matrix}\right)
\end{equation*} \newline
Where matrix $A$ has entries
\begin{align*}
	&d_{1} = 1, d_{2} = \frac{1}{x_{12}}, d_{3} = \frac{1}{x_{12} x_{23}}, d_{4} = \frac{1}{x_{12} x_{23} x_{34}}, d_{5} = \frac{1}{x_{12} x_{23} x_{34} x_{45}},\\ 
	&a_{12} = \frac{x_{13}}{x_{12} x_{23}}, a_{13} = 1, a_{14} = 1, a_{15} = 1,\\ 
	&a_{23} = 0, a_{24} = \frac{x_{12}^{2} x_{23}^{2} x_{34} - x_{12} x_{14} x_{23}}{x_{12}^{3} x_{23}^{2} x_{34}}, a_{25} = \frac{x_{12}^{2} x_{13} x_{23} x_{25} + {\left(x_{12} x_{13} x_{14} x_{23} + {\left(x_{12}^{3} x_{23}^{3} - x_{12}^{2} x_{13} x_{23}^{2}\right)} x_{34}\right)} x_{45}}{x_{12}^{4} x_{23}^{3} x_{34} x_{45}},\\ 
	&a_{34} = 0, a_{35} = -\frac{x_{12}^{2} x_{23} x_{25} - {\left(x_{12}^{2} x_{23}^{2} x_{34} - x_{12} x_{14} x_{23}\right)} x_{45}}{x_{12}^{3} x_{23}^{3} x_{34} x_{45}},\\ 
	&a_{45} = 0
\end{align*}

%001111
\begin{equation*}x=\left(\begin{matrix}
		1 & x_{12} & x_{13} & x_{14} & x_{15} \\
		0 & 1 & x_{23} & 0 & x_{25} \\
		0 & 0 & 1 & x_{34} & 0  \\
		0 & 0 & 0 & 1 & x_{45}  \\
		0 & 0 & 0 & 1 & 1 \\
	\end{matrix}\right),x^A=\left(\begin{matrix}
		1 & 1 & 0 & 0 & 0 \\
		0 & 1 & 1 & 0 & 0 \\
		0 & 0 & 1 & 1 & 0 \\
		0 & 0 & 0 & 1 & 1 \\
		0 & 0 & 0 & 0 & 1
	\end{matrix}\right)
\end{equation*} \newline
Where matrix $A$ has entries
\begin{align*}
	&d_{1} = 1, d_{2} = \frac{1}{x_{12}}, d_{3} = \frac{1}{x_{12} x_{23}}, d_{4} = \frac{1}{x_{12} x_{23} x_{34}}, d_{5} = \frac{1}{x_{12} x_{23} x_{34} x_{45}},\\ 
	&a_{12} = \frac{x_{13}}{x_{12} x_{23}}, a_{13} = 1, a_{14} = 1, a_{15} = 1,\\ 
	&a_{23} = 0, a_{24} = \frac{x_{12}^{2} x_{23}^{2} x_{34} - x_{12} x_{14} x_{23}}{x_{12}^{3} x_{23}^{2} x_{34}}, a_{25} = -\frac{x_{12}^{2} x_{15} x_{23}^{2} - x_{12}^{2} x_{13} x_{23} x_{25} - {\left(x_{12} x_{13} x_{14} x_{23} + {\left(x_{12}^{3} x_{23}^{3} - x_{12}^{2} x_{13} x_{23}^{2}\right)} x_{34}\right)} x_{45}}{x_{12}^{4} x_{23}^{3} x_{34} x_{45}},\\ 
	&a_{34} = 0, a_{35} = -\frac{x_{12}^{2} x_{23} x_{25} - {\left(x_{12}^{2} x_{23}^{2} x_{34} - x_{12} x_{14} x_{23}\right)} x_{45}}{x_{12}^{3} x_{23}^{3} x_{34} x_{45}},\\ 
	&a_{45} = 0
\end{align*}

%010000
\begin{equation*}x=\left(\begin{matrix}
		1 & x_{12} & 0 & 0 & 0 \\
		0 & 1 & x_{23} & x_{24} & 0 \\
		0 & 0 & 1 & x_{34} & 0  \\
		0 & 0 & 0 & 1 & x_{45}  \\
		0 & 0 & 0 & 1 & 1 \\
	\end{matrix}\right),x^A=\left(\begin{matrix}
		1 & 1 & 0 & 0 & 0 \\
		0 & 1 & 1 & 0 & 0 \\
		0 & 0 & 1 & 1 & 0 \\
		0 & 0 & 0 & 1 & 1 \\
		0 & 0 & 0 & 0 & 1
	\end{matrix}\right)
\end{equation*} \newline
Where matrix $A$ has entries
\begin{align*}
	&d_{1} = 1, d_{2} = \frac{1}{x_{12}}, d_{3} = \frac{1}{x_{12} x_{23}}, d_{4} = \frac{1}{x_{12} x_{23} x_{34}}, d_{5} = \frac{1}{x_{12} x_{23} x_{34} x_{45}},\\ 
	&a_{12} = 0, a_{13} = 1, a_{14} = 1, a_{15} = 1,\\ 
	&a_{23} = 0, a_{24} = \frac{1}{x_{12}}, a_{25} = \frac{1}{x_{12}},\\ 
	&a_{34} = -\frac{x_{24}}{x_{12} x_{23}^{2} x_{34}}, a_{35} = \frac{x_{23}^{2} x_{34}^{2} + x_{24}^{2}}{x_{12} x_{23}^{3} x_{34}^{2}},\\ 
	&a_{45} = -\frac{x_{24}}{x_{12} x_{23}^{2} x_{34}^{2}}
\end{align*}

%010001
\begin{equation*}x=\left(\begin{matrix}
		1 & x_{12} & 0 & 0 & x_{15} \\
		0 & 1 & x_{23} & x_{24} & 0 \\
		0 & 0 & 1 & x_{34} & 0  \\
		0 & 0 & 0 & 1 & x_{45}  \\
		0 & 0 & 0 & 1 & 1 \\
	\end{matrix}\right),x^A=\left(\begin{matrix}
		1 & 1 & 0 & 0 & 0 \\
		0 & 1 & 1 & 0 & 0 \\
		0 & 0 & 1 & 1 & 0 \\
		0 & 0 & 0 & 1 & 1 \\
		0 & 0 & 0 & 0 & 1
	\end{matrix}\right)
\end{equation*} \newline
Where matrix $A$ has entries
\begin{align*}
	&d_{1} = 1, d_{2} = \frac{1}{x_{12}}, d_{3} = \frac{1}{x_{12} x_{23}}, d_{4} = \frac{1}{x_{12} x_{23} x_{34}}, d_{5} = \frac{1}{x_{12} x_{23} x_{34} x_{45}},\\ 
	&a_{12} = 0, a_{13} = 1, a_{14} = 1, a_{15} = 1,\\ 
	&a_{23} = 0, a_{24} = \frac{1}{x_{12}}, a_{25} = \frac{x_{12} x_{23} x_{34} x_{45} - x_{15}}{x_{12}^{2} x_{23} x_{34} x_{45}},\\ 
	&a_{34} = -\frac{x_{24}}{x_{12} x_{23}^{2} x_{34}}, a_{35} = \frac{x_{23}^{2} x_{34}^{2} + x_{24}^{2}}{x_{12} x_{23}^{3} x_{34}^{2}},\\ 
	&a_{45} = -\frac{x_{24}}{x_{12} x_{23}^{2} x_{34}^{2}}
\end{align*}

%010010
\begin{equation*}x=\left(\begin{matrix}
		1 & x_{12} & 0 & x_{14} & 0 \\
		0 & 1 & x_{23} & x_{24} & 0 \\
		0 & 0 & 1 & x_{34} & 0  \\
		0 & 0 & 0 & 1 & x_{45}  \\
		0 & 0 & 0 & 1 & 1 \\
	\end{matrix}\right),x^A=\left(\begin{matrix}
		1 & 1 & 0 & 0 & 0 \\
		0 & 1 & 1 & 0 & 0 \\
		0 & 0 & 1 & 1 & 0 \\
		0 & 0 & 0 & 1 & 1 \\
		0 & 0 & 0 & 0 & 1
	\end{matrix}\right)
\end{equation*} \newline
Where matrix $A$ has entries
\begin{align*}
	&d_{1} = 1, d_{2} = \frac{1}{x_{12}}, d_{3} = \frac{1}{x_{12} x_{23}}, d_{4} = \frac{1}{x_{12} x_{23} x_{34}}, d_{5} = \frac{1}{x_{12} x_{23} x_{34} x_{45}},\\ 
	&a_{12} = 0, a_{13} = 1, a_{14} = 1, a_{15} = 1,\\ 
	&a_{23} = 0, a_{24} = \frac{x_{12} x_{23} x_{34} - x_{14}}{x_{12}^{2} x_{23} x_{34}}, a_{25} = \frac{x_{12} x_{23}^{2} x_{34}^{2} + x_{14} x_{24}}{x_{12}^{2} x_{23}^{2} x_{34}^{2}},\\ 
	&a_{34} = -\frac{x_{24}}{x_{12} x_{23}^{2} x_{34}}, a_{35} = \frac{x_{12} x_{23}^{2} x_{34}^{2} + x_{12} x_{24}^{2} - x_{14} x_{23} x_{34}}{x_{12}^{2} x_{23}^{3} x_{34}^{2}},\\ 
	&a_{45} = -\frac{x_{24}}{x_{12} x_{23}^{2} x_{34}^{2}}
\end{align*}

%010011
\begin{equation*}x=\left(\begin{matrix}
		1 & x_{12} & 0 & x_{14} & x_{15} \\
		0 & 1 & x_{23} & x_{24} & 0 \\
		0 & 0 & 1 & x_{34} & 0  \\
		0 & 0 & 0 & 1 & x_{45}  \\
		0 & 0 & 0 & 1 & 1 \\
	\end{matrix}\right),x^A=\left(\begin{matrix}
		1 & 1 & 0 & 0 & 0 \\
		0 & 1 & 1 & 0 & 0 \\
		0 & 0 & 1 & 1 & 0 \\
		0 & 0 & 0 & 1 & 1 \\
		0 & 0 & 0 & 0 & 1
	\end{matrix}\right)
\end{equation*} \newline
Where matrix $A$ has entries
\begin{align*}
	&d_{1} = 1, d_{2} = \frac{1}{x_{12}}, d_{3} = \frac{1}{x_{12} x_{23}}, d_{4} = \frac{1}{x_{12} x_{23} x_{34}}, d_{5} = \frac{1}{x_{12} x_{23} x_{34} x_{45}},\\ 
	&a_{12} = 0, a_{13} = 1, a_{14} = 1, a_{15} = 1,\\ 
	&a_{23} = 0, a_{24} = \frac{x_{12} x_{23} x_{34} - x_{14}}{x_{12}^{2} x_{23} x_{34}}, a_{25} = -\frac{x_{15} x_{23} x_{34} - {\left(x_{12} x_{23}^{2} x_{34}^{2} + x_{14} x_{24}\right)} x_{45}}{x_{12}^{2} x_{23}^{2} x_{34}^{2} x_{45}},\\ 
	&a_{34} = -\frac{x_{24}}{x_{12} x_{23}^{2} x_{34}}, a_{35} = \frac{x_{12} x_{23}^{2} x_{34}^{2} + x_{12} x_{24}^{2} - x_{14} x_{23} x_{34}}{x_{12}^{2} x_{23}^{3} x_{34}^{2}},\\ 
	&a_{45} = -\frac{x_{24}}{x_{12} x_{23}^{2} x_{34}^{2}}
\end{align*}

%010100
\begin{equation*}x=\left(\begin{matrix}
		1 & x_{12} & x_{13} & 0 & 0 \\
		0 & 1 & x_{23} & x_{24} & 0 \\
		0 & 0 & 1 & x_{34} & 0  \\
		0 & 0 & 0 & 1 & x_{45}  \\
		0 & 0 & 0 & 1 & 1 \\
	\end{matrix}\right),x^A=\left(\begin{matrix}
		1 & 1 & 0 & 0 & 0 \\
		0 & 1 & 1 & 0 & 0 \\
		0 & 0 & 1 & 1 & 0 \\
		0 & 0 & 0 & 1 & 1 \\
		0 & 0 & 0 & 0 & 1
	\end{matrix}\right)
\end{equation*} \newline
Where matrix $A$ has entries
\begin{align*}
	&d_{1} = 1, d_{2} = \frac{1}{x_{12}}, d_{3} = \frac{1}{x_{12} x_{23}}, d_{4} = \frac{1}{x_{12} x_{23} x_{34}}, d_{5} = \frac{1}{x_{12} x_{23} x_{34} x_{45}},\\ 
	&a_{12} = \frac{x_{13}}{x_{12} x_{23}}, a_{13} = 1, a_{14} = 1, a_{15} = 1,\\ 
	&a_{23} = 0, a_{24} = \frac{x_{12}^{2} x_{23}^{2} x_{34} + x_{12} x_{13} x_{24}}{x_{12}^{3} x_{23}^{2} x_{34}}, a_{25} = -\frac{x_{12}^{2} x_{13} x_{24}^{2} + x_{12} x_{13}^{2} x_{24} x_{34} - {\left(x_{12}^{3} x_{23}^{3} - x_{12}^{2} x_{13} x_{23}^{2}\right)} x_{34}^{2}}{x_{12}^{4} x_{23}^{3} x_{34}^{2}},\\ 
	&a_{34} = -\frac{x_{24}}{x_{12} x_{23}^{2} x_{34}}, a_{35} = \frac{x_{12}^{2} x_{23}^{2} x_{34}^{2} + x_{12}^{2} x_{24}^{2} + x_{12} x_{13} x_{24} x_{34}}{x_{12}^{3} x_{23}^{3} x_{34}^{2}},\\ 
	&a_{45} = -\frac{x_{24}}{x_{12} x_{23}^{2} x_{34}^{2}}
\end{align*}

%010101
\begin{equation*}x=\left(\begin{matrix}
		1 & x_{12} & x_{13} & 0 & x_{15} \\
		0 & 1 & x_{23} & x_{24} & 0 \\
		0 & 0 & 1 & x_{34} & 0  \\
		0 & 0 & 0 & 1 & x_{45}  \\
		0 & 0 & 0 & 1 & 1 \\
	\end{matrix}\right),x^A=\left(\begin{matrix}
		1 & 1 & 0 & 0 & 0 \\
		0 & 1 & 1 & 0 & 0 \\
		0 & 0 & 1 & 1 & 0 \\
		0 & 0 & 0 & 1 & 1 \\
		0 & 0 & 0 & 0 & 1
	\end{matrix}\right)
\end{equation*} \newline
Where matrix $A$ has entries
\begin{align*}
	&d_{1} = 1, d_{2} = \frac{1}{x_{12}}, d_{3} = \frac{1}{x_{12} x_{23}}, d_{4} = \frac{1}{x_{12} x_{23} x_{34}}, d_{5} = \frac{1}{x_{12} x_{23} x_{34} x_{45}},\\ 
	&a_{12} = \frac{x_{13}}{x_{12} x_{23}}, a_{13} = 1, a_{14} = 1, a_{15} = 1,\\ 
	&a_{23} = 0, a_{24} = \frac{x_{12}^{2} x_{23}^{2} x_{34} + x_{12} x_{13} x_{24}}{x_{12}^{3} x_{23}^{2} x_{34}}, a_{25} = -\frac{x_{12}^{2} x_{15} x_{23}^{2} x_{34} + {\left(x_{12}^{2} x_{13} x_{24}^{2} + x_{12} x_{13}^{2} x_{24} x_{34} - {\left(x_{12}^{3} x_{23}^{3} - x_{12}^{2} x_{13} x_{23}^{2}\right)} x_{34}^{2}\right)} x_{45}}{x_{12}^{4} x_{23}^{3} x_{34}^{2} x_{45}},\\ 
	&a_{34} = -\frac{x_{24}}{x_{12} x_{23}^{2} x_{34}}, a_{35} = \frac{x_{12}^{2} x_{23}^{2} x_{34}^{2} + x_{12}^{2} x_{24}^{2} + x_{12} x_{13} x_{24} x_{34}}{x_{12}^{3} x_{23}^{3} x_{34}^{2}},\\ 
	&a_{45} = -\frac{x_{24}}{x_{12} x_{23}^{2} x_{34}^{2}}
\end{align*}

%010110
\begin{equation*}x=\left(\begin{matrix}
		1 & x_{12} & x_{13} & x_{14} & 0 \\
		0 & 1 & x_{23} & x_{24} & 0 \\
		0 & 0 & 1 & x_{34} & 0  \\
		0 & 0 & 0 & 1 & x_{45}  \\
		0 & 0 & 0 & 1 & 1 \\
	\end{matrix}\right),x^A=\left(\begin{matrix}
		1 & 1 & 0 & 0 & 0 \\
		0 & 1 & 1 & 0 & 0 \\
		0 & 0 & 1 & 1 & 0 \\
		0 & 0 & 0 & 1 & 1 \\
		0 & 0 & 0 & 0 & 1
	\end{matrix}\right)
\end{equation*} \newline
Where matrix $A$ has entries
\begin{align*}
	&d_{1} = 1, d_{2} = \frac{1}{x_{12}}, d_{3} = \frac{1}{x_{12} x_{23}}, d_{4} = \frac{1}{x_{12} x_{23} x_{34}}, d_{5} = \frac{1}{x_{12} x_{23} x_{34} x_{45}},\\ 
	&a_{12} = \frac{x_{13}}{x_{12} x_{23}}, a_{13} = 1, a_{14} = 1, a_{15} = 1,\\ 
	&a_{23} = 0, a_{24} = \frac{x_{12}^{2} x_{23}^{2} x_{34} - x_{12} x_{14} x_{23} + x_{12} x_{13} x_{24}}{x_{12}^{3} x_{23}^{2} x_{34}}, \\
	&a_{25} = \frac{x_{12}^{2} x_{14} x_{23} x_{24} - x_{12}^{2} x_{13} x_{24}^{2} + {\left(x_{12}^{3} x_{23}^{3} - x_{12}^{2} x_{13} x_{23}^{2}\right)} x_{34}^{2} + {\left(x_{12} x_{13} x_{14} x_{23} - x_{12} x_{13}^{2} x_{24}\right)} x_{34}}{x_{12}^{4} x_{23}^{3} x_{34}^{2}},\\ 
	&a_{34} = -\frac{x_{24}}{x_{12} x_{23}^{2} x_{34}}, a_{35} = \frac{x_{12}^{2} x_{23}^{2} x_{34}^{2} + x_{12}^{2} x_{24}^{2} - {\left(x_{12} x_{14} x_{23} - x_{12} x_{13} x_{24}\right)} x_{34}}{x_{12}^{3} x_{23}^{3} x_{34}^{2}},\\ 
	&a_{45} = -\frac{x_{24}}{x_{12} x_{23}^{2} x_{34}^{2}}
\end{align*}

%010111
\begin{equation*}x=\left(\begin{matrix}
		1 & x_{12} & x_{13} & x_{14} & x_{15} \\
		0 & 1 & x_{23} & x_{24} & 0 \\
		0 & 0 & 1 & x_{34} & 0  \\
		0 & 0 & 0 & 1 & x_{45}  \\
		0 & 0 & 0 & 1 & 1 \\
	\end{matrix}\right),x^A=\left(\begin{matrix}
		1 & 1 & 0 & 0 & 0 \\
		0 & 1 & 1 & 0 & 0 \\
		0 & 0 & 1 & 1 & 0 \\
		0 & 0 & 0 & 1 & 1 \\
		0 & 0 & 0 & 0 & 1
	\end{matrix}\right)
\end{equation*} \newline
Where matrix $A$ has entries
\begin{align*}
	&d_{1} = 1, d_{2} = \frac{1}{x_{12}}, d_{3} = \frac{1}{x_{12} x_{23}}, d_{4} = \frac{1}{x_{12} x_{23} x_{34}}, d_{5} = \frac{1}{x_{12} x_{23} x_{34} x_{45}},\\ 
	&a_{12} = \frac{x_{13}}{x_{12} x_{23}}, a_{13} = 1, a_{14} = 1, a_{15} = 1,\\ 
	&a_{23} = 0, a_{24} = \frac{x_{12}^{2} x_{23}^{2} x_{34} - x_{12} x_{14} x_{23} + x_{12} x_{13} x_{24}}{x_{12}^{3} x_{23}^{2} x_{34}}, \\
	&a_{25} = -\frac{x_{12}^{2} x_{15} x_{23}^{2} x_{34} - {\left(x_{12}^{2} x_{14} x_{23} x_{24} - x_{12}^{2} x_{13} x_{24}^{2} + {\left(x_{12}^{3} x_{23}^{3} - x_{12}^{2} x_{13} x_{23}^{2}\right)} x_{34}^{2} + {\left(x_{12} x_{13} x_{14} x_{23} - x_{12} x_{13}^{2} x_{24}\right)} x_{34}\right)} x_{45}}{x_{12}^{4} x_{23}^{3} x_{34}^{2} x_{45}},\\ 
	&a_{34} = -\frac{x_{24}}{x_{12} x_{23}^{2} x_{34}}, a_{35} = \frac{x_{12}^{2} x_{23}^{2} x_{34}^{2} + x_{12}^{2} x_{24}^{2} - {\left(x_{12} x_{14} x_{23} - x_{12} x_{13} x_{24}\right)} x_{34}}{x_{12}^{3} x_{23}^{3} x_{34}^{2}},\\ 
	&a_{45} = -\frac{x_{24}}{x_{12} x_{23}^{2} x_{34}^{2}}
\end{align*}

%011000
\begin{equation*}x=\left(\begin{matrix}
		1 & x_{12} & 0 & 0 & 0 \\
		0 & 1 & x_{23} & x_{24} & x_{25} \\
		0 & 0 & 1 & x_{34} & 0  \\
		0 & 0 & 0 & 1 & x_{45}  \\
		0 & 0 & 0 & 1 & 1 \\
	\end{matrix}\right),x^A=\left(\begin{matrix}
		1 & 1 & 0 & 0 & 0 \\
		0 & 1 & 1 & 0 & 0 \\
		0 & 0 & 1 & 1 & 0 \\
		0 & 0 & 0 & 1 & 1 \\
		0 & 0 & 0 & 0 & 1
	\end{matrix}\right)
\end{equation*} \newline
Where matrix $A$ has entries
\begin{align*}
	&d_{1} = 1, d_{2} = \frac{1}{x_{12}}, d_{3} = \frac{1}{x_{12} x_{23}}, d_{4} = \frac{1}{x_{12} x_{23} x_{34}}, d_{5} = \frac{1}{x_{12} x_{23} x_{34} x_{45}},\\ 
	&a_{12} = 0, a_{13} = 1, a_{14} = 1, a_{15} = 1,\\ 
	&a_{23} = 0, a_{24} = \frac{1}{x_{12}}, a_{25} = \frac{1}{x_{12}},\\ 
	&a_{34} = -\frac{x_{24}}{x_{12} x_{23}^{2} x_{34}}, a_{35} = -\frac{x_{23} x_{25} x_{34} - {\left(x_{23}^{2} x_{34}^{2} + x_{24}^{2}\right)} x_{45}}{x_{12} x_{23}^{3} x_{34}^{2} x_{45}},\\ 
	&a_{45} = -\frac{x_{24}}{x_{12} x_{23}^{2} x_{34}^{2}}
\end{align*}

%011001
\begin{equation*}x=\left(\begin{matrix}
		1 & x_{12} & 0 & 0 & x_{15} \\
		0 & 1 & x_{23} & x_{24} & x_{25} \\
		0 & 0 & 1 & x_{34} & 0  \\
		0 & 0 & 0 & 1 & x_{45}  \\
		0 & 0 & 0 & 1 & 1 \\
	\end{matrix}\right),x^A=\left(\begin{matrix}
		1 & 1 & 0 & 0 & 0 \\
		0 & 1 & 1 & 0 & 0 \\
		0 & 0 & 1 & 1 & 0 \\
		0 & 0 & 0 & 1 & 1 \\
		0 & 0 & 0 & 0 & 1
	\end{matrix}\right)
\end{equation*} \newline
Where matrix $A$ has entries
\begin{align*}
	&d_{1} = 1, d_{2} = \frac{1}{x_{12}}, d_{3} = \frac{1}{x_{12} x_{23}}, d_{4} = \frac{1}{x_{12} x_{23} x_{34}}, d_{5} = \frac{1}{x_{12} x_{23} x_{34} x_{45}},\\ 
	&a_{12} = 0, a_{13} = 1, a_{14} = 1, a_{15} = 1,\\ 
	&a_{23} = 0, a_{24} = \frac{1}{x_{12}}, a_{25} = \frac{x_{12} x_{23} x_{34} x_{45} - x_{15}}{x_{12}^{2} x_{23} x_{34} x_{45}},\\ 
	&a_{34} = -\frac{x_{24}}{x_{12} x_{23}^{2} x_{34}}, a_{35} = -\frac{x_{23} x_{25} x_{34} - {\left(x_{23}^{2} x_{34}^{2} + x_{24}^{2}\right)} x_{45}}{x_{12} x_{23}^{3} x_{34}^{2} x_{45}},\\ 
	&a_{45} = -\frac{x_{24}}{x_{12} x_{23}^{2} x_{34}^{2}}
\end{align*}

%011010
\begin{equation*}x=\left(\begin{matrix}
		1 & x_{12} & 0 & x_{14} & 0 \\
		0 & 1 & x_{23} & x_{24} & x_{25} \\
		0 & 0 & 1 & x_{34} & 0  \\
		0 & 0 & 0 & 1 & x_{45}  \\
		0 & 0 & 0 & 1 & 1 \\
	\end{matrix}\right),x^A=\left(\begin{matrix}
		1 & 1 & 0 & 0 & 0 \\
		0 & 1 & 1 & 0 & 0 \\
		0 & 0 & 1 & 1 & 0 \\
		0 & 0 & 0 & 1 & 1 \\
		0 & 0 & 0 & 0 & 1
	\end{matrix}\right)
\end{equation*} \newline
Where matrix $A$ has entries
\begin{align*}
	&d_{1} = 1, d_{2} = \frac{1}{x_{12}}, d_{3} = \frac{1}{x_{12} x_{23}}, d_{4} = \frac{1}{x_{12} x_{23} x_{34}}, d_{5} = \frac{1}{x_{12} x_{23} x_{34} x_{45}},\\ 
	&a_{12} = 0, a_{13} = 1, a_{14} = 1, a_{15} = 1,\\ 
	&a_{23} = 0, a_{24} = \frac{x_{12} x_{23} x_{34} - x_{14}}{x_{12}^{2} x_{23} x_{34}}, a_{25} = \frac{x_{12} x_{23}^{2} x_{34}^{2} + x_{14} x_{24}}{x_{12}^{2} x_{23}^{2} x_{34}^{2}},\\ 
	&a_{34} = -\frac{x_{24}}{x_{12} x_{23}^{2} x_{34}}, a_{35} = -\frac{x_{12} x_{23} x_{25} x_{34} - {\left(x_{12} x_{23}^{2} x_{34}^{2} + x_{12} x_{24}^{2} - x_{14} x_{23} x_{34}\right)} x_{45}}{x_{12}^{2} x_{23}^{3} x_{34}^{2} x_{45}},\\ 
	&a_{45} = -\frac{x_{24}}{x_{12} x_{23}^{2} x_{34}^{2}}
\end{align*}

%011011
\begin{equation*}x=\left(\begin{matrix}
		1 & x_{12} & 0 & x_{14} & x_{15} \\
		0 & 1 & x_{23} & x_{24} & x_{25} \\
		0 & 0 & 1 & x_{34} & 0  \\
		0 & 0 & 0 & 1 & x_{45}  \\
		0 & 0 & 0 & 1 & 1 \\
	\end{matrix}\right),x^A=\left(\begin{matrix}
		1 & 1 & 0 & 0 & 0 \\
		0 & 1 & 1 & 0 & 0 \\
		0 & 0 & 1 & 1 & 0 \\
		0 & 0 & 0 & 1 & 1 \\
		0 & 0 & 0 & 0 & 1
	\end{matrix}\right)
\end{equation*} \newline
Where matrix $A$ has entries
\begin{align*}
	&d_{1} = 1, d_{2} = \frac{1}{x_{12}}, d_{3} = \frac{1}{x_{12} x_{23}}, d_{4} = \frac{1}{x_{12} x_{23} x_{34}}, d_{5} = \frac{1}{x_{12} x_{23} x_{34} x_{45}},\\ 
	&a_{12} = 0, a_{13} = 1, a_{14} = 1, a_{15} = 1,\\ 
	&a_{23} = 0, a_{24} = \frac{x_{12} x_{23} x_{34} - x_{14}}{x_{12}^{2} x_{23} x_{34}}, a_{25} = -\frac{x_{15} x_{23} x_{34} - {\left(x_{12} x_{23}^{2} x_{34}^{2} + x_{14} x_{24}\right)} x_{45}}{x_{12}^{2} x_{23}^{2} x_{34}^{2} x_{45}},\\ 
	&a_{34} = -\frac{x_{24}}{x_{12} x_{23}^{2} x_{34}}, a_{35} = -\frac{x_{12} x_{23} x_{25} x_{34} - {\left(x_{12} x_{23}^{2} x_{34}^{2} + x_{12} x_{24}^{2} - x_{14} x_{23} x_{34}\right)} x_{45}}{x_{12}^{2} x_{23}^{3} x_{34}^{2} x_{45}},\\ 
	&a_{45} = -\frac{x_{24}}{x_{12} x_{23}^{2} x_{34}^{2}}
\end{align*}

%011100
\begin{equation*}x=\left(\begin{matrix}
		1 & x_{12} & x_{13} & 0 & 0 \\
		0 & 1 & x_{23} & x_{24} & x_{25} \\
		0 & 0 & 1 & x_{34} & 0  \\
		0 & 0 & 0 & 1 & x_{45}  \\
		0 & 0 & 0 & 1 & 1 \\
	\end{matrix}\right),x^A=\left(\begin{matrix}
		1 & 1 & 0 & 0 & 0 \\
		0 & 1 & 1 & 0 & 0 \\
		0 & 0 & 1 & 1 & 0 \\
		0 & 0 & 0 & 1 & 1 \\
		0 & 0 & 0 & 0 & 1
	\end{matrix}\right)
\end{equation*} \newline
Where matrix $A$ has entries
\begin{align*}
	&d_{1} = 1, d_{2} = \frac{1}{x_{12}}, d_{3} = \frac{1}{x_{12} x_{23}}, d_{4} = \frac{1}{x_{12} x_{23} x_{34}}, d_{5} = \frac{1}{x_{12} x_{23} x_{34} x_{45}},\\ 
	&a_{12} = \frac{x_{13}}{x_{12} x_{23}}, a_{13} = 1, a_{14} = 1, a_{15} = 1,\\ 
	&a_{23} = 0, a_{24} = \frac{x_{12}^{2} x_{23}^{2} x_{34} + x_{12} x_{13} x_{24}}{x_{12}^{3} x_{23}^{2} x_{34}}, a_{25} = \frac{x_{12}^{2} x_{13} x_{23} x_{25} x_{34} - {\left(x_{12}^{2} x_{13} x_{24}^{2} + x_{12} x_{13}^{2} x_{24} x_{34} - {\left(x_{12}^{3} x_{23}^{3} - x_{12}^{2} x_{13} x_{23}^{2}\right)} x_{34}^{2}\right)} x_{45}}{x_{12}^{4} x_{23}^{3} x_{34}^{2} x_{45}},\\ 
	&a_{34} = -\frac{x_{24}}{x_{12} x_{23}^{2} x_{34}}, a_{35} = -\frac{x_{12}^{2} x_{23} x_{25} x_{34} - {\left(x_{12}^{2} x_{23}^{2} x_{34}^{2} + x_{12}^{2} x_{24}^{2} + x_{12} x_{13} x_{24} x_{34}\right)} x_{45}}{x_{12}^{3} x_{23}^{3} x_{34}^{2} x_{45}},\\ 
	&a_{45} = -\frac{x_{24}}{x_{12} x_{23}^{2} x_{34}^{2}}
\end{align*}

%011101
\begin{equation*}x=\left(\begin{matrix}
		1 & x_{12} & x_{13} & 0 & x_{15} \\
		0 & 1 & x_{23} & x_{24} & x_{25} \\
		0 & 0 & 1 & x_{34} & 0  \\
		0 & 0 & 0 & 1 & x_{45}  \\
		0 & 0 & 0 & 1 & 1 \\
	\end{matrix}\right),x^A=\left(\begin{matrix}
		1 & 1 & 0 & 0 & 0 \\
		0 & 1 & 1 & 0 & 0 \\
		0 & 0 & 1 & 1 & 0 \\
		0 & 0 & 0 & 1 & 1 \\
		0 & 0 & 0 & 0 & 1
	\end{matrix}\right)
\end{equation*} \newline
Where matrix $A$ has entries
\begin{align*}
	&d_{1} = 1, d_{2} = \frac{1}{x_{12}}, d_{3} = \frac{1}{x_{12} x_{23}}, d_{4} = \frac{1}{x_{12} x_{23} x_{34}}, d_{5} = \frac{1}{x_{12} x_{23} x_{34} x_{45}},\\ 
	&a_{12} = \frac{x_{13}}{x_{12} x_{23}}, a_{13} = 1, a_{14} = 1, a_{15} = 1,\\ 
	&a_{23} = 0, a_{24} = \frac{x_{12}^{2} x_{23}^{2} x_{34} + x_{12} x_{13} x_{24}}{x_{12}^{3} x_{23}^{2} x_{34}}, \\
	&a_{25} = -\frac{{\left(x_{12}^{2} x_{15} x_{23}^{2} - x_{12}^{2} x_{13} x_{23} x_{25}\right)} x_{34} + {\left(x_{12}^{2} x_{13} x_{24}^{2} + x_{12} x_{13}^{2} x_{24} x_{34} - {\left(x_{12}^{3} x_{23}^{3} - x_{12}^{2} x_{13} x_{23}^{2}\right)} x_{34}^{2}\right)} x_{45}}{x_{12}^{4} x_{23}^{3} x_{34}^{2} x_{45}},\\ 
	&a_{34} = -\frac{x_{24}}{x_{12} x_{23}^{2} x_{34}}, a_{35} = -\frac{x_{12}^{2} x_{23} x_{25} x_{34} - {\left(x_{12}^{2} x_{23}^{2} x_{34}^{2} + x_{12}^{2} x_{24}^{2} + x_{12} x_{13} x_{24} x_{34}\right)} x_{45}}{x_{12}^{3} x_{23}^{3} x_{34}^{2} x_{45}},\\ 
	&a_{45} = -\frac{x_{24}}{x_{12} x_{23}^{2} x_{34}^{2}}
\end{align*}

%011110
\begin{equation*}x=\left(\begin{matrix}
		1 & x_{12} & x_{13} & x_{14} & 0 \\
		0 & 1 & x_{23} & x_{24} & x_{25} \\
		0 & 0 & 1 & x_{34} & 0  \\
		0 & 0 & 0 & 1 & x_{45}  \\
		0 & 0 & 0 & 1 & 1 \\
	\end{matrix}\right),x^A=\left(\begin{matrix}
		1 & 1 & 0 & 0 & 0 \\
		0 & 1 & 1 & 0 & 0 \\
		0 & 0 & 1 & 1 & 0 \\
		0 & 0 & 0 & 1 & 1 \\
		0 & 0 & 0 & 0 & 1
	\end{matrix}\right)
\end{equation*} \newline
Where matrix $A$ has entries
\begin{align*}
	&d_{1} = 1, d_{2} = \frac{1}{x_{12}}, d_{3} = \frac{1}{x_{12} x_{23}}, d_{4} = \frac{1}{x_{12} x_{23} x_{34}}, d_{5} = \frac{1}{x_{12} x_{23} x_{34} x_{45}},\\ 
	&a_{12} = \frac{x_{13}}{x_{12} x_{23}}, a_{13} = 1, a_{14} = 1, a_{15} = 1,\\ 
	&a_{23} = 0, a_{24} = \frac{x_{12}^{2} x_{23}^{2} x_{34} - x_{12} x_{14} x_{23} + x_{12} x_{13} x_{24}}{x_{12}^{3} x_{23}^{2} x_{34}}, \\
	&a_{25} = \frac{x_{12}^{2} x_{13} x_{23} x_{25} x_{34} + {\left(x_{12}^{2} x_{14} x_{23} x_{24} - x_{12}^{2} x_{13} x_{24}^{2} + {\left(x_{12}^{3} x_{23}^{3} - x_{12}^{2} x_{13} x_{23}^{2}\right)} x_{34}^{2} + {\left(x_{12} x_{13} x_{14} x_{23} - x_{12} x_{13}^{2} x_{24}\right)} x_{34}\right)} x_{45}}{x_{12}^{4} x_{23}^{3} x_{34}^{2} x_{45}},\\ 
	&a_{34} = -\frac{x_{24}}{x_{12} x_{23}^{2} x_{34}}, a_{35} = -\frac{x_{12}^{2} x_{23} x_{25} x_{34} - {\left(x_{12}^{2} x_{23}^{2} x_{34}^{2} + x_{12}^{2} x_{24}^{2} - {\left(x_{12} x_{14} x_{23} - x_{12} x_{13} x_{24}\right)} x_{34}\right)} x_{45}}{x_{12}^{3} x_{23}^{3} x_{34}^{2} x_{45}},\\ 
	&a_{45} = -\frac{x_{24}}{x_{12} x_{23}^{2} x_{34}^{2}}
\end{align*}

%011111
\begin{equation*}x=\left(\begin{matrix}
		1 & x_{12} & x_{13} & x_{14} & x_{15} \\
		0 & 1 & x_{23} & x_{24} & x_{25} \\
		0 & 0 & 1 & x_{34} & 0  \\
		0 & 0 & 0 & 1 & x_{45}  \\
		0 & 0 & 0 & 1 & 1 \\
	\end{matrix}\right),x^A=\left(\begin{matrix}
		1 & 1 & 0 & 0 & 0 \\
		0 & 1 & 1 & 0 & 0 \\
		0 & 0 & 1 & 1 & 0 \\
		0 & 0 & 0 & 1 & 1 \\
		0 & 0 & 0 & 0 & 1
	\end{matrix}\right)
\end{equation*} \newline
Where matrix $A$ has entries
\begin{align*}
	&d_{1} = 1, d_{2} = \frac{1}{x_{12}}, d_{3} = \frac{1}{x_{12} x_{23}}, d_{4} = \frac{1}{x_{12} x_{23} x_{34}}, d_{5} = \frac{1}{x_{12} x_{23} x_{34} x_{45}},\\ 
	&a_{12} = \frac{x_{13}}{x_{12} x_{23}}, a_{13} = 1, a_{14} = 1, a_{15} = 1,\\ 
	&a_{23} = 0, a_{24} = \frac{x_{12}^{2} x_{23}^{2} x_{34} - x_{12} x_{14} x_{23} + x_{12} x_{13} x_{24}}{x_{12}^{3} x_{23}^{2} x_{34}}, \\
	&a_{25} = -\frac{\splitfrac{{\left(x_{12}^{2} x_{15} x_{23}^{2} - x_{12}^{2} x_{13} x_{23} x_{25}\right)} x_{34} -}
		{	- {\left(x_{12}^{2} x_{14} x_{23} x_{24} - x_{12}^{2} x_{13} x_{24}^{2} + {\left(x_{12}^{3} x_{23}^{3} - x_{12}^{2} x_{13} x_{23}^{2}\right)} x_{34}^{2} + {\left(x_{12} x_{13} x_{14} x_{23} - x_{12} x_{13}^{2} x_{24}\right)} x_{34}\right)} x_{45}}}{x_{12}^{4} x_{23}^{3} x_{34}^{2} x_{45}},\\ 
	&a_{34} = -\frac{x_{24}}{x_{12} x_{23}^{2} x_{34}}, a_{35} = -\frac{x_{12}^{2} x_{23} x_{25} x_{34} - {\left(x_{12}^{2} x_{23}^{2} x_{34}^{2} + x_{12}^{2} x_{24}^{2} - {\left(x_{12} x_{14} x_{23} - x_{12} x_{13} x_{24}\right)} x_{34}\right)} x_{45}}{x_{12}^{3} x_{23}^{3} x_{34}^{2} x_{45}},\\ 
	&a_{45} = -\frac{x_{24}}{x_{12} x_{23}^{2} x_{34}^{2}}
\end{align*}

%100000
\begin{equation*}x=\left(% [inline block 51: 6 envs, 2094 chars -> data_tex | \begin{matrix} 		1 & x_{12} & 0 & 0 & 0 \\...]
\right)
\end{equation*} \newline
Where matrix $A$ has entries
\begin{align*}
	&d_{1} = 1, d_{2} = \frac{1}{x_{12}}, d_{3} = \frac{1}{x_{12} x_{23}}, d_{4} = \frac{1}{x_{12} x_{23} x_{34}}, d_{5} = \frac{1}{x_{12} x_{23} x_{34} x_{45}},\\ 
	&a_{12} = 0, a_{13} = 1, a_{14} = 1, a_{15} = 1,\\ 
	&a_{23} = 0, a_{24} = \frac{x_{12} x_{23} x_{34} - x_{14}}{x_{12}^{2} x_{23} x_{34}}, a_{25} = \frac{x_{12} x_{23} x_{34}^{2} x_{45} + x_{14} x_{35}}{x_{12}^{2} x_{23} x_{34}^{2} x_{45}},\\ 
	&a_{34} = 0, a_{35} = \frac{x_{12} x_{23} x_{34} - x_{14}}{x_{12}^{2} x_{23}^{2} x_{34}},\\ 
	&a_{45} = -\frac{x_{35}}{x_{12} x_{23} x_{34}^{2} x_{45}}
\end{align*}

%100011
\begin{equation*}x=\left(\begin{matrix}
		1 & x_{12} & 0 & x_{14} & x_{15} \\
		0 & 1 & x_{23} & 0 & 0 \\
		0 & 0 & 1 & x_{34} & x_{35}  \\
		0 & 0 & 0 & 1 & x_{45}  \\
		0 & 0 & 0 & 1 & 1 \\
	\end{matrix}\right),x^A=\left(\begin{matrix}
		1 & 1 & 0 & 0 & 0 \\
		0 & 1 & 1 & 0 & 0 \\
		0 & 0 & 1 & 1 & 0 \\
		0 & 0 & 0 & 1 & 1 \\
		0 & 0 & 0 & 0 & 1
	\end{matrix}\right)
\end{equation*} \newline
Where matrix $A$ has entries
\begin{align*}
	&d_{1} = 1, d_{2} = \frac{1}{x_{12}}, d_{3} = \frac{1}{x_{12} x_{23}}, d_{4} = \frac{1}{x_{12} x_{23} x_{34}}, d_{5} = \frac{1}{x_{12} x_{23} x_{34} x_{45}},\\ 
	&a_{12} = 0, a_{13} = 1, a_{14} = 1, a_{15} = 1,\\ 
	&a_{23} = 0, a_{24} = \frac{x_{12} x_{23} x_{34} - x_{14}}{x_{12}^{2} x_{23} x_{34}}, a_{25} = \frac{x_{12} x_{23} x_{34}^{2} x_{45} - x_{15} x_{34} + x_{14} x_{35}}{x_{12}^{2} x_{23} x_{34}^{2} x_{45}},\\ 
	&a_{34} = 0, a_{35} = \frac{x_{12} x_{23} x_{34} - x_{14}}{x_{12}^{2} x_{23}^{2} x_{34}},\\ 
	&a_{45} = -\frac{x_{35}}{x_{12} x_{23} x_{34}^{2} x_{45}}
\end{align*}

%100100
\begin{equation*}x=\left(\begin{matrix}
		1 & x_{12} & x_{13} & 0 & 0 \\
		0 & 1 & x_{23} & 0 & 0 \\
		0 & 0 & 1 & x_{34} & x_{35}  \\
		0 & 0 & 0 & 1 & x_{45}  \\
		0 & 0 & 0 & 1 & 1 \\
	\end{matrix}\right),x^A=\left(\begin{matrix}
		1 & 1 & 0 & 0 & 0 \\
		0 & 1 & 1 & 0 & 0 \\
		0 & 0 & 1 & 1 & 0 \\
		0 & 0 & 0 & 1 & 1 \\
		0 & 0 & 0 & 0 & 1
	\end{matrix}\right)
\end{equation*} \newline
Where matrix $A$ has entries
\begin{align*}
	&d_{1} = 1, d_{2} = \frac{1}{x_{12}}, d_{3} = \frac{1}{x_{12} x_{23}}, d_{4} = \frac{1}{x_{12} x_{23} x_{34}}, d_{5} = \frac{1}{x_{12} x_{23} x_{34} x_{45}},\\ 
	&a_{12} = \frac{x_{13}}{x_{12} x_{23}}, a_{13} = 1, a_{14} = 1, a_{15} = 1,\\ 
	&a_{23} = 0, a_{24} = \frac{1}{x_{12}}, a_{25} = \frac{x_{12}^{3} x_{23}^{3} - x_{12}^{2} x_{13} x_{23}^{2}}{x_{12}^{4} x_{23}^{3}},\\ 
	&a_{34} = 0, a_{35} = \frac{1}{x_{12} x_{23}},\\ 
	&a_{45} = -\frac{x_{35}}{x_{12} x_{23} x_{34}^{2} x_{45}}
\end{align*}

%100101
\begin{equation*}x=\left(\begin{matrix}
		1 & x_{12} & x_{13} & 0 & x_{15} \\
		0 & 1 & x_{23} & 0 & 0 \\
		0 & 0 & 1 & x_{34} & x_{35}  \\
		0 & 0 & 0 & 1 & x_{45}  \\
		0 & 0 & 0 & 1 & 1 \\
	\end{matrix}\right),x^A=\left(\begin{matrix}
		1 & 1 & 0 & 0 & 0 \\
		0 & 1 & 1 & 0 & 0 \\
		0 & 0 & 1 & 1 & 0 \\
		0 & 0 & 0 & 1 & 1 \\
		0 & 0 & 0 & 0 & 1
	\end{matrix}\right)
\end{equation*} \newline
Where matrix $A$ has entries
\begin{align*}
	&d_{1} = 1, d_{2} = \frac{1}{x_{12}}, d_{3} = \frac{1}{x_{12} x_{23}}, d_{4} = \frac{1}{x_{12} x_{23} x_{34}}, d_{5} = \frac{1}{x_{12} x_{23} x_{34} x_{45}},\\ 
	&a_{12} = \frac{x_{13}}{x_{12} x_{23}}, a_{13} = 1, a_{14} = 1, a_{15} = 1,\\ 
	&a_{23} = 0, a_{24} = \frac{1}{x_{12}}, a_{25} = -\frac{x_{12}^{2} x_{15} x_{23}^{2} - {\left(x_{12}^{3} x_{23}^{3} - x_{12}^{2} x_{13} x_{23}^{2}\right)} x_{34} x_{45}}{x_{12}^{4} x_{23}^{3} x_{34} x_{45}},\\ 
	&a_{34} = 0, a_{35} = \frac{1}{x_{12} x_{23}},\\ 
	&a_{45} = -\frac{x_{35}}{x_{12} x_{23} x_{34}^{2} x_{45}}
\end{align*}

%100110
\begin{equation*}x=\left(\begin{matrix}
		1 & x_{12} & x_{13} & x_{14} & 0 \\
		0 & 1 & x_{23} & 0 & 0 \\
		0 & 0 & 1 & x_{34} & x_{35}  \\
		0 & 0 & 0 & 1 & x_{45}  \\
		0 & 0 & 0 & 1 & 1 \\
	\end{matrix}\right),x^A=\left(\begin{matrix}
		1 & 1 & 0 & 0 & 0 \\
		0 & 1 & 1 & 0 & 0 \\
		0 & 0 & 1 & 1 & 0 \\
		0 & 0 & 0 & 1 & 1 \\
		0 & 0 & 0 & 0 & 1
	\end{matrix}\right)
\end{equation*} \newline
Where matrix $A$ has entries
\begin{align*}
	&d_{1} = 1, d_{2} = \frac{1}{x_{12}}, d_{3} = \frac{1}{x_{12} x_{23}}, d_{4} = \frac{1}{x_{12} x_{23} x_{34}}, d_{5} = \frac{1}{x_{12} x_{23} x_{34} x_{45}},\\ 
	&a_{12} = \frac{x_{13}}{x_{12} x_{23}}, a_{13} = 1, a_{14} = 1, a_{15} = 1,\\ 
	&a_{23} = 0, a_{24} = \frac{x_{12}^{2} x_{23}^{2} x_{34} - x_{12} x_{14} x_{23}}{x_{12}^{3} x_{23}^{2} x_{34}}, a_{25} = \frac{x_{12}^{2} x_{14} x_{23}^{2} x_{35} + {\left(x_{12} x_{13} x_{14} x_{23} x_{34} + {\left(x_{12}^{3} x_{23}^{3} - x_{12}^{2} x_{13} x_{23}^{2}\right)} x_{34}^{2}\right)} x_{45}}{x_{12}^{4} x_{23}^{3} x_{34}^{2} x_{45}},\\ 
	&a_{34} = 0, a_{35} = \frac{x_{12}^{2} x_{23}^{2} x_{34} - x_{12} x_{14} x_{23}}{x_{12}^{3} x_{23}^{3} x_{34}},\\ 
	&a_{45} = -\frac{x_{35}}{x_{12} x_{23} x_{34}^{2} x_{45}}
\end{align*}

%100111
\begin{equation*}x=\left(\begin{matrix}
		1 & x_{12} & x_{13} & x_{14} & x_{15} \\
		0 & 1 & x_{23} & 0 & 0 \\
		0 & 0 & 1 & x_{34} & x_{35}  \\
		0 & 0 & 0 & 1 & x_{45}  \\
		0 & 0 & 0 & 1 & 1 \\
	\end{matrix}\right),x^A=\left(\begin{matrix}
		1 & 1 & 0 & 0 & 0 \\
		0 & 1 & 1 & 0 & 0 \\
		0 & 0 & 1 & 1 & 0 \\
		0 & 0 & 0 & 1 & 1 \\
		0 & 0 & 0 & 0 & 1
	\end{matrix}\right)
\end{equation*} \newline
Where matrix $A$ has entries
\begin{align*}
	&d_{1} = 1, d_{2} = \frac{1}{x_{12}}, d_{3} = \frac{1}{x_{12} x_{23}}, d_{4} = \frac{1}{x_{12} x_{23} x_{34}}, d_{5} = \frac{1}{x_{12} x_{23} x_{34} x_{45}},\\ 
	&a_{12} = \frac{x_{13}}{x_{12} x_{23}}, a_{13} = 1, a_{14} = 1, a_{15} = 1,\\ 
	&a_{23} = 0, a_{24} = \frac{x_{12}^{2} x_{23}^{2} x_{34} - x_{12} x_{14} x_{23}}{x_{12}^{3} x_{23}^{2} x_{34}}, a_{25} = -\frac{x_{12}^{2} x_{15} x_{23}^{2} x_{34} - x_{12}^{2} x_{14} x_{23}^{2} x_{35} - {\left(x_{12} x_{13} x_{14} x_{23} x_{34} + {\left(x_{12}^{3} x_{23}^{3} - x_{12}^{2} x_{13} x_{23}^{2}\right)} x_{34}^{2}\right)} x_{45}}{x_{12}^{4} x_{23}^{3} x_{34}^{2} x_{45}},\\ 
	&a_{34} = 0, a_{35} = \frac{x_{12}^{2} x_{23}^{2} x_{34} - x_{12} x_{14} x_{23}}{x_{12}^{3} x_{23}^{3} x_{34}},\\ 
	&a_{45} = -\frac{x_{35}}{x_{12} x_{23} x_{34}^{2} x_{45}}
\end{align*}

%101000
\begin{equation*}x=\left(\begin{matrix}
		1 & x_{12} & 0 & 0 & 0 \\
		0 & 1 & x_{23} & 0 & x_{25} \\
		0 & 0 & 1 & x_{34} & x_{35}  \\
		0 & 0 & 0 & 1 & x_{45}  \\
		0 & 0 & 0 & 1 & 1 \\
	\end{matrix}\right),x^A=\left(\begin{matrix}
		1 & 1 & 0 & 0 & 0 \\
		0 & 1 & 1 & 0 & 0 \\
		0 & 0 & 1 & 1 & 0 \\
		0 & 0 & 0 & 1 & 1 \\
		0 & 0 & 0 & 0 & 1
	\end{matrix}\right)
\end{equation*} \newline
Where matrix $A$ has entries
\begin{align*}
	&d_{1} = 1, d_{2} = \frac{1}{x_{12}}, d_{3} = \frac{1}{x_{12} x_{23}}, d_{4} = \frac{1}{x_{12} x_{23} x_{34}}, d_{5} = \frac{1}{x_{12} x_{23} x_{34} x_{45}},\\ 
	&a_{12} = 0, a_{13} = 1, a_{14} = 1, a_{15} = 1,\\ 
	&a_{23} = 0, a_{24} = \frac{1}{x_{12}}, a_{25} = \frac{1}{x_{12}},\\ 
	&a_{34} = 0, a_{35} = \frac{x_{23} x_{34} x_{45} - x_{25}}{x_{12} x_{23}^{2} x_{34} x_{45}},\\ 
	&a_{45} = -\frac{x_{35}}{x_{12} x_{23} x_{34}^{2} x_{45}}
\end{align*}

%101001
\begin{equation*}x=\left(\begin{matrix}
		1 & x_{12} & 0 & 0 & x_{15} \\
		0 & 1 & x_{23} & 0 & x_{25} \\
		0 & 0 & 1 & x_{34} & x_{35}  \\
		0 & 0 & 0 & 1 & x_{45}  \\
		0 & 0 & 0 & 1 & 1 \\
	\end{matrix}\right),x^A=\left(\begin{matrix}
		1 & 1 & 0 & 0 & 0 \\
		0 & 1 & 1 & 0 & 0 \\
		0 & 0 & 1 & 1 & 0 \\
		0 & 0 & 0 & 1 & 1 \\
		0 & 0 & 0 & 0 & 1
	\end{matrix}\right)
\end{equation*} \newline
Where matrix $A$ has entries
\begin{align*}
	&d_{1} = 1, d_{2} = \frac{1}{x_{12}}, d_{3} = \frac{1}{x_{12} x_{23}}, d_{4} = \frac{1}{x_{12} x_{23} x_{34}}, d_{5} = \frac{1}{x_{12} x_{23} x_{34} x_{45}},\\ 
	&a_{12} = 0, a_{13} = 1, a_{14} = 1, a_{15} = 1,\\ 
	&a_{23} = 0, a_{24} = \frac{1}{x_{12}}, a_{25} = \frac{x_{12} x_{23} x_{34} x_{45} - x_{15}}{x_{12}^{2} x_{23} x_{34} x_{45}},\\ 
	&a_{34} = 0, a_{35} = \frac{x_{23} x_{34} x_{45} - x_{25}}{x_{12} x_{23}^{2} x_{34} x_{45}},\\ 
	&a_{45} = -\frac{x_{35}}{x_{12} x_{23} x_{34}^{2} x_{45}}
\end{align*}

%101010
\begin{equation*}x=\left(\begin{matrix}
		1 & x_{12} & 0 & x_{14} & 0 \\
		0 & 1 & x_{23} & 0 & x_{25} \\
		0 & 0 & 1 & x_{34} & x_{35}  \\
		0 & 0 & 0 & 1 & x_{45}  \\
		0 & 0 & 0 & 1 & 1 \\
	\end{matrix}\right),x^A=\left(\begin{matrix}
		1 & 1 & 0 & 0 & 0 \\
		0 & 1 & 1 & 0 & 0 \\
		0 & 0 & 1 & 1 & 0 \\
		0 & 0 & 0 & 1 & 1 \\
		0 & 0 & 0 & 0 & 1
	\end{matrix}\right)
\end{equation*} \newline
Where matrix $A$ has entries
\begin{align*}
	&d_{1} = 1, d_{2} = \frac{1}{x_{12}}, d_{3} = \frac{1}{x_{12} x_{23}}, d_{4} = \frac{1}{x_{12} x_{23} x_{34}}, d_{5} = \frac{1}{x_{12} x_{23} x_{34} x_{45}},\\ 
	&a_{12} = 0, a_{13} = 1, a_{14} = 1, a_{15} = 1,\\ 
	&a_{23} = 0, a_{24} = \frac{x_{12} x_{23} x_{34} - x_{14}}{x_{12}^{2} x_{23} x_{34}}, a_{25} = \frac{x_{12} x_{23} x_{34}^{2} x_{45} + x_{14} x_{35}}{x_{12}^{2} x_{23} x_{34}^{2} x_{45}},\\ 
	&a_{34} = 0, a_{35} = -\frac{x_{12} x_{25} - {\left(x_{12} x_{23} x_{34} - x_{14}\right)} x_{45}}{x_{12}^{2} x_{23}^{2} x_{34} x_{45}},\\ 
	&a_{45} = -\frac{x_{35}}{x_{12} x_{23} x_{34}^{2} x_{45}}
\end{align*}

%101011
\begin{equation*}x=\left(\begin{matrix}
		1 & x_{12} & 0 & x_{14} & x_{15} \\
		0 & 1 & x_{23} & 0 & x_{25} \\
		0 & 0 & 1 & x_{34} & x_{35}  \\
		0 & 0 & 0 & 1 & x_{45}  \\
		0 & 0 & 0 & 1 & 1 \\
	\end{matrix}\right),x^A=\left(\begin{matrix}
		1 & 1 & 0 & 0 & 0 \\
		0 & 1 & 1 & 0 & 0 \\
		0 & 0 & 1 & 1 & 0 \\
		0 & 0 & 0 & 1 & 1 \\
		0 & 0 & 0 & 0 & 1
	\end{matrix}\right)
\end{equation*} \newline
Where matrix $A$ has entries
\begin{align*}
	&d_{1} = 1, d_{2} = \frac{1}{x_{12}}, d_{3} = \frac{1}{x_{12} x_{23}}, d_{4} = \frac{1}{x_{12} x_{23} x_{34}}, d_{5} = \frac{1}{x_{12} x_{23} x_{34} x_{45}},\\ 
	&a_{12} = 0, a_{13} = 1, a_{14} = 1, a_{15} = 1,\\ 
	&a_{23} = 0, a_{24} = \frac{x_{12} x_{23} x_{34} - x_{14}}{x_{12}^{2} x_{23} x_{34}}, a_{25} = \frac{x_{12} x_{23} x_{34}^{2} x_{45} - x_{15} x_{34} + x_{14} x_{35}}{x_{12}^{2} x_{23} x_{34}^{2} x_{45}},\\ 
	&a_{34} = 0, a_{35} = -\frac{x_{12} x_{25} - {\left(x_{12} x_{23} x_{34} - x_{14}\right)} x_{45}}{x_{12}^{2} x_{23}^{2} x_{34} x_{45}},\\ 
	&a_{45} = -\frac{x_{35}}{x_{12} x_{23} x_{34}^{2} x_{45}}
\end{align*}

%101100
\begin{equation*}x=\left(\begin{matrix}
		1 & x_{12} & x_{13} & 0 & 0 \\
		0 & 1 & x_{23} & 0 & x_{25} \\
		0 & 0 & 1 & x_{34} & x_{35}  \\
		0 & 0 & 0 & 1 & x_{45}  \\
		0 & 0 & 0 & 1 & 1 \\
	\end{matrix}\right),x^A=\left(\begin{matrix}
		1 & 1 & 0 & 0 & 0 \\
		0 & 1 & 1 & 0 & 0 \\
		0 & 0 & 1 & 1 & 0 \\
		0 & 0 & 0 & 1 & 1 \\
		0 & 0 & 0 & 0 & 1
	\end{matrix}\right)
\end{equation*} \newline
Where matrix $A$ has entries
\begin{align*}
	&d_{1} = 1, d_{2} = \frac{1}{x_{12}}, d_{3} = \frac{1}{x_{12} x_{23}}, d_{4} = \frac{1}{x_{12} x_{23} x_{34}}, d_{5} = \frac{1}{x_{12} x_{23} x_{34} x_{45}},\\ 
	&a_{12} = \frac{x_{13}}{x_{12} x_{23}}, a_{13} = 1, a_{14} = 1, a_{15} = 1,\\ 
	&a_{23} = 0, a_{24} = \frac{1}{x_{12}}, a_{25} = \frac{x_{12}^{2} x_{13} x_{23} x_{25} + {\left(x_{12}^{3} x_{23}^{3} - x_{12}^{2} x_{13} x_{23}^{2}\right)} x_{34} x_{45}}{x_{12}^{4} x_{23}^{3} x_{34} x_{45}},\\ 
	&a_{34} = 0, a_{35} = \frac{x_{12}^{2} x_{23}^{2} x_{34} x_{45} - x_{12}^{2} x_{23} x_{25}}{x_{12}^{3} x_{23}^{3} x_{34} x_{45}},\\ 
	&a_{45} = -\frac{x_{35}}{x_{12} x_{23} x_{34}^{2} x_{45}}
\end{align*}

%101101
\begin{equation*}x=\left(\begin{matrix}
		1 & x_{12} & x_{13} & 0 & x_{15} \\
		0 & 1 & x_{23} & 0 & x_{25} \\
		0 & 0 & 1 & x_{34} & x_{35}  \\
		0 & 0 & 0 & 1 & x_{45}  \\
		0 & 0 & 0 & 1 & 1 \\
	\end{matrix}\right),x^A=\left(\begin{matrix}
		1 & 1 & 0 & 0 & 0 \\
		0 & 1 & 1 & 0 & 0 \\
		0 & 0 & 1 & 1 & 0 \\
		0 & 0 & 0 & 1 & 1 \\
		0 & 0 & 0 & 0 & 1
	\end{matrix}\right)
\end{equation*} \newline
Where matrix $A$ has entries
\begin{align*}
	&d_{1} = 1, d_{2} = \frac{1}{x_{12}}, d_{3} = \frac{1}{x_{12} x_{23}}, d_{4} = \frac{1}{x_{12} x_{23} x_{34}}, d_{5} = \frac{1}{x_{12} x_{23} x_{34} x_{45}},\\ 
	&a_{12} = \frac{x_{13}}{x_{12} x_{23}}, a_{13} = 1, a_{14} = 1, a_{15} = 1,\\ 
	&a_{23} = 0, a_{24} = \frac{1}{x_{12}}, a_{25} = -\frac{x_{12}^{2} x_{15} x_{23}^{2} - x_{12}^{2} x_{13} x_{23} x_{25} - {\left(x_{12}^{3} x_{23}^{3} - x_{12}^{2} x_{13} x_{23}^{2}\right)} x_{34} x_{45}}{x_{12}^{4} x_{23}^{3} x_{34} x_{45}},\\ 
	&a_{34} = 0, a_{35} = \frac{x_{12}^{2} x_{23}^{2} x_{34} x_{45} - x_{12}^{2} x_{23} x_{25}}{x_{12}^{3} x_{23}^{3} x_{34} x_{45}},\\ 
	&a_{45} = -\frac{x_{35}}{x_{12} x_{23} x_{34}^{2} x_{45}}
\end{align*}

%101110
\begin{equation*}x=\left(\begin{matrix}
		1 & x_{12} & x_{13} & x_{14} & 0 \\
		0 & 1 & x_{23} & 0 & x_{25} \\
		0 & 0 & 1 & x_{34} & x_{35}  \\
		0 & 0 & 0 & 1 & x_{45}  \\
		0 & 0 & 0 & 1 & 1 \\
	\end{matrix}\right),x^A=\left(\begin{matrix}
		1 & 1 & 0 & 0 & 0 \\
		0 & 1 & 1 & 0 & 0 \\
		0 & 0 & 1 & 1 & 0 \\
		0 & 0 & 0 & 1 & 1 \\
		0 & 0 & 0 & 0 & 1
	\end{matrix}\right)
\end{equation*} \newline
Where matrix $A$ has entries
\begin{align*}
	&d_{1} = 1, d_{2} = \frac{1}{x_{12}}, d_{3} = \frac{1}{x_{12} x_{23}}, d_{4} = \frac{1}{x_{12} x_{23} x_{34}}, d_{5} = \frac{1}{x_{12} x_{23} x_{34} x_{45}},\\ 
	&a_{12} = \frac{x_{13}}{x_{12} x_{23}}, a_{13} = 1, a_{14} = 1, a_{15} = 1,\\ 
	&a_{23} = 0, a_{24} = \frac{x_{12}^{2} x_{23}^{2} x_{34} - x_{12} x_{14} x_{23}}{x_{12}^{3} x_{23}^{2} x_{34}}, \\
	&a_{25} = \frac{x_{12}^{2} x_{13} x_{23} x_{25} x_{34} + x_{12}^{2} x_{14} x_{23}^{2} x_{35} + {\left(x_{12} x_{13} x_{14} x_{23} x_{34} + {\left(x_{12}^{3} x_{23}^{3} - x_{12}^{2} x_{13} x_{23}^{2}\right)} x_{34}^{2}\right)} x_{45}}{x_{12}^{4} x_{23}^{3} x_{34}^{2} x_{45}},\\ 
	&a_{34} = 0, a_{35} = -\frac{x_{12}^{2} x_{23} x_{25} - {\left(x_{12}^{2} x_{23}^{2} x_{34} - x_{12} x_{14} x_{23}\right)} x_{45}}{x_{12}^{3} x_{23}^{3} x_{34} x_{45}},\\ 
	&a_{45} = -\frac{x_{35}}{x_{12} x_{23} x_{34}^{2} x_{45}}
\end{align*}

%101111
\begin{equation*}x=\left(\begin{matrix}
		1 & x_{12} & x_{13} & x_{14} & x_{15} \\
		0 & 1 & x_{23} & 0 & x_{25} \\
		0 & 0 & 1 & x_{34} & x_{35}  \\
		0 & 0 & 0 & 1 & x_{45}  \\
		0 & 0 & 0 & 1 & 1 \\
	\end{matrix}\right),x^A=\left(\begin{matrix}
		1 & 1 & 0 & 0 & 0 \\
		0 & 1 & 1 & 0 & 0 \\
		0 & 0 & 1 & 1 & 0 \\
		0 & 0 & 0 & 1 & 1 \\
		0 & 0 & 0 & 0 & 1
	\end{matrix}\right)
\end{equation*} \newline
Where matrix $A$ has entries
\begin{align*}
	&d_{1} = 1, d_{2} = \frac{1}{x_{12}}, d_{3} = \frac{1}{x_{12} x_{23}}, d_{4} = \frac{1}{x_{12} x_{23} x_{34}}, d_{5} = \frac{1}{x_{12} x_{23} x_{34} x_{45}},\\ 
	&a_{12} = \frac{x_{13}}{x_{12} x_{23}}, a_{13} = 1, a_{14} = 1, a_{15} = 1,\\ 
	&a_{23} = 0, a_{24} = \frac{x_{12}^{2} x_{23}^{2} x_{34} - x_{12} x_{14} x_{23}}{x_{12}^{3} x_{23}^{2} x_{34}}, \\
	&a_{25} = \frac{x_{12}^{2} x_{14} x_{23}^{2} x_{35} - {\left(x_{12}^{2} x_{15} x_{23}^{2} - x_{12}^{2} x_{13} x_{23} x_{25}\right)} x_{34} + {\left(x_{12} x_{13} x_{14} x_{23} x_{34} + {\left(x_{12}^{3} x_{23}^{3} - x_{12}^{2} x_{13} x_{23}^{2}\right)} x_{34}^{2}\right)} x_{45}}{x_{12}^{4} x_{23}^{3} x_{34}^{2} x_{45}},\\ 
	&a_{34} = 0, a_{35} = -\frac{x_{12}^{2} x_{23} x_{25} - {\left(x_{12}^{2} x_{23}^{2} x_{34} - x_{12} x_{14} x_{23}\right)} x_{45}}{x_{12}^{3} x_{23}^{3} x_{34} x_{45}},\\ 
	&a_{45} = -\frac{x_{35}}{x_{12} x_{23} x_{34}^{2} x_{45}}
\end{align*}

%110000
\begin{equation*}x=\left(\begin{matrix}
		1 & x_{12} & 0 & 0 & 0 \\
		0 & 1 & x_{23} & x_{24} & 0 \\
		0 & 0 & 1 & x_{34} & x_{35}  \\
		0 & 0 & 0 & 1 & x_{45}  \\
		0 & 0 & 0 & 1 & 1 \\
	\end{matrix}\right),x^A=\left(\begin{matrix}
		1 & 1 & 0 & 0 & 0 \\
		0 & 1 & 1 & 0 & 0 \\
		0 & 0 & 1 & 1 & 0 \\
		0 & 0 & 0 & 1 & 1 \\
		0 & 0 & 0 & 0 & 1
	\end{matrix}\right)
\end{equation*} \newline
Where matrix $A$ has entries
\begin{align*}
	&d_{1} = 1, d_{2} = \frac{1}{x_{12}}, d_{3} = \frac{1}{x_{12} x_{23}}, d_{4} = \frac{1}{x_{12} x_{23} x_{34}}, d_{5} = \frac{1}{x_{12} x_{23} x_{34} x_{45}},\\ 
	&a_{12} = 0, a_{13} = 1, a_{14} = 1, a_{15} = 1,\\ 
	&a_{23} = 0, a_{24} = \frac{1}{x_{12}}, a_{25} = \frac{1}{x_{12}},\\ 
	&a_{34} = -\frac{x_{24}}{x_{12} x_{23}^{2} x_{34}}, a_{35} = \frac{x_{23} x_{24} x_{35} + {\left(x_{23}^{2} x_{34}^{2} + x_{24}^{2}\right)} x_{45}}{x_{12} x_{23}^{3} x_{34}^{2} x_{45}},\\ 
	&a_{45} = -\frac{x_{23} x_{35} + x_{24} x_{45}}{x_{12} x_{23}^{2} x_{34}^{2} x_{45}}
\end{align*}

%110001
\begin{equation*}x=\left(\begin{matrix}
		1 & x_{12} & 0 & 0 & x_{15} \\
		0 & 1 & x_{23} & x_{24} & 0 \\
		0 & 0 & 1 & x_{34} & x_{35}  \\
		0 & 0 & 0 & 1 & x_{45}  \\
		0 & 0 & 0 & 1 & 1 \\
	\end{matrix}\right),x^A=\left(\begin{matrix}
		1 & 1 & 0 & 0 & 0 \\
		0 & 1 & 1 & 0 & 0 \\
		0 & 0 & 1 & 1 & 0 \\
		0 & 0 & 0 & 1 & 1 \\
		0 & 0 & 0 & 0 & 1
	\end{matrix}\right)
\end{equation*} \newline
Where matrix $A$ has entries
\begin{align*}
	&d_{1} = 1, d_{2} = \frac{1}{x_{12}}, d_{3} = \frac{1}{x_{12} x_{23}}, d_{4} = \frac{1}{x_{12} x_{23} x_{34}}, d_{5} = \frac{1}{x_{12} x_{23} x_{34} x_{45}},\\ 
	&a_{12} = 0, a_{13} = 1, a_{14} = 1, a_{15} = 1,\\ 
	&a_{23} = 0, a_{24} = \frac{1}{x_{12}}, a_{25} = \frac{x_{12} x_{23} x_{34} x_{45} - x_{15}}{x_{12}^{2} x_{23} x_{34} x_{45}},\\ 
	&a_{34} = -\frac{x_{24}}{x_{12} x_{23}^{2} x_{34}}, a_{35} = \frac{x_{23} x_{24} x_{35} + {\left(x_{23}^{2} x_{34}^{2} + x_{24}^{2}\right)} x_{45}}{x_{12} x_{23}^{3} x_{34}^{2} x_{45}},\\ 
	&a_{45} = -\frac{x_{23} x_{35} + x_{24} x_{45}}{x_{12} x_{23}^{2} x_{34}^{2} x_{45}}
\end{align*}

%110010
\begin{equation*}x=\left(\begin{matrix}
		1 & x_{12} & 0 & x_{14} & 0 \\
		0 & 1 & x_{23} & x_{24} & 0 \\
		0 & 0 & 1 & x_{34} & x_{35}  \\
		0 & 0 & 0 & 1 & x_{45}  \\
		0 & 0 & 0 & 1 & 1 \\
	\end{matrix}\right),x^A=\left(\begin{matrix}
		1 & 1 & 0 & 0 & 0 \\
		0 & 1 & 1 & 0 & 0 \\
		0 & 0 & 1 & 1 & 0 \\
		0 & 0 & 0 & 1 & 1 \\
		0 & 0 & 0 & 0 & 1
	\end{matrix}\right)
\end{equation*} \newline
Where matrix $A$ has entries
\begin{align*}
	&d_{1} = 1, d_{2} = \frac{1}{x_{12}}, d_{3} = \frac{1}{x_{12} x_{23}}, d_{4} = \frac{1}{x_{12} x_{23} x_{34}}, d_{5} = \frac{1}{x_{12} x_{23} x_{34} x_{45}},\\ 
	&a_{12} = 0, a_{13} = 1, a_{14} = 1, a_{15} = 1,\\ 
	&a_{23} = 0, a_{24} = \frac{x_{12} x_{23} x_{34} - x_{14}}{x_{12}^{2} x_{23} x_{34}}, a_{25} = \frac{x_{14} x_{23} x_{35} + {\left(x_{12} x_{23}^{2} x_{34}^{2} + x_{14} x_{24}\right)} x_{45}}{x_{12}^{2} x_{23}^{2} x_{34}^{2} x_{45}},\\ 
	&a_{34} = -\frac{x_{24}}{x_{12} x_{23}^{2} x_{34}}, a_{35} = \frac{x_{12} x_{23} x_{24} x_{35} + {\left(x_{12} x_{23}^{2} x_{34}^{2} + x_{12} x_{24}^{2} - x_{14} x_{23} x_{34}\right)} x_{45}}{x_{12}^{2} x_{23}^{3} x_{34}^{2} x_{45}},\\ 
	&a_{45} = -\frac{x_{23} x_{35} + x_{24} x_{45}}{x_{12} x_{23}^{2} x_{34}^{2} x_{45}}
\end{align*}

%110011
\begin{equation*}x=\left(\begin{matrix}
		1 & x_{12} & 0 & x_{14} & x_{15} \\
		0 & 1 & x_{23} & x_{24} & 0 \\
		0 & 0 & 1 & x_{34} & x_{35}  \\
		0 & 0 & 0 & 1 & x_{45}  \\
		0 & 0 & 0 & 1 & 1 \\
	\end{matrix}\right),x^A=\left(\begin{matrix}
		1 & 1 & 0 & 0 & 0 \\
		0 & 1 & 1 & 0 & 0 \\
		0 & 0 & 1 & 1 & 0 \\
		0 & 0 & 0 & 1 & 1 \\
		0 & 0 & 0 & 0 & 1
	\end{matrix}\right)
\end{equation*} \newline
Where matrix $A$ has entries
\begin{align*}
	&d_{1} = 1, d_{2} = \frac{1}{x_{12}}, d_{3} = \frac{1}{x_{12} x_{23}}, d_{4} = \frac{1}{x_{12} x_{23} x_{34}}, d_{5} = \frac{1}{x_{12} x_{23} x_{34} x_{45}},\\ 
	&a_{12} = 0, a_{13} = 1, a_{14} = 1, a_{15} = 1,\\ 
	&a_{23} = 0, a_{24} = \frac{x_{12} x_{23} x_{34} - x_{14}}{x_{12}^{2} x_{23} x_{34}}, a_{25} = -\frac{x_{15} x_{23} x_{34} - x_{14} x_{23} x_{35} - {\left(x_{12} x_{23}^{2} x_{34}^{2} + x_{14} x_{24}\right)} x_{45}}{x_{12}^{2} x_{23}^{2} x_{34}^{2} x_{45}},\\ 
	&a_{34} = -\frac{x_{24}}{x_{12} x_{23}^{2} x_{34}}, a_{35} = \frac{x_{12} x_{23} x_{24} x_{35} + {\left(x_{12} x_{23}^{2} x_{34}^{2} + x_{12} x_{24}^{2} - x_{14} x_{23} x_{34}\right)} x_{45}}{x_{12}^{2} x_{23}^{3} x_{34}^{2} x_{45}},\\ 
	&a_{45} = -\frac{x_{23} x_{35} + x_{24} x_{45}}{x_{12} x_{23}^{2} x_{34}^{2} x_{45}}
\end{align*}

%110100
\begin{equation*}x=\left(\begin{matrix}
		1 & x_{12} & x_{13} & 0 & 0 \\
		0 & 1 & x_{23} & x_{24} & 0 \\
		0 & 0 & 1 & x_{34} & x_{35}  \\
		0 & 0 & 0 & 1 & x_{45}  \\
		0 & 0 & 0 & 1 & 1 \\
	\end{matrix}\right),x^A=\left(\begin{matrix}
		1 & 1 & 0 & 0 & 0 \\
		0 & 1 & 1 & 0 & 0 \\
		0 & 0 & 1 & 1 & 0 \\
		0 & 0 & 0 & 1 & 1 \\
		0 & 0 & 0 & 0 & 1
	\end{matrix}\right)
\end{equation*} \newline
Where matrix $A$ has entries
\begin{align*}
	&d_{1} = 1, d_{2} = \frac{1}{x_{12}}, d_{3} = \frac{1}{x_{12} x_{23}}, d_{4} = \frac{1}{x_{12} x_{23} x_{34}}, d_{5} = \frac{1}{x_{12} x_{23} x_{34} x_{45}},\\ 
	&a_{12} = \frac{x_{13}}{x_{12} x_{23}}, a_{13} = 1, a_{14} = 1, a_{15} = 1,\\ 
	&a_{23} = 0, a_{24} = \frac{x_{12}^{2} x_{23}^{2} x_{34} + x_{12} x_{13} x_{24}}{x_{12}^{3} x_{23}^{2} x_{34}}, a_{25} = -\frac{x_{12}^{2} x_{13} x_{23} x_{24} x_{35} + {\left(x_{12}^{2} x_{13} x_{24}^{2} + x_{12} x_{13}^{2} x_{24} x_{34} - {\left(x_{12}^{3} x_{23}^{3} - x_{12}^{2} x_{13} x_{23}^{2}\right)} x_{34}^{2}\right)} x_{45}}{x_{12}^{4} x_{23}^{3} x_{34}^{2} x_{45}},\\ 
	&a_{34} = -\frac{x_{24}}{x_{12} x_{23}^{2} x_{34}}, a_{35} = \frac{x_{12}^{2} x_{23} x_{24} x_{35} + {\left(x_{12}^{2} x_{23}^{2} x_{34}^{2} + x_{12}^{2} x_{24}^{2} + x_{12} x_{13} x_{24} x_{34}\right)} x_{45}}{x_{12}^{3} x_{23}^{3} x_{34}^{2} x_{45}},\\ 
	&a_{45} = -\frac{x_{23} x_{35} + x_{24} x_{45}}{x_{12} x_{23}^{2} x_{34}^{2} x_{45}}
\end{align*}

%110101
\begin{equation*}x=\left(\begin{matrix}
		1 & x_{12} & x_{13} & 0 & x_{15} \\
		0 & 1 & x_{23} & x_{24} & 0 \\
		0 & 0 & 1 & x_{34} & x_{35}  \\
		0 & 0 & 0 & 1 & x_{45}  \\
		0 & 0 & 0 & 1 & 1 \\
	\end{matrix}\right),x^A=\left(\begin{matrix}
		1 & 1 & 0 & 0 & 0 \\
		0 & 1 & 1 & 0 & 0 \\
		0 & 0 & 1 & 1 & 0 \\
		0 & 0 & 0 & 1 & 1 \\
		0 & 0 & 0 & 0 & 1
	\end{matrix}\right)
\end{equation*} \newline
Where matrix $A$ has entries
\begin{align*}
	&d_{1} = 1, d_{2} = \frac{1}{x_{12}}, d_{3} = \frac{1}{x_{12} x_{23}}, d_{4} = \frac{1}{x_{12} x_{23} x_{34}}, d_{5} = \frac{1}{x_{12} x_{23} x_{34} x_{45}},\\ 
	&a_{12} = \frac{x_{13}}{x_{12} x_{23}}, a_{13} = 1, a_{14} = 1, a_{15} = 1,\\ 
	&a_{23} = 0, a_{24} = \frac{x_{12}^{2} x_{23}^{2} x_{34} + x_{12} x_{13} x_{24}}{x_{12}^{3} x_{23}^{2} x_{34}}, \\
	&a_{25} = -\frac{x_{12}^{2} x_{15} x_{23}^{2} x_{34} + x_{12}^{2} x_{13} x_{23} x_{24} x_{35} + {\left(x_{12}^{2} x_{13} x_{24}^{2} + x_{12} x_{13}^{2} x_{24} x_{34} - {\left(x_{12}^{3} x_{23}^{3} - x_{12}^{2} x_{13} x_{23}^{2}\right)} x_{34}^{2}\right)} x_{45}}{x_{12}^{4} x_{23}^{3} x_{34}^{2} x_{45}},\\ 
	&a_{34} = -\frac{x_{24}}{x_{12} x_{23}^{2} x_{34}}, a_{35} = \frac{x_{12}^{2} x_{23} x_{24} x_{35} + {\left(x_{12}^{2} x_{23}^{2} x_{34}^{2} + x_{12}^{2} x_{24}^{2} + x_{12} x_{13} x_{24} x_{34}\right)} x_{45}}{x_{12}^{3} x_{23}^{3} x_{34}^{2} x_{45}},\\ 
	&a_{45} = -\frac{x_{23} x_{35} + x_{24} x_{45}}{x_{12} x_{23}^{2} x_{34}^{2} x_{45}}
\end{align*}

%110110
\begin{equation*}x=\left(\begin{matrix}
		1 & x_{12} & x_{13} & x_{14} & 0 \\
		0 & 1 & x_{23} & x_{24} & 0 \\
		0 & 0 & 1 & x_{34} & x_{35}  \\
		0 & 0 & 0 & 1 & x_{45}  \\
		0 & 0 & 0 & 1 & 1 \\
	\end{matrix}\right),x^A=\left(\begin{matrix}
		1 & 1 & 0 & 0 & 0 \\
		0 & 1 & 1 & 0 & 0 \\
		0 & 0 & 1 & 1 & 0 \\
		0 & 0 & 0 & 1 & 1 \\
		0 & 0 & 0 & 0 & 1
	\end{matrix}\right)
\end{equation*} \newline
Where matrix $A$ has entries
\begin{align*}
	&d_{1} = 1, d_{2} = \frac{1}{x_{12}}, d_{3} = \frac{1}{x_{12} x_{23}}, d_{4} = \frac{1}{x_{12} x_{23} x_{34}}, d_{5} = \frac{1}{x_{12} x_{23} x_{34} x_{45}},\\ 
	&a_{12} = \frac{x_{13}}{x_{12} x_{23}}, a_{13} = 1, a_{14} = 1, a_{15} = 1,\\ 
	&a_{23} = 0, a_{24} = \frac{x_{12}^{2} x_{23}^{2} x_{34} - x_{12} x_{14} x_{23} + x_{12} x_{13} x_{24}}{x_{12}^{3} x_{23}^{2} x_{34}}, \\
	&a_{25} = \frac{\splitfrac{{\left(x_{12}^{2} x_{14} x_{23}^{2} - x_{12}^{2} x_{13} x_{23} x_{24}\right)} x_{35} +}
		{+ {\left(x_{12}^{2} x_{14} x_{23} x_{24} - x_{12}^{2} x_{13} x_{24}^{2} + {\left(x_{12}^{3} x_{23}^{3} - x_{12}^{2} x_{13} x_{23}^{2}\right)} x_{34}^{2} + {\left(x_{12} x_{13} x_{14} x_{23} - x_{12} x_{13}^{2} x_{24}\right)} x_{34}\right)} x_{45}}}{x_{12}^{4} x_{23}^{3} x_{34}^{2} x_{45}},\\ 
	&a_{34} = -\frac{x_{24}}{x_{12} x_{23}^{2} x_{34}}, a_{35} = \frac{x_{12}^{2} x_{23} x_{24} x_{35} + {\left(x_{12}^{2} x_{23}^{2} x_{34}^{2} + x_{12}^{2} x_{24}^{2} - {\left(x_{12} x_{14} x_{23} - x_{12} x_{13} x_{24}\right)} x_{34}\right)} x_{45}}{x_{12}^{3} x_{23}^{3} x_{34}^{2} x_{45}},\\ 
	&a_{45} = -\frac{x_{23} x_{35} + x_{24} x_{45}}{x_{12} x_{23}^{2} x_{34}^{2} x_{45}}
\end{align*}

%110111
\begin{equation*}x=\left(\begin{matrix}
		1 & x_{12} & x_{13} & x_{14} & x_{15} \\
		0 & 1 & x_{23} & x_{24} & 0 \\
		0 & 0 & 1 & x_{34} & x_{35}  \\
		0 & 0 & 0 & 1 & x_{45}  \\
		0 & 0 & 0 & 1 & 1 \\
	\end{matrix}\right),x^A=\left(\begin{matrix}
		1 & 1 & 0 & 0 & 0 \\
		0 & 1 & 1 & 0 & 0 \\
		0 & 0 & 1 & 1 & 0 \\
		0 & 0 & 0 & 1 & 1 \\
		0 & 0 & 0 & 0 & 1
	\end{matrix}\right)
\end{equation*} \newline
Where matrix $A$ has entries
\begin{align*}
	&d_{1} = 1, d_{2} = \frac{1}{x_{12}}, d_{3} = \frac{1}{x_{12} x_{23}}, d_{4} = \frac{1}{x_{12} x_{23} x_{34}}, d_{5} = \frac{1}{x_{12} x_{23} x_{34} x_{45}},\\ 
	&a_{12} = \frac{x_{13}}{x_{12} x_{23}}, a_{13} = 1, a_{14} = 1, a_{15} = 1,\\ 
	&a_{23} = 0, a_{24} = \frac{x_{12}^{2} x_{23}^{2} x_{34} - x_{12} x_{14} x_{23} + x_{12} x_{13} x_{24}}{x_{12}^{3} x_{23}^{2} x_{34}}, \\
	&a_{25} = -\frac{\splitfrac{x_{12}^{2} x_{15} x_{23}^{2} x_{34} - {\left(x_{12}^{2} x_{14} x_{23}^{2} - x_{12}^{2} x_{13} x_{23} x_{24}\right)} x_{35} -}
		{- {\left(x_{12}^{2} x_{14} x_{23} x_{24} - x_{12}^{2} x_{13} x_{24}^{2} + {\left(x_{12}^{3} x_{23}^{3} - x_{12}^{2} x_{13} x_{23}^{2}\right)} x_{34}^{2} + {\left(x_{12} x_{13} x_{14} x_{23} - x_{12} x_{13}^{2} x_{24}\right)} x_{34}\right)} x_{45}}}{x_{12}^{4} x_{23}^{3} x_{34}^{2} x_{45}},\\ 
	&a_{34} = -\frac{x_{24}}{x_{12} x_{23}^{2} x_{34}}, a_{35} = \frac{x_{12}^{2} x_{23} x_{24} x_{35} + {\left(x_{12}^{2} x_{23}^{2} x_{34}^{2} + x_{12}^{2} x_{24}^{2} - {\left(x_{12} x_{14} x_{23} - x_{12} x_{13} x_{24}\right)} x_{34}\right)} x_{45}}{x_{12}^{3} x_{23}^{3} x_{34}^{2} x_{45}},\\ 
	&a_{45} = -\frac{x_{23} x_{35} + x_{24} x_{45}}{x_{12} x_{23}^{2} x_{34}^{2} x_{45}}
\end{align*}

%111000
\begin{equation*}x=\left(\begin{matrix}
		1 & x_{12} & 0 & 0 & 0 \\
		0 & 1 & x_{23} & x_{24} & x_{25} \\
		0 & 0 & 1 & x_{34} & x_{35}  \\
		0 & 0 & 0 & 1 & x_{45}  \\
		0 & 0 & 0 & 1 & 1 \\
	\end{matrix}\right),x^A=\left(\begin{matrix}
		1 & 1 & 0 & 0 & 0 \\
		0 & 1 & 1 & 0 & 0 \\
		0 & 0 & 1 & 1 & 0 \\
		0 & 0 & 0 & 1 & 1 \\
		0 & 0 & 0 & 0 & 1
	\end{matrix}\right)
\end{equation*} \newline
Where matrix $A$ has entries
\begin{align*}
	&d_{1} = 1, d_{2} = \frac{1}{x_{12}}, d_{3} = \frac{1}{x_{12} x_{23}}, d_{4} = \frac{1}{x_{12} x_{23} x_{34}}, d_{5} = \frac{1}{x_{12} x_{23} x_{34} x_{45}},\\ 
	&a_{12} = 0, a_{13} = 1, a_{14} = 1, a_{15} = 1,\\ 
	&a_{23} = 0, a_{24} = \frac{1}{x_{12}}, a_{25} = \frac{1}{x_{12}},\\ 
	&a_{34} = -\frac{x_{24}}{x_{12} x_{23}^{2} x_{34}}, a_{35} = -\frac{x_{23} x_{25} x_{34} - x_{23} x_{24} x_{35} - {\left(x_{23}^{2} x_{34}^{2} + x_{24}^{2}\right)} x_{45}}{x_{12} x_{23}^{3} x_{34}^{2} x_{45}},\\ 
	&a_{45} = -\frac{x_{23} x_{35} + x_{24} x_{45}}{x_{12} x_{23}^{2} x_{34}^{2} x_{45}}
\end{align*}

%111001
\begin{equation*}x=\left(\begin{matrix}
		1 & x_{12} & 0 & 0 & x_{15} \\
		0 & 1 & x_{23} & x_{24} & x_{25} \\
		0 & 0 & 1 & x_{34} & x_{35}  \\
		0 & 0 & 0 & 1 & x_{45}  \\
		0 & 0 & 0 & 1 & 1 \\
	\end{matrix}\right),x^A=\left(\begin{matrix}
		1 & 1 & 0 & 0 & 0 \\
		0 & 1 & 1 & 0 & 0 \\
		0 & 0 & 1 & 1 & 0 \\
		0 & 0 & 0 & 1 & 1 \\
		0 & 0 & 0 & 0 & 1
	\end{matrix}\right)
\end{equation*} \newline
Where matrix $A$ has entries
\begin{align*}
	&d_{1} = 1, d_{2} = \frac{1}{x_{12}}, d_{3} = \frac{1}{x_{12} x_{23}}, d_{4} = \frac{1}{x_{12} x_{23} x_{34}}, d_{5} = \frac{1}{x_{12} x_{23} x_{34} x_{45}},\\ 
	&a_{12} = 0, a_{13} = 1, a_{14} = 1, a_{15} = 1,\\ 
	&a_{23} = 0, a_{24} = \frac{1}{x_{12}}, a_{25} = \frac{x_{12} x_{23} x_{34} x_{45} - x_{15}}{x_{12}^{2} x_{23} x_{34} x_{45}},\\ 
	&a_{34} = -\frac{x_{24}}{x_{12} x_{23}^{2} x_{34}}, a_{35} = -\frac{x_{23} x_{25} x_{34} - x_{23} x_{24} x_{35} - {\left(x_{23}^{2} x_{34}^{2} + x_{24}^{2}\right)} x_{45}}{x_{12} x_{23}^{3} x_{34}^{2} x_{45}},\\ 
	&a_{45} = -\frac{x_{23} x_{35} + x_{24} x_{45}}{x_{12} x_{23}^{2} x_{34}^{2} x_{45}}
\end{align*}

%111010
\begin{equation*}x=\left(\begin{matrix}
		1 & x_{12} & 0 & x_{14} & 0 \\
		0 & 1 & x_{23} & x_{24} & x_{25} \\
		0 & 0 & 1 & x_{34} & x_{35}  \\
		0 & 0 & 0 & 1 & x_{45}  \\
		0 & 0 & 0 & 1 & 1 \\
	\end{matrix}\right),x^A=\left(\begin{matrix}
		1 & 1 & 0 & 0 & 0 \\
		0 & 1 & 1 & 0 & 0 \\
		0 & 0 & 1 & 1 & 0 \\
		0 & 0 & 0 & 1 & 1 \\
		0 & 0 & 0 & 0 & 1
	\end{matrix}\right)
\end{equation*} \newline
Where matrix $A$ has entries
\begin{align*}
	&d_{1} = 1, d_{2} = \frac{1}{x_{12}}, d_{3} = \frac{1}{x_{12} x_{23}}, d_{4} = \frac{1}{x_{12} x_{23} x_{34}}, d_{5} = \frac{1}{x_{12} x_{23} x_{34} x_{45}},\\ 
	&a_{12} = 0, a_{13} = 1, a_{14} = 1, a_{15} = 1,\\ 
	&a_{23} = 0, a_{24} = \frac{x_{12} x_{23} x_{34} - x_{14}}{x_{12}^{2} x_{23} x_{34}}, a_{25} = \frac{x_{14} x_{23} x_{35} + {\left(x_{12} x_{23}^{2} x_{34}^{2} + x_{14} x_{24}\right)} x_{45}}{x_{12}^{2} x_{23}^{2} x_{34}^{2} x_{45}},\\ 
	&a_{34} = -\frac{x_{24}}{x_{12} x_{23}^{2} x_{34}}, a_{35} = -\frac{x_{12} x_{23} x_{25} x_{34} - x_{12} x_{23} x_{24} x_{35} - {\left(x_{12} x_{23}^{2} x_{34}^{2} + x_{12} x_{24}^{2} - x_{14} x_{23} x_{34}\right)} x_{45}}{x_{12}^{2} x_{23}^{3} x_{34}^{2} x_{45}},\\ 
	&a_{45} = -\frac{x_{23} x_{35} + x_{24} x_{45}}{x_{12} x_{23}^{2} x_{34}^{2} x_{45}}
\end{align*}

%111011
\begin{equation*}x=\left(\begin{matrix}
		1 & x_{12} & 0 & x_{14} & x_{15} \\
		0 & 1 & x_{23} & x_{24} & x_{25} \\
		0 & 0 & 1 & x_{34} & x_{35}  \\
		0 & 0 & 0 & 1 & x_{45}  \\
		0 & 0 & 0 & 1 & 1 \\
	\end{matrix}\right),x^A=\left(\begin{matrix}
		1 & 1 & 0 & 0 & 0 \\
		0 & 1 & 1 & 0 & 0 \\
		0 & 0 & 1 & 1 & 0 \\
		0 & 0 & 0 & 1 & 1 \\
		0 & 0 & 0 & 0 & 1
	\end{matrix}\right)
\end{equation*} \newline
Where matrix $A$ has entries
\begin{align*}
	&d_{1} = 1, d_{2} = \frac{1}{x_{12}}, d_{3} = \frac{1}{x_{12} x_{23}}, d_{4} = \frac{1}{x_{12} x_{23} x_{34}}, d_{5} = \frac{1}{x_{12} x_{23} x_{34} x_{45}},\\ 
	&a_{12} = 0, a_{13} = 1, a_{14} = 1, a_{15} = 1,\\ 
	&a_{23} = 0, a_{24} = \frac{x_{12} x_{23} x_{34} - x_{14}}{x_{12}^{2} x_{23} x_{34}}, a_{25} = -\frac{x_{15} x_{23} x_{34} - x_{14} x_{23} x_{35} - {\left(x_{12} x_{23}^{2} x_{34}^{2} + x_{14} x_{24}\right)} x_{45}}{x_{12}^{2} x_{23}^{2} x_{34}^{2} x_{45}},\\ 
	&a_{34} = -\frac{x_{24}}{x_{12} x_{23}^{2} x_{34}}, a_{35} = -\frac{x_{12} x_{23} x_{25} x_{34} - x_{12} x_{23} x_{24} x_{35} - {\left(x_{12} x_{23}^{2} x_{34}^{2} + x_{12} x_{24}^{2} - x_{14} x_{23} x_{34}\right)} x_{45}}{x_{12}^{2} x_{23}^{3} x_{34}^{2} x_{45}},\\ 
	&a_{45} = -\frac{x_{23} x_{35} + x_{24} x_{45}}{x_{12} x_{23}^{2} x_{34}^{2} x_{45}}
\end{align*}

%111100
\begin{equation*}x=\left(\begin{matrix}
		1 & x_{12} & x_{13} & 0 & 0 \\
		0 & 1 & x_{23} & x_{24} & x_{25} \\
		0 & 0 & 1 & x_{34} & x_{35}  \\
		0 & 0 & 0 & 1 & x_{45}  \\
		0 & 0 & 0 & 1 & 1 \\
	\end{matrix}\right),x^A=\left(\begin{matrix}
		1 & 1 & 0 & 0 & 0 \\
		0 & 1 & 1 & 0 & 0 \\
		0 & 0 & 1 & 1 & 0 \\
		0 & 0 & 0 & 1 & 1 \\
		0 & 0 & 0 & 0 & 1
	\end{matrix}\right)
\end{equation*} \newline
Where matrix $A$ has entries
\begin{align*}
	&d_{1} = 1, d_{2} = \frac{1}{x_{12}}, d_{3} = \frac{1}{x_{12} x_{23}}, d_{4} = \frac{1}{x_{12} x_{23} x_{34}}, d_{5} = \frac{1}{x_{12} x_{23} x_{34} x_{45}},\\ 
	&a_{12} = \frac{x_{13}}{x_{12} x_{23}}, a_{13} = 1, a_{14} = 1, a_{15} = 1,\\ 
	&a_{23} = 0, a_{24} = \frac{x_{12}^{2} x_{23}^{2} x_{34} + x_{12} x_{13} x_{24}}{x_{12}^{3} x_{23}^{2} x_{34}}, \\
	&a_{25} = \frac{x_{12}^{2} x_{13} x_{23} x_{25} x_{34} - x_{12}^{2} x_{13} x_{23} x_{24} x_{35} - {\left(x_{12}^{2} x_{13} x_{24}^{2} + x_{12} x_{13}^{2} x_{24} x_{34} - {\left(x_{12}^{3} x_{23}^{3} - x_{12}^{2} x_{13} x_{23}^{2}\right)} x_{34}^{2}\right)} x_{45}}{x_{12}^{4} x_{23}^{3} x_{34}^{2} x_{45}},\\ 
	&a_{34} = -\frac{x_{24}}{x_{12} x_{23}^{2} x_{34}}, a_{35} = -\frac{x_{12}^{2} x_{23} x_{25} x_{34} - x_{12}^{2} x_{23} x_{24} x_{35} - {\left(x_{12}^{2} x_{23}^{2} x_{34}^{2} + x_{12}^{2} x_{24}^{2} + x_{12} x_{13} x_{24} x_{34}\right)} x_{45}}{x_{12}^{3} x_{23}^{3} x_{34}^{2} x_{45}},\\ 
	&a_{45} = -\frac{x_{23} x_{35} + x_{24} x_{45}}{x_{12} x_{23}^{2} x_{34}^{2} x_{45}}
\end{align*}

%111101
\begin{equation*}x=\left(\begin{matrix}
		1 & x_{12} & x_{13} & 0 & x_{15} \\
		0 & 1 & x_{23} & x_{24} & x_{25} \\
		0 & 0 & 1 & x_{34} & x_{35}  \\
		0 & 0 & 0 & 1 & x_{45}  \\
		0 & 0 & 0 & 1 & 1 \\
	\end{matrix}\right),x^A=\left(\begin{matrix}
		1 & 1 & 0 & 0 & 0 \\
		0 & 1 & 1 & 0 & 0 \\
		0 & 0 & 1 & 1 & 0 \\
		0 & 0 & 0 & 1 & 1 \\
		0 & 0 & 0 & 0 & 1
	\end{matrix}\right)
\end{equation*} \newline
Where matrix $A$ has entries
\begin{align*}
	&d_{1} = 1, d_{2} = \frac{1}{x_{12}}, d_{3} = \frac{1}{x_{12} x_{23}}, d_{4} = \frac{1}{x_{12} x_{23} x_{34}}, d_{5} = \frac{1}{x_{12} x_{23} x_{34} x_{45}},\\ 
	&a_{12} = \frac{x_{13}}{x_{12} x_{23}}, a_{13} = 1, a_{14} = 1, a_{15} = 1,\\ 
	&a_{23} = 0, a_{24} = \frac{x_{12}^{2} x_{23}^{2} x_{34} + x_{12} x_{13} x_{24}}{x_{12}^{3} x_{23}^{2} x_{34}}, \\
	&a_{25} = -\frac{x_{12}^{2} x_{13} x_{23} x_{24} x_{35} + {\left(x_{12}^{2} x_{15} x_{23}^{2} - x_{12}^{2} x_{13} x_{23} x_{25}\right)} x_{34} + {\left(x_{12}^{2} x_{13} x_{24}^{2} + x_{12} x_{13}^{2} x_{24} x_{34} - {\left(x_{12}^{3} x_{23}^{3} - x_{12}^{2} x_{13} x_{23}^{2}\right)} x_{34}^{2}\right)} x_{45}}{x_{12}^{4} x_{23}^{3} x_{34}^{2} x_{45}},\\ 
	&a_{34} = -\frac{x_{24}}{x_{12} x_{23}^{2} x_{34}}, a_{35} = -\frac{x_{12}^{2} x_{23} x_{25} x_{34} - x_{12}^{2} x_{23} x_{24} x_{35} - {\left(x_{12}^{2} x_{23}^{2} x_{34}^{2} + x_{12}^{2} x_{24}^{2} + x_{12} x_{13} x_{24} x_{34}\right)} x_{45}}{x_{12}^{3} x_{23}^{3} x_{34}^{2} x_{45}},\\ 
	&a_{45} = -\frac{x_{23} x_{35} + x_{24} x_{45}}{x_{12} x_{23}^{2} x_{34}^{2} x_{45}}
\end{align*}

%111110
\begin{equation*}x=\left(\begin{matrix}
		1 & x_{12} & x_{13} & x_{14} & 0 \\
		0 & 1 & x_{23} & x_{24} & x_{25} \\
		0 & 0 & 1 & x_{34} & x_{35}  \\
		0 & 0 & 0 & 1 & x_{45}  \\
		0 & 0 & 0 & 1 & 1 \\
	\end{matrix}\right),x^A=\left(\begin{matrix}
		1 & 1 & 0 & 0 & 0 \\
		0 & 1 & 1 & 0 & 0 \\
		0 & 0 & 1 & 1 & 0 \\
		0 & 0 & 0 & 1 & 1 \\
		0 & 0 & 0 & 0 & 1
	\end{matrix}\right)
\end{equation*} \newline
Where matrix $A$ has entries
\begin{align*}
	&d_{1} = 1, d_{2} = \frac{1}{x_{12}}, d_{3} = \frac{1}{x_{12} x_{23}}, d_{4} = \frac{1}{x_{12} x_{23} x_{34}}, d_{5} = \frac{1}{x_{12} x_{23} x_{34} x_{45}},\\ 
	&a_{12} = \frac{x_{13}}{x_{12} x_{23}}, a_{13} = 1, a_{14} = 1, a_{15} = 1,\\ 
	&a_{23} = 0, a_{24} = \frac{x_{12}^{2} x_{23}^{2} x_{34} - x_{12} x_{14} x_{23} + x_{12} x_{13} x_{24}}{x_{12}^{3} x_{23}^{2} x_{34}}, \\
	&a_{25} = \frac{\splitfrac{x_{12}^{2} x_{13} x_{23} x_{25} x_{34} + {\left(x_{12}^{2} x_{14} x_{23}^{2} - x_{12}^{2} x_{13} x_{23} x_{24}\right)} x_{35} +}
		{+ {\left(x_{12}^{2} x_{14} x_{23} x_{24} - x_{12}^{2} x_{13} x_{24}^{2} + {\left(x_{12}^{3} x_{23}^{3} - x_{12}^{2} x_{13} x_{23}^{2}\right)} x_{34}^{2} + {\left(x_{12} x_{13} x_{14} x_{23} - x_{12} x_{13}^{2} x_{24}\right)} x_{34}\right)} x_{45}}}{x_{12}^{4} x_{23}^{3} x_{34}^{2} x_{45}},\\ 
	&a_{34} = -\frac{x_{24}}{x_{12} x_{23}^{2} x_{34}}, a_{35} = -\frac{x_{12}^{2} x_{23} x_{25} x_{34} - x_{12}^{2} x_{23} x_{24} x_{35} - {\left(x_{12}^{2} x_{23}^{2} x_{34}^{2} + x_{12}^{2} x_{24}^{2} - {\left(x_{12} x_{14} x_{23} - x_{12} x_{13} x_{24}\right)} x_{34}\right)} x_{45}}{x_{12}^{3} x_{23}^{3} x_{34}^{2} x_{45}},\\ 
	&a_{45} = -\frac{x_{23} x_{35} + x_{24} x_{45}}{x_{12} x_{23}^{2} x_{34}^{2} x_{45}}
\end{align*}

%111111
\begin{equation*}x=\left(\begin{matrix}
		1 & x_{12} & x_{13} & x_{14} & x_{15} \\
		0 & 1 & x_{23} & x_{24} & x_{25} \\
		0 & 0 & 1 & x_{34} & x_{35}  \\
		0 & 0 & 0 & 1 & x_{45}  \\
		0 & 0 & 0 & 1 & 1 \\
	\end{matrix}\right),x^A=\left(\begin{matrix}
		1 & 1 & 0 & 0 & 0 \\
		0 & 1 & 1 & 0 & 0 \\
		0 & 0 & 1 & 1 & 0 \\
		0 & 0 & 0 & 1 & 1 \\
		0 & 0 & 0 & 0 & 1
	\end{matrix}\right)
\end{equation*} \newline
Where matrix $A$ has entries
\begin{align*}
	&d_{1} = 1, d_{2} = \frac{1}{x_{12}}, d_{3} = \frac{1}{x_{12} x_{23}}, d_{4} = \frac{1}{x_{12} x_{23} x_{34}}, d_{5} = \frac{1}{x_{12} x_{23} x_{34} x_{45}}, \\ 
	&a_{12} = 1, a_{13} = 1, a_{14} = 1, a_{15} = 1, \\ 
	&a_{23} = \frac{x_{12} x_{23} - x_{13}}{x_{12}^{2} x_{23}}, a_{24} = -\frac{x_{12} x_{14} x_{23} - x_{12} x_{13} x_{24} - {\left(x_{12}^{2} x_{23}^{2} - x_{12} x_{13} x_{23} + x_{13}^{2}\right)} x_{34}}{x_{12}^{3} x_{23}^{2} x_{34}},\\
	&a_{25} = -\frac{\splitfrac{
			{x_{12}^{2} x_{15} x_{23}^{2} - x_{12}^{2} x_{13} x_{23} x_{25})} x_{34} - {(x_{12}^{2} x_{14} x_{23}^{2} - x_{12}^{2} x_{13} x_{23} x_{24})} x_{35} - (x_{12}^{2} x_{14} x_{23} x_{24} - x_{12}^{2} x_{13} x_{24}^{2} + (x_{12}^{3} x_{23}^{3} - x_{12}^{2} x_{13} x_{23}^{2} +
		}{
			+x_{12} x_{13}^{2} x_{23} - x_{13}^{3}) x_{34}^{2} - {(x_{12}^{2} x_{14} x_{23}^{2} - 2 \, x_{12} x_{13} x_{14} x_{23} - {(x_{12}^{2} x_{13} x_{23} - 2 \, x_{12} x_{13}^{2})} x_{24})} x_{34}) x_{45}
	}}{
		x_{12}^{4} x_{23}^{3} x_{34}^{2} x_{45}
	}, \\ 
	&a_{34} = -\frac{x_{12} x_{24} - {\left(x_{12} x_{23} - x_{13}\right)} x_{34}}{x_{12}^{2} x_{23}^{2} x_{34}}, \\
	&a_{35} = -\frac{x_{12}^{2} x_{23} x_{25} x_{34} - x_{12}^{2} x_{23} x_{24} x_{35} - {\left(x_{12}^{2} x_{24}^{2} + {\left(x_{12}^{2} x_{23}^{2} - x_{12} x_{13} x_{23} + x_{13}^{2}\right)} x_{34}^{2} - {\left(x_{12} x_{14} x_{23} + {\left(x_{12}^{2} x_{23} - 2 \, x_{12} x_{13}\right)} x_{24}\right)} x_{34}\right)} x_{45}}{x_{12}^{3} x_{23}^{3} x_{34}^{2} x_{45}}, \\ 
	&a_{45} = -\frac{x_{12} x_{23} x_{35} + {\left(x_{12} x_{24} - {\left(x_{12} x_{23} - x_{13}\right)} x_{34}\right)} x_{45}}{x_{12}^{2} x_{23}^{2} x_{34}^{2} x_{45}}
\end{align*}

		\section{Subcases of $Y_{11}$}

%000001
\begin{equation*}x=\left(% [inline block 52: 80 envs, 29493 chars -> data_tex | \begin{matrix} 		1 & 0 & 0 & 0 & x_{15} \\...]
\right)
\end{equation*} \newline
Where matrix $A$ has entries
\begin{align*}
	&d_{1} = 1, d_{2} = 1, d_{3} = \frac{1}{x_{13}}, d_{4} = \frac{x_{45}}{x_{13} x_{35} + x_{14} x_{45}}, d_{5} = \frac{1}{x_{13} x_{35} + x_{14} x_{45}},\\ 
	&a_{12} = 1, a_{13} = 1, a_{14} = 1, a_{15} = 1,\\ 
	&a_{23} = 1, a_{24} = 0, a_{25} = 1,\\ 
	&a_{34} = \frac{x_{35}}{x_{13} x_{35} + x_{14} x_{45}}, a_{35} = 1,\\ 
	&a_{45} = -\frac{{\left(x_{13} - 1\right)} x_{14} x_{45} + x_{15} + {\left(x_{13}^{2} - x_{13}\right)} x_{35}}{x_{13} x_{14} x_{35} + x_{14}^{2} x_{45}}
\end{align*}

Now assume $x_{14}= \frac{-x_{13}x_{35}}{x_{45}}$.
\begin{equation*}x=\left(% [inline block 53: 20 envs, 7384 chars -> data_tex | \begin{matrix} 		1 & 0 & x_{13} & x_{14} & x_{15} \\...]
\right)
\end{equation*} \newline
Where matrix $A$ has entries
\begin{align*}
	&d_{1} = 1, d_{2} = 1, d_{3} = \frac{1}{x_{13}}, d_{4} = \frac{x_{45}}{x_{13} x_{35} + x_{14} x_{45}}, d_{5} = \frac{1}{x_{13} x_{35} + x_{14} x_{45}},\\ 
	&a_{12} = 1, a_{13} = 1, a_{14} = 1, a_{15} = 1,\\ 
	&a_{23} = 1, a_{24} = \frac{x_{25}}{x_{13} x_{35} + x_{14} x_{45}}, a_{25} = 1,\\ 
	&a_{34} = \frac{x_{35}}{x_{13} x_{35} + x_{14} x_{45}}, a_{35} = 1,\\ 
	&a_{45} = -\frac{{\left(x_{13} - 1\right)} x_{14} x_{45} + x_{15} + {\left(x_{13}^{2} - x_{13}\right)} x_{35}}{x_{13} x_{14} x_{35} + x_{14}^{2} x_{45}}
\end{align*}

Now assume $x_{14}= \frac{-x_{13}x_{35}}{x_{45}}$
\begin{equation*}x=\left(% [inline block 54: 32 envs, 12381 chars -> data_tex | \begin{matrix} 		1 & 0 & x_{13} & x_{14} & x_{15} \\...]
\right)
\end{equation*} \newline
Where matrix $A$ has entries
\begin{align*}
	&d_{1} = 1, d_{2} = 1, d_{3} = \frac{1}{x_{13}}, d_{4} = \frac{1}{x_{24}}, d_{5} = \frac{1}{x_{24} x_{45}},\\ 
	&a_{12} = \frac{x_{13} x_{35} + x_{14} x_{45}}{x_{24} x_{45}}, a_{13} = 1, a_{14} = 1, a_{15} = 1,\\ 
	&a_{23} = 1, a_{24} = 1, a_{25} = 1,\\ 
	&a_{34} = \frac{x_{35}}{x_{24} x_{45}}, a_{35} = \frac{x_{14} x_{25} - {\left(x_{14} x_{24} - x_{24}^{2}\right)} x_{45}}{x_{13} x_{24}^{2} x_{45}},\\ 
	&a_{45} = \frac{x_{24} x_{45} - x_{25}}{x_{24}^{2} x_{45}}
\end{align*}

%111111
\begin{equation*}x=\left(\begin{matrix}
		1 & 0 & x_{13} & x_{14} & x_{15} \\
		0 & 1 & 0 & x_{24} & x_{25} \\
		0 & 0 & 1 & 0 & x_{35}  \\
		0 & 0 & 0 & 1 & x_{45}  \\
		0 & 0 & 0 & 0 & 1 \\
	\end{matrix}\right),x^A=\left(\begin{matrix}
		1 & 0 & 1 & 0 & 0 \\
		0 & 1 & 0 & 1 & 0 \\
		0 & 0 & 1 & 0 & 0 \\
		0 & 0 & 0 & 1 & 1 \\
		0 & 0 & 0 & 0 & 1
	\end{matrix}\right)
\end{equation*} \newline
Where matrix $A$ has entries
\begin{align*}
	&d_{1} = 1, d_{2} = 1, d_{3} = \frac{1}{x_{13}}, d_{4} = \frac{1}{x_{24}}, d_{5} = \frac{1}{x_{24} x_{45}},\\ 
	&a_{12} = \frac{x_{13} x_{35} + x_{14} x_{45}}{x_{24} x_{45}}, a_{13} = 1, a_{14} = 1, a_{15} = 1,\\ 
	&a_{23} = 1, a_{24} = 1, a_{25} = 1,\\ 
	&a_{34} = \frac{x_{35}}{x_{24} x_{45}}, a_{35} = -\frac{x_{15} x_{24} - x_{14} x_{25} + {\left(x_{14} x_{24} - x_{24}^{2}\right)} x_{45}}{x_{13} x_{24}^{2} x_{45}},\\ 
	&a_{45} = \frac{x_{24} x_{45} - x_{25}}{x_{24}^{2} x_{45}}
\end{align*}

		\section{Subcases of $Y_{12}$}

		%000001
		\begin{equation*}x=\left(% [inline block 55: 112 envs, 43721 chars -> data_tex | \begin{matrix} 				1 & 0 & 0 & 0 & x_{15} \\...]
\right)
		\end{equation*} \newline
		Where matrix $A$ has entries
		\begin{align*}
			&d_{1} = 1, d_{2} = 1, d_{3} = \frac{1}{x_{13}}, d_{4} = \frac{1}{x_{13} x_{34}}, d_{5} = -\frac{x_{34}}{x_{24} x_{35}},\\ 
			&a_{12} = 1, a_{13} = 1, a_{14} = 1, a_{15} = 1,\\ 
			&a_{23} = \frac{x_{24}}{x_{13} x_{34}}, a_{24} = 1, a_{25} = 1,\\ 
			&a_{34} = \frac{x_{13} x_{34} - x_{14}}{x_{13}^{2} x_{34}}, a_{35} = \frac{x_{15} x_{34} - {\left(x_{14} - x_{24}\right)} x_{35}}{x_{13} x_{24} x_{35}},\\ 
			&a_{45} = \frac{1}{x_{24}}
		\end{align*}
		
		%111000
		
		First assume $x_{25}\neq \frac{x_{24}x_{35}}{x_{34}}$.
		\begin{equation*}x=\left(% [inline block 56: 14 envs, 5537 chars -> data_tex | \begin{matrix} 				1 & 0 & 0 & 0 & 0 \\...]
\right)
		\end{equation*} \newline
		Where matrix $A$ has entries
		\begin{align*}
			&d_{1} = 1, d_{2} = 1, d_{3} = 1, d_{4} = \frac{1}{x_{34}}, d_{5} = \frac{x_{34}}{x_{25} x_{34} - x_{24} x_{35}},\\ 
			&a_{12} = \frac{x_{15} x_{34} - x_{14} x_{35}}{x_{25} x_{34} - x_{24} x_{35}}, a_{13} = \frac{x_{14}}{x_{34}}, a_{14} = 1, a_{15} = 1,\\ 
			&a_{23} = \frac{x_{24}}{x_{34}}, a_{24} = 1, a_{25} = 1,\\ 
			&a_{34} = 1, a_{35} = 1,\\ 
			&a_{45} = -\frac{x_{35}}{x_{25} x_{34} - x_{24} x_{35}}
		\end{align*}

		Now assume $x_{25}= \frac{x_{24}x_{35}}{x_{34}}$ and $x_{15}\neq \frac{x_{14}x_{35}}{x_{34}}$.
		\begin{equation*}x=\left(% [inline block 57: 10 envs, 3889 chars -> data_tex | \begin{matrix} 				1 & 0 & 0 & x_{14} & x_{15} \\...]
\right)
		\end{equation*} \newline
		Where matrix $A$ has entries
		\begin{align*}
			&d_{1} = 1, d_{2} = 1, d_{3} = \frac{1}{x_{13}}, d_{4} = \frac{1}{x_{13} x_{34}}, d_{5} = \frac{x_{34}}{x_{25} x_{34} - x_{24} x_{35}},\\ 
			&a_{12} = 1, a_{13} = 1, a_{14} = 1, a_{15} = 1,\\ 
			&a_{23} = \frac{x_{24}}{x_{13} x_{34}}, a_{24} = 1, a_{25} = 1,\\ 
			&a_{34} = \frac{1}{x_{13}}, a_{35} = -\frac{x_{24} x_{35} + {\left(x_{15} - x_{25}\right)} x_{34}}{x_{13} x_{25} x_{34} - x_{13} x_{24} x_{35}},\\ 
			&a_{45} = -\frac{x_{35}}{x_{25} x_{34} - x_{24} x_{35}}
		\end{align*}
		
		Now assume $x_{25}= \frac{x_{24}x_{35}}{x_{34}}$.
		\begin{equation*}x=\left(\begin{matrix}
				1 & 0 & x_{13} & 0 & x_{15} \\
				0 & 1 & 0 & x_{24} & x_{25} \\
				0 & 0 & 1 & x_{34} & x_{35}  \\
				0 & 0 & 0 & 1 & 0  \\
				0 & 0 & 0 & 0 & 1 \\
			\end{matrix}\right),x^A=\left(\begin{matrix}
				1 & 0 & 1 & 0 & 0 \\
				0 & 1 & 0 & 0 & 0 \\
				0 & 0 & 1 & 1 & 0 \\
				0 & 0 & 0 & 1 & 0 \\
				0 & 0 & 0 & 0 & 1
			\end{matrix}\right)
		\end{equation*} \newline
		Where matrix $A$ has entries
		\begin{align*}
			&d_{1} = 1, d_{2} = 1, d_{3} = \frac{1}{x_{13}}, d_{4} = \frac{1}{x_{13} x_{34}}, d_{5} = 1,\\ 
			&a_{12} = 1, a_{13} = 1, a_{14} = 1, a_{15} = 1,\\ 
			&a_{23} = \frac{x_{24}}{x_{13} x_{34}}, a_{24} = 1, a_{25} = 1,\\ 
			&a_{34} = \frac{1}{x_{13}}, a_{35} = -\frac{x_{15}}{x_{13}},\\ 
			&a_{45} = -\frac{x_{35}}{x_{34}}
		\end{align*}
		
		%111110
		
		First assume $x_{25}\neq \frac{x_{24}x_{35}}{x_{34}}$.
		\begin{equation*}x=\left(\begin{matrix}
				1 & 0 & x_{13} & x_{14} & 0 \\
				0 & 1 & 0 & x_{24} & x_{25} \\
				0 & 0 & 1 & x_{34} & x_{35}  \\
				0 & 0 & 0 & 1 & 0  \\
				0 & 0 & 0 & 0 & 1 \\
			\end{matrix}\right),x^A=\left(\begin{matrix}
				1 & 0 & 1 & 0 & 0 \\
				0 & 1 & 0 & 0 & 1 \\
				0 & 0 & 1 & 1 & 0 \\
				0 & 0 & 0 & 1 & 0 \\
				0 & 0 & 0 & 0 & 1
			\end{matrix}\right)
		\end{equation*} \newline
		Where matrix $A$ has entries
		\begin{align*}
			&d_{1} = 1, d_{2} = 1, d_{3} = \frac{1}{x_{13}}, d_{4} = \frac{1}{x_{13} x_{34}}, d_{5} = \frac{x_{34}}{x_{25} x_{34} - x_{24} x_{35}},\\ 
			&a_{12} = 1, a_{13} = 1, a_{14} = 1, a_{15} = 1,\\ 
			&a_{23} = \frac{x_{24}}{x_{13} x_{34}}, a_{24} = 1, a_{25} = 1,\\ 
			&a_{34} = \frac{x_{13} x_{34} - x_{14}}{x_{13}^{2} x_{34}}, a_{35} = \frac{x_{25} x_{34} + {\left(x_{14} - x_{24}\right)} x_{35}}{x_{13} x_{25} x_{34} - x_{13} x_{24} x_{35}},\\ 
			&a_{45} = -\frac{x_{35}}{x_{25} x_{34} - x_{24} x_{35}}
		\end{align*}
		
		Now assume $x_{25}=\frac{x_{24}x_{35}}{x_{34}}$.
		\begin{equation*}x=\left(\begin{matrix}
				1 & 0 & x_{13} & x_{14} & 0 \\
				0 & 1 & 0 & x_{24} & x_{25} \\
				0 & 0 & 1 & x_{34} & x_{35}  \\
				0 & 0 & 0 & 1 & 0  \\
				0 & 0 & 0 & 0 & 1 \\
			\end{matrix}\right),x^A=\left(\begin{matrix}
				1 & 0 & 1 & 0 & 0 \\
				0 & 1 & 0 & 0 & 0 \\
				0 & 0 & 1 & 1 & 0 \\
				0 & 0 & 0 & 1 & 0 \\
				0 & 0 & 0 & 0 & 1
			\end{matrix}\right)
		\end{equation*} \newline
		Where matrix $A$ has entries
		\begin{align*}
			&d_{1} = 1, d_{2} = 1, d_{3} = \frac{1}{x_{13}}, d_{4} = \frac{1}{x_{13} x_{34}}, d_{5} = 1,\\ 
			&a_{12} = 1, a_{13} = 1, a_{14} = 1, a_{15} = 1,\\ 
			&a_{23} = \frac{x_{24}}{x_{13} x_{34}}, a_{24} = 1, a_{25} = 1,\\ 
			&a_{34} = \frac{x_{13} x_{34} - x_{14}}{x_{13}^{2} x_{34}}, a_{35} = \frac{x_{14} x_{35}}{x_{13} x_{34}},\\ 
			&a_{45} = -\frac{x_{35}}{x_{34}}
		\end{align*}
		
		%111111
		
		First assume $x_{25}\neq \frac{x_{24}x_{35}}{x_{34}}$.
		\begin{equation*}x=\left(\begin{matrix}
				1 & 0 & x_{13} & x_{14} & x_{15} \\
				0 & 1 & 0 & x_{24} & x_{25} \\
				0 & 0 & 1 & x_{34} & x_{35}  \\
				0 & 0 & 0 & 1 & 0  \\
				0 & 0 & 0 & 0 & 1 \\
			\end{matrix}\right),x^A=\left(\begin{matrix}
				1 & 0 & 1 & 0 & 0 \\
				0 & 1 & 0 & 0 & 1 \\
				0 & 0 & 1 & 1 & 0 \\
				0 & 0 & 0 & 1 & 0 \\
				0 & 0 & 0 & 0 & 1
			\end{matrix}\right)
		\end{equation*} \newline
		Where matrix $A$ has entries
		\begin{align*}
			&d_{1} = 1, d_{2} = 1, d_{3} = \frac{1}{x_{13}}, d_{4} = \frac{1}{x_{13} x_{34}}, d_{5} = \frac{x_{34}}{x_{25} x_{34} - x_{24} x_{35}},\\ 
			&a_{12} = 1, a_{13} = 1, a_{14} = 1, a_{15} = 1,\\ 
			&a_{23} = \frac{x_{24}}{x_{13} x_{34}}, a_{24} = 1, a_{25} = 1,\\ 
			&a_{34} = \frac{x_{13} x_{34} - x_{14}}{x_{13}^{2} x_{34}}, a_{35} = -\frac{{\left(x_{15} - x_{25}\right)} x_{34} - {\left(x_{14} - x_{24}\right)} x_{35}}{x_{13} x_{25} x_{34} - x_{13} x_{24} x_{35}},\\ 
			&a_{45} = -\frac{x_{35}}{x_{25} x_{34} - x_{24} x_{35}}
		\end{align*}

		Now assume $x_{25}= \frac{x_{24}x_{35}}{x_{34}}$.
		\begin{equation*}x=\left(% [inline block 58: 16 envs, 5965 chars -> data_tex | \begin{matrix} 				1 & 0 & x_{13} & x_{14} & x_{15} \\...]
\right)
\end{equation*} \newline
Where matrix $A$ has entries
\begin{align*}
	&d_{1} = 1, d_{2} = 1, d_{3} = \frac{1}{x_{13}}, d_{4} = \frac{1}{x_{13} x_{34}}, d_{5} = \frac{1}{x_{13} x_{34} x_{45}}, \\ 
	&a_{12} = 1, a_{13} = 1, a_{14} = 1, a_{15} = 1, \\ 
	&a_{23} = 0, a_{24} = 0, a_{25} = 1, \\ 
	&a_{34} = \frac{x_{13} x_{34} - x_{14}}{x_{13}^{2} x_{34}}, a_{35} = -\frac{x_{13} x_{15} x_{34} - {\left(x_{13}^{2} x_{34}^{2} - x_{13} x_{14} x_{34} + x_{14}^{2}\right)} x_{45}}{x_{13}^{3} x_{34}^{2} x_{45}}, \\ 
	&a_{45} = \frac{x_{13} x_{34} - x_{14}}{x_{13}^{2} x_{34}^{2}}
\end{align*}

%001000
\begin{equation*}x=\left(% [inline block 59: 14 envs, 5046 chars -> data_tex | \begin{matrix} 		1 & 0 & 0 & 0 & 0 \\...]
\right)
\end{equation*} \newline
Where matrix $A$ has entries
\begin{align*}
	&d_{1} = 1, d_{2} = 1, d_{3} = \frac{1}{x_{13}}, d_{4} = \frac{1}{x_{13} x_{34}}, d_{5} = \frac{1}{x_{13} x_{34} x_{45}}, \\ 
	&a_{12} = 1, a_{13} = 1, a_{14} = 1, a_{15} = 1, \\ 
	&a_{23} = 0, a_{24} = \frac{x_{25}}{x_{13} x_{34} x_{45}}, a_{25} = 1, \\ 
	&a_{34} = \frac{x_{13} x_{34} - x_{14}}{x_{13}^{2} x_{34}}, a_{35} = \frac{x_{13}^{2} x_{34}^{2} - x_{13} x_{14} x_{34} + x_{14}^{2}}{x_{13}^{3} x_{34}^{2}}, \\ 
	&a_{45} = \frac{x_{13} x_{34} - x_{14}}{x_{13}^{2} x_{34}^{2}}
\end{align*}

%001111
\begin{equation*}x=\left(\begin{matrix}
		1 & 0 & x_{13} & x_{14} & x_{15} \\
		0 & 1 & 0 & 0 & x_{25} \\
		0 & 0 & 1 & x_{34} & 0  \\
		0 & 0 & 0 & 1 & x_{45}  \\
		0 & 0 & 0 & 0 & 1 \\
	\end{matrix}\right),x^A=\left(\begin{matrix}
		1 & 0 & 1 & 0 & 0 \\
		0 & 1 & 0 & 0 & 0 \\
		0 & 0 & 1 & 1 & 0 \\
		0 & 0 & 0 & 1 & 1 \\
		0 & 0 & 0 & 0 & 1
	\end{matrix}\right)
\end{equation*} \newline
Where matrix $A$ has entries
\begin{align*}
	&d_{1} = 1, d_{2} = 1, d_{3} = \frac{1}{x_{13}}, d_{4} = \frac{1}{x_{13} x_{34}}, d_{5} = \frac{1}{x_{13} x_{34} x_{45}}, \\ 
	&a_{12} = 1, a_{13} = 1, a_{14} = 1, a_{15} = 1, \\ 
	&a_{23} = 0, a_{24} = \frac{x_{25}}{x_{13} x_{34} x_{45}}, a_{25} = 1, \\ 
	&a_{34} = \frac{x_{13} x_{34} - x_{14}}{x_{13}^{2} x_{34}}, a_{35} = -\frac{x_{13} x_{15} x_{34} - {\left(x_{13}^{2} x_{34}^{2} - x_{13} x_{14} x_{34} + x_{14}^{2}\right)} x_{45}}{x_{13}^{3} x_{34}^{2} x_{45}}, \\ 
	&a_{45} = \frac{x_{13} x_{34} - x_{14}}{x_{13}^{2} x_{34}^{2}}
\end{align*}

%010000
\begin{equation*}x=\left(% [inline block 60: 14 envs, 5182 chars -> data_tex | \begin{matrix} 		1 & 0 & 0 & 0 & 0 \\...]
\right)
\end{equation*} \newline
Where matrix $A$ has entries
\begin{align*}
	&d_{1} = 1, d_{2} = 1, d_{3} = \frac{1}{x_{13}}, d_{4} = \frac{1}{x_{13} x_{34}}, d_{5} = \frac{1}{x_{13} x_{34} x_{45}}, \\ 
	&a_{12} = 1, a_{13} = 1, a_{14} = 1, a_{15} = 1, \\ 
	&a_{23} = \frac{x_{24}}{x_{13} x_{34}}, a_{24} = \frac{x_{13} x_{24} x_{34} - x_{14} x_{24}}{x_{13}^{2} x_{34}^{2}}, a_{25} = 1, \\ 
	&a_{34} = \frac{x_{13} x_{34} - x_{14}}{x_{13}^{2} x_{34}}, a_{35} = \frac{x_{13}^{2} x_{34}^{2} - x_{13} x_{14} x_{34} + x_{14}^{2}}{x_{13}^{3} x_{34}^{2}}, \\ 
	&a_{45} = \frac{x_{13} x_{34} - x_{14}}{x_{13}^{2} x_{34}^{2}}
\end{align*}

%010111
\begin{equation*}x=\left(\begin{matrix}
		1 & 0 & x_{13} & x_{14} & x_{15} \\
		0 & 1 & 0 & x_{24} & 0 \\
		0 & 0 & 1 & x_{34} & 0  \\
		0 & 0 & 0 & 1 & x_{45}  \\
		0 & 0 & 0 & 0 & 1 \\
	\end{matrix}\right),x^A=\left(\begin{matrix}
		1 & 0 & 1 & 0 & 0 \\
		0 & 1 & 0 & 0 & 0 \\
		0 & 0 & 1 & 1 & 0 \\
		0 & 0 & 0 & 1 & 1 \\
		0 & 0 & 0 & 0 & 1
	\end{matrix}\right)
\end{equation*} \newline
Where matrix $A$ has entries
\begin{align*}
	&d_{1} = 1, d_{2} = 1, d_{3} = \frac{1}{x_{13}}, d_{4} = \frac{1}{x_{13} x_{34}}, d_{5} = \frac{1}{x_{13} x_{34} x_{45}}, \\ 
	&a_{12} = 1, a_{13} = 1, a_{14} = 1, a_{15} = 1, \\ 
	&a_{23} = \frac{x_{24}}{x_{13} x_{34}}, a_{24} = \frac{x_{13} x_{24} x_{34} - x_{14} x_{24}}{x_{13}^{2} x_{34}^{2}}, a_{25} = 1, \\ 
	&a_{34} = \frac{x_{13} x_{34} - x_{14}}{x_{13}^{2} x_{34}}, a_{35} = -\frac{x_{13} x_{15} x_{34} - {\left(x_{13}^{2} x_{34}^{2} - x_{13} x_{14} x_{34} + x_{14}^{2}\right)} x_{45}}{x_{13}^{3} x_{34}^{2} x_{45}}, \\ 
	&a_{45} = \frac{x_{13} x_{34} - x_{14}}{x_{13}^{2} x_{34}^{2}}
\end{align*}

%011000
\begin{equation*}x=\left(% [inline block 61: 14 envs, 5438 chars -> data_tex | \begin{matrix} 		1 & 0 & 0 & 0 & 0 \\...]
\right)
\end{equation*} \newline
Where matrix $A$ has entries
\begin{align*}
	&d_{1} = 1, d_{2} = 1, d_{3} = \frac{1}{x_{13}}, d_{4} = \frac{1}{x_{13} x_{34}}, d_{5} = \frac{1}{x_{13} x_{34} x_{45}}, \\ 
	&a_{12} = 1, a_{13} = 1, a_{14} = 1, a_{15} = 1, \\ 
	&a_{23} = \frac{x_{24}}{x_{13} x_{34}}, a_{24} = \frac{x_{13} x_{25} x_{34} + {\left(x_{13} x_{24} x_{34} - x_{14} x_{24}\right)} x_{45}}{x_{13}^{2} x_{34}^{2} x_{45}}, a_{25} = 1, \\ 
	&a_{34} = \frac{x_{13} x_{34} - x_{14}}{x_{13}^{2} x_{34}}, a_{35} = \frac{x_{13}^{2} x_{34}^{2} - x_{13} x_{14} x_{34} + x_{14}^{2}}{x_{13}^{3} x_{34}^{2}}, \\ 
	&a_{45} = \frac{x_{13} x_{34} - x_{14}}{x_{13}^{2} x_{34}^{2}}
\end{align*}

%011111
\begin{equation*}x=\left(\begin{matrix}
		1 & 0 & x_{13} & x_{14} & x_{15} \\
		0 & 1 & 0 & x_{24} & x_{25} \\
		0 & 0 & 1 & x_{34} & 0  \\
		0 & 0 & 0 & 1 & x_{45}  \\
		0 & 0 & 0 & 0 & 1 \\
	\end{matrix}\right),x^A=\left(\begin{matrix}
		1 & 0 & 1 & 0 & 0 \\
		0 & 1 & 0 & 0 & 0 \\
		0 & 0 & 1 & 1 & 0 \\
		0 & 0 & 0 & 1 & 1 \\
		0 & 0 & 0 & 0 & 1
	\end{matrix}\right)
\end{equation*} \newline
Where matrix $A$ has entries
\begin{align*}
	&d_{1} = 1, d_{2} = 1, d_{3} = \frac{1}{x_{13}}, d_{4} = \frac{1}{x_{13} x_{34}}, d_{5} = \frac{1}{x_{13} x_{34} x_{45}}, \\ 
	&a_{12} = 1, a_{13} = 1, a_{14} = 1, a_{15} = 1, \\ 
	&a_{23} = \frac{x_{24}}{x_{13} x_{34}}, a_{24} = \frac{x_{13} x_{25} x_{34} + {\left(x_{13} x_{24} x_{34} - x_{14} x_{24}\right)} x_{45}}{x_{13}^{2} x_{34}^{2} x_{45}}, a_{25} = 1, \\ 
	&a_{34} = \frac{x_{13} x_{34} - x_{14}}{x_{13}^{2} x_{34}}, a_{35} = -\frac{x_{13} x_{15} x_{34} - {\left(x_{13}^{2} x_{34}^{2} - x_{13} x_{14} x_{34} + x_{14}^{2}\right)} x_{45}}{x_{13}^{3} x_{34}^{2} x_{45}}, \\ 
	&a_{45} = \frac{x_{13} x_{34} - x_{14}}{x_{13}^{2} x_{34}^{2}}
\end{align*}

%100000
\begin{equation*}x=\left(% [inline block 62: 14 envs, 5073 chars -> data_tex | \begin{matrix} 		1 & 0 & 0 & 0 & 0 \\...]
\right)
\end{equation*} \newline
Where matrix $A$ has entries
\begin{align*}
	&d_{1} = 1, d_{2} = 1, d_{3} = \frac{1}{x_{13}}, d_{4} = \frac{1}{x_{13} x_{34}}, d_{5} = \frac{1}{x_{13} x_{34} x_{45}}, \\ 
	&a_{12} = 1, a_{13} = 1, a_{14} = 1, a_{15} = 1, \\ 
	&a_{23} = 0, a_{24} = 0, a_{25} = 1, \\ 
	&a_{34} = \frac{1 x_{13} x_{34} - 1 x_{14}}{x_{13}^{2} x_{34}}, a_{35} = \frac{1 x_{13} x_{14} x_{35} + {\left(1 x_{13}^{2} x_{34}^{2} - 1 x_{13} x_{14} x_{34} + 1 x_{14}^{2}\right)} x_{45}}{x_{13}^{3} x_{34}^{2} x_{45}}, \\ 
	&a_{45} = -\frac{1 x_{13} x_{35} - {\left(1 x_{13} x_{34} - 1 x_{14}\right)} x_{45}}{x_{13}^{2} x_{34}^{2} x_{45}}
\end{align*}

%100111
\begin{equation*}x=\left(\begin{matrix}
		1 & 0 & x_{13} & x_{14} & x_{15} \\
		0 & 1 & 0 & 0 & 0 \\
		0 & 0 & 1 & x_{34} & x_{35}  \\
		0 & 0 & 0 & 1 & x_{45}  \\
		0 & 0 & 0 & 0 & 1 \\
	\end{matrix}\right),x^A=\left(\begin{matrix}
		1 & 0 & 1 & 0 & 0 \\
		0 & 1 & 0 & 0 & 0 \\
		0 & 0 & 1 & 1 & 0 \\
		0 & 0 & 0 & 1 & 1 \\
		0 & 0 & 0 & 0 & 1
	\end{matrix}\right)
\end{equation*} \newline
Where matrix $A$ has entries
\begin{align*}
	&d_{1} = 1, d_{2} = 1, d_{3} = \frac{1}{x_{13}}, d_{4} = \frac{1}{x_{13} x_{34}}, d_{5} = \frac{1}{x_{13} x_{34} x_{45}}, \\ 
	&a_{12} = 1, a_{13} = 1, a_{14} = 1, a_{15} = 1, \\ 
	&a_{23} = 0, a_{24} = 0, a_{25} = 1, \\ 
	&a_{34} = \frac{1 x_{13} x_{34} - 1 x_{14}}{x_{13}^{2} x_{34}}, a_{35} = -\frac{1 x_{13} x_{15} x_{34} - 1 x_{13} x_{14} x_{35} - {\left(1 x_{13}^{2} x_{34}^{2} - 1 x_{13} x_{14} x_{34} + 1 x_{14}^{2}\right)} x_{45}}{x_{13}^{3} x_{34}^{2} x_{45}}, \\ 
	&a_{45} = -\frac{1 x_{13} x_{35} - {\left(1 x_{13} x_{34} - 1 x_{14}\right)} x_{45}}{x_{13}^{2} x_{34}^{2} x_{45}}
\end{align*}

%101000
\begin{equation*}x=\left(% [inline block 63: 14 envs, 5298 chars -> data_tex | \begin{matrix} 		1 & 0 & 0 & 0 & 0 \\...]
\right)
\end{equation*} \newline
Where matrix $A$ has entries
\begin{align*}
	&d_{1} = 1, d_{2} = 1, d_{3} = \frac{1}{x_{13}}, d_{4} = \frac{1}{x_{13} x_{34}}, d_{5} = \frac{1}{x_{13} x_{34} x_{45}}, \\ 
	&a_{12} = 1, a_{13} = 1, a_{14} = 1, a_{15} = 1, \\ 
	&a_{23} = 0, a_{24} = \frac{x_{25}}{x_{13} x_{34} x_{45}}, a_{25} = 1, \\ 
	&a_{34} = \frac{x_{13} x_{34} - x_{14}}{x_{13}^{2} x_{34}}, a_{35} = \frac{x_{13} x_{14} x_{35} + {\left(x_{13}^{2} x_{34}^{2} - x_{13} x_{14} x_{34} + x_{14}^{2}\right)} x_{45}}{x_{13}^{3} x_{34}^{2} x_{45}}, \\ 
	&a_{45} = -\frac{x_{13} x_{35} - {\left(x_{13} x_{34} - x_{14}\right)} x_{45}}{x_{13}^{2} x_{34}^{2} x_{45}}
\end{align*}

%101111
\begin{equation*}x=\left(\begin{matrix}
		1 & 0 & x_{13} & x_{14} & x_{15} \\
		0 & 1 & 0 & 0 & x_{25} \\
		0 & 0 & 1 & x_{34} & x_{35}  \\
		0 & 0 & 0 & 1 & x_{45}  \\
		0 & 0 & 0 & 0 & 1 \\
	\end{matrix}\right),x^A=\left(\begin{matrix}
		1 & 0 & 1 & 0 & 0 \\
		0 & 1 & 0 & 0 & 0 \\
		0 & 0 & 1 & 1 & 0 \\
		0 & 0 & 0 & 1 & 1 \\
		0 & 0 & 0 & 0 & 1
	\end{matrix}\right)
\end{equation*} \newline
Where matrix $A$ has entries
\begin{align*}
	&d_{1} = 1, d_{2} = 1, d_{3} = \frac{1}{x_{13}}, d_{4} = \frac{1}{x_{13} x_{34}}, d_{5} = \frac{1}{x_{13} x_{34} x_{45}}, \\ 
	&a_{12} = 1, a_{13} = 1, a_{14} = 1, a_{15} = 1, \\ 
	&a_{23} = 0, a_{24} = \frac{x_{25}}{x_{13} x_{34} x_{45}}, a_{25} = 1, \\ 
	&a_{34} = \frac{x_{13} x_{34} - x_{14}}{x_{13}^{2} x_{34}}, a_{35} = -\frac{x_{13} x_{15} x_{34} - x_{13} x_{14} x_{35} - {\left(x_{13}^{2} x_{34}^{2} - x_{13} x_{14} x_{34} + x_{14}^{2}\right)} x_{45}}{x_{13}^{3} x_{34}^{2} x_{45}}, \\ 
	&a_{45} = -\frac{x_{13} x_{35} - {\left(x_{13} x_{34} - x_{14}\right)} x_{45}}{x_{13}^{2} x_{34}^{2} x_{45}}
\end{align*}

%110000
\begin{equation*}x=\left(% [inline block 64: 12 envs, 4572 chars -> data_tex | \begin{matrix} 		1 & 0 & 0 & 0 & 0 \\...]
\right)
\end{equation*} \newline
Where matrix $A$ has entries
\begin{align*}
	&d_{1} = 1, d_{2} = 1, d_{3} = \frac{1}{x_{13}}, d_{4} = \frac{1}{x_{13} x_{34}}, d_{5} = \frac{1}{x_{13} x_{34} x_{45}}, \\ 
	&a_{12} = 1, a_{13} = 1, a_{14} = 1, a_{15} = 1, \\ 
	&a_{23} = \frac{x_{24}}{x_{13} x_{34}}, a_{24} = \frac{x_{24} x_{34} x_{45} - x_{24} x_{35}}{x_{13} x_{34}^{2} x_{45}}, a_{25} = 1, \\ 
	&a_{34} = \frac{1}{x_{13}}, a_{35} = \frac{x_{13} x_{34} x_{45} - x_{15}}{x_{13}^{2} x_{34} x_{45}}, \\ 
	&a_{45} = \frac{x_{34} x_{45} - x_{35}}{x_{13} x_{34}^{2} x_{45}}
\end{align*}

%110110
\begin{equation*}x=\left(\begin{matrix}
		1 & 0 & x_{13} & x_{14} & 0 \\
		0 & 1 & 0 & x_{24} & 0 \\
		0 & 0 & 1 & x_{34} & x_{35}  \\
		0 & 0 & 0 & 1 & x_{45}  \\
		0 & 0 & 0 & 0 & 1 \\
	\end{matrix}\right),x^A=\left(\begin{matrix}
		1 & 0 & 1 & 0 & 0 \\
		0 & 1 & 0 & 0 & 0 \\
		0 & 0 & 1 & 1 & 0 \\
		0 & 0 & 0 & 1 & 1 \\
		0 & 0 & 0 & 0 & 1
	\end{matrix}\right)
\end{equation*} \newline
Where matrix $A$ has entries
\begin{align*}
	&d_{1} = 1, d_{2} = 1, d_{3} = \frac{1}{x_{13}}, d_{4} = \frac{1}{x_{13} x_{34}}, d_{5} = \frac{1}{x_{13} x_{34} x_{45}}, \\ 
	&a_{12} = 1, a_{13} = 1, a_{14} = 1, a_{15} = 1, \\ 
	&a_{23} = \frac{x_{24}}{x_{13} x_{34}}, a_{24} = -\frac{x_{13} x_{24} x_{35} - {\left(x_{13} x_{24} x_{34} - x_{14} x_{24}\right)} x_{45}}{x_{13}^{2} x_{34}^{2} x_{45}}, a_{25} = 1, \\ 
	&a_{34} = \frac{x_{13} x_{34} - x_{14}}{x_{13}^{2} x_{34}}, a_{35} = \frac{x_{13} x_{14} x_{35} + {\left(x_{13}^{2} x_{34}^{2} - x_{13} x_{14} x_{34} + x_{14}^{2}\right)} x_{45}}{x_{13}^{3} x_{34}^{2} x_{45}}, \\ 
	&a_{45} = -\frac{x_{13} x_{35} - {\left(x_{13} x_{34} - x_{14}\right)} x_{45}}{x_{13}^{2} x_{34}^{2} x_{45}}
\end{align*}

%110111
\begin{equation*}x=\left(\begin{matrix}
		1 & 0 & x_{13} & x_{14} & x_{15} \\
		0 & 1 & 0 & x_{24} & 0 \\
		0 & 0 & 1 & x_{34} & x_{35}  \\
		0 & 0 & 0 & 1 & x_{45}  \\
		0 & 0 & 0 & 0 & 1 \\
	\end{matrix}\right),x^A=\left(\begin{matrix}
		1 & 0 & 1 & 0 & 0 \\
		0 & 1 & 0 & 0 & 0 \\
		0 & 0 & 1 & 1 & 0 \\
		0 & 0 & 0 & 1 & 1 \\
		0 & 0 & 0 & 0 & 1
	\end{matrix}\right)
\end{equation*} \newline
Where matrix $A$ has entries
\begin{align*}
	&d_{1} = 1, d_{2} = 1, d_{3} = \frac{1}{x_{13}}, d_{4} = \frac{1}{x_{13} x_{34}}, d_{5} = \frac{1}{x_{13} x_{34} x_{45}}, \\ 
	&a_{12} = 1, a_{13} = 1, a_{14} = 1, a_{15} = 1, \\ 
	&a_{23} = \frac{x_{24}}{x_{13} x_{34}}, a_{24} = -\frac{x_{13} x_{24} x_{35} - {\left(x_{13} x_{24} x_{34} - x_{14} x_{24}\right)} x_{45}}{x_{13}^{2} x_{34}^{2} x_{45}}, a_{25} = 1, \\ 
	&a_{34} = \frac{x_{13} x_{34} - x_{14}}{x_{13}^{2} x_{34}}, a_{35} = -\frac{x_{13} x_{15} x_{34} - x_{13} x_{14} x_{35} - {\left(x_{13}^{2} x_{34}^{2} - x_{13} x_{14} x_{34} + x_{14}^{2}\right)} x_{45}}{x_{13}^{3} x_{34}^{2} x_{45}}, \\ 
	&a_{45} = -\frac{x_{13} x_{35} - {\left(x_{13} x_{34} - x_{14}\right)} x_{45}}{x_{13}^{2} x_{34}^{2} x_{45}}
\end{align*}

%111000
\begin{equation*}x=\left(% [inline block 65: 8 envs, 2916 chars -> data_tex | \begin{matrix} 		1 & 0 & 0 & 0 & 0 \\...]
\right)
\end{equation*} \newline
Where matrix $A$ has entries
\begin{align*}
	&d_{1} = 1, d_{2} = 1, d_{3} = 1, d_{4} = \frac{1}{x_{34}}, d_{5} = \frac{1}{x_{34} x_{45}}, \\ 
	&a_{12} = 1, a_{13} = \frac{x_{14}}{x_{34}}, a_{14} = 1, a_{15} = 1, \\ 
	&a_{23} = \frac{x_{24}}{x_{34}}, a_{24} = \frac{x_{24} x_{34} x_{45} - x_{15} x_{24} + x_{14} x_{25}}{x_{14} x_{34} x_{45}}, a_{25} = 1, \\ 
	&a_{34} = \frac{x_{34}^{2} x_{45} - x_{15} x_{34} + x_{14} x_{35}}{x_{14} x_{34} x_{45}}, a_{35} = 1, \\ 
	&a_{45} = \frac{x_{34} x_{45} - x_{15}}{x_{14} x_{34} x_{45}}
\end{align*}

%111100
\begin{equation*}x=\left(\begin{matrix}
		1 & 0 & x_{13} & 0 & 0 \\
		0 & 1 & 0 & x_{24} & x_{25} \\
		0 & 0 & 1 & x_{34} & x_{35}  \\
		0 & 0 & 0 & 1 & x_{45}  \\
		0 & 0 & 0 & 0 & 1 \\
	\end{matrix}\right),x^A=\left(\begin{matrix}
		1 & 0 & 1 & 0 & 0 \\
		0 & 1 & 0 & 0 & 0 \\
		0 & 0 & 1 & 1 & 0 \\
		0 & 0 & 0 & 1 & 1 \\
		0 & 0 & 0 & 0 & 1
	\end{matrix}\right)
\end{equation*} \newline
Where matrix $A$ has entries
\begin{align*}
	&d_{1} = 1, d_{2} = 1, d_{3} = \frac{1}{x_{13}}, d_{4} = \frac{1}{x_{13} x_{34}}, d_{5} = \frac{1}{x_{13} x_{34} x_{45}}, \\ 
	&a_{12} = 1, a_{13} = 1, a_{14} = 1, a_{15} = 1, \\ 
	&a_{23} = \frac{x_{24}}{x_{13} x_{34}}, a_{24} = \frac{x_{24} x_{34} x_{45} + x_{25} x_{34} - x_{24} x_{35}}{x_{13} x_{34}^{2} x_{45}}, a_{25} = 1, \\ 
	&a_{34} = \frac{1}{x_{13}}, a_{35} = \frac{1 + 1}{x_{13}}, \\ 
	&a_{45} = \frac{{\left(1 + 1\right)} x_{34} x_{45} - x_{35}}{x_{13} x_{34}^{2} x_{45}}
\end{align*}

%111101
\begin{equation*}x=\left(\begin{matrix}
		1 & 0 & x_{13} & 0 & x_{15} \\
		0 & 1 & 0 & x_{24} & x_{25} \\
		0 & 0 & 1 & x_{34} & x_{35}  \\
		0 & 0 & 0 & 1 & x_{45}  \\
		0 & 0 & 0 & 0 & 1 \\
	\end{matrix}\right),x^A=\left(\begin{matrix}
		1 & 0 & 1 & 0 & 0 \\
		0 & 1 & 0 & 0 & 0 \\
		0 & 0 & 1 & 1 & 0 \\
		0 & 0 & 0 & 1 & 1 \\
		0 & 0 & 0 & 0 & 1
	\end{matrix}\right)
\end{equation*} \newline
Where matrix $A$ has entries
\begin{align*}
	&d_{1} = 1, d_{2} = 1, d_{3} = \frac{1}{x_{13}}, d_{4} = \frac{1}{x_{13} x_{34}}, d_{5} = \frac{1}{x_{13} x_{34} x_{45}}, \\ 
	&a_{12} = 1, a_{13} = 1, a_{14} = 1, a_{15} = 1, \\ 
	&a_{23} = \frac{x_{24}}{x_{13} x_{34}}, a_{24} = \frac{x_{24} x_{34} x_{45} + x_{25} x_{34} - x_{24} x_{35}}{x_{13} x_{34}^{2} x_{45}}, a_{25} = 1, \\ 
	&a_{34} = \frac{1}{x_{13}}, a_{35} = \frac{{\left(1 + 1\right)} x_{13} x_{34} x_{45} - x_{15}}{x_{13}^{2} x_{34} x_{45}}, \\ 
	&a_{45} = \frac{{\left(1 + 1\right)} x_{34} x_{45} - x_{35}}{x_{13} x_{34}^{2} x_{45}}
\end{align*}

%111110
\begin{equation*}x=\left(\begin{matrix}
		1 & 0 & x_{13} & x_{14} & 0 \\
		0 & 1 & 0 & x_{24} & x_{25} \\
		0 & 0 & 1 & x_{34} & x_{35}  \\
		0 & 0 & 0 & 1 & x_{45}  \\
		0 & 0 & 0 & 0 & 1 \\
	\end{matrix}\right),x^A=\left(\begin{matrix}
		1 & 0 & 1 & 0 & 0 \\
		0 & 1 & 0 & 0 & 0 \\
		0 & 0 & 1 & 1 & 0 \\
		0 & 0 & 0 & 1 & 1 \\
		0 & 0 & 0 & 0 & 1
	\end{matrix}\right)
\end{equation*} \newline
Where matrix $A$ has entries
\begin{align*}
	&d_{1} = 1, d_{2} = 1, d_{3} = \frac{1}{x_{13}}, d_{4} = \frac{1}{x_{13} x_{34}}, d_{5} = \frac{1}{x_{13} x_{34} x_{45}}, \\ 
	&a_{12} = 1, a_{13} = 1, a_{14} = 1, a_{15} = 1, \\ 
	&a_{23} = \frac{x_{24}}{x_{13} x_{34}}, a_{24} = \frac{x_{13} x_{25} x_{34} - x_{13} x_{24} x_{35} + {\left(x_{13} x_{24} x_{34} - x_{14} x_{24}\right)} x_{45}}{x_{13}^{2} x_{34}^{2} x_{45}}, a_{25} = 1, \\ 
	&a_{34} = \frac{x_{13} x_{34} - x_{14}}{x_{13}^{2} x_{34}}, a_{35} = \frac{x_{13} x_{14} x_{35} + {\left(x_{13}^{2} x_{34}^{2} - x_{13} x_{14} x_{34} + x_{14}^{2}\right)} x_{45}}{x_{13}^{3} x_{34}^{2} x_{45}}, \\ 
	&a_{45} = -\frac{x_{13} x_{35} - {\left(x_{13} x_{34} - x_{14}\right)} x_{45}}{x_{13}^{2} x_{34}^{2} x_{45}}
\end{align*}

%111111
\begin{equation*}x=\left(\begin{matrix}
		1 & 0 & x_{13} & x_{14} & x_{15} \\
		0 & 1 & 0 & x_{24} & x_{25} \\
		0 & 0 & 1 & x_{34} & x_{35}  \\
		0 & 0 & 0 & 1 & x_{45}  \\
		0 & 0 & 0 & 0 & 1 \\
	\end{matrix}\right),x^A=\left(\begin{matrix}
		1 & 0 & 1 & 0 & 0 \\
		0 & 1 & 0 & 0 & 0 \\
		0 & 0 & 1 & 1 & 0 \\
		0 & 0 & 0 & 1 & 1 \\
		0 & 0 & 0 & 0 & 1
	\end{matrix}\right)
\end{equation*} \newline
Where matrix $A$ has entries
\begin{align*}
	&d_{1} = 1, d_{2} = 1, d_{3} = \frac{1}{x_{13}}, d_{4} = \frac{1}{x_{13} x_{34}}, d_{5} = \frac{1}{x_{13} x_{34} x_{45}}, \\ 
	&a_{12} = 1, a_{13} = 1, a_{14} = 1, a_{15} = 1, \\ 
	&a_{23} = \frac{x_{24}}{x_{13} x_{34}}, a_{24} = \frac{x_{13} x_{25} x_{34} - x_{13} x_{24} x_{35} + {\left(x_{13} x_{24} x_{34} - x_{14} x_{24}\right)} x_{45}}{x_{13}^{2} x_{34}^{2} x_{45}}, a_{25} = 1, \\ 
	&a_{34} = \frac{x_{13} x_{34} - x_{14}}{x_{13}^{2} x_{34}}, a_{35} = -\frac{x_{13} x_{15} x_{34} - x_{13} x_{14} x_{35} - {\left(x_{13}^{2} x_{34}^{2} - x_{13} x_{14} x_{34} + x_{14}^{2}\right)} x_{45}}{x_{13}^{3} x_{34}^{2} x_{45}}, \\ 
	&a_{45} = -\frac{x_{13} x_{35} - {\left(x_{13} x_{34} - x_{14}\right)} x_{45}}{x_{13}^{2} x_{34}^{2} x_{45}}
\end{align*}

		\section{Subcases of $Y_{14}$}

%000001
\begin{equation*}x=\left(% [inline block 66: 48 envs, 18333 chars -> data_tex | \begin{matrix} 		1 & x_{12} & 0 & 0 & x_{15} \\...]
\right)
\end{equation*} \newline
Where matrix $A$ has entries
\begin{align*}
	&d_{1} = 1, d_{2} = \frac{1}{x_{12}}, d_{3} = \frac{x_{34}}{x_{12} x_{24} + x_{13} x_{34}}, d_{4} = \frac{1}{x_{12} x_{24} + x_{13} x_{34}}, d_{5} = 1,\\ 
	&a_{12} = 1, a_{13} = 1, a_{14} = 1, a_{15} = 1,\\ 
	&a_{23} = \frac{x_{24}}{x_{12} x_{24} + x_{13} x_{34}}, a_{24} = 1, a_{25} = 1,\\ 
	&a_{34} = -\frac{{\left(x_{12} - 1\right)} x_{13} x_{34} + x_{14} + {\left(x_{12}^{2} - x_{12}\right)} x_{24}}{x_{12} x_{13} x_{24} + x_{13}^{2} x_{34}}, a_{35} = -\frac{x_{12}}{x_{13}},\\ 
	&a_{45} = 0
\end{align*}

Now assume $x_{24}= \frac{-x_{13}x_{34}}{x_{12}}$
\begin{equation*}x=\left(\begin{matrix}
		1 & x_{12} & x_{13} & x_{14} & 0 \\
		0 & 1 & 0 & x_{24} & 0 \\
		0 & 0 & 1 & x_{34} & 0  \\
		0 & 0 & 0 & 1 & 0  \\
		0 & 0 & 0 & 0 & 1 \\
	\end{matrix}\right),x^A=\left(\begin{matrix}
		1 & 1 & 0 & 0 & 0 \\
		0 & 1 & 0 & 0 & 0 \\
		0 & 0 & 1 & 1 & 0 \\
		0 & 0 & 0 & 1 & 0 \\
		0 & 0 & 0 & 0 & 1
	\end{matrix}\right)
\end{equation*} \newline
Where matrix $A$ has entries
\begin{align*}
	&d_{1} = 1, d_{2} = \frac{1}{x_{12}}, d_{3} = 1, d_{4} = \frac{1}{x_{34}}, d_{5} = 1,\\ 
	&a_{12} = 1, a_{13} = 1, a_{14} = 1, a_{15} = 1,\\ 
	&a_{23} = -\frac{x_{13}}{x_{12}}, a_{24} = 1, a_{25} = 1,\\ 
	&a_{34} = -\frac{x_{14} + {\left(x_{12} - 1\right)} x_{34}}{x_{13} x_{34}}, a_{35} = -\frac{x_{12}}{x_{13}},\\ 
	&a_{45} = 0
\end{align*}

%010111

First assume $x_{24}\neq \frac{-x_{13}x_{34}}{x_{12}}$
\begin{equation*}x=\left(\begin{matrix}
		1 & x_{12} & x_{13} & x_{14} & x_{15} \\
		0 & 1 & 0 & x_{24} & 0 \\
		0 & 0 & 1 & x_{34} & 0  \\
		0 & 0 & 0 & 1 & 0  \\
		0 & 0 & 0 & 0 & 1 \\
	\end{matrix}\right),x^A=\left(\begin{matrix}
		1 & 1 & 1 & 0 & 0 \\
		0 & 1 & 0 & 0 & 0 \\
		0 & 0 & 1 & 1 & 0 \\
		0 & 0 & 0 & 1 & 0 \\
		0 & 0 & 0 & 0 & 1
	\end{matrix}\right)
\end{equation*} \newline
Where matrix $A$ has entries
\begin{align*}
	&d_{1} = 1, d_{2} = \frac{1}{x_{12}}, d_{3} = \frac{x_{34}}{x_{12} x_{24} + x_{13} x_{34}}, d_{4} = \frac{1}{x_{12} x_{24} + x_{13} x_{34}}, d_{5} = 1,\\ 
	&a_{12} = 1, a_{13} = 1, a_{14} = 1, a_{15} = 1,\\ 
	&a_{23} = \frac{x_{24}}{x_{12} x_{24} + x_{13} x_{34}}, a_{24} = 1, a_{25} = 1,\\ 
	&a_{34} = -\frac{{\left(x_{12} - 1\right)} x_{13} x_{34} + x_{14} + {\left(x_{12}^{2} - x_{12}\right)} x_{24}}{x_{12} x_{13} x_{24} + x_{13}^{2} x_{34}}, a_{35} = -\frac{x_{12} + x_{15}}{x_{13}},\\ 
	&a_{45} = 0
\end{align*}

Now assume $x_{24}= \frac{-x_{13}x_{34}}{x_{12}}$
\begin{equation*}x=\left(% [inline block 67: 16 envs, 6158 chars -> data_tex | \begin{matrix} 		1 & x_{12} & x_{13} & x_{14} & x_{15} \\...]
\right)
\end{equation*} \newline
Where matrix $A$ has entries
\begin{align*}
	&d_{1} = 1, d_{2} = \frac{1}{x_{12}}, d_{3} = \frac{x_{34}}{x_{12} x_{24} + x_{13} x_{34}}, d_{4} = \frac{1}{x_{12} x_{24} + x_{13} x_{34}}, d_{5} = \frac{1}{x_{12} x_{25}},\\ 
	&a_{12} = 1, a_{13} = 1, a_{14} = 1, a_{15} = 1,\\ 
	&a_{23} = \frac{x_{24}}{x_{12} x_{24} + x_{13} x_{34}}, a_{24} = 1, a_{25} = 1,\\ 
	&a_{34} = -\frac{x_{12} - 1}{x_{13}}, a_{35} = -\frac{x_{15} + {\left(x_{12}^{2} - x_{12}\right)} x_{25}}{x_{12} x_{13} x_{25}},\\ 
	&a_{45} = 0
\end{align*}

Now assume $x_{24}= \frac{-x_{13}x_{34}}{x_{12}}$
\begin{equation*}x=\left(\begin{matrix}
		1 & x_{12} & x_{13} & 0 & x_{15} \\
		0 & 1 & 0 & x_{24} & x_{25} \\
		0 & 0 & 1 & x_{34} & 0  \\
		0 & 0 & 0 & 1 & 0  \\
		0 & 0 & 0 & 0 & 1 \\
	\end{matrix}\right),x^A=\left(\begin{matrix}
		1 & 1 & 0 & 0 & 0 \\
		0 & 1 & 0 & 0 & 1 \\
		0 & 0 & 1 & 1 & 0 \\
		0 & 0 & 0 & 1 & 0 \\
		0 & 0 & 0 & 0 & 1
	\end{matrix}\right)
\end{equation*} \newline
Where matrix $A$ has entries
\begin{align*}
	&d_{1} = 1, d_{2} = \frac{1}{x_{12}}, d_{3} = 1, d_{4} = \frac{1}{x_{34}}, d_{5} = \frac{1}{x_{12} x_{25}},\\ 
	&a_{12} = 1, a_{13} = 1, a_{14} = 1, a_{15} = 1,\\ 
	&a_{23} = -\frac{x_{13}}{x_{12}}, a_{24} = 1, a_{25} = 1,\\ 
	&a_{34} = -\frac{x_{12} - 1}{x_{13}}, a_{35} = -\frac{x_{15} + {\left(x_{12}^{2} - x_{12}\right)} x_{25}}{x_{12} x_{13} x_{25}},\\ 
	&a_{45} = 0
\end{align*}

%011110

First assume $x_{24}\neq \frac{-x_{13}x_{34}}{x_{12}}$
\begin{equation*}x=\left(\begin{matrix}
		1 & x_{12} & x_{13} & x_{14} & 0 \\
		0 & 1 & 0 & x_{24} & x_{25} \\
		0 & 0 & 1 & x_{34} & 0  \\
		0 & 0 & 0 & 1 & 0  \\
		0 & 0 & 0 & 0 & 1 \\
	\end{matrix}\right),x^A=\left(\begin{matrix}
		1 & 1 & 1 & 0 & 0 \\
		0 & 1 & 0 & 0 & 1 \\
		0 & 0 & 1 & 1 & 0 \\
		0 & 0 & 0 & 1 & 0 \\
		0 & 0 & 0 & 0 & 1
	\end{matrix}\right)
\end{equation*} \newline
Where matrix $A$ has entries
\begin{align*}
	&d_{1} = 1, d_{2} = \frac{1}{x_{12}}, d_{3} = \frac{x_{34}}{x_{12} x_{24} + x_{13} x_{34}}, d_{4} = \frac{1}{x_{12} x_{24} + x_{13} x_{34}}, d_{5} = \frac{1}{x_{12} x_{25}},\\ 
	&a_{12} = 1, a_{13} = 1, a_{14} = 1, a_{15} = 1,\\ 
	&a_{23} = \frac{x_{24}}{x_{12} x_{24} + x_{13} x_{34}}, a_{24} = 1, a_{25} = 1,\\ 
	&a_{34} = -\frac{{\left(x_{12} - 1\right)} x_{13} x_{34} + x_{14} + {\left(x_{12}^{2} - x_{12}\right)} x_{24}}{x_{12} x_{13} x_{24} + x_{13}^{2} x_{34}}, a_{35} = -\frac{x_{12} - 1}{x_{13}},\\ 
	&a_{45} = 0
\end{align*}

Now assume $x_{24}= \frac{-x_{13}x_{34}}{x_{12}}$
\begin{equation*}x=\left(\begin{matrix}
		1 & x_{12} & x_{13} & x_{14} & 0 \\
		0 & 1 & 0 & x_{24} & x_{25} \\
		0 & 0 & 1 & x_{34} & 0  \\
		0 & 0 & 0 & 0 & 0  \\
		0 & 0 & 0 & 0 & 0 \\
	\end{matrix}\right),x^A=\left(\begin{matrix}
		0 & 1 & 0 & 0 & 0 \\
		0 & 0 & 0 & 0 & 1 \\
		0 & 0 & 0 & 1 & 0 \\
		0 & 0 & 0 & 0 & 0 \\
		0 & 0 & 0 & 0 & 0
	\end{matrix}\right)
\end{equation*} \newline
Where matrix $A$ has entries
\begin{align*}
	&d_{1} = 1, d_{2} = \frac{1}{x_{12}}, d_{3} = 1, d_{4} = \frac{1}{x_{34}}, d_{5} = \frac{1}{x_{12} x_{25}},\\ 
	&a_{12} = 1, a_{13} = 1, a_{14} = 1, a_{15} = 1,\\ 
	&a_{23} = -\frac{x_{13}}{x_{12}}, a_{24} = 1, a_{25} = 1,\\ 
	&a_{34} = -\frac{x_{14} + {\left(x_{12} - 1\right)} x_{34}}{x_{13} x_{34}}, a_{35} = -\frac{x_{12} - 1}{x_{13}},\\ 
	&a_{45} = 0
\end{align*}

%011111

First assume $x_{24}\neq \frac{-x_{13}x_{34}}{x_{12}}$
\begin{equation*}x=\left(\begin{matrix}
		1 & x_{12} & x_{13} & x_{14} & x_{15} \\
		0 & 1 & 0 & x_{24} & x_{25} \\
		0 & 0 & 1 & x_{34} & 0  \\
		0 & 0 & 0 & 1 & 0  \\
		0 & 0 & 0 & 0 & 1 \\
	\end{matrix}\right),x^A=\left(\begin{matrix}
		1 & 1 & 1 & 0 & 0 \\
		0 & 1 & 0 & 0 & 1 \\
		0 & 0 & 1 & 1 & 0 \\
		0 & 0 & 0 & 1 & 0 \\
		0 & 0 & 0 & 0 & 1
	\end{matrix}\right)
\end{equation*} \newline
Where matrix $A$ has entries
\begin{align*}
	&d_{1} = 1, d_{2} = \frac{1}{x_{12}}, d_{3} = \frac{x_{34}}{x_{12} x_{24} + x_{13} x_{34}}, d_{4} = \frac{1}{x_{12} x_{24} + x_{13} x_{34}}, d_{5} = \frac{1}{x_{12} x_{25}},\\ 
	&a_{12} = 1, a_{13} = 1, a_{14} = 1, a_{15} = 1,\\ 
	&a_{23} = \frac{x_{24}}{x_{12} x_{24} + x_{13} x_{34}}, a_{24} = 1, a_{25} = 1,\\ 
	&a_{34} = -\frac{{\left(x_{12} - 1\right)} x_{13} x_{34} + x_{14} + {\left(x_{12}^{2} - x_{12}\right)} x_{24}}{x_{12} x_{13} x_{24} + x_{13}^{2} x_{34}}, a_{35} = -\frac{x_{15} + {\left(x_{12}^{2} - x_{12}\right)} x_{25}}{x_{12} x_{13} x_{25}},\\ 
	&a_{45} = 0
\end{align*}

Now assume $x_{24}= \frac{-x_{13}x_{34}}{x_{12}}$
\begin{equation*}x=\left(% [inline block 68: 32 envs, 12391 chars -> data_tex | \begin{matrix} 		1 & x_{12} & x_{13} & x_{14} & x_{15} \\...]
\right)
\end{equation*} \newline
Where matrix $A$ has entries
\begin{align*}
	&d_{1} = 1, d_{2} = \frac{1}{x_{12}}, d_{3} = \frac{1}{x_{13}}, d_{4} = \frac{1}{x_{13} x_{34}}, d_{5} = \frac{1}{x_{12} x_{25}},\\ 
	&a_{12} = 1, a_{13} = 1, a_{14} = 1, a_{15} = 1,\\ 
	&a_{23} = 0, a_{24} = 1, a_{25} = 1,\\ 
	&a_{34} = -\frac{{\left(x_{12} - 1\right)} x_{13} x_{34} + x_{14}}{x_{13}^{2} x_{34}}, a_{35} = -\frac{{\left(x_{12}^{2} - x_{12}\right)} x_{25} x_{34} - x_{14} x_{35}}{x_{12} x_{13} x_{25} x_{34}},\\ 
	&a_{45} = -\frac{x_{35}}{x_{12} x_{25} x_{34}}
\end{align*}

%101111
\begin{equation*}x=\left(\begin{matrix}
		1 & x_{12} & x_{13} & x_{14} & x_{15} \\
		0 & 1 & 0 & 0 & x_{25} \\
		0 & 0 & 1 & x_{34} & x_{35}  \\
		0 & 0 & 0 & 1 & 0  \\
		0 & 0 & 0 & 0 & 1 \\
	\end{matrix}\right),x^A=\left(\begin{matrix}
		1 & 1 & 1 & 0 & 0 \\
		0 & 1 & 0 & 0 & 1 \\
		0 & 0 & 1 & 1 & 0 \\
		0 & 0 & 0 & 1 & 0 \\
		0 & 0 & 0 & 0 & 1
	\end{matrix}\right)
\end{equation*} \newline
Where matrix $A$ has entries
\begin{align*}
	&d_{1} = 1, d_{2} = \frac{1}{x_{12}}, d_{3} = \frac{1}{x_{13}}, d_{4} = \frac{1}{x_{13} x_{34}}, d_{5} = \frac{1}{x_{12} x_{25}},\\ 
	&a_{12} = 1, a_{13} = 1, a_{14} = 1, a_{15} = 1,\\ 
	&a_{23} = 0, a_{24} = 1, a_{25} = 1,\\ 
	&a_{34} = -\frac{{\left(x_{12} - 1\right)} x_{13} x_{34} + x_{14}}{x_{13}^{2} x_{34}}, a_{35} = \frac{x_{14} x_{35} - {\left(x_{15} + {\left(x_{12}^{2} - x_{12}\right)} x_{25}\right)} x_{34}}{x_{12} x_{13} x_{25} x_{34}},\\ 
	&a_{45} = -\frac{x_{35}}{x_{12} x_{25} x_{34}}
\end{align*}

%110000
\begin{equation*}x=\left(% [inline block 69: 8 envs, 2887 chars -> data_tex | \begin{matrix} 		1 & x_{12} & 0 & 0 & 0 \\...]
\right)
\end{equation*} \newline
Where matrix $A$ has entries
\begin{align*}
	&d_{1} = 1, d_{2} = \frac{1}{x_{12}}, d_{3} = \frac{x_{34}}{x_{12} x_{24}}, d_{4} = \frac{1}{x_{12} x_{24}}, d_{5} = -\frac{x_{34}}{x_{12} x_{24} x_{35}},\\ 
	&a_{12} = 1, a_{13} = 1, a_{14} = 1, a_{15} = 1,\\ 
	&a_{23} = \frac{1}{x_{12}}, a_{24} = \frac{x_{12} x_{24} - x_{14}}{x_{12}^{2} x_{24}}, a_{25} = \frac{x_{15} x_{34} + {\left(x_{12} x_{24} - x_{14}\right)} x_{35}}{x_{12}^{2} x_{24} x_{35}},\\ 
	&a_{34} = 1, a_{35} = 1,\\ 
	&a_{45} = \frac{1}{x_{12} x_{24}}
\end{align*}

%110100

First assume $x_{24}\neq \frac{-x_{13}x_{34}}{x_{12}}$.
\begin{equation*}x=\left(\begin{matrix}
		1 & x_{12} & x_{13} & 0 & 0 \\
		0 & 1 & 0 & x_{24} & 0 \\
		0 & 0 & 1 & x_{34} & x_{35}  \\
		0 & 0 & 0 & 1 & 0  \\
		0 & 0 & 0 & 0 & 1 \\
	\end{matrix}\right),x^A=\left(\begin{matrix}
		1 & 1 & 1 & 0 & 0 \\
		0 & 1 & 0 & 0 & 1 \\
		0 & 0 & 1 & 1 & 0 \\
		0 & 0 & 0 & 1 & 0 \\
		0 & 0 & 0 & 0 & 1
	\end{matrix}\right)
\end{equation*} \newline
Where matrix $A$ has entries
\begin{align*}
	&d_{1} = 1, d_{2} = \frac{1}{x_{12}}, d_{3} = \frac{x_{34}}{x_{12} x_{24} + x_{13} x_{34}}, d_{4} = \frac{1}{x_{12} x_{24} + x_{13} x_{34}}, d_{5} = -\frac{x_{34}}{x_{12} x_{24} x_{35}},\\ 
	&a_{12} = 1, a_{13} = 1, a_{14} = 1, a_{15} = 1,\\ 
	&a_{23} = \frac{x_{24}}{x_{12} x_{24} + x_{13} x_{34}}, a_{24} = 1, a_{25} = 1,\\ 
	&a_{34} = -\frac{x_{12} - 1}{x_{13}}, a_{35} = -\frac{x_{12} - 1}{x_{13}},\\ 
	&a_{45} = \frac{1}{x_{12} x_{24}}
\end{align*}

Now assume $x_{24}= \frac{-x_{13}x_{34}}{x_{12}}$.
\begin{equation*}x=\left(\begin{matrix}
		1 & x_{12} & x_{13} & 0 & 0 \\
		0 & 1 & 0 & x_{24} & 0 \\
		0 & 0 & 1 & x_{34} & x_{35}  \\
		0 & 0 & 0 & 1 & 0  \\
		0 & 0 & 0 & 0 & 1 \\
	\end{matrix}\right),x^A=\left(\begin{matrix}
		1 & 1 & 0 & 0 & 0 \\
		0 & 1 & 0 & 0 & 1 \\
		0 & 0 & 1 & 1 & 0 \\
		0 & 0 & 0 & 1 & 0 \\
		0 & 0 & 0 & 0 & 1
	\end{matrix}\right)
\end{equation*} \newline
Where matrix $A$ has entries
\begin{align*}
	&d_{1} = 1, d_{2} = \frac{1}{x_{12}}, d_{3} = 1, d_{4} = \frac{1}{x_{34}}, d_{5} = \frac{1}{x_{13} x_{35}},\\ 
	&a_{12} = 1, a_{13} = 1, a_{14} = 1, a_{15} = 1,\\ 
	&a_{23} = -\frac{x_{13}}{x_{12}}, a_{24} = 1, a_{25} = 1,\\ 
	&a_{34} = -\frac{x_{12} - 1}{x_{13}}, a_{35} = -\frac{x_{12} - 1}{x_{13}},\\ 
	&a_{45} = -\frac{1}{x_{13} x_{34}}
\end{align*}

%110101

First assume $x_{24}\neq \frac{-x_{13}x_{34}}{x_{12}}$.
\begin{equation*}x=\left(\begin{matrix}
		1 & x_{12} & x_{13} & 0 & x_{15} \\
		0 & 1 & 0 & x_{24} & 0 \\
		0 & 0 & 1 & x_{34} & x_{35}  \\
		0 & 0 & 0 & 1 & 0  \\
		0 & 0 & 0 & 0 & 1 \\
	\end{matrix}\right),x^A=\left(\begin{matrix}
		1 & 1 & 1 & 0 & 0 \\
		0 & 1 & 0 & 0 & 1 \\
		0 & 0 & 1 & 1 & 0 \\
		0 & 0 & 0 & 1 & 0 \\
		0 & 0 & 0 & 0 & 1
	\end{matrix}\right)
\end{equation*} \newline
Where matrix $A$ has entries
\begin{align*}
	&d_{1} = 1, d_{2} = \frac{1}{x_{12}}, d_{3} = \frac{x_{34}}{x_{12} x_{24} + x_{13} x_{34}}, d_{4} = \frac{1}{x_{12} x_{24} + x_{13} x_{34}}, d_{5} = -\frac{x_{34}}{x_{12} x_{24} x_{35}},\\ 
	&a_{12} = 1, a_{13} = 1, a_{14} = 1, a_{15} = 1,\\ 
	&a_{23} = \frac{x_{24}}{x_{12} x_{24} + x_{13} x_{34}}, a_{24} = 1, a_{25} = 1,\\ 
	&a_{34} = -\frac{x_{12} - 1}{x_{13}}, a_{35} = \frac{x_{15} x_{34} - {\left(x_{12}^{2} - x_{12}\right)} x_{24} x_{35}}{x_{12} x_{13} x_{24} x_{35}},\\ 
	&a_{45} = \frac{1}{x_{12} x_{24}}
\end{align*}

Now assume $x_{24}= \frac{-x_{13}x_{34}}{x_{12}}$.
\begin{equation*}x=\left(\begin{matrix}
		1 & x_{12} & x_{13} & 0 & x_{15} \\
		0 & 1 & 0 & x_{24} & 0 \\
		0 & 0 & 1 & x_{34} & x_{35}  \\
		0 & 0 & 0 & 1 & 0  \\
		0 & 0 & 0 & 0 & 1 \\
	\end{matrix}\right),x^A=\left(\begin{matrix}
		1 & 1 & 0 & 0 & 0 \\
		0 & 1 & 0 & 0 & 1 \\
		0 & 0 & 1 & 1 & 0 \\
		0 & 0 & 0 & 1 & 0 \\
		0 & 0 & 0 & 0 & 1
	\end{matrix}\right)
\end{equation*} \newline
Where matrix $A$ has entries
\begin{align*}
	&d_{1} = 1, d_{2} = \frac{1}{x_{12}}, d_{3} = 1, d_{4} = \frac{1}{x_{34}}, d_{5} = \frac{1}{x_{13} x_{35}},\\ 
	&a_{12} = 1, a_{13} = 1, a_{14} = 1, a_{15} = 1,\\ 
	&a_{23} = -\frac{x_{13}}{x_{12}}, a_{24} = 1, a_{25} = 1,\\ 
	&a_{34} = -\frac{x_{12} - 1}{x_{13}}, a_{35} = -\frac{{\left(x_{12} - 1\right)} x_{13} x_{35} + x_{15}}{x_{13}^{2} x_{35}},\\ 
	&a_{45} = -\frac{1}{x_{13} x_{34}}
\end{align*}

%110110

First assume $x_{24}\neq \frac{-x_{13}x_{34}}{x_{12}}$.
\begin{equation*}x=\left(\begin{matrix}
		1 & x_{12} & x_{13} & x_{14} & 0 \\
		0 & 1 & 0 & x_{24} & 0 \\
		0 & 0 & 1 & x_{34} & x_{35}  \\
		0 & 0 & 0 & 1 & 0  \\
		0 & 0 & 0 & 0 & 1 \\
	\end{matrix}\right),x^A=\left(\begin{matrix}
		1 & 1 & 1 & 0 & 0 \\
		0 & 1 & 0 & 0 & 1 \\
		0 & 0 & 1 & 1 & 0 \\
		0 & 0 & 0 & 1 & 0 \\
		0 & 0 & 0 & 0 & 1
	\end{matrix}\right)
\end{equation*} \newline
Where matrix $A$ has entries
\begin{align*}
	&d_{1} = 1, d_{2} = \frac{1}{x_{12}}, d_{3} = \frac{x_{34}}{x_{12} x_{24} + x_{13} x_{34}}, d_{4} = \frac{1}{x_{12} x_{24} + x_{13} x_{34}}, d_{5} = -\frac{x_{34}}{x_{12} x_{24} x_{35}},\\ 
	&a_{12} = 1, a_{13} = 1, a_{14} = 1, a_{15} = 1,\\ 
	&a_{23} = \frac{x_{24}}{x_{12} x_{24} + x_{13} x_{34}}, a_{24} = 1, a_{25} = 1,\\ 
	&a_{34} = -\frac{{\left(x_{12} - 1\right)} x_{13} x_{34} + x_{14} + {\left(x_{12}^{2} - x_{12}\right)} x_{24}}{x_{12} x_{13} x_{24} + x_{13}^{2} x_{34}}, a_{35} = -\frac{x_{14} + {\left(x_{12}^{2} - x_{12}\right)} x_{24}}{x_{12} x_{13} x_{24}},\\ 
	&a_{45} = \frac{1}{x_{12} x_{24}}
\end{align*}

Now assume $x_{24}= \frac{-x_{13}x_{34}}{x_{12}}$.
\begin{equation*}x=\left(\begin{matrix}
		1 & x_{12} & x_{13} & x_{14} & 0 \\
		0 & 1 & 0 & x_{24} & 0 \\
		0 & 0 & 1 & x_{34} & x_{35}  \\
		0 & 0 & 0 & 1 & 0  \\
		0 & 0 & 0 & 0 & 1 \\
	\end{matrix}\right),x^A=\left(\begin{matrix}
		1 & 1 & 0 & 0 & 0 \\
		0 & 1 & 0 & 0 & 1 \\
		0 & 0 & 1 & 1 & 0 \\
		0 & 0 & 0 & 1 & 0 \\
		0 & 0 & 0 & 0 & 1
	\end{matrix}\right)
\end{equation*} \newline
Where matrix $A$ has entries
\begin{align*}
	&d_{1} = 1, d_{2} = \frac{1}{x_{12}}, d_{3} = 1, d_{4} = \frac{1}{x_{34}}, d_{5} = \frac{1}{x_{13} x_{35}},\\ 
	&a_{12} = 1, a_{13} = 1, a_{14} = 1, a_{15} = 1,\\ 
	&a_{23} = -\frac{x_{13}}{x_{12}}, a_{24} = 1, a_{25} = 1,\\ 
	&a_{34} = -\frac{x_{14} + {\left(x_{12} - 1\right)} x_{34}}{x_{13} x_{34}}, a_{35} = -\frac{{\left(x_{12} - 1\right)} x_{13} x_{34} - x_{14}}{x_{13}^{2} x_{34}},\\ 
	&a_{45} = -\frac{1}{x_{13} x_{34}}
\end{align*}

%110111

First assume $x_{24}\neq \frac{-x_{13}x_{34}}{x_{12}}$.
\begin{equation*}x=\left(\begin{matrix}
		1 & x_{12} & x_{13} & x_{14} & x_{15} \\
		0 & 1 & 0 & x_{24} & 0 \\
		0 & 0 & 1 & x_{34} & x_{35}  \\
		0 & 0 & 0 & 1 & 0  \\
		0 & 0 & 0 & 0 & 1 \\
	\end{matrix}\right),x^A=\left(\begin{matrix}
		1 & 1 & 1 & 0 & 0 \\
		0 & 1 & 0 & 0 & 1 \\
		0 & 0 & 1 & 1 & 0 \\
		0 & 0 & 0 & 1 & 0 \\
		0 & 0 & 0 & 0 & 1
	\end{matrix}\right)
\end{equation*} \newline
Where matrix $A$ has entries
\begin{align*}
	&d_{1} = 1, d_{2} = \frac{1}{x_{12}}, d_{3} = \frac{x_{34}}{x_{12} x_{24} + x_{13} x_{34}}, d_{4} = \frac{1}{x_{12} x_{24} + x_{13} x_{34}}, d_{5} = -\frac{x_{34}}{x_{12} x_{24} x_{35}},\\ 
	&a_{12} = 1, a_{13} = 1, a_{14} = 1, a_{15} = 1,\\ 
	&a_{23} = \frac{x_{24}}{x_{12} x_{24} + x_{13} x_{34}}, a_{24} = 1, a_{25} = 1,\\ 
	&a_{34} = -\frac{{\left(x_{12} - 1\right)} x_{13} x_{34} + x_{14} + {\left(x_{12}^{2} - x_{12}\right)} x_{24}}{x_{12} x_{13} x_{24} + x_{13}^{2} x_{34}}, a_{35} = \frac{x_{15} x_{34} - {\left(x_{14} + {\left(x_{12}^{2} - x_{12}\right)} x_{24}\right)} x_{35}}{x_{12} x_{13} x_{24} x_{35}},\\ 
	&a_{45} = \frac{1}{x_{12} x_{24}}
\end{align*}

Now assume $x_{24}= \frac{-x_{13}x_{34}}{x_{12}}$.
\begin{equation*}x=\left(\begin{matrix}
		1 & x_{12} & x_{13} & x_{14} & x_{15} \\
		0 & 1 & 0 & x_{24} & 0 \\
		0 & 0 & 1 & x_{34} & x_{35}  \\
		0 & 0 & 0 & 1 & 0  \\
		0 & 0 & 0 & 0 & 1 \\
	\end{matrix}\right),x^A=\left(\begin{matrix}
		1 & 1 & 0 & 0 & 0 \\
		0 & 1 & 0 & 0 & 1 \\
		0 & 0 & 1 & 1 & 0 \\
		0 & 0 & 0 & 1 & 0 \\
		0 & 0 & 0 & 0 & 1
	\end{matrix}\right)
\end{equation*} \newline
Where matrix $A$ has entries
\begin{align*}
	&d_{1} = 1, d_{2} = \frac{1}{x_{12}}, d_{3} = 1, d_{4} = \frac{1}{x_{34}}, d_{5} = \frac{1}{x_{13} x_{35}},\\ 
	&a_{12} = 1, a_{13} = 1, a_{14} = 1, a_{15} = 1,\\ 
	&a_{23} = -\frac{x_{13}}{x_{12}}, a_{24} = 1, a_{25} = 1,\\ 
	&a_{34} = -\frac{x_{14} + {\left(x_{12} - 1\right)} x_{34}}{x_{13} x_{34}}, a_{35} = -\frac{x_{15} x_{34} + {\left({\left(x_{12} - 1\right)} x_{13} x_{34} - x_{14}\right)} x_{35}}{x_{13}^{2} x_{34} x_{35}},\\ 
	&a_{45} = -\frac{1}{x_{13} x_{34}}
\end{align*}

%111000

First assume $x_{25}\neq \frac{x_{24}x_{35}}{x_{34}}$.
\begin{equation*}x=\left(\begin{matrix}
		1 & x_{12} & 0 & 0 & 0 \\
		0 & 1 & 0 & x_{24} & x_{25} \\
		0 & 0 & 1 & x_{34} & x_{35}  \\
		0 & 0 & 0 & 1 & 0  \\
		0 & 0 & 0 & 0 & 1 \\
	\end{matrix}\right),x^A=\left(\begin{matrix}
		1 & 1 & 1 & 0 & 0 \\
		0 & 1 & 0 & 0 & 1 \\
		0 & 0 & 1 & 1 & 0 \\
		0 & 0 & 0 & 1 & 0 \\
		0 & 0 & 0 & 0 & 1
	\end{matrix}\right)
\end{equation*} \newline
Where matrix $A$ has entries
\begin{align*}
	&d_{1} = 1, d_{2} = \frac{1}{x_{12}}, d_{3} = \frac{x_{34}}{x_{12} x_{24}}, d_{4} = \frac{1}{x_{12} x_{24}}, d_{5} = \frac{x_{34}}{x_{12} x_{25} x_{34} - x_{12} x_{24} x_{35}},\\ 
	&a_{12} = 1, a_{13} = 1, a_{14} = 1, a_{15} = 1,\\ 
	&a_{23} = \frac{1}{x_{12}}, a_{24} = \frac{1}{x_{12}}, a_{25} = \frac{1}{x_{12}},\\ 
	&a_{34} = 1, a_{35} = 1,\\ 
	&a_{45} = -\frac{x_{35}}{x_{12} x_{25} x_{34} - x_{12} x_{24} x_{35}}
\end{align*}

Now assume $x_{25}= \frac{x_{24}x_{35}}{x_{34}}$.
\begin{equation*}x=\left(\begin{matrix}
		1 & x_{12} & 0 & 0 & 0 \\
		0 & 1 & 0 & x_{24} & x_{25} \\
		0 & 0 & 1 & x_{34} & x_{35}  \\
		0 & 0 & 0 & 1 & 0  \\
		0 & 0 & 0 & 0 & 1 \\
	\end{matrix}\right),x^A=\left(\begin{matrix}
		1 & 1 & 1 & 0 & 0 \\
		0 & 1 & 0 & 0 & 0 \\
		0 & 0 & 1 & 1 & 0 \\
		0 & 0 & 0 & 1 & 0 \\
		0 & 0 & 0 & 0 & 1
	\end{matrix}\right)
\end{equation*} \newline
Where matrix $A$ has entries
\begin{align*}
	&d_{1} = 1, d_{2} = \frac{1}{x_{12}}, d_{3} = \frac{x_{34}}{x_{12} x_{24}}, d_{4} = \frac{1}{x_{12} x_{24}}, d_{5} = 1,\\ 
	&a_{12} = 1, a_{13} = 1, a_{14} = 1, a_{15} = 1,\\ 
	&a_{23} = \frac{1}{x_{12}}, a_{24} = \frac{1}{x_{12}}, a_{25} = 0,\\ 
	&a_{34} = 1, a_{35} = 1,\\ 
	&a_{45} = -\frac{x_{35}}{x_{34}}
\end{align*}

%111001

First assume $x_{25}\neq \frac{x_{24}x_{35}}{x_{34}}$.
\begin{equation*}x=\left(\begin{matrix}
		1 & x_{12} & 0 & 0 & x_{15} \\
		0 & 1 & 0 & x_{24} & x_{25} \\
		0 & 0 & 1 & x_{34} & x_{35}  \\
		0 & 0 & 0 & 1 & 0  \\
		0 & 0 & 0 & 0 & 1 \\
	\end{matrix}\right),x^A=\left(\begin{matrix}
		1 & 1 & 1 & 0 & 0 \\
		0 & 1 & 0 & 0 & 1 \\
		0 & 0 & 1 & 1 & 0 \\
		0 & 0 & 0 & 1 & 0 \\
		0 & 0 & 0 & 0 & 1
	\end{matrix}\right)
\end{equation*} \newline
Where matrix $A$ has entries
\begin{align*}
	&d_{1} = 1, d_{2} = \frac{1}{x_{12}}, d_{3} = \frac{x_{34}}{x_{12} x_{24}}, d_{4} = \frac{1}{x_{12} x_{24}}, d_{5} = \frac{x_{34}}{x_{12} x_{25} x_{34} - x_{12} x_{24} x_{35}},\\ 
	&a_{12} = 1, a_{13} = 1, a_{14} = 1, a_{15} = 1,\\ 
	&a_{23} = \frac{1}{x_{12}}, a_{24} = \frac{1}{x_{12}}, a_{25} = -\frac{x_{12} x_{24} x_{35} - {\left(x_{12} x_{25} - x_{15}\right)} x_{34}}{x_{12}^{2} x_{25} x_{34} - x_{12}^{2} x_{24} x_{35}},\\ 
	&a_{34} = 1, a_{35} = 1,\\ 
	&a_{45} = -\frac{x_{35}}{x_{12} x_{25} x_{34} - x_{12} x_{24} x_{35}}
\end{align*}

Now assume $x_{25}= \frac{x_{24}x_{35}}{x_{34}}$.
\begin{equation*}x=\left(\begin{matrix}
		1 & x_{12} & 0 & 0 & x_{15} \\
		0 & 1 & 0 & x_{24} & x_{25} \\
		0 & 0 & 1 & x_{34} & x_{35}  \\
		0 & 0 & 0 & 1 & 0  \\
		0 & 0 & 0 & 0 & 1 \\
	\end{matrix}\right),x^A=\left(\begin{matrix}
		1 & 1 & 1 & 0 & 0 \\
		0 & 1 & 0 & 0 & 0 \\
		0 & 0 & 1 & 1 & 0 \\
		0 & 0 & 0 & 1 & 0 \\
		0 & 0 & 0 & 0 & 1
	\end{matrix}\right)
\end{equation*} \newline
Where matrix $A$ has entries
\begin{align*}
	&d_{1} = 1, d_{2} = \frac{1}{x_{12}}, d_{3} = \frac{x_{34}}{x_{12} x_{24}}, d_{4} = \frac{1}{x_{12} x_{24}}, d_{5} = 1,\\ 
	&a_{12} = 1, a_{13} = 1, a_{14} = 1, a_{15} = 1,\\ 
	&a_{23} = \frac{1}{x_{12}}, a_{24} = \frac{1}{x_{12}}, a_{25} = -\frac{x_{15}}{x_{12}},\\ 
	&a_{34} = 1, a_{35} = 1,\\ 
	&a_{45} = -\frac{x_{35}}{x_{34}}
\end{align*}

%111010

First assume $x_{25}\neq \frac{x_{24}x_{35}}{x_{34}}$.
\begin{equation*}x=\left(\begin{matrix}
		1 & x_{12} & 0 & x_{14} & 0 \\
		0 & 1 & 0 & x_{24} & x_{25} \\
		0 & 0 & 1 & x_{34} & x_{35}  \\
		0 & 0 & 0 & 1 & 0  \\
		0 & 0 & 0 & 0 & 1 \\
	\end{matrix}\right),x^A=\left(\begin{matrix}
		1 & 1 & 1 & 0 & 0 \\
		0 & 1 & 0 & 0 & 1 \\
		0 & 0 & 1 & 1 & 0 \\
		0 & 0 & 0 & 1 & 0 \\
		0 & 0 & 0 & 0 & 1
	\end{matrix}\right)
\end{equation*} \newline
Where matrix $A$ has entries
\begin{align*}
	&d_{1} = 1, d_{2} = \frac{1}{x_{12}}, d_{3} = \frac{x_{34}}{x_{12} x_{24}}, d_{4} = \frac{1}{x_{12} x_{24}}, d_{5} = \frac{x_{34}}{x_{12} x_{25} x_{34} - x_{12} x_{24} x_{35}},\\ 
	&a_{12} = 1, a_{13} = 1, a_{14} = 1, a_{15} = 1,\\ 
	&a_{23} = \frac{1}{x_{12}}, a_{24} = \frac{x_{12} x_{24} - x_{14}}{x_{12}^{2} x_{24}}, a_{25} = \frac{x_{12} x_{25} x_{34} - {\left(x_{12} x_{24} - x_{14}\right)} x_{35}}{x_{12}^{2} x_{25} x_{34} - x_{12}^{2} x_{24} x_{35}},\\ 
	&a_{34} = 1, a_{35} = 1,\\ 
	&a_{45} = -\frac{x_{35}}{x_{12} x_{25} x_{34} - x_{12} x_{24} x_{35}}
\end{align*}

Now assume $x_{25}= \frac{x_{24}x_{35}}{x_{34}}$.
\begin{equation*}x=\left(\begin{matrix}
		1 & x_{12} & 0 & x_{14} & 0 \\
		0 & 1 & 0 & x_{24} & x_{25} \\
		0 & 0 & 1 & x_{34} & x_{35}  \\
		0 & 0 & 0 & 1 & 0  \\
		0 & 0 & 0 & 0 & 1 \\
	\end{matrix}\right),x^A=\left(\begin{matrix}
		1 & 1 & 1 & 0 & 0 \\
		0 & 1 & 0 & 0 & 0 \\
		0 & 0 & 1 & 1 & 0 \\
		0 & 0 & 0 & 1 & 0 \\
		0 & 0 & 0 & 0 & 1
	\end{matrix}\right)
\end{equation*} \newline
Where matrix $A$ has entries
\begin{align*}
	&d_{1} = 1, d_{2} = \frac{1}{x_{12}}, d_{3} = \frac{x_{34}}{x_{12} x_{24}}, d_{4} = \frac{1}{x_{12} x_{24}}, d_{5} = 1,\\ 
	&a_{12} = 1, a_{13} = 1, a_{14} = 1, a_{15} = 1,\\ 
	&a_{23} = \frac{1}{x_{12}}, a_{24} = \frac{x_{12} x_{24} - x_{14}}{x_{12}^{2} x_{24}}, a_{25} = \frac{x_{14} x_{35}}{x_{12} x_{34}},\\ 
	&a_{34} = 1, a_{35} = 1,\\ 
	&a_{45} = -\frac{x_{35}}{x_{34}}
\end{align*}

%111011

First assume $x_{25}\neq \frac{x_{24}x_{35}}{x_{34}}$.
\begin{equation*}x=\left(\begin{matrix}
		1 & x_{12} & 0 & x_{14} & x_{15} \\
		0 & 1 & 0 & x_{24} & x_{25} \\
		0 & 0 & 1 & x_{34} & x_{35}  \\
		0 & 0 & 0 & 1 & 0  \\
		0 & 0 & 0 & 0 & 1 \\
	\end{matrix}\right),x^A=\left(\begin{matrix}
		1 & 1 & 1 & 0 & 0 \\
		0 & 1 & 0 & 0 & 1 \\
		0 & 0 & 1 & 1 & 0 \\
		0 & 0 & 0 & 1 & 0 \\
		0 & 0 & 0 & 0 & 1
	\end{matrix}\right)
\end{equation*} \newline
Where matrix $A$ has entries
\begin{align*}
	&d_{1} = 1, d_{2} = \frac{1}{x_{12}}, d_{3} = \frac{x_{34}}{x_{12} x_{24}}, d_{4} = \frac{1}{x_{12} x_{24}}, d_{5} = \frac{x_{34}}{x_{12} x_{25} x_{34} - x_{12} x_{24} x_{35}},\\ 
	&a_{12} = 1, a_{13} = 1, a_{14} = 1, a_{15} = 1,\\ 
	&a_{23} = \frac{1}{x_{12}}, a_{24} = \frac{x_{12} x_{24} - x_{14}}{x_{12}^{2} x_{24}}, a_{25} = \frac{{\left(x_{12} x_{25} - x_{15}\right)} x_{34} - {\left(x_{12} x_{24} - x_{14}\right)} x_{35}}{x_{12}^{2} x_{25} x_{34} - x_{12}^{2} x_{24} x_{35}},\\ 
	&a_{34} = 1, a_{35} = 1,\\ 
	&a_{45} = -\frac{x_{35}}{x_{12} x_{25} x_{34} - x_{12} x_{24} x_{35}}
\end{align*}

Now assume $x_{25}= \frac{x_{24}x_{35}}{x_{34}}$.
\begin{equation*}x=\left(\begin{matrix}
		1 & x_{12} & 0 & x_{14} & x_{15} \\
		0 & 1 & 0 & x_{24} & x_{25} \\
		0 & 0 & 1 & x_{34} & x_{35}  \\
		0 & 0 & 0 & 1 & 0  \\
		0 & 0 & 0 & 0 & 1 \\
	\end{matrix}\right),x^A=\left(\begin{matrix}
		1 & 1 & 1 & 0 & 0 \\
		0 & 1 & 0 & 0 & 0 \\
		0 & 0 & 1 & 1 & 0 \\
		0 & 0 & 0 & 1 & 0 \\
		0 & 0 & 0 & 0 & 1
	\end{matrix}\right)
\end{equation*} \newline
Where matrix $A$ has entries
\begin{align*}
	&d_{1} = 1, d_{2} = \frac{1}{x_{12}}, d_{3} = \frac{x_{34}}{x_{12} x_{24}}, d_{4} = \frac{1}{x_{12} x_{24}}, d_{5} = 1,\\ 
	&a_{12} = 1, a_{13} = 1, a_{14} = 1, a_{15} = 1,\\ 
	&a_{23} = \frac{1}{x_{12}}, a_{24} = \frac{x_{12} x_{24} - x_{14}}{x_{12}^{2} x_{24}}, a_{25} = -\frac{x_{15} x_{34} - x_{14} x_{35}}{x_{12} x_{34}},\\ 
	&a_{34} = 1, a_{35} = 1,\\ 
	&a_{45} = -\frac{x_{35}}{x_{34}}
\end{align*}

%111100

First assume $x_{24}\neq \frac{-x_{13}x_{34}}{x_{12}}$ and $x_{24}\neq \frac{-x_{25}x_{34}}{x_{35}}$.
\begin{equation*}x=\left(\begin{matrix}
		1 & x_{12} & x_{13} & 0 & 0 \\
		0 & 1 & 0 & x_{24} & x_{25} \\
		0 & 0 & 1 & x_{34} & x_{35}  \\
		0 & 0 & 0 & 1 & 0  \\
		0 & 0 & 0 & 0 & 1 \\
	\end{matrix}\right),x^A=\left(\begin{matrix}
		1 & 1 & 1 & 0 & 0 \\
		0 & 1 & 0 & 0 & 1 \\
		0 & 0 & 1 & 1 & 0 \\
		0 & 0 & 0 & 1 & 0 \\
		0 & 0 & 0 & 0 & 1
	\end{matrix}\right)
\end{equation*} \newline
Where matrix $A$ has entries
\begin{align*}
	&d_{1} = 1, d_{2} = \frac{1}{x_{12}}, d_{3} = \frac{x_{34}}{x_{12} x_{24} + x_{13} x_{34}}, d_{4} = \frac{1}{x_{12} x_{24} + x_{13} x_{34}}, d_{5} = \frac{x_{34}}{x_{12} x_{25} x_{34} - x_{12} x_{24} x_{35}},\\ 
	&a_{12} = 1, a_{13} = 1, a_{14} = 1, a_{15} = 1,\\ 
	&a_{23} = \frac{x_{24}}{x_{12} x_{24} + x_{13} x_{34}}, a_{24} = 1, a_{25} = 1,\\ 
	&a_{34} = -\frac{x_{12} - 1}{x_{13}}, a_{35} = -\frac{x_{12} - 1}{x_{13}},\\ 
	&a_{45} = -\frac{x_{35}}{x_{12} x_{25} x_{34} - x_{12} x_{24} x_{35}}
\end{align*}

Now assume $x_{24}= \frac{-x_{13}x_{34}}{x_{12}}$ , $x_{24}\neq \frac{-x_{25}x_{34}}{x_{35}}$ and $x_{25}\neq \frac{-x_{13}x_{35}}{x_{12}}$.
\begin{equation*}x=\left(\begin{matrix}
		1 & x_{12} & x_{13} & 0 & 0 \\
		0 & 1 & 0 & x_{24} & x_{25} \\
		0 & 0 & 1 & x_{34} & x_{35}  \\
		0 & 0 & 0 & 1 & 0  \\
		0 & 0 & 0 & 0 & 1 \\
	\end{matrix}\right),x^A=\left(\begin{matrix}
		1 & 1 & 0 & 0 & 0 \\
		0 & 1 & 0 & 0 & 1 \\
		0 & 0 & 1 & 1 & 0 \\
		0 & 0 & 0 & 1 & 0 \\
		0 & 0 & 0 & 0 & 1
	\end{matrix}\right)
\end{equation*} \newline
Where matrix $A$ has entries
\begin{align*}
	&d_{1} = 1, d_{2} = \frac{1}{x_{12}}, d_{3} = 1, d_{4} = \frac{1}{x_{34}}, d_{5} = \frac{1}{x_{12} x_{25} + x_{13} x_{35}},\\ 
	&a_{12} = 1, a_{13} = 1, a_{14} = 1, a_{15} = 1,\\ 
	&a_{23} = -\frac{x_{13}}{x_{12}}, a_{24} = 1, a_{25} = 1,\\ 
	&a_{34} = -\frac{x_{12} - 1}{x_{13}}, a_{35} = -\frac{x_{12} - 1}{x_{13}},\\ 
	&a_{45} = -\frac{x_{35}}{x_{12} x_{25} x_{34} + x_{13} x_{34} x_{35}}
\end{align*}

Now assume $x_{24}= \frac{-x_{13}x_{34}}{x_{12}}$, $x_{24}\neq \frac{-x_{25}x_{34}}{x_{35}}$ and $x_{25}= \frac{-x_{13}x_{35}}{x_{12}}$.
\begin{equation*}x=\left(\begin{matrix}
		1 & x_{12} & x_{13} & 0 & 0 \\
		0 & 1 & 0 & x_{24} & x_{25} \\
		0 & 0 & 1 & x_{34} & x_{35}  \\
		0 & 0 & 0 & 1 & 0  \\
		0 & 0 & 0 & 0 & 1 \\
	\end{matrix}\right),x^A=\left(\begin{matrix}
		1 & 1 & 0 & 0 & 0 \\
		0 & 1 & 0 & 0 & 0 \\
		0 & 0 & 1 & 1 & 0 \\
		0 & 0 & 0 & 1 & 0 \\
		0 & 0 & 0 & 0 & 1
	\end{matrix}\right)
\end{equation*} \newline
Where matrix $A$ has entries
\begin{align*}
	&d_{1} = 1, d_{2} = \frac{1}{x_{12}}, d_{3} = 1, d_{4} = \frac{1}{x_{34}}, d_{5} = 1,\\ 
	&a_{12} = 1, a_{13} = 1, a_{14} = 1, a_{15} = 1,\\ 
	&a_{23} = -\frac{x_{13}}{x_{12}}, a_{24} = 1, a_{25} = 1,\\ 
	&a_{34} = -\frac{x_{12} - 1}{x_{13}}, a_{35} = -\frac{x_{12}}{x_{13}},\\ 
	&a_{45} = -\frac{x_{35}}{x_{34}}
\end{align*}

Now assume $x_{24}\neq \frac{-x_{13}x_{34}}{x_{12}}$, $x_{24}= \frac{-x_{25}x_{34}}{x_{35}}$ and $x_{25}\neq \frac{-x_{13}x_{35}}{x_{12}}$.
\begin{equation*}x=\left(\begin{matrix}
		1 & x_{12} & x_{13} & 0 & 0 \\
		0 & 1 & 0 & x_{24} & x_{25} \\
		0 & 0 & 1 & x_{34} & x_{35}  \\
		0 & 0 & 0 & 1 & 0  \\
		0 & 0 & 0 & 0 & 1 \\
	\end{matrix}\right),x^A=\left(\begin{matrix}
		1 & 1 & 1 & 0 & 0 \\
		0 & 1 & 0 & 0 & 0 \\
		0 & 0 & 1 & 1 & 0 \\
		0 & 0 & 0 & 1 & 0 \\
		0 & 0 & 0 & 0 & 1 
	\end{matrix}\right)
\end{equation*} \newline
Where matrix $A$ has entries
\begin{align*}
	&d_{1} = 1, d_{2} = \frac{1}{x_{12}}, d_{3} = \frac{x_{35}}{x_{12} x_{25} + x_{13} x_{35}}, d_{4} = \frac{x_{35}}{x_{12} x_{25} x_{34} + x_{13} x_{34} x_{35}}, d_{5} = 1,\\ 
	&a_{12} = 1, a_{13} = 1, a_{14} = 1, a_{15} = 1,\\ 
	&a_{23} = \frac{x_{25}}{x_{12} x_{25} + x_{13} x_{35}}, a_{24} = 1, a_{25} = 1,\\ 
	&a_{34} = -\frac{x_{12} - 1}{x_{13}}, a_{35} = -\frac{x_{12}}{x_{13}},\\ 
	&a_{45} = -\frac{x_{35}}{x_{34}}
\end{align*}

Now assume $x_{24}\neq \frac{-x_{13}x_{34}}{x_{12}}$, $x_{24}= \frac{-x_{25}x_{34}}{x_{35}}$ and $x_{25}= \frac{-x_{13}x_{35}}{x_{12}}$.
\begin{equation*}x=\left(% [inline block 70: 6 envs, 2112 chars -> data_tex | \begin{matrix} 		1 & x_{12} & x_{13} & 0 & 0 \\...]
\right)
\end{equation*} \newline
Where matrix $A$ has entries
\begin{align*}
	&d_{1} = 1, d_{2} = \frac{1}{x_{12}}, d_{3} = \frac{x_{34}}{x_{12} x_{24} + x_{13} x_{34}}, d_{4} = \frac{1}{x_{12} x_{24} + x_{13} x_{34}}, d_{5} = \frac{x_{34}}{x_{12} x_{25} x_{34} - x_{12} x_{24} x_{35}},\\ 
	&a_{12} = 1, a_{13} = 1, a_{14} = 1, a_{15} = 1,\\ 
	&a_{23} = \frac{x_{24}}{x_{12} x_{24} + x_{13} x_{34}}, a_{24} = 1, a_{25} = 1,\\ 
	&a_{34} = -\frac{x_{12} - 1}{x_{13}}, a_{35} = \frac{{\left(x_{12}^{2} - x_{12}\right)} x_{24} x_{35} - {\left(x_{15} + {\left(x_{12}^{2} - x_{12}\right)} x_{25}\right)} x_{34}}{x_{12} x_{13} x_{25} x_{34} - x_{12} x_{13} x_{24} x_{35}},\\ 
	&a_{45} = -\frac{x_{35}}{x_{12} x_{25} x_{34} - x_{12} x_{24} x_{35}}
\end{align*}

Now assume $x_{24}= \frac{-x_{13}x_{34}}{x_{12}}$,$x_{24}\neq \frac{-x_{25}x_{34}}{x_{35}}$ and $x_{25}\neq \frac{-x_{13}x_{35}}{x_{12}}$.
\begin{equation*}x=\left(\begin{matrix}
		1 & x_{12} & x_{13} & 0 & x_{15} \\
		0 & 1 & 0 & x_{24} & x_{25} \\
		0 & 0 & 1 & x_{34} & x_{35}  \\
		0 & 0 & 0 & 1 & 0  \\
		0 & 0 & 0 & 0 & 1 \\
	\end{matrix}\right),x^A=\left(\begin{matrix}
		1 & 1 & 0 & 0 & 0 \\
		0 & 1 & 0 & 0 & 1 \\
		0 & 0 & 1 & 1 & 0 \\
		0 & 0 & 0 & 1 & 0 \\
		0 & 0 & 0 & 0 & 1
	\end{matrix}\right)
\end{equation*} \newline
Where matrix $A$ has entries
\begin{align*}
	&d_{1} = 1, d_{2} = \frac{1}{x_{12}}, d_{3} = 1, d_{4} = \frac{1}{x_{34}}, d_{5} = \frac{1}{x_{12} x_{25} + x_{13} x_{35}},\\ 
	&a_{12} = 1, a_{13} = 1, a_{14} = 1, a_{15} = 1,\\ 
	&a_{23} = -\frac{x_{13}}{x_{12}}, a_{24} = 1, a_{25} = 1,\\ 
	&a_{34} = -\frac{x_{12} - 1}{x_{13}}, a_{35} = -\frac{{\left(x_{12} - 1\right)} x_{13} x_{35} + x_{15} + {\left(x_{12}^{2} - x_{12}\right)} x_{25}}{x_{12} x_{13} x_{25} + x_{13}^{2} x_{35}},\\ 
	&a_{45} = -\frac{x_{35}}{x_{12} x_{25} x_{34} + x_{13} x_{34} x_{35}}
\end{align*}

Now assume $x_{24}= \frac{-x_{13}x_{34}}{x_{12}}$, $x_{24}\neq \frac{-x_{25}x_{34}}{x_{35}}$ and $x_{25}= \frac{-x_{13}x_{35}}{x_{12}}$.
\begin{equation*}x=\left(\begin{matrix}
		1 & x_{12} & x_{13} & 0 & x_{15} \\
		0 & 1 & 0 & x_{24} & x_{25} \\
		0 & 0 & 1 & x_{34} & x_{35}  \\
		0 & 0 & 0 & 1 & 0  \\
		0 & 0 & 0 & 0 & 1 \\
	\end{matrix}\right),x^A=\left(\begin{matrix}
		1 & 1 & 0 & 0 & 0 \\
		0 & 1 & 0 & 0 & 0 \\
		0 & 0 & 1 & 1 & 0 \\
		0 & 0 & 0 & 1 & 0 \\
		0 & 0 & 0 & 0 & 1
	\end{matrix}\right)
\end{equation*} \newline
Where matrix $A$ has entries
\begin{align*}
	&d_{1} = 1, d_{2} = \frac{1}{x_{12}}, d_{3} = 1, d_{4} = \frac{1}{x_{34}}, d_{5} = 1,\\ 
	&a_{12} = 1, a_{13} = 1, a_{14} = 1, a_{15} = 1,\\ 
	&a_{23} = -\frac{x_{13}}{x_{12}}, a_{24} = 1, a_{25} = 1,\\ 
	&a_{34} = -\frac{x_{12} - 1}{x_{13}}, a_{35} = -\frac{x_{12} + x_{15}}{x_{13}},\\ 
	&a_{45} = -\frac{x_{35}}{x_{34}}
\end{align*}

Now assume $x_{24}\neq \frac{-x_{13}x_{34}}{x_{12}}$, $x_{24}= \frac{-x_{25}x_{34}}{x_{35}}$ and $x_{25}\neq \frac{-x_{13}x_{35}}{x_{12}}$.
\begin{equation*}x=\left(\begin{matrix}
		1 & x_{12} & x_{13} & 0 & x_{15} \\
		0 & 1 & 0 & x_{24} & x_{25} \\
		0 & 0 & 1 & x_{34} & x_{35}  \\
		0 & 0 & 0 & 1 & 0  \\
		0 & 0 & 0 & 0 & 1 \\
	\end{matrix}\right),x^A=\left(\begin{matrix}
		1 & 1 & 1 & 0 & 0 \\
		0 & 1 & 0 & 0 & 0 \\
		0 & 0 & 1 & 1 & 0 \\
		0 & 0 & 0 & 1 & 0 \\
		0 & 0 & 0 & 0 & 1
	\end{matrix}\right)
\end{equation*} \newline
Where matrix $A$ has entries
\begin{align*}
	&d_{1} = 1, d_{2} = \frac{1}{x_{12}}, d_{3} = \frac{x_{35}}{x_{12} x_{25} + x_{13} x_{35}}, d_{4} = \frac{x_{35}}{x_{12} x_{25} x_{34} + x_{13} x_{34} x_{35}}, d_{5} = 1,\\ 
	&a_{12} = 1, a_{13} = 1, a_{14} = 1, a_{15} = 1,\\ 
	&a_{23} = \frac{x_{25}}{x_{12} x_{25} + x_{13} x_{35}}, a_{24} = 1, a_{25} = 1,\\ 
	&a_{34} = -\frac{x_{12} - 1}{x_{13}}, a_{35} = -\frac{x_{12} + x_{15}}{x_{13}},\\ 
	&a_{45} = -\frac{x_{35}}{x_{34}}
\end{align*}

Now assume $x_{24}\neq \frac{-x_{13}x_{34}}{x_{12}}$, $x_{24}= \frac{-x_{25}x_{34}}{x_{35}}$ and $x_{25}= \frac{-x_{13}x_{35}}{x_{12}}$.
\begin{equation*}x=\left(\begin{matrix}
		1 & x_{12} & x_{13} & 0 & x_{15} \\
		0 & 1 & 0 & x_{24} & x_{25} \\
		0 & 0 & 1 & x_{34} & x_{35}  \\
		0 & 0 & 0 & 1 & 0  \\
		0 & 0 & 0 & 0 & 1 \\
	\end{matrix}\right),x^A=\left(\begin{matrix}
		1 & 1 & 0 & 0 & 0 \\
		0 & 1 & 0 & 0 & 0 \\
		0 & 0 & 1 & 1 & 0 \\
		0 & 0 & 0 & 1 & 0 \\
		0 & 0 & 0 & 0 & 1
	\end{matrix}\right)
\end{equation*} \newline
Where matrix $A$ has entries
\begin{align*}
	&d_{1} = 1, d_{2} = \frac{1}{x_{12}}, d_{3} = 1, d_{4} = \frac{1}{x_{34}}, d_{5} = 1,\\ 
	&a_{12} = 1, a_{13} = 1, a_{14} = 1, a_{15} = 1,\\ 
	&a_{23} = -\frac{x_{13}}{x_{12}}, a_{24} = 1, a_{25} = 1,\\ 
	&a_{34} = -\frac{x_{12} - 1}{x_{13}}, a_{35} = -\frac{x_{12} + x_{15}}{x_{13}},\\ 
	&a_{45} = -\frac{x_{35}}{x_{34}}
\end{align*}

Now assume $x_{24}= \frac{-x_{13}x_{34}}{x_{12}}$, $x_{24}= \frac{-x_{25}x_{34}}{x_{35}}$.
\begin{equation*}x=\left(\begin{matrix}
		1 & x_{12} & x_{13} & 0 & x_{15} \\
		0 & 1 & 0 & x_{24} & x_{25} \\
		0 & 0 & 1 & x_{34} & x_{35}  \\
		0 & 0 & 0 & 1 & 0  \\
		0 & 0 & 0 & 0 & 1 \\
	\end{matrix}\right),x^A=\left(\begin{matrix}
		1 & 1 & 0 & 0 & 0 \\
		0 & 1 & 0 & 0 & 0 \\
		0 & 0 & 1 & 1 & 0 \\
		0 & 0 & 0 & 1 & 0 \\
		0 & 0 & 0 & 0 & 1
	\end{matrix}\right)
\end{equation*} \newline
Where matrix $A$ has entries
\begin{align*}
	&d_{1} = 1, d_{2} = \frac{1}{x_{12}}, d_{3} = 1, d_{4} = \frac{1}{x_{34}}, d_{5} = 1,\\ 
	&a_{12} = 1, a_{13} = 1, a_{14} = 1, a_{15} = 1,\\ 
	&a_{23} = \frac{x_{25}}{x_{35}}, a_{24} = 1, a_{25} = 1,\\ 
	&a_{34} = \frac{{\left(x_{12} - 1\right)} x_{35}}{x_{12} x_{25}}, a_{35} = \frac{{\left(x_{12} + x_{15}\right)} x_{35}}{x_{12} x_{25}},\\ 
	&a_{45} = -\frac{x_{35}}{x_{34}}
\end{align*}
%111110

First assume $x_{24}\neq \frac{-x_{13}x_{34}}{x_{12}}$ and $x_{25}\neq \frac{x_{24}x_{35}}{x_{34}}$.
\begin{equation*}x=\left(\begin{matrix}
		1 & x_{12} & x_{13} & x_{14} & 0 \\
		0 & 1 & 0 & x_{24} & x_{25} \\
		0 & 0 & 1 & x_{34} & x_{35}  \\
		0 & 0 & 0 & 1 & 0  \\
		0 & 0 & 0 & 0 & 1 \\
	\end{matrix}\right),x^A=\left(\begin{matrix}
		1 & 1 & 1 & 0 & 0 \\
		0 & 1 & 0 & 0 & 1 \\
		0 & 0 & 1 & 1 & 0 \\
		0 & 0 & 0 & 1 & 0 \\
		0 & 0 & 0 & 0 & 1
	\end{matrix}\right)
\end{equation*} \newline
Where matrix $A$ has entries
\begin{align*}
	&d_{1} = 1, d_{2} = \frac{1}{x_{12}}, d_{3} = \frac{x_{34}}{x_{12} x_{24} + x_{13} x_{34}}, d_{4} = \frac{1}{x_{12} x_{24} + x_{13} x_{34}}, d_{5} = \frac{x_{34}}{x_{12} x_{25} x_{34} - x_{12} x_{24} x_{35}},\\ 
	&a_{12} = 1, a_{13} = 1, a_{14} = 1, a_{15} = 1,\\ 
	&a_{23} = \frac{x_{24}}{x_{12} x_{24} + x_{13} x_{34}}, a_{24} = 1, a_{25} = 1,\\ 
	&a_{34} = -\frac{{\left(x_{12} - 1\right)} x_{13} x_{34} + x_{14} + {\left(x_{12}^{2} - x_{12}\right)} x_{24}}{x_{12} x_{13} x_{24} + x_{13}^{2} x_{34}}, a_{35} = -\frac{{\left(x_{12}^{2} - x_{12}\right)} x_{25} x_{34} - {\left(x_{14} + {\left(x_{12}^{2} - x_{12}\right)} x_{24}\right)} x_{35}}{x_{12} x_{13} x_{25} x_{34} - x_{12} x_{13} x_{24} x_{35}},\\ 
	&a_{45} = -\frac{x_{35}}{x_{12} x_{25} x_{34} - x_{12} x_{24} x_{35}}
\end{align*}

Now assume $x_{24}= \frac{-x_{13}x_{34}}{x_{12}}$ and $x_{25}\neq \frac{-x_{13}x_{35}}{x_{12}}$.
\begin{equation*}x=\left(\begin{matrix}
		1 & x_{12} & x_{13} & x_{14} & 0 \\
		0 & 1 & 0 & x_{24} & x_{25} \\
		0 & 0 & 1 & x_{34} & x_{35}  \\
		0 & 0 & 0 & 1 & 0  \\
		0 & 0 & 0 & 0 & 1 \\
	\end{matrix}\right),x^A=\left(\begin{matrix}
		1 & 1 & 0 & 0 & 0 \\
		0 & 1 & 0 & 0 & 1 \\
		0 & 0 & 1 & 1 & 0 \\
		0 & 0 & 0 & 1 & 0 \\
		0 & 0 & 0 & 0 & 1
	\end{matrix}\right)
\end{equation*} \newline
Where matrix $A$ has entries
\begin{align*}
	&d_{1} = 1, d_{2} = \frac{1}{x_{12}}, d_{3} = 1, d_{4} = \frac{1}{x_{34}}, d_{5} = \frac{1}{x_{12} x_{25} + x_{13} x_{35}},\\ 
	&a_{12} = 1, a_{13} = 1, a_{14} = 1, a_{15} = 1,\\ 
	&a_{23} = -\frac{x_{13}}{x_{12}}, a_{24} = 1, a_{25} = 1,\\ 
	&a_{34} = -\frac{x_{14} + {\left(x_{12} - 1\right)} x_{34}}{x_{13} x_{34}}, a_{35} = -\frac{{\left(x_{12}^{2} - x_{12}\right)} x_{25} x_{34} + {\left({\left(x_{12} - 1\right)} x_{13} x_{34} - x_{14}\right)} x_{35}}{x_{12} x_{13} x_{25} x_{34} + x_{13}^{2} x_{34} x_{35}},\\ 
	&a_{45} = -\frac{x_{35}}{x_{12} x_{25} x_{34} + x_{13} x_{34} x_{35}}
\end{align*}

Now assume $x_{24}= \frac{-x_{13}x_{34}}{x_{12}}$ and $x_{25}= \frac{-x_{13}x_{35}}{x_{12}}$.
\begin{equation*}x=\left(\begin{matrix}
		1 & x_{12} & x_{13} & x_{14} & 0 \\
		0 & 1 & 0 & x_{24} & x_{25} \\
		0 & 0 & 1 & x_{34} & x_{35}  \\
		0 & 0 & 0 & 1 & 0  \\
		0 & 0 & 0 & 0 & 1 \\
	\end{matrix}\right),x^A=\left(\begin{matrix}
		1 & 1 & 0 & 0 & 0 \\
		0 & 1 & 0 & 0 & 0 \\
		0 & 0 & 1 & 1 & 0 \\
		0 & 0 & 0 & 1 & 0 \\
		0 & 0 & 0 & 0 & 1
	\end{matrix}\right)
\end{equation*} \newline
Where matrix $A$ has entries
\begin{align*}
	&d_{1} = 1, d_{2} = \frac{1}{x_{12}}, d_{3} = 1, d_{4} = \frac{1}{x_{34}}, d_{5} = 1,\\ 
	&a_{12} = 1, a_{13} = 1, a_{14} = 1, a_{15} = 1,\\ 
	&a_{23} = -\frac{x_{13}}{x_{12}}, a_{24} = 1, a_{25} = 1,\\ 
	&a_{34} = -\frac{x_{14} + {\left(x_{12} - 1\right)} x_{34}}{x_{13} x_{34}}, a_{35} = -\frac{x_{12} x_{34} - x_{14} x_{35}}{x_{13} x_{34}},\\ 
	&a_{45} = -\frac{x_{35}}{x_{34}}
\end{align*}

Now assume $x_{25}= \frac{x_{24}x_{35}}{x_{34}}$ and $x_{12}\neq \frac{-x_{13}x_{34}}{x_{24}}$.
\begin{equation*}x=\left(\begin{matrix}
		1 & x_{12} & x_{13} & x_{14} & x_{15} \\
		0 & 1 & 0 & x_{24} & x_{25} \\
		0 & 0 & 1 & x_{34} & x_{35}  \\
		0 & 0 & 0 & 1 & 0  \\
		0 & 0 & 0 & 0 & 1 \\
	\end{matrix}\right),x^A=\left(\begin{matrix}
		1 & 1 & 1 & 0 & 0 \\
		0 & 1 & 0 & 0 & 0 \\
		0 & 0 & 1 & 1 & 0 \\
		0 & 0 & 0 & 1 & 0 \\
		0 & 0 & 0 & 0 & 1
	\end{matrix}\right)
\end{equation*} \newline
Where matrix $A$ has entries
\begin{align*}
	&d_{1} = 1, d_{2} = \frac{1}{x_{12}}, d_{3} = \frac{x_{34}}{x_{12} x_{24} + x_{13} x_{34}}, d_{4} = \frac{1}{x_{12} x_{24} + x_{13} x_{34}}, d_{5} = 1,\\ 
	&a_{12} = 1, a_{13} = 1, a_{14} = 1, a_{15} = 1,\\ 
	&a_{23} = \frac{x_{24}}{x_{12} x_{24} + x_{13} x_{34}}, a_{24} = 1, a_{25} = 1,\\ 
	&a_{34} = -\frac{{\left(x_{12} - 1\right)} x_{13} x_{34} + x_{14} + {\left(x_{12}^{2} - x_{12}\right)} x_{24}}{x_{12} x_{13} x_{24} + x_{13}^{2} x_{34}}, a_{35} = -\frac{x_{12} x_{34} - x_{14} x_{35}}{x_{13} x_{34}},\\ 
	&a_{45} = -\frac{x_{35}}{x_{34}}
\end{align*}

Now assume $x_{25}= \frac{x_{24}x_{35}}{x_{34}}$ and $x_{12}= \frac{-x_{13}x_{34}}{x_{24}}$.
\begin{equation*}x=\left(\begin{matrix}
		1 & x_{12} & x_{13} & x_{14} & x_{15} \\
		0 & 1 & 0 & x_{24} & x_{25} \\
		0 & 0 & 1 & x_{34} & x_{35}  \\
		0 & 0 & 0 & 1 & 0  \\
		0 & 0 & 0 & 0 & 1 \\
	\end{matrix}\right),x^A=\left(\begin{matrix}
		1 & 1 & 0 & 0 & 0 \\
		0 & 1 & 0 & 0 & 0 \\
		0 & 0 & 1 & 1 & 0 \\
		0 & 0 & 0 & 1 & 0 \\
		0 & 0 & 0 & 0 & 1
	\end{matrix}\right)
\end{equation*} \newline
Where matrix $A$ has entries
\begin{align*}
	&d_{1} = 1, d_{2} = -\frac{x_{24}}{x_{13} x_{34}}, d_{3} = 1, d_{4} = \frac{1}{x_{34}}, d_{5} = 1,\\ 
	&a_{12} = 1, a_{13} = 1, a_{14} = 1, a_{15} = 1,\\ 
	&a_{23} = \frac{x_{24}}{x_{34}}, a_{24} = 1, a_{25} = 1,\\ 
	&a_{34} = \frac{x_{13} x_{34}^{2} - x_{14} x_{24} + x_{24} x_{34}}{x_{13} x_{24} x_{34}}, a_{35} = \frac{x_{13} x_{34}^{2} + x_{14} x_{24} x_{35}}{x_{13} x_{24} x_{34}},\\ 
	&a_{45} = -\frac{x_{35}}{x_{34}}
\end{align*}

%111111

First assume $x_{24}\neq \frac{-x_{13}x_{34}}{x_{12}}$ and $x_{24}\neq \frac{-x_{25}x_{34}}{x_{35}}$.
\begin{equation*}x=\left(\begin{matrix}
		1 & x_{12} & x_{13} & x_{14} & x_{15} \\
		0 & 1 & 0 & x_{24} & x_{25} \\
		0 & 0 & 1 & x_{34} & x_{35}  \\
		0 & 0 & 0 & 1 & 0  \\
		0 & 0 & 0 & 0 & 1 \\
	\end{matrix}\right),x^A=\left(\begin{matrix}
		1 & 1 & 1 & 0 & 0 \\
		0 & 1 & 0 & 0 & 1 \\
		0 & 0 & 1 & 1 & 0 \\
		0 & 0 & 0 & 1 & 0 \\
		0 & 0 & 0 & 0 & 1
	\end{matrix}\right)
\end{equation*} \newline
Where matrix $A$ has entries
\begin{align*}
	&d_{1} = 1, d_{2} = \frac{1}{x_{12}}, d_{3} = \frac{x_{34}}{x_{12} x_{24} + x_{13} x_{34}}, d_{4} = \frac{1}{x_{12} x_{24} + x_{13} x_{34}}, d_{5} = \frac{x_{34}}{x_{12} x_{25} x_{34} - x_{12} x_{24} x_{35}},\\ 
	&a_{12} = 1, a_{13} = 1, a_{14} = 1, a_{15} = 1,\\ 
	&a_{23} = \frac{x_{24}}{x_{12} x_{24} + x_{13} x_{34}}, a_{24} = 1, a_{25} = 1,\\ 
	&a_{34} = -\frac{{\left(x_{12} - 1\right)} x_{13} x_{34} + x_{14} + {\left(x_{12}^{2} - x_{12}\right)} x_{24}}{x_{12} x_{13} x_{24} + x_{13}^{2} x_{34}}, a_{35} = -\frac{{\left(x_{15} + {\left(x_{12}^{2} - x_{12}\right)} x_{25}\right)} x_{34} - {\left(x_{14} + {\left(x_{12}^{2} - x_{12}\right)} x_{24}\right)} x_{35}}{x_{12} x_{13} x_{25} x_{34} - x_{12} x_{13} x_{24} x_{35}},\\ 
	&a_{45} = -\frac{x_{35}}{x_{12} x_{25} x_{34} - x_{12} x_{24} x_{35}}
\end{align*}

Now assume $x_{24}= \frac{-x_{13}x_{34}}{x_{12}}$, $x_{24}\neq \frac{-x_{25}x_{34}}{x_{35}}$ and $x_{25}\neq \frac{-x_{13}x_{35}}{x_{12}}$.
\begin{equation*}x=\left(\begin{matrix}
		1 & x_{12} & x_{13} & x_{14} & x_{15} \\
		0 & 1 & 0 & x_{24} & x_{25} \\
		0 & 0 & 1 & x_{34} & x_{35}  \\
		0 & 0 & 0 & 1 & 0  \\
		0 & 0 & 0 & 0 & 1 \\
	\end{matrix}\right),x^A=\left(\begin{matrix}
		1 & 1 & 0 & 0 & 0 \\
		0 & 1 & 0 & 0 & 1 \\
		0 & 0 & 1 & 1 & 0 \\
		0 & 0 & 0 & 1 & 0 \\
		0 & 0 & 0 & 0 & 1
	\end{matrix}\right)
\end{equation*} \newline
Where matrix $A$ has entries
\begin{align*}
	&d_{1} = 1, d_{2} = \frac{1}{x_{12}}, d_{3} = 1, d_{4} = \frac{1}{x_{34}}, d_{5} = \frac{1}{x_{12} x_{25} + x_{13} x_{35}},\\ 
	&a_{12} = 1, a_{13} = 1, a_{14} = 1, a_{15} = 1,\\ 
	&a_{23} = -\frac{x_{13}}{x_{12}}, a_{24} = 1, a_{25} = 1,\\ 
	&a_{34} = -\frac{x_{14} + {\left(x_{12} - 1\right)} x_{34}}{x_{13} x_{34}}, a_{35} = -\frac{{\left(x_{15} + {\left(x_{12}^{2} - x_{12}\right)} x_{25}\right)} x_{34} + {\left({\left(x_{12} - 1\right)} x_{13} x_{34} - x_{14}\right)} x_{35}}{x_{12} x_{13} x_{25} x_{34} + x_{13}^{2} x_{34} x_{35}},\\ 
	&a_{45} = -\frac{x_{35}}{x_{12} x_{25} x_{34} + x_{13} x_{34} x_{35}}
\end{align*}

Now assume $x_{24}= \frac{-x_{13}x_{34}}{x_{12}}$, $x_{24}\neq \frac{-x_{25}x_{34}}{x_{35}}$ and $x_{25}= \frac{-x_{13}x_{35}}{x_{12}}$.
\begin{equation*}x=\left(\begin{matrix}
		1 & x_{12} & x_{13} & x_{14} & x_{15} \\
		0 & 1 & 0 & x_{24} & x_{25} \\
		0 & 0 & 1 & x_{34} & x_{35}  \\
		0 & 0 & 0 & 1 & 0  \\
		0 & 0 & 0 & 0 & 1 \\
	\end{matrix}\right),x^A=\left(\begin{matrix}
		1 & 1 & 0 & 0 & 0 \\
		0 & 1 & 0 & 0 & 0 \\
		0 & 0 & 1 & 1 & 0 \\
		0 & 0 & 0 & 1 & 0 \\
		0 & 0 & 0 & 0 & 1
	\end{matrix}\right)
\end{equation*} \newline
Where matrix $A$ has entries
\begin{align*}
	&d_{1} = 1, d_{2} = \frac{1}{x_{12}}, d_{3} = 1, d_{4} = \frac{1}{x_{34}}, d_{5} = 1,\\ 
	&a_{12} = 1, a_{13} = 1, a_{14} = 1, a_{15} = 1,\\ 
	&a_{23} = -\frac{x_{13}}{x_{12}}, a_{24} = 1, a_{25} = 1,\\ 
	&a_{34} = -\frac{x_{14} + {\left(x_{12} - 1\right)} x_{34}}{x_{13} x_{34}}, a_{35} = \frac{x_{14} x_{35} - {\left(x_{12} + x_{15}\right)} x_{34}}{x_{13} x_{34}},\\ 
	&a_{45} = -\frac{x_{35}}{x_{34}}
\end{align*}

Now assume $x_{24}\neq \frac{-x_{13}x_{34}}{x_{12}}$, $x_{24}= \frac{-x_{25}x_{34}}{x_{35}}$ and $x_{25}\neq \frac{-x_{13}x_{35}}{x_{12}}$.
\begin{equation*}x=\left(\begin{matrix}
		1 & x_{12} & x_{13} & x_{14} & x_{15} \\
		0 & 1 & 0 & x_{24} & x_{25} \\
		0 & 0 & 1 & x_{34} & x_{35}  \\
		0 & 0 & 0 & 1 & 0  \\
		0 & 0 & 0 & 0 & 1 \\
	\end{matrix}\right),x^A=\left(\begin{matrix}
		1 & 1 & 1 & 0 & 0 \\
		0 & 1 & 0 & 0 & 0 \\
		0 & 0 & 1 & 1 & 0 \\
		0 & 0 & 0 & 1 & 0 \\
		0 & 0 & 0 & 0 & 1
	\end{matrix}\right)
\end{equation*} \newline
Where matrix $A$ has entries
\begin{align*}
	&d_{1} = 1, d_{2} = \frac{1}{x_{12}}, d_{3} = \frac{x_{35}}{x_{12} x_{25} + x_{13} x_{35}}, d_{4} = \frac{x_{35}}{x_{12} x_{25} x_{34} + x_{13} x_{34} x_{35}}, d_{5} = 1,\\ 
	&a_{12} = 1, a_{13} = 1, a_{14} = 1, a_{15} = 1,\\ 
	&a_{23} = \frac{x_{25}}{x_{12} x_{25} + x_{13} x_{35}}, a_{24} = 1, a_{25} = 1,\\ 
	&a_{34} = -\frac{{\left(x_{12}^{2} - x_{12}\right)} x_{25} x_{34} + {\left({\left(x_{12} - 1\right)} x_{13} x_{34} + x_{14}\right)} x_{35}}{x_{12} x_{13} x_{25} x_{34} + x_{13}^{2} x_{34} x_{35}}, a_{35} = \frac{x_{14} x_{35} - {\left(x_{12} + x_{15}\right)} x_{34}}{x_{13} x_{34}},\\ 
	&a_{45} = -\frac{x_{35}}{x_{34}}
\end{align*}

Now assume $x_{24}\neq \frac{-x_{13}x_{34}}{x_{12}}$, $x_{24}= \frac{-x_{25}x_{34}}{x_{35}}$ and $x_{25}= \frac{-x_{13}x_{35}}{x_{12}}$.
\begin{equation*}x=\left(\begin{matrix}
		1 & x_{12} & x_{13} & x_{14} & x_{15} \\
		0 & 1 & 0 & x_{24} & x_{25} \\
		0 & 0 & 1 & x_{34} & x_{35}  \\
		0 & 0 & 0 & 1 & 0  \\
		0 & 0 & 0 & 0 & 1 \\
	\end{matrix}\right),x^A=\left(\begin{matrix}
		1 & 1 & 0 & 0 & 0 \\
		0 & 1 & 0 & 0 & 0 \\
		0 & 0 & 1 & 1 & 0 \\
		0 & 0 & 0 & 1 & 0 \\
		0 & 0 & 0 & 0 & 1
	\end{matrix}\right)
\end{equation*} \newline
Where matrix $A$ has entries
\begin{align*}
	&d_{1} = 1, d_{2} = \frac{1}{x_{12}}, d_{3} = 1, d_{4} = \frac{1}{x_{34}}, d_{5} = 1,\\ 
	&a_{12} = 1, a_{13} = 1, a_{14} = 1, a_{15} = 1,\\ 
	&a_{23} = -\frac{x_{13}}{x_{12}}, a_{24} = 1, a_{25} = 1,\\ 
	&a_{34} = -\frac{x_{14} + {\left(x_{12} - 1\right)} x_{34}}{x_{13} x_{34}}, a_{35} = \frac{x_{14} x_{35} - {\left(x_{12} + x_{15}\right)} x_{34}}{x_{13} x_{34}},\\ 
	&a_{45} = -\frac{x_{35}}{x_{34}}
\end{align*}

Now assume $x_{24}= \frac{-x_{13}x_{34}}{x_{12}}$, $x_{24}= \frac{-x_{25}x_{34}}{x_{35}}$.
\begin{equation*}x=\left(\begin{matrix}
		1 & x_{12} & x_{13} & x_{14} & x_{15} \\
		0 & 1 & 0 & x_{24} & x_{25} \\
		0 & 0 & 1 & x_{34} & x_{35}  \\
		0 & 0 & 0 & 1 & 0  \\
		0 & 0 & 0 & 0 & 1 \\
	\end{matrix}\right),x^A=\left(\begin{matrix}
		1 & 1 & 0 & 0 & 0 \\
		0 & 1 & 0 & 0 & 0 \\
		0 & 0 & 1 & 1 & 0 \\
		0 & 0 & 0 & 1 & 0 \\
		0 & 0 & 0 & 0 & 1
	\end{matrix}\right)
\end{equation*} \newline
Where matrix $A$ has entries
\begin{align*}
	&d_{1} = 1, d_{2} = \frac{1}{x_{12}}, d_{3} = 1, d_{4} = \frac{1}{x_{34}}, d_{5} = 1,\\ 
	&a_{12} = 1, a_{13} = 1, a_{14} = 1, a_{15} = 1,\\ 
	&a_{23} = \frac{x_{25}}{x_{35}}, a_{24} = 1, a_{25} = 1,\\ 
	&a_{34} = \frac{{\left(x_{14} + {\left(x_{12} - 1\right)} x_{34}\right)} x_{35}}{x_{12} x_{25} x_{34}}, a_{35} = -\frac{x_{14} x_{35}^{2} - {\left(x_{12} + x_{15}\right)} x_{34} x_{35}}{x_{12} x_{25} x_{34}},\\ 
	&a_{45} = -\frac{x_{35}}{x_{34}}
\end{align*}

		\section{Subcases of $Y_{15}$}

%000001
\begin{equation*}x=\left(\begin{matrix}
		1 & 0 & 0 & 0 & x_{15} \\
		0 & 1 & x_{23} & 0 & 0 \\
		0 & 0 & 1 & x_{34} & 0  \\
		0 & 0 & 0 & 1 & x_{45}  \\
		0 & 0 & 0 & 0 & 1 \\
	\end{matrix}\right),x^A=\left(\begin{matrix}
		1 & 0 & 0 & 0 & 0 \\
		0 & 1 & 1 & 0 & 0 \\
		0 & 0 & 1 & 1 & 0 \\
		0 & 0 & 0 & 1 & 1 \\
		0 & 0 & 0 & 0 & 1
	\end{matrix}\right)
\end{equation*} \newline
Where matrix $A$ has entries
\begin{align*}
	&d_{1} = 1, d_{2} = 1, d_{3} = \frac{1}{x_{23}}, d_{4} = \frac{1}{x_{23} x_{34}}, d_{5} = \frac{1}{x_{23} x_{34} x_{45}},\\ 
	&a_{12} = 0, a_{13} = 0, a_{14} = \frac{1 x_{15}}{x_{23} x_{34} x_{45}}, a_{15} = 1,\\ 
	&a_{23} = 1, a_{24} = 1, a_{25} = 1,\\ 
	&a_{34} = \frac{1}{x_{23}}, a_{35} = \frac{1}{x_{23}},\\ 
	&a_{45} = \frac{1}{x_{23} x_{34}}
\end{align*}

%000010
\begin{equation*}x=\left(\begin{matrix}
		1 & 0 & 0 & x_{14} & 0 \\
		0 & 1 & x_{23} & 0 & 0 \\
		0 & 0 & 1 & x_{34} & 0  \\
		0 & 0 & 0 & 1 & x_{45}  \\
		0 & 0 & 0 & 0 & 1 \\
	\end{matrix}\right),x^A=\left(\begin{matrix}
		1 & 0 & 0 & 0 & 0 \\
		0 & 1 & 1 & 0 & 0 \\
		0 & 0 & 1 & 1 & 0 \\
		0 & 0 & 0 & 1 & 1 \\
		0 & 0 & 0 & 0 & 1
	\end{matrix}\right)
\end{equation*} \newline
Where matrix $A$ has entries
\begin{align*}
	&d_{1} = 1, d_{2} = 1, d_{3} = \frac{1}{x_{23}}, d_{4} = \frac{1}{x_{23} x_{34}}, d_{5} = \frac{1}{x_{23} x_{34} x_{45}},\\ 
	&a_{12} = 0, a_{13} = \frac{1 x_{14}}{x_{23} x_{34}}, a_{14} = 1, a_{15} = 1,\\ 
	&a_{23} = \frac{1 x_{23} x_{34}}{x_{14}}, a_{24} = 1, a_{25} = 1,\\ 
	&a_{34} = \frac{1 x_{34}}{x_{14}}, a_{35} = \frac{1}{x_{23}},\\ 
	&a_{45} = \frac{1}{x_{14}}
\end{align*}

%000011
\begin{equation*}x=\left(\begin{matrix}
		1 & 0 & 0 & x_{14} & x_{15} \\
		0 & 1 & x_{23} & 0 & 0 \\
		0 & 0 & 1 & x_{34} & 0  \\
		0 & 0 & 0 & 1 & x_{45}  \\
		0 & 0 & 0 & 0 & 1 \\
	\end{matrix}\right),x^A=\left(\begin{matrix}
		1 & 0 & 0 & 0 & 0 \\
		0 & 1 & 1 & 0 & 0 \\
		0 & 0 & 1 & 1 & 0 \\
		0 & 0 & 0 & 1 & 1 \\
		0 & 0 & 0 & 0 & 1
	\end{matrix}\right)
\end{equation*} \newline
Where matrix $A$ has entries
\begin{align*}
	&d_{1} = 1, d_{2} = 1, d_{3} = \frac{1}{x_{23}}, d_{4} = \frac{1}{x_{23} x_{34}}, d_{5} = \frac{1}{x_{23} x_{34} x_{45}},\\ 
	&a_{12} = 0, a_{13} = \frac{1 x_{14}}{x_{23} x_{34}}, a_{14} = 1, a_{15} = 1,\\ 
	&a_{23} = \frac{1 x_{23} x_{34} x_{45} - 1 x_{15}}{x_{14} x_{45}}, a_{24} = 1, a_{25} = 1,\\ 
	&a_{34} = \frac{1 x_{23} x_{34} x_{45} - 1 x_{15}}{x_{14} x_{23} x_{45}}, a_{35} = \frac{1}{x_{23}},\\ 
	&a_{45} = \frac{1 x_{23} x_{34} x_{45} - 1 x_{15}}{x_{14} x_{23} x_{34} x_{45}}
\end{align*}

%000100
\begin{equation*}x=\left(\begin{matrix}
		1 & 0 & x_{13} & 0 & 0 \\
		0 & 1 & x_{23} & 0 & 0 \\
		0 & 0 & 1 & x_{34} & 0  \\
		0 & 0 & 0 & 1 & x_{45}  \\
		0 & 0 & 0 & 0 & 1 \\
	\end{matrix}\right),x^A=\left(\begin{matrix}
		1 & 0 & 0 & 0 & 0 \\
		0 & 1 & 1 & 0 & 0 \\
		0 & 0 & 1 & 1 & 0 \\
		0 & 0 & 0 & 1 & 1 \\
		0 & 0 & 0 & 0 & 1
	\end{matrix}\right)
\end{equation*} \newline
Where matrix $A$ has entries
\begin{align*}
	&d_{1} = 1, d_{2} = 1, d_{3} = \frac{1}{x_{23}}, d_{4} = \frac{1}{x_{23} x_{34}}, d_{5} = \frac{1}{x_{23} x_{34} x_{45}},\\ 
	&a_{12} = \frac{x_{13}}{x_{23}}, a_{13} = 1, a_{14} = 1, a_{15} = 1,\\ 
	&a_{23} = \frac{x_{23}}{x_{13}}, a_{24} = \frac{x_{23}}{x_{13}}, a_{25} = 1,\\ 
	&a_{34} = \frac{1}{x_{13}}, a_{35} = \frac{1}{x_{13}},\\ 
	&a_{45} = \frac{1}{x_{13} x_{34}}
\end{align*}

%000101
\begin{equation*}x=\left(\begin{matrix}
		1 & 0 & x_{13} & 0 & x_{15} \\
		0 & 1 & x_{23} & 0 & 0 \\
		0 & 0 & 1 & x_{34} & 0  \\
		0 & 0 & 0 & 1 & x_{45}  \\
		0 & 0 & 0 & 0 & 1 \\
	\end{matrix}\right),x^A=\left(\begin{matrix}
		1 & 0 & 0 & 0 & 0 \\
		0 & 1 & 1 & 0 & 0 \\
		0 & 0 & 1 & 1 & 0 \\
		0 & 0 & 0 & 1 & 1 \\
		0 & 0 & 0 & 0 & 1
	\end{matrix}\right)
\end{equation*} \newline
Where matrix $A$ has entries
\begin{align*}
	&d_{1} = 1, d_{2} = 1, d_{3} = \frac{1}{x_{23}}, d_{4} = \frac{1}{x_{23} x_{34}}, d_{5} = \frac{1}{x_{23} x_{34} x_{45}},\\ 
	&a_{12} = \frac{x_{13}}{x_{23}}, a_{13} = 1, a_{14} = 1, a_{15} = 1,\\ 
	&a_{23} = \frac{x_{23}}{x_{13}}, a_{24} = \frac{x_{23} x_{34} x_{45} - x_{15}}{x_{13} x_{34} x_{45}}, a_{25} = 1,\\ 
	&a_{34} = \frac{1}{x_{13}}, a_{35} = \frac{x_{23} x_{34} x_{45} - x_{15}}{x_{13} x_{23} x_{34} x_{45}},\\ 
	&a_{45} = \frac{1}{x_{13} x_{34}}
\end{align*}

%000110
\begin{equation*}x=\left(\begin{matrix}
		1 & 0 & x_{13} & x_{14} & 0 \\
		0 & 1 & x_{23} & 0 & 0 \\
		0 & 0 & 1 & x_{34} & 0  \\
		0 & 0 & 0 & 1 & x_{45}  \\
		0 & 0 & 0 & 0 & 1 \\
	\end{matrix}\right),x^A=\left(\begin{matrix}
		1 & 0 & 0 & 0 & 0 \\
		0 & 1 & 1 & 0 & 0 \\
		0 & 0 & 1 & 1 & 0 \\
		0 & 0 & 0 & 1 & 1 \\
		0 & 0 & 0 & 0 & 1
	\end{matrix}\right)
\end{equation*} \newline
Where matrix $A$ has entries
\begin{align*}
	&d_{1} = 1, d_{2} = 1, d_{3} = \frac{1}{x_{23}}, d_{4} = \frac{1}{x_{23} x_{34}}, d_{5} = \frac{1}{x_{23} x_{34} x_{45}},\\ 
	&a_{12} = \frac{x_{13}}{x_{23}}, a_{13} = 1, a_{14} = 1, a_{15} = 1,\\ 
	&a_{23} = \frac{x_{23} x_{34} - x_{14}}{x_{13} x_{34}}, a_{24} = \frac{x_{13} x_{23} x_{34}^{2} - x_{14} x_{23} x_{34} + x_{14}^{2}}{x_{13}^{2} x_{34}^{2}}, a_{25} = 1,\\ 
	&a_{34} = \frac{x_{23} x_{34} - x_{14}}{x_{13} x_{23} x_{34}}, a_{35} = \frac{x_{13} x_{23} x_{34}^{2} - x_{14} x_{23} x_{34} + x_{14}^{2}}{x_{13}^{2} x_{23} x_{34}^{2}},\\ 
	&a_{45} = \frac{x_{23} x_{34} - x_{14}}{x_{13} x_{23} x_{34}^{2}}
\end{align*}

%000111
\begin{equation*}x=\left(\begin{matrix}
		1 & 0 & x_{13} & x_{14} & x_{15} \\
		0 & 1 & x_{23} & 0 & 0 \\
		0 & 0 & 1 & x_{34} & 0  \\
		0 & 0 & 0 & 1 & x_{45}  \\
		0 & 0 & 0 & 0 & 1 \\
	\end{matrix}\right),x^A=\left(\begin{matrix}
		1 & 0 & 0 & 0 & 0 \\
		0 & 1 & 1 & 0 & 0 \\
		0 & 0 & 1 & 1 & 0 \\
		0 & 0 & 0 & 1 & 1 \\
		0 & 0 & 0 & 0 & 1
	\end{matrix}\right)
\end{equation*} \newline
Where matrix $A$ has entries
\begin{align*}
	&d_{1} = 1, d_{2} = 1, d_{3} = \frac{1}{x_{23}}, d_{4} = \frac{1}{x_{23} x_{34}}, d_{5} = \frac{1}{x_{23} x_{34} x_{45}}, \\ 
	&a_{12} = \frac{x_{13}}{x_{23}}, a_{13} = 1, a_{14} = 1, a_{15} = 1, \\ 
	&a_{23} = \frac{x_{23} x_{34} - x_{14}}{x_{13} x_{34}}, a_{24} = -\frac{x_{13} x_{15} x_{34} - {\left(x_{13} x_{23} x_{34}^{2} - x_{14} x_{23} x_{34} + x_{14}^{2}\right)} x_{45}}{x_{13}^{2} x_{34}^{2} x_{45}}, a_{25} = 1, \\ 
	&a_{34} = \frac{x_{23} x_{34} - x_{14}}{x_{13} x_{23} x_{34}}, a_{35} = -\frac{x_{13} x_{15} x_{34} - {\left(x_{13} x_{23} x_{34}^{2} - x_{14} x_{23} x_{34} + x_{14}^{2}\right)} x_{45}}{x_{13}^{2} x_{23} x_{34}^{2} x_{45}}, \\ 
	&a_{45} = \frac{x_{23} x_{34} - x_{14}}{x_{13} x_{23} x_{34}^{2}}
\end{align*}

%001000
\begin{equation*}x=\left(% [inline block 71: 8 envs, 2850 chars -> data_tex | \begin{matrix} 		1 & 0 & 0 & 0 & 0 \\...]
\right)
\end{equation*} \newline
Where matrix $A$ has entries
\begin{align*}
	&d_{1} = 1, d_{2} = 1, d_{3} = \frac{1}{x_{23}}, d_{4} = \frac{1}{x_{23} x_{34}}, d_{5} = \frac{1}{x_{23} x_{34} x_{45}},\\ 
	&a_{12} = 0, a_{13} = \frac{x_{14}}{x_{23} x_{34}}, a_{14} = 1, a_{15} = 1,\\ 
	&a_{23} = \frac{x_{23} x_{34} x_{45} - x_{15}}{x_{14} x_{45}}, a_{24} = 1, a_{25} = 1,\\ 
	&a_{34} = \frac{x_{23} x_{34} x_{45} - x_{15}}{x_{14} x_{23} x_{45}}, a_{35} = \frac{x_{23} x_{34} x_{45} - x_{25}}{x_{23}^{2} x_{34} x_{45}},\\ 
	&a_{45} = \frac{x_{23} x_{34} x_{45} - x_{15}}{x_{14} x_{23} x_{34} x_{45}}
\end{align*}

%001100
\begin{equation*}x=\left(\begin{matrix}
		1 & 0 & x_{13} & 0 & 0 \\
		0 & 1 & x_{23} & 0 & x_{25} \\
		0 & 0 & 1 & x_{34} & 0  \\
		0 & 0 & 0 & 1 & x_{45}  \\
		0 & 0 & 0 & 0 & 1 \\
	\end{matrix}\right),x^A=\left(\begin{matrix}
		1 & 0 & 0 & 0 & 0 \\
		0 & 1 & 1 & 0 & 0 \\
		0 & 0 & 1 & 1 & 0 \\
		0 & 0 & 0 & 1 & 1 \\
		0 & 0 & 0 & 0 & 1
	\end{matrix}\right)
\end{equation*} \newline
Where matrix $A$ has entries
\begin{align*}
	&d_{1} = 1, d_{2} = 1, d_{3} = \frac{1}{x_{23}}, d_{4} = \frac{1}{x_{23} x_{34}}, d_{5} = \frac{1}{x_{23} x_{34} x_{45}},\\ 
	&a_{12} = \frac{x_{13}}{x_{23}}, a_{13} = 1, a_{14} = 1, a_{15} = 1,\\ 
	&a_{23} = \frac{x_{23}}{x_{13}}, a_{24} = \frac{x_{23}^{2} x_{34} x_{45} + x_{13} x_{25}}{x_{13} x_{23} x_{34} x_{45}}, a_{25} = 1,\\ 
	&a_{34} = \frac{1}{x_{13}}, a_{35} = \frac{1}{x_{13}},\\ 
	&a_{45} = \frac{1}{x_{13} x_{34}}
\end{align*}

%001101
\begin{equation*}x=\left(\begin{matrix}
		1 & 0 & x_{13} & 0 & x_{15} \\
		0 & 1 & x_{23} & 0 & x_{25} \\
		0 & 0 & 1 & x_{34} & 0  \\
		0 & 0 & 0 & 1 & x_{45}  \\
		0 & 0 & 0 & 0 & 1 \\
	\end{matrix}\right),x^A=\left(\begin{matrix}
		1 & 0 & 0 & 0 & 0 \\
		0 & 1 & 1 & 0 & 0 \\
		0 & 0 & 1 & 1 & 0 \\
		0 & 0 & 0 & 1 & 1 \\
		0 & 0 & 0 & 0 & 1
	\end{matrix}\right)
\end{equation*} \newline
Where matrix $A$ has entries
\begin{align*}
	&d_{1} = 1, d_{2} = 1, d_{3} = \frac{1}{x_{23}}, d_{4} = \frac{1}{x_{23} x_{34}}, d_{5} = \frac{1}{x_{23} x_{34} x_{45}},\\ 
	&a_{12} = \frac{x_{13}}{x_{23}}, a_{13} = 1, a_{14} = 1, a_{15} = 1,\\ 
	&a_{23} = \frac{x_{23}}{x_{13}}, a_{24} = \frac{x_{23}^{2} x_{34} x_{45} - x_{15} x_{23} + x_{13} x_{25}}{x_{13} x_{23} x_{34} x_{45}}, a_{25} = 1,\\ 
	&a_{34} = \frac{1}{x_{13}}, a_{35} = \frac{x_{23} x_{34} x_{45} - x_{15}}{x_{13} x_{23} x_{34} x_{45}},\\ 
	&a_{45} = \frac{1}{x_{13} x_{34}}
\end{align*}

%001110
\begin{equation*}x=\left(\begin{matrix}
		1 & 0 & x_{13} & x_{14} & 0 \\
		0 & 1 & x_{23} & 0 & x_{25} \\
		0 & 0 & 1 & x_{34} & 0  \\
		0 & 0 & 0 & 1 & x_{45}  \\
		0 & 0 & 0 & 0 & 1 \\
	\end{matrix}\right),x^A=\left(\begin{matrix}
		1 & 0 & 0 & 0 & 0 \\
		0 & 1 & 1 & 0 & 0 \\
		0 & 0 & 1 & 1 & 0 \\
		0 & 0 & 0 & 1 & 1 \\
		0 & 0 & 0 & 0 & 1
	\end{matrix}\right)
\end{equation*} \newline
Where matrix $A$ has entries
\begin{align*}
	&d_{1} = 1, d_{2} = 1, d_{3} = \frac{1}{x_{23}}, d_{4} = \frac{1}{x_{23} x_{34}}, d_{5} = \frac{1}{x_{23} x_{34} x_{45}},\\ 
	&a_{12} = \frac{x_{13}}{x_{23}}, a_{13} = 1, a_{14} = 1, a_{15} = 1,\\ 
	&a_{23} = \frac{x_{23} x_{34} - x_{14}}{x_{13} x_{34}}, a_{24} = \frac{x_{13}^{2} x_{25} x_{34} + {\left(x_{13} x_{23}^{2} x_{34}^{2} - x_{14} x_{23}^{2} x_{34} + x_{14}^{2} x_{23}\right)} x_{45}}{x_{13}^{2} x_{23} x_{34}^{2} x_{45}}, a_{25} = 1,\\ 
	&a_{34} = \frac{x_{23} x_{34} - x_{14}}{x_{13} x_{23} x_{34}}, a_{35} = \frac{x_{13} x_{23} x_{34}^{2} - x_{14} x_{23} x_{34} + x_{14}^{2}}{x_{13}^{2} x_{23} x_{34}^{2}},\\ 
	&a_{45} = \frac{x_{23} x_{34} - x_{14}}{x_{13} x_{23} x_{34}^{2}}
\end{align*}

%001111
\begin{equation*}x=\left(\begin{matrix}
		1 & 0 & x_{13} & x_{14} & x_{15} \\
		0 & 1 & x_{23} & 0 & x_{25} \\
		0 & 0 & 1 & x_{34} & 0  \\
		0 & 0 & 0 & 1 & x_{45}  \\
		0 & 0 & 0 & 0 & 1 \\
	\end{matrix}\right),x^A=\left(\begin{matrix}
		1 & 0 & 0 & 0 & 0 \\
		0 & 1 & 1 & 0 & 0 \\
		0 & 0 & 1 & 1 & 0 \\
		0 & 0 & 0 & 1 & 1 \\
		0 & 0 & 0 & 0 & 1
	\end{matrix}\right)
\end{equation*} \newline
Where matrix $A$ has entries
\begin{align*}
	&d_{1} = 1, d_{2} = 1, d_{3} = \frac{1}{x_{23}}, d_{4} = \frac{1}{x_{23} x_{34}}, d_{5} = \frac{1}{x_{23} x_{34} x_{45}},\\ 
	&a_{12} = \frac{x_{13}}{x_{23}}, a_{13} = 1, a_{14} = 1, a_{15} = 1,\\ 
	&a_{23} = \frac{x_{23} x_{34} - x_{14}}{x_{13} x_{34}}, a_{24} = -\frac{{\left(x_{13} x_{15} x_{23} - x_{13}^{2} x_{25}\right)} x_{34} - {\left(x_{13} x_{23}^{2} x_{34}^{2} - x_{14} x_{23}^{2} x_{34} + x_{14}^{2} x_{23}\right)} x_{45}}{x_{13}^{2} x_{23} x_{34}^{2} x_{45}}, a_{25} = 1,\\ 
	&a_{34} = \frac{x_{23} x_{34} - x_{14}}{x_{13} x_{23} x_{34}}, a_{35} = -\frac{x_{13} x_{15} x_{34} - {\left(x_{13} x_{23} x_{34}^{2} - x_{14} x_{23} x_{34} + x_{14}^{2}\right)} x_{45}}{x_{13}^{2} x_{23} x_{34}^{2} x_{45}},\\ 
	&a_{45} = \frac{x_{23} x_{34} - x_{14}}{x_{13} x_{23} x_{34}^{2}}
\end{align*}

%010000
\begin{equation*}x=\left(\begin{matrix}
		1 & 0 & 0 & 0 & 0 \\
		0 & 1 & x_{23} & x_{24} & 0 \\
		0 & 0 & 1 & x_{34} & 0  \\
		0 & 0 & 0 & 1 & x_{45}  \\
		0 & 0 & 0 & 0 & 1 \\
	\end{matrix}\right),x^A=\left(\begin{matrix}
		1 & 0 & 0 & 0 & 0 \\
		0 & 1 & 1 & 0 & 0 \\
		0 & 0 & 1 & 1 & 0 \\
		0 & 0 & 0 & 1 & 1 \\
		0 & 0 & 0 & 0 & 1
	\end{matrix}\right)
\end{equation*} \newline
Where matrix $A$ has entries
\begin{align*}
	&d_{1} = 1, d_{2} = 1, d_{3} = \frac{1}{x_{23}}, d_{4} = \frac{1}{x_{23} x_{34}}, d_{5} = \frac{1}{x_{23} x_{34} x_{45}},\\ 
	&a_{12} = 0, a_{13} = 0, a_{14} = 0, a_{15} = 1,\\ 
	&a_{23} = 1, a_{24} = 1, a_{25} = 1,\\ 
	&a_{34} = \frac{x_{23} x_{34} - x_{24}}{x_{23}^{2} x_{34}}, a_{35} = \frac{x_{23}^{2} x_{34}^{2} - x_{23} x_{24} x_{34} + x_{24}^{2}}{x_{23}^{3} x_{34}^{2}},\\ 
	&a_{45} = \frac{x_{23} x_{34} - x_{24}}{x_{23}^{2} x_{34}^{2}}
\end{align*}

%010001
\begin{equation*}x=\left(\begin{matrix}
		1 & 0 & 0 & 0 & x_{15} \\
		0 & 1 & x_{23} & x_{24} & 0 \\
		0 & 0 & 1 & x_{34} & 0  \\
		0 & 0 & 0 & 1 & x_{45}  \\
		0 & 0 & 0 & 0 & 1 \\
	\end{matrix}\right),x^A=\left(\begin{matrix}
		1 & 0 & 0 & 0 & 0 \\
		0 & 1 & 1 & 0 & 0 \\
		0 & 0 & 1 & 1 & 0 \\
		0 & 0 & 0 & 1 & 1 \\
		0 & 0 & 0 & 0 & 1
	\end{matrix}\right)
\end{equation*} \newline
Where matrix $A$ has entries
\begin{align*}
	&d_{1} = 1, d_{2} = 1, d_{3} = \frac{1}{x_{23}}, d_{4} = \frac{1}{x_{23} x_{34}}, d_{5} = \frac{1}{x_{23} x_{34} x_{45}},\\ 
	&a_{12} = 0, a_{13} = 0, a_{14} = \frac{x_{15}}{x_{23} x_{34} x_{45}}, a_{15} = 1,\\ 
	&a_{23} = 1, a_{24} = 1, a_{25} = 1,\\ 
	&a_{34} = \frac{x_{23} x_{34} - x_{24}}{x_{23}^{2} x_{34}}, a_{35} = \frac{x_{23}^{2} x_{34}^{2} - x_{23} x_{24} x_{34} + x_{24}^{2}}{x_{23}^{3} x_{34}^{2}},\\ 
	&a_{45} = \frac{x_{23} x_{34} - x_{24}}{x_{23}^{2} x_{34}^{2}}
\end{align*}

%010010
\begin{equation*}x=\left(\begin{matrix}
		1 & 0 & 0 & x_{14} & 0 \\
		0 & 1 & x_{23} & x_{24} & 0 \\
		0 & 0 & 1 & x_{34} & 0  \\
		0 & 0 & 0 & 1 & x_{45}  \\
		0 & 0 & 0 & 0 & 1 \\
	\end{matrix}\right),x^A=\left(\begin{matrix}
		1 & 0 & 0 & 0 & 0 \\
		0 & 1 & 1 & 0 & 0 \\
		0 & 0 & 1 & 1 & 0 \\
		0 & 0 & 0 & 1 & 1 \\
		0 & 0 & 0 & 0 & 1
	\end{matrix}\right)
\end{equation*} \newline
Where matrix $A$ has entries
\begin{align*}
	&d_{1} = 1, d_{2} = 1, d_{3} = \frac{1}{x_{23}}, d_{4} = \frac{1}{x_{23} x_{34}}, d_{5} = \frac{1}{x_{23} x_{34} x_{45}},\\ 
	&a_{12} = 0, a_{13} = \frac{x_{14}}{x_{23} x_{34}}, a_{14} = 1, a_{15} = 1,\\ 
	&a_{23} = \frac{x_{23}^{2} x_{34}^{2} + x_{14} x_{24}}{x_{14} x_{23} x_{34}}, a_{24} = 1, a_{25} = 1,\\ 
	&a_{34} = \frac{x_{34}}{x_{14}}, a_{35} = \frac{x_{14} - x_{24}}{x_{14} x_{23}},\\ 
	&a_{45} = \frac{1}{x_{14}}
\end{align*}

%010011
\begin{equation*}x=\left(\begin{matrix}
		1 & 0 & 0 & x_{14} & x_{15} \\
		0 & 1 & x_{23} & x_{24} & 0 \\
		0 & 0 & 1 & x_{34} & 0  \\
		0 & 0 & 0 & 1 & x_{45}  \\
		0 & 0 & 0 & 0 & 1 \\
	\end{matrix}\right),x^A=\left(\begin{matrix}
		1 & 0 & 0 & 0 & 0 \\
		0 & 1 & 1 & 0 & 0 \\
		0 & 0 & 1 & 1 & 0 \\
		0 & 0 & 0 & 1 & 1 \\
		0 & 0 & 0 & 0 & 1
	\end{matrix}\right)
\end{equation*} \newline
Where matrix $A$ has entries
\begin{align*}
	&d_{1} = 1, d_{2} = 1, d_{3} = \frac{1}{x_{23}}, d_{4} = \frac{1}{x_{23} x_{34}}, d_{5} = \frac{1}{x_{23} x_{34} x_{45}},\\ 
	&a_{12} = 0, a_{13} = \frac{x_{14}}{x_{23} x_{34}}, a_{14} = 1, a_{15} = 1,\\ 
	&a_{23} = -\frac{x_{15} x_{23} x_{34} - {\left(x_{23}^{2} x_{34}^{2} + x_{14} x_{24}\right)} x_{45}}{x_{14} x_{23} x_{34} x_{45}}, a_{24} = 1, a_{25} = 1,\\ 
	&a_{34} = \frac{x_{23} x_{34} x_{45} - x_{15}}{x_{14} x_{23} x_{45}}, a_{35} = \frac{x_{15} x_{24} + {\left(x_{14} x_{23} - x_{23} x_{24}\right)} x_{34} x_{45}}{x_{14} x_{23}^{2} x_{34} x_{45}},\\ 
	&a_{45} = \frac{x_{23} x_{34} x_{45} - x_{15}}{x_{14} x_{23} x_{34} x_{45}}
\end{align*}

%010100
\begin{equation*}x=\left(\begin{matrix}
		1 & 0 & x_{13} & 0 & 0 \\
		0 & 1 & x_{23} & x_{24} & 0 \\
		0 & 0 & 1 & x_{34} & 0  \\
		0 & 0 & 0 & 1 & x_{45}  \\
		0 & 0 & 0 & 0 & 1 \\
	\end{matrix}\right),x^A=\left(\begin{matrix}
		1 & 0 & 0 & 0 & 0 \\
		0 & 1 & 1 & 0 & 0 \\
		0 & 0 & 1 & 1 & 0 \\
		0 & 0 & 0 & 1 & 1 \\
		0 & 0 & 0 & 0 & 1
	\end{matrix}\right)
\end{equation*} \newline
Where matrix $A$ has entries
\begin{align*}
	&d_{1} = 1, d_{2} = 1, d_{3} = \frac{1}{x_{23}}, d_{4} = \frac{1}{x_{23} x_{34}}, d_{5} = \frac{1}{x_{23} x_{34} x_{45}},\\ 
	&a_{12} = \frac{x_{13}}{x_{23}}, a_{13} = 1, a_{14} = 1, a_{15} = 1,\\ 
	&a_{23} = \frac{x_{23}^{2} x_{34} + x_{13} x_{24}}{x_{13} x_{23} x_{34}}, a_{24} = \frac{x_{23} x_{34} + x_{24}}{x_{13} x_{34}}, a_{25} = 1,\\ 
	&a_{34} = \frac{1}{x_{13}}, a_{35} = \frac{1}{x_{13}},\\ 
	&a_{45} = \frac{1}{x_{13} x_{34}}
\end{align*}

%010101
\begin{equation*}x=\left(\begin{matrix}
		1 & 0 & x_{13} & 0 & x_{15} \\
		0 & 1 & x_{23} & x_{24} & 0 \\
		0 & 0 & 1 & x_{34} & 0  \\
		0 & 0 & 0 & 1 & x_{45}  \\
		0 & 0 & 0 & 0 & 1 \\
	\end{matrix}\right),x^A=\left(\begin{matrix}
		1 & 0 & 0 & 0 & 0 \\
		0 & 1 & 1 & 0 & 0 \\
		0 & 0 & 1 & 1 & 0 \\
		0 & 0 & 0 & 1 & 1 \\
		0 & 0 & 0 & 0 & 1
	\end{matrix}\right)
\end{equation*} \newline
Where matrix $A$ has entries
\begin{align*}
	&d_{1} = 1, d_{2} = 1, d_{3} = \frac{1}{x_{23}}, d_{4} = \frac{1}{x_{23} x_{34}}, d_{5} = \frac{1}{x_{23} x_{34} x_{45}},\\ 
	&a_{12} = \frac{x_{13}}{x_{23}}, a_{13} = 1, a_{14} = 1, a_{15} = 1,\\ 
	&a_{23} = \frac{x_{23}^{2} x_{34} + x_{13} x_{24}}{x_{13} x_{23} x_{34}}, a_{24} = -\frac{x_{15} - {\left(x_{23} x_{34} + x_{24}\right)} x_{45}}{x_{13} x_{34} x_{45}}, a_{25} = 1,\\ 
	&a_{34} = \frac{1}{x_{13}}, a_{35} = \frac{x_{23} x_{34} x_{45} - x_{15}}{x_{13} x_{23} x_{34} x_{45}},\\ 
	&a_{45} = \frac{1}{x_{13} x_{34}}
\end{align*}

%010110
\begin{equation*}x=\left(\begin{matrix}
		1 & 0 & x_{13} & x_{14} & 0 \\
		0 & 1 & x_{23} & x_{24} & 0 \\
		0 & 0 & 1 & x_{34} & 0  \\
		0 & 0 & 0 & 1 & x_{45}  \\
		0 & 0 & 0 & 0 & 1 \\
	\end{matrix}\right),x^A=\left(\begin{matrix}
		1 & 0 & 0 & 0 & 0 \\
		0 & 1 & 1 & 0 & 0 \\
		0 & 0 & 1 & 1 & 0 \\
		0 & 0 & 0 & 1 & 1 \\
		0 & 0 & 0 & 0 & 1
	\end{matrix}\right)
\end{equation*} \newline
Where matrix $A$ has entries
\begin{align*}
	&d_{1} = 1, d_{2} = 1, d_{3} = \frac{1}{x_{23}}, d_{4} = \frac{1}{x_{23} x_{34}}, d_{5} = \frac{1}{x_{23} x_{34} x_{45}},\\ 
	&a_{12} = \frac{x_{13}}{x_{23}}, a_{13} = 1, a_{14} = 1, a_{15} = 1,\\ 
	&a_{23} = \frac{x_{23}^{2} x_{34} - x_{14} x_{23} + x_{13} x_{24}}{x_{13} x_{23} x_{34}}, a_{24} = \frac{x_{13} x_{23}^{2} x_{34}^{2} + x_{14}^{2} x_{23} - x_{13} x_{14} x_{24} - {\left(x_{14} x_{23}^{2} - x_{13} x_{23} x_{24}\right)} x_{34}}{x_{13}^{2} x_{23} x_{34}^{2}}, a_{25} = 1,\\ 
	&a_{34} = \frac{x_{23} x_{34} - x_{14}}{x_{13} x_{23} x_{34}}, a_{35} = \frac{x_{13} x_{23} x_{34}^{2} - x_{14} x_{23} x_{34} + x_{14}^{2}}{x_{13}^{2} x_{23} x_{34}^{2}},\\ 
	&a_{45} = \frac{x_{23} x_{34} - x_{14}}{x_{13} x_{23} x_{34}^{2}}
\end{align*}

%010111
\begin{equation*}x=\left(\begin{matrix}
		1 & 0 & x_{13} & x_{14} & x_{15} \\
		0 & 1 & x_{23} & x_{24} & 0 \\
		0 & 0 & 1 & x_{34} & 0  \\
		0 & 0 & 0 & 1 & x_{45}  \\
		0 & 0 & 0 & 0 & 1 \\
	\end{matrix}\right),x^A=\left(\begin{matrix}
		1 & 0 & 0 & 0 & 0 \\
		0 & 1 & 1 & 0 & 0 \\
		0 & 0 & 1 & 1 & 0 \\
		0 & 0 & 0 & 1 & 1 \\
		0 & 0 & 0 & 0 & 1
	\end{matrix}\right)
\end{equation*} \newline
Where matrix $A$ has entries
\begin{align*}
	&d_{1} = 1, d_{2} = 1, d_{3} = \frac{1}{x_{23}}, d_{4} = \frac{1}{x_{23} x_{34}}, d_{5} = \frac{1}{x_{23} x_{34} x_{45}},\\ 
	&a_{12} = \frac{x_{13}}{x_{23}}, a_{13} = 1, a_{14} = 1, a_{15} = 1,\\ 
	&a_{23} = \frac{x_{23}^{2} x_{34} - x_{14} x_{23} + x_{13} x_{24}}{x_{13} x_{23} x_{34}}, a_{24} = -\frac{x_{13} x_{15} x_{23} x_{34} - {\left(x_{13} x_{23}^{2} x_{34}^{2} + x_{14}^{2} x_{23} - x_{13} x_{14} x_{24} - {\left(x_{14} x_{23}^{2} - x_{13} x_{23} x_{24}\right)} x_{34}\right)} x_{45}}{x_{13}^{2} x_{23} x_{34}^{2} x_{45}}, a_{25} = 1,\\ 
	&a_{34} = \frac{x_{23} x_{34} - x_{14}}{x_{13} x_{23} x_{34}}, a_{35} = -\frac{x_{13} x_{15} x_{34} - {\left(x_{13} x_{23} x_{34}^{2} - x_{14} x_{23} x_{34} + x_{14}^{2}\right)} x_{45}}{x_{13}^{2} x_{23} x_{34}^{2} x_{45}},\\ 
	&a_{45} = \frac{x_{23} x_{34} - x_{14}}{x_{13} x_{23} x_{34}^{2}}
\end{align*}

%011000
\begin{equation*}x=\left(\begin{matrix}
		1 & 0 & 0 & 0 & 0 \\
		0 & 1 & x_{23} & x_{24} & x_{25} \\
		0 & 0 & 1 & x_{34} & 0  \\
		0 & 0 & 0 & 1 & x_{45}  \\
		0 & 0 & 0 & 0 & 1 \\
	\end{matrix}\right),x^A=\left(\begin{matrix}
		1 & 0 & 0 & 0 & 0 \\
		0 & 1 & 1 & 0 & 0 \\
		0 & 0 & 1 & 1 & 0 \\
		0 & 0 & 0 & 1 & 1 \\
		0 & 0 & 0 & 0 & 1
	\end{matrix}\right)
\end{equation*} \newline
Where matrix $A$ has entries
\begin{align*}
	&d_{1} = 1, d_{2} = 1, d_{3} = \frac{1}{x_{23}}, d_{4} = \frac{1}{x_{23} x_{34}}, d_{5} = \frac{1}{x_{23} x_{34} x_{45}},\\ 
	&a_{12} = 0, a_{13} = 0, a_{14} = 0, a_{15} = 1,\\ 
	&a_{23} = 1, a_{24} = 1, a_{25} = 1,\\ 
	&a_{34} = \frac{x_{23} x_{34} - x_{24}}{x_{23}^{2} x_{34}}, a_{35} = -\frac{x_{23} x_{25} x_{34} - {\left(x_{23}^{2} x_{34}^{2} - x_{23} x_{24} x_{34} + x_{24}^{2}\right)} x_{45}}{x_{23}^{3} x_{34}^{2} x_{45}},\\ 
	&a_{45} = \frac{x_{23} x_{34} - x_{24}}{x_{23}^{2} x_{34}^{2}}
\end{align*}

%011001
\begin{equation*}x=\left(\begin{matrix}
		1 & 0 & 0 & 0 & x_{15} \\
		0 & 1 & x_{23} & x_{24} & x_{25} \\
		0 & 0 & 1 & x_{34} & 0  \\
		0 & 0 & 0 & 1 & x_{45}  \\
		0 & 0 & 0 & 0 & 1 \\
	\end{matrix}\right),x^A=\left(\begin{matrix}
		1 & 0 & 0 & 0 & 0 \\
		0 & 1 & 1 & 0 & 0 \\
		0 & 0 & 1 & 1 & 0 \\
		0 & 0 & 0 & 1 & 1 \\
		0 & 0 & 0 & 0 & 1
	\end{matrix}\right)
\end{equation*} \newline
Where matrix $A$ has entries
\begin{align*}
	&d_{1} = 1, d_{2} = 1, d_{3} = \frac{1}{x_{23}}, d_{4} = \frac{1}{x_{23} x_{34}}, d_{5} = \frac{1}{x_{23} x_{34} x_{45}},\\ 
	&a_{12} = 0, a_{13} = 0, a_{14} = \frac{x_{15}}{x_{23} x_{34} x_{45}}, a_{15} = 1,\\ 
	&a_{23} = 1, a_{24} = 1, a_{25} = 1,\\ 
	&a_{34} = \frac{x_{23} x_{34} - x_{24}}{x_{23}^{2} x_{34}}, a_{35} = -\frac{x_{23} x_{25} x_{34} - {\left(x_{23}^{2} x_{34}^{2} - x_{23} x_{24} x_{34} + x_{24}^{2}\right)} x_{45}}{x_{23}^{3} x_{34}^{2} x_{45}},\\ 
	&a_{45} = \frac{x_{23} x_{34} - x_{24}}{x_{23}^{2} x_{34}^{2}}
\end{align*}

%011010
\begin{equation*}x=\left(\begin{matrix}
		1 & 0 & 0 & x_{14} & 0 \\
		0 & 1 & x_{23} & x_{24} & x_{25} \\
		0 & 0 & 1 & x_{34} & 0  \\
		0 & 0 & 0 & 1 & x_{45}  \\
		0 & 0 & 0 & 0 & 1 \\
	\end{matrix}\right),x^A=\left(\begin{matrix}
		1 & 0 & 0 & 0 & 0 \\
		0 & 1 & 1 & 0 & 0 \\
		0 & 0 & 1 & 1 & 0 \\
		0 & 0 & 0 & 1 & 1 \\
		0 & 0 & 0 & 0 & 1
	\end{matrix}\right)
\end{equation*} \newline
Where matrix $A$ has entries
\begin{align*}
	&d_{1} = 1, d_{2} = 1, d_{3} = \frac{1}{x_{23}}, d_{4} = \frac{1}{x_{23} x_{34}}, d_{5} = \frac{1}{x_{23} x_{34} x_{45}},\\ 
	&a_{12} = 0, a_{13} = \frac{x_{14}}{x_{23} x_{34}}, a_{14} = 1, a_{15} = 1,\\ 
	&a_{23} = \frac{x_{23}^{2} x_{34}^{2} + x_{14} x_{24}}{x_{14} x_{23} x_{34}}, a_{24} = 1, a_{25} = 1,\\ 
	&a_{34} = \frac{x_{34}}{x_{14}}, a_{35} = -\frac{x_{14} x_{25} - {\left(x_{14} x_{23} - x_{23} x_{24}\right)} x_{34} x_{45}}{x_{14} x_{23}^{2} x_{34} x_{45}},\\ 
	&a_{45} = \frac{1}{x_{14}}
\end{align*}

%011011
\begin{equation*}x=\left(\begin{matrix}
		1 & 0 & 0 & x_{14} & x_{15} \\
		0 & 1 & x_{23} & x_{24} & x_{25} \\
		0 & 0 & 1 & x_{34} & 0  \\
		0 & 0 & 0 & 1 & x_{45}  \\
		0 & 0 & 0 & 0 & 1 \\
	\end{matrix}\right),x^A=\left(\begin{matrix}
		1 & 0 & 0 & 0 & 0 \\
		0 & 1 & 1 & 0 & 0 \\
		0 & 0 & 1 & 1 & 0 \\
		0 & 0 & 0 & 1 & 1 \\
		0 & 0 & 0 & 0 & 1
	\end{matrix}\right)
\end{equation*} \newline
Where matrix $A$ has entries
\begin{align*}
	&d_{1} = 1, d_{2} = 1, d_{3} = \frac{1}{x_{23}}, d_{4} = \frac{1}{x_{23} x_{34}}, d_{5} = \frac{1}{x_{23} x_{34} x_{45}},\\ 
	&a_{12} = 0, a_{13} = \frac{x_{14}}{x_{23} x_{34}}, a_{14} = 1, a_{15} = 1,\\ 
	&a_{23} = -\frac{x_{15} x_{23} x_{34} - {\left(x_{23}^{2} x_{34}^{2} + x_{14} x_{24}\right)} x_{45}}{x_{14} x_{23} x_{34} x_{45}}, a_{24} = 1, a_{25} = 1,\\ 
	&a_{34} = \frac{x_{23} x_{34} x_{45} - x_{15}}{x_{14} x_{23} x_{45}}, a_{35} = \frac{x_{15} x_{24} - x_{14} x_{25} + {\left(x_{14} x_{23} - x_{23} x_{24}\right)} x_{34} x_{45}}{x_{14} x_{23}^{2} x_{34} x_{45}},\\ 
	&a_{45} = \frac{x_{23} x_{34} x_{45} - x_{15}}{x_{14} x_{23} x_{34} x_{45}}
\end{align*}

%011100
\begin{equation*}x=\left(\begin{matrix}
		1 & 0 & x_{13} & 0 & 0 \\
		0 & 1 & x_{23} & x_{24} & x_{25} \\
		0 & 0 & 1 & x_{34} & 0  \\
		0 & 0 & 0 & 1 & x_{45}  \\
		0 & 0 & 0 & 0 & 1 \\
	\end{matrix}\right),x^A=\left(\begin{matrix}
		1 & 0 & 0 & 0 & 0 \\
		0 & 1 & 1 & 0 & 0 \\
		0 & 0 & 1 & 1 & 0 \\
		0 & 0 & 0 & 1 & 1 \\
		0 & 0 & 0 & 0 & 1
	\end{matrix}\right)
\end{equation*} \newline
Where matrix $A$ has entries
\begin{align*}
	&d_{1} = 1, d_{2} = 1, d_{3} = \frac{1}{x_{23}}, d_{4} = \frac{1}{x_{23} x_{34}}, d_{5} = \frac{1}{x_{23} x_{34} x_{45}},\\ 
	&a_{12} = \frac{x_{13}}{x_{23}}, a_{13} = 1, a_{14} = 1, a_{15} = 1,\\ 
	&a_{23} = \frac{x_{23}^{2} x_{34} + x_{13} x_{24}}{x_{13} x_{23} x_{34}}, a_{24} = \frac{x_{13} x_{25} + {\left(x_{23}^{2} x_{34} + x_{23} x_{24}\right)} x_{45}}{x_{13} x_{23} x_{34} x_{45}}, a_{25} = 1,\\ 
	&a_{34} = \frac{1}{x_{13}}, a_{35} = \frac{1}{x_{13}},\\ 
	&a_{45} = \frac{1}{x_{13} x_{34}}
\end{align*}

%011101
\begin{equation*}x=\left(\begin{matrix}
		1 & 0 & x_{13} & 0 & x_{15} \\
		0 & 1 & x_{23} & x_{24} & x_{25} \\
		0 & 0 & 1 & x_{34} & 0  \\
		0 & 0 & 0 & 1 & x_{45}  \\
		0 & 0 & 0 & 0 & 1 \\
	\end{matrix}\right),x^A=\left(\begin{matrix}
		1 & 0 & 0 & 0 & 0 \\
		0 & 1 & 1 & 0 & 0 \\
		0 & 0 & 1 & 1 & 0 \\
		0 & 0 & 0 & 1 & 1 \\
		0 & 0 & 0 & 0 & 1
	\end{matrix}\right)
\end{equation*} \newline
Where matrix $A$ has entries
\begin{align*}
	&d_{1} = 1, d_{2} = 1, d_{3} = \frac{1}{x_{23}}, d_{4} = \frac{1}{x_{23} x_{34}}, d_{5} = \frac{1}{x_{23} x_{34} x_{45}},\\ 
	&a_{12} = \frac{x_{13}}{x_{23}}, a_{13} = 1, a_{14} = 1, a_{15} = 1,\\ 
	&a_{23} = \frac{x_{23}^{2} x_{34} + x_{13} x_{24}}{x_{13} x_{23} x_{34}}, a_{24} = -\frac{x_{15} x_{23} - x_{13} x_{25} - {\left(x_{23}^{2} x_{34} + x_{23} x_{24}\right)} x_{45}}{x_{13} x_{23} x_{34} x_{45}}, a_{25} = 1,\\ 
	&a_{34} = \frac{1}{x_{13}}, a_{35} = \frac{x_{23} x_{34} x_{45} - x_{15}}{x_{13} x_{23} x_{34} x_{45}},\\ 
	&a_{45} = \frac{1}{x_{13} x_{34}}
\end{align*}

%011110
\begin{equation*}x=\left(\begin{matrix}
		1 & 0 & x_{13} & x_{14} & 0 \\
		0 & 1 & x_{23} & x_{24} & x_{25} \\
		0 & 0 & 1 & x_{34} & 0  \\
		0 & 0 & 0 & 1 & x_{45}  \\
		0 & 0 & 0 & 0 & 1 \\
	\end{matrix}\right),x^A=\left(\begin{matrix}
		1 & 0 & 0 & 0 & 0 \\
		0 & 1 & 1 & 0 & 0 \\
		0 & 0 & 1 & 1 & 0 \\
		0 & 0 & 0 & 1 & 1 \\
		0 & 0 & 0 & 0 & 1
	\end{matrix}\right)
\end{equation*} \newline
Where matrix $A$ has entries
\begin{align*}
	&d_{1} = 1, d_{2} = 1, d_{3} = \frac{1}{x_{23}}, d_{4} = \frac{1}{x_{23} x_{34}}, d_{5} = \frac{1}{x_{23} x_{34} x_{45}},\\ 
	&a_{12} = \frac{x_{13}}{x_{23}}, a_{13} = 1, a_{14} = 1, a_{15} = 1,\\ 
	&a_{23} = \frac{x_{23}^{2} x_{34} - x_{14} x_{23} + x_{13} x_{24}}{x_{13} x_{23} x_{34}}, a_{24} = \frac{x_{13}^{2} x_{25} x_{34} + {\left(x_{13} x_{23}^{2} x_{34}^{2} + x_{14}^{2} x_{23} - x_{13} x_{14} x_{24} - {\left(x_{14} x_{23}^{2} - x_{13} x_{23} x_{24}\right)} x_{34}\right)} x_{45}}{x_{13}^{2} x_{23} x_{34}^{2} x_{45}}, a_{25} = 1,\\ 
	&a_{34} = \frac{x_{23} x_{34} - x_{14}}{x_{13} x_{23} x_{34}}, a_{35} = \frac{x_{13} x_{23} x_{34}^{2} - x_{14} x_{23} x_{34} + x_{14}^{2}}{x_{13}^{2} x_{23} x_{34}^{2}},\\ 
	&a_{45} = \frac{x_{23} x_{34} - x_{14}}{x_{13} x_{23} x_{34}^{2}}
\end{align*}

%011111
\begin{equation*}x=\left(\begin{matrix}
		1 & 0 & x_{13} & x_{14} & x_{15} \\
		0 & 1 & x_{23} & x_{24} & x_{25} \\
		0 & 0 & 1 & x_{34} & 0  \\
		0 & 0 & 0 & 1 & x_{45}  \\
		0 & 0 & 0 & 0 & 1 \\
	\end{matrix}\right),x^A=\left(\begin{matrix}
		1 & 0 & 0 & 0 & 0 \\
		0 & 1 & 1 & 0 & 0 \\
		0 & 0 & 1 & 1 & 0 \\
		0 & 0 & 0 & 1 & 1 \\
		0 & 0 & 0 & 0 & 1
	\end{matrix}\right)
\end{equation*} \newline
Where matrix $A$ has entries
\begin{align*}
	&d_{1} = 1, d_{2} = 1, d_{3} = \frac{1}{x_{23}}, d_{4} = \frac{1}{x_{23} x_{34}}, d_{5} = \frac{1}{x_{23} x_{34} x_{45}},\\ 
	&a_{12} = \frac{x_{13}}{x_{23}}, a_{13} = 1, a_{14} = 1, a_{15} = 1,\\ 
	&a_{23} = \frac{x_{23}^{2} x_{34} - x_{14} x_{23} + x_{13} x_{24}}{x_{13} x_{23} x_{34}},\\
	& a_{24} = -\frac{{\left(x_{13} x_{15} x_{23} - x_{13}^{2} x_{25}\right)} x_{34} - {\left(x_{13} x_{23}^{2} x_{34}^{2} + x_{14}^{2} x_{23} - x_{13} x_{14} x_{24} - {\left(x_{14} x_{23}^{2} - x_{13} x_{23} x_{24}\right)} x_{34}\right)} x_{45}}{x_{13}^{2} x_{23} x_{34}^{2} x_{45}}, a_{25} = 1,\\ 
	&a_{34} = \frac{x_{23} x_{34} - x_{14}}{x_{13} x_{23} x_{34}}, a_{35} = -\frac{x_{13} x_{15} x_{34} - {\left(x_{13} x_{23} x_{34}^{2} - x_{14} x_{23} x_{34} + x_{14}^{2}\right)} x_{45}}{x_{13}^{2} x_{23} x_{34}^{2} x_{45}},\\ 
	&a_{45} = \frac{x_{23} x_{34} - x_{14}}{x_{13} x_{23} x_{34}^{2}}
\end{align*}

%100000
\begin{equation*}x=\left(\begin{matrix}
		1 & 0 & 0 & 0 & 0 \\
		0 & 1 & x_{23} & 0 & 0 \\
		0 & 0 & 1 & x_{34} & x_{35}  \\
		0 & 0 & 0 & 1 & x_{45}  \\
		0 & 0 & 0 & 0 & 1 \\
	\end{matrix}\right),x^A=\left(\begin{matrix}
		1 & 0 & 0 & 0 & 0 \\
		0 & 1 & 1 & 0 & 0 \\
		0 & 0 & 1 & 1 & 0 \\
		0 & 0 & 0 & 1 & 1 \\
		0 & 0 & 0 & 0 & 1
	\end{matrix}\right)
\end{equation*} \newline
Where matrix $A$ has entries
\begin{align*}
	&d_{1} = 1, d_{2} = 1, d_{3} = \frac{1}{x_{23}}, d_{4} = \frac{1}{x_{23} x_{34}}, d_{5} = \frac{1}{x_{23} x_{34} x_{45}},\\ 
	&a_{12} = 0, a_{13} = 0, a_{14} = 0, a_{15} = 1,\\ 
	&a_{23} = 1, a_{24} = 1, a_{25} = 1,\\ 
	&a_{34} = \frac{1}{x_{23}}, a_{35} = \frac{1}{x_{23}},\\ 
	&a_{45} = \frac{x_{34} x_{45} - x_{35}}{x_{23} x_{34}^{2} x_{45}}
\end{align*}

%100001
\begin{equation*}x=\left(\begin{matrix}
		1 & 0 & 0 & 0 & x_{15} \\
		0 & 1 & x_{23} & 0 & 0 \\
		0 & 0 & 1 & x_{34} & x_{35}  \\
		0 & 0 & 0 & 1 & x_{45}  \\
		0 & 0 & 0 & 0 & 1 \\
	\end{matrix}\right),x^A=\left(\begin{matrix}
		1 & 0 & 0 & 1 & 1 \\
		0 & 1 & 1 & 0 & 0 \\
		0 & 0 & 1 & 1 & 0 \\
		0 & 0 & 0 & 1 & 1 \\
		0 & 0 & 0 & 0 & 1
	\end{matrix}\right)
\end{equation*} \newline
Where matrix $A$ has entries
\begin{align*}
	&d_{1} = 1, d_{2} = 1, d_{3} = \frac{1}{x_{23}}, d_{4} = \frac{1}{x_{23} x_{34}}, d_{5} = \frac{1}{x_{23} x_{34} x_{45}},\\ 
	&a_{12} = 0, a_{13} = 0, a_{14} = \frac{x_{15}}{x_{23} x_{34} x_{45}}, a_{15} = 1,\\ 
	&a_{23} = 1, a_{24} = 1, a_{25} = 1,\\ 
	&a_{34} = \frac{1}{x_{23}}, a_{35} = \frac{1}{x_{23}},\\ 
	&a_{45} = \frac{x_{34} x_{45} - x_{35}}{x_{23} x_{34}^{2} x_{45}}
\end{align*}

%100010
\begin{equation*}x=\left(\begin{matrix}
		1 & 0 & 0 & x_{14} & 0 \\
		0 & 1 & x_{23} & 0 & 0 \\
		0 & 0 & 1 & x_{34} & x_{35}  \\
		0 & 0 & 0 & 1 & x_{45}  \\
		0 & 0 & 0 & 0 & 1 \\
	\end{matrix}\right),x^A=\left(\begin{matrix}
		1 & 0 & 0 & 0 & 0 \\
		0 & 1 & 1 & 0 & 0 \\
		0 & 0 & 1 & 1 & 0 \\
		0 & 0 & 0 & 1 & 1 \\
		0 & 0 & 0 & 0 & 1
	\end{matrix}\right)
\end{equation*} \newline
Where matrix $A$ has entries
\begin{align*}
	&d_{1} = 1, d_{2} = 1, d_{3} = \frac{1}{x_{23}}, d_{4} = \frac{1}{x_{23} x_{34}}, d_{5} = \frac{1}{x_{23} x_{34} x_{45}},\\ 
	&a_{12} = 0, a_{13} = \frac{x_{14}}{x_{23} x_{34}}, a_{14} = 1, a_{15} = 1,\\ 
	&a_{23} = \frac{x_{23} x_{34}^{2} x_{45} + x_{14} x_{35}}{x_{14} x_{34} x_{45}}, a_{24} = 1, a_{25} = 1,\\ 
	&a_{34} = \frac{x_{23} x_{34}^{2} x_{45} + x_{14} x_{35}}{x_{14} x_{23} x_{34} x_{45}}, a_{35} = \frac{1}{x_{23}},\\ 
	&a_{45} = \frac{1}{x_{14}}
\end{align*}

%100011
\begin{equation*}x=\left(\begin{matrix}
		1 & 0 & 0 & x_{14} & x_{15} \\
		0 & 1 & x_{23} & 0 & 0 \\
		0 & 0 & 1 & x_{34} & x_{35}  \\
		0 & 0 & 0 & 1 & x_{45}  \\
		0 & 0 & 0 & 0 & 1 \\
	\end{matrix}\right),x^A=\left(\begin{matrix}
		1 & 0 & 0 & 0 & 0 \\
		0 & 1 & 1 & 0 & 0 \\
		0 & 0 & 1 & 1 & 0 \\
		0 & 0 & 0 & 1 & 1 \\
		0 & 0 & 0 & 0 & 1
	\end{matrix}\right)
\end{equation*} \newline
Where matrix $A$ has entries
\begin{align*}
	&d_{1} = 1, d_{2} = 1, d_{3} = \frac{1}{x_{23}}, d_{4} = \frac{1}{x_{23} x_{34}}, d_{5} = \frac{1}{x_{23} x_{34} x_{45}},\\ 
	&a_{12} = 0, a_{13} = \frac{x_{14}}{x_{23} x_{34}}, a_{14} = 1, a_{15} = 1,\\ 
	&a_{23} = \frac{x_{23} x_{34}^{2} x_{45} - x_{15} x_{34} + x_{14} x_{35}}{x_{14} x_{34} x_{45}}, a_{24} = 1, a_{25} = 1,\\ 
	&a_{34} = \frac{x_{23} x_{34}^{2} x_{45} - x_{15} x_{34} + x_{14} x_{35}}{x_{14} x_{23} x_{34} x_{45}}, a_{35} = \frac{1}{x_{23}},\\ 
	&a_{45} = \frac{x_{23} x_{34} x_{45} - x_{15}}{x_{14} x_{23} x_{34} x_{45}}
\end{align*}

%100100
\begin{equation*}x=\left(\begin{matrix}
		1 & 0 & x_{13} & 0 & 0 \\
		0 & 1 & x_{23} & 0 & 0 \\
		0 & 0 & 1 & x_{34} & x_{35}  \\
		0 & 0 & 0 & 1 & x_{45}  \\
		0 & 0 & 0 & 0 & 1 \\
	\end{matrix}\right),x^A=\left(\begin{matrix}
		1 & 0 & 0 & 0 & 0 \\
		0 & 1 & 1 & 0 & 0 \\
		0 & 0 & 1 & 1 & 0 \\
		0 & 0 & 0 & 1 & 1 \\
		0 & 0 & 0 & 0 & 1
	\end{matrix}\right)
\end{equation*} \newline
Where matrix $A$ has entries
\begin{align*}
	&d_{1} = 1, d_{2} = 1, d_{3} = \frac{1}{x_{23}}, d_{4} = \frac{1}{x_{23} x_{34}}, d_{5} = \frac{1}{x_{23} x_{34} x_{45}},\\ 
	&a_{12} = \frac{x_{13}}{x_{23}}, a_{13} = 1, a_{14} = 1, a_{15} = 1,\\ 
	&a_{23} = \frac{x_{23}}{x_{13}}, a_{24} = \frac{x_{23}}{x_{13}}, a_{25} = 1,\\ 
	&a_{34} = \frac{1}{x_{13}}, a_{35} = \frac{1}{x_{13}},\\ 
	&a_{45} = \frac{x_{23} x_{34} x_{45} - x_{13} x_{35}}{x_{13} x_{23} x_{34}^{2} x_{45}}
\end{align*}

%100101
\begin{equation*}x=\left(\begin{matrix}
		1 & 0 & x_{13} & 0 & x_{15} \\
		0 & 1 & x_{23} & 0 & 0 \\
		0 & 0 & 1 & x_{34} & x_{35}  \\
		0 & 0 & 0 & 1 & x_{45}  \\
		0 & 0 & 0 & 0 & 1 \\
	\end{matrix}\right),x^A=\left(\begin{matrix}
		1 & 0 & 0 & 0 & 0 \\
		0 & 1 & 1 & 0 & 0 \\
		0 & 0 & 1 & 1 & 0 \\
		0 & 0 & 0 & 1 & 1 \\
		0 & 0 & 0 & 0 & 1
	\end{matrix}\right)
\end{equation*} \newline
Where matrix $A$ has entries
\begin{align*}
	&d_{1} = 1, d_{2} = 1, d_{3} = \frac{1}{x_{23}}, d_{4} = \frac{1}{x_{23} x_{34}}, d_{5} = \frac{1}{x_{23} x_{34} x_{45}},\\ 
	&a_{12} = \frac{x_{13}}{x_{23}}, a_{13} = 1, a_{14} = 1, a_{15} = 1,\\ 
	&a_{23} = \frac{x_{23}}{x_{13}}, a_{24} = \frac{x_{23} x_{34} x_{45} - x_{15}}{x_{13} x_{34} x_{45}}, a_{25} = 1,\\ 
	&a_{34} = \frac{1}{x_{13}}, a_{35} = \frac{x_{23} x_{34} x_{45} - x_{15}}{x_{13} x_{23} x_{34} x_{45}},\\ 
	&a_{45} = \frac{x_{23} x_{34} x_{45} - x_{13} x_{35}}{x_{13} x_{23} x_{34}^{2} x_{45}}
\end{align*}

%100110
\begin{equation*}x=\left(\begin{matrix}
		1 & 0 & x_{13} & x_{14} & 0 \\
		0 & 1 & x_{23} & 0 & 0 \\
		0 & 0 & 1 & x_{34} & x_{35}  \\
		0 & 0 & 0 & 1 & x_{45}  \\
		0 & 0 & 0 & 0 & 1 \\
	\end{matrix}\right),x^A=\left(\begin{matrix}
		1 & 0 & 0 & 0 & 0 \\
		0 & 1 & 1 & 0 & 0 \\
		0 & 0 & 1 & 1 & 0 \\
		0 & 0 & 0 & 1 & 1 \\
		0 & 0 & 0 & 0 & 1
	\end{matrix}\right)
\end{equation*} \newline
Where matrix $A$ has entries
\begin{align*}
	&d_{1} = 1, d_{2} = 1, d_{3} = \frac{1}{x_{23}}, d_{4} = \frac{1}{x_{23} x_{34}}, d_{5} = \frac{1}{x_{23} x_{34} x_{45}},\\ 
	&a_{12} = \frac{x_{13}}{x_{23}}, a_{13} = 1, a_{14} = 1, a_{15} = 1,\\ 
	&a_{23} = \frac{x_{23} x_{34} - x_{14}}{x_{13} x_{34}}, a_{24} = \frac{x_{13} x_{14} x_{35} + {\left(x_{13} x_{23} x_{34}^{2} - x_{14} x_{23} x_{34} + x_{14}^{2}\right)} x_{45}}{x_{13}^{2} x_{34}^{2} x_{45}}, a_{25} = 1,\\ 
	&a_{34} = \frac{x_{23} x_{34} - x_{14}}{x_{13} x_{23} x_{34}}, a_{35} = \frac{x_{13} x_{14} x_{35} + {\left(x_{13} x_{23} x_{34}^{2} - x_{14} x_{23} x_{34} + x_{14}^{2}\right)} x_{45}}{x_{13}^{2} x_{23} x_{34}^{2} x_{45}},\\ 
	&a_{45} = -\frac{x_{13} x_{35} - {\left(x_{23} x_{34} - x_{14}\right)} x_{45}}{x_{13} x_{23} x_{34}^{2} x_{45}}
\end{align*}

%100111
\begin{equation*}x=\left(\begin{matrix}
		1 & 0 & x_{13} & x_{14} & x_{15} \\
		0 & 1 & x_{23} & 0 & 0 \\
		0 & 0 & 1 & x_{34} & x_{35}  \\
		0 & 0 & 0 & 1 & x_{45}  \\
		0 & 0 & 0 & 0 & 1 \\
	\end{matrix}\right),x^A=\left(\begin{matrix}
		1 & 0 & 0 & 0 & 0 \\
		0 & 1 & 1 & 0 & 0 \\
		0 & 0 & 1 & 1 & 0 \\
		0 & 0 & 0 & 1 & 1 \\
		0 & 0 & 0 & 0 & 1
	\end{matrix}\right)
\end{equation*} \newline
Where matrix $A$ has entries
\begin{align*}
	&d_{1} = 1, d_{2} = 1, d_{3} = \frac{1}{x_{23}}, d_{4} = \frac{1}{x_{23} x_{34}}, d_{5} = \frac{1}{x_{23} x_{34} x_{45}},\\ 
	&a_{12} = \frac{x_{13}}{x_{23}}, a_{13} = 1, a_{14} = 1, a_{15} = 1,\\ 
	&a_{23} = \frac{x_{23} x_{34} - x_{14}}{x_{13} x_{34}}, a_{24} = -\frac{x_{13} x_{15} x_{34} - x_{13} x_{14} x_{35} - {\left(x_{13} x_{23} x_{34}^{2} - x_{14} x_{23} x_{34} + x_{14}^{2}\right)} x_{45}}{x_{13}^{2} x_{34}^{2} x_{45}}, a_{25} = 1,\\ 
	&a_{34} = \frac{x_{23} x_{34} - x_{14}}{x_{13} x_{23} x_{34}}, a_{35} = -\frac{x_{13} x_{15} x_{34} - x_{13} x_{14} x_{35} - {\left(x_{13} x_{23} x_{34}^{2} - x_{14} x_{23} x_{34} + x_{14}^{2}\right)} x_{45}}{x_{13}^{2} x_{23} x_{34}^{2} x_{45}},\\ 
	&a_{45} = -\frac{x_{13} x_{35} - {\left(x_{23} x_{34} - x_{14}\right)} x_{45}}{x_{13} x_{23} x_{34}^{2} x_{45}}
\end{align*}

%101000
\begin{equation*}x=\left(% [inline block 72: 6 envs, 2070 chars -> data_tex | \begin{matrix} 		1 & 0 & 0 & 0 & 0 \\...]
\right)
\end{equation*} \newline
Where matrix $A$ has entries
\begin{align*}
	&d_{1} = 1, d_{2} = 1, d_{3} = \frac{1}{x_{23}}, d_{4} = \frac{1}{x_{23} x_{34}}, d_{5} = \frac{1}{x_{23} x_{34} x_{45}},\\ 
	&a_{12} = 0, a_{13} = \frac{x_{14}}{x_{23} x_{34}}, a_{14} = 1, a_{15} = 1,\\ 
	&a_{23} = \frac{x_{23} x_{34}^{2} x_{45} + x_{14} x_{35}}{x_{14} x_{34} x_{45}}, a_{24} = 1, a_{25} = 1,\\ 
	&a_{34} = \frac{x_{23} x_{34}^{2} x_{45} + x_{14} x_{35}}{x_{14} x_{23} x_{34} x_{45}}, a_{35} = \frac{x_{23} x_{34} x_{45} - x_{25}}{x_{23}^{2} x_{34} x_{45}},\\ 
	&a_{45} = \frac{1}{x_{14}}
\end{align*}

%101011
\begin{equation*}x=\left(\begin{matrix}
		1 & 0 & 0 & x_{14} & x_{15} \\
		0 & 1 & x_{23} & 0 & x_{25} \\
		0 & 0 & 1 & x_{34} & x_{35}  \\
		0 & 0 & 0 & 1 & x_{45}  \\
		0 & 0 & 0 & 0 & 1 \\
	\end{matrix}\right),x^A=\left(\begin{matrix}
		1 & 0 & 0 & 0 & 0 \\
		0 & 1 & 1 & 0 & 0 \\
		0 & 0 & 1 & 1 & 0 \\
		0 & 0 & 0 & 1 & 1 \\
		0 & 0 & 0 & 0 & 1
	\end{matrix}\right)
\end{equation*} \newline
Where matrix $A$ has entries
\begin{align*}
	&d_{1} = 1, d_{2} = 1, d_{3} = \frac{1}{x_{23}}, d_{4} = \frac{1}{x_{23} x_{34}}, d_{5} = \frac{1}{x_{23} x_{34} x_{45}},\\ 
	&a_{12} = 0, a_{13} = \frac{x_{14}}{x_{23} x_{34}}, a_{14} = 1, a_{15} = 1,\\ 
	&a_{23} = \frac{x_{23} x_{34}^{2} x_{45} - x_{15} x_{34} + x_{14} x_{35}}{x_{14} x_{34} x_{45}}, a_{24} = 1, a_{25} = 1,\\ 
	&a_{34} = \frac{x_{23} x_{34}^{2} x_{45} - x_{15} x_{34} + x_{14} x_{35}}{x_{14} x_{23} x_{34} x_{45}}, a_{35} = \frac{x_{23} x_{34} x_{45} - x_{25}}{x_{23}^{2} x_{34} x_{45}},\\ 
	&a_{45} = \frac{x_{23} x_{34} x_{45} - x_{15}}{x_{14} x_{23} x_{34} x_{45}}
\end{align*}

%101100
\begin{equation*}x=\left(\begin{matrix}
		1 & 0 & x_{13} & 0 & 0 \\
		0 & 1 & x_{23} & 0 & x_{25} \\
		0 & 0 & 1 & x_{34} & x_{35}  \\
		0 & 0 & 0 & 1 & x_{45}  \\
		0 & 0 & 0 & 0 & 1 \\
	\end{matrix}\right),x^A=\left(\begin{matrix}
		1 & 0 & 0 & 0 & 0 \\
		0 & 1 & 1 & 0 & 0 \\
		0 & 0 & 1 & 1 & 0 \\
		0 & 0 & 0 & 1 & 1 \\
		0 & 0 & 0 & 0 & 1
	\end{matrix}\right)
\end{equation*} \newline
Where matrix $A$ has entries
\begin{align*}
	&d_{1} = 1, d_{2} = 1, d_{3} = \frac{1}{x_{23}}, d_{4} = \frac{1}{x_{23} x_{34}}, d_{5} = \frac{1}{x_{23} x_{34} x_{45}},\\ 
	&a_{12} = \frac{x_{13}}{x_{23}}, a_{13} = 1, a_{14} = 1, a_{15} = 1,\\ 
	&a_{23} = \frac{x_{23}}{x_{13}}, a_{24} = \frac{x_{23}^{2} x_{34} x_{45} + x_{13} x_{25}}{x_{13} x_{23} x_{34} x_{45}}, a_{25} = 1,\\ 
	&a_{34} = \frac{1}{x_{13}}, a_{35} = \frac{1}{x_{13}},\\ 
	&a_{45} = \frac{x_{23} x_{34} x_{45} - x_{13} x_{35}}{x_{13} x_{23} x_{34}^{2} x_{45}}
\end{align*}

%101101
\begin{equation*}x=\left(\begin{matrix}
		1 & 0 & x_{13} & 0 & x_{15} \\
		0 & 1 & x_{23} & 0 & x_{25} \\
		0 & 0 & 1 & x_{34} & x_{35}  \\
		0 & 0 & 0 & 1 & x_{45}  \\
		0 & 0 & 0 & 0 & 1 \\
	\end{matrix}\right),x^A=\left(\begin{matrix}
		1 & 0 & 0 & 0 & 0 \\
		0 & 1 & 1 & 0 & 0 \\
		0 & 0 & 1 & 1 & 0 \\
		0 & 0 & 0 & 1 & 1 \\
		0 & 0 & 0 & 0 & 1
	\end{matrix}\right)
\end{equation*} \newline
Where matrix $A$ has entries
\begin{align*}
	&d_{1} = 1, d_{2} = 1, d_{3} = \frac{1}{x_{23}}, d_{4} = \frac{1}{x_{23} x_{34}}, d_{5} = \frac{1}{x_{23} x_{34} x_{45}},\\ 
	&a_{12} = \frac{x_{13}}{x_{23}}, a_{13} = 1, a_{14} = 1, a_{15} = 1,\\ 
	&a_{23} = \frac{x_{23}}{x_{13}}, a_{24} = \frac{x_{23}^{2} x_{34} x_{45} - x_{15} x_{23} + x_{13} x_{25}}{x_{13} x_{23} x_{34} x_{45}}, a_{25} = 1,\\ 
	&a_{34} = \frac{1}{x_{13}}, a_{35} = \frac{x_{23} x_{34} x_{45} - x_{15}}{x_{13} x_{23} x_{34} x_{45}},\\ 
	&a_{45} = \frac{x_{23} x_{34} x_{45} - x_{13} x_{35}}{x_{13} x_{23} x_{34}^{2} x_{45}}
\end{align*}

%101110
\begin{equation*}x=\left(\begin{matrix}
		1 & 0 & x_{13} & x_{14} & 0 \\
		0 & 1 & x_{23} & 0 & x_{25} \\
		0 & 0 & 1 & x_{34} & x_{35}  \\
		0 & 0 & 0 & 1 & x_{45}  \\
		0 & 0 & 0 & 0 & 1 \\
	\end{matrix}\right),x^A=\left(\begin{matrix}
		1 & 0 & 0 & 0 & 0 \\
		0 & 1 & 1 & 0 & 0 \\
		0 & 0 & 1 & 1 & 0 \\
		0 & 0 & 0 & 1 & 1 \\
		0 & 0 & 0 & 0 & 1
	\end{matrix}\right)
\end{equation*} \newline
Where matrix $A$ has entries
\begin{align*}
	&d_{1} = 1, d_{2} = 1, d_{3} = \frac{1}{x_{23}}, d_{4} = \frac{1}{x_{23} x_{34}}, d_{5} = \frac{1}{x_{23} x_{34} x_{45}},\\ 
	&a_{12} = \frac{x_{13}}{x_{23}}, a_{13} = 1, a_{14} = 1, a_{15} = 1,\\ 
	&a_{23} = \frac{x_{23} x_{34} - x_{14}}{x_{13} x_{34}}, a_{24} = \frac{x_{13}^{2} x_{25} x_{34} + x_{13} x_{14} x_{23} x_{35} + {\left(x_{13} x_{23}^{2} x_{34}^{2} - x_{14} x_{23}^{2} x_{34} + x_{14}^{2} x_{23}\right)} x_{45}}{x_{13}^{2} x_{23} x_{34}^{2} x_{45}}, a_{25} = 1,\\ 
	&a_{34} = \frac{x_{23} x_{34} - x_{14}}{x_{13} x_{23} x_{34}}, a_{35} = \frac{x_{13} x_{14} x_{35} + {\left(x_{13} x_{23} x_{34}^{2} - x_{14} x_{23} x_{34} + x_{14}^{2}\right)} x_{45}}{x_{13}^{2} x_{23} x_{34}^{2} x_{45}},\\ 
	&a_{45} = -\frac{x_{13} x_{35} - {\left(x_{23} x_{34} - x_{14}\right)} x_{45}}{x_{13} x_{23} x_{34}^{2} x_{45}}
\end{align*}

%101111
\begin{equation*}x=\left(\begin{matrix}
		1 & 0 & x_{13} & x_{14} & x_{15} \\
		0 & 1 & x_{23} & 0 & x_{25} \\
		0 & 0 & 1 & x_{34} & x_{35}  \\
		0 & 0 & 0 & 1 & x_{45}  \\
		0 & 0 & 0 & 0 & 1 \\
	\end{matrix}\right),x^A=\left(\begin{matrix}
		1 & 0 & 0 & 0 & 0 \\
		0 & 1 & 1 & 0 & 0 \\
		0 & 0 & 1 & 1 & 0 \\
		0 & 0 & 0 & 1 & 1 \\
		0 & 0 & 0 & 0 & 1
	\end{matrix}\right)
\end{equation*} \newline
Where matrix $A$ has entries
\begin{align*}
	&d_{1} = 1, d_{2} = 1, d_{3} = \frac{1}{x_{23}}, d_{4} = \frac{1}{x_{23} x_{34}}, d_{5} = \frac{1}{x_{23} x_{34} x_{45}},\\ 
	&a_{12} = \frac{x_{13}}{x_{23}}, a_{13} = 1, a_{14} = 1, a_{15} = 1,\\ 
	&a_{23} = \frac{x_{23} x_{34} - x_{14}}{x_{13} x_{34}}, a_{24} = \frac{x_{13} x_{14} x_{23} x_{35} - {\left(x_{13} x_{15} x_{23} - x_{13}^{2} x_{25}\right)} x_{34} + {\left(x_{13} x_{23}^{2} x_{34}^{2} - x_{14} x_{23}^{2} x_{34} + x_{14}^{2} x_{23}\right)} x_{45}}{x_{13}^{2} x_{23} x_{34}^{2} x_{45}}, a_{25} = 1,\\ 
	&a_{34} = \frac{x_{23} x_{34} - x_{14}}{x_{13} x_{23} x_{34}}, a_{35} = -\frac{x_{13} x_{15} x_{34} - x_{13} x_{14} x_{35} - {\left(x_{13} x_{23} x_{34}^{2} - x_{14} x_{23} x_{34} + x_{14}^{2}\right)} x_{45}}{x_{13}^{2} x_{23} x_{34}^{2} x_{45}},\\ 
	&a_{45} = -\frac{x_{13} x_{35} - {\left(x_{23} x_{34} - x_{14}\right)} x_{45}}{x_{13} x_{23} x_{34}^{2} x_{45}}
\end{align*}

%110000
\begin{equation*}x=\left(\begin{matrix}
		1 & 0 & 0 & 0 & 0 \\
		0 & 1 & x_{23} & x_{24} & 0 \\
		0 & 0 & 1 & x_{34} & x_{35}  \\
		0 & 0 & 0 & 1 & x_{45}  \\
		0 & 0 & 0 & 0 & 1 \\
	\end{matrix}\right),x^A=\left(\begin{matrix}
		1 & 0 & 0 & 0 & 0 \\
		0 & 1 & 1 & 0 & 0 \\
		0 & 0 & 1 & 1 & 0 \\
		0 & 0 & 0 & 1 & 1 \\
		0 & 0 & 0 & 0 & 1
	\end{matrix}\right)
\end{equation*} \newline
Where matrix $A$ has entries
\begin{align*}
	&d_{1} = 1, d_{2} = 1, d_{3} = \frac{1}{x_{23}}, d_{4} = \frac{1}{x_{23} x_{34}}, d_{5} = \frac{1}{x_{23} x_{34} x_{45}},\\ 
	&a_{12} = 0, a_{13} = 0, a_{14} = 0, a_{15} = 1,\\ 
	&a_{23} = 1, a_{24} = 1, a_{25} = 1,\\ 
	&a_{34} = \frac{x_{23} x_{34} - x_{24}}{x_{23}^{2} x_{34}}, a_{35} = \frac{x_{23} x_{24} x_{35} + {\left(x_{23}^{2} x_{34}^{2} - x_{23} x_{24} x_{34} + x_{24}^{2}\right)} x_{45}}{x_{23}^{3} x_{34}^{2} x_{45}},\\ 
	&a_{45} = -\frac{x_{23} x_{35} - {\left(x_{23} x_{34} - x_{24}\right)} x_{45}}{x_{23}^{2} x_{34}^{2} x_{45}}
\end{align*}

%110001
\begin{equation*}x=\left(\begin{matrix}
		1 & 0 & 0 & 0 & x_{15} \\
		0 & 1 & x_{23} & x_{24} & 0 \\
		0 & 0 & 1 & x_{34} & x_{35}  \\
		0 & 0 & 0 & 1 & x_{45}  \\
		0 & 0 & 0 & 0 & 1 \\
	\end{matrix}\right),x^A=\left(\begin{matrix}
		1 & 0 & 0 & 0 & 0 \\
		0 & 1 & 1 & 0 & 0 \\
		0 & 0 & 1 & 1 & 0 \\
		0 & 0 & 0 & 1 & 1 \\
		0 & 0 & 0 & 0 & 1
	\end{matrix}\right)
\end{equation*} \newline
Where matrix $A$ has entries
\begin{align*}
	&d_{1} = 1, d_{2} = 1, d_{3} = \frac{1}{x_{23}}, d_{4} = \frac{1}{x_{23} x_{34}}, d_{5} = \frac{1}{x_{23} x_{34} x_{45}},\\ 
	&a_{12} = 0, a_{13} = 0, a_{14} = \frac{x_{15}}{x_{23} x_{34} x_{45}}, a_{15} = 1,\\ 
	&a_{23} = 1, a_{24} = 1, a_{25} = 1,\\ 
	&a_{34} = \frac{x_{23} x_{34} - x_{24}}{x_{23}^{2} x_{34}}, a_{35} = \frac{x_{23} x_{24} x_{35} + {\left(x_{23}^{2} x_{34}^{2} - x_{23} x_{24} x_{34} + x_{24}^{2}\right)} x_{45}}{x_{23}^{3} x_{34}^{2} x_{45}},\\ 
	&a_{45} = -\frac{x_{23} x_{35} - {\left(x_{23} x_{34} - x_{24}\right)} x_{45}}{x_{23}^{2} x_{34}^{2} x_{45}}
\end{align*}

%110010
\begin{equation*}x=\left(\begin{matrix}
		1 & 0 & 0 & x_{14} & 0 \\
		0 & 1 & x_{23} & x_{24} & 0 \\
		0 & 0 & 1 & x_{34} & x_{35}  \\
		0 & 0 & 0 & 1 & x_{45}  \\
		0 & 0 & 0 & 0 & 1 \\
	\end{matrix}\right),x^A=\left(\begin{matrix}
		1 & 0 & 0 & 0 & 0 \\
		0 & 1 & 1 & 0 & 0 \\
		0 & 0 & 1 & 1 & 0 \\
		0 & 0 & 0 & 1 & 1 \\
		0 & 0 & 0 & 0 & 1
	\end{matrix}\right)
\end{equation*} \newline
Where matrix $A$ has entries
\begin{align*}
	&d_{1} = 1, d_{2} = 1, d_{3} = \frac{1}{x_{23}}, d_{4} = \frac{1}{x_{23} x_{34}}, d_{5} = \frac{1}{x_{23} x_{34} x_{45}},\\ 
	&a_{12} = 0, a_{13} = \frac{x_{14}}{x_{23} x_{34}}, a_{14} = 1, a_{15} = 1,\\ 
	&a_{23} = \frac{x_{14} x_{23} x_{35} + {\left(x_{23}^{2} x_{34}^{2} + x_{14} x_{24}\right)} x_{45}}{x_{14} x_{23} x_{34} x_{45}}, a_{24} = 1, a_{25} = 1,\\ 
	&a_{34} = \frac{x_{23} x_{34}^{2} x_{45} + x_{14} x_{35}}{x_{14} x_{23} x_{34} x_{45}}, a_{35} = \frac{x_{14} - x_{24}}{x_{14} x_{23}},\\ 
	&a_{45} = \frac{1}{x_{14}}
\end{align*}

%110011
\begin{equation*}x=\left(\begin{matrix}
		1 & 0 & 0 & x_{14} & x_{15} \\
		0 & 1 & x_{23} & x_{24} & 0 \\
		0 & 0 & 1 & x_{34} & x_{35}  \\
		0 & 0 & 0 & 1 & x_{45}  \\
		0 & 0 & 0 & 0 & 1 \\
	\end{matrix}\right),x^A=\left(\begin{matrix}
		1 & 0 & 0 & 0 & 0 \\
		0 & 1 & 1 & 0 & 0 \\
		0 & 0 & 1 & 1 & 0 \\
		0 & 0 & 0 & 1 & 1 \\
		0 & 0 & 0 & 0 & 1
	\end{matrix}\right)
\end{equation*} \newline
Where matrix $A$ has entries
\begin{align*}
	&d_{1} = 1, d_{2} = 1, d_{3} = \frac{1}{x_{23}}, d_{4} = \frac{1}{x_{23} x_{34}}, d_{5} = \frac{1}{x_{23} x_{34} x_{45}},\\ 
	&a_{12} = 0, a_{13} = \frac{x_{14}}{x_{23} x_{34}}, a_{14} = 1, a_{15} = 1,\\ 
	&a_{23} = -\frac{x_{15} x_{23} x_{34} - x_{14} x_{23} x_{35} - {\left(x_{23}^{2} x_{34}^{2} + x_{14} x_{24}\right)} x_{45}}{x_{14} x_{23} x_{34} x_{45}}, a_{24} = 1, a_{25} = 1,\\ 
	&a_{34} = \frac{x_{23} x_{34}^{2} x_{45} - x_{15} x_{34} + x_{14} x_{35}}{x_{14} x_{23} x_{34} x_{45}}, a_{35} = \frac{x_{15} x_{24} + {\left(x_{14} x_{23} - x_{23} x_{24}\right)} x_{34} x_{45}}{x_{14} x_{23}^{2} x_{34} x_{45}},\\ 
	&a_{45} = \frac{x_{23} x_{34} x_{45} - x_{15}}{x_{14} x_{23} x_{34} x_{45}}
\end{align*}

%110100
\begin{equation*}x=\left(\begin{matrix}
		1 & 0 & x_{13} & 0 & 0 \\
		0 & 1 & x_{23} & x_{24} & 0 \\
		0 & 0 & 1 & x_{34} & x_{35}  \\
		0 & 0 & 0 & 1 & x_{45}  \\
		0 & 0 & 0 & 0 & 1 \\
	\end{matrix}\right),x^A=\left(\begin{matrix}
		1 & 0 & 0 & 0 & 0 \\
		0 & 1 & 1 & 0 & 0 \\
		0 & 0 & 1 & 1 & 0 \\
		0 & 0 & 0 & 1 & 1 \\
		0 & 0 & 0 & 0 & 1
	\end{matrix}\right)
\end{equation*} \newline
Where matrix $A$ has entries
\begin{align*}
	&d_{1} = 1, d_{2} = 1, d_{3} = \frac{1}{x_{23}}, d_{4} = \frac{1}{x_{23} x_{34}}, d_{5} = \frac{1}{x_{23} x_{34} x_{45}},\\ 
	&a_{12} = \frac{x_{13}}{x_{23}}, a_{13} = 1, a_{14} = 1, a_{15} = 1,\\ 
	&a_{23} = \frac{x_{23}^{2} x_{34} + x_{13} x_{24}}{x_{13} x_{23} x_{34}}, a_{24} = -\frac{x_{13} x_{24} x_{35} - {\left(x_{23}^{2} x_{34}^{2} + x_{23} x_{24} x_{34}\right)} x_{45}}{x_{13} x_{23} x_{34}^{2} x_{45}}, a_{25} = 1,\\ 
	&a_{34} = \frac{1}{x_{13}}, a_{35} = \frac{1}{x_{13}},\\ 
	&a_{45} = \frac{x_{23} x_{34} x_{45} - x_{13} x_{35}}{x_{13} x_{23} x_{34}^{2} x_{45}}
\end{align*}

%110101
\begin{equation*}x=\left(\begin{matrix}
		1 & 0 & x_{13} & 0 & x_{15} \\
		0 & 1 & x_{23} & x_{24} & 0 \\
		0 & 0 & 1 & x_{34} & x_{35}  \\
		0 & 0 & 0 & 1 & x_{45}  \\
		0 & 0 & 0 & 0 & 1 \\
	\end{matrix}\right),x^A=\left(\begin{matrix}
		1 & 0 & 0 & 0 & 0 \\
		0 & 1 & 1 & 0 & 0 \\
		0 & 0 & 1 & 1 & 0 \\
		0 & 0 & 0 & 1 & 1 \\
		0 & 0 & 0 & 0 & 1
	\end{matrix}\right)
\end{equation*} \newline
Where matrix $A$ has entries
\begin{align*}
	&d_{1} = 1, d_{2} = 1, d_{3} = \frac{1}{x_{23}}, d_{4} = \frac{1}{x_{23} x_{34}}, d_{5} = \frac{1}{x_{23} x_{34} x_{45}},\\ 
	&a_{12} = \frac{x_{13}}{x_{23}}, a_{13} = 1, a_{14} = 1, a_{15} = 1,\\ 
	&a_{23} = \frac{x_{23}^{2} x_{34} + x_{13} x_{24}}{x_{13} x_{23} x_{34}}, a_{24} = -\frac{x_{15} x_{23} x_{34} + x_{13} x_{24} x_{35} - {\left(x_{23}^{2} x_{34}^{2} + x_{23} x_{24} x_{34}\right)} x_{45}}{x_{13} x_{23} x_{34}^{2} x_{45}}, a_{25} = 1,\\ 
	&a_{34} = \frac{1}{x_{13}}, a_{35} = \frac{x_{23} x_{34} x_{45} - x_{15}}{x_{13} x_{23} x_{34} x_{45}},\\ 
	&a_{45} = \frac{x_{23} x_{34} x_{45} - x_{13} x_{35}}{x_{13} x_{23} x_{34}^{2} x_{45}}
\end{align*}

%110110
\begin{equation*}x=\left(\begin{matrix}
		1 & 0 & x_{13} & x_{14} & 0 \\
		0 & 1 & x_{23} & x_{24} & 0 \\
		0 & 0 & 1 & x_{34} & x_{35}  \\
		0 & 0 & 0 & 1 & x_{45}  \\
		0 & 0 & 0 & 0 & 1 \\
	\end{matrix}\right),x^A=\left(\begin{matrix}
		1 & 0 & 0 & 0 & 0 \\
		0 & 1 & 1 & 0 & 0 \\
		0 & 0 & 1 & 1 & 0 \\
		0 & 0 & 0 & 1 & 1 \\
		0 & 0 & 0 & 0 & 1
	\end{matrix}\right)
\end{equation*} \newline
Where matrix $A$ has entries
\begin{align*}
	&d_{1} = 1, d_{2} = 1, d_{3} = \frac{1}{x_{23}}, d_{4} = \frac{1}{x_{23} x_{34}}, d_{5} = \frac{1}{x_{23} x_{34} x_{45}},\\ 
	&a_{12} = \frac{x_{13}}{x_{23}}, a_{13} = 1, a_{14} = 1, a_{15} = 1,\\ 
	&a_{23} = \frac{x_{23}^{2} x_{34} - x_{14} x_{23} + x_{13} x_{24}}{x_{13} x_{23} x_{34}},\\
	& a_{24} = \frac{{\left(x_{13} x_{14} x_{23} - x_{13}^{2} x_{24}\right)} x_{35} + {\left(x_{13} x_{23}^{2} x_{34}^{2} + x_{14}^{2} x_{23} - x_{13} x_{14} x_{24} - {\left(x_{14} x_{23}^{2} - x_{13} x_{23} x_{24}\right)} x_{34}\right)} x_{45}}{x_{13}^{2} x_{23} x_{34}^{2} x_{45}}, a_{25} = 1,\\ 
	&a_{34} = \frac{x_{23} x_{34} - x_{14}}{x_{13} x_{23} x_{34}}, a_{35} = \frac{x_{13} x_{14} x_{35} + {\left(x_{13} x_{23} x_{34}^{2} - x_{14} x_{23} x_{34} + x_{14}^{2}\right)} x_{45}}{x_{13}^{2} x_{23} x_{34}^{2} x_{45}},\\ 
	&a_{45} = -\frac{x_{13} x_{35} - {\left(x_{23} x_{34} - x_{14}\right)} x_{45}}{x_{13} x_{23} x_{34}^{2} x_{45}}
\end{align*}

%110111
\begin{equation*}x=\left(\begin{matrix}
		0 & 0 & x_{13} & x_{14} & x_{15} \\
		0 & 1 & x_{23} & x_{24} & 0 \\
		0 & 0 & 1 & x_{34} & x_{35}  \\
		0 & 0 & 0 & 1 & x_{45}  \\
		0 & 0 & 0 & 0 & 1 \\
	\end{matrix}\right),x^A=\left(\begin{matrix}
		1 & 0 & 0 & 0 & 0 \\
		0 & 1 & 1 & 0 & 0 \\
		0 & 0 & 1 & 1 & 0 \\
		0 & 0 & 0 & 1 & 1 \\
		0 & 0 & 0 & 0 & 1
	\end{matrix}\right)
\end{equation*} \newline
Where matrix $A$ has entries
\begin{align*}
	&d_{1} = 1, d_{2} = 1, d_{3} = \frac{1}{x_{23}}, d_{4} = \frac{1}{x_{23} x_{34}}, d_{5} = \frac{1}{x_{23} x_{34} x_{45}},\\ 
	&a_{12} = \frac{x_{13}}{x_{23}}, a_{13} = 1, a_{14} = 1, a_{15} = 1,\\ 
	&a_{23} = \frac{x_{23}^{2} x_{34} - x_{14} x_{23} + x_{13} x_{24}}{x_{13} x_{23} x_{34}},\\
	& a_{24} = -\frac{x_{13} x_{15} x_{23} x_{34} - {\left(x_{13} x_{14} x_{23} - x_{13}^{2} x_{24}\right)} x_{35} - {\left(x_{13} x_{23}^{2} x_{34}^{2} + x_{14}^{2} x_{23} - x_{13} x_{14} x_{24} - {\left(x_{14} x_{23}^{2} - x_{13} x_{23} x_{24}\right)} x_{34}\right)} x_{45}}{x_{13}^{2} x_{23} x_{34}^{2} x_{45}}, a_{25} = 1,\\ 
	&a_{34} = \frac{x_{23} x_{34} - x_{14}}{x_{13} x_{23} x_{34}}, a_{35} = -\frac{x_{13} x_{15} x_{34} - x_{13} x_{14} x_{35} - {\left(x_{13} x_{23} x_{34}^{2} - x_{14} x_{23} x_{34} + x_{14}^{2}\right)} x_{45}}{x_{13}^{2} x_{23} x_{34}^{2} x_{45}},\\ 
	&a_{45} = -\frac{x_{13} x_{35} - {\left(x_{23} x_{34} - x_{14}\right)} x_{45}}{x_{13} x_{23} x_{34}^{2} x_{45}}
\end{align*}

%111000
\begin{equation*}x=\left(\begin{matrix}
		1 & 0 & 0 & 0 & 0 \\
		0 & 1 & x_{23} & x_{24} & x_{25} \\
		0 & 0 & 1 & x_{34} & x_{35}  \\
		0 & 0 & 0 & 1 & x_{45}  \\
		0 & 0 & 0 & 0 & 1 \\
	\end{matrix}\right),x^A=\left(\begin{matrix}
		1 & 0 & 0 & 0 & 0 \\
		0 & 1 & 1 & 0 & 0 \\
		0 & 0 & 1 & 1 & 0 \\
		0 & 0 & 0 & 1 & 1 \\
		0 & 0 & 0 & 0 & 1
	\end{matrix}\right)
\end{equation*} \newline
Where matrix $A$ has entries
\begin{align*}
	&d_{1} = 1, d_{2} = 1, d_{3} = \frac{1}{x_{23}}, d_{4} = \frac{1}{x_{23} x_{34}}, d_{5} = \frac{1}{x_{23} x_{34} x_{45}},\\ 
	&a_{12} = 0, a_{13} = 0, a_{14} = 0, a_{15} = 1,\\ 
	&a_{23} = 1, a_{24} = 1, a_{25} = 1,\\ 
	&a_{34} = \frac{x_{23} x_{34} - x_{24}}{x_{23}^{2} x_{34}}, a_{35} = -\frac{x_{23} x_{25} x_{34} - x_{23} x_{24} x_{35} - {\left(x_{23}^{2} x_{34}^{2} - x_{23} x_{24} x_{34} + x_{24}^{2}\right)} x_{45}}{x_{23}^{3} x_{34}^{2} x_{45}},\\ 
	&a_{45} = -\frac{x_{23} x_{35} - {\left(x_{23} x_{34} - x_{24}\right)} x_{45}}{x_{23}^{2} x_{34}^{2} x_{45}}
\end{align*}

%111001
\begin{equation*}x=\left(\begin{matrix}
		1 & 0 & 0 & 0 & x_{15} \\
		0 & 1 & x_{23} & x_{24} & x_{25} \\
		0 & 0 & 1 & x_{34} & x_{35}  \\
		0 & 0 & 0 & 1 & x_{45}  \\
		0 & 0 & 0 & 0 & 1 \\
	\end{matrix}\right),x^A=\left(\begin{matrix}
		1 & 0 & 0 & 0 & 0 \\
		0 & 1 & 1 & 0 & 0 \\
		0 & 0 & 1 & 1 & 0 \\
		0 & 0 & 0 & 1 & 1 \\
		0 & 0 & 0 & 0 & 1
	\end{matrix}\right)
\end{equation*} \newline
Where matrix $A$ has entries
\begin{align*}
	&d_{1} = 1, d_{2} = 1, d_{3} = \frac{1}{x_{23}}, d_{4} = \frac{1}{x_{23} x_{34}}, d_{5} = \frac{1}{x_{23} x_{34} x_{45}},\\ 
	&a_{12} = 0, a_{13} = 0, a_{14} = \frac{x_{15}}{x_{23} x_{34} x_{45}}, a_{15} = 1,\\ 
	&a_{23} = 1, a_{24} = 1, a_{25} = 1,\\ 
	&a_{34} = \frac{x_{23} x_{34} - x_{24}}{x_{23}^{2} x_{34}}, a_{35} = -\frac{x_{23} x_{25} x_{34} - x_{23} x_{24} x_{35} - {\left(x_{23}^{2} x_{34}^{2} - x_{23} x_{24} x_{34} + x_{24}^{2}\right)} x_{45}}{x_{23}^{3} x_{34}^{2} x_{45}},\\ 
	&a_{45} = -\frac{x_{23} x_{35} - {\left(x_{23} x_{34} - x_{24}\right)} x_{45}}{x_{23}^{2} x_{34}^{2} x_{45}}
\end{align*}

%111010
\begin{equation*}x=\left(\begin{matrix}
		1 & 0 & 0 & x_{14} & 0 \\
		0 & 1 & x_{23} & x_{24} & x_{25} \\
		0 & 0 & 1 & x_{34} & x_{35}  \\
		0 & 0 & 0 & 1 & x_{45}  \\
		0 & 0 & 0 & 0 & 1 \\
	\end{matrix}\right),x^A=\left(\begin{matrix}
		1 & 0 & 0 & 0 & 0 \\
		0 & 1 & 1 & 0 & 0 \\
		0 & 0 & 1 & 1 & 0 \\
		0 & 0 & 0 & 1 & 1 \\
		0 & 0 & 0 & 0 & 1
	\end{matrix}\right)
\end{equation*} \newline
Where matrix $A$ has entries
\begin{align*}
	&d_{1} = 1, d_{2} = 1, d_{3} = \frac{1}{x_{23}}, d_{4} = \frac{1}{x_{23} x_{34}}, d_{5} = \frac{1}{x_{23} x_{34} x_{45}},\\ 
	&a_{12} = 0, a_{13} = \frac{x_{14}}{x_{23} x_{34}}, a_{14} = 1, a_{15} = 1,\\ 
	&a_{23} = \frac{x_{14} x_{23} x_{35} + {\left(x_{23}^{2} x_{34}^{2} + x_{14} x_{24}\right)} x_{45}}{x_{14} x_{23} x_{34} x_{45}}, a_{24} = 1, a_{25} = 1,\\ 
	&a_{34} = \frac{x_{23} x_{34}^{2} x_{45} + x_{14} x_{35}}{x_{14} x_{23} x_{34} x_{45}}, a_{35} = -\frac{x_{14} x_{25} - {\left(x_{14} x_{23} - x_{23} x_{24}\right)} x_{34} x_{45}}{x_{14} x_{23}^{2} x_{34} x_{45}},\\ 
	&a_{45} = \frac{1}{x_{14}}
\end{align*}

%111011
\begin{equation*}x=\left(\begin{matrix}
		1 & 0 & 0 & x_{14} & x_{15} \\
		0 & 1 & x_{23} & x_{24} & x_{25} \\
		0 & 0 & 1 & x_{34} & x_{35}  \\
		0 & 0 & 0 & 1 & x_{45}  \\
		0 & 0 & 0 & 0 & 1 \\
	\end{matrix}\right),x^A=\left(\begin{matrix}
		1 & 0 & 0 & 0 & 0 \\
		0 & 1 & 1 & 0 & 0 \\
		0 & 0 & 1 & 1 & 0 \\
		0 & 0 & 0 & 1 & 1 \\
		0 & 0 & 0 & 0 & 1
	\end{matrix}\right)
\end{equation*} \newline
Where matrix $A$ has entries
\begin{align*}
	&d_{1} = 1, d_{2} = 1, d_{3} = \frac{1}{x_{23}}, d_{4} = \frac{1}{x_{23} x_{34}}, d_{5} = \frac{1}{x_{23} x_{34} x_{45}},\\ 
	&a_{12} = 0, a_{13} = \frac{x_{14}}{x_{23} x_{34}}, a_{14} = 1, a_{15} = 1,\\ 
	&a_{23} = -\frac{x_{15} x_{23} x_{34} - x_{14} x_{23} x_{35} - {\left(x_{23}^{2} x_{34}^{2} + x_{14} x_{24}\right)} x_{45}}{x_{14} x_{23} x_{34} x_{45}}, a_{24} = 1, a_{25} = 1,\\ 
	&a_{34} = \frac{x_{23} x_{34}^{2} x_{45} - x_{15} x_{34} + x_{14} x_{35}}{x_{14} x_{23} x_{34} x_{45}}, a_{35} = \frac{x_{15} x_{24} - x_{14} x_{25} + {\left(x_{14} x_{23} - x_{23} x_{24}\right)} x_{34} x_{45}}{x_{14} x_{23}^{2} x_{34} x_{45}},\\ 
	&a_{45} = \frac{x_{23} x_{34} x_{45} - x_{15}}{x_{14} x_{23} x_{34} x_{45}}
\end{align*}

%111100
\begin{equation*}x=\left(\begin{matrix}
		1 & 0 & x_{13} & 0 & 0 \\
		0 & 1 & x_{23} & x_{24} & x_{25} \\
		0 & 0 & 1 & x_{34} & x_{35}  \\
		0 & 0 & 0 & 1 & x_{45}  \\
		0 & 0 & 0 & 0 & 1 \\
	\end{matrix}\right),x^A=\left(\begin{matrix}
		1 & 0 & 0 & 0 & 0 \\
		0 & 1 & 1 & 0 & 0 \\
		0 & 0 & 1 & 1 & 0 \\
		0 & 0 & 0 & 1 & 1 \\
		0 & 0 & 0 & 0 & 1
	\end{matrix}\right)
\end{equation*} \newline
Where matrix $A$ has entries
\begin{align*}
	&d_{1} = 1, d_{2} = 1, d_{3} = \frac{1}{x_{23}}, d_{4} = \frac{1}{x_{23} x_{34}}, d_{5} = \frac{1}{x_{23} x_{34} x_{45}},\\ 
	&a_{12} = \frac{x_{13}}{x_{23}}, a_{13} = 1, a_{14} = 1, a_{15} = 1,\\ 
	&a_{23} = \frac{x_{23}^{2} x_{34} + x_{13} x_{24}}{x_{13} x_{23} x_{34}}, a_{24} = \frac{x_{13} x_{25} x_{34} - x_{13} x_{24} x_{35} + {\left(x_{23}^{2} x_{34}^{2} + x_{23} x_{24} x_{34}\right)} x_{45}}{x_{13} x_{23} x_{34}^{2} x_{45}}, a_{25} = 1,\\ 
	&a_{34} = \frac{1}{x_{13}}, a_{35} = \frac{1}{x_{13}},\\ 
	&a_{45} = \frac{x_{23} x_{34} x_{45} - x_{13} x_{35}}{x_{13} x_{23} x_{34}^{2} x_{45}}
\end{align*}

%111101
\begin{equation*}x=\left(\begin{matrix}
		1 & 0 & x_{13} & 0 & x_{15} \\
		0 & 1 & x_{23} & x_{24} & x_{25} \\
		0 & 0 & 1 & x_{34} & x_{35}  \\
		0 & 0 & 0 & 1 & x_{45}  \\
		0 & 0 & 0 & 0 & 1 \\
	\end{matrix}\right),x^A=\left(\begin{matrix}
		1 & 0 & 0 & 0 & 0 \\
		0 & 1 & 1 & 0 & 0 \\
		0 & 0 & 1 & 1 & 0 \\
		0 & 0 & 0 & 1 & 1 \\
		0 & 0 & 0 & 0 & 1
	\end{matrix}\right)
\end{equation*} \newline
Where matrix $A$ has entries
\begin{align*}
	&d_{1} = 1, d_{2} = 1, d_{3} = \frac{1}{x_{23}}, d_{4} = \frac{1}{x_{23} x_{34}}, d_{5} = \frac{1}{x_{23} x_{34} x_{45}},\\ 
	&a_{12} = \frac{x_{13}}{x_{23}}, a_{13} = 1, a_{14} = 1, a_{15} = 1,\\ 
	&a_{23} = \frac{x_{23}^{2} x_{34} + x_{13} x_{24}}{x_{13} x_{23} x_{34}}, a_{24} = -\frac{x_{13} x_{24} x_{35} + {\left(x_{15} x_{23} - x_{13} x_{25}\right)} x_{34} - {\left(x_{23}^{2} x_{34}^{2} + x_{23} x_{24} x_{34}\right)} x_{45}}{x_{13} x_{23} x_{34}^{2} x_{45}}, a_{25} = 1,\\ 
	&a_{34} = \frac{1}{x_{13}}, a_{35} = \frac{x_{23} x_{34} x_{45} - x_{15}}{x_{13} x_{23} x_{34} x_{45}},\\ 
	&a_{45} = \frac{x_{23} x_{34} x_{45} - x_{13} x_{35}}{x_{13} x_{23} x_{34}^{2} x_{45}}
\end{align*}

%111110
\begin{equation*}x=\left(\begin{matrix}
		1 & 0 & x_{13} & x_{14} & 0 \\
		0 & 1 & x_{23} & x_{24} & x_{25} \\
		0 & 0 & 1 & x_{34} & x_{35}  \\
		0 & 0 & 0 & 1 & x_{45}  \\
		0 & 0 & 0 & 0 & 1 \\
	\end{matrix}\right),x^A=\left(\begin{matrix}
		1 & 0 & 0 & 0 & 0 \\
		0 & 1 & 1 & 0 & 0 \\
		0 & 0 & 1 & 1 & 0 \\
		0 & 0 & 0 & 1 & 1 \\
		0 & 0 & 0 & 0 & 1
	\end{matrix}\right)
\end{equation*} \newline
Where matrix $A$ has entries
\begin{align*}
	&d_{1} = 1, d_{2} = 1, d_{3} = \frac{1}{x_{23}}, d_{4} = \frac{1}{x_{23} x_{34}}, d_{5} = \frac{1}{x_{23} x_{34} x_{45}},\\ 
	&a_{12} = \frac{x_{13}}{x_{23}}, a_{13} = 1, a_{14} = 1, a_{15} = 1,\\ 
	&a_{23} = \frac{x_{23}^{2} x_{34} - x_{14} x_{23} + x_{13} x_{24}}{x_{13} x_{23} x_{34}}, \\
	&a_{24} = \frac{x_{13}^{2} x_{25} x_{34} + {\left(x_{13} x_{14} x_{23} - x_{13}^{2} x_{24}\right)} x_{35} + {\left(x_{13} x_{23}^{2} x_{34}^{2} + x_{14}^{2} x_{23} - x_{13} x_{14} x_{24} - {\left(x_{14} x_{23}^{2} - x_{13} x_{23} x_{24}\right)} x_{34}\right)} x_{45}}{x_{13}^{2} x_{23} x_{34}^{2} x_{45}}, a_{25} = 1,\\ 
	&a_{34} = \frac{x_{23} x_{34} - x_{14}}{x_{13} x_{23} x_{34}}, a_{35} = \frac{x_{13} x_{14} x_{35} + {\left(x_{13} x_{23} x_{34}^{2} - x_{14} x_{23} x_{34} + x_{14}^{2}\right)} x_{45}}{x_{13}^{2} x_{23} x_{34}^{2} x_{45}},\\ 
	&a_{45} = -\frac{x_{13} x_{35} - {\left(x_{23} x_{34} - x_{14}\right)} x_{45}}{x_{13} x_{23} x_{34}^{2} x_{45}}
\end{align*}

%111111
\begin{equation*}x=\left(\begin{matrix}
		1 & 0 & x_{13} & x_{14} & x_{15} \\
		0 & 1 & x_{23} & x_{24} & x_{25} \\
		0 & 0 & 1 & x_{34} & x_{35}  \\
		0 & 0 & 0 & 1 & x_{45}  \\
		0 & 0 & 0 & 0 & 1 \\
	\end{matrix}\right),x^A=\left(\begin{matrix}
		1 & 0 & 0 & 0 & 0 \\
		0 & 1 & 1 & 0 & 0 \\
		0 & 0 & 1 & 1 & 0 \\
		0 & 0 & 0 & 1 & 1 \\
		0 & 0 & 0 & 0 & 1
	\end{matrix}\right)
\end{equation*} \newline
Where matrix $A$ has entries
\begin{align*}
	&d_{1} = 1, d_{2} = 1, d_{3} = \frac{1}{x_{23}}, d_{4} = \frac{1}{x_{23} x_{34}}, d_{5} = \frac{1}{x_{23} x_{34} x_{45}},\\ 
	&a_{12} = \frac{x_{13}}{x_{23}}, a_{13} = 1, a_{14} = 1, a_{15} = 1,\\ 
	&a_{23} = \frac{x_{23}^{2} x_{34} - x_{14} x_{23} + x_{13} x_{24}}{x_{13} x_{23} x_{34}}, a_{25} = 1, \\
	&a_{24} = -\frac{{\left(x_{13} x_{15} x_{23} - x_{13}^{2} x_{25}\right)} x_{34} - {\left(x_{13} x_{14} x_{23} - x_{13}^{2} x_{24}\right)} x_{35} - {\left(x_{13} x_{23}^{2} x_{34}^{2} + x_{14}^{2} x_{23} - x_{13} x_{14} x_{24} - {\left(x_{14} x_{23}^{2} - x_{13} x_{23} x_{24}\right)} x_{34}\right)} x_{45}}{x_{13}^{2} x_{23} x_{34}^{2} x_{45}}, \\ 
	&a_{34} = \frac{x_{23} x_{34} - x_{14}}{x_{13} x_{23} x_{34}}, a_{35} = -\frac{x_{13} x_{15} x_{34} - x_{13} x_{14} x_{35} - {\left(x_{13} x_{23} x_{34}^{2} - x_{14} x_{23} x_{34} + x_{14}^{2}\right)} x_{45}}{x_{13}^{2} x_{23} x_{34}^{2} x_{45}},\\ 
	&a_{45} = -\frac{x_{13} x_{35} - {\left(x_{23} x_{34} - x_{14}\right)} x_{45}}{x_{13} x_{23} x_{34}^{2} x_{45}}
\end{align*}

		\section{Subcases of $Y_{16}$}

%000001
\begin{equation*}x=\left(% [inline block 73: 12 envs, 4388 chars -> data_tex | \begin{matrix} 		1 & x_{12} & 0 & 0 & x_{15} \\...]
\right)
\end{equation*} \newline
Where matrix $A$ has entries
\begin{align*}
	&d_{1} = 1, d_{2} = \frac{1}{x_{12}}, d_{3} = \frac{1}{x_{13}}, d_{4} = \frac{1}{x_{13} x_{34}}, d_{5} = \frac{1}{x_{13} x_{34} x_{45}},\\ 
	&a_{12} = 1, a_{13} = 1, a_{14} = 1, a_{15} = 1,\\ 
	&a_{23} = 0, a_{24} = 0, a_{25} = 1,\\ 
	&a_{34} = \frac{x_{13} x_{34} - x_{14}}{x_{13}^{2} x_{34}}, a_{35} = -\frac{{\left(x_{12} - 1\right)} x_{13}^{2} x_{34}^{2} + x_{13} x_{14} x_{34} - x_{14}^{2}}{x_{13}^{3} x_{34}^{2}},\\ 
	&a_{45} = \frac{x_{13} x_{34} - x_{14}}{x_{13}^{2} x_{34}^{2}}
\end{align*}

%000111
\begin{equation*}x=\left(\begin{matrix}
		1 & x_{12} & x_{13} & x_{14} & x_{15} \\
		0 & 1 & 0 & 0 & 0 \\
		0 & 0 & 1 & x_{34} & 0  \\
		0 & 0 & 0 & 1 & x_{45}  \\
		0 & 0 & 0 & 0 & 1 \\
	\end{matrix}\right),x^A=\left(\begin{matrix}
		1 & 1 & 1 & 0 & 0 \\
		0 & 1 & 0 & 0 & 0 \\
		0 & 0 & 1 & 1 & 0 \\
		0 & 0 & 0 & 1 & 1 \\
		0 & 0 & 0 & 0 & 1
	\end{matrix}\right)
\end{equation*} \newline
Where matrix $A$ has entries
\begin{align*}
	&d_{1} = 1, d_{2} = \frac{1}{x_{12}}, d_{3} = \frac{1}{x_{13}}, d_{4} = \frac{1}{x_{13} x_{34}}, d_{5} = \frac{1}{x_{13} x_{34} x_{45}},\\ 
	&a_{12} = 1, a_{13} = 1, a_{14} = 1, a_{15} = 1,\\ 
	&a_{23} = 0, a_{24} = 0, a_{25} = 1,\\ 
	&a_{34} = \frac{x_{13} x_{34} - x_{14}}{x_{13}^{2} x_{34}}, a_{35} = -\frac{x_{13} x_{15} x_{34} + {\left({\left(x_{12} - 1\right)} x_{13}^{2} x_{34}^{2} + x_{13} x_{14} x_{34} - x_{14}^{2}\right)} x_{45}}{x_{13}^{3} x_{34}^{2} x_{45}},\\ 
	&a_{45} = \frac{x_{13} x_{34} - x_{14}}{x_{13}^{2} x_{34}^{2}}
\end{align*}

%001000
\begin{equation*}x=\left(% [inline block 74: 8 envs, 2794 chars -> data_tex | \begin{matrix} 		1 & x_{12} & 0 & 0 & 0 \\...]
\right)
\end{equation*} \newline
Where matrix $A$ has entries
\begin{align*}
	&d_{1} = 1, d_{2} = \frac{1}{x_{12}}, d_{3} = 1, d_{4} = \frac{1}{x_{34}}, d_{5} = \frac{1}{x_{34} x_{45}},\\ 
	&a_{12} = 1, a_{13} = \frac{x_{12} x_{25} + x_{14} x_{45}}{x_{34} x_{45}}, a_{14} = 1, a_{15} = 1,\\ 
	&a_{23} = 0, a_{24} = \frac{x_{25}}{x_{34} x_{45}}, a_{25} = 1,\\ 
	&a_{34} = -\frac{{\left(x_{12} - 1\right)} x_{34} x_{45} + x_{15}}{x_{14} x_{45}}, a_{35} = 1,\\ 
	&a_{45} = -\frac{{\left(x_{12} - 1\right)} x_{34} x_{45} + x_{15}}{x_{14} x_{34} x_{45}}
\end{align*}

%001100
\begin{equation*}x=\left(\begin{matrix}
		1 & x_{12} & x_{13} & 0 & 0 \\
		0 & 1 & 0 & 0 & x_{25} \\
		0 & 0 & 1 & x_{34} & 0  \\
		0 & 0 & 0 & 1 & x_{45}  \\
		0 & 0 & 0 & 0 & 1 \\
	\end{matrix}\right),x^A=\left(\begin{matrix}
		1 & 1 & 1 & 0 & 0 \\
		0 & 1 & 0 & 0 & 0 \\
		0 & 0 & 1 & 1 & 0 \\
		0 & 0 & 0 & 1 & 1 \\
		0 & 0 & 0 & 0 & 1
	\end{matrix}\right)
\end{equation*} \newline
Where matrix $A$ has entries
\begin{align*}
	&d_{1} = 1, d_{2} = \frac{1}{x_{12}}, d_{3} = \frac{1}{x_{13}}, d_{4} = \frac{1}{x_{13} x_{34}}, d_{5} = \frac{1}{x_{13} x_{34} x_{45}},\\ 
	&a_{12} = 1, a_{13} = 1, a_{14} = 1, a_{15} = 1,\\ 
	&a_{23} = 0, a_{24} = \frac{x_{25}}{x_{13} x_{34} x_{45}}, a_{25} = 1,\\ 
	&a_{34} = \frac{x_{13} x_{34} x_{45} - x_{12} x_{25}}{x_{13}^{2} x_{34} x_{45}}, a_{35} = -\frac{x_{12} - 1}{x_{13}},\\ 
	&a_{45} = \frac{x_{13} x_{34} x_{45} - x_{12} x_{25}}{x_{13}^{2} x_{34}^{2} x_{45}}
\end{align*}

%001101
\begin{equation*}x=\left(\begin{matrix}
		1 & x_{12} & x_{13} & 0 & x_{15} \\
		0 & 1 & 0 & 0 & x_{25} \\
		0 & 0 & 1 & x_{34} & 0  \\
		0 & 0 & 0 & 1 & x_{45}  \\
		0 & 0 & 0 & 0 & 1 \\
	\end{matrix}\right),x^A=\left(\begin{matrix}
		1 & 1 & 1 & 0 & 0 \\
		0 & 1 & 0 & 0 & 0 \\
		0 & 0 & 1 & 1 & 0 \\
		0 & 0 & 0 & 1 & 1 \\
		0 & 0 & 0 & 0 & 1
	\end{matrix}\right)
\end{equation*} \newline
Where matrix $A$ has entries
\begin{align*}
	&d_{1} = 1, d_{2} = \frac{1}{x_{12}}, d_{3} = \frac{1}{x_{13}}, d_{4} = \frac{1}{x_{13} x_{34}}, d_{5} = \frac{1}{x_{13} x_{34} x_{45}},\\ 
	&a_{12} = 1, a_{13} = 1, a_{14} = 1, a_{15} = 1,\\ 
	&a_{23} = 0, a_{24} = \frac{x_{25}}{x_{13} x_{34} x_{45}}, a_{25} = 1,\\ 
	&a_{34} = \frac{x_{13} x_{34} x_{45} - x_{12} x_{25}}{x_{13}^{2} x_{34} x_{45}}, a_{35} = -\frac{{\left(x_{12} - 1\right)} x_{13} x_{34} x_{45} + x_{15}}{x_{13}^{2} x_{34} x_{45}},\\ 
	&a_{45} = \frac{x_{13} x_{34} x_{45} - x_{12} x_{25}}{x_{13}^{2} x_{34}^{2} x_{45}}
\end{align*}

%001110
\begin{equation*}x=\left(\begin{matrix}
		1 & x_{12} & x_{13} & x_{14} & 0 \\
		0 & 1 & 0 & 0 & x_{25} \\
		0 & 0 & 1 & x_{34} & 0  \\
		0 & 0 & 0 & 1 & x_{45}  \\
		0 & 0 & 0 & 0 & 1 \\
	\end{matrix}\right),x^A=\left(\begin{matrix}
		1 & 1 & 1 & 0 & 0 \\
		0 & 1 & 0 & 0 & 0 \\
		0 & 0 & 1 & 1 & 0 \\
		0 & 0 & 0 & 1 & 1 \\
		0 & 0 & 0 & 0 & 1
	\end{matrix}\right)
\end{equation*} \newline
Where matrix $A$ has entries
\begin{align*}
	&d_{1} = 1, d_{2} = \frac{1}{x_{12}}, d_{3} = \frac{1}{x_{13}}, d_{4} = \frac{1}{x_{13} x_{34}}, d_{5} = \frac{1}{x_{13} x_{34} x_{45}},\\ 
	&a_{12} = 1, a_{13} = 1, a_{14} = 1, a_{15} = 1,\\ 
	&a_{23} = 0, a_{24} = \frac{x_{25}}{x_{13} x_{34} x_{45}}, a_{25} = 1,\\ 
	&a_{34} = -\frac{x_{12} x_{25} - {\left(x_{13} x_{34} - x_{14}\right)} x_{45}}{x_{13}^{2} x_{34} x_{45}}, a_{35} = \frac{x_{12} x_{14} x_{25} - {\left({\left(x_{12} - 1\right)} x_{13}^{2} x_{34}^{2} + x_{13} x_{14} x_{34} - x_{14}^{2}\right)} x_{45}}{x_{13}^{3} x_{34}^{2} x_{45}},\\ 
	&a_{45} = -\frac{x_{12} x_{25} - {\left(x_{13} x_{34} - x_{14}\right)} x_{45}}{x_{13}^{2} x_{34}^{2} x_{45}}
\end{align*}

%001111
\begin{equation*}x=\left(\begin{matrix}
		1 & x_{12} & x_{13} & x_{14} & x_{15} \\
		0 & 1 & 0 & 0 & x_{25} \\
		0 & 0 & 1 & x_{34} & 0  \\
		0 & 0 & 0 & 1 & x_{45}  \\
		0 & 0 & 0 & 0 & 1 \\
	\end{matrix}\right),x^A=\left(\begin{matrix}
		1 & 1 & 1 & 0 & 0 \\
		0 & 1 & 0 & 0 & 0 \\
		0 & 0 & 1 & 1 & 0 \\
		0 & 0 & 0 & 1 & 1 \\
		0 & 0 & 0 & 0 & 1
	\end{matrix}\right)
\end{equation*} \newline
Where matrix $A$ has entries
\begin{align*}
	&d_{1} = 1, d_{2} = \frac{1}{x_{12}}, d_{3} = \frac{1}{x_{13}}, d_{4} = \frac{1}{x_{13} x_{34}}, d_{5} = \frac{1}{x_{13} x_{34} x_{45}},\\ 
	&a_{12} = 1, a_{13} = 1, a_{14} = 1, a_{15} = 1,\\ 
	&a_{23} = 0, a_{24} = \frac{x_{25}}{x_{13} x_{34} x_{45}}, a_{25} = 1,\\ 
	&a_{34} = -\frac{x_{12} x_{25} - {\left(x_{13} x_{34} - x_{14}\right)} x_{45}}{x_{13}^{2} x_{34} x_{45}}, a_{35} = \frac{x_{12} x_{14} x_{25} - x_{13} x_{15} x_{34} - {\left({\left(x_{12} - 1\right)} x_{13}^{2} x_{34}^{2} + x_{13} x_{14} x_{34} - x_{14}^{2}\right)} x_{45}}{x_{13}^{3} x_{34}^{2} x_{45}},\\ 
	&a_{45} = -\frac{x_{12} x_{25} - {\left(x_{13} x_{34} - x_{14}\right)} x_{45}}{x_{13}^{2} x_{34}^{2} x_{45}}
\end{align*}

%010000
\begin{equation*}x=\left(% [inline block 75: 6 envs, 2039 chars -> data_tex | \begin{matrix} 		1 & x_{12} & 0 & 0 & 0 \\...]
\right)
\end{equation*} \newline
Where matrix $A$ has entries
\begin{align*}
	&d_{1} = 1, d_{2} = \frac{1}{x_{12}}, d_{3} = \frac{x_{34}}{x_{12} x_{24}}, d_{4} = \frac{1}{x_{12} x_{24}}, d_{5} = \frac{1}{x_{12} x_{24} x_{45}},\\ 
	&a_{12} = 1, a_{13} = 1, a_{14} = 1, a_{15} = 1,\\ 
	&a_{23} = \frac{1}{x_{12}}, a_{24} = \frac{x_{12} x_{24} - x_{14}}{x_{12}^{2} x_{24}}, a_{25} = \frac{x_{12}^{2} x_{24}^{2} - x_{12} x_{14} x_{24} + x_{14}^{2}}{x_{12}^{3} x_{24}^{2}},\\ 
	&a_{34} = \frac{{\left(x_{12} x_{24} - x_{14}\right)} x_{34}}{x_{12}^{2} x_{24}^{2}}, a_{35} = 1,\\ 
	&a_{45} = \frac{x_{12} x_{24} - x_{14}}{x_{12}^{2} x_{24}^{2}}
\end{align*}

%010011
\begin{equation*}x=\left(\begin{matrix}
		1 & x_{12} & 0 & x_{14} & x_{15} \\
		0 & 1 & 0 & x_{24} & 0 \\
		0 & 0 & 1 & x_{34} & 0  \\
		0 & 0 & 0 & 1 & x_{45}  \\
		0 & 0 & 0 & 0 & 1 \\
	\end{matrix}\right),x^A=\left(\begin{matrix}
		1 & 1 & 1 & 0 & 0 \\
		0 & 1 & 0 & 0 & 0 \\
		0 & 0 & 1 & 1 & 0 \\
		0 & 0 & 0 & 1 & 1 \\
		0 & 0 & 0 & 0 & 1
	\end{matrix}\right)
\end{equation*} \newline
Where matrix $A$ has entries
\begin{align*}
	&d_{1} = 1, d_{2} = \frac{1}{x_{12}}, d_{3} = \frac{x_{34}}{x_{12} x_{24}}, d_{4} = \frac{1}{x_{12} x_{24}}, d_{5} = \frac{1}{x_{12} x_{24} x_{45}},\\ 
	&a_{12} = 1, a_{13} = 1, a_{14} = 1, a_{15} = 1,\\ 
	&a_{23} = \frac{1}{x_{12}}, a_{24} = \frac{x_{12} x_{24} - x_{14}}{x_{12}^{2} x_{24}}, a_{25} = -\frac{x_{12} x_{15} x_{24} - {\left(x_{12}^{2} x_{24}^{2} - x_{12} x_{14} x_{24} + x_{14}^{2}\right)} x_{45}}{x_{12}^{3} x_{24}^{2} x_{45}},\\ 
	&a_{34} = \frac{{\left(x_{12} x_{24} - x_{14}\right)} x_{34}}{x_{12}^{2} x_{24}^{2}}, a_{35} = 1,\\ 
	&a_{45} = \frac{x_{12} x_{24} - x_{14}}{x_{12}^{2} x_{24}^{2}}
\end{align*}

%010100

First assume $x_{24}\neq \frac{-x_{13}x_{34}}{x_{12}}$
\begin{equation*}x=\left(\begin{matrix}
		1 & x_{12} & x_{13} & 0 & 0 \\
		0 & 1 & 0 & x_{24} & 0 \\
		0 & 0 & 1 & x_{34} & 0  \\
		0 & 0 & 0 & 1 & x_{45}  \\
		0 & 0 & 0 & 0 & 1 \\
	\end{matrix}\right),x^A=\left(\begin{matrix}
		1 & 1 & 1 & 0 & 0 \\
		0 & 1 & 0 & 0 & 0 \\
		0 & 0 & 1 & 1 & 0 \\
		0 & 0 & 0 & 1 & 1 \\
		0 & 0 & 0 & 0 & 1
	\end{matrix}\right)
\end{equation*} \newline
Where matrix $A$ has entries
\begin{align*}
	&d_{1} = 1, d_{2} = \frac{1}{x_{12}}, d_{3} = \frac{x_{34}}{x_{12} x_{24} + x_{13} x_{34}}, d_{4} = \frac{1}{x_{12} x_{24} + x_{13} x_{34}}, d_{5} = \frac{1}{{\left(x_{12} x_{24} + x_{13} x_{34}\right)} x_{45}},\\ 
	&a_{12} = 1, a_{13} = 1, a_{14} = 1, a_{15} = 1,\\ 
	&a_{23} = \frac{x_{24}}{x_{12} x_{24} + x_{13} x_{34}}, a_{24} = \frac{x_{24}}{x_{12} x_{24} + x_{13} x_{34}}, a_{25} = 1,\\ 
	&a_{34} = \frac{x_{34}}{x_{12} x_{24} + x_{13} x_{34}}, a_{35} = -\frac{x_{12} - 1}{x_{13}},\\ 
	&a_{45} = \frac{1}{x_{12} x_{24} + x_{13} x_{34}}
\end{align*}

Now assume $x_{24}= \frac{-x_{13}x_{34}}{x_{12}}$
\begin{equation*}x=\left(\begin{matrix}
		1 & x_{12} & x_{13} & 0 & 0 \\
		0 & 1 & 0 & x_{24} & 0 \\
		0 & 0 & 1 & x_{34} & 0  \\
		0 & 0 & 0 & 1 & x_{45}  \\
		0 & 0 & 0 & 0 & 1 \\
	\end{matrix}\right),x^A=\left(\begin{matrix}
		1 & 1 & 0 & 0 & 0 \\
		0 & 1 & 0 & 0 & 0 \\
		0 & 0 & 1 & 1 & 0 \\
		0 & 0 & 0 & 1 & 1 \\
		0 & 0 & 0 & 0 & 1
	\end{matrix}\right)
\end{equation*} \newline
Where matrix $A$ has entries
\begin{align*}
	&d_{1} = 1, d_{2} = \frac{1}{x_{12}}, d_{3} = 1, d_{4} = \frac{1}{x_{34}}, d_{5} = \frac{1}{x_{34} x_{45}},\\ 
	&a_{12} = 1, a_{13} = 0, a_{14} = 1, a_{15} = 1,\\ 
	&a_{23} = -\frac{x_{13}}{x_{12}}, a_{24} = 1, a_{25} = 1,\\ 
	&a_{34} = -\frac{x_{12}}{x_{13}}, a_{35} = -\frac{x_{12} - 1}{x_{13}},\\ 
	&a_{45} = -\frac{x_{12}}{x_{13} x_{34}}
\end{align*}

%010101

First assume $x_{24}\neq \frac{-x_{13}x_{34}}{x_{12}}$
\begin{equation*}x=\left(\begin{matrix}
		1 & x_{12} & x_{13} & 0 & x_{15} \\
		0 & 1 & 0 & x_{24} & 0 \\
		0 & 0 & 1 & x_{34} & 0  \\
		0 & 0 & 0 & 1 & x_{45}  \\
		0 & 0 & 0 & 0 & 1 \\
	\end{matrix}\right),x^A=\left(\begin{matrix}
		1 & 1 & 1 & 0 & 0 \\
		0 & 1 & 0 & 0 & 0 \\
		0 & 0 & 1 & 1 & 0 \\
		0 & 0 & 0 & 1 & 1 \\
		0 & 0 & 0 & 0 & 1
	\end{matrix}\right)
\end{equation*} \newline
Where matrix $A$ has entries
\begin{align*}
	&d_{1} = 1, d_{2} = \frac{1}{x_{12}}, d_{3} = \frac{x_{34}}{x_{12} x_{24} + x_{13} x_{34}}, d_{4} = \frac{1}{x_{12} x_{24} + x_{13} x_{34}}, d_{5} = \frac{1}{{\left(x_{12} x_{24} + x_{13} x_{34}\right)} x_{45}},\\ 
	&a_{12} = 1, a_{13} = 1, a_{14} = 1, a_{15} = 1,\\ 
	&a_{23} = \frac{x_{24}}{x_{12} x_{24} + x_{13} x_{34}}, a_{24} = \frac{x_{24}}{x_{12} x_{24} + x_{13} x_{34}}, a_{25} = 1,\\ 
	&a_{34} = \frac{x_{34}}{x_{12} x_{24} + x_{13} x_{34}}, a_{35} = -\frac{x_{15} + {\left({\left(x_{12} - 1\right)} x_{13} x_{34} + {\left(x_{12}^{2} - x_{12}\right)} x_{24}\right)} x_{45}}{{\left(x_{12} x_{13} x_{24} + x_{13}^{2} x_{34}\right)} x_{45}},\\ 
	&a_{45} = \frac{1}{x_{12} x_{24} + x_{13} x_{34}}
\end{align*}

Now assume $x_{24}= \frac{-x_{13}x_{34}}{x_{12}}$
\begin{equation*}x=\left(\begin{matrix}
		1 & x_{12} & x_{13} & 0 & x_{15} \\
		0 & 1 & 0 & x_{24} & 0 \\
		0 & 0 & 1 & x_{34} & 0  \\
		0 & 0 & 0 & 1 & x_{45}  \\
		0 & 0 & 0 & 0 & 1 \\
	\end{matrix}\right),x^A=\left(\begin{matrix}
		1 & 1 & 0 & 0 & 0 \\
		0 & 1 & 0 & 0 & 0 \\
		0 & 0 & 1 & 1 & 0 \\
		0 & 0 & 0 & 1 & 1 \\
		0 & 0 & 0 & 0 & 1
	\end{matrix}\right)
\end{equation*} \newline
Where matrix $A$ has entries
\begin{align*}
	&d_{1} = 1, d_{2} = \frac{1}{x_{12}}, d_{3} = 1, d_{4} = \frac{1}{x_{34}}, d_{5} = \frac{1}{x_{34} x_{45}},\\ 
	&a_{12} = 1, a_{13} = 0, a_{14} = 1, a_{15} = 1,\\ 
	&a_{23} = -\frac{x_{13}}{x_{12}}, a_{24} = 1, a_{25} = 1,\\ 
	&a_{34} = -\frac{x_{12}}{x_{13}}, a_{35} = -\frac{{\left(x_{12} - 1\right)} x_{34} x_{45} + x_{15}}{x_{13} x_{34} x_{45}},\\ 
	&a_{45} = -\frac{x_{12}}{x_{13} x_{34}}
\end{align*}

%010110

First assume $x_{24}\neq \frac{-x_{13}x_{34}}{x_{12}}$
\begin{equation*}x=\left(\begin{matrix}
		1 & x_{12} & x_{13} & x_{14} & 0 \\
		0 & 1 & 0 & x_{24} & 0 \\
		0 & 0 & 1 & x_{34} & 0  \\
		0 & 0 & 0 & 1 & x_{45}  \\
		0 & 0 & 0 & 0 & 1 \\
	\end{matrix}\right),x^A=\left(\begin{matrix}
		1 & 1 & 1 & 0 & 0 \\
		0 & 1 & 0 & 0 & 0 \\
		0 & 0 & 1 & 1 & 0 \\
		0 & 0 & 0 & 1 & 1 \\
		0 & 0 & 0 & 0 & 1
	\end{matrix}\right)
\end{equation*} \newline
Where matrix $A$ has entries
\begin{align*}
	&d_{1} = 1, d_{2} = \frac{1}{x_{12}}, d_{3} = \frac{x_{34}}{x_{12} x_{24} + x_{13} x_{34}}, d_{4} = \frac{1}{x_{12} x_{24} + x_{13} x_{34}}, d_{5} = \frac{1}{{\left(x_{12} x_{24} + x_{13} x_{34}\right)} x_{45}},\\ 
	&a_{12} = 1, a_{13} = 1, a_{14} = 1, a_{15} = 1,\\ 
	&a_{23} = \frac{x_{24}}{x_{12} x_{24} + x_{13} x_{34}}, a_{24} = \frac{x_{12} x_{24}^{2} + x_{13} x_{24} x_{34} - x_{14} x_{24}}{x_{12}^{2} x_{24}^{2} + 2 \, x_{12} x_{13} x_{24} x_{34} + x_{13}^{2} x_{34}^{2}}, a_{25} = 1,\\ 
	&a_{34} = \frac{x_{13} x_{34}^{2} + {\left(x_{12} x_{24} - x_{14}\right)} x_{34}}{x_{12}^{2} x_{24}^{2} + 2 \, x_{12} x_{13} x_{24} x_{34} + x_{13}^{2} x_{34}^{2}}, \\
	&a_{35} = -\frac{{\left(x_{12} - 1\right)} x_{13}^{2} x_{34}^{2} + x_{12} x_{14} x_{24} - x_{14}^{2} + {\left(x_{12}^{3} - x_{12}^{2}\right)} x_{24}^{2} + {\left(x_{13} x_{14} + 2 \, {\left(x_{12}^{2} - x_{12}\right)} x_{13} x_{24}\right)} x_{34}}{x_{12}^{2} x_{13} x_{24}^{2} + 2 \, x_{12} x_{13}^{2} x_{24} x_{34} + x_{13}^{3} x_{34}^{2}},\\ 
	&a_{45} = \frac{x_{12} x_{24} + x_{13} x_{34} - x_{14}}{x_{12}^{2} x_{24}^{2} + 2 \, x_{12} x_{13} x_{24} x_{34} + x_{13}^{2} x_{34}^{2}}
\end{align*}

First assume $x_{24}\neq \frac{-x_{13}x_{34}}{x_{12}}$
\begin{equation*}x=\left(\begin{matrix}
		1 & x_{12} & x_{13} & x_{14} & 0 \\
		0 & 1 & 0 & x_{24} & 0 \\
		0 & 0 & 1 & x_{34} & 0  \\
		0 & 0 & 0 & 1 & x_{45}  \\
		0 & 0 & 0 & 0 & 1 \\
	\end{matrix}\right),x^A=\left(\begin{matrix}
		1 & 1 & 0 & 0 & 0 \\
		0 & 1 & 0 & 0 & 0 \\
		0 & 0 & 1 & 1 & 0 \\
		0 & 0 & 0 & 1 & 1 \\
		0 & 0 & 0 & 0 & 1
	\end{matrix}\right)
\end{equation*} \newline
Where matrix $A$ has entries
\begin{align*}
	&d_{1} = 1, d_{2} = \frac{1}{x_{12}}, d_{3} = 1, d_{4} = \frac{1}{x_{34}}, d_{5} = \frac{1}{x_{34} x_{45}},\\ 
	&a_{12} = 1, a_{13} = \frac{x_{14}}{x_{34}}, a_{14} = 1, a_{15} = 1,\\ 
	&a_{23} = -\frac{x_{13}}{x_{12}}, a_{24} = 1, a_{25} = 1,\\ 
	&a_{34} = -\frac{x_{12}}{x_{13}}, a_{35} = \frac{x_{12} x_{14} - {\left(x_{12} - 1\right)} x_{13} x_{34}}{x_{13}^{2} x_{34}},\\ 
	&a_{45} = -\frac{x_{12}}{x_{13} x_{34}}
\end{align*}

%010111

First assume $x_{24}\neq \frac{-x_{13}x_{34}}{x_{12}}$
\begin{equation*}x=\left(\begin{matrix}
		1 & x_{12} & x_{13} & x_{14} & x_{15} \\
		0 & 1 & 0 & x_{24} & 0 \\
		0 & 0 & 1 & x_{34} & 0  \\
		0 & 0 & 0 & 1 & x_{45}  \\
		0 & 0 & 0 & 0 & 1 \\
	\end{matrix}\right),x^A=\left(\begin{matrix}
		1 & 1 & 1 & 0 & 0 \\
		0 & 1 & 0 & 0 & 0 \\
		0 & 0 & 1 & 1 & 0 \\
		0 & 0 & 0 & 1 & 1 \\
		0 & 0 & 0 & 0 & 1
	\end{matrix}\right)
\end{equation*} \newline
Where matrix $A$ has entries
\begin{align*}
	&d_{1} = 1, d_{2} = \frac{1}{x_{12}}, d_{3} = \frac{x_{34}}{x_{12} x_{24} + x_{13} x_{34}}, d_{4} = \frac{1}{x_{12} x_{24} + x_{13} x_{34}}, d_{5} = \frac{1}{{\left(x_{12} x_{24} + x_{13} x_{34}\right)} x_{45}},\\ 
	&a_{12} = 1, a_{13} = 1, a_{14} = 1, a_{15} = 1,\\ 
	&a_{23} = \frac{x_{24}}{x_{12} x_{24} + x_{13} x_{34}}, a_{24} = \frac{x_{12} x_{24}^{2} + x_{13} x_{24} x_{34} - x_{14} x_{24}}{x_{12}^{2} x_{24}^{2} + 2 \, x_{12} x_{13} x_{24} x_{34} + x_{13}^{2} x_{34}^{2}}, a_{25} = 1,\\ 
	&a_{34} = \frac{x_{13} x_{34}^{2} + {\left(x_{12} x_{24} - x_{14}\right)} x_{34}}{x_{12}^{2} x_{24}^{2} + 2 \, x_{12} x_{13} x_{24} x_{34} + x_{13}^{2} x_{34}^{2}}, \\
	&a_{35} = -\frac{x_{12} x_{15} x_{24} + x_{13} x_{15} x_{34} + {\left({\left(x_{12} - 1\right)} x_{13}^{2} x_{34}^{2} + x_{12} x_{14} x_{24} - x_{14}^{2} + {\left(x_{12}^{3} - x_{12}^{2}\right)} x_{24}^{2} + {\left(x_{13} x_{14} + 2 \, {\left(x_{12}^{2} - x_{12}\right)} x_{13} x_{24}\right)} x_{34}\right)} x_{45}}{{\left(x_{12}^{2} x_{13} x_{24}^{2} + 2 \, x_{12} x_{13}^{2} x_{24} x_{34} + x_{13}^{3} x_{34}^{2}\right)} x_{45}},\\ 
	&a_{45} = \frac{x_{12} x_{24} + x_{13} x_{34} - x_{14}}{x_{12}^{2} x_{24}^{2} + 2 \, x_{12} x_{13} x_{24} x_{34} + x_{13}^{2} x_{34}^{2}}
\end{align*}

First assume $x_{24}\neq \frac{-x_{13}x_{34}}{x_{12}}$
\begin{equation*}x=\left(\begin{matrix}
		1 & x_{12} & x_{13} & x_{14} & 0 \\
		0 & 1 & 0 & x_{24} & 0 \\
		0 & 0 & 1 & x_{34} & 0  \\
		0 & 0 & 0 & 1 & x_{45}  \\
		0 & 0 & 0 & 0 & 1 \\
	\end{matrix}\right),x^A=\left(\begin{matrix}
		1 & 1 & 0 & 0 & 0 \\
		0 & 1 & 0 & 0 & 0 \\
		0 & 0 & 1 & 1 & 0 \\
		0 & 0 & 0 & 1 & 1 \\
		0 & 0 & 0 & 0 & 1
	\end{matrix}\right)
\end{equation*} \newline
Where matrix $A$ has entries
\begin{align*}
	&d_{1} = 1, d_{2} = \frac{1}{x_{12}}, d_{3} = 1, d_{4} = \frac{1}{x_{34}}, d_{5} = \frac{1}{x_{34} x_{45}},\\ 
	&a_{12} = 1, a_{13} = \frac{x_{14}}{x_{34}}, a_{14} = 1, a_{15} = 1,\\ 
	&a_{23} = -\frac{x_{13}}{x_{12}}, a_{24} = 1, a_{25} = 1,\\ 
	&a_{34} = -\frac{x_{12}}{x_{13}}, a_{35} = -\frac{x_{13} x_{15} - {\left(x_{12} x_{14} - {\left(x_{12} - 1\right)} x_{13} x_{34}\right)} x_{45}}{x_{13}^{2} x_{34} x_{45}},\\ 
	&a_{45} = -\frac{x_{12}}{x_{13} x_{34}}
\end{align*}

%011000
\begin{equation*}x=\left(\begin{matrix}
		1 & x_{12} & 0 & 0 & 0 \\
		0 & 1 & 0 & x_{24} & x_{25} \\
		0 & 0 & 1 & x_{34} & 0  \\
		0 & 0 & 0 & 1 & x_{45}  \\
		0 & 0 & 0 & 0 & 1 \\
	\end{matrix}\right),x^A=\left(\begin{matrix}
		1 & 1 & 1 & 0 & 0 \\
		0 & 1 & 0 & 0 & 0 \\
		0 & 0 & 1 & 1 & 0 \\
		0 & 0 & 0 & 1 & 1 \\
		0 & 0 & 0 & 0 & 1
	\end{matrix}\right)
\end{equation*} \newline
Where matrix $A$ has entries
\begin{align*}
	&d_{1} = 1, d_{2} = \frac{1}{x_{12}}, d_{3} = \frac{x_{34}}{x_{12} x_{24}}, d_{4} = \frac{1}{x_{12} x_{24}}, d_{5} = \frac{1}{x_{12} x_{24} x_{45}},\\ 
	&a_{12} = 1, a_{13} = 1, a_{14} = 1, a_{15} = 1,\\ 
	&a_{23} = \frac{1}{x_{12}}, a_{24} = \frac{1}{x_{12}}, a_{25} = \frac{1}{x_{12}},\\ 
	&a_{34} = \frac{x_{24} x_{34} x_{45} - x_{25} x_{34}}{x_{12} x_{24}^{2} x_{45}}, a_{35} = 1,\\ 
	&a_{45} = \frac{x_{24} x_{45} - x_{25}}{x_{12} x_{24}^{2} x_{45}}
\end{align*}

%011001
\begin{equation*}x=\left(\begin{matrix}
		1 & x_{12} & 0 & 0 & x_{15} \\
		0 & 1 & 0 & x_{24} & x_{25} \\
		0 & 0 & 1 & x_{34} & 0  \\
		0 & 0 & 0 & 1 & x_{45}  \\
		0 & 0 & 0 & 0 & 1 \\
	\end{matrix}\right),x^A=\left(\begin{matrix}
		1 & 1 & 1 & 0 & 0 \\
		0 & 1 & 0 & 0 & 0 \\
		0 & 0 & 1 & 1 & 0 \\
		0 & 0 & 0 & 1 & 1 \\
		0 & 0 & 0 & 0 & 1
	\end{matrix}\right)
\end{equation*} \newline
Where matrix $A$ has entries
\begin{align*}
	&d_{1} = 1, d_{2} = \frac{1}{x_{12}}, d_{3} = \frac{x_{34}}{x_{12} x_{24}}, d_{4} = \frac{1}{x_{12} x_{24}}, d_{5} = \frac{1}{x_{12} x_{24} x_{45}},\\ 
	&a_{12} = 1, a_{13} = 1, a_{14} = 1, a_{15} = 1,\\ 
	&a_{23} = \frac{1}{x_{12}}, a_{24} = \frac{1}{x_{12}}, a_{25} = \frac{x_{12} x_{24} x_{45} - x_{15}}{x_{12}^{2} x_{24} x_{45}},\\ 
	&a_{34} = \frac{x_{24} x_{34} x_{45} - x_{25} x_{34}}{x_{12} x_{24}^{2} x_{45}}, a_{35} = 1,\\ 
	&a_{45} = \frac{x_{24} x_{45} - x_{25}}{x_{12} x_{24}^{2} x_{45}}
\end{align*}

%011010
\begin{equation*}x=\left(\begin{matrix}
		1 & x_{12} & 0 & x_{14} & 0 \\
		0 & 1 & 0 & x_{24} & x_{25} \\
		0 & 0 & 1 & x_{34} & 0  \\
		0 & 0 & 0 & 1 & x_{45}  \\
		0 & 0 & 0 & 0 & 1 \\
	\end{matrix}\right),x^A=\left(\begin{matrix}
		1 & 1 & 1 & 0 & 0 \\
		0 & 1 & 0 & 0 & 0 \\
		0 & 0 & 1 & 1 & 0 \\
		0 & 0 & 0 & 1 & 1 \\
		0 & 0 & 0 & 0 & 1
	\end{matrix}\right)
\end{equation*} \newline
Where matrix $A$ has entries
\begin{align*}
	&d_{1} = 1, d_{2} = \frac{1}{x_{12}}, d_{3} = \frac{x_{34}}{x_{12} x_{24}}, d_{4} = \frac{1}{x_{12} x_{24}}, d_{5} = \frac{1}{x_{12} x_{24} x_{45}},\\ 
	&a_{12} = 1, a_{13} = 1, a_{14} = 1, a_{15} = 1,\\ 
	&a_{23} = \frac{1}{x_{12}}, a_{24} = \frac{x_{12} x_{24} - x_{14}}{x_{12}^{2} x_{24}}, a_{25} = \frac{x_{12} x_{14} x_{25} + {\left(x_{12}^{2} x_{24}^{2} - x_{12} x_{14} x_{24} + x_{14}^{2}\right)} x_{45}}{x_{12}^{3} x_{24}^{2} x_{45}},\\ 
	&a_{34} = -\frac{x_{12} x_{25} x_{34} - {\left(x_{12} x_{24} - x_{14}\right)} x_{34} x_{45}}{x_{12}^{2} x_{24}^{2} x_{45}}, a_{35} = 1,\\ 
	&a_{45} = -\frac{x_{12} x_{25} - {\left(x_{12} x_{24} - x_{14}\right)} x_{45}}{x_{12}^{2} x_{24}^{2} x_{45}}
\end{align*}

%011011
\begin{equation*}x=\left(\begin{matrix}
		1 & x_{12} & 0 & x_{14} & x_{15} \\
		0 & 1 & 0 & x_{24} & x_{25} \\
		0 & 0 & 1 & x_{34} & 0  \\
		0 & 0 & 0 & 1 & x_{45}  \\
		0 & 0 & 0 & 0 & 1 \\
	\end{matrix}\right),x^A=\left(\begin{matrix}
		1 & 1 & 1 & 0 & 0 \\
		0 & 1 & 0 & 0 & 0 \\
		0 & 0 & 1 & 1 & 0 \\
		0 & 0 & 0 & 1 & 1 \\
		0 & 0 & 0 & 0 & 1
	\end{matrix}\right)
\end{equation*} \newline
Where matrix $A$ has entries
\begin{align*}
	&d_{1} = 1, d_{2} = \frac{1}{x_{12}}, d_{3} = \frac{x_{34}}{x_{12} x_{24}}, d_{4} = \frac{1}{x_{12} x_{24}}, d_{5} = \frac{1}{x_{12} x_{24} x_{45}},\\ 
	&a_{12} = 1, a_{13} = 1, a_{14} = 1, a_{15} = 1,\\ 
	&a_{23} = \frac{1}{x_{12}}, a_{24} = \frac{x_{12} x_{24} - x_{14}}{x_{12}^{2} x_{24}}, a_{25} = -\frac{x_{12} x_{15} x_{24} - x_{12} x_{14} x_{25} - {\left(x_{12}^{2} x_{24}^{2} - x_{12} x_{14} x_{24} + x_{14}^{2}\right)} x_{45}}{x_{12}^{3} x_{24}^{2} x_{45}},\\ 
	&a_{34} = -\frac{x_{12} x_{25} x_{34} - {\left(x_{12} x_{24} - x_{14}\right)} x_{34} x_{45}}{x_{12}^{2} x_{24}^{2} x_{45}}, a_{35} = 1,\\ 
	&a_{45} = -\frac{x_{12} x_{25} - {\left(x_{12} x_{24} - x_{14}\right)} x_{45}}{x_{12}^{2} x_{24}^{2} x_{45}}
\end{align*}

%011100

First assume $x_{24}\neq \frac{-x_{13}x_{34}}{x_{12}}$.
\begin{equation*}x=\left(\begin{matrix}
		1 & x_{12} & x_{13} & 0 & 0 \\
		0 & 1 & 0 & x_{24} & x_{25} \\
		0 & 0 & 1 & x_{34} & 0  \\
		0 & 0 & 0 & 1 & x_{45}  \\
		0 & 0 & 0 & 0 & 1 \\
	\end{matrix}\right),x^A=\left(\begin{matrix}
		1 & 1 & 1 & 0 & 0 \\
		0 & 1 & 0 & 0 & 0 \\
		0 & 0 & 1 & 1 & 0 \\
		0 & 0 & 0 & 1 & 1 \\
		0 & 0 & 0 & 0 & 1
	\end{matrix}\right)
\end{equation*} \newline
Where matrix $A$ has entries
\begin{align*}
	&d_{1} = 1, d_{2} = \frac{1}{x_{12}}, d_{3} = \frac{x_{34}}{x_{12} x_{24} + x_{13} x_{34}}, d_{4} = \frac{1}{x_{12} x_{24} + x_{13} x_{34}}, d_{5} = \frac{1}{{\left(x_{12} x_{24} + x_{13} x_{34}\right)} x_{45}},\\ 
	&a_{12} = 1, a_{13} = 1, a_{14} = 1, a_{15} = 1,\\ 
	&a_{23} = \frac{x_{24}}{x_{12} x_{24} + x_{13} x_{34}}, a_{24} = \frac{x_{13} x_{25} x_{34} + {\left(x_{12} x_{24}^{2} + x_{13} x_{24} x_{34}\right)} x_{45}}{{\left(x_{12}^{2} x_{24}^{2} + 2 \, x_{12} x_{13} x_{24} x_{34} + x_{13}^{2} x_{34}^{2}\right)} x_{45}}, a_{25} = 1,\\ 
	&a_{34} = -\frac{x_{12} x_{25} x_{34} - {\left(x_{12} x_{24} x_{34} + x_{13} x_{34}^{2}\right)} x_{45}}{{\left(x_{12}^{2} x_{24}^{2} + 2 \, x_{12} x_{13} x_{24} x_{34} + x_{13}^{2} x_{34}^{2}\right)} x_{45}}, a_{35} = -\frac{x_{12} - 1}{x_{13}},\\ 
	&a_{45} = -\frac{x_{12} x_{25} - {\left(x_{12} x_{24} + x_{13} x_{34}\right)} x_{45}}{{\left(x_{12}^{2} x_{24}^{2} + 2 \, x_{12} x_{13} x_{24} x_{34} + x_{13}^{2} x_{34}^{2}\right)} x_{45}}
\end{align*}

Now assume $x_{24}=\frac{-x_{13}x_{34}}{x_{12}}$.
\begin{equation*}x=\left(\begin{matrix}
		1 & x_{12} & x_{13} & 0 & 0 \\
		0 & 1 & 0 & x_{24} & x_{25} \\
		0 & 0 & 1 & x_{34} & 0  \\
		0 & 0 & 0 & 1 & x_{45}  \\
		0 & 0 & 0 & 0 & 1 \\
	\end{matrix}\right),x^A=\left(\begin{matrix}
		1 & 1 & 0 & 0 & 0 \\
		0 & 1 & 0 & 0 & 0 \\
		0 & 0 & 1 & 1 & 0 \\
		0 & 0 & 0 & 1 & 1 \\
		0 & 0 & 0 & 0 & 1
	\end{matrix}\right)
\end{equation*} \newline
Where matrix $A$ has entries
\begin{align*}
	&d_{1} = 1, d_{2} = \frac{1}{x_{12}}, d_{3} = 1, d_{4} = \frac{1}{x_{34}}, d_{5} = \frac{1}{x_{34} x_{45}},\\ 
	&a_{12} = 1, a_{13} = \frac{x_{12} x_{25}}{x_{34} x_{45}}, a_{14} = 1, a_{15} = 1,\\ 
	&a_{23} = -\frac{x_{13}}{x_{12}}, a_{24} = 1, a_{25} = 1,\\ 
	&a_{34} = -\frac{x_{12} x_{34} x_{45} - x_{12} x_{25}}{x_{13} x_{34} x_{45}}, a_{35} = -\frac{x_{12} - 1}{x_{13}},\\ 
	&a_{45} = -\frac{x_{12} x_{34} x_{45} - x_{12} x_{25}}{x_{13} x_{34}^{2} x_{45}}
\end{align*}

%011101

First assume $x_{24}\neq \frac{-x_{13}x_{34}}{x_{12}}$.
\begin{equation*}x=\left(\begin{matrix}
		1 & x_{12} & x_{13} & 0 & x_{15} \\
		0 & 1 & 0 & x_{24} & x_{25} \\
		0 & 0 & 1 & x_{34} & 0  \\
		0 & 0 & 0 & 1 & x_{45}  \\
		0 & 0 & 0 & 0 & 1 \\
	\end{matrix}\right),x^A=\left(\begin{matrix}
		1 & 1 & 1 & 0 & 0 \\
		0 & 1 & 0 & 0 & 0 \\
		0 & 0 & 1 & 1 & 0 \\
		0 & 0 & 0 & 1 & 1 \\
		0 & 0 & 0 & 0 & 1
	\end{matrix}\right)
\end{equation*} \newline
Where matrix $A$ has entries
\begin{align*}
	&d_{1} = 1, d_{2} = \frac{1}{x_{12}}, d_{3} = \frac{x_{34}}{x_{12} x_{24} + x_{13} x_{34}}, d_{4} = \frac{1}{x_{12} x_{24} + x_{13} x_{34}}, d_{5} = \frac{1}{{\left(x_{12} x_{24} + x_{13} x_{34}\right)} x_{45}},\\ 
	&a_{12} = 1, a_{13} = 1, a_{14} = 1, a_{15} = 1,\\ 
	&a_{23} = \frac{x_{24}}{x_{12} x_{24} + x_{13} x_{34}}, a_{24} = \frac{x_{13} x_{25} x_{34} + {\left(x_{12} x_{24}^{2} + x_{13} x_{24} x_{34}\right)} x_{45}}{{\left(x_{12}^{2} x_{24}^{2} + 2 \, x_{12} x_{13} x_{24} x_{34} + x_{13}^{2} x_{34}^{2}\right)} x_{45}}, a_{25} = 1,\\ 
	&a_{34} = -\frac{x_{12} x_{25} x_{34} - {\left(x_{12} x_{24} x_{34} + x_{13} x_{34}^{2}\right)} x_{45}}{{\left(x_{12}^{2} x_{24}^{2} + 2 \, x_{12} x_{13} x_{24} x_{34} + x_{13}^{2} x_{34}^{2}\right)} x_{45}}, a_{35} = -\frac{x_{15} + {\left({\left(x_{12} - 1\right)} x_{13} x_{34} + {\left(x_{12}^{2} - x_{12}\right)} x_{24}\right)} x_{45}}{{\left(x_{12} x_{13} x_{24} + x_{13}^{2} x_{34}\right)} x_{45}},\\ 
	&a_{45} = -\frac{x_{12} x_{25} - {\left(x_{12} x_{24} + x_{13} x_{34}\right)} x_{45}}{{\left(x_{12}^{2} x_{24}^{2} + 2 \, x_{12} x_{13} x_{24} x_{34} + x_{13}^{2} x_{34}^{2}\right)} x_{45}}
\end{align*}

Now assume $x_{24}= \frac{-x_{13}x_{34}}{x_{12}}$.
\begin{equation*}x=\left(\begin{matrix}
		1 & x_{12} & x_{13} & 0 & x_{15} \\
		0 & 1 & 0 & x_{24} & x_{25} \\
		0 & 0 & 1 & x_{34} & 0  \\
		0 & 0 & 0 & 1 & x_{45}  \\
		0 & 0 & 0 & 0 & 1 \\
	\end{matrix}\right),x^A=\left(\begin{matrix}
		1 & 1 & 0 & 0 & 0 \\
		0 & 1 & 0 & 0 & 0 \\
		0 & 0 & 1 & 1 & 0 \\
		0 & 0 & 0 & 1 & 1 \\
		0 & 0 & 0 & 0 & 1
	\end{matrix}\right)
\end{equation*} \newline
Where matrix $A$ has entries
\begin{align*}
	&d_{1} = 1, d_{2} = \frac{1}{x_{12}}, d_{3} = 1, d_{4} = \frac{1}{x_{34}}, d_{5} = \frac{1}{x_{34} x_{45}},\\ 
	&a_{12} = 1, a_{13} = \frac{x_{12} x_{25}}{x_{34} x_{45}}, a_{14} = 1, a_{15} = 1,\\ 
	&a_{23} = -\frac{x_{13}}{x_{12}}, a_{24} = 1, a_{25} = 1,\\ 
	&a_{34} = -\frac{x_{12} x_{34} x_{45} - x_{12} x_{25}}{x_{13} x_{34} x_{45}}, a_{35} = -\frac{{\left(x_{12} - 1\right)} x_{34} x_{45} + x_{15}}{x_{13} x_{34} x_{45}},\\ 
	&a_{45} = -\frac{x_{12} x_{34} x_{45} - x_{12} x_{25}}{x_{13} x_{34}^{2} x_{45}}
\end{align*}

%011110

First assume $x_{24}\neq \frac{-x_{13}x_{34}}{x_{12}}$.
\begin{equation*}x=\left(\begin{matrix}
		1 & x_{12} & x_{13} & x_{14} & 0 \\
		0 & 1 & 0 & x_{24} & x_{25} \\
		0 & 0 & 1 & x_{34} & 0  \\
		0 & 0 & 0 & 1 & x_{45}  \\
		0 & 0 & 0 & 0 & 1 \\
	\end{matrix}\right),x^A=\left(\begin{matrix}
		1 & 1 & 1 & 0 & 0 \\
		0 & 1 & 0 & 0 & 0 \\
		0 & 0 & 1 & 1 & 0 \\
		0 & 0 & 0 & 1 & 1 \\
		0 & 0 & 0 & 0 & 1
	\end{matrix}\right)
\end{equation*} \newline
Where matrix $A$ has entries
\begin{align*}
	&d_{1} = 1, d_{2} = \frac{1}{x_{12}}, d_{3} = \frac{x_{34}}{x_{12} x_{24} + x_{13} x_{34}}, d_{4} = \frac{1}{x_{12} x_{24} + x_{13} x_{34}}, d_{5} = \frac{1}{{\left(x_{12} x_{24} + x_{13} x_{34}\right)} x_{45}},\\ 
	&a_{12} = 1, a_{13} = 1, a_{14} = 1, a_{15} = 1,\\ 
	&a_{23} = \frac{x_{24}}{x_{12} x_{24} + x_{13} x_{34}}, a_{24} = \frac{x_{13} x_{25} x_{34} + {\left(x_{12} x_{24}^{2} + x_{13} x_{24} x_{34} - x_{14} x_{24}\right)} x_{45}}{{\left(x_{12}^{2} x_{24}^{2} + 2 \, x_{12} x_{13} x_{24} x_{34} + x_{13}^{2} x_{34}^{2}\right)} x_{45}}, a_{25} = 1,\\ 
	&a_{34} = -\frac{x_{12} x_{25} x_{34} - {\left(x_{13} x_{34}^{2} + {\left(x_{12} x_{24} - x_{14}\right)} x_{34}\right)} x_{45}}{{\left(x_{12}^{2} x_{24}^{2} + 2 \, x_{12} x_{13} x_{24} x_{34} + x_{13}^{2} x_{34}^{2}\right)} x_{45}}, \\
	&a_{35} = \frac{x_{12} x_{14} x_{25} - {\left({\left(x_{12} - 1\right)} x_{13}^{2} x_{34}^{2} + x_{12} x_{14} x_{24} - x_{14}^{2} + {\left(x_{12}^{3} - x_{12}^{2}\right)} x_{24}^{2} + {\left(x_{13} x_{14} + 2 \, {\left(x_{12}^{2} - x_{12}\right)} x_{13} x_{24}\right)} x_{34}\right)} x_{45}}{{\left(x_{12}^{2} x_{13} x_{24}^{2} + 2 \, x_{12} x_{13}^{2} x_{24} x_{34} + x_{13}^{3} x_{34}^{2}\right)} x_{45}},\\ 
	&a_{45} = -\frac{x_{12} x_{25} - {\left(x_{12} x_{24} + x_{13} x_{34} - x_{14}\right)} x_{45}}{{\left(x_{12}^{2} x_{24}^{2} + 2 \, x_{12} x_{13} x_{24} x_{34} + x_{13}^{2} x_{34}^{2}\right)} x_{45}}
\end{align*}

Now assume $x_{24}= \frac{-x_{13}x_{34}}{x_{12}}$.
\begin{equation*}x=\left(\begin{matrix}
		1 & x_{12} & x_{13} & x_{14} & 0 \\
		0 & 1 & 0 & x_{24} & x_{25} \\
		0 & 0 & 1 & x_{34} & 0  \\
		0 & 0 & 0 & 1 & x_{45}  \\
		0 & 0 & 0 & 0 & 1 \\
	\end{matrix}\right),x^A=\left(\begin{matrix}
		1 & 1 & 0 & 0 & 0 \\
		0 & 1 & 0 & 0 & 0 \\
		0 & 0 & 1 & 1 & 0 \\
		0 & 0 & 0 & 1 & 1 \\
		0 & 0 & 0 & 0 & 1
	\end{matrix}\right)
\end{equation*} \newline
Where matrix $A$ has entries
\begin{align*}
	&d_{1} = 1, d_{2} = \frac{1}{x_{12}}, d_{3} = 1, d_{4} = \frac{1}{x_{34}}, d_{5} = \frac{1}{x_{34} x_{45}},\\ 
	&a_{12} = 1, a_{13} = \frac{x_{12} x_{25} + x_{14} x_{45}}{x_{34} x_{45}}, a_{14} = 1, a_{15} = 1,\\ 
	&a_{23} = -\frac{x_{13}}{x_{12}}, a_{24} = 1, a_{25} = 1,\\ 
	&a_{34} = -\frac{x_{12} x_{34} x_{45} - x_{12} x_{25}}{x_{13} x_{34} x_{45}}, a_{35} = -\frac{x_{12} x_{14} x_{25} - {\left(x_{12} x_{14} x_{34} - {\left(x_{12} - 1\right)} x_{13} x_{34}^{2}\right)} x_{45}}{x_{13}^{2} x_{34}^{2} x_{45}},\\ 
	&a_{45} = -\frac{x_{12} x_{34} x_{45} - x_{12} x_{25}}{x_{13} x_{34}^{2} x_{45}}
\end{align*}

%011111

First assume $x_{24}\neq \frac{-x_{13}x_{34}}{x_{12}}$.
\begin{equation*}x=\left(\begin{matrix}
		1 & x_{12} & x_{13} & x_{14} & x_{15} \\
		0 & 1 & 0 & x_{24} & x_{25} \\
		0 & 0 & 1 & x_{34} & 0  \\
		0 & 0 & 0 & 1 & x_{45}  \\
		0 & 0 & 0 & 0 & 1 \\
	\end{matrix}\right),x^A=\left(\begin{matrix}
		1 & 1 & 1 & 0 & 0 \\
		0 & 1 & 0 & 0 & 0 \\
		0 & 0 & 1 & 1 & 0 \\
		0 & 0 & 0 & 1 & 1 \\
		0 & 0 & 0 & 0 & 1
	\end{matrix}\right)
\end{equation*} \newline
Where matrix $A$ has entries
\begin{align*}
	&d_{1} = 1, d_{2} = \frac{1}{x_{12}}, d_{3} = \frac{x_{34}}{x_{12} x_{24} + x_{13} x_{34}}, d_{4} = \frac{1}{x_{12} x_{24} + x_{13} x_{34}}, d_{5} = \frac{1}{{\left(x_{12} x_{24} + x_{13} x_{34}\right)} x_{45}},\\ 
	&a_{12} = 1, a_{13} = 1, a_{14} = 1, a_{15} = 1,\\ 
	&a_{23} = \frac{x_{24}}{x_{12} x_{24} + x_{13} x_{34}}, a_{24} = \frac{x_{13} x_{25} x_{34} + {\left(x_{12} x_{24}^{2} + x_{13} x_{24} x_{34} - x_{14} x_{24}\right)} x_{45}}{{\left(x_{12}^{2} x_{24}^{2} + 2 \, x_{12} x_{13} x_{24} x_{34} + x_{13}^{2} x_{34}^{2}\right)} x_{45}}, a_{25} = 1,\\ 
	&a_{34} = -\frac{x_{12} x_{25} x_{34} - {\left(x_{13} x_{34}^{2} + {\left(x_{12} x_{24} - x_{14}\right)} x_{34}\right)} x_{45}}{{\left(x_{12}^{2} x_{24}^{2} + 2 \, x_{12} x_{13} x_{24} x_{34} + x_{13}^{2} x_{34}^{2}\right)} x_{45}}, \\
	&a_{35} = -\frac{\splitfrac{x_{12} x_{15} x_{24} - x_{12} x_{14} x_{25} +}
		{+ x_{13} x_{15} x_{34} + {\left({\left(x_{12} - 1\right)} x_{13}^{2} x_{34}^{2} + x_{12} x_{14} x_{24} - x_{14}^{2} + {\left(x_{12}^{3} - x_{12}^{2}\right)} x_{24}^{2} + {\left(x_{13} x_{14} + 2 \, {\left(x_{12}^{2} - x_{12}\right)} x_{13} x_{24}\right)} x_{34}\right)} x_{45}}}{{\left(x_{12}^{2} x_{13} x_{24}^{2} + 2 \, x_{12} x_{13}^{2} x_{24} x_{34} + x_{13}^{3} x_{34}^{2}\right)} x_{45}},\\ 
	&a_{45} = -\frac{x_{12} x_{25} - {\left(x_{12} x_{24} + x_{13} x_{34} - x_{14}\right)} x_{45}}{{\left(x_{12}^{2} x_{24}^{2} + 2 \, x_{12} x_{13} x_{24} x_{34} + x_{13}^{2} x_{34}^{2}\right)} x_{45}}
\end{align*}

Now assume $x_{24}= \frac{-x_{13}x_{34}}{x_{12}}$.
\begin{equation*}x=\left(\begin{matrix}
		1 & x_{12} & x_{13} & x_{14} & x_{15} \\
		0 & 1 & 0 & x_{24} & x_{25} \\
		0 & 0 & 1 & x_{34} & 0  \\
		0 & 0 & 0 & 1 & x_{45}  \\
		0 & 0 & 0 & 0 & 1 \\
	\end{matrix}\right),x^A=\left(\begin{matrix}
		1 & 1 & 0 & 0 & 0 \\
		0 & 1 & 0 & 0 & 0 \\
		0 & 0 & 1 & 1 & 0 \\
		0 & 0 & 0 & 1 & 1 \\
		0 & 0 & 0 & 0 & 1
	\end{matrix}\right)
\end{equation*} \newline
Where matrix $A$ has entries
\begin{align*}
	&d_{1} = 1, d_{2} = \frac{1}{x_{12}}, d_{3} = 1, d_{4} = \frac{1}{x_{34}}, d_{5} = \frac{1}{x_{34} x_{45}},\\ 
	&a_{12} = 1, a_{13} = \frac{x_{12} x_{25} + x_{14} x_{45}}{x_{34} x_{45}}, a_{14} = 1, a_{15} = 1,\\ 
	&a_{23} = -\frac{x_{13}}{x_{12}}, a_{24} = 1, a_{25} = 1,\\ 
	&a_{34} = -\frac{x_{12} x_{34} x_{45} - x_{12} x_{25}}{x_{13} x_{34} x_{45}}, a_{35} = -\frac{x_{12} x_{14} x_{25} + x_{13} x_{15} x_{34} - {\left(x_{12} x_{14} x_{34} - {\left(x_{12} - 1\right)} x_{13} x_{34}^{2}\right)} x_{45}}{x_{13}^{2} x_{34}^{2} x_{45}},\\ 
	&a_{45} = -\frac{x_{12} x_{34} x_{45} - x_{12} x_{25}}{x_{13} x_{34}^{2} x_{45}}
\end{align*}

%100000
\begin{equation*}x=\left(% [inline block 76: 14 envs, 5340 chars -> data_tex | \begin{matrix} 		1 & x_{12} & 0 & 0 & 0 \\...]
\right)
\end{equation*} \newline
Where matrix $A$ has entries
\begin{align*}
	&d_{1} = 1, d_{2} = \frac{1}{x_{12}}, d_{3} = \frac{1}{x_{13}}, d_{4} = \frac{1}{x_{13} x_{34}}, d_{5} = \frac{1}{x_{13} x_{34} x_{45}},\\ 
	&a_{12} = 1, a_{13} = 1, a_{14} = 1, a_{15} = 1,\\ 
	&a_{23} = 0, a_{24} = 0, a_{25} = 1,\\ 
	&a_{34} = \frac{x_{13} x_{34} - x_{14}}{x_{13}^{2} x_{34}}, a_{35} = \frac{x_{13} x_{14} x_{35} - {\left({\left(x_{12} - 1\right)} x_{13}^{2} x_{34}^{2} + x_{13} x_{14} x_{34} - x_{14}^{2}\right)} x_{45}}{x_{13}^{3} x_{34}^{2} x_{45}},\\ 
	&a_{45} = -\frac{x_{13} x_{35} - {\left(x_{13} x_{34} - x_{14}\right)} x_{45}}{x_{13}^{2} x_{34}^{2} x_{45}}
\end{align*}

%100111
\begin{equation*}x=\left(\begin{matrix}
		1 & x_{12} & x_{13} & x_{14} & x_{15} \\
		0 & 1 & 0 & 0 & 0 \\
		0 & 0 & 1 & x_{34} & x_{35}  \\
		0 & 0 & 0 & 1 & x_{45}  \\
		0 & 0 & 0 & 0 & 1 \\
	\end{matrix}\right),x^A=\left(\begin{matrix}
		1 & 1 & 1 & 0 & 0 \\
		0 & 1 & 0 & 0 & 0 \\
		0 & 0 & 1 & 1 & 0 \\
		0 & 0 & 0 & 1 & 1 \\
		0 & 0 & 0 & 0 & 1
	\end{matrix}\right)
\end{equation*} \newline
Where matrix $A$ has entries
\begin{align*}
	&d_{1} = 1, d_{2} = \frac{1}{x_{12}}, d_{3} = \frac{1}{x_{13}}, d_{4} = \frac{1}{x_{13} x_{34}}, d_{5} = \frac{1}{x_{13} x_{34} x_{45}},\\ 
	&a_{12} = 1, a_{13} = 1, a_{14} = 1, a_{15} = 1,\\ 
	&a_{23} = 0, a_{24} = 0, a_{25} = 1,\\ 
	&a_{34} = \frac{x_{13} x_{34} - x_{14}}{x_{13}^{2} x_{34}}, a_{35} = -\frac{x_{13} x_{15} x_{34} - x_{13} x_{14} x_{35} + {\left({\left(x_{12} - 1\right)} x_{13}^{2} x_{34}^{2} + x_{13} x_{14} x_{34} - x_{14}^{2}\right)} x_{45}}{x_{13}^{3} x_{34}^{2} x_{45}},\\ 
	&a_{45} = -\frac{x_{13} x_{35} - {\left(x_{13} x_{34} - x_{14}\right)} x_{45}}{x_{13}^{2} x_{34}^{2} x_{45}}
\end{align*}

%101000
\begin{equation*}x=\left(% [inline block 77: 8 envs, 2919 chars -> data_tex | \begin{matrix} 		1 & x_{12} & 0 & 0 & 0 \\...]
\right)
\end{equation*} \newline
Where matrix $A$ has entries
\begin{align*}
	&d_{1} = 1, d_{2} = \frac{1}{x_{12}}, d_{3} = 1, d_{4} = \frac{1}{x_{34}}, d_{5} = \frac{1}{x_{34} x_{45}},\\ 
	&a_{12} = 1, a_{13} = \frac{x_{12} x_{25} + x_{14} x_{45}}{x_{34} x_{45}}, a_{14} = 1, a_{15} = 1,\\ 
	&a_{23} = 0, a_{24} = \frac{x_{25}}{x_{34} x_{45}}, a_{25} = 1,\\ 
	&a_{34} = -\frac{{\left(x_{12} - 1\right)} x_{34}^{2} x_{45} + x_{15} x_{34} - x_{14} x_{35}}{x_{14} x_{34} x_{45}}, a_{35} = 1,\\ 
	&a_{45} = -\frac{{\left(x_{12} - 1\right)} x_{34} x_{45} + x_{15}}{x_{14} x_{34} x_{45}}
\end{align*}

%101100
\begin{equation*}x=\left(\begin{matrix}
		1 & x_{12} & x_{13} & 0 & 0 \\
		0 & 1 & 0 & 0 & x_{25} \\
		0 & 0 & 1 & x_{34} & x_{35}  \\
		0 & 0 & 0 & 1 & x_{45}  \\
		0 & 0 & 0 & 0 & 1 \\
	\end{matrix}\right),x^A=\left(\begin{matrix}
		1 & 1 & 1 & 0 & 0 \\
		0 & 1 & 0 & 0 & 0 \\
		0 & 0 & 1 & 1 & 0 \\
		0 & 0 & 0 & 1 & 1 \\
		0 & 0 & 0 & 0 & 1
	\end{matrix}\right)
\end{equation*} \newline
Where matrix $A$ has entries
\begin{align*}
	&d_{1} = 1, d_{2} = \frac{1}{x_{12}}, d_{3} = \frac{1}{x_{13}}, d_{4} = \frac{1}{x_{13} x_{34}}, d_{5} = \frac{1}{x_{13} x_{34} x_{45}},\\ 
	&a_{12} = 1, a_{13} = 1, a_{14} = 1, a_{15} = 1,\\ 
	&a_{23} = 0, a_{24} = \frac{x_{25}}{x_{13} x_{34} x_{45}}, a_{25} = 1,\\ 
	&a_{34} = \frac{x_{13} x_{34} x_{45} - x_{12} x_{25}}{x_{13}^{2} x_{34} x_{45}}, a_{35} = -\frac{x_{12} - 1}{x_{13}},\\ 
	&a_{45} = \frac{x_{13} x_{34} x_{45} - x_{12} x_{25} - x_{13} x_{35}}{x_{13}^{2} x_{34}^{2} x_{45}}
\end{align*}

%101101
\begin{equation*}x=\left(\begin{matrix}
		1 & x_{12} & x_{13} & 0 & x_{15} \\
		0 & 1 & 0 & 0 & x_{25} \\
		0 & 0 & 1 & x_{34} & x_{35}  \\
		0 & 0 & 0 & 1 & x_{45}  \\
		0 & 0 & 0 & 0 & 1 \\
	\end{matrix}\right),x^A=\left(\begin{matrix}
		1 & 1 & 1 & 0 & 0 \\
		0 & 1 & 0 & 0 & 0 \\
		0 & 0 & 1 & 1 & 0 \\
		0 & 0 & 0 & 1 & 1 \\
		0 & 0 & 0 & 0 & 1
	\end{matrix}\right)
\end{equation*} \newline
Where matrix $A$ has entries
\begin{align*}
	&d_{1} = 1, d_{2} = \frac{1}{x_{12}}, d_{3} = \frac{1}{x_{13}}, d_{4} = \frac{1}{x_{13} x_{34}}, d_{5} = \frac{1}{x_{13} x_{34} x_{45}},\\ 
	&a_{12} = 1, a_{13} = 1, a_{14} = 1, a_{15} = 1,\\ 
	&a_{23} = 0, a_{24} = \frac{x_{25}}{x_{13} x_{34} x_{45}}, a_{25} = 1,\\ 
	&a_{34} = \frac{x_{13} x_{34} x_{45} - x_{12} x_{25}}{x_{13}^{2} x_{34} x_{45}}, a_{35} = -\frac{{\left(x_{12} - 1\right)} x_{13} x_{34} x_{45} + x_{15}}{x_{13}^{2} x_{34} x_{45}},\\ 
	&a_{45} = \frac{x_{13} x_{34} x_{45} - x_{12} x_{25} - x_{13} x_{35}}{x_{13}^{2} x_{34}^{2} x_{45}}
\end{align*}

%101110
\begin{equation*}x=\left(\begin{matrix}
		1 & x_{12} & x_{13} & x_{14} & 0 \\
		0 & 1 & 0 & 0 & x_{25} \\
		0 & 0 & 1 & x_{34} & x_{35}  \\
		0 & 0 & 0 & 1 & x_{45}  \\
		0 & 0 & 0 & 0 & 1 \\
	\end{matrix}\right),x^A=\left(\begin{matrix}
		1 & 1 & 1 & 0 & 0 \\
		0 & 1 & 0 & 0 & 0 \\
		0 & 0 & 1 & 1 & 0 \\
		0 & 0 & 0 & 1 & 1 \\
		0 & 0 & 0 & 0 & 1
	\end{matrix}\right)
\end{equation*} \newline
Where matrix $A$ has entries
\begin{align*}
	&d_{1} = 1, d_{2} = \frac{1}{x_{12}}, d_{3} = \frac{1}{x_{13}}, d_{4} = \frac{1}{x_{13} x_{34}}, d_{5} = \frac{1}{x_{13} x_{34} x_{45}},\\ 
	&a_{12} = 1, a_{13} = 1, a_{14} = 1, a_{15} = 1,\\ 
	&a_{23} = 0, a_{24} = \frac{x_{25}}{x_{13} x_{34} x_{45}}, a_{25} = 1,\\ 
	&a_{34} = -\frac{x_{12} x_{25} - {\left(x_{13} x_{34} - x_{14}\right)} x_{45}}{x_{13}^{2} x_{34} x_{45}}, a_{35} = \frac{x_{12} x_{14} x_{25} + x_{13} x_{14} x_{35} - {\left({\left(x_{12} - 1\right)} x_{13}^{2} x_{34}^{2} + x_{13} x_{14} x_{34} - x_{14}^{2}\right)} x_{45}}{x_{13}^{3} x_{34}^{2} x_{45}},\\ 
	&a_{45} = -\frac{x_{12} x_{25} + x_{13} x_{35} - {\left(x_{13} x_{34} - x_{14}\right)} x_{45}}{x_{13}^{2} x_{34}^{2} x_{45}}
\end{align*}

%101111
\begin{equation*}x=\left(\begin{matrix}
		1 & x_{12} & x_{13} & x_{14} & x_{15} \\
		0 & 1 & 0 & 0 & x_{25} \\
		0 & 0 & 1 & x_{34} & x_{35}  \\
		0 & 0 & 0 & 1 & x_{45}  \\
		0 & 0 & 0 & 0 & 1 \\
	\end{matrix}\right),x^A=\left(\begin{matrix}
		1 & 1 & 1 & 0 & 0 \\
		0 & 1 & 0 & 0 & 0 \\
		0 & 0 & 1 & 1 & 0 \\
		0 & 0 & 0 & 1 & 1 \\
		0 & 0 & 0 & 0 & 1
	\end{matrix}\right)
\end{equation*} \newline
Where matrix $A$ has entries
\begin{align*}
	&d_{1} = 1, d_{2} = \frac{1}{x_{12}}, d_{3} = \frac{1}{x_{13}}, d_{4} = \frac{1}{x_{13} x_{34}}, d_{5} = \frac{1}{x_{13} x_{34} x_{45}},\\ 
	&a_{12} = 1, a_{13} = 1, a_{14} = 1, a_{15} = 1,\\ 
	&a_{23} = 0, a_{24} = \frac{x_{25}}{x_{13} x_{34} x_{45}}, a_{25} = 1,\\ 
	&a_{34} = -\frac{x_{12} x_{25} - {\left(x_{13} x_{34} - x_{14}\right)} x_{45}}{x_{13}^{2} x_{34} x_{45}}, a_{35} = \frac{x_{12} x_{14} x_{25} - x_{13} x_{15} x_{34} + x_{13} x_{14} x_{35} - {\left({\left(x_{12} - 1\right)} x_{13}^{2} x_{34}^{2} + x_{13} x_{14} x_{34} - x_{14}^{2}\right)} x_{45}}{x_{13}^{3} x_{34}^{2} x_{45}},\\ 
	&a_{45} = -\frac{x_{12} x_{25} + x_{13} x_{35} - {\left(x_{13} x_{34} - x_{14}\right)} x_{45}}{x_{13}^{2} x_{34}^{2} x_{45}}
\end{align*}

%110000
\begin{equation*}x=\left(% [inline block 78: 6 envs, 2100 chars -> data_tex | \begin{matrix} 		1 & x_{12} & 0 & 0 & 0 \\...]
\right)
\end{equation*} \newline
Where matrix $A$ has entries
\begin{align*}
	&d_{1} = 1, d_{2} = \frac{1}{x_{12}}, d_{3} = \frac{x_{34}}{x_{12} x_{24}}, d_{4} = \frac{1}{x_{12} x_{24}}, d_{5} = \frac{1}{x_{12} x_{24} x_{45}},\\ 
	&a_{12} = 1, a_{13} = 1, a_{14} = 1, a_{15} = 1,\\ 
	&a_{23} = \frac{1}{x_{12}}, a_{24} = \frac{x_{12} x_{24} - x_{14}}{x_{12}^{2} x_{24}}, a_{25} = \frac{x_{12}^{2} x_{24}^{2} - x_{12} x_{14} x_{24} + x_{14}^{2}}{x_{12}^{3} x_{24}^{2}},\\ 
	&a_{34} = \frac{x_{12} x_{24} x_{35} + {\left(x_{12} x_{24} - x_{14}\right)} x_{34} x_{45}}{x_{12}^{2} x_{24}^{2} x_{45}}, a_{35} = 1,\\ 
	&a_{45} = \frac{x_{12} x_{24} - x_{14}}{x_{12}^{2} x_{24}^{2}}
\end{align*}

%110011
\begin{equation*}x=\left(\begin{matrix}
		1 & x_{12} & 0 & x_{14} & x_{15} \\
		0 & 1 & 0 & x_{24} & 0 \\
		0 & 0 & 1 & x_{34} & x_{35}  \\
		0 & 0 & 0 & 1 & x_{45}  \\
		0 & 0 & 0 & 0 & 1 \\
	\end{matrix}\right),x^A=\left(\begin{matrix}
		1 & 1 & 1 & 0 & 0 \\
		0 & 1 & 0 & 0 & 0 \\
		0 & 0 & 1 & 1 & 0 \\
		0 & 0 & 0 & 1 & 1 \\
		0 & 0 & 0 & 0 & 1
	\end{matrix}\right)
\end{equation*} \newline
Where matrix $A$ has entries
\begin{align*}
	&d_{1} = 1, d_{2} = \frac{1}{x_{12}}, d_{3} = \frac{x_{34}}{x_{12} x_{24}}, d_{4} = \frac{1}{x_{12} x_{24}}, d_{5} = \frac{1}{x_{12} x_{24} x_{45}},\\ 
	&a_{12} = 1, a_{13} = 1, a_{14} = 1, a_{15} = 1,\\ 
	&a_{23} = \frac{1}{x_{12}}, a_{24} = \frac{x_{12} x_{24} - x_{14}}{x_{12}^{2} x_{24}}, a_{25} = -\frac{x_{12} x_{15} x_{24} - {\left(x_{12}^{2} x_{24}^{2} - x_{12} x_{14} x_{24} + x_{14}^{2}\right)} x_{45}}{x_{12}^{3} x_{24}^{2} x_{45}},\\ 
	&a_{34} = \frac{x_{12} x_{24} x_{35} + {\left(x_{12} x_{24} - x_{14}\right)} x_{34} x_{45}}{x_{12}^{2} x_{24}^{2} x_{45}}, a_{35} = 1,\\ 
	&a_{45} = \frac{x_{12} x_{24} - x_{14}}{x_{12}^{2} x_{24}^{2}}
\end{align*}

%110100

First assume $x_{24}\neq \frac{-x_{13}x_{34}}{x_{12}}$.
\begin{equation*}x=\left(\begin{matrix}
		1 & x_{12} & x_{13} & 0 & 0 \\
		0 & 1 & 0 & x_{24} & 0 \\
		0 & 0 & 1 & x_{34} & x_{35}  \\
		0 & 0 & 0 & 1 & x_{45}  \\
		0 & 0 & 0 & 0 & 1 \\
	\end{matrix}\right),x^A=\left(\begin{matrix}
		1 & 1 & 1 & 0 & 0 \\
		0 & 1 & 0 & 0 & 0 \\
		0 & 0 & 1 & 1 & 0 \\
		0 & 0 & 0 & 1 & 1 \\
		0 & 0 & 0 & 0 & 1
	\end{matrix}\right)
\end{equation*} \newline
Where matrix $A$ has entries
\begin{align*}
	&d_{1} = 1, d_{2} = \frac{1}{x_{12}}, d_{3} = \frac{x_{34}}{x_{12} x_{24} + x_{13} x_{34}}, d_{4} = \frac{1}{x_{12} x_{24} + x_{13} x_{34}}, d_{5} = \frac{1}{{\left(x_{12} x_{24} + x_{13} x_{34}\right)} x_{45}},\\ 
	&a_{12} = 1, a_{13} = 1, a_{14} = 1, a_{15} = 1,\\ 
	&a_{23} = \frac{x_{24}}{x_{12} x_{24} + x_{13} x_{34}}, a_{24} = -\frac{x_{13} x_{24} x_{35} - {\left(x_{12} x_{24}^{2} + x_{13} x_{24} x_{34}\right)} x_{45}}{{\left(x_{12}^{2} x_{24}^{2} + 2 \, x_{12} x_{13} x_{24} x_{34} + x_{13}^{2} x_{34}^{2}\right)} x_{45}}, a_{25} = 1,\\ 
	&a_{34} = \frac{x_{12} x_{24} x_{35} + {\left(x_{12} x_{24} x_{34} + x_{13} x_{34}^{2}\right)} x_{45}}{{\left(x_{12}^{2} x_{24}^{2} + 2 \, x_{12} x_{13} x_{24} x_{34} + x_{13}^{2} x_{34}^{2}\right)} x_{45}}, a_{35} = -\frac{x_{12} - 1}{x_{13}},\\ 
	&a_{45} = -\frac{x_{13} x_{35} - {\left(x_{12} x_{24} + x_{13} x_{34}\right)} x_{45}}{{\left(x_{12}^{2} x_{24}^{2} + 2 \, x_{12} x_{13} x_{24} x_{34} + x_{13}^{2} x_{34}^{2}\right)} x_{45}}
\end{align*}

Now assume $x_{24}= \frac{-x_{13}x_{34}}{x_{12}}$.
\begin{equation*}x=\left(\begin{matrix}
		1 & x_{12} & x_{13} & 0 & 0 \\
		0 & 1 & 0 & x_{24} & 0 \\
		0 & 0 & 1 & x_{34} & x_{35}  \\
		0 & 0 & 0 & 1 & x_{45}  \\
		0 & 0 & 0 & 0 & 1 \\
	\end{matrix}\right),x^A=\left(\begin{matrix}
		1 & 1 & 0 & 0 & 0 \\
		0 & 1 & 0 & 0 & 0 \\
		0 & 0 & 1 & 1 & 0 \\
		0 & 0 & 0 & 1 & 1 \\
		0 & 0 & 0 & 0 & 1
	\end{matrix}\right)
\end{equation*} \newline
Where matrix $A$ has entries
\begin{align*}
	&d_{1} = 1, d_{2} = \frac{1}{x_{12}}, d_{3} = 1, d_{4} = \frac{1}{x_{34}}, d_{5} = \frac{1}{x_{34} x_{45}},\\ 
	&a_{12} = 1, a_{13} = \frac{x_{13} x_{35}}{x_{34} x_{45}}, a_{14} = 1, a_{15} = 1,\\ 
	&a_{23} = -\frac{x_{13}}{x_{12}}, a_{24} = 1, a_{25} = 1,\\ 
	&a_{34} = -\frac{x_{12} x_{34} x_{45} - x_{13} x_{35}}{x_{13} x_{34} x_{45}}, a_{35} = -\frac{x_{12} - 1}{x_{13}},\\ 
	&a_{45} = -\frac{x_{12}}{x_{13} x_{34}}
\end{align*}

%110101

First assume $x_{24}\neq \frac{-x_{13}x_{34}}{x_{12}}$.
\begin{equation*}x=\left(\begin{matrix}
		1 & x_{12} & x_{13} & 0 & x_{15} \\
		0 & 1 & 0 & x_{24} & 0 \\
		0 & 0 & 1 & x_{34} & x_{35}  \\
		0 & 0 & 0 & 1 & x_{45}  \\
		0 & 0 & 0 & 0 & 1 \\
	\end{matrix}\right),x^A=\left(\begin{matrix}
		1 & 1 & 1 & 0 & 0 \\
		0 & 1 & 0 & 0 & 0 \\
		0 & 0 & 1 & 1 & 0 \\
		0 & 0 & 0 & 1 & 1 \\
		0 & 0 & 0 & 0 & 1
	\end{matrix}\right)
\end{equation*} \newline
Where matrix $A$ has entries
\begin{align*}
	&d_{1} = 1, d_{2} = \frac{1}{x_{12}}, d_{3} = \frac{x_{34}}{x_{12} x_{24} + x_{13} x_{34}}, d_{4} = \frac{1}{x_{12} x_{24} + x_{13} x_{34}}, d_{5} = \frac{1}{{\left(x_{12} x_{24} + x_{13} x_{34}\right)} x_{45}},\\ 
	&a_{12} = 1, a_{13} = 1, a_{14} = 1, a_{15} = 1,\\ 
	&a_{23} = \frac{x_{24}}{x_{12} x_{24} + x_{13} x_{34}}, a_{24} = -\frac{x_{13} x_{24} x_{35} - {\left(x_{12} x_{24}^{2} + x_{13} x_{24} x_{34}\right)} x_{45}}{{\left(x_{12}^{2} x_{24}^{2} + 2 \, x_{12} x_{13} x_{24} x_{34} + x_{13}^{2} x_{34}^{2}\right)} x_{45}}, a_{25} = 1,\\ 
	&a_{34} = \frac{x_{12} x_{24} x_{35} + {\left(x_{12} x_{24} x_{34} + x_{13} x_{34}^{2}\right)} x_{45}}{{\left(x_{12}^{2} x_{24}^{2} + 2 \, x_{12} x_{13} x_{24} x_{34} + x_{13}^{2} x_{34}^{2}\right)} x_{45}}, a_{35} = -\frac{x_{15} + {\left({\left(x_{12} - 1\right)} x_{13} x_{34} + {\left(x_{12}^{2} - x_{12}\right)} x_{24}\right)} x_{45}}{{\left(x_{12} x_{13} x_{24} + x_{13}^{2} x_{34}\right)} x_{45}},\\ 
	&a_{45} = -\frac{x_{13} x_{35} - {\left(x_{12} x_{24} + x_{13} x_{34}\right)} x_{45}}{{\left(x_{12}^{2} x_{24}^{2} + 2 \, x_{12} x_{13} x_{24} x_{34} + x_{13}^{2} x_{34}^{2}\right)} x_{45}}
\end{align*}

Now  assume $x_{24}= \frac{-x_{13}x_{34}}{x_{12}}$.
\begin{equation*}x=\left(\begin{matrix}
		1 & x_{12} & x_{13} & 0 & x_{15} \\
		0 & 1 & 0 & x_{24} & 0 \\
		0 & 0 & 1 & x_{34} & x_{35}  \\
		0 & 0 & 0 & 1 & x_{45}  \\
		0 & 0 & 0 & 0 & 1 \\
	\end{matrix}\right),x^A=\left(\begin{matrix}
		1 & 1 & 0 & 0 & 0 \\
		0 & 1 & 0 & 0 & 0 \\
		0 & 0 & 1 & 1 & 0 \\
		0 & 0 & 0 & 1 & 1 \\
		0 & 0 & 0 & 0 & 1
	\end{matrix}\right)
\end{equation*} \newline
Where matrix $A$ has entries
\begin{align*}
	&d_{1} = 1, d_{2} = \frac{1}{x_{12}}, d_{3} = 1, d_{4} = \frac{1}{x_{34}}, d_{5} = \frac{1}{x_{34} x_{45}},\\ 
	&a_{12} = 1, a_{13} = \frac{x_{13} x_{35}}{x_{34} x_{45}}, a_{14} = 1, a_{15} = 1,\\ 
	&a_{23} = -\frac{x_{13}}{x_{12}}, a_{24} = 1, a_{25} = 1,\\ 
	&a_{34} = -\frac{x_{12} x_{34} x_{45} - x_{13} x_{35}}{x_{13} x_{34} x_{45}}, a_{35} = -\frac{{\left(x_{12} - 1\right)} x_{34} x_{45} + x_{15}}{x_{13} x_{34} x_{45}},\\ 
	&a_{45} = -\frac{x_{12}}{x_{13} x_{34}}
\end{align*}

%110110

First assume $x_{24}\neq \frac{-x_{13}x_{34}}{x_{12}}$.
\begin{equation*}x=\left(\begin{matrix}
		1 & x_{12} & x_{13} & x_{14} & 0 \\
		0 & 1 & 0 & x_{24} & 0 \\
		0 & 0 & 1 & x_{34} & x_{35}  \\
		0 & 0 & 0 & 1 & x_{45}  \\
		0 & 0 & 0 & 0 & 1 \\
	\end{matrix}\right),x^A=\left(\begin{matrix}
		1 & 1 & 1 & 0 & 0 \\
		0 & 1 & 0 & 0 & 0 \\
		0 & 0 & 1 & 1 & 0 \\
		0 & 0 & 0 & 1 & 1 \\
		0 & 0 & 0 & 0 & 1
	\end{matrix}\right)
\end{equation*} \newline
Where matrix $A$ has entries
\begin{align*}
	&d_{1} = 1, d_{2} = \frac{1}{x_{12}}, d_{3} = \frac{x_{34}}{x_{12} x_{24} + x_{13} x_{34}}, d_{4} = \frac{1}{x_{12} x_{24} + x_{13} x_{34}}, d_{5} = \frac{1}{{\left(x_{12} x_{24} + x_{13} x_{34}\right)} x_{45}},\\ 
	&a_{12} = 1, a_{13} = 1, a_{14} = 1, a_{15} = 1,\\ 
	&a_{23} = \frac{x_{24}}{x_{12} x_{24} + x_{13} x_{34}}, a_{24} = -\frac{x_{13} x_{24} x_{35} - {\left(x_{12} x_{24}^{2} + x_{13} x_{24} x_{34} - x_{14} x_{24}\right)} x_{45}}{{\left(x_{12}^{2} x_{24}^{2} + 2 \, x_{12} x_{13} x_{24} x_{34} + x_{13}^{2} x_{34}^{2}\right)} x_{45}}, a_{25} = 1,\\ 
	&a_{34} = \frac{x_{12} x_{24} x_{35} + {\left(x_{13} x_{34}^{2} + {\left(x_{12} x_{24} - x_{14}\right)} x_{34}\right)} x_{45}}{{\left(x_{12}^{2} x_{24}^{2} + 2 \, x_{12} x_{13} x_{24} x_{34} + x_{13}^{2} x_{34}^{2}\right)} x_{45}}, \\
	&a_{35} = \frac{x_{13} x_{14} x_{35} - {\left({\left(x_{12} - 1\right)} x_{13}^{2} x_{34}^{2} + x_{12} x_{14} x_{24} - x_{14}^{2} + {\left(x_{12}^{3} - x_{12}^{2}\right)} x_{24}^{2} + {\left(x_{13} x_{14} + 2 \, {\left(x_{12}^{2} - x_{12}\right)} x_{13} x_{24}\right)} x_{34}\right)} x_{45}}{{\left(x_{12}^{2} x_{13} x_{24}^{2} + 2 \, x_{12} x_{13}^{2} x_{24} x_{34} + x_{13}^{3} x_{34}^{2}\right)} x_{45}},\\ 
	&a_{45} = -\frac{x_{13} x_{35} - {\left(x_{12} x_{24} + x_{13} x_{34} - x_{14}\right)} x_{45}}{{\left(x_{12}^{2} x_{24}^{2} + 2 \, x_{12} x_{13} x_{24} x_{34} + x_{13}^{2} x_{34}^{2}\right)} x_{45}}
\end{align*}

Now assume $x_{24}= \frac{-x_{13}x_{34}}{x_{12}}$.
\begin{equation*}x=\left(\begin{matrix}
		1 & x_{12} & x_{13} & x_{14} & 0 \\
		0 & 1 & 0 & x_{24} & 0 \\
		0 & 0 & 1 & x_{34} & x_{35}  \\
		0 & 0 & 0 & 1 & x_{45}  \\
		0 & 0 & 0 & 0 & 1 \\
	\end{matrix}\right),x^A=\left(\begin{matrix}
		1 & 1 & 0 & 0 & 0 \\
		0 & 1 & 0 & 0 & 0 \\
		0 & 0 & 1 & 1 & 0 \\
		0 & 0 & 0 & 1 & 1 \\
		0 & 0 & 0 & 0 & 1
	\end{matrix}\right)
\end{equation*} \newline
Where matrix $A$ has entries
\begin{align*}
	&d_{1} = 1, d_{2} = \frac{1}{x_{12}}, d_{3} = 1, d_{4} = \frac{1}{x_{34}}, d_{5} = \frac{1}{x_{34} x_{45}},\\ 
	&a_{12} = 1, a_{13} = \frac{x_{13} x_{35} + x_{14} x_{45}}{x_{34} x_{45}}, a_{14} = 1, a_{15} = 1,\\ 
	&a_{23} = -\frac{x_{13}}{x_{12}}, a_{24} = 1, a_{25} = 1,\\ 
	&a_{34} = -\frac{x_{12} x_{34} x_{45} - x_{13} x_{35}}{x_{13} x_{34} x_{45}}, a_{35} = \frac{x_{12} x_{14} - {\left(x_{12} - 1\right)} x_{13} x_{34}}{x_{13}^{2} x_{34}},\\ 
	&a_{45} = -\frac{x_{12}}{x_{13} x_{34}}
\end{align*}

%110111

First assume $x_{24}\neq \frac{-x_{13}x_{34}}{x_{12}}$.
\begin{equation*}x=\left(\begin{matrix}
		1 & x_{12} & x_{13} & x_{14} & x_{15} \\
		0 & 1 & 0 & x_{24} & 0 \\
		0 & 0 & 1 & x_{34} & x_{35}  \\
		0 & 0 & 0 & 1 & x_{45}  \\
		0 & 0 & 0 & 0 & 1 \\
	\end{matrix}\right),x^A=\left(\begin{matrix}
		1 & 1 & 1 & 0 & 0 \\
		0 & 1 & 0 & 0 & 0 \\
		0 & 0 & 1 & 1 & 0 \\
		0 & 0 & 0 & 1 & 1 \\
		0 & 0 & 0 & 0 & 1
	\end{matrix}\right)
\end{equation*} \newline
Where matrix $A$ has entries
\begin{align*}
	&d_{1} = 1, d_{2} = \frac{1}{x_{12}}, d_{3} = \frac{x_{34}}{x_{12} x_{24} + x_{13} x_{34}}, d_{4} = \frac{1}{x_{12} x_{24} + x_{13} x_{34}}, d_{5} = \frac{1}{{\left(x_{12} x_{24} + x_{13} x_{34}\right)} x_{45}},\\ 
	&a_{12} = 1, a_{13} = 1, a_{14} = 1, a_{15} = 1,\\ 
	&a_{23} = \frac{x_{24}}{x_{12} x_{24} + x_{13} x_{34}}, a_{24} = -\frac{x_{13} x_{24} x_{35} - {\left(x_{12} x_{24}^{2} + x_{13} x_{24} x_{34} - x_{14} x_{24}\right)} x_{45}}{{\left(x_{12}^{2} x_{24}^{2} + 2 \, x_{12} x_{13} x_{24} x_{34} + x_{13}^{2} x_{34}^{2}\right)} x_{45}}, a_{25} = 1,\\ 
	&a_{34} = \frac{x_{12} x_{24} x_{35} + {\left(x_{13} x_{34}^{2} + {\left(x_{12} x_{24} - x_{14}\right)} x_{34}\right)} x_{45}}{{\left(x_{12}^{2} x_{24}^{2} + 2 \, x_{12} x_{13} x_{24} x_{34} + x_{13}^{2} x_{34}^{2}\right)} x_{45}}, \\
	&a_{35} = -\frac{\splitfrac{x_{12} x_{15} x_{24} + x_{13} x_{15} x_{34} - x_{13} x_{14} x_{35} +}
		{+{\left({\left(x_{12} - 1\right)} x_{13}^{2} x_{34}^{2} + x_{12} x_{14} x_{24} - x_{14}^{2} + {\left(x_{12}^{3} - x_{12}^{2}\right)} x_{24}^{2} + {\left(x_{13} x_{14} + 2 \, {\left(x_{12}^{2} - x_{12}\right)} x_{13} x_{24}\right)} x_{34}\right)} x_{45}}}{{\left(x_{12}^{2} x_{13} x_{24}^{2} + 2 \, x_{12} x_{13}^{2} x_{24} x_{34} + x_{13}^{3} x_{34}^{2}\right)} x_{45}},\\ 
	&a_{45} = -\frac{x_{13} x_{35} - {\left(x_{12} x_{24} + x_{13} x_{34} - x_{14}\right)} x_{45}}{{\left(x_{12}^{2} x_{24}^{2} + 2 \, x_{12} x_{13} x_{24} x_{34} + x_{13}^{2} x_{34}^{2}\right)} x_{45}}
\end{align*}

Now assume $x_{24}= \frac{-x_{13}x_{34}}{x_{12}}$.
\begin{equation*}x=\left(\begin{matrix}
		1 & x_{12} & x_{13} & x_{14} & x_{15} \\
		0 & 1 & 0 & x_{24} & 0 \\
		0 & 0 & 1 & x_{34} & x_{35}  \\
		0 & 0 & 0 & 1 & x_{45}  \\
		0 & 0 & 0 & 0 & 1 \\
	\end{matrix}\right),x^A=\left(\begin{matrix}
		1 & 1 & 0 & 0 & 0 \\
		0 & 1 & 0 & 0 & 0 \\
		0 & 0 & 1 & 1 & 0 \\
		0 & 0 & 0 & 1 & 1 \\
		0 & 0 & 0 & 0 & 1
	\end{matrix}\right)
\end{equation*} \newline
Where matrix $A$ has entries
\begin{align*}
	&d_{1} = 1, d_{2} = \frac{1}{x_{12}}, d_{3} = 1, d_{4} = \frac{1}{x_{34}}, d_{5} = \frac{1}{x_{34} x_{45}},\\ 
	&a_{12} = 1, a_{13} = \frac{x_{13} x_{35} + x_{14} x_{45}}{x_{34} x_{45}}, a_{14} = 1, a_{15} = 1,\\ 
	&a_{23} = -\frac{x_{13}}{x_{12}}, a_{24} = 1, a_{25} = 1,\\ 
	&a_{34} = -\frac{x_{12} x_{34} x_{45} - x_{13} x_{35}}{x_{13} x_{34} x_{45}}, a_{35} = -\frac{x_{13} x_{15} - {\left(x_{12} x_{14} - {\left(x_{12} - 1\right)} x_{13} x_{34}\right)} x_{45}}{x_{13}^{2} x_{34} x_{45}},\\ 
	&a_{45} = -\frac{x_{12}}{x_{13} x_{34}}
\end{align*}

%111000
\begin{equation*}x=\left(\begin{matrix}
		1 & x_{12} & 0 & 0 & 0 \\
		0 & 1 & 0 & x_{24} & x_{25} \\
		0 & 0 & 1 & x_{34} & x_{35}  \\
		0 & 0 & 0 & 1 & x_{45}  \\
		0 & 0 & 0 & 0 & 1 \\
	\end{matrix}\right),x^A=\left(\begin{matrix}
		1 & 1 & 1 & 0 & 0 \\
		0 & 1 & 0 & 0 & 0 \\
		0 & 0 & 1 & 1 & 0 \\
		0 & 0 & 0 & 1 & 1 \\
		0 & 0 & 0 & 0 & 1
	\end{matrix}\right)
\end{equation*} \newline
Where matrix $A$ has entries
\begin{align*}
	&d_{1} = 1, d_{2} = \frac{1}{x_{12}}, d_{3} = \frac{x_{34}}{x_{12} x_{24}}, d_{4} = \frac{1}{x_{12} x_{24}}, d_{5} = \frac{1}{x_{12} x_{24} x_{45}},\\ 
	&a_{12} = 1, a_{13} = 1, a_{14} = 1, a_{15} = 1,\\ 
	&a_{23} = \frac{1}{x_{12}}, a_{24} = \frac{1}{x_{12}}, a_{25} = \frac{1}{x_{12}},\\ 
	&a_{34} = \frac{x_{24} x_{34} x_{45} - x_{25} x_{34} + x_{24} x_{35}}{x_{12} x_{24}^{2} x_{45}}, a_{35} = 1,\\ 
	&a_{45} = \frac{x_{24} x_{45} - x_{25}}{x_{12} x_{24}^{2} x_{45}}
\end{align*}

%111001
\begin{equation*}x=\left(\begin{matrix}
		1 & x_{12} & 0 & 0 & x_{15} \\
		0 & 1 & 0 & x_{24} & x_{25} \\
		0 & 0 & 1 & x_{34} & x_{35}  \\
		0 & 0 & 0 & 1 & x_{45}  \\
		0 & 0 & 0 & 0 & 1 \\
	\end{matrix}\right),x^A=\left(\begin{matrix}
		1 & 1 & 1 & 0 & 0 \\
		0 & 1 & 0 & 0 & 0 \\
		0 & 0 & 1 & 1 & 0 \\
		0 & 0 & 0 & 1 & 1 \\
		0 & 0 & 0 & 0 & 1
	\end{matrix}\right)
\end{equation*} \newline
Where matrix $A$ has entries
\begin{align*}
	&d_{1} = 1, d_{2} = \frac{1}{x_{12}}, d_{3} = \frac{x_{34}}{x_{12} x_{24}}, d_{4} = \frac{1}{x_{12} x_{24}}, d_{5} = \frac{1}{x_{12} x_{24} x_{45}},\\ 
	&a_{12} = 1, a_{13} = 1, a_{14} = 1, a_{15} = 1,\\ 
	&a_{23} = \frac{1}{x_{12}}, a_{24} = \frac{1}{x_{12}}, a_{25} = \frac{x_{12} x_{24} x_{45} - x_{15}}{x_{12}^{2} x_{24} x_{45}},\\ 
	&a_{34} = \frac{x_{24} x_{34} x_{45} - x_{25} x_{34} + x_{24} x_{35}}{x_{12} x_{24}^{2} x_{45}}, a_{35} = 1,\\ 
	&a_{45} = \frac{x_{24} x_{45} - x_{25}}{x_{12} x_{24}^{2} x_{45}}
\end{align*}

%111010
\begin{equation*}x=\left(\begin{matrix}
		1 & x_{12} & 0 & x_{14} & 0 \\
		0 & 1 & 0 & x_{24} & x_{25} \\
		0 & 0 & 1 & x_{34} & x_{35}  \\
		0 & 0 & 0 & 1 & x_{45}  \\
		0 & 0 & 0 & 0 & 1 \\
	\end{matrix}\right),x^A=\left(\begin{matrix}
		1 & 1 & 1 & 0 & 0 \\
		0 & 1 & 0 & 0 & 0 \\
		0 & 0 & 1 & 1 & 0 \\
		0 & 0 & 0 & 1 & 1 \\
		0 & 0 & 0 & 0 & 1
	\end{matrix}\right)
\end{equation*} \newline
Where matrix $A$ has entries
\begin{align*}
	&d_{1} = 1, d_{2} = \frac{1}{x_{12}}, d_{3} = \frac{x_{34}}{x_{12} x_{24}}, d_{4} = \frac{1}{x_{12} x_{24}}, d_{5} = \frac{1}{x_{12} x_{24} x_{45}},\\ 
	&a_{12} = 1, a_{13} = 1, a_{14} = 1, a_{15} = 1,\\ 
	&a_{23} = \frac{1}{x_{12}}, a_{24} = \frac{x_{12} x_{24} - x_{14}}{x_{12}^{2} x_{24}}, a_{25} = \frac{x_{12} x_{14} x_{25} + {\left(x_{12}^{2} x_{24}^{2} - x_{12} x_{14} x_{24} + x_{14}^{2}\right)} x_{45}}{x_{12}^{3} x_{24}^{2} x_{45}},\\ 
	&a_{34} = -\frac{x_{12} x_{25} x_{34} - x_{12} x_{24} x_{35} - {\left(x_{12} x_{24} - x_{14}\right)} x_{34} x_{45}}{x_{12}^{2} x_{24}^{2} x_{45}}, a_{35} = 1,\\ 
	&a_{45} = -\frac{x_{12} x_{25} - {\left(x_{12} x_{24} - x_{14}\right)} x_{45}}{x_{12}^{2} x_{24}^{2} x_{45}}
\end{align*}

%111011
\begin{equation*}x=\left(\begin{matrix}
		1 & x_{12} & 0 & x_{14} & x_{15} \\
		0 & 1 & 0 & x_{24} & x_{25} \\
		0 & 0 & 1 & x_{34} & x_{35}  \\
		0 & 0 & 0 & 1 & x_{45}  \\
		0 & 0 & 0 & 0 & 1 \\
	\end{matrix}\right),x^A=\left(\begin{matrix}
		1 & 1 & 1 & 0 & 0 \\
		0 & 1 & 0 & 0 & 0 \\
		0 & 0 & 1 & 1 & 0 \\
		0 & 0 & 0 & 1 & 1 \\
		0 & 0 & 0 & 0 & 1
	\end{matrix}\right)
\end{equation*} \newline
Where matrix $A$ has entries
\begin{align*}
	&d_{1} = 1, d_{2} = \frac{1}{x_{12}}, d_{3} = \frac{x_{34}}{x_{12} x_{24}}, d_{4} = \frac{1}{x_{12} x_{24}}, d_{5} = \frac{1}{x_{12} x_{24} x_{45}},\\ 
	&a_{12} = 1, a_{13} = 1, a_{14} = 1, a_{15} = 1,\\ 
	&a_{23} = \frac{1}{x_{12}}, a_{24} = \frac{x_{12} x_{24} - x_{14}}{x_{12}^{2} x_{24}}, a_{25} = -\frac{x_{12} x_{15} x_{24} - x_{12} x_{14} x_{25} - {\left(x_{12}^{2} x_{24}^{2} - x_{12} x_{14} x_{24} + x_{14}^{2}\right)} x_{45}}{x_{12}^{3} x_{24}^{2} x_{45}},\\ 
	&a_{34} = -\frac{x_{12} x_{25} x_{34} - x_{12} x_{24} x_{35} - {\left(x_{12} x_{24} - x_{14}\right)} x_{34} x_{45}}{x_{12}^{2} x_{24}^{2} x_{45}}, a_{35} = 1,\\ 
	&a_{45} = -\frac{x_{12} x_{25} - {\left(x_{12} x_{24} - x_{14}\right)} x_{45}}{x_{12}^{2} x_{24}^{2} x_{45}}
\end{align*}

%111100

First assume $x_{24}\neq \frac{-x_{13}x_{34}}{x_{12}}$.
\begin{equation*}x=\left(\begin{matrix}
		1 & x_{12} & x_{13} & 0 & 0 \\
		0 & 1 & 0 & x_{24} & x_{25} \\
		0 & 0 & 1 & x_{34} & x_{35}  \\
		0 & 0 & 0 & 1 & x_{45}  \\
		0 & 0 & 0 & 0 & 1 \\
	\end{matrix}\right),x^A=\left(\begin{matrix}
		1 & 1 & 1 & 0 & 0 \\
		0 & 1 & 0 & 0 & 0 \\
		0 & 0 & 1 & 1 & 0 \\
		0 & 0 & 0 & 1 & 1 \\
		0 & 0 & 0 & 0 & 1
	\end{matrix}\right)
\end{equation*} \newline
Where matrix $A$ has entries
\begin{align*}
	&d_{1} = 1, d_{2} = \frac{1}{x_{12}}, d_{3} = \frac{x_{34}}{x_{12} x_{24} + x_{13} x_{34}}, d_{4} = \frac{1}{x_{12} x_{24} + x_{13} x_{34}}, d_{5} = \frac{1}{{\left(x_{12} x_{24} + x_{13} x_{34}\right)} x_{45}},\\ 
	&a_{12} = 1, a_{13} = 1, a_{14} = 1, a_{15} = 1,\\ 
	&a_{23} = \frac{x_{24}}{x_{12} x_{24} + x_{13} x_{34}}, a_{24} = \frac{x_{13} x_{25} x_{34} - x_{13} x_{24} x_{35} + {\left(x_{12} x_{24}^{2} + x_{13} x_{24} x_{34}\right)} x_{45}}{{\left(x_{12}^{2} x_{24}^{2} + 2 \, x_{12} x_{13} x_{24} x_{34} + x_{13}^{2} x_{34}^{2}\right)} x_{45}}, a_{25} = 1,\\ 
	&a_{34} = -\frac{x_{12} x_{25} x_{34} - x_{12} x_{24} x_{35} - {\left(x_{12} x_{24} x_{34} + x_{13} x_{34}^{2}\right)} x_{45}}{{\left(x_{12}^{2} x_{24}^{2} + 2 \, x_{12} x_{13} x_{24} x_{34} + x_{13}^{2} x_{34}^{2}\right)} x_{45}}, a_{35} = -\frac{x_{12} - 1}{x_{13}},\\ 
	&a_{45} = -\frac{x_{12} x_{25} + x_{13} x_{35} - {\left(x_{12} x_{24} + x_{13} x_{34}\right)} x_{45}}{{\left(x_{12}^{2} x_{24}^{2} + 2 \, x_{12} x_{13} x_{24} x_{34} + x_{13}^{2} x_{34}^{2}\right)} x_{45}}
\end{align*}

Now assume $x_{24}= \frac{-x_{13}x_{34}}{x_{12}}$.
\begin{equation*}x=\left(\begin{matrix}
		1 & x_{12} & x_{13} & 0 & 0 \\
		0 & 1 & 0 & x_{24} & x_{25} \\
		0 & 0 & 1 & x_{34} & x_{35}  \\
		0 & 0 & 0 & 1 & x_{45}  \\
		0 & 0 & 0 & 0 & 1 \\
	\end{matrix}\right),x^A=\left(\begin{matrix}
		1 & 1 & 0 & 0 & 0 \\
		0 & 1 & 0 & 0 & 0 \\
		0 & 0 & 1 & 1 & 0 \\
		0 & 0 & 0 & 1 & 1 \\
		0 & 0 & 0 & 0 & 1
	\end{matrix}\right)
\end{equation*} \newline
Where matrix $A$ has entries
\begin{align*}
	&d_{1} = 1, d_{2} = \frac{1}{x_{12}}, d_{3} = 1, d_{4} = \frac{1}{x_{34}}, d_{5} = \frac{1}{x_{34} x_{45}},\\ 
	&a_{12} = 1, a_{13} = \frac{x_{12} x_{25} + x_{13} x_{35}}{x_{34} x_{45}}, a_{14} = 1, a_{15} = 1,\\ 
	&a_{23} = -\frac{x_{13}}{x_{12}}, a_{24} = 1, a_{25} = 1,\\ 
	&a_{34} = -\frac{x_{12} x_{34} x_{45} - x_{12} x_{25} - x_{13} x_{35}}{x_{13} x_{34} x_{45}}, a_{35} = -\frac{x_{12} - 1}{x_{13}},\\ 
	&a_{45} = -\frac{x_{12} x_{34} x_{45} - x_{12} x_{25}}{x_{13} x_{34}^{2} x_{45}}
\end{align*}

%111101
First assume $x_{24}\neq \frac{-x_{13}x_{34}}{x_{12}}$.
\begin{equation*}x=\left(\begin{matrix}
		1 & x_{12} & x_{13} & 0 & x_{15} \\
		0 & 1 & 0 & x_{24} & x_{25} \\
		0 & 0 & 1 & x_{34} & x_{35}  \\
		0 & 0 & 0 & 1 & x_{45}  \\
		0 & 0 & 0 & 0 & 1 \\
	\end{matrix}\right),x^A=\left(\begin{matrix}
		1 & 1 & 1 & 0 & 0 \\
		0 & 1 & 0 & 0 & 0 \\
		0 & 0 & 1 & 1 & 0 \\
		0 & 0 & 0 & 1 & 1 \\
		0 & 0 & 0 & 0 & 1
	\end{matrix}\right)
\end{equation*} \newline
Where matrix $A$ has entries
\begin{align*}
	&d_{1} = 1, d_{2} = \frac{1}{x_{12}}, d_{3} = \frac{x_{34}}{x_{12} x_{24} + x_{13} x_{34}}, d_{4} = \frac{1}{x_{12} x_{24} + x_{13} x_{34}}, d_{5} = \frac{1}{{\left(x_{12} x_{24} + x_{13} x_{34}\right)} x_{45}},\\ 
	&a_{12} = 1, a_{13} = 1, a_{14} = 1, a_{15} = 1,\\ 
	&a_{23} = \frac{x_{24}}{x_{12} x_{24} + x_{13} x_{34}}, a_{24} = \frac{x_{13} x_{25} x_{34} - x_{13} x_{24} x_{35} + {\left(x_{12} x_{24}^{2} + x_{13} x_{24} x_{34}\right)} x_{45}}{{\left(x_{12}^{2} x_{24}^{2} + 2 \, x_{12} x_{13} x_{24} x_{34} + x_{13}^{2} x_{34}^{2}\right)} x_{45}}, a_{25} = 1,\\ 
	&a_{34} = -\frac{x_{12} x_{25} x_{34} - x_{12} x_{24} x_{35} - {\left(x_{12} x_{24} x_{34} + x_{13} x_{34}^{2}\right)} x_{45}}{{\left(x_{12}^{2} x_{24}^{2} + 2 \, x_{12} x_{13} x_{24} x_{34} + x_{13}^{2} x_{34}^{2}\right)} x_{45}}, a_{35} = -\frac{x_{15} + {\left({\left(x_{12} - 1\right)} x_{13} x_{34} + {\left(x_{12}^{2} - x_{12}\right)} x_{24}\right)} x_{45}}{{\left(x_{12} x_{13} x_{24} + x_{13}^{2} x_{34}\right)} x_{45}},\\ 
	&a_{45} = -\frac{x_{12} x_{25} + x_{13} x_{35} - {\left(x_{12} x_{24} + x_{13} x_{34}\right)} x_{45}}{{\left(x_{12}^{2} x_{24}^{2} + 2 \, x_{12} x_{13} x_{24} x_{34} + x_{13}^{2} x_{34}^{2}\right)} x_{45}}
\end{align*}

Now assume $x_{24}= \frac{-x_{13}x_{34}}{x_{12}}$.
\begin{equation*}x=\left(\begin{matrix}
		1 & x_{12} & x_{13} & 0 & x_{15} \\
		0 & 1 & 0 & x_{24} & x_{25} \\
		0 & 0 & 1 & x_{34} & x_{35}  \\
		0 & 0 & 0 & 1 & x_{45}  \\
		0 & 0 & 0 & 0 & 1 \\
	\end{matrix}\right),x^A=\left(\begin{matrix}
		1 & 1 & 0 & 0 & 0 \\
		0 & 1 & 0 & 0 & 0 \\
		0 & 0 & 1 & 1 & 0 \\
		0 & 0 & 0 & 1 & 1 \\
		0 & 0 & 0 & 0 & 1
	\end{matrix}\right)
\end{equation*} \newline
Where matrix $A$ has entries
\begin{align*}
	&d_{1} = 1, d_{2} = \frac{1}{x_{12}}, d_{3} = 1, d_{4} = \frac{1}{x_{34}}, d_{5} = \frac{1}{x_{34} x_{45}},\\ 
	&a_{12} = 1, a_{13} = \frac{x_{12} x_{25} + x_{13} x_{35}}{x_{34} x_{45}}, a_{14} = 1, a_{15} = 1,\\ 
	&a_{23} = -\frac{x_{13}}{x_{12}}, a_{24} = 1, a_{25} = 1,\\ 
	&a_{34} = -\frac{x_{12} x_{34} x_{45} - x_{12} x_{25} - x_{13} x_{35}}{x_{13} x_{34} x_{45}}, a_{35} = -\frac{{\left(x_{12} - 1\right)} x_{34} x_{45} + x_{15}}{x_{13} x_{34} x_{45}},\\ 
	&a_{45} = -\frac{x_{12} x_{34} x_{45} - x_{12} x_{25}}{x_{13} x_{34}^{2} x_{45}}
\end{align*}

%111110
First assume $x_{24}\neq \frac{-x_{13}x_{34}}{x_{12}}$.
\begin{equation*}x=\left(\begin{matrix}
		1 & x_{12} & x_{13} & x_{14} & 0 \\
		0 & 1 & 0 & x_{24} & x_{25} \\
		0 & 0 & 1 & x_{34} & x_{35}  \\
		0 & 0 & 0 & 1 & x_{45}  \\
		0 & 0 & 0 & 0 & 1 \\
	\end{matrix}\right),x^A=\left(\begin{matrix}
		1 & 1 & 1 & 0 & 0 \\
		0 & 1 & 0 & 0 & 0 \\
		0 & 0 & 1 & 1 & 0 \\
		0 & 0 & 0 & 1 & 1 \\
		0 & 0 & 0 & 0 & 1
	\end{matrix}\right)
\end{equation*} \newline
Where matrix $A$ has entries
\begin{align*}
	&d_{1} = 1, d_{2} = \frac{1}{x_{12}}, d_{3} = \frac{x_{34}}{x_{12} x_{24} + x_{13} x_{34}}, d_{4} = \frac{1}{x_{12} x_{24} + x_{13} x_{34}}, d_{5} = \frac{1}{{\left(x_{12} x_{24} + x_{13} x_{34}\right)} x_{45}},\\ 
	&a_{12} = 1, a_{13} = 1, a_{14} = 1, a_{15} = 1,\\ 
	&a_{23} = \frac{x_{24}}{x_{12} x_{24} + x_{13} x_{34}}, a_{24} = \frac{x_{13} x_{25} x_{34} - x_{13} x_{24} x_{35} + {\left(x_{12} x_{24}^{2} + x_{13} x_{24} x_{34} - x_{14} x_{24}\right)} x_{45}}{{\left(x_{12}^{2} x_{24}^{2} + 2 \, x_{12} x_{13} x_{24} x_{34} + x_{13}^{2} x_{34}^{2}\right)} x_{45}}, a_{25} = 1,\\ 
	&a_{34} = -\frac{x_{12} x_{25} x_{34} - x_{12} x_{24} x_{35} - {\left(x_{13} x_{34}^{2} + {\left(x_{12} x_{24} - x_{14}\right)} x_{34}\right)} x_{45}}{{\left(x_{12}^{2} x_{24}^{2} + 2 \, x_{12} x_{13} x_{24} x_{34} + x_{13}^{2} x_{34}^{2}\right)} x_{45}}, \\
	&a_{35} = \frac{x_{12} x_{14} x_{25} + x_{13} x_{14} x_{35} - {\left({\left(x_{12} - 1\right)} x_{13}^{2} x_{34}^{2} + x_{12} x_{14} x_{24} - x_{14}^{2} + {\left(x_{12}^{3} - x_{12}^{2}\right)} x_{24}^{2} + {\left(x_{13} x_{14} + 2 \, {\left(x_{12}^{2} - x_{12}\right)} x_{13} x_{24}\right)} x_{34}\right)} x_{45}}{{\left(x_{12}^{2} x_{13} x_{24}^{2} + 2 \, x_{12} x_{13}^{2} x_{24} x_{34} + x_{13}^{3} x_{34}^{2}\right)} x_{45}},\\ 
	&a_{45} = -\frac{x_{12} x_{25} + x_{13} x_{35} - {\left(x_{12} x_{24} + x_{13} x_{34} - x_{14}\right)} x_{45}}{{\left(x_{12}^{2} x_{24}^{2} + 2 \, x_{12} x_{13} x_{24} x_{34} + x_{13}^{2} x_{34}^{2}\right)} x_{45}}
\end{align*}

Now assume $x_{24}= \frac{-x_{13}x_{34}}{x_{12}}$.
\begin{equation*}x=\left(\begin{matrix}
		1 & x_{12} & x_{13} & x_{14} & 0 \\
		0 & 1 & 0 & x_{24} & x_{25} \\
		0 & 0 & 1 & x_{34} & x_{35}  \\
		0 & 0 & 0 & 1 & x_{45}  \\
		0 & 0 & 0 & 0 & 1 \\
	\end{matrix}\right),x^A=\left(\begin{matrix}
		1 & 1 & 0 & 0 & 0 \\
		0 & 1 & 0 & 0 & 0 \\
		0 & 0 & 1 & 1 & 0 \\
		0 & 0 & 0 & 1 & 1 \\
		0 & 0 & 0 & 0 & 1
	\end{matrix}\right)
\end{equation*} \newline
Where matrix $A$ has entries
\begin{align*}
	&d_{1} = 1, d_{2} = \frac{1}{x_{12}}, d_{3} = 1, d_{4} = \frac{1}{x_{34}}, d_{5} = \frac{1}{x_{34} x_{45}},\\ 
	&a_{12} = 1, a_{13} = \frac{x_{12} x_{25} + x_{13} x_{35} + x_{14} x_{45}}{x_{34} x_{45}}, a_{14} = 1, a_{15} = 1,\\ 
	&a_{23} = -\frac{x_{13}}{x_{12}}, a_{24} = 1, a_{25} = 1,\\ 
	&a_{34} = -\frac{x_{12} x_{34} x_{45} - x_{12} x_{25} - x_{13} x_{35}}{x_{13} x_{34} x_{45}}, a_{35} = -\frac{x_{12} x_{14} x_{25} - {\left(x_{12} x_{14} x_{34} - {\left(x_{12} - 1\right)} x_{13} x_{34}^{2}\right)} x_{45}}{x_{13}^{2} x_{34}^{2} x_{45}},\\ 
	&a_{45} = -\frac{x_{12} x_{34} x_{45} - x_{12} x_{25}}{x_{13} x_{34}^{2} x_{45}}
\end{align*}

%111111

First assume $x_{24}\neq \frac{-x_{13}x_{34}}{x_{12}}$.
\begin{equation*}x=\left(\begin{matrix}
		1 & x_{12} & x_{13} & x_{14} & x_{15} \\
		0 & 1 & 0 & x_{24} & x_{25} \\
		0 & 0 & 1 & x_{34} & x_{35}  \\
		0 & 0 & 0 & 1 & x_{45}  \\
		0 & 0 & 0 & 0 & 1 \\
	\end{matrix}\right),x^A=\left(\begin{matrix}
		1 & 1 & 1 & 0 & 0 \\
		0 & 1 & 0 & 0 & 0 \\
		0 & 0 & 1 & 1 & 0 \\
		0 & 0 & 0 & 1 & 1 \\
		0 & 0 & 0 & 0 & 1
	\end{matrix}\right)
\end{equation*} \newline
Where matrix $A$ has entries
\begin{align*}
	&d_{1} = 1, d_{2} = \frac{1}{x_{12}}, d_{3} = \frac{x_{34}}{x_{12} x_{24} + x_{13} x_{34}}, d_{4} = \frac{1}{x_{12} x_{24} + x_{13} x_{34}}, d_{5} = \frac{1}{{\left(x_{12} x_{24} + x_{13} x_{34}\right)} x_{45}},\\ 
	&a_{12} = 1, a_{13} = 1, a_{14} = 1, a_{15} = 1,\\ 
	&a_{23} = \frac{x_{24}}{x_{12} x_{24} + x_{13} x_{34}}, a_{24} = \frac{x_{13} x_{25} x_{34} - x_{13} x_{24} x_{35} + {\left(x_{12} x_{24}^{2} + x_{13} x_{24} x_{34} - x_{14} x_{24}\right)} x_{45}}{{\left(x_{12}^{2} x_{24}^{2} + 2 \, x_{12} x_{13} x_{24} x_{34} + x_{13}^{2} x_{34}^{2}\right)} x_{45}}, a_{25} = 1,\\ 
	&a_{34} = -\frac{x_{12} x_{25} x_{34} - x_{12} x_{24} x_{35} - {\left(x_{13} x_{34}^{2} + {\left(x_{12} x_{24} - x_{14}\right)} x_{34}\right)} x_{45}}{{\left(x_{12}^{2} x_{24}^{2} + 2 \, x_{12} x_{13} x_{24} x_{34} + x_{13}^{2} x_{34}^{2}\right)} x_{45}}, \\
	&a_{35} = -\frac{\splitfrac{x_{12} x_{15} x_{24} - x_{12} x_{14} x_{25} + x_{13} x_{15} x_{34} - x_{13} x_{14} x_{35} +}
		{+ {\left({\left(x_{12} - 1\right)} x_{13}^{2} x_{34}^{2} + x_{12} x_{14} x_{24} - x_{14}^{2} + {\left(x_{12}^{3} - x_{12}^{2}\right)} x_{24}^{2} + {\left(x_{13} x_{14} + 2 \, {\left(x_{12}^{2} - x_{12}\right)} x_{13} x_{24}\right)} x_{34}\right)} x_{45}}}{{\left(x_{12}^{2} x_{13} x_{24}^{2} + 2 \, x_{12} x_{13}^{2} x_{24} x_{34} + x_{13}^{3} x_{34}^{2}\right)} x_{45}},\\ 
	&a_{45} = -\frac{x_{12} x_{25} + x_{13} x_{35} - {\left(x_{12} x_{24} + x_{13} x_{34} - x_{14}\right)} x_{45}}{{\left(x_{12}^{2} x_{24}^{2} + 2 \, x_{12} x_{13} x_{24} x_{34} + x_{13}^{2} x_{34}^{2}\right)} x_{45}}
\end{align*}

Now assume $x_{24}= \frac{-x_{13}x_{34}}{x_{12}}$.
\begin{equation*}x=\left(\begin{matrix}
		1 & x_{12} & x_{13} & x_{14} & x_{15} \\
		0 & 1 & 0 & x_{24} & x_{25} \\
		0 & 0 & 1 & x_{34} & x_{35}  \\
		0 & 0 & 0 & 1 & x_{45}  \\
		0 & 0 & 0 & 0 & 1 \\
	\end{matrix}\right),x^A=\left(\begin{matrix}
		1 & 1 & 0 & 0 & 0 \\
		0 & 1 & 0 & 0 & 0 \\
		0 & 0 & 1 & 1 & 0 \\
		0 & 0 & 0 & 1 & 1 \\
		0 & 0 & 0 & 0 & 1
	\end{matrix}\right)
\end{equation*} \newline
Where matrix $A$ has entries
\begin{align*}
	&d_{1} = 1, d_{2} = \frac{1}{x_{12}}, d_{3} = 1, d_{4} = \frac{1}{x_{34}}, d_{5} = \frac{1}{x_{34} x_{45}},\\ 
	&a_{12} = 1, a_{13} = \frac{x_{12} x_{25} + x_{13} x_{35} + x_{14} x_{45}}{x_{34} x_{45}}, a_{14} = 1, a_{15} = 1,\\ 
	&a_{23} = -\frac{x_{13}}{x_{12}}, a_{24} = 1, a_{25} = 1,\\ 
	&a_{34} = -\frac{x_{12} x_{34} x_{45} - x_{12} x_{25} - x_{13} x_{35}}{x_{13} x_{34} x_{45}}, a_{35} = -\frac{x_{12} x_{14} x_{25} + x_{13} x_{15} x_{34} - {\left(x_{12} x_{14} x_{34} - {\left(x_{12} - 1\right)} x_{13} x_{34}^{2}\right)} x_{45}}{x_{13}^{2} x_{34}^{2} x_{45}},\\ 
	&a_{45} = -\frac{x_{12} x_{34} x_{45} - x_{12} x_{25}}{x_{13} x_{34}^{2} x_{45}}
\end{align*}